%


\documentclass[12pt,a4paper,twoside,reqno]{amsart}
\allowdisplaybreaks


\usepackage[all,cmtip]{xy}
\usepackage[T1]{fontenc}
\usepackage{amsmath,amscd}
\usepackage{amssymb,amsfonts}
\usepackage[driver=pdftex,margin=3cm,heightrounded=true,centering]{geometry}
\usepackage{mathtools}
\usepackage{tensor}
\usepackage{url}
\usepackage[colorlinks=true,linkcolor=blue,citecolor=blue]{hyperref}
\usepackage{slashed}


\setcounter{tocdepth}{2}

\tolerance=2000
\emergencystretch=20pt

\usepackage{graphicx}


\usepackage{a4wide}

\usepackage{thmtools}
\declaretheorem[style=theorem,name={Theorem}]{theoremletter}

\newtheorem{introcorollary}[theoremletter]{Corollary}


\usepackage{amsthm}

\theoremstyle{plain}

\newtheorem{theorem}{Theorem}[subsection]
\newtheorem*{thm*}{Theorem}
\newtheorem{prop}[theorem]{Proposition}
\newtheorem{lemma}[theorem]{Lemma}
\newtheorem{cor}[theorem]{Corollary}

\newtheorem{assu}[theorem]{Assumption}

\theoremstyle{definition}
\newtheorem{dfn}[theorem]{Definition}

\theoremstyle{remark} 

\newtheorem{remark}[theorem]{Remark}

\theoremstyle{plain}

\usepackage{amscd}


\numberwithin{equation}{section}


\newcommand{\alpheqn}[1][\relax]{
     \refstepcounter{equation}
     \if#1\relax \relax
       \else \label{#1}
     \fi  
     \setcounter{saveeqn}{\value{equation}}%
    \setcounter{equation}{0}%
    \renewcommand{\theequation}{\thealphequation}}
\newcommand{\reseteqn}{\setcounter{equation}{\value{saveeqn}}%
     \renewcommand{\theequation}{\thearabicequation}}


\providecommand{\mathscr}{\mathcal} 
\IfFileExists{mathrsfs.sty}{
\usepackage{mathrsfs}}         
   {
     \IfFileExists{eucal.sty}{
        \usepackage[mathscr]{eucal}} 
   {
   }
}


\newcommand{\tens}{\otimes}
\newcommand{\Lip}{\operatorname{Lip}}

\newcommand{\sa}{{\operatorname{sa}}}

\newcommand{\vertiii}[1]{{\left\vert\kern-0.25ex\left\vert\kern-0.25ex\left\vert #1 
    \right\vert\kern-0.25ex\right\vert\kern-0.25ex\right\vert}}
\newcommand{\Bvert}[1]{{\Big\vert\kern-0.25ex\Big\vert\kern-0.25ex\Big\vert #1 
    \Big\vert\kern-0.25ex\Big\vert\kern-0.25ex\Big\vert}}
\newcommand{\bvert}[1]{{\big\vert\kern-0.25ex\big\vert\kern-0.25ex\big\vert #1 
    \big\vert\kern-0.25ex\big\vert\kern-0.25ex\big\vert}}
\newcommand{\nvert}[1]{{\vert\kern-0.25ex\vert\kern-0.25ex\vert #1 
    \vert\kern-0.25ex\vert\kern-0.25ex\vert}}

\renewcommand{\leq}{\leqslant}
\renewcommand{\geq}{\geqslant}

\newcommand{\cd}{\cdot}

\newcommand{\ot}{\otimes}
\newcommand{\hot}{\widehat \otimes}

\newcommand{\op}{\oplus}

\newcommand{\plp}{+ \ldots +}

\newcommand{\ci}{\circ}

\newcommand{\ti}{\times}
\newcommand{\nn}{\mathbb{N}}
\newcommand{\zz}{\mathbb{Z}}

\newcommand{\rr}{\mathbb{R}}
\newcommand{\cc}{\mathbb{C}}

\newcommand{\al}{\alpha}
\newcommand{\be}{\beta}
\newcommand{\ga}{\gamma}
\newcommand{\Ga}{\Gamma}
\newcommand{\de}{\delta}
\newcommand{\De}{\Delta}
\newcommand{\ep}{\varepsilon}

\newcommand{\io}{\iota}

\newcommand{\la}{\lambda}
\newcommand{\La}{\Lambda}
\newcommand{\Na}{\nabla}

\newcommand{\si}{\sigma}

\newcommand{\te}{\theta}

\newcommand{\ze}{\zeta}
\newcommand{\pa}{\partial}

\newcommand{\ov}{\overline}
\newcommand{\C}[1]{\mathcal{#1}}
\newcommand{\G}[1]{\mathfrak{#1}}
\newcommand{\T}[1]{\textup{#1}}

\newcommand{\E}[1]{\emph{#1}}
\newcommand{\B}[1]{\mathbb{#1}}

\newcommand{\fork}[2]{\left\{ \begin{array}{#1} #2 \end{array} \right.} 
\newcommand{\cas}[1]{\begin{cases} #1 \end{cases} }

\newcommand{\ma}[2]{\left(\begin{array}{#1} #2 \end{array} \right)}
\newcommand{\pma}[1]{\begin{pmatrix} #1 \end{pmatrix}}
\newcommand\SmallMatrix[1]{{\tiny\arraycolsep=0.3\arraycolsep\ensuremath{\begin{pmatrix}#1\end{pmatrix}}}}

\newcommand{\su}{\subseteq}

\newcommand{\q}{\qquad}

\newcommand{\wit}{\widetilde}

\newcommand{\inn}[1]{\langle #1 \rangle}

\newcommand{\binn}[1]{\big\langle #1 \big\rangle}

\newcommand{\sem}{\setminus}

\newcommand{\comm}[3]{\tensor[_{#1}]{#2}{_{#3}}}
\newcommand{\sma}[1]{{\tiny\arraycolsep=0.3\arraycolsep\ensuremath{\begin{pmatrix}#1\end{pmatrix}}}}

\def\smallint{\begingroup\textstyle \int\endgroup}


\makeatletter
\@namedef{subjclassname@2020}{%
  \textup{2020} Mathematics Subject Classification}
\makeatother


\begin{document}

\author{Jens Kaad}

\address{Jens Kaad, Department of Mathematics and Computer Science, University of Southern Denmark, Campusvej 55, DK-5230 Odense M, Denmark}
\email{kaad@imada.sdu.dk}

\author{David Kyed}
\address{David Kyed, Department of Mathematics and Computer Science, University of Southern Denmark, Campusvej 55, DK-5230 Odense M, Denmark}
\email{dkyed@imada.sdu.dk}

\subjclass[2020]{Primary: 58B32, 58B34, 46L89; Secondary: 46L30, 81R60.} 

\keywords{Quantum $SU(2)$, quantum metric spaces, Gromov-Hausdorff distance.} 

\title{\sc{The quantum metric structure  of quantum $SU(2)$}} 

\begin{abstract}
We introduce a two parameter family of Dirac operators on quantum $SU(2)$ and analyse their properties from the point of view of non-commutative metric geometry. 
It is shown that these Dirac operators give rise to compact quantum metric structures, and that the corresponding two parameter family of compact quantum metric spaces varies continuously in Rieffel's quantum Gromov-Hausdorff distance.  This continuity result includes the classical case where we recover the round 3-sphere up to a global scaling factor on the metric. Our main technical tool is a quantum $SU(2)$ analogue of the Berezin transform, together with its associated fuzzy approximations, the analysis of which 
also leads to a   systematic way of approximating Lipschitz operators by means of polynomial expressions in the generators.
\end{abstract}

\maketitle

\tableofcontents

\section{Introduction}
Over the past century, the theory of operator algebras has given rise to an abundance of non-commutative analogues of classical mathematical theories, many of which have grown into successful independent research areas with exciting applications in theoretical physics.
Prominent examples of this phenomenon include the theory of quantum groups \cite{kustermans-vaes-C*-lc, wor:cpqgrps},   non-commutative geometry \`a la Connes \cite{CoMo:LIF,Con:NDG,Con:NCG,Con:GFN}, and Rieffel's theory of  quantum metric spaces  \cite{Rie:MSS, Rie:GHD},  which constitute far reaching non-commutative generalisations of classical topological groups, Riemannian (spin) manifolds and compact metric spaces, respectively.
Despite a continuous effort, it has proven very challenging to reconcile the theory of non-commutative geometry with the theory of quantum groups \cite{CoMo:TST, Maj:QNG}, and even for the most fundamental $q$-deformation, Woronowicz' $SU_q(2)$, it is not clear how one should modify Connes' axioms to obtain a non-commutative geometry which  adequately reflects the underlying $q$-geometry. Numerous candidates for Dirac operators on $SU_q(2)$ have been proposed \cite{BiKu:DQQ, CP:EST, DLSSV:DOS, KaSe:TST,  KRS:RFH, KW:TDO, NeTu:DCQ}, each having their advantages and disadvantages, but it seems unclear which (if any) of these provide $SU_q(2)$ with the right kind of non-commutative geometry. At the time of writing, it is not even known if any of these Dirac operators give rise to a  compact quantum metric structure, so also the connection between the metric geometry and the differential geometry on $SU_q(2)$ is open\footnote{Note that one may construct a Dirac operator from a length function on the dual $\widehat{SU_q(2)}$ \textcolor{black}{whose iterated commutators} 
does give rise to a compact quantum metric space \cite[Proposition 4.3 and Theorem 7.4]{BCZ:CQM}, but this construction seems to be  less related to the spin geometry of $SU_q(2)$. Indeed, for $q = 1$ the resulting Dirac operator is very different from the classical Dirac operator since the $K$-homology class of the Dirac operator coming from a length function is trivial.}. 

The first aim of the present paper is to remedy the latter problem by introducing a family of Dirac operators on $SU_q(2)$ and showing that these give rise to compact quantum metric structures.  More precisely, we investigate a new 2-parameter family of Dirac operators $D_{t,q}$,  indexed over $(0,1]\times (0,1]$, which connect some of the existing constructions in that $D_{q,q}$ agrees with the Dirac operator suggested in \cite{KaSe:TST} and $D_{1,q}$ is closely related to the one studied in \cite{KRS:RFH}.
 It is important to stress that $D_{t,q}$ does not have bounded commutators with the coordinate  algebra $\C O(SU_q(2))$ in general, and we therefore cannot obtain a genuine spectral triple.    It does, however, decompose naturally into a  ``horizontal'' part $D_{q}^H$  and ``vertical'' part $D_{t}^V$,  each of which admits bounded \emph{twisted} commutators  (though for different twists)  with elements from  $\C O(SU_q(2))$. The appearance of such twists seems inevitable when non-commutative geometry is applied in the context of $q$-deformed spaces \cite{CoMo:TST}, and even though our constructions do not fit exactly with Connes' original axioms for a spectral triple, many of the  properties of classical spectral triples admit suitable analogues in our twisted setting, as witnessed by the following result:

\begin{theoremletter}[See  Lemma \ref{l:analytic}, \ref{l:sumdirac}, \ref{l:twicommu}, \ref{l:Irela}, \ref{l:multiunidir}, Proposition \ref{p:firstorder} and Section \ref{sec:comparison-with-classical}]\label{introthm:dirac-properties}
 The Dirac operators $D_{t,q}= D_t^V + D^H_q$ are selfadjoint and the following hold:
\begin{enumerate}
\item 
There exists a  one-parameter family $(\sigma_r)_{r\in  (0,\infty)}$ of algebra automorphisms of $\C O(SU_q(2))$ such that the  twisted commutators
\[
D_t^V \sigma_t(x)-\sigma_t^{-1}(x)D_t^V \q \mbox{and} \q D_q^H\sigma_q(x) -\sigma_q^{-1}(x)D_q^H,
\]
extend to bounded operators  $\pa_t^V(x)$ and $\pa_q^H(x)$ for all $x\in \C O (SU_q(2))$.  Moreover, there exists a one-parameter family of unbounded, strictly positive operators $(\Ga_r)_{r \in (0,\infty)}$ satisfying that $\sigma_r(x) = \ov{\Ga_r^{-1} x \Ga_r}$ for all $r \in (0,\infty)$ and all $x \in \C O(SU_q(2))$. 

\item The Dirac operators $D_{t,q}$ are $SU_q(2)$-equivariant, in the sense that the  selfadjoint unbounded operators $1\hot D_{t}^V$ and $1\hot D_{q}^H$ on $(L^2(SU_q(2))\hot L^2(SU_q(2)))^{\oplus 2}$  commute with $W\oplus W$ where $W\in \mathbb{B}(L^2(SU_q(2)) \hat{\otimes} L^2(SU_q(2)) )$ denotes the multiplicative unitary for $SU_q(2)$. 

\item There exists an  antilinear unitary $I$ with $I^2=-1$ such that $D_{t,q}$ satisfies the first order condition  $[\pa_t^V(x), IyI]=0 = [\pa_q^H(x),IyI]$ for all $x,y\in \C O(SU_q(2))$.   Moreover, $I$ commutes with $D_{t,q}$ up to modular operators in the sense that the relations
\[
D_t^V \Ga_t^{-1} \cd I = I \cd D_t^V \Ga_t^{-1} \q \mbox{and} \q  D_q^H \Ga_q^{-1} \cd I = I \cd D_q^H \Ga_q^{-1} 
\]
hold on the dense subspace $\C O(SU_q(2))^{\op 2} \su L^2(SU_q(2))^{\op 2}$.
\item When $t=q=1$, the unbounded selfadjoint operator $D_{1,1}$ satisfies that $2\cdot D_{1,1} +1 =D_{S^3}$, where $D_{S^3}$ is the classical Dirac operator on $SU(2) \cong S^3$. 
\end{enumerate}
\end{theoremletter}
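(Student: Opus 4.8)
The plan is to compare $D_{1,1}$ with $D_{S^3}$ by reducing both operators to the blocks of the Peter--Weyl decomposition and exploiting the equivariance from part~(2). At $q = 1$ the coordinate algebra $\C O(SU_q(2))$ is the genuine coordinate algebra $\C O(SU(2))$, the GNS space carries the Peter--Weyl decomposition $L^2(SU(2)) = \bop_{n \in \frac12\nn_0} V_n \ot \ov{V_n}$, and the twists $\si_t$, $\si_q$ of part~(1) reduce to the identity automorphism at $t = q = 1$, so that $D_{1,1}$ is an ordinary Dirac-type operator with bounded commutators with $\C O(SU(2))$. The first step is to fix a unitary identification of the module $L^2(SU_q(2))^{\op 2}$ at $q = 1$ with the Hilbert space $L^2(S^3, \slashed{S})$ of square-integrable spinors on the round three-sphere, the spinor bundle $\slashed{S}$ being globally trivialised by left-translations; under this identification one must check that the Clifford datum built into $D_t^V$ and $D_q^H$ corresponds to Clifford multiplication by a fixed orthonormal left-invariant coframe $e_1, e_2, e_3$ (the first two ``horizontal'' and the last ``vertical'' for the Hopf fibration $S^3 \to S^2$), and that the antilinear unitary $I$ of part~(3) is the charge conjugation on $\slashed{S}$.

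The second step is to diagonalise $D_{1,1} = D_1^V + D_1^H$. By part~(2) both summands commute with the left regular representation of $SU(2)$, so $D_{1,1}$ preserves each block $V_n \ot \ov{V_n} \ot \cc^2$ and acts there as $\T{id}_{V_n} \ot A_n$; decomposing $\ov{V_n} \ot \cc^2 \cong V_{n + 1/2} \op V_{n - 1/2}$ (the second summand present only for $n \geq \tfrac12$) and using the residual equivariance together with Schur's lemma, $A_n$ must act by a scalar $\la_n^{\pm}$ on each of the two summands. Reading these scalars off the explicit formulas for $D_1^V$ and $D_1^H$ --- a short computation involving the relevant Clebsch--Gordan coefficients --- yields closed expressions for $\la_n^{\pm}$ that are affine in $n$.

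The third step is the comparison with the classical side. The spectral decomposition of $D_{S^3}$ for the round three-sphere is classical (Hitchin, B\"ar): on the block indexed by $n$ it acts by the scalars $\tfrac32 + 2n$ and $-\tfrac12 - 2n$ on the two isotypic summands $V_{n \pm 1/2}$ (which of the two gets which value depending on the fixed orientation), equivalently its spectrum is $\{\pm(\tfrac32 + k) : k \in \nn_0\}$ with the standard multiplicities. One then checks the two identities $2\la_n^{+} + 1 = \tfrac32 + 2n$ and $2\la_n^{-} + 1 = -\tfrac12 - 2n$ for every $n$. Since $2 D_{1,1} + 1$ and $D_{S^3}$ are both selfadjoint, are reduced by the same orthogonal block decomposition of $L^2(S^3, \slashed{S})$, act by the same scalar on each block, and admit the algebraic direct sum $\C O(SU(2))^{\op 2}$ of the blocks as a common core, they coincide.

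The main obstacle will not be the eigenvalue arithmetic but the careful bookkeeping in the first step: one must pin down all the normalisations --- the radius of the sphere implicit in $D_t^V$, $D_q^H$, the orientation, the choice of irreducible Clifford module, and the horizontal/vertical splitting of the cotangent space --- so that the affine factors come out to be exactly $2$ and $1$ and not some other pair. A further point requiring care is that $D_1^V$ and $D_1^H$ are individually only twisted objects with no clean classical meaning, so the identification genuinely has to be carried out for the full operator $D_{1,1}$; organising the argument blockwise, as above, is what keeps this manageable.
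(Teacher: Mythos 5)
Your proposal only engages with part~(4) of the theorem: parts (1)--(3) and even the selfadjointness of $D_{t,q}$ are used as black boxes (``the twists of part (1)'', ``by part (2)'', ``the antilinear unitary $I$ of part (3)''), whereas the statement asks for all of them. In the paper these are genuine pieces of work: selfadjointness of $D_t^V+D_q^H$ on the intersection of domains requires checking that $\C D_t^V$ and $\C D_q^H$ anticommute on the core and invoking the Lesch--Mesland theorem on weakly anticommuting operators (Lemma \ref{l:sumdirac}); the automorphisms $\si_r=\si_L(r^{\mp1/2},\cdot)$ and the modular operators $\Ga_r$ are tied together by an analytic-continuation argument (Lemma \ref{l:analytic}); the bounded twisted commutators, the real structure and the first order condition, and the $W\op W$-equivariance are Lemma \ref{l:twicommu}, Lemma \ref{l:Irela} together with Proposition \ref{p:firstorder}, and Lemma \ref{l:multiunidir}. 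None of this is addressed.

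For part~(4) your spectral route is genuinely different from the paper's, which simply verifies $\pa_e=-\tfrac12(X_1+iX_2)$, $\pa_f=\tfrac12(X_1-iX_2)$, $\pa_h=iX_3$ on the generators, deduces $2(\C D^V_1+\C D^H_1)+1\su\C D_{S^3}$ on the common core $\C O(SU(2))^{\op 2}$, and concludes by essential selfadjointness of both sides --- no spectra, Clebsch--Gordan data or classical eigenvalue theorems are needed. More importantly, your plan as stated does not close numerically. In the paper $D_{S^3}$ is \emph{by definition} the closure of $\sum_i X_i\cd X_i(\cd)$ in the left trivialisation; on the invariant two-planes spanned by $(u^n_{ij},0)$ and $(0,u^n_{i,j-1})$ the matrix of $\C D_{1,1}$ is $\sma{\frac{2j-n-1}{2} & -c \\ -c & -\frac{2j-n-1}{2}}$ with $c=\sqrt{j(n-j+1)}$, so its eigenvalues are $\pm\frac{n+1}{2}$ and $2D_{1,1}+1$ has spectrum $\{n+2\}\cup\{-n\}$, integers, including a two-dimensional kernel on the constants. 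The Hitchin--B\"ar benchmark you quote, $\pm(\tfrac32+k)$, is the spectrum of the Levi--Civita Dirac operator of the unit round metric, which differs from the left-trivialised operator $\sum_i c(e_i)X_i$ by the constant cubic spin-connection term $\tfrac32$ of the bi-invariant metric; hence your step-three identities $2\la_n^{+}+1=\tfrac32+2n$ and $2\la_n^{-}+1=-\tfrac12-2n$ are each off by $\tfrac12$ and cannot be verified, and the identity $2D_{1,1}+1=D_{S^3}$ with the genuine round Dirac operator on the right-hand side is simply false (the left side has a kernel, the right side does not). So the ``normalisation bookkeeping'' you defer to step one is precisely where the argument breaks: the benchmark must be the trivialised operator the paper actually defines. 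Finally, the Schur-lemma step needs equivariance for right translations twisted by the spin representation on $\cc^2$; part (2) only gives commutation with left translations, so this extra equivariance would have to be proved separately.
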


As already mentioned, one of the main \textcolor{black}{goals} of the paper is to investigate the metric geometry governed by the Dirac operators $D_{t,q}$, by connecting our construction to Rieffel's theory of compact quantum metric spaces \cite{Rie:MSS, Rie:GHD}.  The data defining a compact quantum metric space consists of a unital $C^*$-algebra $A$ (or, more generally, an operator system) endowed with a densely defined  seminorm $L$, and the central requirement is that the Monge-Kantorovi\v{c} extended\footnote{The adjective \emph{extended} here means that $d_L$ is apriori allowed to take the value infinity. Note, however, that by compactness this cannot be the case if $d_L$ metrises the weak$^*$ topology.} metric $d_L$ on the state space $\C{S}(A)$, defined as
\begin{align}\label{eq:connes-metric}
d_{L}(\mu, \nu):= \sup \{ |\mu(a)-\nu(a)|  \, \mid \,  L(a)\leq 1\},
\end{align}
metrises the weak$^*$ topology.  The motivating example of course comes from taking a classical compact metric space $(M,d)$ and associating to it the seminorm 
\[
L_{\T{Lip}}(f):= \sup\left\{\frac{|f(x)-f(y)|}{d(x,y)} \mid x,y\in M, x\neq y \right\} 
\]
defined on the Lipschitz functions $\T{Lip}(M)$.  Rieffel's definition is also very much inspired by constructions appearing in non-commutative geometry; see \cite{Con:CFH}. Indeed, one can associate a natural seminorm  $L_D$ to a  unital spectral triple  $(A, H, D)$ by setting
\begin{align}\label{eq:Dirac-seminorm}
L_D(a):=\big\|\ov{ [D,a] }\big\|,
\end{align}
 whenever the element $a$ belongs to a specified dense unital $*$-subalgebra $\C A \su A$ of ``differentiable'' operators. Note, however,  that this construction does not always yield a compact quantum metric space, and even in the cases where this happens, the argument is often far from trivial; see e.g.~\cite{AgKa:PSM, OzRi:Hyp, Rieffel:group-C-star-algebras-as-cqms}. \\

Having the Dirac operators $D_{t,q}$ at our disposal we obtain a family of seminorms $L_{t,q}\colon \C O(SU_q(2))\to [0,\infty)$ by setting 
 \[
L_{t,q}(x) := \big\| \pa_t^V(x) + \pa_q^H(x)\big\| 
\]
and we may ask whether they yield compact quantum metric structures on $C(SU_q(2))$. Since the seminorm $L_{t,q}$ comes from two unbounded selfadjoint operators via a twisted commutator construction we may enlarge the domain considerably and replace the coordinate algebra $\C O(SU_q(2))$ with a much larger algebra of Lipschitz elements $\T{Lip}_t(SU_q(2))$. We are considering the resulting seminorm $L_{t,q}^{\max} \colon \T{Lip}_t(SU_q(2)) \to [0,\infty)$ as the ``maximal'' seminorm associated with our spectral data whereas $L_{t,q}$ is regarded as the corresponding ``minimal'' seminorm. In analogy with the classical case, there is a wide gap between the Hopf algebra $\C O(SU_q(2))$ of polynomial expressions in the generators and the Lipschitz algebra $\T{Lip}_t(SU_q(2))$, in so far that the intersection of the domain of the closures of the twisted derivations $\pa_t^V$ and $\pa_q^H$ on the coordinate algebra does not agree with the Lipschitz algebra $\T{Lip}_t(SU_q(2))$; see for example \cite[Theorem 3.1]{GKK:QI}. One may compare the gap between the minimal and maximal seminorms to the gap between the unital $C^*$-algebra $C(SU_q(2))$ and its weak closure, the von Neumann algebra $L^\infty(SU_q(2))$. We are in this text presenting a thorough treatment of the maximal seminorms $L_{t,q}^{\max}$ and this is partly the reason for the appearance of a number of analytic challenges.
%

\begin{theoremletter}[see Theorem \ref{thm:quantum-su2-as-cqms}]\label{introthm:SUq2-is-a-cqms}
The pair $\big(C(SU_q(2)), L_{t,q}^{\max}\big)$ is a compact quantum metric space for all $t,q\in (0,1]$.
\end{theoremletter}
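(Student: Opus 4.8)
The plan is to verify Rieffel's criterion in the form: a densely defined, lower semicontinuous seminorm $L$ on $C(SU_q(2))$ that vanishes exactly on the scalars defines a compact quantum metric space if and only if the set $\{x : L_{t,q}^{\max}(x)\le 1,\ \tau(x)=0\}$ is totally bounded in the $C^*$-norm, where $\tau$ denotes the Haar state. Lower semicontinuity of $L_{t,q}^{\max}$ is built into its definition, since it is assembled from the twisted commutators with the \emph{closed} operators $D_t^V$ and $D_q^H$; and the identification of $\ker L_{t,q}^{\max}$ with $\cc1$ should come from the Peter--Weyl spectral decomposition of $D_t^V$ and $D_q^H$ --- the vertical part acts diagonally with nonzero eigenvalue on every non-scalar weight vector, while the horizontal part is injective on the remaining (weight-zero) non-scalar components, so that $\pa_t^V(x)+\pa_q^H(x)=0$ forces $x\in\cc1$; here one uses that the two pieces act along the two orthogonal Clifford directions, so that no cancellation between them is possible.

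The core of the argument is the total boundedness, and I would establish it via the quantum $SU(2)$ Berezin transform and its fuzzy truncations. For each level $N$ there is a unital completely positive map $\beta_N\colon C(SU_q(2))\to C(SU_q(2))$ that factors through a finite-dimensional matrix algebra (the ``fuzzy $SU_q(2)$'' at level $N$), so in particular $\beta_N$ is a contraction with finite-dimensional range. Consequently, for every $N$, the image $\beta_N\big(\{x: L_{t,q}^{\max}(x)\le 1,\ \tau(x)=0,\ \|x\|\le C\}\big)$ is a bounded subset of a finite-dimensional space, hence totally bounded; granting a ``finite diameter'' inequality $\|x-\tau(x)1\|\le C\,L_{t,q}^{\max}(x)$ to control $\|x\|$ on the Lipschitz ball, an $\varepsilon/3$-argument then reduces the whole statement to the uniform approximation estimate
\[
\sup\big\{\,\|\beta_N(x)-x\|\ :\ L_{t,q}^{\max}(x)\le 1,\ \|x\|\le 1\,\big\}\ \xrightarrow[\,N\to\infty\,]{}\ 0 .
\]

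To prove this estimate --- and, along the way, the finite-diameter bound --- I would decompose $x\in\C O(SU_q(2))$ along the Peter--Weyl blocks and write $\beta_N(x)-x$ as a sum over corepresentation degrees $n$, in which the degree-$n$ contribution is damped by an explicit $q$-combinatorial factor tending to $0$ as $N\to\infty$ for each fixed $n$, and uniformly summably in $n$ once one trades a factor growing in $n$ against a single application of $D_t^V$ or $D_q^H$; this yields $\|\beta_N(x)-x\|\le \varepsilon_N\, L_{t,q}^{\max}(x)$ on the coordinate algebra, and then lower semicontinuity together with a closure argument promotes the inequality to all of $\T{Lip}_t(SU_q(2))$. (One could instead hope to dominate $L_{t,q}^{\max}$ from below by a Lip-norm already known to yield a compact quantum metric, such as the one coming from a length function on $\widehat{SU_q(2)}$; but the presence of the twists makes such a comparison awkward, so the Berezin route seems the natural one.)

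The main obstacle is precisely this uniform Berezin estimate. Three features make it delicate: (i) $\T{Lip}_t(SU_q(2))$ is genuinely larger than $\C O(SU_q(2))$, so the estimate must survive passage to the closures of the unbounded derivations and cannot merely be checked on polynomials; (ii) the relevant derivations are \emph{twisted} by the modular-type operators $\Ga_t,\Ga_q$ of Theorem~\ref{introthm:dirac-properties}, so the usual Leibniz manipulations must be carried out carefully while tracking the automorphisms $\sigma_r$; and (iii) one needs sufficiently sharp simultaneous control of how $D_t^V$, $D_q^H$ and the Berezin kernel act on each corepresentation block, which is a genuine $q$-analytic computation. Once these are in hand, combining lower semicontinuity, the scalar kernel, the finite-diameter inequality and the uniform Berezin approximation delivers the compact quantum metric space structure.
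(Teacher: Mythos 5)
Your proposal follows a genuinely different route from the paper, and as it stands it has two gaps that I do not see how to close. The first and most serious one is the uniform Berezin estimate $\sup\{\|\beta_N(x)-x\| : L_{t,q}^{\max}(x)\le 1\}\to 0$, which you correctly identify as the core of your argument. Given that $\beta_N$ has finite-dimensional range, this estimate (together with your ``finite diameter'' bound) immediately yields total boundedness of the Lip-ball, so it is essentially equivalent to the theorem itself; and the way such an estimate is actually obtained in the paper is a posteriori: Proposition \ref{prop:berezin-approximates-identity} bounds $\|\beta_N^M(x)-x\|$ by $d_{t,q}^{\max}(\chi_N^M,\epsilon)\cd L_{t,q}^{\max}(x)$, and the convergence $d_{t,q}^{\max}(\chi_N^M,\epsilon)\to 0$ is deduced from the weak$^*$ convergence of Lemma \ref{lem:convergence-to-counit} precisely because $d_{t,q}^{\max}$ metrises the weak$^*$ topology, i.e.\ because Theorem \ref{thm:quantum-su2-as-cqms} is already known. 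Your proposed direct mechanism --- trading a factor growing in the corepresentation degree against a single application of $D_t^V$ or $D_q^H$ --- breaks down in the horizontal direction: on the zero spectral band (the Podle\'s sphere) $\pa_t^V$ vanishes identically, and the uniform-in-degree behaviour of the damping factors $\chi_N^M(u^{2j}_{jj})$ relative to the horizontal Dirac data is exactly the hard content of the Podle\'s-sphere results of \cite{AgKa:PSM,AKK:Podcon}; likewise your finite-diameter inequality restricted to the zero band is equivalent to $(C(S_q^2),L_q^{0,\max})$ being a compact quantum metric space, which is a genuine theorem and cannot simply be ``granted''. Even the quantitative estimates the paper does prove (Proposition \ref{p:mulspe}) only control $d_{t,q}(\chi_N^M,\epsilon)$ band-by-band, in terms of Podle\'s-sphere Monge--Kantorovi\v{c} distances, and the passage from a fixed band to all of $C(SU_q(2))$ is exactly what requires a further device.

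The second gap is your promotion of the estimate from $\C O(SU_q(2))$ to $\T{Lip}_t(SU_q(2))$ by ``lower semicontinuity together with a closure argument''. This fails because the Lipschitz algebra is strictly larger than the closure of the coordinate algebra in the graph norm of the twisted derivations (this is pointed out in the introduction, citing \cite{GKK:QI}), and Rieffel's criterion requires total boundedness of the full $L_{t,q}^{\max}$-unit ball, not of the closure of its algebraic part; any estimate used for this purpose must therefore be proved directly on $\T{Lip}_t(SU_q(2))$, which in the paper forces the careful domain/core arguments with spectral projections, slice maps and the extended Berezin transform. The paper's actual proof avoids both problems and never uses the Berezin transform here: it (a) transfers the known quantum metric structure of the Podle\'s sphere to each spectral band $B_q^M$ via the finitely generated projective module criterion (Theorem \ref{t:bandcqms}); (b) uses Schur multipliers to build the Fej\'er-type band projections $E_M^L$ and a vertical antiderivative, proving at the level of the full Lipschitz algebra that $E_M^L$ is a Lip-norm contraction with $\|x-E_M^L(x)\|\le \ep(\de,M)\cd L_{t,q}^{\max}(x)$ (Lemma \ref{l:emlip}, Proposition \ref{p:bandapprox-without-saying-it}); and (c) concludes by the approximation criterion of Corollary \ref{cor:approx-cor}. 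To salvage your outline you would essentially have to re-import these ingredients: the Podle\'s-sphere input to handle the horizontal direction, and an argument carried out on $\T{Lip}_t(SU_q(2))$ itself rather than obtained by closing up from polynomials.
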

Enlarging the domain of a seminorm increases the difficulty of proving that it defines a compact quantum metric structure, so  Theorem \ref{introthm:SUq2-is-a-cqms} immediately implies the corresponding statement for the minimal seminorm $L_{t,q}$.
 \begin{introcorollary}[see  Corollary \ref{cor:alg-seminorm-gives-cqms}]
 The pair $\big(C(SU_q(2)), L_{t,q}\big)$ is a compact quantum metric space for all $t,q\in (0,1]$.
 \end{introcorollary}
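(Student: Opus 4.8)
The plan is to deduce this as a soft consequence of Theorem~\ref{thm:quantum-su2-as-cqms}, using that the compact quantum metric space property is hereditary under restricting a Lip-seminorm to a norm-dense unital subspace. First I would record the elementary observation that $L_{t,q}$ is literally the restriction of $L_{t,q}^{\max}$ to the coordinate algebra: the Hopf $*$-algebra $\C O(SU_q(2))$ is a unital $*$-subalgebra of $\T{Lip}_t(SU_q(2))$ which is norm-dense in $C(SU_q(2))$, and on it the two seminorms are given by one and the same formula $x \mapsto \|\pa_t^V(x) + \pa_q^H(x)\|$. Consequently the Lip-unit ball $\{x \in \C O(SU_q(2)) : L_{t,q}(x) \leq 1\}$ is contained in $\{a \in \T{Lip}_t(SU_q(2)) : L_{t,q}^{\max}(a) \leq 1\}$, and taking the supremum in \eqref{eq:connes-metric} over the smaller set yields $d_{L_{t,q}} \leq d_{L_{t,q}^{\max}}$ on the state space $\C{S}\big(C(SU_q(2))\big)$; in particular $d_{L_{t,q}}$ is finite.

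From here I would run a squeezing argument at the level of topologies. Scaling the defining formula \eqref{eq:connes-metric} gives $|\mu(x) - \nu(x)| \leq L_{t,q}(x)\,d_{L_{t,q}}(\mu,\nu)$ for every $x \in \C O(SU_q(2))$; combined with the norm-density of $\C O(SU_q(2))$ and the uniform boundedness of states, this shows that every evaluation functional $\mu \mapsto \mu(a)$, $a \in C(SU_q(2))$, is $d_{L_{t,q}}$-continuous, so the weak$^*$ topology is coarser than the $d_{L_{t,q}}$-topology on $\C{S}\big(C(SU_q(2))\big)$. On the other hand $d_{L_{t,q}} \leq d_{L_{t,q}^{\max}}$ shows that the $d_{L_{t,q}}$-topology is coarser than the $d_{L_{t,q}^{\max}}$-topology, which by Theorem~\ref{thm:quantum-su2-as-cqms} is precisely the weak$^*$ topology. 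The two inclusions force equality, and since the weak$^*$ topology is Hausdorff the pseudometric $d_{L_{t,q}}$ is in fact a genuine metric; this is exactly the assertion that $\big(C(SU_q(2)), L_{t,q}\big)$ is a compact quantum metric space. If one prefers to argue via Rieffel's characterisation instead, one checks that $L_{t,q}$ vanishes only on $\cc\cd 1$ --- since $L_{t,q}(x) = L_{t,q}^{\max}(x) = 0$ forces $x \in \cc\cd 1$ by Theorem~\ref{thm:quantum-su2-as-cqms} --- and that the image of $\{x \in \C O(SU_q(2)) : L_{t,q}(x) \leq 1\}$ in $C(SU_q(2))/\cc\cd 1$, being a subset of the corresponding totally bounded set attached to $L_{t,q}^{\max}$, is itself totally bounded.

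I do not expect any genuine obstacle here: the entire analytic burden --- the quantum $SU(2)$ analogue of the Berezin transform, its fuzzy approximations, and the total-boundedness estimate underpinning Theorem~\ref{thm:quantum-su2-as-cqms} --- has already been discharged in the proof of that theorem, and what is left is essentially bookkeeping. The only points demanding a modest amount of care are the verifications that $\C O(SU_q(2))$ really is a unital, norm-dense $*$-subalgebra of $C(SU_q(2))$ that sits inside $\T{Lip}_t(SU_q(2))$, and that $L_{t,q}$ coincides with the restriction of $L_{t,q}^{\max}$ rather than with some a priori strictly smaller seminorm; both are immediate from the constructions.
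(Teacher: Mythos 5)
Your proposal is correct and is essentially the paper's own argument: the paper proves Corollary \ref{cor:alg-seminorm-gives-cqms} in one line by noting that the $L_{t,q}$-unit ball is contained in the $L_{t,q}^{\max}$-unit ball and invoking Theorem \ref{thm:quantum-su2-as-cqms} together with Rieffel's criterion (Theorem \ref{thm:rieffels-criterion}), which is exactly the alternative you spell out at the end. Your primary ``squeezing'' argument on the state space topologies is just an equivalent reformulation of the same unit-ball-containment observation, so there is no substantive difference in approach.
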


 To prove Theorem \ref{introthm:SUq2-is-a-cqms}, we develop a set of new general tools which are likely to have applications elsewhere, and we therefore  briefly outline the main ideas involved. The central ingredient is the Podle{\'s} sphere $C(S_q^2)$ \cite{Pod:QS}, which  arises as the fixed point algebra of a certain circle action on $C(SU_q(2))$ (providing a quantised analogue of the Hopf fibration). In contrast to $SU_q(2)$, the  non-commutative metric geometry of $S_q^2$ is reasonably well understood. The work of D\c{a}browski and Sitarz \cite{DaSi:DSP} provides the Podle\'s sphere with a unital spectral triple, and it was furthermore proven in \cite{AgKa:PSM}  that $C(S_q^2)$ becomes a compact quantum metric space when equipped with the corresponding seminorm from \eqref{eq:Dirac-seminorm}. The circle action defining the Podle{\'s} sphere also gives rise to an increasing sequence of finitely generated projective modules which  suitably exhaust $C(SU_q(2))$.  These finitely generated projective modules are direct sums of spectral subspaces for the circle action and are referred to as spectral bands. The first step in proving Theorem \ref{introthm:SUq2-is-a-cqms} is  to lift the compact quantum metric structure from $C(S_q^2)$ to the  spectral bands and we develop the general theory to achieve this in Section \ref{ss:fingenproCQMS}. The second step is then to lift the compact quantum metric structure from the spectral bands all the way up to $C(SU_q(2))$. Perhaps a bit surprisingly, the main aid here comes from the theory of Schur multipliers, and we unfold this aspect in Section \ref{ss:schur}.\\

One of the main virtues of Rieffel's theory of compact quantum metric spaces, is that it allows for a natural generalisation of the classical Gromov-Hausdorff distance between compact metric spaces \cite{edwards-GH-paper, gromov-groups-of-polynomial-growth-and-expanding-maps}, naturally dubbed the quantum Gromov-Hausdorff distance \cite{Rie:GHD}. This concept has been further developed by, among others,  Kerr \cite{Ker:MQG}, Li \cite{Li:CQG, Li:GH-dist, Li:ECQ} and   Latr\'emoli\`ere \cite{Lat:DGH, Lat:QGH, Lat:MGP, Lat:GPS}, and by now exists in several different versions which take into account  more structure than Rieffel's original definition. 
The existence of such a distance function allows one to study the class of compact quantum metric spaces from a more analytical point of view, and opens the possibility to investigate a wealth of natural continuity questions. 
Over the past two decades, many positive answers have been obtained, and examples include Rieffel's fundamental result that the $2$-sphere can be approximated by the fuzzy spheres (matrix algebras) \cite{Rie:MSG}, as well as the more recent proof \cite{AKK:Podcon} that the Podle\'s spheres $S_q^2$ vary continuously in the deformation parameter  $q\in (0,1]$; for many more examples see  \cite{aguilar:thesis,  KK:DCQ,  Lat:AQQ, LatPack:Solenoids, Li:ECQ, Rie:GHD}.\\
  
In light of Theorem \ref{introthm:SUq2-is-a-cqms}, the next natural question to ask is whether one obtains quantum Gromov-Hausdorff continuity in the deformation parameters $t$  and $q$, and through a series of approximation arguments we are able to answer this in the affirmative:

\begin{theoremletter}[See Theorem \ref{thm:continuity-of-quantum-su2}]\label{introthm:qgh-continuity}
The compact quantum metric spaces $\big(C(SU_q(2)),  L_{t,q}^{\max}\big)$ vary continuously in the deformation parameter $(t,q)\in (0,1]\times (0,1]$ with respect to the quantum Gromov-Hausdorff distance.
\end{theoremletter}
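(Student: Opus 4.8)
The plan is to establish continuity separately in the two parameters and then combine, exploiting the product structure $D_{t,q} = D_t^V + D_q^H$ and the fact that the vertical part depends only on $t$ while the horizontal part depends only on $q$. For the $q$-direction, I would build on the already-established continuity of the Podle\'s spheres $(C(S_q^2))_{q\in(0,1]}$ from \cite{AKK:Podcon}, together with the ``spectral band'' machinery described in the introduction: since $C(SU_q(2))$ is exhausted by an increasing sequence of finitely generated projective modules over $C(S_q^2)$ (direct sums of spectral subspaces of the circle action), and the lifting of compact quantum metric structures from the base to the bands and from the bands to $C(SU_q(2))$ is controlled by the Berezin transform and its fuzzy approximations, one expects the bridges realising the $q$-continuity of the Podle\'s spheres to propagate, band by band, to bridges between the $C(SU_q(2))$'s. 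The key quantitative input is a uniform (in the band index) estimate on how well the fuzzy/Berezin approximations reconstruct Lipschitz elements from their restrictions to finitely many spectral bands; this is exactly the ``systematic way of approximating Lipschitz operators by polynomial expressions in the generators'' advertised in the abstract.

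For the $t$-direction, the situation is structurally simpler because only $D_t^V$ varies, and $D_t^V$ is essentially a rescaling of a fixed vertical Dirac operator along the fibres of the Hopf fibration: on each spectral band, which is a finitely generated projective $C(S_q^2)$-module, $\pa_t^V$ acts (up to the twist $\sigma_t$) by a diagonal operator whose entries scale polynomially in $t$. Thus I would show that for $t$ close to $t'$ the seminorms $L_{t,q}^{\max}$ and $L_{t',q}^{\max}$ are uniformly comparable on each band with a comparison constant tending to $1$, and that the twists $\sigma_t$, $\Ga_t$ vary norm-continuously in $t$ on each spectral subspace; feeding this into the general lifting results of Section \ref{ss:fingenproCQMS} and the Schur-multiplier estimates of Section \ref{ss:schur} yields small bridges between $(C(SU_q(2)), L_{t,q}^{\max})$ and $(C(SU_q(2)), L_{t',q}^{\max})$ realised on the \emph{same} $C^*$-algebra. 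Combining the two one-parameter estimates via the triangle inequality for the quantum Gromov-Hausdorff distance, $\operatorname{dist}_{\mathrm{GH}}\big((C(SU_q(2)),L_{t,q}^{\max}),(C(SU_{q'}(2)),L_{t',q'}^{\max})\big) \le \operatorname{dist}_{\mathrm{GH}}\big(\cdot_{(t,q)},\cdot_{(t',q)}\big) + \operatorname{dist}_{\mathrm{GH}}\big(\cdot_{(t',q)},\cdot_{(t',q')}\big)$, then gives joint continuity.

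The main obstacle I anticipate is obtaining the uniformity in the spectral band index needed to pass from ``continuity on each finite piece'' to ``continuity of the whole''. Concretely: a bridge between two finite spectral bands gives control of $d_L$ on states that are ``concentrated'' on those bands, but to control $d_L$ on all of $\C S(C(SU_q(2)))$ one must know that Lipschitz elements are approximated, uniformly in $q$ and $t$ over a neighbourhood, by their truncations to the first $N$ bands, with an error that is $o(1)$ as $N\to\infty$ uniformly in the parameters. This is a genuinely quantitative statement about the convergence rate of the quantum $SU(2)$ Berezin transform and its fuzzy approximants, and it must be combined with the Schur-multiplier boundedness results so that the truncation maps do not inflate the seminorm. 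A secondary technical point is continuity at the boundary values $t=1$ or $q=1$ — in particular recovering the classical round $3$-sphere (up to the global factor $2$ appearing in Theorem~\ref{introthm:dirac-properties}(4)) — where the twists $\sigma_r$, $\Ga_r$ degenerate to the identity and one must check that the estimates remain valid in the limit; but this should follow from the same uniform approximation scheme once the parameter-dependence of all the relevant operators is tracked carefully.
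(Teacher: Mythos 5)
Your skeleton (uniform approximation by finite pieces, comparison of the finite pieces across parameters, triangle inequality) is the right one, and you correctly identify uniformity in the band index as the crux; but the proposal has a genuine gap at the only genuinely hard point, namely the comparison across \emph{different} values of $q$, where the $C^*$-algebra itself changes. The sentence ``the bridges realising the $q$-continuity of the Podle\'s spheres propagate, band by band, to bridges between the $C(SU_q(2))$'s'' is not a construction and would not go through as stated: the spectral bands $B_q^K$ and $B_{q'}^K$ are modules over \emph{different} Podle\'s spheres with no canonical maps between them, and the lifting machinery of Section \ref{ss:fingenproCQMS} operates for a fixed $q$ only, so an admissible seminorm on $C(S_q^2)\op C(S_{q'}^2)$ does not induce one on $B_q^K \op B_{q'}^K$ in any evident way. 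What actually makes the cross-parameter comparison possible in the paper is (i) passing all the way down to the \emph{finite-dimensional} fuzzy bands $\T{Fuzz}_N(B_q^K)=\be_N^M(B_q^K)$, which are coinvariant and admit a $q$-independent linear basis (Corollary \ref{c:fuzzbasis}); (ii) realising them as a fixed finite-dimensional vector space inside Blanchard's continuous field $C(SU_\bullet(2))$ with continuously varying norms and seminorms, so that Rieffel's continuous-field criterion \cite[Theorem 11.2]{Rie:GHD} yields continuity of these pieces in $(t,q)$ (Proposition \ref{p:fincont}); and (iii) the uniform-in-a-neighbourhood control of the fuzzy approximation error, which is obtained not by abstract ``reconstruction'' but by bounding $d_{t,q}\big(\chi_N^M\vert_{B_q^K},\epsilon\vert_{B_q^K}\big)$ in terms of the Podle\'s-sphere distances $d_q^0(h_r,\epsilon)$ and the continuous bounding functions of \cite{AKK:Podcon} (Propositions \ref{p:mulspe} and \ref{p:mulspeII}); this is where the Podle\'s continuity input really enters.

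Two further points you would have to address. First, the Berezin transform is an $L_{t,q}$-contraction only when $t=q$ (Corollary \ref{cor:beta-Lip-contractive-when-t=q}); for $t\neq q$ it inflates the seminorm by a factor $1+h_K(t,q)\,d_{t,q}^{\max}(\chi_N^M\vert_{B_q^K},\epsilon\vert_{B_q^K})$ (Proposition \ref{p:berestigen}), and without this quantitative defect estimate your ``uniform reconstruction of Lipschitz elements'' step does not close, even for the $q$-direction at fixed $t\neq q$. (Relatedly, the paper first reduces from $L_{t,q}^{\max}$ to the algebraic seminorm $L_{t,q}$ via Corollary \ref{cor:dist-zero}, which is what makes the estimates tractable.) Second, your reduction of joint continuity to separate continuity via the triangle inequality needs the $t$-continuity to be locally uniform in $q$ (or vice versa); your fixed-$q$, varying-$t$ argument by comparing the two seminorms on the same algebra through a band truncation is fine in outline and can be made uniform using the $(t,q)$-uniform Schur-multiplier estimates, but this uniformity must be stated and used, not just the pointwise statement.
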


We single out the following special case of Theorem \ref{introthm:qgh-continuity}, which was the original motivation for the  study undertaken in the present paper. Denoting by $d_{S^3}$ the usual round metric on $SU(2) \cong S^3\subseteq \rr^4$ and by $L_{\Lip}\colon  \Lip(SU(2)) \to [0,\infty)$ the Lipschitz constant seminorm on $C(SU(2))$ associated with the rescaled metric $2\cdot d_{S^3}$, combining Theorem \ref{introthm:qgh-continuity} and Theorem \ref{introthm:dirac-properties} yields the following:

\begin{introcorollary}[see Corollary \ref{cor:convergence-to-classical-SU2}]\label{introcor:convergence-to-classical}
The  compact quantum metric spaces  $\big(C(SU_q(2)),  L_{t,q}^{\max}\big)$ converge in quantum Gromov-Hausdorff distance to $\big(C(SU(2)), L_{\T{Lip}}\big)$ as $(t,q)$ tends to $(1,1)$.
\end{introcorollary}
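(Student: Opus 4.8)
The plan is to obtain the corollary by specialising the continuity statement of Theorem~\ref{introthm:qgh-continuity} to the classical point and then identifying the resulting limit space. By Theorem~\ref{introthm:qgh-continuity} the assignment $(t,q)\mapsto \big(C(SU_q(2)),L_{t,q}^{\max}\big)$ is continuous for the quantum Gromov--Hausdorff distance on $(0,1]\times(0,1]$, so as $(t,q)\to(1,1)$ these compact quantum metric spaces converge to $\big(C(SU(2)),L_{1,1}^{\max}\big)$, where we use that $C(SU_q(2))$ specialises to $C(SU(2))$ at $q=1$. Everything then reduces to showing that $L_{1,1}^{\max}$ coincides, both as to domain and as to values, with the Lipschitz-constant seminorm $L_{\T{Lip}}$ associated with the rescaled metric $2\cdot d_{S^3}$.

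For this identification I would proceed as follows. At $t=q=1$ the modular operators $\Ga_1$ are trivial and the automorphism $\sigma_1$ is the identity (as recorded in Section~\ref{sec:comparison-with-classical}), so the twisted commutators defining $L_{t,q}^{\max}$ reduce to ordinary commutators and $\pa_1^V(x)+\pa_1^H(x)$ is the closure of $[D_{1,1},x]$. Since $2D_{1,1}+1=D_{S^3}$ by Theorem~\ref{introthm:dirac-properties}(4), and a scalar perturbation does not affect commutators, we get $[D_{1,1},f]=\tfrac12[D_{S^3},f]$ whenever either side makes sense; hence the maximal Lipschitz algebra $\T{Lip}_1(SU(2))$ consists precisely of those $f\in C(SU(2))$ for which $\ov{[D_{S^3},f]}$ is bounded, and $L_{1,1}^{\max}(f)=\tfrac12\big\|\ov{[D_{S^3},f]}\big\|$ there. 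Invoking the classical theory of the Dirac operator on a compact Riemannian spin manifold --- Connes' distance formula together with the description of the principal symbol of $D_{S^3}$ as Clifford multiplication --- one identifies $\{f : \ov{[D_{S^3},f]}\ \text{bounded}\}$ with $\Lip(SU(2))$ for the round metric $d_{S^3}$, with $\big\|\ov{[D_{S^3},f]}\big\| = L_{d_{S^3}}(f)$. Combining these equalities yields $L_{1,1}^{\max}(f)=\tfrac12 L_{d_{S^3}}(f)=L_{2\cdot d_{S^3}}(f)=L_{\T{Lip}}(f)$ with the same domain, and plugging this into the previous paragraph completes the argument.

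The delicate point is not the scaling bookkeeping but the verification that the \emph{maximal} construction at the classical point reproduces exactly the classical Lipschitz algebra, i.e.\ that $\T{Lip}_1(SU(2))=\Lip(SU(2))$: the former is defined through separate ``vertical'' and ``horizontal'' boundedness requirements, and one has to check that at $t=q=1$ these assemble into the single requirement of possessing a bounded gradient. This is exactly where the geometry of the Hopf fibration enters, namely that the vertical and horizontal distributions together span the tangent bundle of $S^3$, so that a bounded vertical derivative together with a bounded horizontal derivative forces --- and is forced by --- a bounded total derivative; one must also match the closures entering the definition of $L_{1,1}^{\max}$ with the closure of $[D_{S^3},\cdot]$ on smooth functions, which relies on the equality $2D_{1,1}+1=D_{S^3}$ holding at the level of unbounded self-adjoint operators. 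With these checks --- all supplied in Section~\ref{sec:comparison-with-classical} and the classical literature --- the corollary follows by combining Theorem~\ref{introthm:qgh-continuity} with the identification above.
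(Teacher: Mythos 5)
Your proposal is correct and follows essentially the same route as the paper: the paper obtains Corollary \ref{cor:convergence-to-classical-SU2} by combining the continuity result (Theorem \ref{thm:continuity-of-quantum-su2}) with the identification at $(t,q)=(1,1)$ carried out in Theorem \ref{t:classical}, which --- exactly as you outline --- uses $2\cdot(D_1^V+D_1^H)+1=D_{S^3}$, the equality of domains from Lemma \ref{l:sumdirac}, and Connes' characterisation of Lipschitz functions via bounded commutators with $D_{S^3}$ to conclude $\T{Lip}_1(SU_1(2))=\T{Lip}(S^3)$ and $L_{1,1}^{\max}=\tfrac12 L_{d_{S^3}}=L_{\T{Lip}}$.
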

The rescaling of  the metric on $S^3$ may at first sight seem strange, but it is exactly this  factor of 2 which  makes the Hopf fibration $S^3 \to S^2$ a Riemannian submersion when the 2-sphere is endowed with its round metric arising from the natural embedding into $\rr^3$. \\

The road to Theorem  \ref{introthm:qgh-continuity} is quite long, but involves a number of constructions which are of independent interest. As in the case of the Podle{\'s} sphere \cite{AKK:Podcon}, the key to such a continuity result is to construct an $SU_q(2)$ version of the Berezin transform. By means of the Berezin transform we obtain finite dimensional compact quantum metric spaces $ \T{Fuzz}_N(B_q^K) \subseteq \C O(SU_q(2))$  indexed by $N,K \in \nn_0$. We think of these compact quantum metric spaces as \emph{fuzzy spectral bands}. These fuzzy spectral bands are $\C O(SU_q(2))$-coinvariant and it is possible to describe them explicitly in terms of the usual generators for $SU_q(2)$. In Section \ref{sec:berezin} we construct our Berezin transform and prove that the fuzzy spectral bands approximate $SU_q(2)$:


\begin{theoremletter}[see Corollary \ref{cor:fuzzy-to-quantum-SU2}]\label{introthm:fuzzy-approx}
The compact quantum metric spaces  $\big(\T{Fuzz}_N(B_q^K), L_{t,q}\big)$ converge in quantum Gromov-Hausdorff distance to $\big(C(SU_q(2)), L_{t,q}^{\max}\big)$  as $N$ and $K$ tend to infinity.
\end{theoremletter}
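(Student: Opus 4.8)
The plan is to deduce the convergence from two separate approximation steps, glued together with the triangle inequality for Rieffel's quantum Gromov--Hausdorff distance $\mathrm{dist}_q$: first one approximates $C(SU_q(2))$ by the spectral bands $B_q^K$ as $K\to\infty$, using the Schur-multiplier machinery of Section~\ref{ss:schur} and the lifting results of Section~\ref{ss:fingenproCQMS}; then, for each fixed $K$, one approximates the band by its fuzzy version $\T{Fuzz}_N(B_q^K)$ as $N\to\infty$, using the Berezin transform built in Section~\ref{sec:berezin}. Concretely, that Berezin transform provides for each pair $(N,K)$ a unital positive map $\be_{N,K}\colon \T{Lip}_t(SU_q(2))\to\T{Fuzz}_N(B_q^K)$. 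Moreover $\T{Fuzz}_N(B_q^K)$ is a finite-dimensional subspace of $\C O(SU_q(2))\su C(SU_q(2))$ on which the restriction of $L_{t,q}^{\max}$ coincides with $L_{t,q}$; being finite-dimensional and carrying a seminorm faithful on the scalars, it is automatically a compact quantum metric space, so the content of the statement is to push $\mathrm{dist}_q$ to $0$.

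The first of the two estimates I would establish is that $\be_{N,K}$ is an asymptotic Lipschitz contraction, i.e.\ $L_{t,q}\big(\be_{N,K}(x)\big)\le c_{N,K}\cdot L_{t,q}^{\max}(x)$ with $c_{N,K}\to 1$. This should follow from the equivariance of $\be_{N,K}$ --- it is assembled from the coproduct of $SU_q(2)$ averaged against the Berezin kernels, hence intertwines the right regular corepresentation and commutes, up to the modular twists recorded in Theorem~\ref{introthm:dirac-properties}, with the twisted derivations $\pa_t^V$ and $\pa_q^H$ --- together with the fact that the Berezin kernels are states, which makes the relevant averaging norm non-increasing up to the overlap constant $c_{N,K}$. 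The second, and decisive, estimate is the uniform approximation bound
\[
\sup\big\{\,\|\be_{N,K}(x)-x\|\ \colon\ L_{t,q}^{\max}(x)\le 1,\ \|x\|\le 1\,\big\}\ \longrightarrow\ 0
\qquad(N,K\to\infty).
\]
Here I would argue in two stages. For the $K\to\infty$ part, the band-truncation Schur multipliers of Section~\ref{ss:schur} are uniformly bounded and converge pointwise to the identity on $\C O(SU_q(2))$; since the unit Lipschitz ball of $L_{t,q}^{\max}$ is norm-totally bounded by Theorem~\ref{introthm:SUq2-is-a-cqms}, this pointwise convergence upgrades to uniform convergence on that ball. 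For the $N\to\infty$ part, within a fixed band the fuzzy Berezin transforms $\be_{N,K}$ are uniformly bounded and converge pointwise to the identity (computable explicitly on matrix coefficients via the Peter--Weyl decomposition of $SU_q(2)$), and again total boundedness promotes this to uniform convergence. Treating the two limits successively --- fix $K$, let $N\to\infty$, then let $K\to\infty$ --- and inserting a triangle inequality yields the displayed bound.

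With these two estimates, the passage to the distance is the standard bridge construction for $\mathrm{dist}_q$ (see \cite{Rie:GHD} and the refinements of \cite{Ker:MQG, Li:GH-dist}, and compare the analogous argument for the Podle\'s sphere in \cite{AKK:Podcon}): using the graph of $\be_{N,K}$ one produces an admissible seminorm on the direct sum $C(SU_q(2))\oplus\T{Fuzz}_N(B_q^K)$ whose induced Hausdorff distance between the two state spaces is dominated by $\max\big(|c_{N,K}-1|,\ \varepsilon_{N,K}\big)$, where $\varepsilon_{N,K}$ denotes the supremum in the display above. Both terms tend to $0$, and hence so does $\mathrm{dist}_q\big((\T{Fuzz}_N(B_q^K),L_{t,q}),(C(SU_q(2)),L_{t,q}^{\max})\big)$, which is the assertion. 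I expect the main obstacle to be the uniform approximation estimate of the middle paragraph: pointwise convergence $\be_{N,K}\to\mathrm{id}$ on the coordinate algebra is comparatively routine, but controlling $\|\be_{N,K}(x)-x\|$ uniformly over the entire --- infinite-dimensional and non-commutative --- Lipschitz ball is exactly where the compact quantum metric property of Theorem~\ref{introthm:SUq2-is-a-cqms} must be combined with quantitative estimates on the Berezin kernels, and organising the bookkeeping so that $N$ and $K$ can be driven to infinity with simultaneously small error terms --- while verifying the compatibility of the circle-action spectral truncation with the Berezin/coherent-state truncation --- is the delicate technical heart of the argument.
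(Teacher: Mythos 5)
Your overall architecture --- band truncation via the Schur multipliers, Berezin/fuzzy approximation inside a fixed band, and a bridge/triangle-inequality argument in the style of Corollary \ref{cor:subspacegen} --- is the same as the paper's (which composes the two maps into $\be_N^M \ci E_{K_0}^L$ and invokes Proposition \ref{p:fuzztosuq2} together with the nesting $\be_{N_0}^{M_0}(B_q^{K_0}) \su \T{Fuzz}_N(B_q^K)$ to pass from an iterated to the joint limit in $(N,K)$). Your route for the approximation estimate is also viable, if less quantitative: the paper bounds $\|\be_N^M(x)-x\|$ directly by $d_{t,q}^{\max}(\chi_N^M,\epsilon)\cd L_{t,q}^{\max}(x)$ via slice maps (Proposition \ref{prop:slice-and-Lip}, Proposition \ref{prop:berezin-approximates-identity}) and then uses $\chi_N^M \to \epsilon$ weak$^*$ (Lemma \ref{lem:convergence-to-counit}) together with Theorem \ref{thm:quantum-su2-as-cqms}; your "pointwise convergence plus total boundedness of the Lip ball" argument gives the same qualitative conclusion for fixed $(t,q)$, provided you also reduce, via a diameter estimate, from the doubly bounded set to an estimate of the form $\|x-\Phi(x)\| \leq \ep \cd L_{t,q}^{\max}(x)$ valid for all $x$, which is what the bridge construction actually consumes.

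The genuine gap is your first estimate, the asymptotic Lip-contractivity $L_{t,q}(\be_{N,K}(x)) \leq c_{N,K}\cd L_{t,q}^{\max}(x)$. You assert that this follows because the Berezin transform "commutes, up to modular twists, with the twisted derivations $\pa_t^V$ and $\pa_q^H$". It does not: $\be_N^M = (1 \ot \chi_N^M)\De$ and the derivations $\pa_e, \pa_f$ entering $\pa_q^H$ are both obtained by slicing the \emph{same} (right) tensor leg of the coproduct, so there is no coassociativity argument making them commute, with or without twists, and the naive "averaging against states" heuristic only controls operator norms, not the twisted commutators defining $L_{t,q}$. The paper's actual mechanism is quite different and is the technical heart of Section \ref{sec:berezin}: conjugation by the fundamental unitary converts the left-action derivation into the right-action one, $u\,\pa_{q,q}(x)\,u^* = \de(x)$ (Proposition \ref{p:derV}), the Berezin transform \emph{does} commute with right-action operations by coassociativity and extends to a ucp map on $L^\infty(SU_q(2))$ (Proposition \ref{prop:berezin-and-delta}, Lemma \ref{lem:extended-berezin-is-ucp}), which yields exact Lip-contractivity precisely when $t=q$ (Corollary \ref{cor:beta-Lip-contractive-when-t=q}); for $t \neq q$ one must in addition estimate the defect $\big\|\be_N^M(u\ga y u^*) - u\,\be_N^M(\ga y)\,u^*\big\|$ on a fixed band in terms of $d_{t,q}^{\max}\big(\chi_N^M\vert_{B_q^K},\epsilon\vert_{B_q^K}\big)$ (Lemma \ref{l:estiberuni}), leading to the bound $L_{t,q}^{\max}(\be_N^M(x)) \leq \big(1 + h_K(t,q)\cd d_{t,q}^{\max}(\chi_N^M\vert_{B_q^K},\epsilon\vert_{B_q^K})\big)\cd L_{t,q}^{\max}(x)$ of Proposition \ref{p:berestigen}, whose error term tends to $0$ only because the approximation estimate is fed back in. Without this unitary-conjugation device (or a substitute), your constant $c_{N,K}$ has no justification, and since the bridge estimate degrades linearly in $|c_{N,K}-1|$ (or, in the paper's formulation, in $D/(1+D)$ times the diameter), the whole convergence argument stalls at this point.
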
 
 
 This theorem should be viewed  as an $SU_q(2)$-analogue of Rieffel's original result \cite{Rie:MSG}, showing that the 2-sphere can be approximated in quantum Gromov-Hausdorff distance by the fuzzy 2-spheres (matrix algebras). The concrete techniques used in the construction of the Berezin transform and the fuzzy spectral bands build on the corresponding constructions for the Podle{\'s} sphere developed in \cite{AKK:Podcon}. An interesting consequence of the above fuzzy approximation is that the maximal and minimal seminorm actually give rise to the same compact quantum metric structure on $SU_q(2)$:

\begin{theoremletter}[see Corollary \ref{cor:dist-zero}]\label{introtheorem:metrics-agree}
The quantum Gromov-Hausdorff distance between  $\big(C(SU_q(2)), L_{t,q}^{\max}\big)$ and $\big(C(SU_q(2)), L_{t,q}\big)$ is zero, and the  Monge-Kantorovi\v{c} metrics $d_{t,q}^{\max}$ and $d_{t,q}$ on $\C S\big(C(SU_q(2))\big)$, induced by the two seminorms via the formula \eqref{eq:connes-metric}, agree. 
\end{theoremletter}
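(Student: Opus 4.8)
This statement comprises two assertions: that the Monge-Kantorovi\v{c} metrics $d_{t,q}^{\max}$ and $d_{t,q}$ on $\C S\big(C(SU_q(2))\big)$ coincide, and that the quantum Gromov-Hausdorff distance between the two compact quantum metric spaces is zero; both will be extracted from the Berezin machinery of Section~\ref{sec:berezin} that also yields Corollary~\ref{cor:fuzzy-to-quantum-SU2}. One of the two inequalities needed for the equality of metrics is free: since $\C O(SU_q(2)) \su \T{Lip}_t(SU_q(2))$ and $L_{t,q}^{\max}$ restricts to $L_{t,q}$ on $\C O(SU_q(2))$, we have $\{x \in \C O(SU_q(2)) : L_{t,q}(x) \leq 1\} \su \{x \in \T{Lip}_t(SU_q(2)) : L_{t,q}^{\max}(x) \leq 1\}$, so the supremum in \eqref{eq:connes-metric} defining $d_{t,q}^{\max}$ is taken over a larger set than the one defining $d_{t,q}$; hence $d_{t,q} \leq d_{t,q}^{\max}$.

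For the reverse inequality I would use the family of Berezin transforms $\beta_{N,K} \colon C(SU_q(2)) \to \T{Fuzz}_N(B_q^K) \su \C O(SU_q(2))$ from Section~\ref{sec:berezin}, which are unital and positive, converge pointwise in norm to the identity, and are (at worst asymptotically) Lipschitz-contractive in the sense that $\limsup_{N,K} L_{t,q}\big(\beta_{N,K}(x)\big) \leq L_{t,q}^{\max}(x)$ for every $x \in \T{Lip}_t(SU_q(2))$ (these are precisely the properties underpinning Corollary~\ref{cor:fuzzy-to-quantum-SU2}). Fix a self-adjoint $x \in \T{Lip}_t(SU_q(2))$ with $L_{t,q}^{\max}(x) \leq 1$ and set $x_{N,K} := \beta_{N,K}(x)$, so that $x_{N,K} = x_{N,K}^* \in \C O(SU_q(2))$ by positivity, $\limsup_{N,K} L_{t,q}(x_{N,K}) \leq 1$, and $\|x_{N,K} - x\| \to 0$. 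Then, for all states $\mu,\nu$ on $C(SU_q(2))$,
\[
|\mu(x) - \nu(x)| = \lim_{N,K} |\mu(x_{N,K}) - \nu(x_{N,K})| \leq \Big( \limsup_{N,K} L_{t,q}(x_{N,K}) \Big) \cd d_{t,q}(\mu,\nu) \leq d_{t,q}(\mu,\nu),
\]
and since $L_{t,q}^{\max}$ is $*$-invariant the supremum in \eqref{eq:connes-metric} may be restricted to self-adjoint elements, giving $d_{t,q}^{\max}(\mu,\nu) \leq d_{t,q}(\mu,\nu)$. Together with the previous paragraph this proves $d_{t,q}^{\max} = d_{t,q}$.

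It remains to show that the quantum Gromov-Hausdorff distance vanishes. For each $\gamma > 0$ I would consider the seminorm $L_\gamma(x,y) := \max\big\{ L_{t,q}^{\max}(x),\, L_{t,q}(y),\, \gamma^{-1}\|x-y\| \big\}$ on $C(SU_q(2)) \op C(SU_q(2))$; one checks that it is a Lip-norm, and that it is admissible for the pair $\big(L_{t,q}^{\max}, L_{t,q}\big)$ in the sense of \cite{Rie:GHD}, i.e.\ its two quotient seminorms along the coordinate projections are $L_{t,q}^{\max}$ and $L_{t,q}$. Indeed, for the first quotient one uses the test element $y = \beta_{N,K}(x)$ and lets $N,K \to \infty$, invoking $\|x - \beta_{N,K}(x)\| \to 0$ and $\limsup_{N,K} L_{t,q}(\beta_{N,K}(x)) \leq L_{t,q}^{\max}(x)$; for the second one uses $x = y$, which is legitimate because $L_{t,q}^{\max}$ and $L_{t,q}$ agree on $\C O(SU_q(2))$. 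One then observes that for every state $\mu$ the two images of $\mu$ under the canonical embeddings $\C S\big(C(SU_q(2))\big) \hookrightarrow \C S\big(C(SU_q(2)) \op C(SU_q(2))\big)$ are at $d_{L_\gamma}$-distance at most $\gamma$ (since $|\mu(x) - \mu(y)| \leq \|x-y\| \leq \gamma$ whenever $L_\gamma(x,y) \leq 1$), so the two copies of the state space lie at Hausdorff distance $\leq \gamma$; letting $\gamma \to 0$ gives $\T{dist}_q = 0$. One could equally deduce the vanishing from Corollary~\ref{cor:fuzzy-to-quantum-SU2} and the triangle inequality, once it is noted that the same Berezin transforms show $\big(\T{Fuzz}_N(B_q^K), L_{t,q}\big)$ also converges to $\big(C(SU_q(2)), L_{t,q}\big)$. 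In either approach the only substantial point is the asymptotic Lipschitz contractivity of the Berezin transform, $\limsup_{N,K} L_{t,q}\big(\beta_{N,K}(x)\big) \leq L_{t,q}^{\max}(x)$, which is the analytic heart shared with Corollary~\ref{cor:fuzzy-to-quantum-SU2}; everything else is formal, modulo the routine verifications that $L_\gamma$ is a Lip-norm and that $\beta_{N,K}$ preserves self-adjointness.
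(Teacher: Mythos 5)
Your proposal is correct in substance and rests on exactly the same analytic core as the paper's proof of Corollary \ref{cor:dist-zero}, namely the quantitative Berezin/fuzzy approximation of Section \ref{sec:berezin}; only the packaging differs. For the equality $d_{t,q}^{\max}=d_{t,q}$ you pass to the limit along approximants $x_{N,K}$ directly, whereas the paper routes the same estimates through Corollary \ref{cor:subgenmet} applied to $\Phi=\be_N^M\ci E_{K_0}^L$, using that $L_{t,q}$ and $L_{t,q}^{\max}$ coincide on the finite-dimensional operator system $\be_N^M(B_q^{K_0})\su \C O(SU_q(2))$, and then lets $d_{t,q}^{\max}(\chi_N^M,\epsilon)\to 0$ via Theorem \ref{thm:quantum-su2-as-cqms} and Lemma \ref{lem:convergence-to-counit}. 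For the vanishing of the quantum Gromov--Hausdorff distance you build an explicit bridge seminorm $L_\ga$, while the paper simply invokes Rieffel's criterion that distance zero is equivalent to the existence of an isometry (\cite[Corollary 6.4]{Rie:GHD}), which follows once the two Monge--Kantorovi\v{c} metrics agree; your bridge argument (and also your alternative via Corollary \ref{cor:fuzzy-to-quantum-SU2} and the triangle inequality, after noting that the same estimates give convergence of the fuzzy bands to $(C(SU_q(2)),L_{t,q})$) is a perfectly sound substitute.

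One point needs to be stated more carefully, since it is precisely where the paper has to work: the ``asymptotic Lipschitz-contractivity'' $\limsup_{N,K}L_{t,q}(\be_{N,K}(x))\leq L_{t,q}^{\max}(x)$ is \emph{not} available for the bare Berezin transforms when $t\neq q$, and it is not a joint limit. Exact Lip-contractivity of $\be_N^M$ is only proved for $t=q$ (Corollary \ref{cor:beta-Lip-contractive-when-t=q}); for $t\neq q$ the paper only has Proposition \ref{p:berestigen}, an estimate valid on a fixed spectral band $B_q^K$ with a constant $h_K(t,q)$ that grows with $K$. Consequently one must take $x_{N,K}:=\be_N^M\big(E_K^L(x)\big)$ (using Lemma \ref{l:emlip} and Proposition \ref{p:bandapprox-without-saying-it} for the band-smoothing step) and order the limits --- fix $K$, then let $N,M\to\infty$, or choose a diagonal $N=N(K)$ so that $h_K(t,q)\cd d_{t,q}^{\max}(\chi_N^M\vert_{B_q^K},\epsilon\vert_{B_q^K})\to 0$ --- rather than assert a limsup over all pairs $(N,K)$, which the stated estimates do not support. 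With this reading (which is exactly what the proof of Proposition \ref{p:fuzztosuq2} implements), your limit argument for $d_{t,q}^{\max}\leq d_{t,q}$ and your verification that the quotient of $L_\ga$ along the first coordinate is $L_{t,q}^{\max}$ both go through; as literally written, the key property you take as input from Section \ref{sec:berezin} is stronger than what is established there.
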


In particular, the continuity results in Theorem \ref{introthm:qgh-continuity} and Corollary \ref{introcor:convergence-to-classical}, which pertain to the  \emph{maximal} seminorm   $L_{t,q}^{\max}$ automatically hold true for the  \emph{minimal} seminorm $L_{t,q}$:


\begin{introcorollary}
The compact quantum metric spaces  $\big(C(SU_q(2)), L_{t,q}\big)$ vary continuously in the deformation parameters $(t,q)\in (0,1]\times (0,1]$ with respect to the quantum Gromov-Hausdorff distance. In particular, $\big(C(SU_q(2)), L_{t,q}\big)$ converges to  $\big(C(SU(2)), L_{\T{Lip}}\big)$ as $(t,q)$ tends to $(1,1)$.
\end{introcorollary}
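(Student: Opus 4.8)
The plan is to deduce this corollary formally from the three results already established for the \emph{maximal} seminorms, using only that Rieffel's quantum Gromov-Hausdorff distance $\T{dist}_Q$ satisfies the triangle inequality. The key observation is Theorem \ref{introtheorem:metrics-agree}: for every fixed $(t,q) \in (0,1] \times (0,1]$ one has $\T{dist}_Q\big( (C(SU_q(2)), L_{t,q}^{\max}), (C(SU_q(2)), L_{t,q}) \big) = 0$. Consequently, given any two parameter pairs $(t,q)$ and $(t',q')$, the triangle inequality applied twice yields
\[
\T{dist}_Q\big( (C(SU_q(2)), L_{t,q}), (C(SU_{q'}(2)), L_{t',q'}) \big) = \T{dist}_Q\big( (C(SU_q(2)), L_{t,q}^{\max}), (C(SU_{q'}(2)), L_{t',q'}^{\max}) \big).
\]
Hence continuity of the assignment $(t,q) \mapsto \big(C(SU_q(2)), L_{t,q}\big)$ is \emph{equivalent} to the already proven continuity of $(t,q) \mapsto \big(C(SU_q(2)), L_{t,q}^{\max}\big)$ from Theorem \ref{introthm:qgh-continuity}, and the first assertion of the corollary follows at once.

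For the ``in particular'' clause I would argue in exactly the same way: using Theorem \ref{introtheorem:metrics-agree} at the pair $(t,q)$ together with the triangle inequality,
\[
\T{dist}_Q\big( (C(SU_q(2)), L_{t,q}), (C(SU(2)), L_{\T{Lip}}) \big) = \T{dist}_Q\big( (C(SU_q(2)), L_{t,q}^{\max}), (C(SU(2)), L_{\T{Lip}}) \big),
\]
and the right-hand side tends to $0$ as $(t,q) \to (1,1)$ by Corollary \ref{introcor:convergence-to-classical}. This gives the claimed convergence to the rescaled round $3$-sphere.

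There is essentially no obstacle to overcome here: all the substantive analytic work --- the construction of the $SU_q(2)$ Berezin transform and the fuzzy spectral bands (Section \ref{sec:berezin}), the fuzzy approximation of Theorem \ref{introthm:fuzzy-approx}, and the resulting coincidence of the minimal and maximal Monge-Kantorovi\v{c} metrics recorded in Theorem \ref{introtheorem:metrics-agree} --- has already been carried out, and what remains is purely a book-keeping argument with the pseudometric $\T{dist}_Q$. If one wished to bypass Theorem \ref{introtheorem:metrics-agree} entirely, a strictly harder alternative would be to rerun the approximation scheme behind Theorem \ref{introthm:qgh-continuity} while carrying the minimal seminorm $L_{t,q}$ through every estimate; but the availability of the equality of the two metrics makes this detour unnecessary.
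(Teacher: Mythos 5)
Your proposal is correct and is essentially the paper's own argument: the paper deduces the minimal-seminorm statements from the maximal ones (and vice versa in the body, where Theorem \ref{thm:continuity-of-quantum-su2} and its proof pass freely between $L_{t,q}$ and $L_{t,q}^{\max}$) precisely by combining the vanishing distance of Corollary \ref{cor:dist-zero} with the triangle inequality for $\T{dist}_{\T{Q}}$. Nothing further is needed, since $\big(C(SU_q(2)),L_{t,q}\big)$ is already known to be a compact quantum metric space by Corollary \ref{cor:alg-seminorm-gives-cqms}.
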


The rest of paper is structured as follows: Section \ref{sec:qcms} contains the necessary background on compact quantum metric spaces as well as the new tools needed for the present paper. Section \ref{s:quantumsu2} contains a detailed introduction to $SU_q(2)$.  In Section \ref{s:twisspectrip} we introduce our family of Dirac operators and prove Theorem \ref{introthm:dirac-properties}. Section \ref{sec:quantum-metrics-on-quantum-su2} is devoted to proving Theorem \ref{introthm:SUq2-is-a-cqms} and in Section \ref{sec:berezin} we construct the Berezin transform and prove Theorem \ref{introthm:fuzzy-approx} and Theorem \ref{introtheorem:metrics-agree}. The final Section \ref{sec:continuity-results} pieces everything together into a proof of the main continuity result, Theorem \ref{introthm:qgh-continuity}.\\

\subsection{Notation and standing assumptions.}
Unless otherwise stated, we shall always apply the notation $\|\cdot\|$ for the unique $C^*$-norm on a $C^*$-algebra $A$ or, more generally, for its restriction to a complete operator system $X \su A$. Since the greek letter epsilon is the standard symbol both for the counit in a quantum group and an arbitrarily small positive number, we will use the symbol $\epsilon$ for the former and the symbol $\ep$ for the latter. As for tensor products, the symbols $\ot, \ot_{\min}$ and $\hot$ will denote algebraic,  minimal $C^*$-algebraic, and Hilbert space tensor products, respectively. The theory of unbounded operators plays a central role in the paper, and if $T$ is an unbounded closable operator in a Hilbert space we will denote its closure by $\overline{T}$.
Lastly, we will use the abbreviations WOT and SOT for the weak- and strong operator topology, respectively, and ucp for unital completely positive.\\

\subsection{Note added in proof.}
{Since the writing of the present paper, the research on quantum metrics on $q$-deformations has progressed further. In \cite{KM24}, it was proven that the  D'Andrea-D\k{a}browski 
spectral triples provide all quantum projective spaces $\cc P_q^\ell$ with compact quantum metric structures (the case $\ell=1$ corresponding to the Podle{\'s} sphere), and in \cite{Kaa24} the higher-dimensional Vaksman-Soibelman spheres were treated, thus providing a generalisation of Theorem \ref{introthm:SUq2-is-a-cqms}}. {The paper \cite{Kaa24} also features an updated treatment of finitely generated projective modules in the context of quantum metric spaces.}

\subsubsection*{Acknowledgments.}
The authors gratefully acknowledge the financial support from  the Independent Research Fund Denmark through grant no.~9040-00107B, 7014-00145B and 1026-00371B {and from the EU Staff Exchange project 101086394}. Moreover, thanks are due to Konrad Aguilar, since parts of the present text are based on ideas developed in connection with our joint papers \cite{AKK:Podcon} and \cite{AKK:Polyapprox}. Lastly, the authors would like to thank Ulrich Kr{\"a}hmer and  Adam Rennie for sharing their insights on Dirac operators on $q$-deformed spaces, and Marc Rieffel and Sergey Neshveyev for providing them with the important references \cite{Sain:Thesis} and \cite{IzNeTu:PBD}.  The first author would also like to thank Andreas Thom and the Technische Universit\"at Dresden for hospitality during the autumn of 2021 where a large part of the present paper was written. \\

\section{Compact quantum metric spaces}\label{sec:qcms}

In this section, we present the relevant preliminaries on compact quantum metric spaces.  For our purposes,  the theory of (concrete) operator systems provides the most convenient framework for studying compact quantum metric spaces,  and we are thus in line with the recent developments in  \cite{walter-connes:truncations} and \cite{walter:GH-convergence}, as well as the $C^*$-algebra based approaches in \cite{Li:CQG, Li:GH-dist, Rie:MSS}. The theory discussed here is also closely related to  Rieffel's original theory of \emph{order unit compact quantum metric spaces} \cite{Rie:GHD}, via the passage from an operator system to its selfadjoint part (the real subspace of selfadjoint elements). The selfadjoint part of an operator system is indeed an order unit space and the two state spaces can be identified via restriction.\\

\subsection{Definitions and basic properties}
Throughout this section, $X$ will be a \emph{complete operator system}; i.e.~$X$ will be a norm-closed subspace of a specified unital $C^*$-algebra $A_X$ such that $X$ is invariant under the adjoint operation and contains the unit from $A_X$. A \emph{state} on $X$ is a positive linear functional $\mu \colon X \to \cc$ which sends the unit $1_X$ in $X$ to the unit $1$ in $\cc$. A state on $X$ automatically has norm $1$ \cite{Pau:CBM}, and the state space $\C S (X)$  therefore becomes a compact Hausdorff space for the weak$^*$ topology. Although $X$ is not an algebra, any selfadjoint $x\in X$ may still be written as a difference of positive elements from $X$ as
\[
x=\frac12(\|x\|\cdot 1_X +x) -\frac12 (\|x\|\cdot 1_X - x),
\]
and from this it follows that any positive map $\Phi\colon X \to Y$ into another operator system $Y$ satisfies $\Phi(x^*)=\Phi(x)^*$. Lastly, we note the slight subtlety that $\Phi$ need not be a contraction, but that it is bounded with $\|\Phi\|\leq 2\|\Phi(1_X)\|$; see \cite[Proposition 2.1]{Pau:CBM}. If, however, $\Phi$ is \emph{completely} positive then  $\|\Phi\|= \|\Phi\|_{\T{cb}}=  \|\Phi(1_X)\|$. Note also, that if $\Phi$ is unital and positive and $x$ is selfadjoint then $-\|x\| \cd 1_X \leq x\leq \|x\|\cdot 1_X$ so that $\|\Phi(x)\|\leq \|x\|$. As a final observation, we note that if $\Phi\colon X \to Y$ is instead assumed  to be unital and contractive, then $\Phi$ is automatically positive; see e.g.~\cite[Proposition 2.11]{Pau:CBM}. We will apply these observations without further mentioning in the sections to follow.\\

The complete operator system $X$ gives rise to a complete order unit space $X_{\T{sa}} := \big\{ x \in X \mid x = x^*\big\}$, where the order and the unit are inherited from the surrounding unital $C^*$-algebra $A_X$. The order unit space $X_{\T{sa}}$ also has an associated state space $\C S(X_{\T{sa}})$ and we record that the restriction of states yields an affine homeomorphism $\C S(X) \to \C S(X_{\T{sa}})$.  For an arbitrary element $x \in X$ we let $\T{Re}(x)$ and $\T{Im}(x)$ in $X_{\T{sa}}$ denote the real and the imaginary part of $x$. We are interested in metrics on the state space $\C S(X)$ and in particular those metrics which metrise the weak$^*$ topology. As realised by Rieffel, these may be constructed from certain seminorms on the operator system $X$ and we now recall the key notions in this connection.

\begin{dfn}\label{d:lipschitzsem}
A seminorm $L \colon X \to [0,\infty]$ is called  a \emph{Lipschitz seminorm} when the following hold:
\begin{enumerate} 
\item $L$ is \emph{densely defined}, meaning that the domain $\T{Dom}(L) := \big\{ x \in X : L(x) < \infty \big\}$
is a norm-dense subspace of $X$;
\item the kernel of $L$ contains the scalars $\cc := \cc \cd 1_X$, thus $L(1_X) = 0$;
\item $L$ is invariant under the adjoint operation, i.e.~$L(x^*) = L(x)$ for all $x \in X$.
\end{enumerate}
\end{dfn}

It is common to require that the kernel of a Lipschitz seminorm agrees with the scalars $\cc = \cc \cd 1_X$, but we find it convenient to work with the above more flexible notion.

\begin{dfn}
Let $L \colon X \to [0,\infty]$ be a Lipschitz seminorm. The \emph{Monge-Kantorovi\v{c}} metric $d_L \colon \C S(X) \ti \C S(X) \to [0,\infty]$ is defined by
\[
d_L(\mu,\nu) := \sup\big\{ | \mu(x) - \nu(x) | \mid L(x) \leq 1 \big\}, \q \mbox{for } \mu,\nu \in \C S(X) .
\]
\end{dfn}

We remark that the Monge-Kantorovi\v{c} metric $d_L$ is not, strictly speaking, a metric since it can, a priori, take the value infinity. In fact, it can be proved that if $\ker(L)$ contains non-scalar elements, then there exist states $\mu_0$ and $\nu_0$ on $X$ such that $d_L(\mu_0,\nu_0) = \infty$; see for example \cite[Lemma 2.2]{KK:DCQ}. This possibility is excluded when $(X,L)$ is a compact quantum metric space in the following sense:

\begin{dfn}\label{d:CQMS}
Let $L \colon X \to [0,\infty]$ be a Lipschitz seminorm. We say that $(X,L)$ is a \emph{compact quantum metric space} when the Monge-Kantorovi\v{c} metric $d_L$ metrises the weak$^*$ topology on the state space $\C S (X)$. In this case, $L$ is referred to as a \emph{Lip-norm}.
\end{dfn}

\begin{dfn}\label{def:diameter}
For a compact quantum metric space $(X,L)$, the \emph{diameter} is defined as
\[
\T{diam}(X,L):=\T{diam}(\C S(X), d_L):=\sup\{d_L(\mu, \nu) \mid \mu,\nu\in \C S(X) \}.
\]
\end{dfn}

For any norm or seminorm $\vertiii{\cdot}$ on $X$, $x\in X$ and $r\geq 0$ we denote the corresponding open and closed balls as follows:
\begin{align*}
\mathbb{B}_r^{\vertiii{\cdot}}(x) :=\{y\in X \mid \vertiii{x-y}<r\} \, \, \T{ and } \, \, \,  
\overline{\mathbb{B}}_r^{\vertiii{\cdot}}(x) :=\{y\in X \mid \vertiii{x-y}\leq r\} .
\end{align*}
The following convenient characterisation of compact quantum metric spaces can be found in \cite[Theorem 1.8]{Rie:MSA}; here we let $[\  \cd \ ] \colon X \to X/\cc$ denote the quotient map and  $\| \cd \|_{X/\cc}$ denote  the quotient norm on $X/\cc$.

\begin{theorem}[Rieffel]\label{thm:rieffels-criterion}
Let $L \colon X \to [0,\infty]$ be a Lipschitz seminorm. It holds that $(X,L)$ is a compact quantum metric space if and only if the subset $\big[ \, \ov{\B B}_1^L(0) \big] \su X/\cc$ is totally bounded with respect to the quotient  norm $\| \cd \|_{X/\cc}$ on $X/\cc$.
\end{theorem}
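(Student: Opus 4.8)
The plan is to prove both implications directly; the only genuinely non-formal ingredient is a computation of the quotient norm on $X/\cc$ in terms of states. The starting point is that every state on $X$ extends to a state on the ambient $C^*$-algebra $A_X$, so that $\C S(X)$ restricts onto $\C S(A_X)$; consequently, for selfadjoint $x$ one has $\max_{\mu\in\C S(X)}\mu(x)=\max\operatorname{sp}_{A_X}(x)$ and $\min_{\mu\in\C S(X)}\mu(x)=\min\operatorname{sp}_{A_X}(x)$, and minimising $\|x-c\cd 1_X\|=\max\{\,|\max\operatorname{sp}_{A_X}(x)-c|,\,|\min\operatorname{sp}_{A_X}(x)-c|\,\}$ over $c\in\rr$ gives the identity $\|[x]\|_{X/\cc}=\tfrac12\big(\max_\mu\mu(x)-\min_\mu\mu(x)\big)$ for every $x=x^*$. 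In particular the map $[x]\mapsto[\hat x]$, where $\hat x(\mu):=\mu(x)$, is an isometry of $X_{\T{sa}}/\rr$ onto a subspace of $C_\rr(\C S(X))/\rr$. Since $L(x^*)=L(x)$ forces $L(\T{Re}(x)),L(\T{Im}(x))\le L(x)$, everything concerning $\ov{\B B}_1^L(0)$ reduces to its selfadjoint part: writing $S:=[\,\ov{\B B}_1^L(0)\cap X_{\T{sa}}\,]$ and letting $\pi\colon X_{\T{sa}}/\rr\to X/\cc$ denote the canonical contraction, one has $[\,\ov{\B B}_1^L(0)\,]\su \pi(S)+i\,\pi(S)$, and a sum of scalar multiples of totally bounded sets is totally bounded; so it suffices to treat $S$.

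Assume first that $(X,L)$ is a compact quantum metric space. Then $(\C S(X),d_L)$ is a compact metric space, hence $\T{diam}(X,L)<\infty$, and fixing $\mu_0\in\C S(X)$ the family $\C F:=\{\hat x-\mu_0(x)\cd 1:x\in X_{\T{sa}},\,L(x)\le 1\}\su C_\rr(\C S(X))$ is uniformly bounded by $\T{diam}(X,L)$ (since $|\mu(x)-\mu_0(x)|\le L(x)\,d_L(\mu,\mu_0)$) and $1$-Lipschitz for $d_L$, hence equicontinuous for the weak$^*$ topology, the two topologies agreeing on $\C S(X)$. By the Arzel\`a--Ascoli theorem $\C F$ is totally bounded in $C_\rr(\C S(X))$, so its image in $C_\rr(\C S(X))/\rr$ is totally bounded; but that image equals $\{[\hat x]:x\in X_{\T{sa}},\,L(x)\le 1\}$, which the isometry from the previous paragraph maps onto from $S$. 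Hence $S$ is totally bounded in $X_{\T{sa}}/\rr$, and the reduction above completes this implication.

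For the converse, assume $[\,\ov{\B B}_1^L(0)\,]$ is totally bounded, hence bounded, in $X/\cc$, say $\|[x]\|_{X/\cc}\le R$ whenever $L(x)\le 1$. Boundedness already forces $\ker(L)=\cc\cd 1_X$ (a non-scalar $y\in\ker L$ would give $ny\in\ov{\B B}_1^L(0)$ for all $n$ with $\|[ny]\|_{X/\cc}\to\infty$), and for states $\mu,\nu$ and $L(x)\le 1$ one has $|\mu(x)-\nu(x)|=|\mu(x-c\cd 1_X)-\nu(x-c\cd 1_X)|\le 2\|x-c\cd 1_X\|$ for every scalar $c$, whence $d_L(\mu,\nu)\le 2R<\infty$. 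To show that a weak$^*$-convergent net $\mu_i\to\mu$ in $\C S(X)$ also converges for $d_L$, fix $\ep>0$ and elements $x_1,\dots,x_n$ with $L(x_j)\le 1$ whose classes $\ep$-net $[\,\ov{\B B}_1^L(0)\,]$; given $x$ with $L(x)\le 1$, choose $j$ and a scalar $c$ with $\|x-x_j-c\cd 1_X\|<\ep$, so that $|\mu_i(x)-\mu(x)|\le|\mu_i(x_j)-\mu(x_j)|+2\ep$ because states agree on $\cc\cd 1_X$. Taking the supremum over such $x$ gives $d_L(\mu_i,\mu)\le\max_j|\mu_i(x_j)-\mu(x_j)|+2\ep$, which is eventually $\le 3\ep$; as $\ep$ was arbitrary, $d_L(\mu_i,\mu)\to 0$. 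The opposite implication is the routine scaling estimate: for $x\in\T{Dom}(L)$ with $L(x)>0$ one has $|\mu_i(x)-\mu(x)|\le L(x)\,d_L(\mu_i,\mu)$, for $x\in\ker L=\cc\cd 1_X$ it is immediate, and density of $\T{Dom}(L)$ together with $\|\mu_i\|=\|\mu\|=1$ upgrades this to $\mu_i(x)\to\mu(x)$ for every $x\in X$. Thus the identity map is a continuous bijection from the compact space $(\C S(X),\T{weak}^*)$ onto the Hausdorff space $(\C S(X),d_L)$, so a homeomorphism, and $d_L$ metrises the weak$^*$ topology.

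I expect the main obstacle to be the forward direction, and specifically the passage from Arzel\`a--Ascoli compactness in $C(\C S(X))$ back to total boundedness in $X/\cc$: this hinges on the exact identification $\|[x]\|_{X/\cc}=\tfrac12\big(\max_\mu\mu(x)-\min_\mu\mu(x)\big)$ for selfadjoint $x$, which is precisely the point where the fact that $X$ is merely an operator system (so that norms and spectra must be computed in $A_X$) genuinely enters. The remaining steps are bookkeeping with a finite $\ep$-net and the observation that states annihilate the scalars.
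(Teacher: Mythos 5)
Your proof is correct. There is nothing in the paper to compare it against: the paper does not prove this statement but imports it verbatim from Rieffel, citing \cite[Theorem 1.8]{Rie:MSA}. What you have written is essentially Rieffel's original argument transplanted from order unit spaces to operator systems, and the two genuinely non-formal points are handled properly: the identity $\|[x]\|_{X/\cc}=\tfrac12\big(\max_{\mu}\mu(x)-\min_{\mu}\mu(x)\big)$ for selfadjoint $x$ (which uses that states of $X$ are exactly restrictions of states of $A_X$ and that the numerical range of a selfadjoint element is the convex hull of its spectrum, so that both quantities equal half the spectral width), and the reduction to the selfadjoint part via $L(\T{Re}(x)),L(\T{Im}(x))\leq L(x)$ together with the contraction $X_{\T{sa}}/\rr\to X/\cc$ --- the latter step is precisely the role played in the paper by the passage between $(X,L)$ and $(\T{Dom}(L)_{\T{sa}},L_{\T{sa}})$ in Proposition \ref{p:restrict}. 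The forward direction (Arzel\`a--Ascoli applied to the $1$-Lipschitz, uniformly bounded family $\hat{x}-\mu_0(x)\cd 1$, then pulled back through the isometry onto $X_{\T{sa}}/\rr$) and the converse (boundedness forces $\ker(L)=\cc\cd 1_X$ and finiteness of $d_L$, a finite $\ep$-net plus the fact that states kill scalars gives weak$^*$-to-$d_L$ continuity, and the scaling estimate gives the reverse) are both complete. The only point left implicit is that $d_L$ separates points, which is needed if one invokes ``continuous bijection from a compact space onto a Hausdorff space''; but this follows from your own scaling estimate together with $\ker(L)=\cc\cd 1_X$ and density of $\T{Dom}(L)$ (alternatively, your two net-convergence implications already show the topologies coincide, making the compactness argument redundant), so this is a presentational remark rather than a gap.
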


\begin{remark}\label{rem:totallybdd}
We recall that a subset of a metric space is said to be \emph{totally bounded} if it can be covered by a finite number of $\ep$-balls for any $\ep>0$. Moreover, if the ambient metric space is complete (as it is the case for $X/\cc$), then a subset is totally bounded if and only if it has compact closure. 
We moreover notice that if $(X,L)$ is a compact quantum metric space, then the intersection $\ov{\B B}_1^{\|\cd \|}(0) \cap \ov{\B B}_1^L(0)$ is totally bounded as a subset of $X$. This follows by applying the isomorphism of Banach spaces $X \to X/\cc \op \cc$ given by $x \mapsto ([x],\mu(x))$, where $\mu \colon X \to \cc$ is a fixed state. 
\end{remark}

Let us now explain the relationship between the above operator system approach to compact quantum metric spaces and Rieffel's approach developed in the context of order unit spaces.  Consider a norm-dense real subspace $V \su X_{\T{sa}}$ satisfying that $1_X \in V$ and let $L^0 \colon V \to [0,\infty)$ be a seminorm with $L^0(1_X) = 0$. We call such a seminorm $L^0$ for an \emph{order unit Lipschitz seminorm}. This data also gives rise to a Monge-Kantorovi\v{c} metric on the state space $\C S(X)$ by putting
\[
d_{L^0}(\mu,\nu) := \sup\big\{ | \mu(x) - \nu(x) | \mid x \in V \, , \, \, L^0(x) \leq 1 \big\} .
\]

\begin{dfn}[Rieffel]\label{d:rieCQMS}
The pair  $(V,L^0)$ is an \emph{order unit compact quantum metric space} when the Monge-Kantorovi\v{c} metric $d_{L^0}$ metrises the weak$^*$ topology on the state space $\C S(X)$.
\end{dfn}

We now wish to relate the two concepts of compact quantum metric spaces given in Definition \ref{d:CQMS} and Definition \ref{d:rieCQMS}. 
To every Lipschitz seminorm $L \colon X \to [0,\infty]$ on the operator system $X$ we associate an order unit Lipschitz seminorm $L_{\T{sa}} \colon \T{Dom}(L)_{\T{sa}} \to [0,\infty)$ by restricting $L$ to the selfadjoint part of the domain $\T{Dom}(L)_{\T{sa}} := X_{\T{sa}} \cap \T{Dom}(L)$. Conversely, to every  order unit Lipschitz seminorm $L^0 \colon V \to [0,\infty)$, we associate a  Lipschitz seminorm $L^0_{\T{os}} \colon X \to  [0,\infty]$ by defining
\[
L^0_{\T{os}}(x) := \fork{cc}{
\sup_{\te \in [0,2 \pi]}L^0\big( \cos(\te)\T{Re}(x) + \sin(\te)\T{Im}(x) \big) & {\T{when }} \T{Re}(x), \T{Im}(x) \in \T{Dom}(L^0) \\
\infty & \T{otherwise}
} .
\]
We record the formula $(L^0_{\T{os}})_{\T{sa}} = L^0$. The relationship between the two notions of compact quantum metric spaces can now be made precise.
%

\begin{prop}\label{p:restrict}
If $L \colon X \to [0,\infty]$ is a Lipschitz seminorm, then we have the identity $d_L = d_{L_{\T{sa}}}$ for the associated Monge-Kantorovi\v{c} metrics on $\C S(X)$. Hence, if $(X,L)$ is a compact quantum metric space, then $(\T{Dom}(L)_{\T{sa}}, L_{\T{sa}})$ is an order unit compact quantum metric space.  Conversely, if $L^0 \colon V \to [0,\infty)$ is an order unit Lipschitz seminorm, then we have the identity $d_{L^0} = d_{L^0_{\T{os}}}$. Hence if $(V,L^0)$ is an order unit compact quantum metric space, then $(X,L^0_{\T{os}})$ is a compact quantum metric space.
\end{prop}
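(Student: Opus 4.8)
The plan is to reduce the whole statement to the two Monge-Kantorovi\v{c} identities $d_L = d_{L_{\T{sa}}}$ and $d_{L^0} = d_{L^0_{\T{os}}}$. Once these are in hand the four ``compact quantum metric space'' assertions are immediate, because Definition \ref{d:CQMS} and Definition \ref{d:rieCQMS} both test whether a metric on the \emph{same} set $\C S(X)$ metrises the \emph{same} (weak$^*$) topology; so ``$(X,L)$ is a compact quantum metric space'' reads verbatim as ``$d_{L_{\T{sa}}}$ metrises the weak$^*$ topology on $\C S(X)$'', i.e.\ as ``$\big(\T{Dom}(L)_{\T{sa}}, L_{\T{sa}}\big)$ is an order unit compact quantum metric space'', and symmetrically on the other side. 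In both identities the inequality obtained by shrinking the supremum's feasible set is trivial, so the real content is the reverse inequality, which I would prove each time by a rotation trick using that states are real-valued on self-adjoint elements.

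For $d_L \leq d_{L_{\T{sa}}}$: fix $\mu,\nu \in \C S(X)$ and $x \in \T{Dom}(L)$ with $L(x) \leq 1$, choose $\te$ with $e^{i\te}\big(\mu(x) - \nu(x)\big) = |\mu(x) - \nu(x)|$, and put $y := \T{Re}(e^{i\te}x) \in X_{\T{sa}}$; the seminorm property together with adjoint-invariance $L(z^*) = L(z)$ gives $L(y) \leq \tfrac12\big(L(e^{i\te}x) + L(e^{-i\te}x^*)\big) = L(x) \leq 1$, while $\mu(y) - \nu(y) = \T{Re}\big(e^{i\te}(\mu(x) - \nu(x))\big) = |\mu(x) - \nu(x)|$ since $\mu,\nu$ are real on $X_{\T{sa}}$; taking suprema proves the identity. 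I would also note that $L_{\T{sa}}$ is a bona fide order unit Lipschitz seminorm, the only non-formal point being that $\T{Dom}(L)_{\T{sa}} = X_{\T{sa}} \cap \T{Dom}(L)$ is norm-dense in $X_{\T{sa}}$, which follows by applying $x \mapsto \T{Re}(x)$ to a norm-approximating sequence, using that $\T{Dom}(L)$ is a norm-dense $*$-invariant subspace.

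For the converse I would first record that $L^0_{\T{os}}$ is a Lipschitz seminorm. Homogeneity under a complex scalar $\la$ and subadditivity follow by reindexing the supremum over the circle once one observes that the family $\te \mapsto \cos\te\,\T{Re}(x) + \sin\te\,\T{Im}(x)$ is merely \emph{rotated}, not enlarged, when $x$ is replaced by $\la x$ — this is precisely what the supremum over $\te$ is designed to absorb; the domain $\T{Dom}(L^0_{\T{os}}) = V + iV$ is norm-dense because $V$ is norm-dense in $X_{\T{sa}}$; and $L^0_{\T{os}}(1_X) = L^0(1_X) = 0$ together with $L^0_{\T{os}}(x^*) = L^0_{\T{os}}(x)$ (substitute $\te \mapsto -\te$) completes the verification. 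Now $d_{L^0} \leq d_{L^0_{\T{os}}}$ holds because every $x \in V$ is self-adjoint, so $L^0_{\T{os}}(x) = L^0(x)$ by the recorded formula $(L^0_{\T{os}})_{\T{sa}} = L^0$, whence the feasible set defining $d_{L^0}$ lies inside the one defining $d_{L^0_{\T{os}}}$. For the reverse, given $x \in X$ with $L^0_{\T{os}}(x) \leq 1$ — so in particular $\T{Re}(x), \T{Im}(x) \in V$ — put $a := \mu(\T{Re}(x)) - \nu(\T{Re}(x))$ and $b := \mu(\T{Im}(x)) - \nu(\T{Im}(x))$, both real, choose $\te$ with $(\cos\te, \sin\te)$ pointing along $(a,b)$, and set $y := \cos\te\,\T{Re}(x) + \sin\te\,\T{Im}(x) \in V$; then $L^0(y) \leq L^0_{\T{os}}(x) \leq 1$ while $\mu(y) - \nu(y) = \cos\te\, a + \sin\te\, b = \sqrt{a^2 + b^2} = |\mu(x) - \nu(x)|$, so $d_{L^0_{\T{os}}} \leq d_{L^0}$; the ``Hence'' then follows as described above.

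I do not anticipate a serious obstacle — the whole argument is bookkeeping around two elementary rotations. The two spots that need genuine care are that the adjoint-invariance of $L$ is exactly what makes the real-part operation non-expansive in the first identity, and that the somewhat ad hoc formula defining $L^0_{\T{os}}$ must actually be checked to be a seminorm with dense domain; the supremum over $\te$ is precisely the device that keeps homogeneity alive under \emph{complex}, not merely real, scalars.
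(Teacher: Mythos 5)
Your proof is correct and follows essentially the same route as the paper: the non-trivial inequality $d_L \leq d_{L_{\T{sa}}}$ is obtained by exactly the same rotation/real-part trick (using adjoint-invariance of $L$ to get $L(\T{Re}(\la x)) \leq L(x)$), and the converse rests on the identity $(L^0_{\T{os}})_{\T{sa}} = L^0$. The only (harmless) difference is that the paper deduces $d_{L^0} = d_{L^0_{\T{os}}}$ by applying the first identity to the Lipschitz seminorm $L^0_{\T{os}}$ itself, whereas you rerun the rotation argument directly and in addition verify that $L^0_{\T{os}}$ is indeed a Lipschitz seminorm, a fact the paper records without proof just before the proposition.
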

\begin{proof}
Let $L\colon X\to [0,\infty]$ be a Lipschitz seminorm. It clearly holds that $d_{L_{\T{sa}}} \leq d_L$. Let now $\mu,\nu \in \C S(X)$ and consider an element $\xi \in X$ with $L(\xi) \leq 1$. Choose a $\la \in S^1$ such that $\la \cd ( \mu(\xi) - \nu(\xi) ) \in \rr$. Since $L(\T{Re}(\la \cd \xi)) \leq L(\xi) $, we obtain that $\T{Re}(\la \cd \xi) \in \T{Dom}(L)_{\T{sa}}$ and $L_{\T{sa}}\big(\T{Re}(\la \cd \xi)\big) \leq 1$. We may thus estimate as follows:
\[
| \mu(\xi) - \nu(\xi) | = | \mu(\la \cd \xi) - \nu(\la \cd \xi) | = | \mu(\T{Re}(\la \cd \xi)) - \nu(\T{Re}(\la \cd \xi)) |  \leq d_{L_{\T{sa}}}(\mu,\nu) . 
\]
This shows that $d_L \leq d_{L_{\T{sa}}}$ and we may conclude that $d_L = d_{L_{\T{sa}}}$. 
Conversely, suppose that $L^0 \colon V \to [0,\infty)$ is an order unit Lipschitz seminorm. Recall that $(L^0_{\T{os}})_{\T{sa}} = L^0$ and hence $d_{L^0} = d_{L^0_{\T{os}}}$ by the first part of the proposition.
\end{proof}

The following result provides a technical condition for verifying when a pair $(X,L)$ is a compact quantum metric space. The essence of the result is that if $(X,L)$ can be suitably approximated by compact quantum metric spaces, then $(X,L)$ must also be a compact quantum metric space; see Corollary \ref{cor:approx-cor} for the precise statement. In the present text we shall apply this theorem to provide quantum $SU(2)$ with the structure of a compact quantum metric space. 

\begin{theorem}\label{t:cqmsapprox}
Let $L \colon X \to [0,\infty]$ be a Lipschitz seminorm. Suppose that for every $\ep > 0$ there exist an operator system $X_\ep$ equipped with a seminorm $L_\ep \colon X_\ep \to [0,\infty]$ and linear maps $\Phi_\ep \colon X \to X_\ep$ and $\Psi_\ep \colon X_\ep \to X$ such that
\begin{enumerate}
\item The kernel of $L_\ep$ is closed in operator norm and the subset
\[
\big[ \, \ov{\B B}_1^{L_\ep}(0)  \big] \su X_\ep / \ker(L_\ep)
\]
is totally bounded with respect to the quotient operator norm on $X_\ep / \ker(L_\ep)$;
\item We have the inclusion $\Psi_\ep( \ker(L_\ep)) \su \cc$;
\item $\Phi_\ep$ is bounded for the seminorms and $\Psi_\ep$ is bounded for the operator norms; 
\item The inequality $\| \Psi_\ep \Phi_\ep(x) - x \| \leq \ep \cd L(x)$ holds for all $x \in X$.
\end{enumerate}
Then $(X,L)$ is a compact quantum metric space.
\end{theorem}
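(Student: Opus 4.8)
The plan is to verify Rieffel's criterion, Theorem \ref{thm:rieffels-criterion}: it suffices to show that the image $\big[\,\ov{\B B}_1^L(0)\big]$ of the unit $L$-ball is totally bounded in the quotient $X/\cc$ for the quotient norm $\|\cd\|_{X/\cc}$. The idea is that an arbitrary $x$ with $L(x)\leq 1$ is approximated, uniformly in $x$ and to within $\ep$, by $\Psi_\ep\Phi_\ep(x)$, whose class in $X/\cc$ ranges over a totally bounded set; since $\ep$ is arbitrary this then gives the claim.

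First I would record two quantitative consequences of the hypotheses. By condition (3) there are constants $C_\ep\geq 1$ and $M_\ep\geq 0$ with $L_\ep(\Phi_\ep(x))\leq C_\ep L(x)$ for all $x\in X$ and $\|\Psi_\ep(\xi)\|\leq M_\ep\|\xi\|$ for all $\xi\in X_\ep$; in particular $\Phi_\ep$ maps $\ov{\B B}_1^L(0)$ into $\ov{\B B}_{C_\ep}^{L_\ep}(0)$. By condition (2) together with the closedness of $\ker(L_\ep)$ from condition (1), the map $\Psi_\ep$ descends to a well-defined linear map $\ov\Psi_\ep\colon X_\ep/\ker(L_\ep)\to X/\cc$; and since $[\Psi_\ep(\xi)]=[\Psi_\ep(\xi-k)]$ in $X/\cc$ with $\|[\Psi_\ep(\xi-k)]\|_{X/\cc}\leq M_\ep\|\xi-k\|$ for every $k\in\ker(L_\ep)$, taking the infimum over such $k$ shows that $\ov\Psi_\ep$ is bounded.

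Next I would transport the total boundedness. By condition (1) the set $\big[\,\ov{\B B}_1^{L_\ep}(0)\big]$ is totally bounded in $X_\ep/\ker(L_\ep)$, hence so is its dilation $\big[\,\ov{\B B}_{C_\ep}^{L_\ep}(0)\big]=C_\ep\cd\big[\,\ov{\B B}_1^{L_\ep}(0)\big]$, and therefore its image $\ov\Psi_\ep\big(\big[\,\ov{\B B}_{C_\ep}^{L_\ep}(0)\big]\big)$ under the bounded linear map $\ov\Psi_\ep$ is totally bounded in $X/\cc$. For every $x$ with $L(x)\leq 1$ we have $[\Phi_\ep(x)]\in\big[\,\ov{\B B}_{C_\ep}^{L_\ep}(0)\big]$, so $[\Psi_\ep\Phi_\ep(x)]=\ov\Psi_\ep\big([\Phi_\ep(x)]\big)$ lies in this totally bounded set. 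Choose finitely many $z_1,\dots,z_n\in X$ such that for each such $x$ there is an index $i$ with $\|[\Psi_\ep\Phi_\ep(x)]-[z_i]\|_{X/\cc}<\ep$. Finally, condition (4) gives $\|[x]-[\Psi_\ep\Phi_\ep(x)]\|_{X/\cc}\leq\|x-\Psi_\ep\Phi_\ep(x)\|\leq\ep\cd L(x)\leq\ep$, whence $\|[x]-[z_i]\|_{X/\cc}<2\ep$. Thus the finitely many open $2\ep$-balls around $[z_1],\dots,[z_n]$ cover $\big[\,\ov{\B B}_1^L(0)\big]$; as $\ep>0$ was arbitrary, $\big[\,\ov{\B B}_1^L(0)\big]$ is totally bounded, and Theorem \ref{thm:rieffels-criterion} completes the proof.

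At heart this is a three-$\ep$ argument, and I expect no serious obstacle. The only point demanding real care is the bookkeeping with the two different quotient spaces: condition (2) is exactly what is needed for $\Psi_\ep$ to descend to a bounded map $X_\ep/\ker(L_\ep)\to X/\cc$, and one must not forget to absorb the dilation constant $C_\ep$ from condition (3) when pushing the totally bounded set $\big[\,\ov{\B B}_1^{L_\ep}(0)\big]$ through $\Phi_\ep$ and $\ov\Psi_\ep$. It is also worth noting that neither completeness of $X_\ep$ nor any positivity of $\Phi_\ep,\Psi_\ep$ is used, only the four listed conditions.
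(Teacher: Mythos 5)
Your proof is correct and follows essentially the same route as the paper: verify Rieffel's criterion by pushing the unit $L$-ball through $\Phi_\ep$, transporting total boundedness via the descended map on $X_\ep/\ker(L_\ep)$ (condition (2) giving the factorisation through the quotient), and closing with the approximation estimate from condition (4). The only cosmetic difference is that the paper rescales by taking $\ep' = \ep/2$ to end with $\ep$-balls, whereas you end with $2\ep$-balls, which is immaterial.
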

Before embarking on the proof, it is worth emphasising  that the maps $\Phi_\ep$ and $\Psi_\ep$ are not required to be unital, and indeed this additional flexibility will be of importance when applying the criterion to prove Theorem \ref{t:fingenproCQMS} below.
\begin{proof}
By Theorem \ref{thm:rieffels-criterion}, it suffices to show that the subset $\big[\, \ov{\B B}_1^L(0) \big] \su X/\cc$ is totally bounded. Let $\ep > 0$ be given. Put $\ep' := \ep/2$ and choose a constant $C > 0$ such that $L_{\ep'}(\Phi_{\ep'}(x)) \leq C \cd L(x)$ and $\| \Psi_{\ep'}(y) \| \leq C \cd \| y \|$ for all $x \in X$ and all $y \in X_{\ep'}$. Since $\cc \su \ker(L)$ the first inequality implies that $\Phi_{\ep'}( \cc ) \su \ker(L_{\ep'})$ and we therefore have well-defined linear maps $[\Phi_{\ep'}] \colon X/\cc \to X_{\ep'}/\ker(L_{\ep'})$ and $[ \Psi_{\ep'} ] \colon X_{\ep'}/\ker(L_{\ep'}) \to X/\cc$ at the level of quotient spaces. 
We record that $[\Phi_{\ep'}]\big[\, \ov{\B B}_1^L(0) \big] \su \big[ \, \ov{\B B}_C^{L_{\ep'}}(0) \big]$. Using that the subset $\big[ \ov{\B B}_C^{L_{\ep'}}(0) \big] \su X_{\ep'} / \ker(L_{\ep'})$ is totally bounded, we may put $\de := \ep'/ C = \ep/(2C)$ and choose finitely many elements $y_1,y_2,\ldots,y_n \in X_{\ep'}$ such that
\[
[\Phi_{\ep'}]\big[ \, \ov{\B B}_1^L(0) \big] \su \bigcup_{j = 1}^n \B B_{\de}^{\|\cdot\|_{X_{\ep'}/\ker(L_{\ep'})}}\big( [y_j]\big) .
\]
We now claim that $\big[\, \ov{\B B}_1^L(0)\big] \su \bigcup_{j = 1}^n \B B_{\ep}^{\|\cdot\|_{X/\cc}}\big( [\Psi_{\ep'}(y_j)] \big)$. Indeed, for every $x \in \ov{\B B}_1^L(0)$ we may choose $j_0 \in \{1,2,\ldots,n\}$ such that $\big\| [\Phi_{\ep'}(x)] - [y_{j_0}] \big\|_{X_{\ep'}/\ker(L_{\ep'})} < \de$. Recalling that $C \cd \de = \ep' = \ep/2$ we then obtain the following inequalities:
\[
\begin{split}
\big\| [x] - [\Psi_{\ep'}(y_{j_0})] \big\|_{X/\cc} 
& \leq \big\| [x - \Psi_{\ep'} \Phi_{\ep'}(x)] \big\|_{X/\cc} 
+ \big\| [ \Psi_{\ep'} \Phi_{\ep'}(x) - \Psi_{\ep'}(y_{j_0}) ] \big\|_{X/\cc} \\
& \leq \ep' \cd L(x) + C \cd \big\| [ \Phi_{\ep'}(x) - y_{j_0}] \big\|_{X_{\ep'}/\ker(L_{\ep'})}\\
&< \ep' + C \cd \de = \ep .
\end{split}
\]
 This shows that $[x] \in \B B_{\ep}^{\|\cdot\|_{X/\cc}}\big( [\Psi_{\ep'}(y_{j_0})] \big)$ and the theorem is therefore proved.
\end{proof}

It is useful to spell out the following particular case of the above theorem.

\begin{cor}\label{cor:approx-cor}
Let $L \colon X \to [0,\infty]$ be a Lipschitz seminorm. Suppose that for every $\ep > 0$ there exist a compact quantum metric space $(X_\ep,L_\ep)$ and unital linear maps $\Phi_\ep \colon X \to X_\ep$ and $\Psi_\ep \colon X_\ep \to X$ such that
\begin{enumerate}
\item $\Phi_\ep$ is bounded for the Lipschitz seminorms and $\Psi_\ep$ is bounded for the operator norms; 
\item The inequality $\| \Psi_\ep \Phi_\ep(x) - x \| \leq \ep \cd L(x)$ holds for all $x \in X$.
\end{enumerate}
Then $(X,L)$ is a compact quantum metric space.
\end{cor}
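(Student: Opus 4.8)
The plan is to derive Corollary \ref{cor:approx-cor} directly from Theorem \ref{t:cqmsapprox} by verifying that the hypotheses of the latter are implied by the hypotheses of the former. So suppose we are given, for each $\ep > 0$, a compact quantum metric space $(X_\ep, L_\ep)$ together with unital linear maps $\Phi_\ep \colon X \to X_\ep$ and $\Psi_\ep \colon X_\ep \to X$ satisfying conditions (1) and (2) of the corollary. I claim these data satisfy conditions (1)--(4) of Theorem \ref{t:cqmsapprox}.

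First I would check condition (1) of the theorem. Since $(X_\ep, L_\ep)$ is a compact quantum metric space, $L_\ep$ is in particular a Lipschitz seminorm (Definition \ref{d:lipschitzsem}), so its kernel contains the scalars $\cc \cd 1_{X_\ep}$. The standard fact here is that for a \emph{Lip-norm} the kernel is exactly $\cc \cd 1_{X_\ep}$ — otherwise, by \cite[Lemma 2.2]{KK:DCQ}, the Monge-Kantorovi\v{c} metric $d_{L_\ep}$ would take the value $\infty$ on some pair of states, contradicting that it metrises the (compact) weak$^*$ topology. Hence $\ker(L_\ep) = \cc \cd 1_{X_\ep}$, which is certainly closed in operator norm, and the quotient $X_\ep/\ker(L_\ep) = X_\ep/\cc$ is precisely the space appearing in Rieffel's criterion, Theorem \ref{thm:rieffels-criterion}. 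Applying that criterion to the compact quantum metric space $(X_\ep, L_\ep)$ gives that $\big[\, \ov{\B B}_1^{L_\ep}(0)\big] \su X_\ep/\cc$ is totally bounded for the quotient operator norm, which is exactly condition (1) of Theorem \ref{t:cqmsapprox}. Condition (2) of the theorem is immediate: $\ker(L_\ep) = \cc \cd 1_{X_\ep}$ and $\Psi_\ep$ is unital, so $\Psi_\ep(\ker(L_\ep)) = \cc \cd \Psi_\ep(1_{X_\ep}) = \cc \cd 1_X = \cc$, giving the required inclusion (in fact equality). Condition (3) of the theorem is just a restatement of condition (1) of the corollary, and condition (4) of the theorem is condition (2) of the corollary verbatim.

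Having checked all four hypotheses, Theorem \ref{t:cqmsapprox} applies and yields that $(X,L)$ is a compact quantum metric space, which is the desired conclusion.

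This argument is entirely routine; there is no real obstacle. The only point requiring a moment's care is the identification $\ker(L_\ep) = \cc \cd 1_{X_\ep}$, which is needed to pass from "kernel closed, quotient by kernel totally bounded" in the theorem to the statement of Rieffel's criterion and to make condition (2) go through via unitality of $\Psi_\ep$; as noted, this follows from \cite[Lemma 2.2]{KK:DCQ} together with the defining property of a Lip-norm. Everything else is a direct transcription of hypotheses, so the proof is short.
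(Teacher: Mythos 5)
Your proposal is correct and is exactly the argument the paper intends: the corollary is presented as a special case of Theorem \ref{t:cqmsapprox}, and your verification of its four hypotheses — including the key identification $\ker(L_\ep) = \cc \cd 1_{X_\ep}$ via \cite[Lemma 2.2]{KK:DCQ} and Rieffel's criterion (Theorem \ref{thm:rieffels-criterion}) — fills in the routine details the paper leaves implicit.
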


\subsection{Quantum Gromov-Hausdorff distance}
We now review the notion of quantum Gromov-Hausdorff distance between two compact quantum metric spaces $(X,L)$ and $(Y,K)$. We are in this text applying  Rieffel's original notion of quantum Gromov-Hausdorff distance as introduced in \cite{ Rie:GHD}, although we are  paraphrasing the main definitions in order to deal with operator systems instead of order unit spaces. We would, however, like to emphasise the large body of work due to Latr\'emoli\`ere regarding quantised distance concepts in a $C^*$-algebraic context; see \cite{Lat:AQQ, Lat:BLD, Lat:DGH, Lat:QGH}. It could, in particular, be interesting to investigate whether our main continuity result for quantum $SU(2)$ (Theorem \ref{introthm:qgh-continuity}) remains valid for Latr\'emoli\`ere's notion of quantum Gromov-Hausdorff propinquity as well.

\begin{dfn}\label{d:admissible}
A Lipschitz seminorm $M \colon X \op Y \to [0,\infty]$ is said to be \emph{admissible} when the pair $(X \op Y,M)$ is a compact quantum metric space, $\T{Dom}(M) = \T{Dom}(L) \op \T{Dom}(K)$ and the quotient seminorms induced by $M_{\T{sa}}$ via the coordinate projections
$\T{Dom}(M)_{\T{sa}} \to \T{Dom}(L)_{\T{sa}}$ and $\T{Dom}(M)_{\T{sa}} \to \T{Dom}(K)_{\T{sa}}$ agree with $L_{\T{sa}}$ and $K_{\T{sa}}$, respectively. 
\end{dfn}

Whenever $M \colon X \op Y \to [0,\infty]$ is an admissible Lipschitz seminorm it follows that the coordinate projections $X \op Y \to X$ and $X \op Y \to Y$ induce isometries $\C S(X) \to \C S(X \op Y)$ and $\C S(Y) \to \C S(X \op Y)$, where the state spaces involved are equipped with the Monge-Kantorovi\v{c} metrics coming from the relevant Lip-norms. In particular, we may measure the Hausdorff distance between the state spaces $\C S(X)$ and $\C S(Y)$ with respect to the Monge-Kantorovi\v{c} metric $d_M$ on the state space $\C S(X \op Y)$. Denoting this quantity by
\[
\T{dist}_{\T H}^{d_{M}}(\C S(X),\C S(Y)) \in [0,\infty) 
\]
the \emph{quantum Gromov-Hausdorff distance} between $(X,L)$ and $(Y,K)$ is defined as the infimum over all these Hausdorff distances:
\[
\T{dist}_{\T Q}((X,L);(Y,K)) := \inf\big\{ \T{dist}_{\T H}^{d_M}(\C S(X), \C S(Y)) \mid M \colon X \op Y \to [0,\infty] \T{ admissible} \big\} .
\]
In the following lemma we apply the notation $\T{dist}_{\T Q}\big( (\T{Dom}(L)_{\T{sa}}, L_{\T{sa}}) ; (\T{Dom}(K)_{\T{sa}}, K_{\T{sa}}) \big)$ for the quantum Gromov-Hausdorff distance between the order unit compact quantum metric spaces $(\T{Dom}(L)_{\T{sa}}, L_{\T{sa}})$ and $(\T{Dom}(K)_{\T{sa}}, K_{\T{sa}})$. This notion of order unit quantum Gromov-Hausdorff distance was introduced by Rieffel in \cite{Rie:GHD}, and is defined via the obvious order unit space analogue of admissible seminorms; see  \cite[Definition 4.2]{Rie:GHD}.

\begin{lemma}\label{lem:Rie-qGH-equals-our-qGH}
We have the identity
\[
\T{dist}_{\T{Q}}((X,L);(Y,K)) 
= \T{dist}_{\T{Q}}\big((\T{Dom}(L)_{\T{sa}}, L_{\T{sa}}) ; (\T{Dom}(K)_{\T{sa}}, K_{\T{sa}})\big) .
\]
\end{lemma}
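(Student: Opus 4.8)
The plan is to establish the identity by showing that each admissible Lipschitz seminorm $M \colon X \op Y \to [0,\infty]$ restricts to an admissible order unit seminorm $M_{\T{sa}}$ on $\T{Dom}(M)_{\T{sa}} = \T{Dom}(L)_{\T{sa}} \op \T{Dom}(K)_{\T{sa}}$ realising the \emph{same} Hausdorff distance between the state spaces, and conversely that every admissible order unit seminorm $M^0$ on $\T{Dom}(L)_{\T{sa}} \op \T{Dom}(K)_{\T{sa}}$ arises (up to the same Hausdorff distance) from an admissible Lipschitz seminorm on $X \op Y$ via the operator system extension construction $M^0 \mapsto M^0_{\T{os}}$ introduced before Proposition \ref{p:restrict}. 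Taking infima over the two matching families of admissible seminorms then yields the claimed equality of the two quantum Gromov-Hausdorff distances.

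The first inequality $\leq$ is the easier direction. Given an admissible $M$ on $X \op Y$, set $M^0 := M_{\T{sa}}$. By Proposition \ref{p:restrict} we have $d_M = d_{M_{\T{sa}}}$ on $\C S(X \op Y)$, and since the restriction of states gives an affine homeomorphism $\C S(X \op Y) \to \C S((X\op Y)_{\T{sa}})$ which matches the isometric embeddings of $\C S(X)$ and $\C S(Y)$ (again using $d_L = d_{L_{\T{sa}}}$ and $d_K = d_{K_{\T{sa}}}$), the Hausdorff distance $\T{dist}_{\T H}^{d_M}(\C S(X),\C S(Y))$ is literally unchanged when we pass to the order unit picture. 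One has to check that $M_{\T{sa}}$ is admissible in Rieffel's order unit sense: its domain splits correctly by hypothesis on $M$, the induced quotient seminorms are $L_{\T{sa}}$ and $K_{\T{sa}}$ by the admissibility of $M$, and $(\T{Dom}(M)_{\T{sa}}, M_{\T{sa}})$ is an order unit compact quantum metric space by the first half of Proposition \ref{p:restrict}. This gives $\T{dist}_{\T Q}((X,L);(Y,K)) \geq \T{dist}_{\T Q}\big((\T{Dom}(L)_{\T{sa}}, L_{\T{sa}}); (\T{Dom}(K)_{\T{sa}}, K_{\T{sa}})\big)$.

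For the reverse inequality, start from an admissible order unit seminorm $M^0 \colon \T{Dom}(L)_{\T{sa}} \op \T{Dom}(K)_{\T{sa}} \to [0,\infty)$ and form $M := M^0_{\T{os}}$ on $X \op Y$. By Proposition \ref{p:restrict}, $(X\op Y, M^0_{\T{os}})$ is a compact quantum metric space with $d_{M^0_{\T{os}}} = d_{M^0}$, so again the Hausdorff distances coincide. The point requiring genuine care is verifying that $M^0_{\T{os}}$ is admissible as a Lipschitz seminorm on $X \op Y$ in the sense of Definition \ref{d:admissible}: one must show $\T{Dom}(M^0_{\T{os}}) = \T{Dom}(L^0_{\T{os}}) \op \T{Dom}(K^0_{\T{os}}) = \T{Dom}(L) \op \T{Dom}(K)$ (which follows by unwinding the definition of the $(\cdot)_{\T{os}}$ construction coordinatewise and using that the real and imaginary parts of an element of $X\op Y$ are the pairs of real and imaginary parts), and that the quotient seminorms induced by $(M^0_{\T{os}})_{\T{sa}} = M^0$ under the coordinate projections agree with $L_{\T{sa}}$ and $K_{\T{sa}}$ — but this last fact is exactly the order unit admissibility of $M^0$ together with the identity $(L^0_{\T{os}})_{\T{sa}} = L^0$. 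This is the step I expect to be the main obstacle, as it amounts to carefully matching the operator system quotient construction with Rieffel's order unit quotient construction; once it is in place, taking the infimum over all admissible $M^0$ delivers $\T{dist}_{\T Q}((X,L);(Y,K)) \leq \T{dist}_{\T Q}\big((\T{Dom}(L)_{\T{sa}}, L_{\T{sa}}); (\T{Dom}(K)_{\T{sa}}, K_{\T{sa}})\big)$, completing the proof.
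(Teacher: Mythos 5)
Your proposal is correct and follows essentially the same route as the paper's proof: restrict an admissible $M$ to $M_{\T{sa}}$ in one direction, extend an admissible $M^0$ to $M^0_{\T{os}}$ in the other, check admissibility on each side (using $(M^0_{\T{os}})_{\T{sa}} = M^0$ and the splitting of domains), and conclude via Proposition \ref{p:restrict} that the Monge-Kantorovi\v{c} metrics, and hence the Hausdorff distances between the embedded state spaces, are unchanged. The step you flag as delicate is precisely the one the paper also identifies and handles by the same domain/quotient-seminorm bookkeeping, so no gap remains.
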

\begin{proof}
Suppose that $M \colon X \op Y \to [0,\infty]$ is admissible. By Proposition \ref{p:restrict} we then know that $\big( \T{Dom}(L)_{\T{sa}} \op \T{Dom}(K)_{\T{sa}}, M_{\T{sa}})$ is an order unit compact quantum metric space.  It moreover follows immediately from Definition \ref{d:admissible} that $M_{\T{sa}}$ is admissible in the order unit sense of Rieffel; see \cite[Section 4]{Rie:GHD}.
%
Conversely, suppose that $M^0 \colon \T{Dom}(L)_{\T{sa}} \op \T{Dom}(K)_{\T{sa}} \to [0,\infty)$ is admissible in the order unit sense. By Proposition \ref{p:restrict}, we then know that $\big( X \op Y , M^0_{\T{os}})$ is a compact quantum metric space. We record that $\T{Dom}(M^0_{\T{os}}) = \T{Dom}(L) \op \T{Dom}(K)$ and since $(M^0_{\T{os}})_{\T{sa}} = M^0$ we obtain that $M^0_{\T{os}}$ is admissible in the sense of Definition \ref{d:admissible}.
The claimed identity between quantum Gromov-Hausdorff distances now follows from Proposition \ref{p:restrict}. 
\end{proof}


Since the quantum Gromov-Hausdorff distance $\T{dist}_{\T Q}((X,L);(Y,K))$  is nothing but Rieffel's original definition from \cite{Rie:GHD} applied to the associated order unit compact quantum metric spaces,  all the main results from  \cite{Rie:GHD}  may be imported verbatim. For the readers convenience, we summarise the key features of $\T{dist}_{\T Q}$ in the theorem below. However, before doing so we need to clarify the slightly subtle notion of isometry in the setting of compact quantum metric spaces.
Fix a compact quantum metric space $(X,L)$ and consider the associated order unit compact quantum metric space $(A,L_A)$ where $A:= \T{Dom}(L)_{\T{sa}}$ and $L_A:=L\vert_A$. We let $(A^c,L_A^c)$ denote the \emph{closed} compact quantum metric space associated to $(A,L_A)$; for more details on this construction see \cite[Section 6]{Rie:GHD} and \cite[Section 4]{Rie:MSS}.
If $(Y,K)$ is another compact quantum metric space with associated order unit compact quantum metric space $(B, K_B)$, then an \emph{isometry} between $X$ and $Y$ is an order unit isomorphism $\varphi\colon A^c \to B^c$ satisfying that $L_B^c\circ \varphi= L_A^c$. The state spaces of $(A^c, L_A^c)$ and $(X,L)$ are naturally identified, and by \cite[Corollary 6.4]{Rie:GHD} one has that the isometries from $(X,L)$ to $(Y,K)$ are in bijective correspondence with the affine isometric isomorphisms from $(\C S(Y), d_K)$ to $(\C S(X), d_L)$. 

\begin{theorem}[Rieffel] \label{thm:quantumdist}
The following hold:
\begin{enumerate}
\item The quantum Gromov-Hausdorff distance is symmetric and satisfies the triangle inequality.
\item The quantum Gromov-Hausdorff distance between two compact quantum metric spaces is zero if and only if there exists an isometry between them.
\item The set of isometry classes of compact quantum metric spaces is complete for the metric induced by $\T{dist}_{\T{Q}}$.
\end{enumerate}
\end{theorem}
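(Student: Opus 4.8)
The plan is to deduce everything from Rieffel's original theory of order unit compact quantum metric spaces, which has already been tied to the present operator system framework by Lemma \ref{lem:Rie-qGH-equals-our-qGH}. Write $(A,L_A)$ and $(B,K_B)$ for the order unit compact quantum metric spaces associated with $(X,L)$ and $(Y,K)$, so that $A = \T{Dom}(L)_{\T{sa}}$, $L_A = L_{\T{sa}}$, and similarly for $B$. By Lemma \ref{lem:Rie-qGH-equals-our-qGH} one has the identity $\T{dist}_{\T Q}((X,L);(Y,K)) = \T{dist}_{\T Q}((A,L_A);(B,K_B))$, the right hand side being Rieffel's order unit quantum Gromov-Hausdorff distance from \cite[Section 4]{Rie:GHD}. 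Part (1) is then immediate: symmetry of the order unit distance is clear from its definition (admissible seminorms on $A\op B$ and on $B\op A$ correspond by swapping coordinates), and the triangle inequality is one of the fundamental results of \cite[Section 4]{Rie:GHD}; transporting both statements along the above identity gives (1) for $\T{dist}_{\T Q}$ on complete operator systems.

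For part (2), recall that, by the definition given just before the theorem, an isometry between $(X,L)$ and $(Y,K)$ is precisely an order unit isomorphism $A^c \to B^c$ intertwining the Lip-norms $L_A^c$ and $K_B^c$ of the associated \emph{closed} order unit compact quantum metric spaces, i.e.\ exactly an isometry between $(A,L_A)$ and $(B,K_B)$ in Rieffel's sense. Since passing to the closure does not affect the quantum Gromov-Hausdorff distance \cite[Section 6]{Rie:GHD}, the equivalence ``$\T{dist}_{\T Q} = 0$ iff there is an isometry'' for operator systems follows from the corresponding order unit statement in \cite[Section 6]{Rie:GHD}, again via the identity of Lemma \ref{lem:Rie-qGH-equals-our-qGH}.

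For part (3), I would first observe that the assignment $(X,L)\mapsto (A^c,L_A^c)$ descends to a well-defined, distance-preserving (by Lemma \ref{lem:Rie-qGH-equals-our-qGH} together with the closure-invariance of the distance) and injective (by part (2)) map from isometry classes of compact quantum metric spaces to isometry classes of closed order unit compact quantum metric spaces. It is moreover essentially surjective: a complete order unit space $V$ can always be equipped with an operator system structure (for instance the minimal one), realising $V$ as $X_{\T{sa}}$ for a complete operator system $X$, and any order unit Lipschitz seminorm on $V$ extends to a Lipschitz seminorm on $X$ via the construction $L^0\mapsto L^0_{\T{os}}$ of Proposition \ref{p:restrict}, recovering $(V,L^0)$ up to closure. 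Hence a Cauchy sequence $(X_n,L_n)$ of compact quantum metric spaces produces, through $L_n\mapsto (L_n)_{\T{sa}}$, a Cauchy sequence of order unit compact quantum metric spaces, which by Rieffel's completeness theorem \cite[Section 12]{Rie:GHD} converges to some $(A,L_A)$; realising $(A,L_A)$ as the selfadjoint part of an operator system compact quantum metric space $(X,L)$ and invoking Lemma \ref{lem:Rie-qGH-equals-our-qGH} once more shows $(X_n,L_n)\to (X,L)$ for $\T{dist}_{\T Q}$.

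The substantive content here is entirely supplied by Lemma \ref{lem:Rie-qGH-equals-our-qGH} and the definition of isometry recalled above the theorem, so there is no genuinely new difficulty; the only point requiring care is the essential surjectivity used in part (3), namely that every complete (closed) order unit compact quantum metric space really does arise from a complete operator system — which holds because every order unit space admits at least one operator system structure, and the passage to $L^0_{\T{os}}$ then matches the order unit Lip-norm.
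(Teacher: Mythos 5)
Your proposal is correct and follows essentially the same route as the paper, which offers no separate proof but simply imports all three statements verbatim from Rieffel's order unit theory via the identification in Lemma \ref{lem:Rie-qGH-equals-our-qGH} and the definition of isometry through the associated closed order unit compact quantum metric spaces. The extra care you take in part (3) — realising the order unit limit as the selfadjoint part of a complete operator system (e.g.\ via the Kadison representation and the construction $L^0 \mapsto L^0_{\T{os}}$ of Proposition \ref{p:restrict}) — is a legitimate and correctly handled refinement of exactly this import.
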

 
The following result provides a convenient way to estimate the distance between two compact quantum metric spaces:

\begin{prop}\label{p:almostisom}
Let $(X,L)$ and $(X',L')$ be compact quantum metric spaces and suppose that $\Phi \colon X \to X'$ and $\Psi \colon X' \to X$ are two unital positive maps satisfying that
\begin{enumerate}
\item there exist $C, C' > 0$ such that
\[
L'( \Phi(x)) \leq C \cd L(x) \q \mbox{and} \q L(\Psi(y)) \leq C' \cd L'(y)
\]
for all $x \in X$ and $y \in X'$;
\item there exist $\ep, \ep' > 0$ such that
\[
\| \Psi \Phi(x) - x \| \leq \ep \cd L(x) \q \mbox{and} \q \| \Phi \Psi(y) - y \| \leq \ep' \cd L'(y)
\]
for all $x \in X$ and $y \in X'$.
\end{enumerate}
Then the quantum Gromov-Hausdorff distance $\T{dist}_{\T{Q}}\big( (X,L) ; (X',L') \big)$ is dominated by
\[
\max\big\{ \T{diam}(X,L) \cd |1 - 1/C | + \ep/ C , \T{diam}(X',L') \cd |1 - 1/C'| + \ep'/C' \big\} .
\]
\end{prop}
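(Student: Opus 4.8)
The plan is to exhibit a single \emph{admissible} Lipschitz seminorm $M$ on the operator system $X\op X'$ for which the Hausdorff distance $\T{dist}_{\T H}^{d_M}\big(\C S(X),\C S(X')\big)$ does not exceed
\[
\de:=\max\big\{\T{diam}(X,L)\cd|1-1/C|+\ep/C,\ \T{diam}(X',L')\cd|1-1/C'|+\ep'/C'\big\};
\]
by the definition of $\T{dist}_{\T Q}$ this proves the proposition. (If $\de=0$ one runs the argument with $\de$ replaced by an arbitrary $\eta>0$ and lets $\eta\to 0$.) By Lemma \ref{lem:Rie-qGH-equals-our-qGH} we may work throughout with selfadjoint elements, so the unital positive maps $\Phi,\Psi$ become norm contractions. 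The bridge I would use is the two--sided seminorm
\[
M(x,x'):=\max\Big\{L(x),\ L'(x'),\ \tfrac1\de\|\Phi(x)-x'\|,\ \tfrac1\de\|x-\Psi(x')\|\Big\}.
\]
Both norm terms are kept on purpose: the unit ball of $M$ then controls $\|\Phi(x)-x'\|$ and $\|x-\Psi(x')\|$ simultaneously, and this is what will produce the clean bound $\de$ --- rather than $\de$ plus a correction --- in both directions of the Hausdorff distance. The one auxiliary fact needed throughout is the \emph{diameter estimate} $\|z-\om(z)1\|\le\T{diam}(Z,N)\cd N(z)$ for a state $\om$ on a compact quantum metric space $(Z,N)$ and $z\in\T{Dom}(N)$, which is immediate from $\|z-\om(z)1\|=\sup_{\om'}|\om'(z)-\om(z)|$ and the definition of $d_N$.

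The routine structural facts come first: $M$ is a Lipschitz seminorm, $\T{Dom}(M)=\T{Dom}(L)\op\T{Dom}(L')$, and $\ker(M)=\cc\cd 1$, all using that $\Phi,\Psi$ are unital and adjoint--preserving. That $(X\op X',M)$ is a compact quantum metric space I would deduce from Rieffel's criterion (Theorem \ref{thm:rieffels-criterion}): for $(x,x')\in\ov{\B B}_1^M(0)$, translating by a scalar multiple of $1_{X\op X'}$ so that $\mu_0(x)=0$ for a fixed state $\mu_0\in\C S(X)$, the diameter estimate gives $\|x\|\le\T{diam}(X,L)$, placing $x$ in the totally bounded set $\ov{\B B}_{\T{diam}(X,L)}^{\|\cd\|}(0)\cap\ov{\B B}_1^L(0)$ of Remark \ref{rem:totallybdd}; then $\|\Phi(x)-x'\|\le\de$ together with $L'(x')\le1$ confines $x'$ to a totally bounded subset of $X'$, so the image of $\ov{\B B}_1^M(0)$ in $(X\op X')/\cc$ is totally bounded.

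The core is checking \emph{admissibility}: that the two coordinate projections carry $M_{\T{sa}}$ onto $L_{\T{sa}}$ and $L'_{\T{sa}}$. The inequality ``$\ge$'' is trivial since $M$ dominates $L$ and $L'$; for ``$\le$'' on the projection onto $X$, given selfadjoint $x\in\T{Dom}(L)$ I would test the infimum over the second coordinate with the \emph{shrunk lift}
\[
x':=\tfrac1C\,\Phi(x)+\big(1-\tfrac1C\big)\,\mu_0(x)\cd 1_{X'}.
\]
Hypothesis (1) gives $L'(x')=\tfrac1C L'(\Phi(x))\le L(x)$; since $\mu_0(x)1_{X'}=\Phi\big(\mu_0(x)1_X\big)$ and $\Phi$ is a contraction, the diameter estimate gives $\|\Phi(x)-x'\|=|1-\tfrac1C|\,\|\Phi(x-\mu_0(x)1)\|\le|1-\tfrac1C|\,\T{diam}(X,L)\,L(x)$; and $x-\Psi(x')=\tfrac1C(x-\Psi\Phi(x))+(1-\tfrac1C)(x-\mu_0(x)1)$ together with hypothesis (2) gives $\|x-\Psi(x')\|\le\big(\ep/C+|1-\tfrac1C|\T{diam}(X,L)\big)L(x)$. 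As $\de$ dominates every coefficient occurring here, $M(x,x')\le L(x)$; the mirror-image lift $x:=\tfrac1{C'}\Psi(x')+(1-\tfrac1{C'})\nu_0'(x')1_X$ handles the projection onto $X'$. \textbf{The subtle point is exactly the centering in the shrunk lift}: one must subtract the scalar $\mu_0(x)1$ pulled back from the \emph{source} space and exploit that $\Phi$ is a norm contraction --- centering instead around $\mu_0'(\Phi(x))1$ produces the coefficient $|1-1/C|\,C\,\T{diam}(X',L')$, which $\de$ does not dominate.

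Finally, for the Hausdorff estimate: given $\mu\in\C S(X)$, pair it with $\mu\ci\Psi\in\C S(X')$; for any $(x,x')$ with $M(x,x')\le1$ one has $\|x-\Psi(x')\|\le\de$, whence $|\mu(x)-(\mu\ci\Psi)(x')|=|\mu(x-\Psi(x'))|\le\de$. Symmetrically, $\nu'\in\C S(X')$ pairs with $\nu'\ci\Phi\in\C S(X)$ and $|\nu'(x')-(\nu'\ci\Phi)(x)|=|\nu'(x'-\Phi(x))|\le\|\Phi(x)-x'\|\le\de$. Hence $\T{dist}_{\T H}^{d_M}\big(\C S(X),\C S(X')\big)\le\de$, finishing the proof. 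The genuine obstacle is the admissibility verification --- choosing the two--sided bridge and, especially, centering the shrunk lifts correctly; the Hausdorff estimate and the structural checks are then straightforward.
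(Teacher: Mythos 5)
Your proposal is correct and follows essentially the same route as the paper: the same bridge seminorm $\max\{L(x),L'(x'),\tfrac1\de\|\Phi(x)-x'\|,\tfrac1\de\|x-\Psi(x')\|\}$, the same centred shrunk lift $\tfrac1C\Phi(x)+(1-\tfrac1C)\mu_0(x)1_{X'}$ for admissibility, and the same pairings $\mu\mapsto\mu\circ\Psi$, $\nu'\mapsto\nu'\circ\Phi$ for the Hausdorff estimate. The only cosmetic difference is your verification that the bridge yields a compact quantum metric space (translation plus the diameter estimate, versus the paper's Banach-space isomorphism $[(x,y)]\mapsto(\mu(x)-\nu(y),[x],[y])$), which is equivalent.
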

\begin{proof}
To ease the notation, we put 
\[
r := \max\big\{ \T{diam}(X,L) \cd |1 - 1/C | + \ep/ C , \T{diam}(X',L') \cd |1 - 1/C'| + \ep'/C' \big\},
\]
and define a Lipschitz seminorm $K \colon X \op X' \to [0,\infty] $ by
\[
K(x,y) := 
\max\big\{ L(x), L'(y), \tfrac{1}{r} \| y - \Phi(x) \| , \tfrac{1}{r} \| x - \Psi(y) \| \big\} .
\]
Since both $(X,L)$ and $(X',L')$ are compact quantum metric spaces, we get that $K$ turns $X \op X'$ into a compact quantum metric space. Indeed, fix a state $\mu \in \C S(X)$ and put $\nu:=\mu \circ \Psi$. The fact that the image of $\ov{\B B}_1^K(0)$ becomes totally bounded in $(X \op X')/\cc$ then follows since the map 
\[
(X\oplus X')/\cc \ni [(x,y)] \longmapsto (\mu(x)-\nu(y), [x], [y] )\in \cc \oplus X/\cc \oplus X'/\cc
\] 
is an isomorphism of Banach spaces. We now show that $K$ is admissible. Clearly, $\T{Dom}(K) = \T{Dom}(L) \op \T{Dom}(L')$. Let thus $x \in \T{Dom}(L)_{\T{sa}}$ be given and let $\mu \colon X \to \cc$ be a state. Put $z = x - \mu(x)1_X$ and define the element $y := \frac{1}{C} \Phi( z) + \mu(x)1_{X'} \in  \T{Dom}(L')_{\T{sa}}$. We then obtain the estimates:\\

\begin{itemize}
\item $L'( y ) \leq \frac{1}{C} L'\big( \Phi( z ) \big) \leq L(x)$;\\
\item    $ \frac{1}{r} \| y - \Phi(x) \|  
= \frac{1}{r} \| \frac{1}{C} \Phi(z) - \Phi(z ) \| 
\leq \frac{| 1-1/C|}{r} \| z  \| 
 \leq \frac{| 1-1/C| \cd \T{diam}(X,L)}{r}  \cd L(x) \leq L(x) $;\\
 
 \item $ \frac{1}{r} \| x - \Psi(y) \| 
\leq \frac{1}{r} \| z - \frac{1}{C} \Psi \Phi(z ) \| 
 \leq \| z \| \cd \frac{ |1-1/C| }{r} 
+ \frac{1}{r} \cd \frac{1}{C} \cd \| z - \Psi \Phi(z) \|$ \\
\item[] \hspace{2.25cm} $\leq  \frac{ |1-1/C| \cd \T{diam}(X,L)}{r} \cd L(x)
+ \frac{1}{r} \cd \frac{\ep}{C} \cd L(x) \leq L(x)$.\\
\end{itemize}
This shows that $K( x, y ) \leq L(x)$. Similarly, we obtain that
\[
K\Big( \frac{1}{C'}  \Psi( x' - \nu(x') 1_{X'}) + \nu(x') 1_X, x'\Big) \leq L'(x')
\]
whenever $\nu \colon X' \to \cc$ is a state and $x' \in \T{Dom}(L')_{\T{sa}}$. We conclude that $K$ is an admissible seminorm. 
Finally, given $\mu \in \C S(X)$ it holds that $\nu:=\mu\circ \Psi\in \C S(X')$ and $d_{K}(\mu,\nu)\leq r$. By symmetry, we obtain from this that
\[
\T{dist}_{\T Q}\big( (X, L); (X', L') \big) 
\leq \T{dist}_{\T H}^{d_K}\big( \C S(X), \C S(X') \big) \leq r
\]
and this ends the proof of the present proposition.
\end{proof}

We spell out the following useful consequence of the above proposition. 


 \begin{cor}\label{cor:subspacegen}
Let $(X,L)$ be a compact quantum metric space and let $Y\su X$ be a sub-operator system such that $\T{Dom}(L)\cap Y$ is norm-dense in $Y$. Suppose there exist a constant $D \geq 0$ and an $\ep > 0$ as well as a unital positive map $\Phi \colon X \to Y$ such that $L( \Phi(x)) \leq (1 + D) \cd L(x)$ and $\| x - \Phi(x) \| \leq \ep \cd L(x)$ for all $x \in X$. Then $(Y, L)$ is a compact quantum metric space and we have the estimate 
\[
\T{dist}_{\T{Q}}\big((X,L); (Y,L) \big)\leq \T{diam}(X,L) \cd \frac{D}{1+D} + \ep .
\]
In particular, if $\Phi$ is a Lip-norm  contraction then $\T{dist}_{\T{Q}}\big((X,L); (Y,L) \big)\leq \ep$.
\end{cor}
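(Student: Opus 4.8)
The plan is to deduce Corollary \ref{cor:subspacegen} directly from Proposition \ref{p:almostisom} by choosing the second map to be the inclusion $\iota \colon Y \hookrightarrow X$ and the first map to be the given $\Phi \colon X \to Y$. First I would observe that both maps are unital and positive: $\Phi$ by hypothesis, and $\iota$ trivially, since $Y$ is a sub-operator system of $X$. Before invoking Proposition \ref{p:almostisom} I must know that $(Y,L)$ — more precisely $(Y, L|_Y)$ — is itself a compact quantum metric space; this is the point where the density hypothesis $\T{Dom}(L)\cap Y$ norm-dense in $Y$ enters, together with the existence of $\Phi$. Indeed, $L|_Y$ is a Lipschitz seminorm on $Y$ (the density assumption gives condition (1) of Definition \ref{d:lipschitzsem}, and conditions (2)–(3) are inherited from $L$), and applying Corollary \ref{cor:approx-cor} with the single approximating space $(X,L)$, the map $\Phi$ and the inclusion $\iota$ — noting $L(\Phi(x)) \leq (1+D)L(x)$ and $\|\iota\Phi(x) - x\| = \|\Phi(x) - x\| \leq \ep L(x)$ for $x \in Y$ (here $L(x) = L|_Y(x)$) — shows $(Y, L|_Y)$ is a compact quantum metric space.

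Next I would feed the two maps into Proposition \ref{p:almostisom} with the roles $(X,L) \rightsquigarrow (X,L)$ and $(X',L') \rightsquigarrow (Y, L|_Y)$, $\Phi \rightsquigarrow \Phi$, $\Psi \rightsquigarrow \iota$. For hypothesis (1) of that proposition we take $C := 1 + D$, since $L(\Phi(x)) \leq (1+D) L(x)$, and $C' := 1$, since $L(\iota(y)) = L(y) = L'(y)$ for $y \in Y$. For hypothesis (2) we take $\ep := \ep$, using $\|\iota\Phi(x) - x\| = \|\Phi(x) - x\| \leq \ep L(x)$, and $\ep' := 0$ (or any positive number, taking the infimum afterwards), since $\Phi\iota(y) = \Phi(y)$ and we may apply $\|\Phi(y) - y\| \leq \ep L(y)$ — but in fact, since $\Phi\iota - \mathrm{id}_Y$ need not vanish, the cleanest choice is to note $\|\Phi\iota(y) - y\| = \|\Phi(y) - y\| \leq \ep L(y) = \ep L'(y)$, so $\ep' := \ep$ works as well; either way the second coordinate of the bound will be controlled. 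Proposition \ref{p:almostisom} then yields
\[
\T{dist}_{\T Q}\big((X,L);(Y,L)\big) \leq \max\Big\{ \T{diam}(X,L)\cdot\big|1 - \tfrac{1}{1+D}\big| + \tfrac{\ep}{1+D},\ \T{diam}(Y,L)\cdot 0 + \tfrac{\ep'}{1}\Big\}.
\]
Since $|1 - 1/(1+D)| = D/(1+D) \leq 1$ and $1/(1+D) \leq 1$, the first entry is at most $\T{diam}(X,L)\cdot\frac{D}{1+D} + \ep$; with the sharper choice $\ep' \to 0$ — or simply absorbing, since $\T{diam}(Y,L) \leq \T{diam}(X,L) < \infty$ so the second entry is $0$ when $\ep' = 0$ — the claimed estimate $\T{dist}_{\T Q}\big((X,L);(Y,L)\big) \leq \T{diam}(X,L)\cdot\frac{D}{1+D} + \ep$ follows. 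For the final sentence, if $\Phi$ is a Lip-norm contraction we may take $D = 0$, whence $D/(1+D) = 0$ and the bound collapses to $\ep$.

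The only genuine subtlety — and the step I would treat most carefully — is the justification that $(Y, L|_Y)$ is a compact quantum metric space, since Proposition \ref{p:almostisom} presupposes this for both of its arguments; everything downstream is bookkeeping with the constants. One should double-check that the density hypothesis is exactly what makes $L|_Y$ densely defined on $Y$, and that Corollary \ref{cor:approx-cor} (or equivalently the totally-bounded criterion of Theorem \ref{thm:rieffels-criterion} applied via $\Phi$) legitimately applies with a \emph{single} approximating space rather than a net — which it does, since the defining inequality $\|\iota\Phi(x) - x\| \leq \ep L(x)$ with a \emph{fixed} $\ep$ already forces total boundedness of $[\ov{\B B}_1^{L|_Y}(0)]$ in $Y/\cc$ by pushing the totally bounded set $[\ov{\B B}_{1+D}^{L}(0)]$ forward under $[\Phi]$ and perturbing by $\ep$. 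I would also remark, as the paper does, that no unitality or contractivity beyond what is stated is needed, which is precisely why this corollary is a convenient tool for the applications to the spectral bands in quantum $SU(2)$.
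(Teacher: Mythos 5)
Your overall route is the same as the paper's: feed the given map $\Phi$ and the inclusion $\io\colon Y\to X$ into Proposition \ref{p:almostisom} with $C=1+D$, $C'=1$ and $\ep'=\ep$, and then dominate the resulting maximum by $\T{diam}(X,L)\cd\frac{D}{1+D}+\ep$. That bookkeeping is fine (note only that your aside about taking $\ep'=0$ is unjustified, since $\Phi\circ\io$ is not the identity on $Y$; but this is harmless because $\ep'=\ep$ already makes the second entry of the maximum equal to $\ep$, which is dominated by the claimed bound), and the final remark about Lip-norm contractions corresponding to $D=0$ is correct.

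The genuine gap is in the step you yourself single out as the delicate one: the verification that $(Y,L\vert_Y)$ is a compact quantum metric space. Corollary \ref{cor:approx-cor} requires approximating data for \emph{every} $\ep>0$, whereas you only have one fixed $\ep$, so its hypotheses are simply not met; and your proposed repair --- that $\big[\,\ov{\B B}_1^{L\vert_Y}(0)\big]$ lies within $\ep$ of the totally bounded set $[\Phi]\big(\big[\,\ov{\B B}_1^{L}(0)\big]\big)$ and is therefore totally bounded --- is false as a general principle: containment in the $\ep$-neighbourhood of a totally bounded set for a \emph{fixed} $\ep$ only produces finite covers by balls of radius larger than $\ep$, not total boundedness (the closed ball of radius $\ep$ in an infinite-dimensional Banach space sits inside the $\ep$-neighbourhood of the single point $0$). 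The correct argument, which is what the paper's one-line appeal to Theorem \ref{thm:rieffels-criterion} rests on, needs no map $\Phi$ at all: since $1_X\in Y$ and $Y$ carries the restricted norm, the canonical map $Y/\cc\to X/\cc$ is isometric, and $\ov{\B B}_1^{L\vert_Y}(0)\su\ov{\B B}_1^{L}(0)$, so $\big[\,\ov{\B B}_1^{L\vert_Y}(0)\big]$ is a subset of the totally bounded set $\big[\,\ov{\B B}_1^{L}(0)\big]$ and hence totally bounded in $Y/\cc$; combined with the density hypothesis (which makes $L\vert_Y$ a Lipschitz seminorm on $Y$), Rieffel's criterion gives that $(Y,L)$ is a compact quantum metric space. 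With that step replaced, your argument coincides with the paper's proof.
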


\begin{proof}
That $(Y,L)$ is a compact quantum metric space follows from Rieffel's criterion in Theorem \ref{thm:rieffels-criterion}. 
We apply Proposition \ref{p:almostisom} to the unital positive map $\Phi \colon X \to Y$ and the inclusion $\io \colon Y \to X$. We then obtain that
\[
\begin{split}
\T{dist}_{\T{Q}}\big((X,L); (Y,L) \big)
& \leq \max\big\{ \T{diam}(X,L) \cd \big| 1- 1/(1 + D) \big| + \ep/(1 + D), \ep \big\} \\
& \leq \T{diam}(X,L) \cd \frac{D}{1+D} + \ep . \qedhere 
\end{split}
\]
\end{proof}

\begin{remark}\label{rem:intermediate}
Under the assumptions in Corollary \ref{cor:subspacegen}, if $Z$ is an intermediate operator system (i.e.~$Y\subseteq Z\subseteq X$) such that $\T{Dom}(L)\cap Z$ is dense in $Z$, then $(Z,L)$ is a compact quantum metric space as well, and the same estimate on the  quantum Gromov-Hausdorff distance holds with $(Z,L)$ instead of $(Y,L)$. Indeed, one may simply enlarge the codomain of $\Phi$ from $Y$ to $Z$  and remark that the assumptions in Corollary \ref{cor:subspacegen} are still satisfied.
\end{remark}



\begin{cor}\label{cor:subgenmet}
Under the assumptions of Corollary \ref{cor:subspacegen} we have the estimate
\[
d_L(\mu,\nu) \leq \frac{D}{1+D} \cd \T{diam}(X,L) + 2 \ep + d_L(\mu\vert_Y,\nu\vert_Y)
\]
for all $\mu,\nu \in \C S(X)$.
\end{cor}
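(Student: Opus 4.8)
The plan is to estimate $d_L(\mu,\nu)$ directly from its defining supremum over $x \in X$ with $L(x) \leq 1$, pairing each such competitor with a carefully chosen element of $Y$.

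First I would reduce to a selfadjoint competitor. Since $L$ is invariant under the adjoint and $\mu(a) - \nu(a) \in \rr$ whenever $a = a^*$, multiplying a given $x$ by a suitable unimodular scalar and passing to its real part shows that $d_L(\mu,\nu) = \sup\{ \mu(a) - \nu(a) \mid a = a^* \in X,\ L(a) \leq 1 \}$, so it suffices to bound $\mu(a) - \nu(a)$ for such $a$. Next I would ``centre'' $a$ by choosing $\lambda \in \rr$ with $\| a - \lambda 1_X \| \leq \tfrac12 \T{diam}(X,L)$. This is possible because, for selfadjoint $a$ with $L(a) \leq 1$, the quantity $\inf_{\lambda \in \rr}\| a - \lambda 1_X \|$ equals half the length of the spectrum of $a$ in $A_X$, and $\max \sigma(a) - \min \sigma(a) = \sup_{\rho,\rho' \in \C S(X)} (\rho(a) - \rho'(a)) \leq \T{diam}(X,L)$, the latter because the states of $X$ restrict from, and extend to, $A_X$. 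Putting $z := a - \lambda 1_X$, we then have $z = z^*$, $L(z) = L(a) \leq 1$ and $\|z\| \leq \tfrac12 \T{diam}(X,L)$.

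The heart of the matter is then to take
\[
y := \frac{1}{1+D}\, \Phi(z) + \lambda 1_X \ \in\ Y .
\]
Because $\Phi$ is unital with $L(\Phi(z)) \leq (1+D) L(z)$, we obtain $L(y) = \tfrac{1}{1+D} L(\Phi(z)) \leq 1$, and, using unitality of $\Phi$ once more, $a - y = z - \tfrac{1}{1+D}\Phi(z) = (z - \Phi(z)) + \tfrac{D}{1+D}\Phi(z)$. Now $\| z - \Phi(z) \| \leq \ep\, L(z) \leq \ep$ by hypothesis, while $\| \Phi(z) \| \leq \| z \| \leq \tfrac12\T{diam}(X,L)$ since $\Phi$ is unital, positive and $z$ is selfadjoint; hence $\| a - y \| \leq \ep + \tfrac{D}{2(1+D)}\T{diam}(X,L)$. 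Finally, writing
\[
\mu(a) - \nu(a) = \mu(a - y) + \big( \mu(y) - \nu(y) \big) + \nu(y - a),
\]
the two outer terms are each bounded by $\| a - y \|$ (states are contractive), while the middle one is bounded by $d_L(\mu\vert_Y, \nu\vert_Y)$ because $y \in Y$ and $L(y) \leq 1$. Summing and inserting the bound on $\| a - y \|$ yields $\mu(a) - \nu(a) \leq 2\ep + \tfrac{D}{1+D}\T{diam}(X,L) + d_L(\mu\vert_Y, \nu\vert_Y)$, and taking the supremum over $a$ proves the estimate.

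The only genuinely non-routine ingredient is the choice of the companion $y$. The naive choice $y = \Phi(a)$ fails, since $L(\Phi(a))$ may be as large as $1+D$ instead of being $\leq 1$; rescaling $\Phi(a)$ by $\tfrac{1}{1+D}$ repairs the Lip-seminorm but produces a norm defect of order $\tfrac{D}{1+D}\|\Phi(a)\|$, and the delicate point is that this defect is bounded by $\tfrac{D}{2(1+D)}\T{diam}(X,L)$ only after one replaces $a$ by the \emph{centred} element $z$ — which is precisely why the scalar $\lambda 1_X$ has to be fed back into the definition of $y$. The reduction to selfadjoint competitors, the spectral estimate for $\| a - \lambda 1_X \|$, and the three-term splitting are all routine.
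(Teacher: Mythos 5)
Your proof is correct and follows essentially the same route as the paper: reduce to a selfadjoint competitor, centre it by a scalar so that its norm is at most half the diameter, compare it with the rescaled element $\tfrac{1}{1+D}\Phi(\cdot)$ of $Y$, and split $\mu(a)-\nu(a)$ into three terms. The only differences are cosmetic — you reprove the bound $\inf_{\la\in\rr}\|a-\la 1_X\|\leq \tfrac12\T{diam}(X,L)$ (which the paper cites from Rieffel) and absorb the scalar $\la 1_X$ into the companion $y$ instead of taking an infimum over $\la$ at the end.
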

Here the quantity $d_L(\mu\vert_Y,\nu\vert_Y)$ is to be understood as the Monge-Kantorovi\v{c} metric on $\C S(Y)$ arising from the restriction of $L$, which indeed provides $Y$ with a quantum metric structure  by Corollary \ref{cor:subspacegen} 

\begin{proof}
  Let $\mu,\nu \in \C S(X)$. By Proposition \ref{p:restrict} it suffices to show that
  \[
|\mu(x) - \nu(x) | \leq \frac{D}{1+D} \cd \T{diam}(X,L) + 2\ep + d_L(\mu\vert_Y,\nu\vert_Y)
\]
for all $x \in X_{\sa}$ with $L(x) \leq 1$. Let $x \in X_{\sa}$ with $L(x) \leq 1$ be given. By \cite[Proposition 2.2]{Rie:MSS} it holds that $\inf_{\la \in \rr} \|x - \la \cd 1_X\| \leq \T{diam}(X,L)/2$. Since $\Phi$ is unital and positive (and $x$ is selfadjoint), we then have that $\inf_{\la \in \rr} \|\Phi(x - \la \cd 1_X)\| \leq \T{diam}(X,L)/2$. We moreover notice that $\frac{1}{1+D} \Phi(x - \la \cd 1_X) \in Y$ and that the estimate $\frac{1}{1+D} L\big( \Phi(x - \la \cd 1_X) \big) \leq 1$ is satisfied for all $\la \in \rr$. For every $\la \in \rr$ we put $x_\la := x - \la \cd 1_X$ and compute as follows:
\[
\begin{split}
| \mu(x) - \nu(x)| & = \inf_{\la \in \rr}\big| \mu(x_\la) - \nu(x_\la) \big| \\
& \leq \inf_{\la \in \rr}\Big( \big| \mu(x_\la) - \frac{1}{1+D}\mu( \Phi(x_\la)) \big| + \frac{1}{1+D} \big| \mu( \Phi(x_\la)) - \nu( \Phi(x_\la)) \big| \\
& \ \ \  + \big| \frac{1}{1+D}\nu( \Phi(x_\la)) - \nu(x_\la) \big| \Big) \\
& \leq 2 \cd \| x - \Phi(x) \| + 2 \cd \inf_{\la \in \rr} \| \frac{D}{1+D} \Phi(x_\la) \|  + d_L( \mu\vert_Y,\nu\vert_Y) \\
& \leq 2 \ep + \frac{D}{1+D} \T{diam}(X,L) + d_L( \mu\vert_Y,\nu\vert_Y) . \qedhere
\end{split}
\]
\end{proof}

The first step in proving that $C(SU_q(2))$ is a compact quantum metric space, is to utilise that this is known to be the case for the $C^*$-subalgebra $C(S_q^2)$ (see \cite{AgKa:PSM}), and then bootstrap to certain finitely generated projective modules over $C(S_q^2)$. We therefore need to develop a bit of general theory to ensure that our finitely generated projective modules do indeed become compact quantum metric spaces, and we carry out this part of the program in the following section.

\subsection{Finitely generated projective modules}\label{ss:fingenproCQMS}
Let $A$ be a unital $C^*$-algebra, let $B \su A$ be a unital $C^*$-subalgebra and suppose that $E \colon A \to B$ is a conditional expectation.  Remark that $E$ is automatically unital and completely positive and the operator norm of $E$ is therefore equal to one. We moreover consider a complete operator system $X \su A$ such that $B \su X$ and  suppose in addition that the multiplication in $A$ induces a right $B$-module structure on $X$.  On top of this data we fix a Lipschitz seminorm $L \colon A \to [0,\infty]$, and suppose that the domain of $L$ is a unital $*$-subalgebra of $A$.
Our aim is now to impose conditions which ensure that $(X,L)$ is a compact quantum metric space. On the algebraic side we make the following:

\begin{assu}\label{a:fingenerated}
Let $n\in \nn_0$ and assume that there exist elements $v_j \in A$ {and $w_j \in X$} for $j = 0,1,\ldots,n$ with $v_0 = w_0 = 1_A$ such that
\[
\sum_{j = 0}^n w_j \cd E(v_j \cd x) = x \q \mbox{for all } x \in X .
\]
Assume, moreover,  that $E(v_j) = 0$ for all $j \in \{1,2,\ldots,n\}$.
\end{assu}

We define the $B$-linear maps 
\[
\begin{split}
\Phi & \colon X \to \bigoplus_{j = 0}^n B \q \T{ by } \q  \Phi(x) = \sum_{j = 0}^n e_j \cd E(v_j \cd x) \q \T{and} \\
\Psi & \colon \bigoplus_{j = 0}^n B \to X \q \T{ by } \q   \Psi \big( \sum_{j = 0}^n e_j \cd b_j \big) = \sum_{j = 0}^n w_j \cd b_j ,
\end{split}
\]
where $e_0,\dots, e_n$ denotes the standard basis in the free module $\op_{j=0}^n B $. It then follows from Assumption \ref{a:fingenerated} that $(\Psi \ci \Phi)(x) = x$ for all $x \in X$. {In particular, we obtain that $X$ is finitely generated projective as a right $B$-module.} Since $E(v_j) = 0$ for all $j \in \{1,2,\ldots,n\}$ we moreover get that
\begin{align}\label{eq:Phi-of-b-equation}
\Phi(b) = \sum_{j = 0}^n e_j \cd E(v_j \cd b) = \sum_{j = 0}^n e_j \cd E(v_j) \cd b = e_0 \cd b
\end{align}
for all $b \in B \su X$. 


\begin{assu}\label{a:fingensemi}
We impose the following extra conditions on our data:
\begin{enumerate}
\item The conditional expectation $E \colon A \to B$ is bounded for the seminorm $L \colon A \to [0,\infty]$;
\item The restriction $L \colon B \to [0,\infty]$ gives $B$ the structure of a compact quantum metric space;
\item There exists a constant $C_0 > 0$ such that $\| x - E(x) \| \leq C_0 \cd L(x)$ for all $x \in X$;
\item The elements $v_j$ and $w_j$ belong to $\T{Dom}(L)$ for all $j = 0,1,\ldots,n$;
\item For each $v\in \T{Dom(L)}$ the left-multiplication operator $m(v) \colon X \cap \ker(E) \to A$ is bounded with respect to the seminorm $L$.
\end{enumerate}
\end{assu}

A few remarks are in place. First of all, since $L \colon A \to [0,\infty]$ is a Lipschitz seminorm it follows from Assumption \ref{a:fingensemi} $(1)$ that $\T{Dom}(L) \cap B \su B$ is norm-dense and hence that
the restriction $L \colon B \to [0,\infty]$ is a Lipschitz seminorm.  Next, since $\Psi : \op_{j = 0}^n B \to X$ is surjective and $\T{Dom}(L)$ is an algebra, we obtain from Assumption \ref{a:fingensemi} $(4)$ that $X \cap \T{Dom}(L) \su X$ is norm-dense and hence that the restriction $L \colon X \to [0,\infty]$ is also a Lipschitz seminorm. As the following theorem shows, this restriction is actually a Lip-norm.


\begin{theorem}\label{t:fingenproCQMS}
Under Assumption \ref{a:fingenerated} and  \ref{a:fingensemi}, the restriction $L \colon X \to [0,\infty]$ provides $X$ with the structure of a compact quantum metric space.
\end{theorem}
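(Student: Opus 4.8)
The plan is to verify the four hypotheses of Corollary \ref{cor:approx-cor} (or rather the more flexible Theorem \ref{t:cqmsapprox}) using the maps $\Phi\colon X\to \bigoplus_{j=0}^n B$ and $\Psi\colon \bigoplus_{j=0}^n B\to X$ already constructed from Assumption \ref{a:fingenerated}. Note that these maps do not depend on $\ep$, so the approximation scheme is in fact a single "exact" splitting; the role of $\ep$ will be absorbed by a rescaling trick. First I would equip the target $X_\ep := \bigoplus_{j=0}^n B$ with a suitable Lipschitz seminorm. The natural candidate is something like $L_\ep(b_0,\dots,b_n) := \max\big\{ L(b_0), \tfrac1\ep \sum_{j=1}^n \|b_j\|\big\}$ or, more symmetrically, a weighted maximum of $L(b_0)$ and the norms $\|b_1\|,\dots,\|b_n\|$ scaled by $1/\ep$; the point is that on the "$0$-th coordinate" we see the Lip-norm $L$ on $B$, which by Assumption \ref{a:fingensemi}(2) makes $B$ a compact quantum metric space, while the remaining coordinates are controlled purely in operator norm. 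Then $\ker(L_\ep) = \ker(L|_B)\oplus 0\oplus\cdots\oplus 0 = \cc\cdot e_0$ (using that $\ker(L|_B)=\cc$, which follows since $(B,L|_B)$ is a compact quantum metric space), so $\ker(L_\ep)$ is closed in operator norm and $X_\ep/\ker(L_\ep) \cong (B/\cc)\oplus B^{\oplus n}$. The set $[\overline{\mathbb B}_1^{L_\ep}(0)]$ is contained in the product of $[\overline{\mathbb B}_1^{L|_B}(0)]\subseteq B/\cc$ (totally bounded by Rieffel's criterion, Theorem \ref{thm:rieffels-criterion}) with the closed balls of radius $\ep$ in each of the $n$ copies of $B$ — wait, those balls are not totally bounded since $B$ is infinite-dimensional. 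This is the main obstacle and I address it below. Condition (2) of Theorem \ref{t:cqmsapprox}, namely $\Psi(\ker L_\ep)\subseteq\cc$, holds immediately since $\Psi(e_0 b)=w_0 b = b$ and $w_0=1_A$, so $\Psi(\cc e_0)=\cc$.

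The resolution of the obstacle is that we must not use all of $\bigoplus B$ but only the image $\Phi(X)$, or better, we should exploit Assumption \ref{a:fingensemi}(5) to bound the off-diagonal seminorm data. Here is the corrected approach. For $x\in X$ with $L(x)\le 1$, write $x = E(x) + (x - E(x))$; by Assumption \ref{a:fingensemi}(3), $\|x-E(x)\|\le C_0 L(x)\le C_0$, and $x-E(x)\in X\cap\ker(E)$. By Assumption \ref{a:fingensemi}(5), each left-multiplication operator $m(v_j)$ on $X\cap\ker(E)$ is $L$-bounded, say $L(v_j(x-E(x)))\le C_j L(x-E(x))$; and since $E$ is $L$-bounded (Assumption \ref{a:fingensemi}(1)) with $L(E(x))\le C' L(x)$ — combined with $L$ being a Lipschitz seminorm on the algebra $\T{Dom}(L)$ so that $L(v_j\cdot E(x))\le L(v_j)\|E(x)\| + \|v_j\|L(E(x))$ — we get that $L(E(v_j x)) = L\big(E(v_j E(x)) + E(v_j(x-E(x)))\big)$ is controlled by a constant times $L(x)$ plus a constant times $\|x\|$. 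The $\|x\|$ term is the nuisance: it cannot be bounded by $L(x)$ alone. The standard fix (as in Remark \ref{rem:totallybdd}) is to pass to $X/\cc$ and use that $\inf_\la\|x-\la 1\|\le \T{diam}\cdot L(x)/2$ is finite once we know $(B,L|_B)$ is a compact quantum metric space and $\|x-E(x)\|\le C_0 L(x)$: indeed for $x\in X$, $\|x - \mu_B(E(x))1\| \le \|x - E(x)\| + \|E(x) - \mu_B(E(x))1\|$ where the second term is bounded by $\T{diam}(B,L|_B)\cdot L(E(x))/2 \le \T{const}\cdot L(x)$. So on the set $\{L(x)\le 1\}$, after subtracting an appropriate scalar, the operator norm is bounded; hence $L(E(v_j x))$ is bounded on $[\overline{\mathbb B}_1^L(0)]$. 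This shows $\Phi$ maps $\overline{\mathbb B}_1^L(0)$ into a set on which the $0$-th coordinate has bounded $L|_B$-seminorm and each other coordinate, modulo scalars, lies in a set of the form $\{E(v_j(x-E(x))) : L(x-E(x))\le C_0\}$ whose image in $B/\cc$ — here one needs the $j$-th coordinate to also be handled modulo scalars, which forces a slightly more careful choice of $X_\ep$ and $L_\ep$ incorporating $B/\cc$ in every slot, or equivalently working directly with the quotient.

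Given these estimates, I would then finish as follows. Set $L_\ep$ on $X_\ep=\bigoplus_{j=0}^n B$ by $L_\ep(b_0,\dots,b_n) := \max\big\{ L(b_0),\ L(b_1),\dots,L(b_n),\ \tfrac{1}{\ep}\|b_1 - E(\,\cdot\,)\|,\dots\big\}$ — actually cleaner: since we will ultimately only care about $[\,\overline{\mathbb B}_1^{L_\ep}(0)\,]$ in $X_\ep/\ker(L_\ep)$, take $L_\ep(b_0,\dots,b_n):=\max\{L(b_0), C^{-1}L(b_1),\dots,C^{-1}L(b_n)\}$ for a large constant $C$ — no; the genuinely correct construction, which I would write out, is to take $L_\ep$ to be the quotient/pullback seminorm induced on $\bigoplus B$ by $\Psi$ from $X$ together with the product of the Lip-norms on each factor, then condition (1) of Theorem \ref{t:cqmsapprox} follows because each $[\overline{\mathbb B}_1^{L|_B}(0)]\subseteq B/\cc$ is totally bounded (Rieffel's criterion applied to $(B,L|_B)$) and a finite product of totally bounded sets is totally bounded. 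Condition (3) — $\Phi$ bounded for the seminorms, $\Psi$ bounded for operator norms — follows from Assumptions \ref{a:fingensemi}(1),(4),(5) for $\Phi$ (each coordinate $x\mapsto E(v_j x)$ is a composite of $L$-bounded maps on $X\cap\ker(E)$ after the reduction above, and $\|w_j\|<\infty$ gives boundedness of $\Psi$ in operator norm since $\Psi(\sum e_j b_j)=\sum w_j b_j$). Condition (4), the approximation inequality $\|\Psi\Phi(x)-x\|\le\ep L(x)$, is where the factor $\ep$ must enter: since $\Psi\Phi(x)=x$ exactly, the inequality holds trivially with $\ep$ replaced by $0$, so in fact we are in the degenerate-but-allowed case and Theorem \ref{t:cqmsapprox} applies for every $\ep>0$ with these fixed maps, provided condition (1) holds — which it does by the total-boundedness argument. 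Therefore $(X,L)$ is a compact quantum metric space. The one genuinely delicate point to get right in the writeup is the reduction modulo scalars needed to convert the $\|x\|$-dependence in the bound $L(E(v_jx))\lesssim L(x)+\|x\|$ into a bound that is uniform on $\overline{\mathbb B}_1^L(0)$ after translating by scalars — this is exactly why Assumption \ref{a:fingensemi}(3) (the inequality $\|x-E(x)\|\le C_0 L(x)$) is included, and I expect that to be the crux of the argument.
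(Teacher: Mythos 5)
Your overall skeleton agrees with the paper's: apply Theorem \ref{t:cqmsapprox} with the fixed maps $\Phi$ and $\Psi$, observing that $\Psi\Phi=\T{id}_X$ makes condition $(4)$ trivial. But two genuine gaps remain, and both concern the points you yourself flag as unresolved. First, you never settle on a workable seminorm on $X_\ep=\bigoplus_{j=0}^n B$. Your first candidate (operator norms, scaled by $1/\ep$, in the coordinates $j\geq 1$) fails total boundedness, as you note; your final candidate (``the product of the Lip-norms on each factor'', possibly combined with a pullback along $\Psi$) fails condition $(2)$ of Theorem \ref{t:cqmsapprox}: its kernel contains every $e_j\cd \la$, while $\Psi(e_j)=w_j\notin\cc$ in general, and the norm-closedness of that kernel is also unclear. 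The choice that works is the combination you circle around but never write down: $K\big(\sum_{j=0}^n e_j\cd b_j\big):=\max\{L(b_0),L(b_1),\dots,L(b_n),\|b_1\|,\dots,\|b_n\|\}$. Then $\ker(K)=\cc\cd e_0$ (using that $\ker(L|_B)=\cc$ because $(B,L|_B)$ is a compact quantum metric space), so $\Psi(\ker K)=\cc\cd w_0=\cc$, and the image of the $K$-unit ball in $X_\ep/\ker(K)$ lies in $\big[\ov{\B B}_1^L(0)\big]\ti\big(\ov{\B B}_1^L(0)\cap\ov{\B B}_1^{\|\cd\|}(0)\big)\ti\cdots\ti\big(\ov{\B B}_1^L(0)\cap\ov{\B B}_1^{\|\cd\|}(0)\big)$, which is totally bounded by Theorem \ref{thm:rieffels-criterion} applied to $(B,L|_B)$ together with Remark \ref{rem:totallybdd}; this is exactly why both the seminorm and the operator norm must appear in the coordinates $j\geq 1$.

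Second, your verification that $\Phi$ is bounded for the seminorms invokes a Leibniz-type inequality $L(v_j\cd E(x))\leq L(v_j)\|E(x)\|+\|v_j\|L(E(x))$ which is not among the hypotheses: $L$ is an arbitrary Lipschitz seminorm here, and Assumption \ref{a:fingensemi}$(5)$ (boundedness of $m(v_j)$ only on $X\cap\ker(E)$) is precisely the abstract substitute for such a rule. The resulting $\|x\|$-dependence, and the scalar-translation/diameter repair you sketch, are unnecessary, because the troublesome term vanishes identically: $E$ is a conditional expectation onto $B$ and $E(v_j)=0$ for $j\geq 1$, so $E(v_j E(x))=E(v_j)\cd E(x)=0$, i.e.\ $\Phi(E(x))=e_0\cd E(x)$, which is \eqref{eq:Phi-of-b-equation}. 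Writing $\Phi(x)=\Phi(x-E(x))+e_0\cd E(x)$ and applying Assumptions \ref{a:fingensemi}$(1)$, $(3)$ and $(5)$ to $x-E(x)\in X\cap\ker(E)$ gives directly $L\big(E(v_j(x-E(x)))\big)\leq C_1 D_j(1+C_1)\cd L(x)$ and $\big\|E(v_j(x-E(x)))\big\|\leq\|v_j\|\cd C_0\cd L(x)$, together with $L(E(x))\leq C_1\cd L(x)$; hence $K(\Phi(x))\leq C\cd L(x)$ with no Leibniz rule and no passage modulo scalars. With these two repairs your argument coincides with the paper's proof.
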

\begin{proof}
We first record that the direct sum 
\[
Y := \op_{j = 0}^n B \cong C\big( \{0,1,2,\ldots,n\},B\big)
\]
becomes a unital $C^*$-algebra when equipped with the supremum norm. We are going to apply Theorem \ref{t:cqmsapprox} with $\Phi_\ep = \Phi \colon X \to Y$ and $\Psi_\ep = \Psi \colon Y \to X$ for all $\ep > 0$. Indeed, condition $(4)$ in Theorem \ref{t:cqmsapprox} is satisfied since $(\Psi \circ \Phi)(x) = x$ for all $x \in X$. 
Let us define the seminorm $K \colon Y \to [0,\infty]$ by
\[
 K\Big( \sum_{j = 0}^n e_j \cd b_j\Big) := 
 \max\big\{ L(b_0) , L(b_1),\ldots,L(b_n), \| b_1 \|, \ldots, \| b_n\| \big\} .
\]
Assumption \ref{a:fingensemi} $(2)$ then implies that the kernel of $K$ is given by the closed subspace 
$\ker(K) = \cc \cd e_0 \su Y$. Moreover, Theorem \ref{thm:rieffels-criterion} and Remark \ref{rem:totallybdd} together with Assumption \ref{a:fingensemi} $(2)$ shows that the subset $\big[ \ov{\B B}^K_1(0) \big] \su Y / \ker(K)$ is contained in
\[
[\mathbb{B}_1^L(0)] \times \big(\mathbb{B}_1^L(0)\cap \mathbb{B}_1^{\|\cdot\|}(0) \big) \times \cdots \times \big(\mathbb{B}_1^L(0)\cap \mathbb{B}_1^{\|\cdot\|}(0)\big) \su  B/\cc \op B^{\op n}
\]
and therefore totally bounded with respect to the quotient operator norm. We have thus verified condition $(1)$ in Theorem \ref{t:cqmsapprox}. Condition $(2)$ in Theorem \ref{t:cqmsapprox} follows immediately since $\Psi( e_0) = w_0$ and $w_0 = 1_A = 1_X$. It is moreover clear that $\Psi \colon Y \to X$ is bounded for the operator norms. In order to establish the remaining condition $(3)$ in Theorem \ref{t:cqmsapprox} we therefore only need to show that $\Phi \colon X \to Y$ is bounded for the seminorms involved.
By Assumption \ref{a:fingensemi} $(1)$ we may choose a constant $C_1 > 0$ such that $L(E(x)) \leq C_1 \cd L(x)$ for all $x \in A$. Moreover, by Assumption \ref{a:fingensemi} $(5)$ we may choose constants $D_j > 0$ for $j = 1,2,\ldots,n$ such that
\[
L(v_j \cd x) \leq D_j \cd L(x) \q \T{for all } x \in \ker(E) \cap X .
\]
Using that $\Phi(E(x))=  e_0\cdot E(x)$  (see \ref{eq:Phi-of-b-equation}) and that $v_0=1$, we then obtain that
\[
\begin{split}
K( \Phi(x) ) 
& \leq K\big( \Phi(x - E(x)) \big) + K( \Phi(E(x))) \\
& = K\Big( \sum_{j = 1}^n e_j \cd E\big( v_j \cd (x - E(x))\big) \Big) + L( E(x)) \\
& \leq K\Big( \sum_{j = 1}^n e_j \cd E\big( v_j \cd (x - E(x))\big) \Big) + C_1 \cd L(x)
\end{split}
\]
for all $x \in X$. Moreover, for each $j \in \{1,2,\ldots,n\}$ and $x\in X$ we obtain the inequality
\[
\| E\big( v_j \cd (x - E(x))\big) \| 
\leq \| v_j \| \cd \| x - E(x) \| \leq \| v_j \| \cd C_0 \cd L(x)
\]
together with the inequality
\[
\begin{split}
L\big( E\big( v_j \cd (x - E(x))\big) \big) 
& \leq C_1 \cd L\big( v_j \cd (x - E(x))\big)  \leq C_1 \cd D_j \cd L(x - E(x)) \\
& \leq C_1 \cd D_j \cd (1 + C_1) \cd L(x) .
\end{split}
\]
This shows that $\Phi \colon X \to Y$ is indeed bounded for the  seminorms involved and we have proved the theorem.
\end{proof}

\section{Preliminaries on quantum $SU(2)$} \label{s:quantumsu2}

The main object of study in the present text is the unital $C^*$-algebra $C(SU_q(2))$, known as \emph{quantum $SU(2)$},  introduced by Woronowicz in \cite{Wor:UAC}. There are numerous good sources describing this object, and in addition to the original texts by Woronowicz we refer the reader to the monographs  \cite{KlSc:QGR} and \cite{Timmermann-book} for general background information. Let $q \in (0,1]$. Aligning our notation with the papers  \cite{AgKa:PSM, AKK:Podcon, AKK:Polyapprox, DaSi:DSP, GKK:QI},  we define the $C^*$-algebraic version of quantum $SU(2)$ as the universal unital $C^*$-algebra $C(SU_q(2))$ with two generators $a$ and $b$ subject to the relations
\begin{align*}
 ba &= q ab \, \,\, \qquad b^* a = q a b^*  \, \qquad \, \, bb^* =b^* b \\
 1&=a^* a + q^2 bb^* \qquad \hspace{0.61cm} a a^* + bb^*= 1 .
\end{align*}

These relations are best justified by noting that they are equivalent to the requirement that
\[
u := \pma{a^* & - qb \\ b^* & a} \in \B M_2\big( C(SU_q(2))\big)
\]
is a unitary matrix, in the following  referred to as the \emph{fundamental unitary}.
Inside the unital $C^*$-algebra $C(SU_q(2))$ we have the \emph{coordinate algebra} $\C O(SU_q(2))$ defined as the  unital $*$-subalgebra generated by $a$ and $b$.  The set $\{\xi^{klm}\mid k\in \zz, l,m\in \nn_0\}$  with elements given by
\begin{align}\label{eq:standard-basis}
\xi^{klm}:= \begin{cases}
  a^kb^l(b^*)^m  &  k,l,m\geq 0  \\
  b^l(b^*)^m (a^*)^{-k}  &  k<0 ,l,m\geq 0  
\end{cases}
\end{align}
constitutes a linear basis for $\C O(SU_q(2))$; see \cite[Proposition 6.2.5]{Timmermann-book}. The coordinate algebra $\C O(SU_q(2))$ is in fact a Hopf $*$-algebra and the coproduct $\De$, the antipode $S$ and the counit $\epsilon$ are best described in terms of the fundamental unitary
by means of the formulae $\De(u) = u \ot u$, $S(u) = u^*$ and $\epsilon(u) =  \SmallMatrix{1 & 0 \\ 0 & 1}$. The coproduct $\De$ extends to a unital $*$-homomorphism $\De \colon C(SU_q(2)) \to C(SU_q(2)) \ot_{\T{min}} C(SU_q(2))$, which turns $C(SU_q(2))$ into a $C^*$-algebraic compact quantum group in the sense of Woronowicz; see \cite{wor:cpqgrps}.  For general $C^*$-algebraic compact quantum groups, it is not true that one can find a bounded counit, but since $C(SU_q(2))$ is known to be \emph{coamenable}, the counit $ \epsilon \colon \C O (SU_q(2))\to \cc$ actually does extend to a unital $*$-homomorphism $\epsilon \colon C(SU_q(2)) \to \cc$; see \cite{BMT:comamenability}. 

\subsection{The quantum enveloping algebra}\label{subsec:enveloping}
We are also interested in the \emph{quantum enveloping algebra} $\C U_q(\G{su}(2))$. For $q \in (0,1)$, this is defined (see \cite[Chapter 4]{KlSc:QGR}) as the universal unital  $\cc$-algebra with generators $e,f,k,k^{-1}$ subject to the relations
\begin{align}\label{eq:deformed-lie-alg-relations}
kk^{-1} = 1 = k^{-1} k \, , \quad \, \, ek = q ke \, , \quad \, \, kf = q fk \,  \ \ \ \T{and} \ \ \ \, \,   fe - ef = \frac{k^2 - k^{-2}}{q - q^{-1}} .
\end{align}
 The quantum enveloping algebra becomes a unital $*$-algebra for the adjoint operation determined by the formulae $k^* = k$ and $e^* = f$. For $q = 1$, the (quantum) enveloping algebra is defined as the universal unital algebra with generators $e,f,h$ satisfying the relations
\begin{align*}
[h,e] = -2e \, , \quad \, \, [h,f] = 2f \, \, \, \ \T{ and } \ \, \, \, \, [f,e] = h,
\end{align*}
with involution given by $h^* = h$ and $e^* = f$; i.e.~it agrees with the enveloping algebra of the Lie algebra $\mathfrak{su}(2)$ as one would expect. Note that we have chosen to follow the notation from \cite{DaSi:DSP}, and that the quantum enveloping algebra just defined is the one denoted $\E{\u{U}}_q(\T{sl}_2)$ in \cite{KlSc:QGR}.
The quantum enveloping algebra $\C U_q(\G{su}(2))$ is also a Hopf $*$-algebra. For $q \neq 1$, the comultiplication, antipode and counit are determined by the formulae
\begin{align*} 
 \De(e) &= e \ot k + k^{-1} \ot e &  S(e) &= -q^{-1}e &  \epsilon(e) &= 0 \notag  \\
 \De(f) &= f \ot k + k^{-1} \ot f &   S(f) &= - qf &  \epsilon(f) &= 0   \\
\De(k) &= k \ot k&   S(k) &= k^{-1} &  \epsilon(k) &= 1   \notag
\end{align*}
and for $q = 1$ by
\begin{align*}
 \De(e) &= e \ot 1 + 1 \ot e  &  S(e) &= - e  &  \epsilon(e) &= 0 \notag  \\
 \De(f) &= f \ot 1 + 1 \ot f&   S(f) &= - f  &  \epsilon(f) &= 0   \\
\De(h) &= h \ot 1 + 1 \ot h &   S(h) &= - h &  \epsilon(h) &= 0   \notag
\end{align*}
In order to unify our notation, it is convenient to put $k = 1$ in the case where $q = 1$.\\

The coordinate algebra $\C O(SU_q(2))$ and the quantum enveloping algebra $\C U_q(\G{su}(2))$ are related to one another by means of a non-degenerate dual pairing of Hopf $*$-algebras \cite[Chapter 4, Theorem 21]{KlSc:QGR}. For $q \neq 1$, this pairing can be described as follows:
\begin{align}\label{eq:generators-on-fundamental-unitary}
\inn{k,u} = \ma{cc}{q^{-1/2} & 0 \\ 0 & q^{1/2}} \, , \quad \, \, \inn{e,u} = \ma{cc}{0 & 1 \\ 0 & 0} \,  \ \ \text{ and } \ \  \, \, \inn{f,u} = \ma{cc}{0 & 0 \\ 1 & 0} 
\end{align}
and for $q = 1$  the same formulae apply together with the additional identity
\[
\inn{h,u} = \ma{cc}{-1 & 0 \\ 0 & 1} .
\]
The dual pairing yields a left action and a right action of $\C U_q(\G{su}(2))$ on $\C O(SU_q(2))$. These actions play a central role in the present text and for $\eta \in \C U_q(\G{su}(2))$ they are defined by the linear endomorphisms
\[
\pa_\eta := (1 \ot \inn{\eta,\cd}) \De  \q \T{and} \q \de_\eta := ( \inn{\eta, \cd} \ot 1) \De
\]
of $\C O(SU_q(2))$.  Thus, $\pa_\eta$ denotes the left action associated to $\eta$ whereas $\de_\eta$ denotes the corresponding right action.
Pairing the generators of $\C O(SU_q(2))$ and $\C U_q(\G{su}(2))$ one obtains the following explicit formulae for the endomorphisms coming from $e$ and $f$ (we are here only listing the non-zero values):
\begin{align}\label{eq:derexpI}
\pa_e(a) &= b^* & \pa_f(a^*) &= -q b  & \de_e(a^*) &= b^* & \de_f(a) &= -q  b \notag \\
\pa_e(b) &= -q^{-1} a^* & \pa_f(b^*) &= a  & \de_e(b) &= -q^{-1} a & \de_f(b^*) &= a^* .
\end{align}
The endomorphisms coming from $e$ and $f$ in $\C U_q(\G{su}(2))$ are related to one another via the adjoint operation, meaning that 
\begin{align}\label{eq:pae-and-paf-and-star}
\pa_e(x^*) = - q^{-1} \pa_f(x)^* \quad \T{and} \quad \de_e(x^*) = -q^{-1}\de_f(x)^*
\end{align}
for all $x \in \C O(SU_q(2))$. We furthermore record that $\pa_k$ and $\de_k$ are algebra automorphisms of $\C O(SU_q(2))$. The relationship between these automorphisms and the adjoint operation is given by $\pa_k(x^*) = \pa_k^{-1}(x)^*$ and $\de_k(x^*) = \de_k^{-1}(x)^*$ for all $x \in \C O(SU_q(2))$. The relevant formulae on generators are listed here:
\begin{align}\label{eq:derexpII}
\pa_k(a) &= q^{1/2} a &  \pa_k(b) &= q^{1/2} b &  \de_k(a) &= q^{1/2} a &  \de_k(b^*) &= q^{1/2}  b^* .
\end{align}
All these formulae may be derived directly from the defining relations for $\C O(SU_q(2))$ and $\C U_q(\G{su}(2))$ and the definition of a dual pairing of Hopf $*$-algebras \cite[Chapter 1, Definition 5 \& Equation (41)]{KlSc:QGR}. In the same way one sees that both $\pa_e$ and $\pa_f$ are twisted derivations, in the sense that 
\begin{align}\label{eq:twisted-leibnitz-for-pae-and-paf}
\pa_e(xy)&=\pa_e(x)\pa_k(\textcolor{black}{y})+\pa_{k^{-1}}(x)\pa_e(y)\notag\\
\pa_f(xy)&=\pa_f(x)\pa_k(\textcolor{black}{y})+\pa_{k^{-1}}(x)\pa_f(y)
\end{align}
for all $x,y\in \C O(SU_q(2))$. 


We shall encounter such twisted derivations numerous times in the sections to follow and we therefore formalise this notion in the following short section.

\subsection{Twisted derivations}

\begin{dfn}\label{def:twisted-derivation}
Let $A$ and $B$ be $C^*$-algebras and let $\si$ and $\te \colon \C A \to B$ be algebra homomorphisms defined on a dense $*$-subalgebra $\C A \su A$. We say that a linear map $d \colon \C A \to B$ is a \emph{twisted derivation} when $d(x \cd y) = d(x) \cd \te(y) + \si(x) \cd d(y)$ for all $x,y \in \C A$. A twisted derivation is called a \emph{twisted $*$-derivation} when $d(x^*) = - d(x)^*$ and $\si(x^*)^* = \te(x)$ for all $x \in \C A$.  
\end{dfn}
We remark that a twisted derivation $d \colon \C A \to B$ is the same thing as a derivation $d \colon \C A \to B$ when $B$ is given the bimodule structure determined by the algebra homomorphisms $\si$ and $\te \colon \C A \to B$.

\subsubsection{$q$-numbers}
 We are going to need two versions of $q$-numbers. For $q \in (0,1]$ and $n \in \nn$ we define the quantity
\begin{align}\label{eq:our-q-integers}
\inn{n}_q := 1 + q^2 +\cdots + q^{2(n-1)} . 
\end{align}
Furthermore, the classical \emph{$q$-number} makes sense for every $a\in \rr$ and is defined by
\[
[a]_q:=  \begin{cases} \frac{q^{a}-q^{-a}}{q-q^{-1}} & {q\in (0,1)} \\
a & q=1
\end{cases}.
\]
 Whenever no confusion can arise, we omit the subscript $q$ from the notation. 

\subsection{Corepresentation theory}
The (co-)representation theory of $SU_q(2)$ is well understood, and turns out to be equivalent with that of $SU(2)$; see \cite[Section 5]{Wor:UAC}. We may therefore choose a complete set of irreducible corepresentation unitaries $u^n \in \B M_{n+1}(\C O(SU_q(2)))$, $n \in \nn_0$, where the matrix entries $u^n_{ij}$ are labelled by indices $i,j \in \{0,1,\ldots,n\}$. For $q \neq 1$, we fix this choice of irreducible corepresentation unitaries such that
\begin{equation}\label{eq:exppai}
\begin{split}
\inn{k,u^n_{ij}} & = \de_{ij} \cd q^{j -n/2} \\
 \inn{e,u^n_{ij}} &= \de_{i,j - 1} \cd q^{\frac{1 - n}{2}} \sqrt{ \inn{n - j +1}_{q} \inn{j}_{q}  }\\
\inn{f,u^n_{ij}} & = \de_{i,j + 1} \cd q^{\frac{1-n}{2}} \sqrt{ \inn{n-j}_{q} \inn{j+1}_{q} } ,
\end{split}
\end{equation}
and for $q = 1$ we fix the same formulae together with the additional identity
\[
\inn{h,u^n_{ij}} = \de_{ij} \cd (2j - n) .
\]
We record that the fundamental unitary $u$ agrees with the irreducible corepresentation unitary $u^1$ and that $u^0 = 1$. We shall often refer to the entries $u^n_{ij} \in \C O(SU_q(2))$ as the \emph{matrix coefficients} and we apply the convention that $u_{ij}^n:=0$ whenever one of the parameters $n,i,j$ is outside of its natural range; i.e.~when $n<0$ or $(i,j)\notin \{0,\dots, n\}^2$. The adjoint operation can be described at the level of the matrix coefficients via the formula
\begin{align}\label{eq:adjoint-of-matrix-coefficients}
(u^n_{ij})^* = (-q)^{j-i} u^n_{n-i,n-j} ;
\end{align}
see for instance \cite[Section 2]{DLSSV:DOS}.  For more details on the corepresentation theory for quantum $SU(2)$, we refer the reader to \cite[Chapter 3, Theorem 13 \& Chapter 4, Proposition 16 and 19]{KlSc:QGR}.  
Using the $q$-Clebsch-Gordan coefficients (see \cite[Section 3]{DLSSV:DOS} and \cite[Chapter 3.4]{KlSc:QGR}) one may explicitly describe the products between the generators and the matrix coefficients:
\begin{equation}\label{eq:leftmult}
\begin{split}
a^* \cd u^n_{ij} & = q^{i + j} \tfrac{ \sqrt{\inn{n - i + 1} \inn{n-j+1}}}{ \inn{n+1}} \cd u^{n+1}_{ij}
+ \tfrac{\sqrt{\inn{i} \inn{j}}}{\inn{n+1}} \cd u^{n-1}_{i-1,j-1} \\
b^* \cd u^n_{ij} & = q^j \tfrac{ \sqrt{\inn{i+1}\inn{n-j + 1}}}{\inn{n+1}} \cd u^{n+1}_{i+1,j}
- q^{i+1} \tfrac{ \sqrt{\inn{n-i} \inn{j}}}{\inn{n+1}} \cd u^{n-1}_{i,j-1} \\
a \cd u^n_{ij} & = \tfrac{ \sqrt{\inn{i + 1} \inn{j+1}}}{ \inn{n+1}} \cd u^{n+1}_{i+1,j+1}
+ q^{i+j+2}\tfrac{\sqrt{\inn{n-i} \inn{n-j}}}{\inn{n+1}} \cd u^{n-1}_{ij} \\
b \cd u^n_{ij} & = 
- q^{i-1}\tfrac{\sqrt{\inn{j+1} \inn{n-i +1}}}{\inn{n+1}} \cd u^{n+1}_{i,j+1}
+ q^j \tfrac{ \sqrt{\inn{n -j} \inn{i}}}{ \inn{n+1}} \cd u^{n-1}_{i-1,j} .
\end{split}
\end{equation}
In particular, it holds that $u_{00}^n=(a^*)^n$ for all $n\in \nn_0$, a fact that will be used several times throughout the paper. 

\subsection{The Haar state}\label{subsec:Haar-state}
Quantum $SU(2)$ comes equipped with its \emph{Haar state} $h \colon C(SU_q(2)) \to \cc$ which can be expressed on the matrix coefficients by the simple relations
\[
h(1) = 1 \, \, \T{ and } \, \, \, h(u^n_{ij}) = 0
\]
for all $n \in \nn$ and $i,j \in \{0,1,\ldots,n\}$; see e.g.~\cite[Chapter 4, Equation (50)]{KlSc:QGR}. On the elements $\xi^{klm}$ of the linear basis \eqref{eq:standard-basis}, the Haar state vanishes if $k\neq 0$, and for $k=0$ it furthermore vanishes when $l\neq m$. Finally, when $k=0$ and $l=m$ it holds that
\begin{align}\label{eq:Haar-on-standard-basis}
h(b^{m}b^{*m})=\frac{1}{\inn{m+1}_q};
\end{align}
see e.g.~\cite[Theorem 6.2.17]{Timmermann-book}. As the name suggests, the Haar state is bi-invariant with respect to the comultiplication in the sense that
\[
(h\ot 1)\Delta(x)=(1\ot h)\Delta(x)=h(x)\cdot 1 \quad \T{for all } x\in C(SU_q(2)) .
\]
For $q\neq 1$, the Haar state is not a trace, but it is a \emph{twisted trace} with respect to the algebra automorphism $\nu:=\de_{k^{-2}}\circ \pa_{k^{-2}}$, in the sense that
\begin{align}\label{eq:modular}
h(x y) = h(\nu(y) x) \q \T{for all } x,y \in \C O(SU_q(2));
\end{align}
see \cite[Chapter 4, Proposition 15]{KlSc:QGR}. Using the formulae in \eqref{eq:exppai}, one sees that the modular automorphism $\nu$ is given by the following formula on the matrix coefficients:
\begin{align}\label{eq:modular-function}
\nu(u^n_{ij}) =  q^{2(n - i - j)}   \cd u^n_{ij}.
\end{align}
The  algebra automorphisms $\de_{k^{-1}} \circ \pa_{k^{-1}}$ and $\de_{k}\circ \pa_{k}$ will be denoted $\nu^{1/2}$ and $\nu^{-1/2}$, respectively.\\

The Haar state is faithful and we denote the corresponding GNS Hilbert space by $L^2(SU_q(2))$ and the natural embedding $C(SU_q(2))\subseteq L^2(SU_q(2))$ by $\Lambda$. Furthermore, we denote the associated injective $*$-homomorphism by $\rho \colon C(SU_q(2)) \to \B B\big(L^2(SU_q(2))\big)$ and the notation $L^\infty(SU_q(2))$ refers to the enveloping von Neumann algebra so that $L^\infty(SU_q(2))$ agrees with the double commutant $\rho(C(SU_q(2)))''\subseteq \mathbb{B}\big(L^2(SU_q(2))\big)$. Lastly, the diagonal representation of $C(SU_q(2))$ on two copies of $L^2(SU_q(2))$ plays a prominent role in the sections to follow and will be denoted by $\pi\colon C(SU_q(2))\to \mathbb{B}\big(L^2(SU_q(2))^{\oplus 2}\big)$. Whenever convenient, we apply the notation $H_q:=L^2(SU_q(2))$. 
The matrix  units $u_{ij}^n$ constitute an orthogonal basis in $L^2(SU_q(2))$, and  the $2$-norms of $u_{ij}^n$ and $(u_{ij}^n)^*$ are given by
\begin{align}\label{eq:haarmatrix}
\inn{u^n_{ij},u^n_{ij}} & = h( (u^n_{ij})^* u^n_{ij}) = \frac{q^{2(n-i)}}{\inn{n + 1}_q } \notag \\ 
\binn{ (u^n_{ij})^* , (u^n_{ij})^*} & = h( u^n_{ij} (u^n_{ij})^*) = \frac{q^{2j}}{\inn{n + 1}_q };  
\end{align}
whenever $n\in \nn_0$ and $i,j\in \{0,\dots, n\}$; see \cite[Chapter 4, Theorem 17]{KlSc:QGR}.

\subsection{Circle actions}\label{ss:circle}
The unital $C^*$-algebra $C(SU_q(2))$ carries two distinguished circle actions 
\[
\si_L \T{ and } \si_R \colon S^1 \ti C(SU_q(2)) \to C(SU_q(2))
\]
referred to as the \emph{left circle action} and the \emph{right circle action}, respectively. These two circle actions are given on the matrix coefficients by the formulae
\begin{align}\label{eq:sigma-L-and-R-on-matrix-units}
\si_L(z,u^n_{ij}) = z^{2j - n} u^n_{ij} \, \, \ \T{ and } \ \, \, \, \si_R(z,u^n_{ij}) = z^{2i -n} u^n_{ij}
\end{align}
for all $z \in S^1$, $n \in \nn_0$ and $i,j \in \{0,1,\ldots,n\}$; see for example \cite[Section 2.2]{KRS:RFH}. The spectral subspaces for the left circle action play a special role in the present text and they are denoted by
\[
A^m_q := \big\{ x \in C(SU_q(2)) \mid \si_L(z,x) = z^m \cd x \, \, \T{ for all }  z \in S^1 \big\}, \q m \in \zz .
\]
For each $m \in \zz$ we also define the algebraic spectral subspace $\C A^m_q := A^m_q \cap \C O(SU_q(2))$. Note that the \emph{Podle\'s sphere} (see \cite{Pod:QS}) agrees with the fixed point algebra so that $C(S_q^2) = A^0_q$, and the coordinate algebra $\C O(S_q^2)$ agrees with the algebraic fixed point algebra $\C A^0_q$. The algebraic spectral subspaces are left comodules over $\C O(SU_q(2))$ in the sense that the coproduct restricts to a coaction $\De \colon \C A^m_q \to \C O(SU_q(2)) \ot \C A^m_q$ for each $m \in \zz$.
The spectral subspace $A_q^m$ comes with an associated \emph{spectral projection} $\Pi^L_m\colon C(SU_q(2))\to A_q^m$
defined by the norm-convergent Riemann integral
\begin{align}\label{eq:spec-proj-norm-def-on-suq2}
\Pi^L_m(x) = \frac{1}{2\pi} \int_0^{2\pi} \sigma_L(e^{ir},x) \cd e^{-irm} dr .
\end{align}
Note that $\Pi^L_m$ is a contraction and that $\Pi^L_m\big(\C O (SU_q(2))\big) \subseteq \C A_q^m$. We apply the notation $H_q^m \su H_q$ for the Hilbert space closure of $\La( \C A^m_q) \su H_q$.
For each $M \in \nn_0$, we introduce the \emph{spectral band}  
\begin{equation}\label{eq:specbanddef}
B_q^M :=\sum_{m=-M}^M A_q^m .
\end{equation}
The spectral band also exists in an algebraic version, namely $\C B_q^M := \sum_{m = -M}^M \C A_q^m$. We note that $B_q^M$ agrees with the norm-closure of the algebraic spectral band, where the non-trivial inclusion follows by using the spectral projections.

\subsection{Analytic elements}\label{ss:analytic}
For each $s \in (0,1]$, we define the closed strip
\begin{equation}\label{eq:closstrip}
I_s := \left\{ z \in \cc \mid \T{Im}(z) \in \left[ \tfrac{\log(s)}{2}, -\tfrac{\log(s)}{2} \right] \right\} \su \cc .
\end{equation}

\begin{dfn}\label{d:analyt}
Let $s \in (0,1]$. We say that an element $x \in C(SU_q(2))$ is \emph{analytic of order $-\log(s)/2$} when the continuous map $\rr \to C(SU_q(2))$ given by $r \mapsto \si_L(e^{ir},x)$ extends to a continuous map $I_s \to C(SU_q(2))$ which is analytic on the interior $I_s^{\ci} \su I_s$. If so, we denote this (unique) continuous extension by $z \mapsto \si_L(e^{iz},x)$. 
\end{dfn}

Let $x,y \in C(SU_q(2))$ be analytic of order $-\log(s)/2$. Applying the basic properties of operator valued analytic maps we obtain that $x \cd y$ and $x^*$ are analytic of order $-\log(s)/2$ and that we have the relations
\begin{align}\label{eq:sigmaL-relations}
\si_L(e^{iz}, x \cd y) = \si_L(e^{iz},x) \cd \si_L(e^{iz},y) \, \, \T{ and } \, \, \, 
\si_L(e^{iz},x^*) = \si_L(e^{i \cd \ov{z}},x)^*
\end{align}
for all $z \in I_s$. The set of elements that are analytic of order $-\log(s)/2$ thus constitutes a unital $*$-subalgebra.

\begin{lemma}\label{l:boundedanalytic}
Let $s \in (0,1]$ and let $x$ be analytic of order $-\log(s)/2$. If $T \colon C(SU_q(2)) \to C(SU_q(2))$ is a bounded operator which is equivariant with respect to the circle action $\si_L$, then $T(x)$ is analytic of order $-\log(s)/2$ and it holds that $T (\si_L(e^{iz},x)) = \si_L(e^{iz},T(x))$ for all $z \in I_s$.
\end{lemma}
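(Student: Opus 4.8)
The plan is to push the analytic extension of $x$ through $T$ and then invoke the uniqueness built into Definition \ref{d:analyt}. First I would set $g \colon I_s \to C(SU_q(2))$, $g(z) := T\big( \si_L(e^{iz},x) \big)$, which makes sense because $x$ being analytic of order $-\log(s)/2$ means precisely that $z \mapsto \si_L(e^{iz},x)$ is a well-defined continuous map on $I_s$, analytic on the interior $I_s^{\ci}$. Since $T$ is a bounded linear operator, $g$ is continuous on $I_s$, and $g$ is analytic on $I_s^{\ci}$ as well: locally on $I_s^{\ci}$ the map $z \mapsto \si_L(e^{iz},x)$ is given by a norm-convergent power series $\sum_n a_n (z-z_0)^n$, and applying $T$ termwise yields $\sum_n T(a_n)(z-z_0)^n$, which converges in norm because $\| T(a_n) \| \leq \| T \| \cd \| a_n \|$. (Alternatively, compose further with bounded functionals and use that weak analyticity together with local norm-boundedness implies analyticity.)

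Next I would identify $g$ on the real axis: for $r \in \rr$, the equivariance of $T$ with respect to the circle action $\si_L$ gives $g(r) = T\big( \si_L(e^{ir},x) \big) = \si_L\big( e^{ir}, T(x) \big)$. Thus $g$ is a continuous extension of the orbit map $r \mapsto \si_L(e^{ir},T(x))$ to the strip $I_s$ which is analytic on $I_s^{\ci}$, and by Definition \ref{d:analyt} this is exactly the assertion that $T(x)$ is analytic of order $-\log(s)/2$.

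Finally, the identity $\si_L(e^{iz},T(x)) = g(z) = T\big( \si_L(e^{iz},x) \big)$ for all $z \in I_s$ follows from the uniqueness of the continuous extension stated in Definition \ref{d:analyt}: when $s < 1$ the real axis lies inside the connected open set $I_s^{\ci}$, so two functions analytic on $I_s^{\ci}$ that agree on $\rr$ must coincide there and hence, by continuity, on all of $I_s$; the case $s = 1$ is trivial since then $I_1 = \rr$. There is essentially no hard step here — the only point deserving a word of justification is the (standard) fact that bounded linear maps preserve Banach-space-valued analyticity, handled above.
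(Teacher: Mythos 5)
Your proposal is correct and follows essentially the same route as the paper: define $z \mapsto T(\si_L(e^{iz},x))$ on $I_s$, note it is continuous and analytic on $I_s^{\ci}$ since $T$ is bounded, match it with $r \mapsto \si_L(e^{ir},T(x))$ on $\rr$ via equivariance, and conclude by the identity theorem/uniqueness of the extension. The only difference is that you spell out the (standard) fact that bounded linear maps preserve Banach-space-valued analyticity, which the paper takes for granted.
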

\begin{proof}
Since $T$ is bounded, the map $I_s \ni z\mapsto T(\sigma_L(e^{iz},x)) \in C(SU_q(2))$ is continuous and analytic on the interior $I_s^{\circ}$. Moreover, for $r\in \rr$ we have $T(\sigma_L(e^{ir}, x))=\sigma_L(e^{ir}, Tx)$, so it follows that $T(x)$ is analytic of order $-\log(s)/2$ and, by the identity theorem for analytic functions, that $T( \si_L(e^{iz},x)) = \si_L(e^{iz},T(x))$ for all $z \in I_s$.
\end{proof}


\begin{lemma}\label{l:sigmaonspec}
Let $m \in \zz$ and $x \in A_q^m$. It holds that $x$ is analytic of order $-\log(s)/2$ for all $s \in (0,1]$ and that the associated extension is given by
\[
\si_L(e^{iz},x) = e^{iz \cd m} \cd x \q \mbox{for all } z \in \cc .
\]
\end{lemma}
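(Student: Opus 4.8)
This is a direct verification against Definition \ref{d:analyt}, and there is essentially no obstacle: the point is simply that on $A_q^m$ the circle action is already given by a scalar that extends to an entire function.

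First I would recall that, by the very definition of the spectral subspace, every $x \in A_q^m$ satisfies $\si_L(z,x) = z^m \cd x$ for all $z \in S^1$. Consequently the continuous map $\rr \to C(SU_q(2))$ appearing in Definition \ref{d:analyt} is the explicit map $r \mapsto \si_L(e^{ir},x) = e^{irm} \cd x$.

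Next I would introduce the map $g \colon \cc \to C(SU_q(2))$ defined by $g(z) := e^{izm} \cd x$. Since $z \mapsto e^{izm}$ is an entire scalar function and $x$ is a fixed element of $C(SU_q(2))$, the map $g$ is analytic on all of $\cc$; in particular, for any $s \in (0,1]$ its restriction to the closed strip $I_s$ is continuous and analytic on the interior $I_s^{\ci}$. Moreover $g(r) = e^{irm} \cd x = \si_L(e^{ir},x)$ for every $r \in \rr$, so $g\vert_{I_s}$ is precisely the required continuous extension of $r \mapsto \si_L(e^{ir},x)$.

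By Definition \ref{d:analyt} this shows that $x$ is analytic of order $-\log(s)/2$ for every $s \in (0,1]$, and — the extension being unique by the identity theorem for analytic maps (as already used in the proof of Lemma \ref{l:boundedanalytic}) — the associated extension is $\si_L(e^{iz},x) = g(z) = e^{iz \cd m} \cd x$ for all $z \in \cc$. The only thing worth a line of care is noting that $g$ really does agree on $\rr$ with the given map, which is immediate from $\si_L(e^{ir},x) = (e^{ir})^m x$.
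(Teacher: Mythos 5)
Your proof is correct and follows essentially the same route as the paper: the defining relation $\si_L(z,x)=z^m x$ for $x\in A_q^m$ means the map to be extended is $r\mapsto e^{irm}x$, and the entire function $z\mapsto e^{izm}\cd x$ provides the (unique) continuous extension, analytic on every interior strip. The extra remarks on uniqueness and agreement on $\rr$ are fine but not needed beyond what the paper records.
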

\begin{proof}
This follows since $\si_L(e^{it},x) = e^{it \cd m} \cd x$ and since $z \mapsto e^{iz \cd m}$ is analytic. 
\end{proof}

It follows from Lemma \ref{l:sigmaonspec} that every $x \in \C O(SU_q(2))$ is analytic of order $-\log(s)/2$ for all $s \in (0,1]$ and that we have an algebra automorphism
\[
\si_L(e^{iz}, \cd) \colon \C O(SU_q(2)) \to \C O(SU_q(2))
\]
for all $z \in \cc$. Moreover, it holds that $\si_L(e^{iz}, \si_L(e^{iw}, x) ) = \si_L(e^{i(z + w)},x)$ for all $z,w \in \cc$ and $x \in \C O(SU_q(2))$. As a consequence Lemma \ref{l:sigmaonspec} we also obtain that
\[
\sigma_L(q^{1/2}, x)= \pa_k(x) \q \T{for all } x\in \C O(SU_q(2)) .
\]
For each $s \in (0,\infty)$, we also introduce the unbounded operator $\Ga_{s,0} \colon \C O(SU_q(2))^{\op 2} \to L^2(SU_q(2))^{\op 2}$ given by the formula 
\begin{align}\label{eq:modudef}
\Ga_{s,0}\pma{\xi \\ \eta} := \pma{s^{\frac{1-n}{2}} & 0 \\ 0 & s^{\frac{-1-m}{2}} } \pma{\xi \\ \eta}
\end{align}
for all $\xi \in \C A^n_q$ and $\eta \in \C A^m_q$. Since $\Ga_{s,0}$ admits an orthonormal basis of eigenvectors with strictly positive eigenvalues, we obtain that $\Ga_{s,0}$ is closable and that the closure is a positive unbounded operator with dense image. We denote this closure by 
\[
\Ga_s \colon \T{Dom}(\Ga_s) \to L^2(SU_q(2))^{\op 2} .
\]
The inverse of $\Ga_s$ is again a positive unbounded operator with dense image and we have the following identities regarding images and domains:
\[
\T{Dom}(\Ga_s^{-1}) = \T{Im}(\Ga_s) \q \T{and} \q \T{Im}(\Ga_s^{-1}) = \T{Dom}(\Ga_s) .
\]
The inverse $\Ga_s^{-1}$ agrees with the closure of the unbounded operator $\Ga_{s^{-1},0} \colon \C O(SU_q(2))^{\op 2} \to L^2(SU_q(2))^{\op 2}$ and we therefore have the identity $\Ga_s^{-1} = \Ga_{s^{-1}}$. 


\begin{lemma}\label{l:analytic}
Let $s \in (0,1]$. If $x \in C(SU_q(2))$ is analytic of order $-\log(s)/2$, then it holds that $x\big( \T{Dom}(\Ga_s) \big) \su \T{Dom}(\Ga_s)$ and  $x\big( \T{Im}(\Ga_s) \big) \su \T{Im}(\Ga_s)$ and we have the relations
\[
\Ga_s x \Ga_s^{-1}(\xi) =  \si_L(s^{-1/2},x)(\xi) \q \mbox{and} \q   \Ga_s^{-1} x \Ga_s(\eta) = \si_L(s^{1/2},x)(\eta)
\]
for all $\xi \in \T{Im}(\Ga_s)$ and $\eta \in \T{Dom}(\Ga_s)$.
\end{lemma}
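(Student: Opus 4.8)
The plan is to reduce everything to the dense subspace $\C O(SU_q(2))^{\op 2}$ on which $\Ga_s$ is given by the explicit diagonal formula \eqref{eq:modudef}, and then use a closed-graph/continuity argument to pass to the closure. First I would treat the generating case: suppose $\xi \in \C A_q^n$ and $\eta \in \C A_q^m$, viewed as elements of the first, respectively second, copy in $L^2(SU_q(2))^{\op 2}$. Since $x$ is analytic of order $-\log(s)/2$, Lemma \ref{l:boundedanalytic} (applied with $T = \Pi_k^L$, which is bounded and $\si_L$-equivariant) shows that each spectral component $\Pi_k^L(x)$ is again analytic of the same order, and one has the norm-convergent expansion $\si_L(e^{iz},x) = \sum_k e^{izk}\,\Pi_k^L(x)$ on the strip $I_s$. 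Multiplying an element of $\C A_q^n$ by $\Pi_k^L(x) \in A_q^{k}$ lands in $A_q^{n+k}$, so $\Ga_{s,0}$ acts on $x\cdot\xi$ componentwise with eigenvalue $s^{(1-(n+k))/2} = s^{(1-n)/2}\cdot s^{-k/2}$, and the factor $s^{-k/2}$ is exactly $e^{izk}$ evaluated at the point $z$ with $e^{iz} = s^{-1/2}$, i.e.\ $z = i\log(s)/2 \in I_s$ (using $s \le 1$). This is where the hypothesis that $x$ be analytic of order $-\log(s)/2$ — rather than of some smaller order — is essential: we need to evaluate $\si_L(e^{iz},x)$ at the boundary point of $I_s$, and $\si_L(s^{-1/2},x)$ is precisely that boundary value. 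Hence on $\C O(SU_q(2))^{\op 2}$ one gets the algebraic identity $\Ga_{s,0}\, x\, \Ga_{s^{-1},0}(\xi) = \si_L(s^{-1/2},x)(\xi)$, and symmetrically $\Ga_{s^{-1},0}\,x\,\Ga_{s,0}(\eta) = \si_L(s^{1/2},x)(\eta)$ (here $z = -i\log(s)/2 \in I_s$).

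Next I would upgrade the algebraic identity to the full domain. The operator $\si_L(s^{\pm1/2},x)$ is a \emph{bounded} operator on $L^2(SU_q(2))^{\op 2}$: indeed, since $x$ is analytic on the strip and continuous up to the boundary, $\si_L(e^{iz},x)$ is a norm-bounded family in $C(SU_q(2))$ for $z \in I_s$, so each $\si_L(s^{\pm1/2},x)$ is a bounded multiplication operator (with norm at most $\sup_{z \in I_s}\|\si_L(e^{iz},x)\|$). Given $\xi \in \T{Im}(\Ga_s) = \T{Dom}(\Ga_s^{-1})$, pick $\xi_0 \in \C O(SU_q(2))^{\op 2}$ with $\Ga_s^{-1}\xi$ approximated by $\Ga_{s^{-1},0}\xi_0 = \Ga_s^{-1}\xi_0$; more cleanly, since $\C O(SU_q(2))^{\op 2}$ is a core for $\Ga_s^{-1}$, choose a sequence $\xi_j \in \C O(SU_q(2))^{\op 2}$ with $\xi_j \to \xi$ and $\Ga_s^{-1}\xi_j \to \Ga_s^{-1}\xi$. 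Applying the algebraic identity, $\Ga_s x \Ga_s^{-1}\xi_j = \si_L(s^{-1/2},x)\xi_j \to \si_L(s^{-1/2},x)\xi$ by boundedness of the latter. On the other hand $x\Ga_s^{-1}\xi_j \to x\Ga_s^{-1}\xi$ (as $x$ is a bounded operator on $L^2$), and since $\Ga_s$ is closed and $\Ga_s(x\Ga_s^{-1}\xi_j)$ converges, we conclude $x\Ga_s^{-1}\xi \in \T{Dom}(\Ga_s)$, equivalently $x\xi \in \T{Im}(\Ga_s)$ after reindexing, and $\Ga_s x \Ga_s^{-1}\xi = \si_L(s^{-1/2},x)\xi$. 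The same closed-operator argument with $\Ga_s$ and $\Ga_s^{-1} = \Ga_{s^{-1}}$ interchanged gives the inclusion $x(\T{Dom}(\Ga_s)) \su \T{Dom}(\Ga_s)$ together with $\Ga_s^{-1}x\Ga_s(\eta) = \si_L(s^{1/2},x)(\eta)$ for $\eta \in \T{Dom}(\Ga_s)$.

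The only real subtlety — and the step I would be most careful about — is the interchange of limits in the closure argument: one must check that $\C O(SU_q(2))^{\op 2}$ is genuinely a core for both $\Ga_s^{-1}$ and $\Ga_s$, which is immediate here because $\Ga_{s,0}$ and $\Ga_{s^{-1},0}$ are essentially self-adjoint diagonal operators on an orthonormal basis of eigenvectors (as already recorded in the paragraph preceding the lemma), so their minimal closures have $\C O(SU_q(2))^{\op 2}$ as a core by construction. Everything else is bookkeeping with the spectral decomposition $\si_L(e^{iz},x) = \sum_k e^{izk}\Pi_k^L(x)$ and the fact that left multiplication by $A_q^k$ shifts the grading by $k$; no hard estimate beyond the uniform bound $\sup_{z\in I_s}\|\si_L(e^{iz},x)\| < \infty$, which is a standard consequence of continuity of the extension on the compact strip $I_s$, is needed.
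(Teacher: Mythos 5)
Your route is sound, and it is genuinely different from the paper's. The paper never decomposes $x$ at all: it reduces the lemma to the sesquilinear identity $\inn{\Ga_s \eta, x \Ga_s^{-1}\xi} = \inn{\eta, \si_L(s^{-1/2},x)\xi}$ for $\xi,\eta$ in the defining core $\C O(SU_q(2))^{\op 2}$, observes that the unitary group $(\Ga_s^{ir})_{r \in \rr}$ implements the circle action via $\Ga_s^{ir} x \Ga_s^{-ir} = \si_L(e^{-ir\log(s)/2},x)$, and then compares the two functions $z \mapsto \inn{\Ga_s^{i\ov z}\eta, x\Ga_s^{iz}\xi}$ and $z \mapsto \inn{\eta, \si_L(e^{iz\log(s)/2},x)\xi}$, which are continuous on a closed strip, holomorphic in its interior and equal on $\rr$, hence equal at $z = i$; selfadjointness of $\Ga_s$ then converts this weak identity into the operator statement. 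You instead prove the strong identity on the algebraic core by hand, using that left multiplication by the spectral components of $x$ shifts the $\si_L$-grading, and then pass to all of $\T{Dom}(\Ga_s^{\pm 1})$ by a closed-graph argument, using that $\C O(SU_q(2))^{\op 2}$ is a core for both $\Ga_s$ and $\Ga_s^{-1}$. Your version avoids complex analysis beyond what is already packaged into Lemmas \ref{l:boundedanalytic} and \ref{l:sigmaonspec}, at the price of handling a spectral decomposition of $x$; the paper's version avoids any convergence discussion for that decomposition, at the price of the identity-theorem-on-a-strip argument, and its final approximation step is implicit where yours is spelled out.

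Two points in your write-up need repair, both fixable. First, the claimed norm-convergent expansion $\si_L(e^{iz},x) = \sum_k e^{izk}\Pi_k^L(x)$ is false in general: the Fourier series of a continuous element need not converge in norm (only its Ces\`aro means do). You do not actually need it. For $\xi \in \C A_q^n$ (say in the first copy) the vector $x\xi$ decomposes \emph{orthogonally} in $L^2(SU_q(2))$ as $\sum_k \Pi_k^L(x)\xi$ with $\Pi_k^L(x)\xi \in H_q^{n+k}$, and likewise $\si_L(s^{-1/2},x)\xi = \sum_k s^{-k/2}\Pi_k^L(x)\xi$, since $\Pi_k^L(\si_L(s^{-1/2},x)) = s^{-k/2}\Pi_k^L(x)$ by Lemmas \ref{l:boundedanalytic} and \ref{l:sigmaonspec}. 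Because $\Ga_s$ is the closure of a diagonal operator, $x\xi \in \T{Dom}(\Ga_s)$ precisely when $\sum_k s^{1-n-k}\|\Pi_k^L(x)\xi\|^2 < \infty$, and by orthogonality this sum equals $s^{1-n}\|\si_L(s^{-1/2},x)\xi\|^2 < \infty$; this simultaneously puts $x\xi$ in the domain and gives $\Ga_s(x\xi) = s^{(1-n)/2}\si_L(s^{-1/2},x)\xi$, which is the identity you want on the core (note the left-hand operator must be the closure $\Ga_s$, not $\Ga_{s,0}$, since $x\xi$ is not polynomial — this is exactly the point your phrase ``acts componentwise'' glosses over). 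Second, a bookkeeping slip: the closedness argument you run first (approximating $\Ga_s^{-1}\xi$) yields $x(\T{Dom}(\Ga_s)) \su \T{Dom}(\Ga_s)$ together with $\Ga_s x \Ga_s^{-1} = \si_L(s^{-1/2},x)$ on $\T{Im}(\Ga_s)$, while the symmetric argument yields $x(\T{Im}(\Ga_s)) \su \T{Im}(\Ga_s)$ together with the other relation; your labels for the two inclusions are crossed, though the argument itself is unaffected.
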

\begin{proof}
Suppose that $x \in C(SU_q(2))$ is analytic of order $-\log(s)/2$. We focus on showing that $x\big( \T{Dom}(\Ga_s) \big) \su \T{Dom}(\Ga_s)$ and that $\Ga_s x \Ga_s^{-1}(\xi) = \si_L(s^{-1/2},x)(\xi)$ for all $\xi \in \T{Dom}(\Ga_s)$, since the remaining identities follow by similar arguments. 
We apply the notation $\C E := \C O(SU_q(2))^{\op 2}$ for the defining core for $\Ga_s$. It then suffices to show that
\begin{equation}\label{eq:sigmaconj}
\inn{\Ga_s \eta, x \Ga_s^{-1} \xi} = \inn{\eta, \si_L(s^{-1/2},x) \xi}
\end{equation}
for all $\xi, \eta \in \C E$. Let thus $\xi,\eta \in \C E$ be given. For each $r \in \rr$ we consider the unitary operator $\Ga_s^{ir} \colon L^2(SU_q(2))^{\op 2} \to L^2(SU_q(2))^{\op 2}$. It can then be verified that these unitary operators implement the left circle action in the sense that the identity 
\begin{equation}\label{eq:gammasigma}
\Ga_s^{ir} x \Ga_s^{-ir} = \si_L( e^{-ir\log(s)/2},x)
\end{equation}
holds for all $r \in \rr$. Indeed, when $x$ belongs to a spectral subspace the above identity follows from Lemma \ref{l:sigmaonspec} and therefore holds in general by density and continuity. \\
Let us define the closed strip $I := \{ z \in \cc \mid \T{Im}(z) \in [-1,1] \}$ together with the continuous functions $f,g \colon I \to \cc$ given by the formulae
\[
f(z) := \inn{\Ga_s^{i \cd \ov z} \eta, x \Ga_s^{i \cd z} \xi} \q \T{and} \q g(z) := \inn{\eta, \si_L( e^{ i \cd z \log(s)/2},x) \xi} 
\]
for all $z \in \cc$. Notice that the first of these functions makes sense since $\eta,\xi \in \C E$ and the second makes sense because $x$ is analytic of order $-\log(s)/2$. Both of these functions are then holomorphic on the interior of the strip $I^{\ci}$ and they agree on the real line $\rr \su I$ by an application of \eqref{eq:gammasigma}. This implies that $f(z) = g(z)$ for all $z \in I$ and we obtain the identity in \eqref{eq:sigmaconj} by evaluating at $z = i$.
\end{proof}

For each $t \in (0,1]$, we apply the notation $\T{Ana}_t(SU_q(2))$ for the unital $*$-subalgebra of $C(SU_q(2))$ consisting of elements $x \in C(SU_q(2))$ which are analytic of order $\T{max}\big\{ -\log(q)/2 , - \log(t)/2 \big\}$. We equip $\T{Ana}_t(SU_q(2))$ with the norm $\| \cd \|_{t,q}$ defined by  
\[
\|x\|_{t,q} := \max\big\{\| \si_L(t^{1/2},x) \| + \| \si_L(q^{1/2},x) \| , \| \si_L(t^{-1/2},x) \| + \| \si_L(q^{-1/2},x) \| \big\} 
\]
and record that $\T{Ana}_t(SU_q(2))$ is then a unital Banach $*$-algebra. For more details, see for instance \cite[Example 1.5]{BlKaMe:OCA}. 
We end this section by a small lemma providing an estimate on the norm $\| \cdot \|_{t,q}$ on a fixed spectral band.

\begin{lemma}\label{l:anaband}
Let $M \in \nn_0$ and $x \in B_q^M$. It holds that $x \in \T{Ana}_t(SU_q(2))$ and we have the estimate
\[
\| x \|_{t,q} \leq \sum_{m = - M}^M ( t^{m/2} + q^{m/2}) \cd \| x \| \q \mbox{for all } t \in (0,1] .
\]
\end{lemma}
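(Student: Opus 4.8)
The plan is to decompose $x\in B_q^M$ into its homogeneous components for the left circle action and reduce everything to Lemma \ref{l:sigmaonspec}, where the action of $\si_L$ on a single spectral subspace is completely explicit. Concretely, for $|m|\leq M$ set $x_m:=\Pi^L_m(x)$; since $\Pi^L_m$ acts as the identity on $A^m_q$ and annihilates $A^{m'}_q$ for $m'\neq m$, and since $x$ lies in $B_q^M=\sum_{m=-M}^M A^m_q$, this gives a decomposition $x=\sum_{m=-M}^M x_m$ with $x_m\in A^m_q$, and as each $\Pi^L_m$ is a contraction we have $\|x_m\|\leq\|x\|$ for every $m$.

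By Lemma \ref{l:sigmaonspec} each $x_m$ is analytic of order $-\log(s)/2$ for every $s\in(0,1]$, with $\si_L(s^{\pm 1/2},x_m)=s^{\pm m/2}x_m$. Since finite sums of elements that are analytic of a fixed order are again analytic of that order, with the analytic continuation depending linearly on the element, the element $x=\sum_{m=-M}^M x_m$ is analytic of order $-\log(s)/2$ for every $s\in(0,1]$; in particular it is analytic of order $\max\{-\log(q)/2,-\log(t)/2\}$, so $x\in\T{Ana}_t(SU_q(2))$, and moreover
\[
\si_L(s^{1/2},x)=\sum_{m=-M}^M s^{m/2}x_m ,\qquad \si_L(s^{-1/2},x)=\sum_{m=-M}^M s^{-m/2}x_m .
\]

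It then remains to estimate. For any $s\in(0,1]$ the triangle inequality gives
\[
\big\|\si_L(s^{1/2},x)\big\|\leq\sum_{m=-M}^M s^{m/2}\|x_m\|\leq\Big(\sum_{m=-M}^M s^{m/2}\Big)\|x\| ,
\]
and the substitution $m\mapsto -m$ shows $\sum_{m=-M}^M s^{-m/2}=\sum_{m=-M}^M s^{m/2}$, so the same bound holds for $\big\|\si_L(s^{-1/2},x)\big\|$. Feeding in $s=t$ and $s=q$ and adding, each of the two quantities inside the maximum defining $\|x\|_{t,q}$ is at most $\sum_{m=-M}^M (t^{m/2}+q^{m/2})\,\|x\|$, which is precisely the claimed inequality. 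I do not expect a real obstacle here: the whole argument is a short computation, and the only point requiring a little care is the bookkeeping for the non-unitary evaluations $\si_L(s^{\pm 1/2},\cdot)$ of the analytic continuation — handled term by term via Lemma \ref{l:sigmaonspec} — together with the symmetry of the sum $\sum_{m=-M}^M s^{m/2}$ under $m\mapsto -m$.
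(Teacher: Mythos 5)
Your proof is correct and follows essentially the same route as the paper: decompose $x = \sum_{m=-M}^M \Pi_m^L(x)$, invoke Lemma \ref{l:sigmaonspec} to get analyticity and the explicit formula $\si_L(s^{\pm 1/2},\Pi_m^L(x)) = s^{\pm m/2}\Pi_m^L(x)$, and then estimate term by term using that each $\Pi_m^L$ is a norm contraction. The only cosmetic difference is that you make the symmetry $\sum_{m=-M}^M s^{-m/2} = \sum_{m=-M}^M s^{m/2}$ explicit, which the paper leaves implicit in its $\pm$ notation.
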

\begin{proof}
This follows from Lemma \ref{l:sigmaonspec}. Indeed, for every $s \in (0,1]$ we have the estimate:
\[
\big\| \si_L(s^{\pm 1/2},x) \big\| = \Big\| \sum_{m = -M}^M s^{\pm m/2} \Pi_m^L(x) \Big\|
\leq \sum_{m = - M}^M s^{\pm m/2} \| x \| .  \qedhere
\]
\end{proof}


\subsection{The continuous field}\label{subsec:cont-field}
It is possible to consider the unital $C^*$-algebras $C(SU_q(2))$ for different values of $q \in (0,1]$ as fibres in a continuous field of $C^*$-algebras, as was shown by Blanchard in \cite{Bla:DCH}. For the sake of clarity, we will, for a moment,  adorn the elements in $SU_q(2)$ with an additional $q$, thus writing $a_q$ and $b_q$ for the generators. Let us fix $\de\in (0,1)$. We obtain from \cite[Th\'eor\`eme 3.3 \& Proposition 7.1]{Bla:DCH} that there exists a unital continuous field of $C^*$-algebras $C(SU_\bullet(2))$ over $[\delta,1]$ whose fibre at $q$ agrees with $C(SU_q(2))$. Concretely, the continuous field $C(SU_\bullet(2))$ is defined as the universal $C^*$-algebra generated by three elements $a_\bullet,b_\bullet$ and $f$ subject to the relations
\begin{itemize}
\item[-] $f$ commutes with $a_\bullet$ and $b_\bullet$; 
\item[-] $f$ is selfadjoint and the spectrum of $f$ agrees with the interval $[\de,1]$;
\item[-] $u_\bullet = \begin{pmatrix} a_\bullet^* & - f b_\bullet \\ b_\bullet^* & a_\bullet \end{pmatrix}$ is a unitary element in $\mathbb{M}_2(C(SU_\bullet(2)))$.
\end{itemize}
For each $q \in [\de,1]$, the evaluation homomorphism $\T{ev}_q\colon C(SU_\bullet(2)) \to C(SU_q(2))$ is defined by sending the generators $a_\bullet$ and $b_\bullet$ to the corresponding generators $a_q$ and $b_q$ in $C(SU_q(2))$ and by sending $f$ to the scalar $q$.  In what follows, we will tacitly identify $C^*(f)$ with $C([\de, 1])$. 
We denote by $\C O(SU_\bullet(2))$ the unital $*$-subalgebra generated by $C^*(f), a_\bullet$ and $b_\bullet$.  Note that it follows from the discussion in the beginning of Section \ref{s:quantumsu2} that the elements
\begin{align}\label{eq:standard-basis-field}
\xi^{klm}_\bullet:= \begin{cases}
  a_\bullet^kb_\bullet^l(b_\bullet^*)^m   &   k,l,m\geq 0  \\
  b_\bullet^l(b_\bullet^*)^m (a_\bullet^*)^{-k} &   k<0 ,l,m\geq 0  
\end{cases}
\end{align}
constitute a basis for $\C O(SU_\bullet(2))$ when considered as a $C([\de,1])$-module.
 Let $k \in \zz$ and $l,m \in \nn_0$. As a consequence of the twisted Leibniz rule from \eqref{eq:twisted-leibnitz-for-pae-and-paf} there exists a unique element $\pa_{e_\bullet}(\xi^{klm}_\bullet) \in \C O(SU_\bullet(2))$ such that
\[
\T{ev}_q\big( \pa_{e_\bullet}(\xi^{klm}_\bullet) \big) =\pa_e\big(\T{ev}_q(\xi^{klm}_\bullet)\big) \q \T{for all } q \in [\de,1] .
\]
We may thus define $\pa_{e_\bullet}  \colon \C O(SU_\bullet(2)) \to \C O(SU_\bullet(2))$, by mapping each basis element $\xi^{klm}_\bullet$ to  $\pa_{e_\bullet}(\xi^{klm}_\bullet)$ and extending by $C([\de,1])$-linearity. By construction, it holds that 
\[
\T{ev}_q\big( \pa_{e_\bullet}(x_\bullet) \big) =\pa_e\big(\T{ev}_q(x_\bullet)\big) \q \T{for all } x_\bullet \in \C O(SU_\bullet(2)) \, \T{ and } \, \, q \in [\de,1] .
\]
In a similar fashion, we define a $C([\de,1])$-linear map $\pa_{f_\bullet} \colon \C O(SU_\bullet(2)) \to \C O(SU_\bullet(2))$ satisfying that
\[
\T{ev}_q\big( \pa_{f_\bullet}(x_\bullet) \big) =\pa_f\big(\T{ev}_q(x_\bullet)\big) \q \T{for all } x_\bullet \in \C O(SU_\bullet(2)) \, \T{ and } \, \, q \in [\de,1] .
\]

\section{Spectral geometry on quantum $SU(2)$}\label{s:twisspectrip}
In this section we provide a detailed treatment of the non-commutative geometry of quantum $SU(2)$. As alluded to in the introduction, it has turned out remarkably difficult to properly unify the theory of quantum groups with Connes' non-commutative geometry, and the general consensus seems to be that one needs to relax Connes' axioms by allowing for  certain twists; see \cite{CoMo:TST}. There are by now a number of candidates for Dirac operators on $SU_q(2)$ with various advantages and disadvantages \cite{KRS:RFH, KaSe:TST, DLSSV:DOS, BiKu:DQQ, CP:EST, KW:TDO, NeTu:DCQ, BCZ:CQM}, and here we wish to give a detailed analysis of the Dirac operators proposed in \cite{KaSe:TST} and \cite{KRS:RFH} from the quantum metric point of view. In order to treat both Dirac operators simultaneously, it will be an advantage to allow for an additional parameter $t$ which, for fixed $q$, interpolates between the Dirac operator from \cite{KaSe:TST} and that from \cite{KRS:RFH} on $SU_q(2)$.  We emphasise that for $t \neq q$ we do not work with a single Dirac operator but rather with a pair of Dirac operators, aligning with the terminology from classical fiber bundles, we refer to them as the vertical and horizontal Dirac operator, respectively. The vertical and horizontal Dirac operators are in fact incompatible in the sense that their interactions with the coordinate algebra require the use of two different twists. \\

\subsection{The horizontal and vertical Dirac operators}
Let us fix two parameters $t,q \in (0,1]$. We define two unbounded operators $\C D^H_q$ and $\C D^V_t \colon \C O(SU_q(2))^{\op 2} \to L^2(SU_q(2))^{\op 2}$. The first of these unbounded operators is referred to as the \emph{horizontal Dirac operator} and is given by the matrix
\begin{equation}\label{eq:dirhori}
\C D^H_q := \pma{ 0 & -q^{-1/2} \pa_{fk^{-1}} \\ -q^{1/2} \pa_{ek^{-1}} & 0} .
\end{equation}
We remark that $\C D^H_q$ is independent of the parameter $t \in (0,1]$. The second  unbounded operators is referred to as the \emph{vertical Dirac operator} and given by the assignment
\begin{equation}\label{eq:dirvert}
\C D^V_t \pma{\xi \\ \eta} := \pma{ t^{\frac{-n + 1}{2}} \big[ \frac{n-1}{2}\big]_t & 0 \\ 0 & - t^{\frac{-m - 1}{2}} \big[ \frac{m+1}{2}\big]_t} \cd \pma{\xi \\ \eta}  
\end{equation}
for all $\xi \in \C A^n_q$ and $\eta \in \C A^m_q$. 
A direct computation verifies that both $\C D^V_t$ and $\C D^H_q$ are symmetric and for both operators there exists a family of  orthogonal finite dimensional invariant subspaces which span a dense subspace in $L^2(SU_q(2))^{\oplus 2}$; it may even be deduced from \eqref{eq:exppai} that we can obtain a joint invariant family of  finite dimensional subspaces, by setting
\[
V^n_{ij} := \left\{ \begin{pmatrix} \la \cd u^n_{ij} \\ \mu \cd u^n_{i,j-1} \end{pmatrix} \mid \la,\mu \in \cc \right\} 
\su \C O(SU_q(2))^{\op 2}, \qquad n\in \nn_0, i,j\in \{0,\dots, n\}.
\]
It therefore follows that $\C D^H_q, \C D_t^V$ and $\C D_t^V + \C D^H_q$ are essentially selfadjoint, and we denote the selfadjoint closures of the horizontal and vertical Dirac operators by $D^H_q$ and $D^V_t$, respectively. Moreover, we have the following convenient description of the closure of $\C D_t^V + \C D^H_q \colon \C O(SU_q(2))^{\op 2} \to L^2(SU_q(2))^{\op 2}$:
\begin{lemma}\label{l:sumdirac}
The unbounded operator $\C D^V_t + \C D^H_q$ is essentially selfadjoint. Moreover, it holds that $\T{Dom}(\overline{\C D^V_t + \C D^H_q})= \T{Dom}(D_t^V)\cap \T{Dom}(D^H_q)$ and $\overline{\C D^V_t + \C D^H_q}= D_t^V + D^H_q$.
\end{lemma}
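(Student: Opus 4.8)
The plan is to exploit the common core of orthogonal finite-dimensional invariant subspaces already exhibited just before the statement. First I would observe that the spaces $V^n_{ij} \su \C O(SU_q(2))^{\op 2}$ are invariant under $\C D^V_t$, under $\C D^H_q$, and hence under $\C D^V_t + \C D^H_q$, and that their algebraic span is precisely $\C O(SU_q(2))^{\op 2}$, which is dense in $L^2(SU_q(2))^{\op 2}$. Since on each finite-dimensional invariant subspace a symmetric operator is automatically selfadjoint, a standard criterion (a symmetric operator with a dense span of finite-dimensional reducing subspaces is essentially selfadjoint) gives that $\C D^V_t + \C D^H_q$ is essentially selfadjoint; this reproves the first sentence and is the easy part.

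For the core identification, the natural strategy is a double inclusion at the level of the closures. The inclusion $\overline{\C D^V_t + \C D^H_q} \su D^V_t + D^H_q$ is the soft direction: if $\xi_k \to \xi$ in $\C O(SU_q(2))^{\op 2}$ with $(\C D^V_t + \C D^H_q)\xi_k$ convergent, one wants to split the sequence $(\C D^V_t + \C D^H_q)\xi_k = \C D^V_t \xi_k + \C D^H_q \xi_k$ into its two convergent pieces. This is where the shared invariant decomposition does the real work: because $\C D^V_t$ acts on $V^n_{ij}$ by a \emph{diagonal} matrix (it multiplies the two coordinates by scalars depending only on $n$ and $m$) while $\C D^H_q$ acts by an \emph{off-diagonal} matrix (it swaps and reweights the coordinates), the images $\C D^V_t V^n_{ij}$ and $\C D^H_q V^n_{ij}$ sit inside mutually orthogonal one-dimensional pieces of a slightly larger finite-dimensional space. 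Consequently $\|\C D^V_t \xi\|^2 + \|\C D^H_q \xi\|^2 = \|(\C D^V_t + \C D^H_q)\xi\|^2$ for every $\xi \in \C O(SU_q(2))^{\op 2}$, and both $\C D^V_t \xi_k$ and $\C D^H_q \xi_k$ are Cauchy; hence $\xi \in \T{Dom}(D^V_t) \cap \T{Dom}(D^H_q)$ and $\overline{(\C D^V_t + \C D^H_q)}\xi = D^V_t \xi + D^H_q \xi$. I expect this orthogonality bookkeeping — pinning down exactly which coordinates of which $V^n_{i,j\pm 1}$ receive the vertical versus horizontal contributions, and checking it survives on the $n$-indexed direct sum — to be the main technical obstacle, though it is entirely elementary.

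For the reverse inclusion $D^V_t + D^H_q \su \overline{\C D^V_t + \C D^H_q}$, I would argue that $\C O(SU_q(2))^{\op 2}$ is a common core for $D^V_t$ and for $D^H_q$ (this follows from essential selfadjointness of each of $\C D^V_t$ and $\C D^H_q$, already noted in the text) and then show it is a core for the sum. Given $\zeta \in \T{Dom}(D^V_t) \cap \T{Dom}(D^H_q)$, use the spectral projections $\Pi^L_m$ (equivalently, truncate the expansion in the $V^n_{ij}$) to produce a sequence $\zeta_k \in \C O(SU_q(2))^{\op 2}$ with $\zeta_k \to \zeta$; because the truncations are finite sums of reducing subspaces, $D^V_t \zeta_k \to D^V_t \zeta$ and $D^H_q \zeta_k \to D^H_q \zeta$ simultaneously (the Pythagorean identity above again controls the tails), so $(\C D^V_t + \C D^H_q)\zeta_k \to D^V_t\zeta + D^H_q\zeta$. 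This shows $\zeta \in \T{Dom}(\overline{\C D^V_t + \C D^H_q})$ with the expected value, and combined with the previous paragraph yields both $\T{Dom}(\overline{\C D^V_t + \C D^H_q}) = \T{Dom}(D^V_t) \cap \T{Dom}(D^H_q)$ and $\overline{\C D^V_t + \C D^H_q} = D^V_t + D^H_q$, completing the proof.
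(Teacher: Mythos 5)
Your overall route is genuinely different from the paper's and its skeleton is viable, but the central step is justified incorrectly, and this is a real gap. The heart of your argument is the graph-norm identity $\|\C D^V_t\xi\|^2+\|\C D^H_q\xi\|^2=\|(\C D^V_t+\C D^H_q)\xi\|^2$ on the core, i.e.\ the vanishing of $2\T{Re}\inn{\C D^V_t\xi,\C D^H_q\xi}$. Your reason for it — that $\C D^V_t V^n_{ij}$ and $\C D^H_q V^n_{ij}$ land in mutually orthogonal one-dimensional pieces — is false: both operators map each $V^n_{ij}$ into itself (this joint invariance is precisely why the family was introduced), so the two images lie in the same two-dimensional space. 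Orthogonality does hold when you apply the operators to a single basis vector $(u^n_{ij},0)$ or $(0,u^n_{i,j-1})$, but it does not pass to linear combinations inside one $V^n_{ij}$, and "diagonal versus off-diagonal'' alone is not enough: if, in the normalised orthogonal basis of $V^n_{ij}$, the vertical operator is $\T{diag}(\al,\be)$ (real entries) and the horizontal one is the hermitian off-diagonal matrix with entry $\ga$, then for $v=(\la,\mu)$ one finds $\T{Re}\inn{\C D^V_t v,\C D^H_q v}=(\al+\be)\cd\T{Re}(\ov{\la}\ga\mu)$, which vanishes for all $v$ only if $\al+\be=0$. So as written, the key identity is unproven.

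The identity is nevertheless true, for the reason your computation is forced to use: the two legs of $V^n_{ij}$ sit in spectral degrees $k=2j-n$ and $k-2$, and the corresponding vertical eigenvalues $t^{\frac{-k+1}{2}}\big[\tfrac{k-1}{2}\big]_t$ and $-t^{\frac{-(k-2)-1}{2}}\big[\tfrac{(k-2)+1}{2}\big]_t$ are exact negatives of each other; equivalently, $\C D^V_t$ and $\C D^H_q$ anticommute on the core, whence $2\T{Re}\inn{\C D^V_t\xi,\C D^H_q\xi}=\inn{\xi,(\C D^V_t\C D^H_q+\C D^H_q\C D^V_t)\xi}=0$. This anticommutation is exactly the computation the paper carries out; the paper then cites the Lesch--Mesland results \cite[Theorem 2.6]{MeLe} on weakly anticommuting operators to get that $D^V_t+D^H_q$ is selfadjoint on $\T{Dom}(D^V_t)\cap\T{Dom}(D^H_q)$ and concludes from $\overline{\C D^V_t+\C D^H_q}\su D^V_t+D^H_q$. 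If you insert the anticommutation (or the explicit sign observation above), your Pythagoras argument becomes a correct and more elementary replacement for that citation; note also that your final approximation paragraph is then superfluous, since once $\overline{\C D^V_t+\C D^H_q}\su D^V_t+D^H_q$ is known, the right-hand side is a symmetric extension of a selfadjoint operator and hence equal to it — which is also how the paper finishes.
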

\begin{proof}
We already argued that $\C D^V_t + \C D^H_q$ is essentially selfadjoint. Let $n,m \in \zz$. Using that $\pa_e(\C A_q^n)\subseteq \C A_q^{n-2}$ and $\pa_f(\C A_q^m)\subseteq \C A_q^{m+2}$, we obtain for $\xi\in \C A_q^n$ and $\eta\in \C A_q^m$ that
\begin{align*}
\C D^H_q \C D_t^V \pma{\xi \\ \eta} &= \C D^H_q \pma{t^{-\frac{n-1}{2}}\left[\frac{n-1}{2}\right]_t\xi \\ - t^{-\frac{m+1}{2}}\left[\frac{m+1}{2}\right]_t \eta}= \pma{q^{-\frac{1}{2}} t^{ -\frac{m+1}{2}}\left[\frac{m+1}{2}\right]_t\pa_{fk^{-1}}\eta \\     -q^{\frac12 }t^{-\frac{(n-1)}{2}}\left[\frac{n-1}{2}\right]_t \pa_{ek^{-1}}\xi }\\
&= -\pma{ t^{-\frac{(m+2)-1}{2}}\big[\frac{(m+2)-1}{2}\big]_t  & 0 \\  0 &  - t^{-\frac{(n-2)+1}{2}}\big[\frac{(n-2)+1}{2}\big]_t } \pma{-q^{-\frac12} \pa_{f k^{-1}}\eta \\ -q^{\frac12 } \pa_{ek^{-1}}\xi } \\
&=- \C D_t^V \C D^H_q  \pma{\xi \\ \eta}.
\end{align*}
Hence, $\C D_t^V$ and $\C D^H_q$ anti-commute on the core $\C O (SU_q(2))^{\oplus 2}$ and from \cite[Proposition 2.3]{MeLe} it therefore follows that $D_t^V$ and $D^H_q$ weakly anti-commute in the sense of  \cite[Definition 2.1]{MeLe}. An application of \cite[Theorem 2.6]{MeLe} therefore gives that $D_t^V + D^H_q $ is selfadjoint on  $\T{Dom}(D_t^V)\cap \T{Dom}(D^H_q)$; see also \cite{Mes:UCN, KaLe:LGR}. Hence $\overline{\C D^V_t + \C D^H_q} \subseteq D_t^V + D^H_q$ and since both operators are selfadjoint the opposite inclusion follows trivially.
\end{proof}

\subsection{The origin of the Dirac operators}\label{subsec:comparison-with-KS-and-KRS}
We now describe the precise relationship between the Dirac operators constructed above and those introduced in  \cite{KaSe:TST} and \cite{KRS:RFH}.
Setting $t=q$, a direct computation verifies that 
\[
\C D_q :=\C D^V_q+\C  D^H_q =
\begin{cases} \SmallMatrix{ \frac{ 1 - q \pa_{k^{-2}}}{q - q^{-1}}  & -q^{-1/2} \pa_{fk^{-1}}  \\ -q^{1/2} \pa_{ek^{-1}} & \frac{q^{-1}\pa_{k^{-2}} - 1}{q - q^{-1}}  } &\mbox{ for } q\in (0,1) \\
\SmallMatrix{ \frac{1}{2} (\pa_h - 1) & -\pa_f \\ -\pa_e & -\frac{1}{2} (\pa_h + 1) } & \mbox{ for } q=1 \end{cases} .
\]
Comparing with the Dirac operator $\C D_q^{\T{KS}}$ introduced in \cite{KaSe:TST} we then have the identity
\[
\C D_q = \pma{0 & 1 \\ - 1 & 0} \C D_q^{\T{KS}} \pma{0 & - 1 \\ 1 & 0} .
\]
In \cite{KRS:RFH}, Kr\"ahmer, Rennie and Senior proposed another candidate for a Dirac operator,  $\C D_q^{\T{KRS}}$, which they apply to construct a non-trivial twisted Hochschild 3-cocycle; see \cite[Theorem 3.5]{KRS:RFH}. This provides one way of formalising the intuition that $SU_q(2)$ ought to have dimension 3 as a non-commutative manifold, avoiding the typical dimension drop phenomenon. In our notation, their Dirac operator is given by
\[
\C D_q^{\T{KRS}}:= \C D^V_1 + \underbrace{\pma{ 0 & q^{-1/2}\pa_{kf} \\ q^{1/2}\pa_{ke} &0 }}_{=: \C D^{H}_{\T{KRS}}}.
\]
The relationship between our horizontal Dirac operator and the horizontal Dirac operator introduced by Kr\"ahmer, Rennie and Senior is governed by the unbounded strictly positive operator $\Gamma_{q,0}$ via the relation
\begin{equation}\label{eq:dampening}
\Gamma_{q,0} \C D^{H}_{\T{KRS}} \Gamma_{q,0}=- \C D^H_q .
\end{equation}

 The vertical and horizontal Dirac operators $D_1^V$ and $D_q^H$ are also compatible with the unbounded Kasparov product in a way which we will now explain; see \cite{KaLe:SFU,Mes:UCN,MeRe:NMU}. It is however important to realise that the triple $(C(SU_q(2)), L^2(SU_q(2))^{\op 2}, D_1^V + D_q^H)$ is \emph{not} a spectral triple unless $q = 1$, so that we are formally beyond the scope of the current state of the art in unbounded $KK$-theory. We let $D_q^0$ denote the Dirac operator associated with the D\c{a}browski-Sitarz spectral triple $(C(S_q^2),H_q^1 \op H_q^{-1},D_q^0)$; see \cite{DaSi:DSP}. We are going to discuss this even spectral triple in more details in Section \ref{ss:podles-revisited}, but record for the moment that $D_q^0$ agrees with the closure of the unbounded symmetric operator
\[
\C D_q^0 := \pma{0 & -\pa_f \\ - \pa_e & 0} \colon \C A_q^1 \op \C A_q^{-1} \to H_q^1 \op H_q^{-1} .
\]
The grading operator on $H_q^1 \op H_q^{-1}$ is denoted by $\ga := \SmallMatrix{1 & 0 \\ 0 & -1}$, and the derivation on $\C O(S_q^2)$ coming from $D_q^0$ by taking commutators is denoted by $\pa^0 \colon \C O(S_q^2) \to \B B(H_q^1 \op H_q^{-1})$.
Let $E$ denote the Hilbert $C^*$-module obtained by completing $\C O(SU_q(2))$ with respect to the $C(S_q^2)$-valued inner product given by $\inn{x,y} := \Pi^L_0(x^* y)$. We may turn $E$ into a $C^*$-correspondence from $C(SU_q(2))$ to $C(S_q^2)$ where the left action of $C(SU_q(2))$ is induced by the product structure in $\C O(SU_q(2))$. The $C^*$-correspondence $E$ can moreover be equipped with the unbounded selfadjoint and regular operator $N \colon \T{Dom}(N) \to E$ defined on the core $\C O(SU_q(2)) \su E$ by putting $N(x) = n \cd x$ whenever $x \in \C A^n_q$. The pair $(C(SU_q(2)),E,N)$ is then an odd unbounded Kasparov module from $C(SU_q(2))$ to $C(S_q^2)$;  see \cite{CNNR:TEK} for more details.
Following the scheme of unbounded $KK$-theory, we should in principle  be able to form the unbounded Kasparov product of the odd unbounded Kasparov module $(C(SU_q(2)),E,N)$ and the even spectral triple $(C(S_q^2),H_q^1 \op H_q^{-1},D_q^0)$. The result of this operation is in general not a spectral triple on $C(SU_q(2))$, but we may still investigate the involved unbounded operators on the Hilbert space $E \hot_{C(S_q^2)} (H_q^1 \op H_q^{-1})$, which arises as the interior tensor product between the $C^*$-correspondence $E$ and the $C^*$-correspondence $H_q^1 \op H_q^{-1}$. 
The interior tensor product $E \hot_{C(S_q^2)} (H_q^1 \op H_q^{-1})$ is isomorphic to $L^2(SU_q(2))^{\op 2}$ and the isomorphism is induced by the product structure in $\C O(SU_q(2))$. The unbounded selfadjoint and regular operator $N \colon \T{Dom}(N) \to E$ gives rise to the unbounded selfadjoint and regular operator $N \hot \ga \colon \T{Dom}(N \hot 1) \to E \hot_{C(S_q^2)} (H_q^1 \op H_q^{-1})$ which is given by $N \ot \ga$ on the core $\T{Dom}(N) \ot_{C(S_q^2)} (H_q^1 \op H_q^{-1}) \su E \hot_{C(S_q^2)} (H_q^1 \op H_q^{-1})$. Under the isomorphism between $E \hot_{C(S_q^2)} (H_q^1 \op H_q^{-1})$ and $L^2(SU_q(2))^{\op 2}$ it can be verified that $N \hot \ga$ agrees with $D^V_1$. This explains the relationship between the vertical Dirac operator $D^V_1$ and the expected formula from unbounded $KK$-theory.

In order to explain the relationship between the horizontal Dirac operator and constructions appearing in unbounded $KK$-theory, we define the \emph{Gra\ss mann connection}
\[
\Na \colon \C O(SU_q(2)) \to E \underset{C(S_q^2)}{\hot} \B B(H_q^1 \op H_q^{-1})
\]
by putting $\Na(x) := \sum_{i = 0}^n (u^n_{i0})^* \ot \pa^0(u^n_{i0} \cd x)$ whenever $x$ belongs to the algebraic spectral subspace $\C A^n_q \su \C O(SU_q(2))$. Combining this Gra\ss mann connection with the Dirac operator from the D\c{a}browski-Sitarz spectral triple we obtain the linear map

\begin{align*}
 1 \ot_\Na \C D_q^0 \colon \C O(SU_q(2)) \underset{\C O(S_q^2)}{\ot} (\C A_q^1 \op \C A_q^{-1}) &\longrightarrow E\underset{C(S_q^2)}{\hot} (H_q^1 \op H_q^{-1}) 
\end{align*}
given by $ (1 \ot_\Na \C D_q^0)(x \ot y) := \Na(x)(y) + x \ot \C D_q^0(y)$, where the domain agrees with the balanced tensor product $\C O(SU_q(2)) \ot_{\C O(S_q^2)} (\C A_q^1 \op \C A_q^{-1})$. It can then be verified that $1 \ot_\Na \C D_q^0$ induces an unbounded symmetric operator on the Hilbert space $E \hot_{C(S_q^2)} (H_q^1 \op H_q^{-1})$ and this unbounded symmetric operator is unitarily equivalent to $\C D_H^{\T{KRS}} \colon \C O(SU_q(2))^{\op 2} \to L^2(SU_q(2))^{\op 2}$. The dampening procedure applied in \eqref{eq:dampening} in order to pass from the horizontal Dirac operator $\C D_H^{\T{KRS}}$ to the horizontal Dirac operator $\C D^H_q$ appears in many places and is systematically investigated in \cite{Kaa:DAH,Kaa:UKM} from the point of view of unbounded $KK$-theory. We record however that the modular operators applied in \cite{Kaa:DAH,Kaa:UKM} are all assumed to be bounded (even though inverses are allowed to be unbounded).

\subsection{Bounded twisted commutators}
Recall that $\pi \colon C(SU_q(2)) \to \B B\big( L^2(SU_q(2))^{\op 2} \big)$ denotes the injective $*$-homomorphism obtained by letting the GNS representation $\rho$ act diagonally. We now wish to describe the interaction between the coordinate algebra $\C O(SU_q(2))$ and the horizontal and vertical Dirac operators. To this end, it is convenient to introduce the linear maps $\pa^1$ and $\pa^2 \colon \C O(SU_q(2)) \to \C O(SU_q(2))$ given by the formulae 
\begin{equation}\label{eq:twideriI}
\pa^1 := q^{1/2} \pa_e, \q \pa^2 := q^{-1/2} \pa_f,
\end{equation}
as well as the linear map $\pa^3_t \colon \C O(SU_q(2)) \to \C O(SU_q(2))$ given by
\begin{equation}\label{eq:twideriII}
\pa^3_t(x) := [n/2]_t \cd x \q \T{for all } x \in \C A^n_q .
\end{equation}


The following lemma shows that suitably twisted commutators with the horizontal and vertical Dirac do indeed give rise to bounded operators, which may be explicitly described via the maps just introduced. Note that for $t=q$ the twist is the same and in this case the lemma below becomes the statement from  \cite[Lemma 3.2]{KaSe:TST}; cf.~Section \ref{subsec:comparison-with-KS-and-KRS}.

\begin{lemma}\label{l:twicommu}
For each $x \in \C O(SU_q(2))$, it holds that the twisted commutators
\[
\begin{split}
 \C D^H_q \cd \si_L(q^{1/2},x) - \si_L(q^{-1/2},x) \cd \C D^H_q \colon  \C O(SU_q(2))^{\op 2} &\longrightarrow L^2(SU_q(2))^{\op 2} \q \mbox{and} \\
 \C D^V_t \cd \si_L(t^{1/2},x) - \si_L(t^{-1/2},x) \cd \C D^V_t : \C O(SU_q(2))^{\op 2}& \longrightarrow L^2(SU_q(2))^{\op 2}
\end{split}
\]
extend to bounded operators on $L^2(SU_q(2))^{\op 2}$ given, respectively, by
\[
\pa_q^H(x) := \pma{ 0 & - \pa^2(x) \\ -\pa^1(x) & 0 } \q \mbox{and} \q \pa^V_t(x) := \pma{\pa^3_t(x) & 0 \\ 0 & -\pa^3_t(x) } .
\]
\end{lemma}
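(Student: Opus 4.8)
The plan is to verify the two twisted-commutator identities separately, reducing each to a computation on the joint invariant family of finite-dimensional subspaces $V^n_{ij}$, or equivalently on matrix coefficients $u^n_{ij}$, and then checking that the resulting operators are bounded. For the vertical part, I would first record that the subtle object $\si_L(t^{\pm 1/2},x)$ makes sense as an algebra automorphism of $\C O(SU_q(2))$ by Lemma \ref{l:sigmaonspec}, so that for $x \in \C A^\ell_q$ we simply have $\si_L(t^{\pm 1/2},x) = t^{\pm \ell/2} x$. Since $\pa^V_t$ is diagonal and acts on $\C A^n_q$ by the scalar $t^{(1-n)/2}[\frac{n-1}{2}]_t$ on the first leg (and a similar scalar on the second), the twisted commutator $\C D^V_t \cd \si_L(t^{1/2},x) - \si_L(t^{-1/2},x) \cd \C D^V_t$ applied to $\xi \in \C A^n_q$ in the first coordinate produces, using $x\xi \in \C A^{n+\ell}_q$, the scalar $t^{\ell/2}\big( t^{(1-n-\ell)/2}[\tfrac{n+\ell-1}{2}]_t - t^{-\ell/2} t^{(1-n)/2}[\tfrac{n-1}{2}]_t\big) x\xi$. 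The $q$-number identity $[\frac{n+\ell-1}{2}]_t = t^{\ell/2}[\frac{n-1}{2}]_t + t^{(1-n)/2}[\frac{\ell}{2}]_t$ (the standard addition formula $[a+b]_t = t^b[a]_t + t^{-a}[b]_t$) then collapses this to $[\tfrac{\ell}{2}]_t \cdot x\xi = \pa^3_t(x)\xi$, which is exactly the $(1,1)$-entry of $\pa^V_t(x)$; the $(2,2)$-entry is handled identically with the appropriate sign. Boundedness then follows because $\pa^3_t$ maps the finite-dimensional isotypic block $\C A^\ell_q$ (for homogeneous $x$) to a scalar multiple of the left multiplication by $x$, and one reduces the general case to $x$ in a single spectral subspace together with the estimate on spectral bands; more directly, one may argue that $\pa^V_t(x)$ is a finite sum of terms each bounded by the operator norm of left multiplication by a component of $x$ times a fixed $q$-number, so $\pa^V_t(x)$ extends to a bounded operator.

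For the horizontal part, I would again take $x$ homogeneous, say $x \in \C A^\ell_q$, use $\si_L(q^{\pm 1/2},x) = q^{\pm \ell/2} x = \pa_{k^{\pm 1}}(x)$ on $\C O(SU_q(2))$, and exploit that the off-diagonal entries of $\C D^H_q$ are the twisted derivations $-q^{\mp 1/2}\pa_{ek^{-1}}$ and $-q^{\pm 1/2}\pa_{fk^{-1}}$. The key computation is to expand, for the $(2,1)$-entry, $-q^{1/2}\pa_{ek^{-1}}(\pa_{k}(x)\cdot\xi) + q^{-\ell/2}\cdot q^{1/2}\pa_{k^{-1}}(x)\cdot \pa_{ek^{-1}}(\xi)$ for $\xi$ in the relevant spectral subspace, and to apply the twisted Leibniz rule \eqref{eq:twisted-leibnitz-for-pae-and-paf} together with the commutation relations between $\pa_e$, $\pa_k$ (from \eqref{eq:derexpII} and the $q$-commutation $ek = qke$). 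The derivation term involving $\pa_{ek^{-1}}(\xi)$ should cancel against the second summand precisely because $\pa_{k^{-1}}$ contributes the compensating power of $q$, leaving only the term where $\pa_{ek^{-1}}$ lands on $x$, namely $-q^{1/2}\pa_{ek^{-1}}(x) = -\pa^1(x)$ after absorbing the action of $\pa_{k^{-1}}$ on $x$ into the scalar $q^{-\ell/2}$ built into the twist. One must be careful about whether the $k^{-1}$-dampening is split as $\pa_{ek^{-1}} = \pa_e \circ \pa_{k^{-1}}$ or $\pa_{k^{-1}}\circ\pa_e$, and track the resulting power of $q$; the definition $\pa^1 = q^{1/2}\pa_e$ is chosen exactly so that the twist by $\si_L(q^{\pm 1/2},\cdot)$ absorbs the dampening. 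The $(1,2)$-entry is symmetric, producing $-\pa^2(x) = -q^{-1/2}\pa_f(x)$. Boundedness of $\pa^H_q(x)$ reduces to boundedness of left multiplication operators, since $\pa^1(x), \pa^2(x) \in \C O(SU_q(2)) \su C(SU_q(2))$ act as bounded operators on $L^2(SU_q(2))$ via $\rho$.

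Finally, to assemble the full statement I would note that by linearity it suffices to treat homogeneous $x$, that the identities established on the dense core $\C O(SU_q(2))^{\op 2}$ characterise the bounded extensions uniquely, and that the displayed matrices $\pa^H_q(x)$ and $\pa^V_t(x)$ have entries in $\rho(\C O(SU_q(2)))$, hence are bounded. The main obstacle I anticipate is purely bookkeeping: getting the powers of $q$ and $t$ to match in the twisted Leibniz expansion, in particular keeping straight the three potentially different conventions in play (the dampening $k^{-1}$ inside $\pa_{ek^{-1}}$, the twist $\si_L(q^{\pm 1/2},\cdot) = \pa_{k^{\pm 1}}$, and the normalisation constants $q^{\pm 1/2}$ in the definitions of $\pa^1,\pa^2$ and of $\C D^H_q$). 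Once the $q$-number addition identity and the twisted Leibniz rule are invoked with the correct exponents, the cross terms cancel and the computation is routine; the boundedness claims are then immediate from the fact that all resulting operators are (finite matrices of) left multiplications by coordinate-algebra elements.
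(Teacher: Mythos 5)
Your proposal follows essentially the same route as the paper's proof: reduce by linearity to $x$ in a single spectral subspace, use $\si_L(q^{\pm 1/2},x)=q^{\pm \ell/2}x=\pa_{k^{\pm 1}}(x)$, verify the horizontal identity by the twisted Leibniz rule \eqref{eq:twisted-leibnitz-for-pae-and-paf} and the vertical one by a direct computation on $\C A_q^k\op\C A_q^m$ using the $q$-number addition formula, and read off boundedness from the fact that $\pa^1(x),\pa^2(x),\pa^3_t(x)$ lie in $\C O(SU_q(2))$. Only two small bookkeeping slips appear in your intermediate displays --- the outer factor $t^{\ell/2}$ should multiply only the first term of the vertical scalar, and in the horizontal expansion $\si_L(q^{-1/2},x)$ equals $q^{-\ell/2}x=\pa_{k^{-1}}(x)$ rather than $q^{-\ell/2}\pa_{k^{-1}}(x)$ --- neither of which affects the method or the stated outcome.
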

\begin{proof}
Note first that $\sigma_L(s^{\pm \frac12}, -)$ preserves $\C O(SU_q(2))$ for all $s\in (0,1]$ by Lemma \ref{l:sigmaonspec}, so that the compositions in the lemma are indeed well-defined. By linearity, it suffices to fix an  $n\in \zz$ and prove the statements for  $x\in \C A_q^n$. It then holds that  $\sigma_L(q^{\pm \frac12},x)=q^{\pm \frac{n}{2}}x=\pa_{k^{\pm 1} }(x)$. Using the twisted Leibniz rule from \eqref{eq:twisted-leibnitz-for-pae-and-paf}, the first formula may now be verified by a direct computation.  For the second equality, one computes the twisted commutator on an arbitrary vector in $\C A_q^k \oplus \C A_q^m$, and again a direct computation yields the desired formula.
\end{proof}

In classical Riemannian spin geometry, it is well known (see e.g.~\cite[Chapter 6, Lemma 1]{Con:NCG}) that a continuous function has bounded commutator with the Dirac operator exactly if the function in question is Lipschitz with respect to the Riemannian metric. Our next aim is to provide a suitable counterpart for the algebra of Lipschitz functions in the $q$-deformed setting. We recall that both of the parameters $t$ and $q$ in $(0,1]$ are currently fixed.

\begin{dfn}\label{def:Lipschitz-elements}
Let $x \in C(SU_q(2))$. We say that $x$ is \emph{horizontally Lipschitz} when
\begin{enumerate}
\item $x$ is analytic of order $-\log(q)/2$;
\item the bounded operator $\si_L(q^{1/2},x)$ preserves the domain of $D^H_q$; 
\item the twisted commutator
\[
D^H_q \cd \si_L(q^{1/2},x) - \si_L(q^{-1/2},x) \cd D^H_q \colon \T{Dom}(D^H_q) \to L^2(SU_q(2))^{\op 2}
\]
extends to a bounded operator $\pa_q^H(x)$ on $L^2(SU_q(2))^{\op 2}$.  The set of horizontally Lipschitz elements is denoted $\T{Lip}^{H}(SU_q(2))$.
\end{enumerate}
We say that $x$ is \emph{vertically Lipschitz} when
\begin{enumerate}
\item $x$ is analytic of order $-\log(t)/2$;
\item the bounded operator $\si_L(t^{1/2},x)$ preserves the domain of $D^V_t$; 
\item the twisted commutator
\[
D^V_t \cd \si_L(t^{1/2},x) - \si_L(t^{-1/2},x) \cd D^V_t \colon \T{Dom}(D^V_t) \to L^2(SU_q(2))^{\op 2}
\]
extends to a bounded operator $\pa^V_t(x)$ on $L^2(SU_q(2))^{\op 2}$.  The set of vertically Lipschitz elements is denoted $\T{Lip}_t^{V}(SU_q(2))$
\end{enumerate} 
We apply the notation $\T{Lip}_t(SU_q(2))$ for the subset of $C(SU_q(2))$ consisting of elements which are both horizontally and vertically Lipschitz. 
\end{dfn}

A few remarks are in place. The subset $\T{Lip}_t(SU_q(2)) \su C(SU_q(2))$ is in fact a unital $*$-subalgebra which we refer to as the \emph{Lipschitz algebra}. Moreover, we obtain from Lemma \ref{l:twicommu} that $\C O(SU_q(2)) \su \T{Lip}_t(SU_q(2))$ and hence that $\T{Lip}_t(SU_q(2))$ is norm-dense in $C(SU_q(2))$. The basic algebraic properties of the linear maps
\[
\pa_q^H \T{ and } \pa^V_t  \colon \T{Lip}_t(SU_q(2)) \longrightarrow \B B\big( L^2(SU_q(2))^{\op 2} \big)
\]
can be summarised as follows:

\begin{lemma}\label{l:twilip}
The linear maps $\pa_q^H, \pa^V_t  \colon \T{Lip}_t(SU_q(2)) \longrightarrow \B B\big( L^2(SU_q(2))^{\op 2} \big)$ are twisted $*$-derivations, in the sense that the formulae 
\[
\begin{split}
\pa_q^H(x^*) & = - \pa_q^H(x)^* \, \, , \, \, \, \, \,  \pa_q^H(x \cd y) = \pa_q^H(x) \si_L(q^{1/2},y) + \si_L(q^{-1/2},x) \pa_q^H(y) \q \mbox{and} \\
\pa^V_t(x^*) & = - \pa^V_t(x)^* \, \, , \, \, \, \, \, \pa^V_t(x \cd y) = \pa^V_t(x) \si_L(t^{1/2},y) + \si_L(t^{-1/2},x) \pa^V_t(y)
\end{split}
\]
hold for all $x,y \in \T{Lip}_t(SU_q(2))$.
\end{lemma}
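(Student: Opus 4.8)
The plan is to verify the two families of identities directly on the dense core $\C O(SU_q(2))^{\op 2}$, using the explicit descriptions of $\pa_q^H$ and $\pa^V_t$ from Lemma \ref{l:twicommu} together with the twisted Leibniz rules \eqref{eq:twisted-leibnitz-for-pae-and-paf} and the $*$-relations \eqref{eq:pae-and-paf-and-star} for $\pa_e$ and $\pa_f$, and then extend the resulting bounded-operator identities by continuity. Concretely, since $\pa_q^H(x)$ and $\pa^V_t(x)$ are by definition the closures of the twisted commutators $D^H_q\si_L(q^{1/2},x)-\si_L(q^{-1/2},x)D^H_q$ and $D^V_t\si_L(t^{1/2},x)-\si_L(t^{-1/2},x)D^V_t$, the $*$-derivation relations should first be checked for $x,y \in \C O(SU_q(2))$, where everything reduces to finite linear algebra on the invariant subspaces $V^n_{ij}$, and then promoted to all of $\T{Lip}_t(SU_q(2))$ using that $\C O(SU_q(2))$ is a core for the Dirac operators and that both sides of each identity are bounded operators depending continuously on the Lipschitz data.

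First I would establish the $*$-relations $\pa_q^H(x^*) = -\pa_q^H(x)^*$ and $\pa^V_t(x^*) = -\pa^V_t(x)^*$. For the horizontal derivation, using the matrix form $\pa_q^H(x) = \SmallMatrix{0 & -\pa^2(x) \\ -\pa^1(x) & 0}$ with $\pa^1 = q^{1/2}\pa_e$, $\pa^2 = q^{-1/2}\pa_f$, one has $\pa_q^H(x)^* = \SmallMatrix{0 & -\pa^1(x)^* \\ -\pa^2(x)^* & 0}$ since the GNS representation is a $*$-homomorphism; so it suffices to check $\pa^2(x^*) = -\pa^1(x)^*$ and $\pa^1(x^*) = -\pa^2(x)^*$, i.e.\ $q^{-1/2}\pa_f(x^*) = -q^{1/2}\pa_e(x)^*$ and $q^{1/2}\pa_e(x^*) = -q^{-1/2}\pa_f(x)^*$, both of which are immediate rearrangements of \eqref{eq:pae-and-paf-and-star}. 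For the vertical derivation, $\pa^V_t(x) = \SmallMatrix{\pa^3_t(x) & 0 \\ 0 & -\pa^3_t(x)}$ with $\pa^3_t(x) = [n/2]_t x$ for $x \in \C A^n_q$; since $x^* \in \C A^{-n}_q$ and $[-n/2]_t = -[n/2]_t$, one gets $\pa^3_t(x^*) = -[n/2]_t x^* = -\pa^3_t(x)^*$, and hence $\pa^V_t(x^*) = -\pa^V_t(x)^*$ on the coordinate algebra, extending to $\T{Lip}_t(SU_q(2))$ by density.

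Next I would verify the twisted Leibniz rules. For $x \in \C A^n_q$, $y \in \C A^m_q$ one has $\si_L(q^{\pm 1/2},x) = q^{\pm n/2}x = \pa_{k^{\pm1}}(x)$ by Lemma \ref{l:sigmaonspec}, and similarly $\si_L(t^{\pm1/2},x) = t^{\pm n/2}x$; so the claimed horizontal identity becomes $\pa^i(xy) = \pa^i(x)\pa_k(y) + \pa_{k^{-1}}(x)\pa^i(y)$ for $i=1,2$, which is exactly the twisted Leibniz rule \eqref{eq:twisted-leibnitz-for-pae-and-paf} rescaled by the constants $q^{\pm1/2}$, and the vertical identity becomes $[(n+m)/2]_t = [n/2]_t t^{m/2} + t^{-n/2}[m/2]_t$, which is the standard $q$-number addition formula $[a+b]_t = [a]_t t^{b} + t^{-a}[b]_t$ (valid also at $t=1$ where it degenerates to $(n+m)/2 = n/2 + m/2$). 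Both sides of each identity are bounded operators, and they agree on the algebraic spectral subspaces, hence on all of $\C O(SU_q(2))$; passing to general $x,y \in \T{Lip}_t(SU_q(2))$, one uses that $\C O(SU_q(2))^{\op 2}$ is a core for $D^H_q$ and $D^V_t$, that $\si_L(s^{1/2},-)$ is multiplicative on analytic elements by \eqref{eq:sigmaL-relations}, and that a product of Lipschitz elements is again Lipschitz, so the bounded twisted commutator of the product decomposes by the Leibniz rule on the core and therefore everywhere by continuity.

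The main obstacle is not conceptual but organisational: one must be careful that the twisted Leibniz identity for $\pa^V_t$, $\pa^H_q$ on $\T{Lip}_t(SU_q(2))$ really does follow from the coordinate-algebra case, which requires checking that the relevant operators ($\si_L(s^{\pm1/2},x)$, $D^H_q$, $D^V_t$ and their twisted commutators) all interact correctly on a common core and that the bimodule-type products $\pa_q^H(x)\si_L(q^{1/2},y)$ are genuinely bounded — this is where closing the commutators and the density arguments must be deployed with some care, and where one should invoke that $\T{Lip}_t(SU_q(2))$ is a $*$-algebra (as remarked after Definition \ref{def:Lipschitz-elements}) so that $\si_L(q^{1/2},y)$ preserving $\T{Dom}(D^H_q)$ is available for products.
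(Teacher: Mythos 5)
The step that fails is the ``promotion'' from the coordinate algebra to the Lipschitz algebra. Your verification of both identities for $x,y \in \C O(SU_q(2))$ is fine (it is essentially Lemma \ref{l:twicommu} combined with \eqref{eq:twisted-leibnitz-for-pae-and-paf}, \eqref{eq:pae-and-paf-and-star} and the $q$-number addition formula), but there is no topology in which $\C O(SU_q(2))$ is dense in $\T{Lip}_t(SU_q(2))$ \emph{and} the maps $x \mapsto \pa_q^H(x)$, $x \mapsto \pa^V_t(x)$ are continuous: these maps are unbounded for the $C^*$-norm (they are merely closable, and even closability is only established later, in Proposition \ref{p:twistdericlos}), and with respect to the natural graph-type topology the coordinate algebra is \emph{not} dense in $\T{Lip}_t(SU_q(2))$ --- the paper stresses exactly this point in the introduction, citing \cite[Theorem 3.1]{GKK:QI}, when distinguishing $L_{t,q}$ from $L_{t,q}^{\max}$. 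So the phrase ``both sides of each identity are bounded operators depending continuously on the Lipschitz data'' has no justification, and your appeal to ``a product of Lipschitz elements is again Lipschitz'' is essentially the content of the Leibniz rule you are trying to prove. (A smaller point: the matrix forms of $\pa_q^H(x)$ and $\pa^V_t(x)$ are at this stage only available for $x \in \C O(SU_q(2))$; on $\T{Lip}_t(SU_q(2))$ they are only established afterwards, in Remarks \ref{rem:pat-on-Lipschitz} and \ref{rem:form-of-patv}.)

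The repair --- and the paper's actual argument --- is to carry out the computation directly for arbitrary $x,y \in \T{Lip}_t(SU_q(2))$, with no reduction in the algebra variable. For the Leibniz rule: $\si_L(q^{1/2},x y) = \si_L(q^{1/2},x)\si_L(q^{1/2},y)$ by \eqref{eq:sigmaL-relations}, this operator preserves $\T{Dom}(D^H_q)$ since each factor does, and for $\xi \in \T{Dom}(D^H_q)$ one inserts and subtracts $\si_L(q^{-1/2},x) D^H_q \si_L(q^{1/2},y)\xi$ to obtain $\pa_q^H(x y)\xi = \pa_q^H(x)\si_L(q^{1/2},y)\xi + \si_L(q^{-1/2},x)\pa_q^H(y)\xi$; boundedness of the right-hand side simultaneously shows that $x y$ is horizontally Lipschitz. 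For the $*$-relation one uses the identity $\si_L(s^{1/2},x^*) = \si_L(s^{-1/2},x)^*$ (derived from \eqref{eq:sigmaL-relations}) together with the selfadjointness of $D^H_q$: taking adjoints in $\langle \pa_q^H(x)\eta,\xi\rangle$ for $\eta,\xi \in \T{Dom}(D^H_q)$ shows that $\si_L(q^{1/2},x^*)$ maps $\T{Dom}(D^H_q)$ into itself and that $\pa_q^H(x^*) = -\pa_q^H(x)^*$, with the identical argument for $D^V_t$. This direct route is what the paper's one-line proof refers to.
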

\begin{proof}
The twisted Leibniz rules are verified by a direct computation, and the $*$-compatibility follows from the selfadjointness of the involved unbounded operators and the formula $\sigma_L(s^{\frac12}, x)^* =\sigma_L(s^{-\frac12}, x^*)$, which can be derived from \eqref{eq:sigmaL-relations}.
\end{proof}

We are interested in the linear map
\[
\pa_{t,q} := \pa^V_t + \pa^H_q \colon \T{Lip}_t(SU_q(2)) \longrightarrow \B B\big( L^2(SU_q(2))^{\op 2} \big) .
\]
It is important to clarify that $\pa_{t,q}$ is \emph{not} a twisted derivation unless $t = q$. It does however hold that $\pa_{t,q}(x^*) = - \pa_{t,q}(x)^*$ for all $x \in \T{Lip}_t(SU_q(2))$. Later on, in Proposition \ref{p:twistdericlos}, we shall moreover see that $\pa_{t,q}$ is closable for the norm topology. \\

Let us denote the standard matrix units in $\B M_2(\cc)$ by $e_{ij}$, $i,j \in \{0,1\}$, and introduce the twisted derivations $\pa^1, \pa^2, \pa^3_t \colon \T{Lip}_t(SU_q(2)) \to \B B\big( L^2(SU_q(2)) \big)$ by putting
\[
\pa^1(x) := - e_{11} \cd \pa_{t,q}(x) \cd e_{00} \, \, ,  \, \, \, 
\pa^2(x) := - e_{00} \cd \pa_{t,q}(x) \cd e_{11} \, \, \T{ and } \ \, \, \, 
\pa^3_t(x) := e_{00} \cd \pa_{t,q}(x) \cd e_{00}
\]
for all $x \in \T{Lip}_t(SU_q(2))$. By Lemma \ref{l:twicommu}, this notation is compatible with the notation introduced in \eqref{eq:twideriI} and \eqref{eq:twideriII}.
The adjective \emph{twisted} above is here to be understood in the sense of Definition \ref{def:twisted-derivation} where the twists are given by $\sigma(q^{1/2}, \cdot )$ and $\sigma(q^{-1/2}, \cdot )$ for $\pa^1$ and $\pa^2$, and  by $\sigma(t^{1/2},\cdot)$ and $\sigma(t^{-1/2}, \cdot)$ for $\pa_t^3$.


\begin{remark}\label{rem:pat-on-Lipschitz}
Let $x\in \T{Lip}_t(SU_q(2))$ be given. A direct computation shows that $\binn{\zeta',e_{00} \pa_q^H(x) e_{00} \cd \zeta}= \binn{\zeta',e_{11} \pa_q^H(x)e_{11} \cd \zeta}=0$ for all $\zeta, \zeta'\in \C O(SU_q(2))^{\oplus 2}$. We thereby obtain that $\pa_q^H(x)=\SmallMatrix{ 0 & -\pa^2(x) \\ - \pa^1(x) & 0  }$. Similarly, one sees that $ \pa^V_t(x)=\SmallMatrix{ \pa_t^3(x) & 0\\ 0 & \pa_t^4(x) }$ for some twisted derivation $\pa^4_t \colon \T{Lip}_t(SU_q(2)) \to \mathbb{B}\big(L^2(SU_q(2))\big)$. As a consequence, the following inequality holds:
\begin{align}\label{eq:praktisk-ulighed}
\max\left\{ \|\pa_t^V(x)\|, \|\pa_q^H(x)\|  \right\} \leq  \| \pa_{t,q}(x) \|  
\end{align}
In analogy with the algebraic case described in Lemma \ref{l:twicommu}, we shall later show (see Remark \ref{rem:form-of-patv}) that $\pa^4_t(x)=-\pa_t^3(x)$, implying that
\begin{align*}
\pa_{t,q}(x)= \pma{ \pa_t^3(x) &  -\pa^2(x)\\  -\pa^1(x) & -\pa_t^3(x)  } \q \T{for all } x \in \T{Lip}_t(SU_q(2)).
\end{align*}
\end{remark}

\begin{dfn}\label{def:max-and-min-seminorm}
We define two seminorms, $L_{t,q}$ and  $L^{\max}_{t,q}$, on $C(SU_q(2))$ by setting
\begin{align*}
L_{t,q}(x)&:= \begin{cases} \|\pa_{t,q}(x)\| & {\T{for }} x\in \C O(SU_q(2))\\
\infty & {\T{for }}  x\in C(SU_q(2))\setminus \C O(SU_q(2))
\end{cases} \\
L_{t,q}^{\max}(x)&:= \begin{cases} \|\pa_{t,q}(x)\| & {\T{for }} x\in \T{Lip}_t(SU_q(2))\\
\infty & {\T{for }} x\in C(SU_q(2))\setminus \T{Lip}_t(SU_q(2))
\end{cases}
\end{align*}
The (extended) metrics on $\C S(SU_q(2))$ induced by the seminorms $L_{t,q}$ and $L_{t,q}^{\max}$ through the formula \eqref{eq:connes-metric} will be denoted $d_{t,q}$ and $d_{t,q}^{\max}$, respectively.
\end{dfn}
\begin{remark}
It follows from Lemma \ref{l:twicommu} and Lemma \ref{l:twilip} that $L_{t,q}^{\T{max}}$ and $L_{t,q}$ are both Lipschitz seminorms in the sense of Definition \ref{d:lipschitzsem}. 
\end{remark}

In Latr\'emoli\`ere's approach to the quantised Gromov-Hausdorff distance \cite{ Lat:DGH, Lat:QGH}, a central role is played by an axiom demanding that the seminorm in question satisfies a certain Leibniz inequality \cite[Equation (1.1)]{Lat:QGH}. Since $\pa_{t,q}$ is not a derivation, we only get a twisted version of the Leibniz inequality, where the operator norm appearing in  \cite[Equation (1.1)]{Lat:QGH} is replaced by the norm $\| \cdot \|_{t,q}$ introduced in Section \ref{ss:analytic}.

\begin{lemma}\label{l:leftrightbound}
Let $x,y \in \T{Lip}_t(SU_q(2))$. Then we have the estimate
\[
\begin{split}
L_{t,q}^{\T{max}}(x \cd y) & \leq 
\|x\|_{t,q} \cd L_{t,q}^{\T{max}}(y) 
+ L_{t,q}^{\T{max}}(x) \cd \| y \|_{t,q} .
\end{split}
\]
\end{lemma}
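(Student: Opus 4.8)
The plan is to decompose $\pa_{t,q}$ into its horizontal and vertical parts and apply the twisted Leibniz rules from Lemma \ref{l:twilip} to each summand separately. Recall that $\pa_{t,q} = \pa^V_t + \pa^H_q$, and that by Remark \ref{rem:pat-on-Lipschitz} the two matrix-valued parts occupy disjoint matrix entries, so that $\| \pa_{t,q}(z) \| = \max\{ \| \pa^V_t(z)\|, \| \pa^H_q(z) \| \}$ for every $z \in \T{Lip}_t(SU_q(2))$. Thus it suffices to bound $\| \pa^V_t(x \cd y) \|$ and $\| \pa^H_q(x \cd y)\|$ separately by the right-hand side.

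First I would treat the horizontal part. By Lemma \ref{l:twilip},
\[
\pa^H_q(x \cd y) = \pa^H_q(x)\, \si_L(q^{1/2},y) + \si_L(q^{-1/2},x)\, \pa^H_q(y),
\]
so by the triangle inequality and submultiplicativity of the operator norm,
\[
\| \pa^H_q(x \cd y)\| \leq \| \pa^H_q(x)\| \cd \| \si_L(q^{1/2},y)\| + \| \si_L(q^{-1/2},x)\| \cd \| \pa^H_q(y)\|.
\]
Now $\| \si_L(q^{1/2},y)\| \leq \| y \|_{t,q}$ and $\| \si_L(q^{-1/2},x)\| \leq \| x \|_{t,q}$ directly from the definition of $\| \cdot \|_{t,q}$ in Section \ref{ss:analytic} (each of the four summands defining $\| \cdot \|_{t,q}$ dominates the corresponding single term), and $\| \pa^H_q(x)\| \leq \| \pa_{t,q}(x)\| = L^{\max}_{t,q}(x)$ by \eqref{eq:praktisk-ulighed}, and similarly for $y$. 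Hence $\| \pa^H_q(x \cd y)\| \leq \| x\|_{t,q} L^{\max}_{t,q}(y) + L^{\max}_{t,q}(x) \| y \|_{t,q}$. The vertical part is handled identically, using the second twisted Leibniz rule in Lemma \ref{l:twilip} and the bounds $\| \si_L(t^{\pm 1/2},\cdot)\| \leq \| \cdot \|_{t,q}$. Taking the maximum of the two estimates gives the claim; one should also note at the outset that $x\cdot y \in \T{Lip}_t(SU_q(2))$, since this algebra is a unital $*$-subalgebra, so that $L^{\max}_{t,q}(x\cdot y) = \|\pa_{t,q}(x\cdot y)\|$ is the relevant quantity.

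There is no real obstacle here — the statement is a formal consequence of the twisted Leibniz rule and the explicit form of the norm $\| \cdot \|_{t,q}$, which was evidently designed so that the mismatched twists $\si_L(q^{\pm 1/2},\cdot)$ and $\si_L(t^{\pm 1/2},\cdot)$ are simultaneously dominated by a single norm. The only point requiring a moment's care is that the two twists appearing in $\pa^V_t$ and $\pa^H_q$ are different, which is precisely why the operator norm in the classical Leibniz inequality must be replaced by $\| \cdot \|_{t,q}$; once this is observed the proof is a two-line estimate for each summand.
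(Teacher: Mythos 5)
Your reduction rests on the claim that $\| \pa_{t,q}(z) \| = \max\{ \| \pa^V_t(z)\|, \| \pa^H_q(z)\|\}$ because the two parts occupy disjoint matrix entries. This identity is false: for a matrix of the form $\sma{a & b \\ c & -a}$ one only has $\max\{\|\T{diag}\|,\|\T{off-diag}\|\} \leq \|T\| \leq \|\T{diag}\| + \|\T{off-diag}\|$, and the first inequality (which is all that Remark \ref{rem:pat-on-Lipschitz}, i.e.~\eqref{eq:praktisk-ulighed}, records) can be strict; the scalar example $\sma{1 & 1 \\ 1 & -1}$ has norm $\sqrt{2}$ while its diagonal and off-diagonal parts each have norm $1$, and it is exactly of the structural form $\sma{a & -b \\ -c & -a}$ arising here. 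So ``it suffices to bound the two parts separately by the right-hand side'' is not justified. Moreover, the damage is not merely cosmetic: since you have already relaxed each individual factor $\| \si_L(q^{\pm 1/2},\cdot)\|$ and $\| \si_L(t^{\pm 1/2},\cdot)\|$ to $\|\cdot\|_{t,q}$ inside the two Leibniz estimates, replacing your false identity by the valid triangle inequality $\|\pa_{t,q}(x \cd y)\| \leq \|\pa^V_t(x\cd y)\| + \|\pa^H_q(x \cd y)\|$ only yields the claimed bound with an extra factor $2$.

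The repair is the paper's argument, and it explains why $\|\cdot\|_{t,q}$ is defined as a \emph{maximum of sums} rather than a maximum of the four individual terms: do use the triangle inequality over the diagonal/off-diagonal decomposition, but keep the factors $\| \si_L(q^{\pm 1/2},\cdot)\|$ and $\| \si_L(t^{\pm 1/2},\cdot)\|$ intact in the two twisted Leibniz estimates, and only after adding them bound $\| \si_L(t^{1/2},y)\| + \| \si_L(q^{1/2},y)\| \leq \|y\|_{t,q}$ and $\| \si_L(t^{-1/2},x)\| + \| \si_L(q^{-1/2},x)\| \leq \|x\|_{t,q}$. This gives
\[
L_{t,q}^{\T{max}}(x \cd y) \leq \|\pa^V_t(x\cd y)\| + \|\pa^H_q(x\cd y)\| \leq L_{t,q}^{\T{max}}(x) \cd \|y\|_{t,q} + \|x\|_{t,q} \cd L_{t,q}^{\T{max}}(y)
\]
with no loss. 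The remaining ingredients of your write-up, namely the use of Lemma \ref{l:twilip}, the bound $\max\{\|\pa^V_t(x)\|,\|\pa^H_q(x)\|\} \leq L_{t,q}^{\T{max}}(x)$ from \eqref{eq:praktisk-ulighed}, and the observation that $x \cd y \in \T{Lip}_t(SU_q(2))$, are fine.
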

\begin{proof}
Let $x,y \in \T{Lip}_t(SU_q(2))$. We first notice that the following inequalities hold: 
\[
\begin{split}
\| \pa_q^H(x \cd y) \| & \leq \| \pa_q^H(x)\| \cd \| \si_L(q^{1/2},y) \| + \| \si_L(q^{-1/2},x) \| \cd \| \pa_q^H(y) \| \\
& \leq L_{t,q}^{\T{max}}(x) \cd \| \si_L(q^{1/2},y) \| + \| \si_L(q^{-1/2},x) \| \cd L_{t,q}^{\T{max}}(y) .
\end{split}
\]
Since a similar computation shows that
\[
\| \pa^V_t(x \cd y) \| \leq L_{t,q}^{\T{max}}(x) \cd \| \si_L(t^{1/2},y) \| + \| \si_L(t^{-1/2},x) \| \cd L_{t,q}^{\T{max}}(y) ,
\]
we obtain the result of the present lemma.
\end{proof}

One of the main results of the present paper is Theorem \ref{introthm:SUq2-is-a-cqms}, which shows that $L_{t,q}^{\T{max}}$ turns $C(SU_q(2))$ into a compact quantum metric space. Knowing this,  it then follows (cf.~Theorem \ref{thm:rieffels-criterion}) that $L_{t,q}$ also has this property.  The proof of Theorem \ref{introthm:SUq2-is-a-cqms}  is contained in Section \ref{sec:quantum-metrics-on-quantum-su2} below, but before proceeding to this, we will need to carry out a rather detailed analysis of the spectral geometry on $SU_q(2)$ arising from the horizontal and vertical Dirac operators introduced above. We first show how one recovers the classical spin geometry on $SU(2)$ when $t = q = 1$.

\subsection{Comparison with the classical Dirac operator}\label{sec:comparison-with-classical}
In this section, we analyse the classical case where both of the parameters $t$ and $q$ are equal to one. 
Consider therefore the compact Lie group $SU(2)$ of special unitary $2 \ti 2$-matrices. The unital $C^*$-algebra of continuous functions on $SU(2)$ agrees with $C(SU_1(2))$ and the fundamental representation $U \colon SU(2) \to U(\cc^2)$ identifies with the fundamental unitary $u \in \B M_2\big(C(SU_1(2))\big)$. We equip $SU(2)$ with the Haar measure $\mu$ and record that the corresponding state on $C(SU(2))$ agrees with the Haar state $h\colon C(SU_1(2)) \to \cc$. In particular, the Hilbert space of (equivalence classes) of square integrable functions $L^2(SU(2))$ coincides with $L^2(SU_1(2))$. 
We are now going to explain how the classical Dirac operator on $SU(2)$ identifies with the sum of the vertical and horizontal Dirac operators, $\C D^V_1$ and $\C D^H_1$, from \eqref{eq:dirvert} and \eqref{eq:dirhori} up to rescaling and addition of a constant. \\

The Lie algebra of $SU(2)$ is denoted by $\G{su}(2)$ and is explicitly given by the space of skew-hermitian $(2 \ti 2)$-matrices of trace zero. We equip the Lie algebra $\G{su}(2)$ with the inner product defined by
\[
\inn{X,Y} := \T{TR}(X^* Y) \q \T{for all } X,Y \in \G{su}(2) ,
\]
where $\T{TR} \colon \B M_2(\cc) \to \cc$ denotes the normalised trace satisfying that $\T{TR}(1) = 1$. We single out the orthonormal basis for $\G{su}(2)$ consisting of the matrices
\[
X_1 := \pma{0 & - 1 \\ 1 & 0 } \q X_2 := \pma{0 & i \\ i & 0} \q X_3 := \pma{i & 0 \\ 0 & - i} .
\]
The elements in $\G{su}(2)$ can be identified with left-invariant vector fields on $SU(2)$. Indeed, for each element $X \in \G{su}(2)$ one obtains a derivation $X \colon C^\infty(SU(2)) \to C^\infty(SU(2))$ by the formula
\begin{align}\label{eq:inv-vector-field}
X(f)(g) := \frac{d}{dt}\big( f( g \cd e^{tX} ) \big)\big|_{t = 0} 
\q \T{for all } f \in C^\infty(SU(2)) \, , \,\, g \in SU(2) .
\end{align}
In this way, the inner product on the Lie algebra $\G{su}(2)$ yields a Riemannian metric on $SU(2)$ and therefore in particular a metric on $SU(2)$. Upon identifying $SU(2)$ with the $3$-sphere $S^3$ via the map
\[
\pma{z_1 & -\ov{z_2} \\ z_2 & \ov{z_1}} \mapsto (z_1,z_2) 
\]
it can be verified that the corresponding metric on $S^3$ agrees with the classical round metric. This means that $S^3$ sits inside $\rr^4$ as a sphere of radius one, or more precisely that the standard inclusion $S^3 \to \rr^4$ becomes a Riemannian immersion.\\ 

The spinor bundle for $SU(2)$ is the trivial complex hermitian vector bundle of rank $2$. The fundamental representation of the Lie algebra $\G{su}(2)$ on $\cc^2$ induces a representation of the Clifford algebra associated to $\G{su}(2)$ on $\cc^2$. The classical Dirac operator $\C D_{S^3} \colon C^\infty(SU(2))^{\op 2} \to L^2(SU(2))^{\op 2}$ on $SU(2)$ is then given by the expression
\[
\C D_{S^3}(\xi) := \sum_{i = 1}^3 X_i \cd X_i(\xi) = \pma{ i X_3(\xi) & -X_1(\xi) + i X_2(\xi) \\ X_1(\xi) + i X_2(\xi) & - i X_3(\xi)} ;
\]
see for example \cite[Section 3.5]{Friedrich:Dirac}. Notice that we are here considering $\C D_{S^3}$ as an unbounded operator on the Hilbert space of $L^2$-sections of the spinor bundle. We denote the closure of $\C D_{S^3}$ by $D_{S^3}$ and record that $D_{S^3}$ is a selfadjoint unbounded operator.\\

At the level of the coordinate algebra $\C O(SU_1(2))$, which we tacitly identify with a unital $*$-subalgebra of $C^\infty(SU(2))$, we now single out the correspondence between the derivations associated to $X_1,X_2,X_3 \in \G{su}(2)$ and the derivations $\pa_e,\pa_f,\pa_h$ defined in Section \ref{s:quantumsu2}. 
Using the formula \eqref{eq:inv-vector-field} on may verify the relations
\[
\pa_e = -\frac{1}{2} (X_1 + i X_2) \q \pa_f = \frac{1}{2} (X_1 - i X_2) \q \pa_h = i X_3 ,
\]
directly on the generators $a,b,a^*,b^*$,  and since all maps are derivations the same relations hold on all of   $\C O(SU_1(2))$. We may thus rewrite the unbounded operator $\C D^V_1 + \C D^H_1 \colon \C O(SU_1(2))^{\op 2} \to L^2(SU_1(2))^{\op 2}$ as follows:
\[
\C D^V_1 + \C D^H_1 = \frac{1}{2} \cd \pma{ i X_3 & -X_1 + i X_2 \\ X_1 + i X_2 & - i X_3} - \frac{1}{2} .
\]
At the level of unbounded operators on $L^2(SU(2))^{\op 2}$ we therefore obtain that $2 \cd (\C D^V_1 + \C D^H_1) + 1 \su \C D_{S^3}$. Since both of the unbounded operators $2 \cd (\C D^V_1 + \C D^H_1) + 1$ and $\C D_{S^3}$ are essentially selfadjoint we conclude that their closures agree,  resulting in the identity 
\[
2 \cd \ov{\C D^V_1 + \C D^H_1} + 1 = D_{S^3} .
\]
We moreover recall from Lemma \ref{l:sumdirac} that $\ov{\C D^V_1 + \C D^H_1} = D_1^V + D_1^H$.
Lastly, we spell out some consequences of the above identity of Dirac operators from the point of view of quantum metric spaces. Let us denote the classical round metric by $d_{S^3} \colon S^3 \ti S^3 \to [0,\infty)$ and the corresponding Lipschitz algebra by $\T{Lip}(S^3)$. The Lipschitz constant associated to a Lipschitz function $f \colon S^3 \to \cc$ is denoted by $L_{\T{Lip}}(f)$.
For each point $p \in S^3$ we apply the notation $\T{ev}_p \colon C(SU_1(2)) \to \cc$ for the pure state given by evaluation in the point $p$. We are here suppressing the $*$-isomorphisms $C(SU_1(2)) \cong C(SU(2)) \cong C(S^3)$. 

\begin{theorem}\label{t:classical}
The pair $\big( C(SU_1(2)), L_{1,1}^{\T{max}} \big)$ is a compact quantum metric space. The Lipschitz algebra $\T{Lip}_1(SU_1(2))$ identifies with the Lipschitz algebra $\T{Lip}(S^3)$ and for every $f \in \T{Lip}(S^3)$ it holds that
\[
L_{1,1}^{\T{max}}(f) = \frac{1}{2} L_{\T{Lip}}(f) .
\]
In particular, for every pair of points $p_0,p_1 \in S^3$ we obtain the formula
\[
2 \cd d_{S^3}(p_0,p_1) = d_{1,1}^{\max}(\T{ev}_{p_0},\T{ev}_{p_1}) ,
\]
where the metric on the right hand side denotes the Monge-Kantorovi\v{c} metric associated with the Lip-norm $L_{1,1}^{\max}$. 
\end{theorem}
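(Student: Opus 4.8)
The plan is to reduce everything to the classical characterisation of Lipschitz functions via the Dirac operator \cite[Chapter~6, Lemma~1]{Con:NCG}: a function $f \in C(S^3)$ lies in $\T{Lip}(S^3)$ if and only if $M_f$ preserves $\T{Dom}(D_{S^3})$ and $[D_{S^3},M_f]$ extends to a bounded operator, in which case $\|[D_{S^3},M_f]\| = L_{\T{Lip}}(f)$. The starting point is the observation that for $t=q=1$ all the twists trivialise, since $\si_L(1^{\pm 1/2},x)=x$; hence Definition~\ref{def:Lipschitz-elements} simply says that $x \in \T{Lip}_1(SU_1(2))$ precisely when $M_x$ preserves both $\T{Dom}(D_1^V)$ and $\T{Dom}(D_1^H)$ and the ordinary commutators $[D_1^V,M_x]$, $[D_1^H,M_x]$ extend to bounded operators. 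Moreover $\pa_{1,1}(x)=\pa_1^V(x)+\pa_1^H(x)$ is then the bounded extension of $[D_1^V+D_1^H,M_x]$, so that $L_{1,1}^{\max}(x) = \|[D_1^V+D_1^H,M_x]\|$.

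Next I would identify the two Lipschitz algebras. For $\T{Lip}_1(SU_1(2)) \su \T{Lip}(S^3)$: if $x \in \T{Lip}_1(SU_1(2))$ then $M_x$ preserves $\T{Dom}(D_1^V)\cap\T{Dom}(D_1^H)$, which by Lemma~\ref{l:sumdirac} equals $\T{Dom}(D_1^V+D_1^H)=\T{Dom}(D_{S^3})$ (passing from $\ov{\C D_1^V+\C D_1^H}$ to $D_{S^3}=2\ov{\C D_1^V+\C D_1^H}+1$ changes neither the domain nor, up to the factor $2$, the commutator); on that domain $[D_{S^3},M_x]=2([D_1^V,M_x]+[D_1^H,M_x])$ is bounded, so $x\in\T{Lip}(S^3)$. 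Conversely, for $f\in\T{Lip}(S^3)$ the distributional derivatives $X_1 f$, $X_2 f$, $X_3 f$ all lie in $L^\infty(SU(2))$; using the dictionary $\pa_e=-\tfrac12(X_1+iX_2)$, $\pa_f=\tfrac12(X_1-iX_2)$, $\pa_h=iX_3$ from Section~\ref{sec:comparison-with-classical} together with the standard fact that a Lipschitz function preserves the domain of each essentially selfadjoint invariant vector field and has bounded commutator with it, one concludes that $M_f$ preserves $\T{Dom}(D_1^V)$ and $\T{Dom}(D_1^H)$ with bounded commutators, i.e.\ $f\in\T{Lip}_1(SU_1(2))$.

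For the norm identity, I would work on the common core $\C O(SU_1(2))^{\op 2}$, which by Lemma~\ref{l:sumdirac} is a core for $D_{S^3}$; there $[D_{S^3},M_f]=2(\pa_1^V(f)+\pa_1^H(f))=2\pa_{1,1}(f)$, and since both sides are bounded operators agreeing on a dense subspace they are equal, giving $L_{\T{Lip}}(f)=\|[D_{S^3},M_f]\|=2\|\pa_{1,1}(f)\|=2L_{1,1}^{\max}(f)$. Thus $L_{1,1}^{\max}=\tfrac12 L_{\T{Lip}}$, with matching infinite values off $\T{Lip}(S^3)$; since $(C(S^3),L_{\T{Lip}})$ is the classical motivating compact quantum metric space (see \cite{Rie:MSS}) and a positive scalar multiple of a Lip-norm is again a Lip-norm, $(C(SU_1(2)),L_{1,1}^{\max})$ is a compact quantum metric space. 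Finally, for $p_0,p_1\in S^3$,
\begin{align*}
d_{1,1}^{\max}(\T{ev}_{p_0},\T{ev}_{p_1})
&= \sup\{ |f(p_0)-f(p_1)| \mid \tfrac12 L_{\T{Lip}}(f)\leq 1 \} \\
&= 2\sup\{ |f(p_0)-f(p_1)| \mid L_{\T{Lip}}(f)\leq 1 \} = 2\, d_{S^3}(p_0,p_1),
\end{align*}
the last equality being Kantorovich--Rubinstein duality for the compact metric space $(S^3,d_{S^3})$.

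The only genuinely delicate point is the equivalence, for a fixed $f$, between bounded commutators with $D_1^V$ and $D_1^H$ \emph{separately} and a bounded commutator with their sum $D_{S^3}$; this works because a Lipschitz function on a Riemannian manifold has every directional derivative in $L^\infty$, and $D_1^V$, $D_1^H$ isolate the Hopf-fibre direction $X_3$ and the horizontal directions $X_1,X_2$ respectively. Once the vector-field dictionary of Section~\ref{sec:comparison-with-classical} and the identity $2\ov{\C D_1^V+\C D_1^H}+1=D_{S^3}$ are in hand, the remaining steps are routine bookkeeping with operator domains and cores.
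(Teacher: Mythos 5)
Your proposal is correct and follows essentially the same route as the paper: both arguments reduce to Connes' characterisation of Lipschitz functions via bounded commutators with $D_{S^3}$, using the identity $D_{S^3} = 2(D_1^V + D_1^H) + 1$, the triviality of the twists at $t=q=1$, and the fact that $\T{Dom}(D_{S^3}) = \T{Dom}(D_1^V)\cap\T{Dom}(D_1^H)$, with the metric formula then coming from the standard duality between the Monge--Kantorovi\v{c} metric on point evaluations and the underlying metric. The only difference is one of detail: you spell out, via distributional derivatives of Lipschitz functions along the invariant vector fields $X_1,X_2,X_3$, the step the paper compresses into ``it can then be verified'', namely that a Lipschitz function has bounded commutators with $D_1^V$ and $D_1^H$ separately (not merely with their sum).
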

\begin{proof}
A continuous function $f \colon S^3 \to \cc$ has bounded commutator with $D_{S^3}=2\cd (D^V_1 + D^H_1 )+1$ if and only if $f$ is Lipschitz with respect to $d_{S^3}$ \cite[Chapter 6, Lemma 1]{Con:NCG}, and by the paragraph following \cite[Chapter 6, Lemma 1]{Con:NCG} one has that $\big\| \ov{ [D_{S^3}, f] } \big\|$ equals the Lipschitz constant $L_{\T{Lip}}(f)$.
 Since $t=q=1$, all twists appearing in the definition of the Lipschitz algebra $\T{Lip}_1(SU_1(2))$ are trivial. Using that $D_{S^3} = 2 \cd (D^V_1 + D^H_1) + 1$ and, in particular, that the domain of $D_{S^3}$ is the intersection of the domains of $D_1^V$ and $D_1^H$, it can then be verified that a continuous function $f \colon S^3 \to \cc$ has bounded commutator with $D_{S^3}$ if and only if $f$ is both vertically and horizontally Lipschitz (meaning that $f$ has bounded commutators with $D^V_1$ and with $D^H_1$). The Lipschitz algebra $\T{Lip}_1(SU_1(2))$ therefore agrees with the Lipschitz algebra $\T{Lip}(S^3)$ and the formula $L_{1,1}^{\T{max}}(f) = \frac{1}{2} L_{\T{Lip}}(f)$ now follows. The comparison formula for the two metrics $d_{S^3}$ and $d_{1,1}^{\max}$ is now a consequence of  \cite[Chapter 6, Formula 1]{Con:NCG}; see also  \cite[Proposition 1]{Con:CFH}.
\end{proof}

\subsection{The real structure} \label{subsec:first-order-condition}
In Connes' non-commutative geometry, one encounters the notion of a \emph{real structure} for a spectral triple $(A, H, D)$; see \cite{Con:GFN}. A real structure captures the dimension (modulo 8) of the non-commutative spin manifold in question and is encoded by an antilinear  unitary $J\colon H\to H$ (subject to a couple of conditions).  Even though we are working on the borderline of non-commutative geometry we shall nevertheless show that one may define an analogue of a real structure in our setting. As one would expect, this real structure gives $SU_q(2)$ real dimension 3; see Remark \ref{rem:twisted-real-structure} below for more details. \\

Let us fix the parameters $t,q \in (0,1]$. Define the antilinear map $\mathcal{J}\colon \C O(SU_q(2))\to \C O(SU_q(2))$ by setting $\mathcal{J}(x) = (\pa_k \de_k)(x^*)$. Using that the modular automorphism $\nu$ is given by $\de_{k^{-2}}  \pa_{k^{-2}} \colon \C O(SU_q(2)) \to \C O(SU_q(2))$ a direct computation shows that $\mathcal{J}$ extends to an antilinear unitary $J$ on $L^2(SU_q(2))$. In fact, $J$ is the modular conjugation arising when applying Tomita-Takesaki theory (see e.g.~\cite[Chapter VI]{Takesaki-vol-II}) to the left Hilbert algebra $\C O(SU_q(2))$ equipped with the inner product $\inn{x,y} := h(x^* y)$. In particular, it therefore holds that $JL^\infty(SU_q(2))J = L^\infty(SU_q(2))'$; see \cite[Chapter VI , Theorem 1.19]{Takesaki-vol-II}. \\

We now define the antilinear map $\C I  := \pma{0 & \C J \\ - \C J & 0} \colon  \C O(SU_q(2))^{\op 2} \to \C O(SU_q(2))^{\op 2}$ together with the associated antilinear unitary operator $I := \pma{0 & J \\ - J & 0} \colon  L^2(SU_q(2))^{\op 2} \to L^2(SU_q(2))^{\op 2}$.  We record that $I^2 = -1$. This is the map that will be our substitute for a real structure, and our next aim is therefore to prove a version of the first order condition, which in our setting amounts to a relation of the form $[\pa_{t,q}(x), IyI]=0$; see Proposition \ref{p:firstorder}. To achieve this, the unbounded operator $\Ga_{s,0}$ defined in \eqref{eq:modudef} turns out to be essential, and we analyse its interaction with $\C I, \C D_t^V$ and $\C D_q^H$ in the following series of lemmas.

\begin{lemma}\label{l:modudir}
The horizontal Dirac operator $\C D^H_q$ commutes with $\Ga_{q,0}$ and the vertical Dirac operator $\C D^V_t$ commutes with $\Ga_{s,0}$ for all $s \in (0,1]$. 
\end{lemma}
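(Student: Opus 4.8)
The plan is to verify both commutation relations directly on the dense core $\C O(SU_q(2))^{\op 2}$, where $\Ga_{q,0}$ and $\Ga_{s,0}$ act diagonally by explicit scalars on each algebraic spectral subspace. Since all operators in sight preserve the algebraic spectral decomposition, it suffices to compute on vectors of the form $\SmallMatrix{\xi \\ \eta}$ with $\xi \in \C A_q^n$ and $\eta \in \C A_q^m$, and to check that the two composites agree.

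For the vertical Dirac operator the argument is essentially a triviality: $\C D_t^V$ is itself diagonal with respect to the spectral decomposition, acting as multiplication by $t^{\frac{-n+1}{2}}[\frac{n-1}{2}]_t$ on $\C A_q^n$ in the first coordinate and by $-t^{\frac{-m-1}{2}}[\frac{m+1}{2}]_t$ on $\C A_q^m$ in the second; and $\Ga_{s,0}$ is diagonal in exactly the same sense, acting by $s^{\frac{1-n}{2}}$ and $s^{\frac{-1-m}{2}}$ respectively. Two diagonal operators on the same eigenbasis commute, so $\C D_t^V \Ga_{s,0} = \Ga_{s,0} \C D_t^V$ on the core for every $s \in (0,1]$. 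For the horizontal Dirac operator I would use that $\pa_e(\C A_q^n) \su \C A_q^{n-2}$ and $\pa_f(\C A_q^m) \su \C A_q^{m+2}$, so $\C D_q^H$ maps the summand indexed by $(n,m)$ into the summand indexed by $(m+2, n-2)$. Writing out $\Ga_{q,0} \C D_q^H \SmallMatrix{\xi \\ \eta}$ and $\C D_q^H \Ga_{q,0} \SmallMatrix{\xi \\ \eta}$, the scalar prefactors that appear are $q^{\frac{1-(m+2)}{2}}$ versus $q^{\frac{1-m}{2}}$ in the first coordinate (coming from $\Ga_{q,0}$ applied after, respectively before, the shift $m \mapsto m+2$) together with the $q^{\pm 1/2}$ from the $\pa_{fk^{-1}}$ and $\pa_{ek^{-1}}$ twists, and symmetrically $q^{\frac{-1-(n-2)}{2}}$ versus $q^{\frac{-1-n}{2}}$ in the second coordinate. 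The point is that $k^{-1}$ has been inserted precisely so that the power of $q$ picked up from $\pa_{k^{-1}}$ compensates the discrepancy between the two placements of $\Ga_{q,0}$; concretely, $\Ga_{q,0}$ scales $\C A_q^{m+2}$ by $q^{-1}$ relative to $\C A_q^m$, and $\pa_{k^{-1}}$ contributes the reciprocal factor, so everything cancels and the two composites coincide on the core.

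The main (and only mild) obstacle is bookkeeping: one has to keep the index shifts $n \mapsto n-2$, $m \mapsto m+2$ straight and check that the exponents of $q$ on the two sides really match in both matrix entries — this is the same kind of computation already carried out in the proof of Lemma \ref{l:sumdirac}, so I would present it as ``a direct computation analogous to the proof of Lemma \ref{l:sumdirac}'' rather than spelling out every exponent. No essential selfadjointness or closure issue arises here, since the statement of the lemma is phrased at the level of the operators $\C D_q^H$ and $\Ga_{q,0}$ (equivalently $\Ga_{s,0}$) on their common core $\C O(SU_q(2))^{\op 2}$; there is nothing to extend.
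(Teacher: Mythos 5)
Your overall strategy is the same as the paper's: verify both relations on the core $\C O(SU_q(2))^{\op 2}$ summand by summand, with the vertical case being immediate because $\C D_t^V$ and $\Ga_{s,0}$ are simultaneously diagonal on each $\C A_q^n \op \C A_q^m$, and the horizontal case following from the fact that $\C D_q^H$ swaps the coordinates and shifts the degrees by $\mp 2$. The vertical half of your argument is correct as stated.

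In the horizontal half, however, your bookkeeping and the mechanism you invoke are wrong. The element $\eta$ feeding the first output coordinate lives in the \emph{second} coordinate, where $\Ga_{q,0}$ acts at degree $m$ by $q^{\frac{-1-m}{2}}$, not by $q^{\frac{1-m}{2}}$ as you write; and since $q^{\frac{-1-m}{2}} = q^{\frac{1-(m+2)}{2}}$ (and symmetrically $q^{\frac{1-n}{2}} = q^{\frac{-1-(n-2)}{2}}$), the two placements of $\Ga_{q,0}$ already produce \emph{identical} scalars: the $\pm 1$ offsets built into $\Ga_{s,0}$ exactly absorb the coordinate swap together with the degree shift by $2$ coming from $\pa_f$ and $\pa_e$. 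There is thus no discrepancy left for $\pa_{k^{-1}}$ to compensate, and it could not do so anyway: the operators $\pa_{fk^{-1}}$ and $\pa_{ek^{-1}}$ occur identically in both composites $\C D_q^H \Ga_{q,0}$ and $\Ga_{q,0}\C D_q^H$, so whatever factor $\pa_{k^{-1}}$ contributes (multiplication by $q^{-m/2}$ on $\C A_q^m$ — a degree-dependent factor, not a fixed $q^{\pm 1}$) drops out of the comparison; indeed $\Ga_{q,0}$ commutes equally well with the untwisted matrix $\SmallMatrix{0 & -\pa_f \\ -\pa_e & 0}$, so the twist by $k^{-1}$ is irrelevant to this lemma (it matters elsewhere, e.g.\ in Lemma \ref{l:Irela}). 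The statement is true and your proposed direct computation does prove it, but carried out as you describe — with the wrong weight for $\Ga_{q,0}$ on the second coordinate and a cancellation attributed to $\pa_{k^{-1}}$ — the exponents would not close up; replace that step by the correct matching of the $\Ga_{q,0}$-weights across the swap and shift, as in the paper's proof.
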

\begin{proof}
Let $n,m \in \zz$. By linearity, it suffices prove the two commutation relations on vectors of the form $\pma{\xi \\ \eta}\in \C A_q^n \oplus \C A_q^m$. Since $\Ga_{q,0}$ preserves the algebraic spectral subspaces and $\pa_e(\C A_q^n)\subseteq \C A_q^{n-2}$ and $\pa_f(\C A_q^m)\subseteq \C A_q^{m+2}$ we obtain that:
\begin{align*}
\C D^H_q \Gamma_{q,0}\pma{\xi \\ \eta} &= \pma{-q^{-\frac12}q^{\frac{-1-m}{2}}  \pa_{fk^{-1}}(\eta) \\ -q^{\frac12}q^{\frac{1-n}{2}}\pa_{ek^{-1}}(\xi) }= \pma{ q^{\frac{1-(m+ 2)}{2}} & 0 \\ 0 & q^{\frac{-1-(n-2)}{2}}  } \C D^H_q \pma{\xi \\ \eta}= \Gamma_{q,0}\C D^H_q \pma{\xi \\ \eta},
\end{align*}
thus proving the first commutation relation.  Since both $\C D_t^V$ and $\Gamma_{s,0}$ are diagonal on $\C A_q^n\oplus \C A_q^m$ they clearly commute here.
\end{proof}


\begin{lemma}\label{l:Irela}
It holds that $\C I \cd \Ga_{s,0}^{-1} =\Ga_{s,0} \cd \C I$ for all $s \in (0,1]$. Moreover, we have the commutation relations
\[
(\C D^H_q \Ga_{q,0}^{-1}) \cd \C I = \C I \cd (\C D^H_q \Ga_{q,0}^{-1}) \quad \mbox{and} \quad 
(\C D^V_t  \Ga_{t,0}^{-1}) \cd \C I = \C I \cd (\C D^V_t  \Ga_{t,0}^{-1}) .
\]
\end{lemma}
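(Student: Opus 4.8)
The plan is to verify all three identities on the common invariant subspace $\C E := \C O(SU_q(2))^{\op 2}$, on which $\C I$, $\Ga_{s,0}$, $\Ga_{s,0}^{-1} = \Ga_{s^{-1},0}$, $\C D^H_q$ and $\C D^V_t$ are all defined and which they all preserve, so that every composition below makes unambiguous sense and the relations are meant as identities of operators on $\C E$. The one structural fact I would establish first is that $\C J$ maps $\C A_q^m$ into $\C A_q^{-m}$ for every $m \in \zz$: taking adjoints interchanges $\C A_q^m$ and $\C A_q^{-m}$ (since each $\si_L(z,\cd)$ is a $*$-automorphism, or directly from \eqref{eq:adjoint-of-matrix-coefficients} and \eqref{eq:sigma-L-and-R-on-matrix-units}), whereas $\pa_k$ and $\de_k$ preserve every $\C A_q^m$ — the former because $\pa_k = \si_L(q^{1/2},\cd)$ acts there as the scalar $q^{m/2}$, the latter because $\De$ restricts to a coaction $\C A_q^m \to \C O(SU_q(2)) \ot \C A_q^m$. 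Granting this, the identity $\C I \Ga_{s,0}^{-1} = \Ga_{s,0} \C I$ is a bookkeeping of scalars on a homogeneous vector $\pma{\xi \\ \eta} \in \C A_q^n \op \C A_q^m$: applying $\Ga_{s,0}^{-1}$ first multiplies the two entries by $s^{(n-1)/2}$ and $s^{(1+m)/2}$ and then $\C I$ interchanges and conjugates them into $\C A_q^{-m} \op \C A_q^{-n}$, whereas applying $\C I$ first lands in $\C A_q^{-m} \op \C A_q^{-n}$ and $\Ga_{s,0}$ then multiplies by $s^{(1-(-m))/2} = s^{(1+m)/2}$ and $s^{(-1-(-n))/2} = s^{(n-1)/2}$; the two outcomes agree, the real scalars passing through the antilinear $\C J$ unchanged.

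For the two Dirac relations my first move would be to put $\C D^H_q \Ga_{q,0}^{-1}$ and $\C D^V_t \Ga_{t,0}^{-1}$ into a twist-free form. Starting from the formula for $\C D^H_q \Ga_{q,0}$ obtained in the proof of Lemma \ref{l:modudir}, substituting $\Ga_{q,0}^{-1} = \Ga_{q^{-1},0}$, and using $\pa_{fk^{-1}} = \pa_f \pa_{k^{-1}}$ and $\pa_{ek^{-1}} = \pa_e \pa_{k^{-1}}$ (the left-action property $\pa_{\eta_1\eta_2} = \pa_{\eta_1}\pa_{\eta_2}$), which act on $\C A_q^m$ (resp.\ $\C A_q^n$) as $q^{-m/2}\pa_f$ (resp.\ $q^{-n/2}\pa_e$), one checks that every power of $q$ cancels and that
\[
\C D^H_q \Ga_{q,0}^{-1} = \pma{0 & -\pa_f \\ -\pa_e & 0} \q \mbox{ and } \q \C D^V_t \Ga_{t,0}^{-1} = \pma{\la_t^+ & 0 \\ 0 & -\la_t^-}
\]
on $\C E$, where $\la_t^+$ and $\la_t^-$ act on $\C A_q^n$ and $\C A_q^m$ by multiplication with the $t$-numbers $[\tfrac{n-1}{2}]_t$ and $[\tfrac{m+1}{2}]_t$, respectively. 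Writing $\C I = \pma{0 & \C J \\ -\C J & 0}$ in block form, the relation $(\C D^H_q \Ga_{q,0}^{-1}) \C I = \C I (\C D^H_q \Ga_{q,0}^{-1})$ becomes equivalent to the pair of operator identities $\pa_f \C J = - \C J \pa_e$ and $\pa_e \C J = - \C J \pa_f$ on $\C O(SU_q(2))$, while $(\C D^V_t \Ga_{t,0}^{-1}) \C I = \C I (\C D^V_t \Ga_{t,0}^{-1})$ becomes equivalent to $\la_t^+ \C J = - \C J \la_t^-$ together with $\la_t^- \C J = - \C J \la_t^+$.

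The vertical identities then drop out of the first observation: for $\eta \in \C A_q^m$ one has $\C J \eta \in \C A_q^{-m}$, hence $\la_t^+ \C J \eta = [\tfrac{-m-1}{2}]_t \cd \C J \eta = - [\tfrac{m+1}{2}]_t \cd \C J \eta = - \C J( \la_t^- \eta)$, using that the $t$-numbers are real and that $[-a]_t = -[a]_t$; the remaining vertical identity is symmetric. For the horizontal identity I would argue algebraically rather than by induction over monomials: from $\C J \pa_e(x) = (\pa_k \de_k)\big( \pa_e(x)^* \big)$, I would invoke \eqref{eq:pae-and-paf-and-star} in the form $\pa_e(x)^* = - q^{-1} \pa_f(x^*)$, then use that the left action $\pa_f$ commutes with the right action $\de_k$ and that $\pa_k \pa_f = q\, \pa_f \pa_k$ (the relation $kf = qfk$ in $\C U_q(\G{su}(2))$, equivalently the fact that $\pa_f$ raises the $\si_L$-degree by $2$ while $\pa_k$ acts on $\C A_q^j$ as multiplication by $q^{j/2}$), to rewrite
\[
\C J \pa_e(x) = - q^{-1} \pa_k \de_k \pa_f(x^*) = - q^{-1} \cd q\, \pa_f \pa_k \de_k(x^*) = - \pa_f\big( (\pa_k \de_k)(x^*) \big) = - \pa_f( \C J x) .
\]
The companion identity $\pa_e \C J = - \C J \pa_f$ then follows by conjugating with $\C J$ and using $\C J^2 = \T{id}$ on $\C O(SU_q(2))$ (alternatively it is obtained in the same way from the other half of \eqref{eq:pae-and-paf-and-star}). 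Substituting these identities back into the block reductions of the previous paragraph gives both Dirac commutation relations.

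I expect the only genuinely delicate point to be the twist-free normal form for $\C D^H_q \Ga_{q,0}^{-1}$: one must see that the positive powers of $q$ supplied by $\Ga_{q,0}^{-1}$ cancel exactly against the internal $\pa_{k^{-1}}$-twist built into $\C D^H_q$, and one has to keep the factor $q^{-1}$ in $\pa_e(x)^* = -q^{-1}\pa_f(x^*)$ matched against the factor $q$ in $\pa_k \pa_f = q\, \pa_f \pa_k$ so that they cancel in the displayed computation. Everything else reduces to routine scalar bookkeeping on the homogeneous pieces $\C A_q^n \op \C A_q^m$.
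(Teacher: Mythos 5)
Your proposal is correct and follows essentially the same route as the paper's proof: reduction to the homogeneous pieces $\C A_q^n \op \C A_q^m$ together with $\C J(\C A_q^k) = \C A_q^{-k}$ handles the modular and vertical relations, the twist-free normal form $\C D_q^H \Ga_{q,0}^{-1} = \sma{0 & -\pa_f \\ -\pa_e & 0}$ reduces the horizontal relation to the identities $\pa_f \C J = -\C J \pa_e$ and $\pa_e \C J = -\C J \pa_f$, and these are established from the relation $kf = qfk$, the commutation of left and right actions, and \eqref{eq:pae-and-paf-and-star}, exactly as in the paper. The only cosmetic differences are that you spell out the block-matrix reduction and obtain the second anticommutation identity by conjugating with $\C J$ (using $\C J^2 = \T{id}$) rather than by a symmetric computation.
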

\begin{proof}
By linearity, it suffices to check the three commutation relations on subspaces of the form $ \C A_q^n \oplus \C A_q^m$ for arbitrary $n,m\in \zz$. 
The first commutation relation $\C I \cd \Ga_{s,0}^{-1} =\Ga_{s,0} \cd \C I$ follows on $\C A_q^n \oplus \C A_q^m$ by noting that $\C J(\C A_q^k) = \C A_q^{-k}$ for all $k \in \zz$.
For the second commutation relation, we first remark that $\pa_f \C J(\xi) = - \C J \pa_e(\xi)$ for all vectors $\xi \in \C O(SU_q(2))$. Indeed, using the defining relations for $\C U_q(\mathfrak{su}(2))$ from \eqref{eq:deformed-lie-alg-relations} and the $*$-relations from \eqref{eq:pae-and-paf-and-star} we may compute as follows:
\[
\pa_f \C J(\xi) = \pa_f \pa_k \de_k(\xi^*) = \pa_k \de_k \pa_f(\xi^*) \cd q^{-1} = - \pa_k \de_k \pa_e(\xi)^* = -\C J \pa_e(\xi) .
\]
Similarly, one sees that $\pa_e \C J= -\C J \pa_f$, and the second commutation relation then follows by noting that
\[
\C D^H_q \Ga_{q,0}^{-1} = \pma{ 0 & -q^{-1/2} \pa_{fk^{-1}} \\ - q^{1/2} \pa_{ek^{-1}} & 0 } \pma{q^{-1/2} \pa_k & 0 \\ 0 & q^{1/2} \pa_k}
= \pma{0 & - \pa_f \\ - \pa_e & 0} .
\]
To prove the last commutation relation, observe that the restriction of the unbounded operator $\C D_t^V \Gamma_{t,0}^{-1}$ to the subspace $\C A_q^n \oplus \C A_q^m$ is represented by the matrix $\SmallMatrix{[\frac{n-1}{2}]_t & 0 \\ 0 & -[\frac{m+1}{2}]_t  }$. Using one more time that $\C J(\C A_q^k) = \C A_q^{-k}$ for all $k \in \zz$, we now obtain the identity $(\C D^V_t  \Ga_{t,0}^{-1}) \cd \C I = \C I \cd (\C D^V_t  \Ga_{t,0}^{-1})$ on $\C A_q^n \oplus \C A_q^m$ from a direct computation.
\end{proof}

\begin{lemma}\label{l:commureal}
For each $y \in \C O(SU_q(2))$ we have the identities
\begin{alignat}{2}
[\C D^H_q, \C I y \C I]  &= \Ga_{q,0} \cd \C I \pa_q^H(y) \C I \cd \Ga_{q,0}  &&= \C I \pa_q^H(\pa_k(y)) \C I \cd \Ga_{q,0}^2\notag\\
[\C D^V_t, \C I y \C I]  &= \Ga_{t,0} \cd \C I \pa^V_t(y) \C I \cd \Ga_{t,0} &&=\C I \pa^V_t(\si_L(t^{1/2},y)) \C I \cd \Ga_{t,0}^2\notag
\end{alignat}
on the subspace $\C O(SU_q(2))^{\op 2} \su L^2(SU_q(2))^{\op 2}$. 
\end{lemma}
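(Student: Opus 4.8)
The plan is to verify each of the four claimed identities by a direct computation on the core $\C O(SU_q(2))^{\op 2}$, using linearity to reduce to vectors $\binn{\xi \\ \eta}$ lying in a fixed algebraic spectral subspace $\C A_q^n \op \C A_q^m$. This is legitimate because every operator appearing in the statement ($\C D^H_q$, $\C D^V_t$, $\C I$, $\Ga_{q,0}$, $\Ga_{t,0}$, $\pa_q^H(y)$, $\pa_t^V(y)$, and multiplication by $y$) preserves $\C O(SU_q(2))^{\op 2}$, so all compositions are well-defined endomorphisms of this space, and the spectral subspaces span it.

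First I would treat the horizontal case. The starting point is Lemma \ref{l:Irela}, which gives $\C D^H_q \Ga_{q,0}^{-1} \cd \C I = \C I \cd \C D^H_q \Ga_{q,0}^{-1}$, and Lemma \ref{l:modudir}, which gives $\C D^H_q \Ga_{q,0} = \Ga_{q,0} \C D^H_q$, together with the elementary relation $\C I \Ga_{q,0}^{-1} = \Ga_{q,0} \C I$ (also from Lemma \ref{l:Irela}). Since $\C I$ is antilinear and $\C I^2 = -1$, one has $\C I y \C I = -\C I y \C I^{-1}$ acting as an operator, but more usefully I would compute $[\C D^H_q, \C I y \C I]$ by inserting $\Ga_{q,0}\Ga_{q,0}^{-1} = 1$ cleverly: write $\C D^H_q \C I y \C I = (\C D^H_q \Ga_{q,0}^{-1}) \Ga_{q,0} \C I y \C I$ and push $\Ga_{q,0}$ through $\C I$ to get $(\C D^H_q \Ga_{q,0}^{-1})\C I \Ga_{q,0}^{-1} y \C I$; then use $\C I \Ga_{q,0}^{-1} y = \Ga_{q,0}^{-1} \si_L(\cdot)(y)$-type manipulations via Lemma \ref{l:analytic}. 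The cleaner route is: the twisted commutator defining $\pa_q^H(y)$ is $\C D^H_q \si_L(q^{1/2},y) - \si_L(q^{-1/2},y)\C D^H_q$, and conjugating the whole defining relation $\pa_q^H(y) = \C D^H_q \si_L(q^{1/2},y) - \si_L(q^{-1/2},y) \C D^H_q$ by $\C I$ and $\Ga_{q,0}$, using $\Ga_s y \Ga_s^{-1} = \si_L(s^{-1/2},y)$ and $\Ga_s^{-1} y \Ga_s = \si_L(s^{1/2},y)$ from Lemma \ref{l:analytic}, should produce $\Ga_{q,0}\C I \pa_q^H(y)\C I \Ga_{q,0} = [\C D^H_q, \C I y \C I]$ after the twists and $\Ga$'s cancel against each other. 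The second horizontal equality $\Ga_{q,0}\C I \pa_q^H(y)\C I \Ga_{q,0} = \C I \pa_q^H(\pa_k(y))\C I \Ga_{q,0}^2$ then follows from $\pa_q^H(\pa_k(y)) = \Ga_{q,0}^{?}\pa_q^H(y)\Ga_{q,0}^{?}$, i.e.\ from the fact that conjugating $\pa_q^H(y)$ by $\Ga_{q,0}$ implements $\si_L(q^{1/2},\cdot) = \pa_k$ on the entries (Lemma \ref{l:analytic} combined with the identity $\sigma_L(q^{1/2},x) = \pa_k(x)$ recorded in Section \ref{ss:analytic}), modulo the block-diagonal twist structure of $\pa_q^H$ recorded in Lemma \ref{l:twicommu}.

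The vertical case is handled identically, replacing $q$ by $t$ throughout, using the commutation $\C D^V_t \Ga_{s,0} = \Ga_{s,0}\C D^V_t$ (Lemma \ref{l:modudir}), the relation $(\C D^V_t \Ga_{t,0}^{-1})\C I = \C I(\C D^V_t \Ga_{t,0}^{-1})$ (Lemma \ref{l:Irela}), and the explicit diagonal form $\pa_t^V(y) = \SmallMatrix{\pa^3_t(y) & 0 \\ 0 & -\pa^3_t(y)}$ from Lemma \ref{l:twicommu}; here $\si_L(t^{1/2},\cdot)$ takes the place of $\pa_k$ in the second equality, so that $\pa_t^V(\si_L(t^{1/2},y)) = \Ga_{t,0}\pa_t^V(y)\Ga_{t,0}^{-1}$-type conjugation gives the $\Ga_{t,0}^2$ factor. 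I would write out one of the four computations (say the horizontal one) in full coordinates on $\C A_q^n \op \C A_q^m$ using the explicit matrices $\C D^H_q \Ga_{q,0}^{-1} = \SmallMatrix{0 & -\pa_f \\ -\pa_e & 0}$ from the proof of Lemma \ref{l:Irela} and $\C J(\C A_q^k) = \C A_q^{-k}$, and then simply state that the other three are entirely analogous.

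The main obstacle is purely bookkeeping: one must keep careful track of which twist ($\pa_k$ versus $\si_L(t^{1/2},\cdot)$, and their inverses) appears where, and of the antilinearity of $\C I$ — in particular that $\C I \la = \ov\la \C I$ and that $\C I y \C I$ as an operator equals $J \C J(y^{**})\cdots$, so that conjugation by $\C I$ sends the twist $\si_L(s^{1/2},\cdot)$ to $\si_L(s^{1/2},\cdot)$ again rather than to its inverse (because $\C J$ already incorporates an inversion $\C A_q^k \to \C A_q^{-k}$ which compensates). Getting the signs and the exact powers of $q$ and $t$ right in the $[\cdot/2]_t$ terms for the vertical operator, where $\C D^V_t$ acts by $t^{(-n+1)/2}[(n-1)/2]_t$ on the first component and $\Ga_{t,0}$ by $t^{(1-n)/2}$, is the only place where a sign or exponent error could creep in; once the block structures of $\pa_q^H$ and $\pa_t^V$ are invoked, everything collapses.
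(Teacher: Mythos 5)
Your proposal is correct and takes essentially the same route as the paper: the paper proves the first identities by inserting $\Ga_{q,0}^{-1}\Ga_{q,0}$ into $\C D^H_q \cd \C I y \C I$ and moving $\Ga_{q,0}$ past $\C I$ and $y$ via Lemmas \ref{l:analytic}, \ref{l:modudir} and \ref{l:Irela} --- precisely your ``cleaner route'' --- and obtains the second identities from the same lemmas, which amounts to the conjugation relations $\Ga_{q,0}^{-1}\pa_q^H(y)\Ga_{q,0}=\pa_q^H(\pa_k(y))$ and $\Ga_{t,0}^{-1}\pa_t^V(y)\Ga_{t,0}=\pa_t^V(\si_L(t^{1/2},y))$ that you gesture at, and whose exponents do work out. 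The only difference is cosmetic: the paper never descends to coordinates on the spectral subspaces, since the cited lemmas already package the needed commutation relations.
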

\begin{proof}
Using Lemma \ref{l:analytic}, Lemma \ref{l:modudir} and Lemma \ref{l:Irela}, we may compute as follows:
\[
\begin{split}
\C D^H_q \cd \C I y \C I & = \C D^H_q \Ga_{q,0}^{-1} \cd \Ga_{q,0} \C I y \C I \\
&= \C D^H_q \Ga_{q,0}^{-1} \cd \C I \si_L(q^{1/2},y) \C I \cd \Ga_{q,0} \\
& = \Ga_{q,0} \cd \C I \C D^H_q \si_L(q^{1/2},y) \C I \cd \Ga_{q,0} \\
& = \Ga_{q,0} \cd \C I \pa_q^H(y) \C I \cd \Ga_{q,0}
+ \Ga_{q,0} \cd \C I \si_L(q^{-1/2},y) \C D^H_q \C I \cd \Ga_{q,0} \\
& = \Ga_{q,0} \cd \C I \pa_q^H(y) \C I \cd \Ga_{q,0}
+ \C I y \C I \cd \C D^H_q .
\end{split}
\]
This proves the first identity regarding the commutator with the horizontal Dirac operator. The second one follows by a similar computation, using the same series of lemmas as above:
\begin{align*}
\Ga_{q,0} \C I \pa_q^H(y) \C I \Ga_{q,0} &= \C I \Ga_{q,0}^{-1} \left( \C D^H_q \Ga_{q,0}^{-1}y\Ga_{q,0}- \Ga_{q,0}y \Ga_{q,0}^{-1}\C D^H_q   \right) \C I \Ga_{q,0}\\
&=\C I \left( \C D^H_q \Ga_{q,0}^{-2}y\Ga_{q,0}^2- y \C D^H_q   \right) \Ga_{q,0}^{-1} \C I \Ga_{q,0}\\
&= \C I \pa_q^H(\pa_k(y))\C I \Ga_{q,0}^{2}.
\end{align*}
The remaining identities regarding the commutator with the vertical Dirac operator are proven by completely analogous computations.
\end{proof}

With the above lemmas at our disposal, we may now state and prove the analogue of the first order condition.

\begin{prop}\label{p:firstorder}
For each $y\in L^\infty(SU_q(2))$ and $x\in \T{Lip}_t(SU_q(2))$ we have the identities
\[
[IyI, \pa_q^H(x)] = 0 = [I y I, \pa^V_t(x)] .
\]
\end{prop}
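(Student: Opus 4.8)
The strategy is to first establish the identities on the dense subspace $\C O(SU_q(2))^{\op 2}\su L^2(SU_q(2))^{\op 2}$ for $y\in\C O(SU_q(2))$, and then pass to the stated generality by density and continuity, using that the operators involved are bounded. For the algebraic core, I would combine the computation in Lemma \ref{l:commureal} with the fact that $I$ commutes with the Dirac operators up to modular operators (Lemma \ref{l:Irela}). Concretely, write $\pa_{t,q}(x)=\pa_t^V(x)+\pa_q^H(x)$ and treat each summand separately; I focus on $\pa_q^H$, the vertical case being completely analogous. The key point is that $\pa_q^H(x)$ arises as a (bounded extension of a) \emph{twisted} commutator $D_q^H\si_L(q^{1/2},x)-\si_L(q^{-1/2},x)D_q^H$, while $IyI$ interacts with $D_q^H$ through the dampened operator $D_q^H\Ga_{q,0}^{-1}$, which genuinely commutes with $\C I$ by Lemma \ref{l:Irela}.

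\textbf{Key steps.} First, fix $y\in\C O(SU_q(2))$ and $x\in\T{Lip}_t(SU_q(2))$ and work on the core $\C O(SU_q(2))^{\op 2}$. By Lemma \ref{l:commureal} we have $[\C D_q^H,\C IyI]=\Ga_{q,0}\cd\C I\pa_q^H(y)\C I\cd\Ga_{q,0}$ on this core; equivalently, $\C D_q^H\cd\C Iy\C I-\C Iy\C I\cd\C D_q^H=\Ga_{q,0}\cd\C I\pa_q^H(y)\C I\cd\Ga_{q,0}$, which shows that $\C Iy\C I$ maps the core into $\T{Dom}(D_q^H)$ and that its ``twisted commutator'' with $D_q^H$, computed relative to the conjugation by $\Ga_{q,0}$, is the bounded operator $\C I\pa_q^H(y)\C I$ — i.e.\ $D_q^H\Ga_{q,0}^{-1}\cd(\Ga_{q,0}\C Iy\C I\Ga_{q,0}^{-1})-(\Ga_{q,0}\C Iy\C I\Ga_{q,0}^{-1})\cd D_q^H\Ga_{q,0}^{-1}=\C I\pa_q^H(y)\C I$ after the appropriate rearrangement. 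The crucial structural input is that $\pa_q^H(y)$ is supported on the off-diagonal $\SmallMatrix{0&*\\ *&0}$, commutes with the relevant twists of $x$, and that $\pa_q^H(x)$ acts on the ``right'' factor while $IyI$ acts on the ``commutant side'' — here one invokes $JL^\infty(SU_q(2))J=L^\infty(SU_q(2))'$ from the Tomita--Takesaki discussion preceding the proposition. Second, compute $[IyI,\pa_q^H(x)]$ directly on the core by expanding $\pa_q^H(x)$ as the twisted commutator and repeatedly commuting $IyI$ past $D_q^H$ using the Lemma \ref{l:commureal} identity; the ``correction terms'' involving $\pa_q^H(y)$ cancel because they are multiplied on both sides by operators that force them into a telescoping configuration, while the main term vanishes since $\si_L(q^{\pm1/2},x)$ and $IyI$ commute (as one acts in $L^\infty(SU_q(2))$ and the other in its commutant). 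Third, having verified the identity on the core with $y$ polynomial, observe that both $[IyI,\pa_q^H(x)]$ and $[IyI,\pa_t^V(x)]$ are bounded operators depending WOT-continuously on $y\in L^\infty(SU_q(2))$ (conjugation by the unitary $I$ and multiplication are WOT-continuous on bounded sets), and $\C O(SU_q(2))$ is WOT-dense in $L^\infty(SU_q(2))$; since the identity $=0$ holds on a dense subspace for $y$ in a WOT-dense bounded-net-exhausted subalgebra, it persists for all $y\in L^\infty(SU_q(2))$ on all of $L^2(SU_q(2))^{\op 2}$.

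\textbf{Main obstacle.} I expect the genuinely delicate part to be the bookkeeping in the second step: because $\pa_{t,q}$ is only a \emph{twisted} derivation (with different twists for the vertical and horizontal parts), one cannot simply invoke the classical first-order computation $[[D,x],IyI]=\pm[[D,IyI],x]$ verbatim. Instead one must carefully track how the modular operators $\Ga_{q,0}$ and $\Ga_{t,0}$ thread through the commutators, using Lemma \ref{l:analytic} to rewrite $\Ga_{q,0}(\cdot)\Ga_{q,0}^{-1}$ as $\si_L(q^{-1/2},\cdot)$ and the commutation $\C I\Ga_{s,0}^{-1}=\Ga_{s,0}\C I$ from Lemma \ref{l:Irela}, so that all the twists line up to let the $L^\infty$-element $\si_L(q^{\pm1/2},x)$ and the commutant-element $IyI$ slide past each other. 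Once the algebraic identity is set up correctly on the core, the cancellation is forced by $JL^\infty J=(L^\infty)'$, and the density argument is routine; but getting every twist and every modular factor in the right place in the twisted setting is where the care is needed.
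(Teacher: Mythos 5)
Your proposal is correct and follows essentially the same route as the paper's proof: reduce to $y\in\C O(SU_q(2))$ by density, expand the twisted commutator of $\pa^V_t(x)$ (resp.\ $\pa^H_q(x)$) on the core $\C O(SU_q(2))^{\op 2}$ using Lemma \ref{l:commureal} together with Lemma \ref{l:analytic} and Lemma \ref{l:Irela} to line up the modular factors, observe that the two correction terms $\Ga_t\cd I\pa^V_t(y)I\,x\cd\Ga_{t,0}$ cancel, and let the main term vanish via $JL^\infty(SU_q(2))J=L^\infty(SU_q(2))'$. The only point the paper handles more explicitly than your sketch is the domain issue for $D^V_t\si_L(t^{1/2},x)\xi$, which it resolves by passing to inner products against core vectors (using selfadjointness of $D^V_t$ and $(IyI)^*=Iy^*I$) — this is precisely the bookkeeping you flag as the main obstacle.
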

\begin{proof}
Since the von Neumann algebra $L^\infty(SU_q(2))$ agrees with the closure of the coordinate algebra $\C O(SU_q(2)) \su \B B( L^2(SU_q(2)))$ with respect to the strong operator topology, it suffices to treat the case where $y\in \C O(SU_q(2))$. Let thus $y \in \C O(SU_q(2))$ be given. We will just focus on proving that $IyI$ commutes with $\pa^V_t(x)$ since the proof of the analogous result for $\pa_q^H(x)$ follows the same pattern. From Lemma \ref{l:analytic} and Lemma \ref{l:commureal} we obtain the identities
\begin{equation}\label{eq:firstorderI}
\begin{split}
I y I \cd \si_L(t^{-1/2},x) \C D^V_t 
& = \si_L(t^{-1/2},x) \C I y \C I \cd \C D^V_t \\
& = \si_L(t^{-1/2},x) \C D^V_t \cd \C I y \C I - \si_L(t^{-1/2},x) \Ga_{t,0} \cd \C I \pa^V_t(y) \C I \cd \Ga_{t,0} \\
& = \si_L(t^{-1/2},x) \C D^V_t \cd \C I y \C I - \Ga_{t} \cd I \pa^V_t(y) I x \cd \Ga_{t,0} 
\end{split}
\end{equation}
of unbounded operators defined on the dense subspace $\C O(SU_q(2))^{\op 2} \su L^2(SU_q(2))^{\op 2}$. Similarly, using Lemma \ref{l:analytic} and Lemma \ref{l:commureal} one more time, we obtain that
\begin{equation}\label{eq:firstorderII}
\begin{split}
& \binn{ I y I \cd D^V_t \si_L(t^{1/2},x) \xi, \eta } 
= \binn{ \si_L(t^{1/2},x) \xi, \C D^V_t \cd \C I y^* \C I \eta} \\
& \q = \binn{ \si_L(t^{1/2},x) \xi, \C I y^* \C I \cd \C D^V_t \eta}
+ \binn{ \si_L(t^{1/2},x) \xi, \Ga_{t,0} \cd \C I \pa^V_t(y^*) \C I \cd \Ga_{t,0} \eta} \\
& \q = \binn{ D^V_t \si_L(t^{1/2},x) \cd I y I \xi, \eta}
- \binn{ \Ga_t \cd I \pa^V_t(y) I x \cd \Ga_{t,0} \xi, \eta} 
\end{split}
\end{equation}
for all  $\xi,\eta \in \C O(SU_q(2))^{\op 2}$. Combining the identities in \eqref{eq:firstorderI} and \eqref{eq:firstorderII} we see that
\[
\begin{split}
I y I \cd \pa^V_t(x)(\xi)
& = I y I \cd D^V_t \si_L(t^{1/2},x)(\xi) - I y I \cd \si_L(t^{-1/2},x) \C D^V_t(\xi) \\
& = D^V_t \si_L(t^{1/2},x) \cd I y I (\xi) - \Ga_t \cd I \pa^V_t(y) I x \cd \Ga_{t,0}(\xi) \\
& \q - \si_L(t^{-1/2},x) \C D^V_t \cd \C I y \C I(\xi) + \Ga_t \cd I \pa^V_t(y) I x \cd \Ga_{t,0}(\xi) \\
& = \pa^V_t(x) \cd I y I(\xi)
\end{split}
\]
for all  $\xi \in \C O(SU_q(2))^{\op 2}$. This proves the proposition.
\end{proof}

\begin{cor}\label{cor:values-in-L-infty}
The twisted $*$-derivations $\pa_t^V$ and $\pa^H_q \colon \T{Lip}_t(SU_q(2))\to \mathbb{B}\big(L^2(SU_q(2))^{\op 2}\big)$ both take values in $\mathbb{M}_2\big(L^\infty(SU_q(2))\big)$.
\end{cor}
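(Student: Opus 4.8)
The plan is to deduce the statement entirely from the first order condition in Proposition \ref{p:firstorder}, together with the Tomita--Takesaki identity $J L^\infty(SU_q(2)) J = L^\infty(SU_q(2))'$ already recorded above. The guiding principle is the elementary von Neumann algebra fact that an operator commuting with the entire commutant of a von Neumann algebra must lie in that algebra.

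First I would identify $IyI$ explicitly for $y \in L^\infty(SU_q(2))$, where $y$ is understood to act diagonally on $L^2(SU_q(2))^{\op 2}$ via $\pi$. Using the block form $I = \SmallMatrix{0 & J \\ -J & 0}$, the relation $I^2 = -1$, and the antilinearity of $J$, a short computation gives $IyI = - \SmallMatrix{JyJ & 0 \\ 0 & JyJ}$, where $JyJ$ denotes the linear operator $\xi \mapsto J\big( y (J\xi)\big)$ on $L^2(SU_q(2))$. By the Tomita--Takesaki identity, as $y$ ranges over $L^\infty(SU_q(2))$ the operator $JyJ$ ranges over all of the commutant $L^\infty(SU_q(2))'$, so the set $\{ IyI \mid y \in L^\infty(SU_q(2)) \}$ coincides with $\big\{ \SmallMatrix{z & 0 \\ 0 & z} \mid z \in L^\infty(SU_q(2))' \big\}$.

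Next, fix $x \in \T{Lip}_t(SU_q(2))$ and write $\pa_t^V(x) = (T_{ij})_{i,j \in \{0,1\}}$ under the identification $\B B\big(L^2(SU_q(2))^{\op 2}\big) = \mathbb{M}_2\big(\B B(L^2(SU_q(2)))\big)$. Proposition \ref{p:firstorder} asserts that $\pa_t^V(x)$ commutes with $\SmallMatrix{z & 0 \\ 0 & z}$ for every $z \in L^\infty(SU_q(2))'$; reading this identity entrywise yields $T_{ij} z = z T_{ij}$ for all such $z$ and all $i,j$. Hence each $T_{ij}$ lies in $\big(L^\infty(SU_q(2))'\big)' = L^\infty(SU_q(2))$, using that $L^\infty(SU_q(2))$ is a von Neumann algebra and so equals its own bicommutant. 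Thus $\pa_t^V(x) \in \mathbb{M}_2\big(L^\infty(SU_q(2))\big)$, and the very same argument applies verbatim to $\pa_q^H(x)$. I do not anticipate a genuine obstacle; the only points needing mild care are tracking the antilinearity of $J$ in the computation of $IyI$, and using that $y$ is allowed to range over all of $L^\infty(SU_q(2))$ so that $JyJ$ exhausts the full commutant.
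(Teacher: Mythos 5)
Your proof is correct and follows the same route as the paper: compute $IyI = -\SmallMatrix{JyJ & 0 \\ 0 & JyJ}$, invoke Tomita--Takesaki so that $JyJ$ exhausts the commutant $L^\infty(SU_q(2))'$, apply the first order condition of Proposition \ref{p:firstorder}, and conclude entrywise via the bicommutant theorem. No gaps.
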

\begin{proof}
Since $J$ is the modular conjugation for the left Hilbert algebra $\C O(SU_q(2))$ with inner product coming from the Haar state, it holds that $L^\infty(SU_q(2))'=JL^\infty(SU_q(2))J$ as an identity between operator algebras in $\mathbb{B}\big(L^2(SU_q(2))\big)$; see \cite[Chapter VI , Theorem 1.19]{Takesaki-vol-II}. For $x \in \T{Lip}_t(SU_q(2))$, it therefore suffices to show that each entry in $\pa_q^H(x), \pa_t^V(x)\in\mathbb{B}\big(L^2(SU_q(2))^{\op 2} \big) =\mathbb{M}_2\big(\mathbb{B}(L^2(SU_q(2)))\big)$ belongs to the commutant $(JL^\infty(SU_q(2))J)'$. For $y\in L^\infty(SU_q(2))$ it holds that 
\[
IyI=-\begin{pmatrix} JyJ & 0 \\ 0 & JyJ \end{pmatrix},
\]
 and hence it suffices to show that $[IyI, \pa_t^V(x)]=[IyI, \pa_q^H(x)]=0$, but this was already proven in Proposition \ref{p:firstorder}.
\end{proof}

\begin{remark}\label{rem:twisted-real-structure}
In the classical setting of non-commutative geometry, a real 3-dimensional structure for an odd spectral triple $(A, H, D)$  with coordinate algebra $\C A \su A$ is given by an antilinear  unitary $J\colon H\to H$. This data is then supposed to satisfy the conditions $J^2=-1$, $DJ=JD$ and for all $a,b\in \C A$ one has $[a, JbJ]=0$ and  $\big[\ov{[D,a]}, JbJ\big]=0$; see \cite{Con:GFN}. In our setting,  the antilinear unitary $I\in \mathbb{B}(L^2(SU_q(2))^{\oplus 2})$ provides the substitute for a real structure. Lemma \ref{l:Irela} may thus be viewed as a twisted analogue of the relation $DJ=JD$,  while  Proposition \ref{p:firstorder}  is the analogue of the first order condition $\big[\ov{[D,a]}, JbJ\big]=0$. The relation $[a, IbI]=0$ also holds by Tomita-Takesaki theory as already remarked in the beginning of the present section. 
\end{remark}

\subsection{The equivariance condition}
We are now going to investigate the equivariance properties of the spectral geometric data governed by our pair of Dirac operators. In some of the literature on Dirac operators on $q$-deformed spaces (see e.g.~\cite{DaSi:DSP, DLSSV:DOS}) the equivariance is to be  understood in the sense that the Dirac operator in question commutes with the right action of $\C U_q(\mathfrak{su}(2))$; i.e. with the diagonal action of operators of the form $\de_{\eta}$ with $\eta\in \C U_q(\mathfrak{su}(2))$ on the core $\C O (SU_q(2))^{\oplus 2}\subseteq L^2(SU_q(2))^{\oplus 2}$. Since $\C D_q^H$ is constructed explicitly using the \emph{left} action  it clearly commutes with $\de_\eta$, and since $\de_\eta$ preserves the spectral subspaces 
it also follows easily that $\C D_t^V$ commutes with $\de_\eta$. Thus, this type of equivariance is basically built into the construction of $D_{t,q}$. 
In this section we shall show another kind of equivariance, in that we will show that our spectral data is compatible with the coproduct on the $C^*$-algebraic quantum group $C(SU_q(2))$. More precisely, we will show in Lemma \ref{l:multiunidir} below that the vertical and horizontal Dirac operators both commute with the multiplicative unitary for $SU_q(2)$, which seems to be an equivariance condition which is more closely related  with the $SU(2)$-equivariance of the classical Dirac operator on $S^3$; see Remark \ref{rem:classical-equivariance} for more details. Throughout the section,  we are still keeping the two parameters $t$ and $q$ in  $(0,1]$ fixed unless explicitly stated otherwise. \\



 Let us consider the Hilbert space tensor product $L^2(SU_q(2)) \hot L^2(SU_q(2))$ and introduce the unitary operator 
\[
W \colon L^2(SU_q(2)) \hot L^2(SU_q(2)) \to L^2(SU_q(2)) \hot L^2(SU_q(2))
\]
given by the formula $W( x \ot y) := \De(y)  \cd ( x \ot 1 )$ for all elements $x,y \in \C O(SU_q(2))$.  We record that $W\big( \C O(SU_q(2)) \ot \C O(SU_q(2))\big) = \C O(SU_q(2)) \ot \C O(SU_q(2))$ and hence that $W^*\big(\C O(SU_q(2)) \ot \C O(SU_q(2))\big) = \C O(SU_q(2)) \ot \C O(SU_q(2))$ as well. The unitary operator $W$ implements the coproduct $\De \colon C(SU_q(2)) \to C(SU_q(2)) \ot_{\T{min}} C(SU_q(2))$ in the sense that
\[
\De(z) = W(1 \ot z) W^* \quad \T{for all } z \in C(SU_q(2)) .
\]
The operator $W$ is referred to as the \emph{multiplicative unitary} for quantum $SU(2)$; see \cite{BaSk:UMD} for more details on these matters. For each $x \in \T{Lip}_t(SU_q(2))$ we may use the multiplicative unitary to make sense of the expressions $\De(\pa_q^H(x))$ and $\De(\pa^V_t(x))$. Indeed, since $\pa_q^H(x)$ and $\pa^V_t(x)$ are bounded operators on $L^2(SU_q(2))^{\op 2}$ we may apply the following definitions:
\[
\begin{split}
\De(\pa_q^H(x)) & := (W \op W)( 1 \ot \pa_q^H(x) )(W \op W)^* \q \T{and} \\
\De(\pa^V_t(x)) & := (W \op W)( 1 \ot \pa^V_t(x) )(W \op W)^* ,
\end{split}
\]
where both of the right hand sides are bounded operators on the Hilbert space tensor product $L^2(SU_q(2)) \hot L^2(SU_q(2))^{\op 2}$.
We would like to commute the coproduct past the twisted $*$-derivations $\pa_q^H$ and $\pa^V_t$ obtaining formulae of the form
\[
(1 \ot \pa_q^H) \De(x) = \De( \pa_q^H(x) ) \, \, \T{ and } \, \, \, (1 \ot \pa^V_t) \De(x) = \De( \pa^V_t(x) ) .
\]
In order to make sense of the left hand sides of these expressions we first investigate the unbounded selfadjoint operators $1 \hot D^H_q$ and $1 \hot D^V_t$, defined, respectively, as the closures of the unbounded symmetric operators
\[
1 \ot \C D^H_q \, \, \T{ and } \, \, \, 1 \ot \C D^V_t \colon \C O(SU_q(2)) \ot \C O(SU_q(2))^{\op 2}
\longrightarrow L^2(SU_q(2)) \hot L^2(SU_q(2))^{\op 2} .
\]

\begin{lemma}\label{l:multiunidir}
The unitary operator $W \op W$ preserves the subspaces $\T{Dom}(1 \hot D^H_q)$ and $\T{Dom}(1 \hot D^V) \su L^2(SU_q(2)) \hot L^2(SU_q(2))^{\op 2}$. Moreover, it holds that
\begin{equation}\label{eq:equivar}
\begin{split}
& [ 1 \hot D^H_q , W \op W ](\xi) = 0 \q \mbox{for all } \xi \in \T{Dom}(1 \hot D^H_q) \q \mbox{and} \\ 
& [ 1 \hot D^V_t , W \op W ]( \ze) = 0 \q \mbox{for all }  \ze \in \T{Dom}(1 \hot D^V_t) .
\end{split}
\end{equation}
\end{lemma}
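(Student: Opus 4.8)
The plan is to reduce the claimed commutation relations to a computation on the algebraic core and then extend by density and selfadjointness. First I would recall that $W$ satisfies $W(x\otimes y)=\Delta(y)(x\otimes 1)$ for $x,y\in\C O(SU_q(2))$, and in particular preserves the algebraic tensor product $\C O(SU_q(2))\otimes\C O(SU_q(2))$, with the same being true for $W^*$. Since $\C O(SU_q(2))\otimes\C O(SU_q(2))^{\op 2}$ is a core for both $1\hot D^H_q$ and $1\hot D^V_t$, it suffices to prove that $W\op W$ maps this core into the domains of the respective closures and that $(1\ot\C D^H_q)(W\op W)\zeta=(W\op W)(1\ot\C D^H_q)\zeta$ (and likewise for the vertical operator) for all $\zeta$ in the core; the general statement then follows by a standard approximation argument, using that $W\op W$ is unitary and that both operators are essentially selfadjoint on the core.

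The key computation is then purely algebraic. For the horizontal operator, I would use the defining formula $\C D^H_q=\SmallMatrix{0 & -q^{-1/2}\pa_{fk^{-1}} \\ -q^{1/2}\pa_{ek^{-1}} & 0}$ together with the fact that $\pa_\eta=(1\ot\inn{\eta,\cd})\Delta$ is built from the \emph{left} regular corepresentation. The point is that $\pa_\eta$, being a left convolution operator, commutes with left translation, which is exactly what $W$ implements. Concretely, for $x,y\in\C O(SU_q(2))$ one has $(1\ot\pa_\eta)W(x\otimes y)=(1\ot\pa_\eta)\big(\Delta(y)(x\otimes 1)\big)$, and using coassociativity $(1\ot\Delta)\Delta=(\Delta\ot 1)\Delta$ one checks that this equals $\Delta(\pa_\eta(y))(x\otimes 1)=W(x\otimes\pa_\eta(y))=W(1\ot\pa_\eta)(x\otimes y)$. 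Applying this entrywise to the matrix $\C D^H_q$ (so $\eta$ runs over $fk^{-1}$ and $ek^{-1}$) and to $\C D^V_t$ gives the desired identities on the core. For the vertical operator an alternative, perhaps cleaner, route is to note that $\C D^V_t$ acts as a scalar on each algebraic spectral subspace $\C A^m_q$ for the left circle action, and $W$ is built from the coproduct, which intertwines the left coaction; so $1\ot\C D^V_t$ commutes with $W$ because $W$ respects the left-spectral decomposition in the second leg. One must be a little careful that $\C D^V_t$ is genuinely independent of the first tensor leg so that the computation really is ``one-sided''.

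The main obstacle I anticipate is the domain bookkeeping rather than the algebra: one must verify that $W\op W$ maps the core $\C O(SU_q(2))\otimes\C O(SU_q(2))^{\op 2}$ into $\T{Dom}(1\hot D^H_q)$ (resp. $\T{Dom}(1\hot D^V_t)$), which is immediate since $W$ preserves the algebraic tensor product and the core is invariant, and then to promote the core-level identity to the full domain. For the latter I would argue as follows: given $\xi\in\T{Dom}(1\hot D^H_q)$, choose a sequence $\xi_n$ in the core with $\xi_n\to\xi$ and $(1\hot D^H_q)\xi_n\to(1\hot D^H_q)\xi$; then $(W\op W)\xi_n\to(W\op W)\xi$ and $(1\ot\C D^H_q)(W\op W)\xi_n=(W\op W)(1\ot\C D^H_q)\xi_n\to(W\op W)(1\hot D^H_q)\xi$, so by closedness of $1\hot D^H_q$ we get $(W\op W)\xi\in\T{Dom}(1\hot D^H_q)$ and $(1\hot D^H_q)(W\op W)\xi=(W\op W)(1\hot D^H_q)\xi$, which is precisely the vanishing of the commutator. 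The same argument applied to $W^*\op W^*$ (which also preserves the core) gives the reverse inclusion of domains, completing the proof. I expect the only genuinely delicate point to be making the coassociativity manipulation in the second leg fully rigorous when $\pa_{fk^{-1}}$ and $\pa_{ek^{-1}}$ carry the twist $k^{-1}$, but since $\pa_k$ is an algebra automorphism preserving $\C O(SU_q(2))$ this causes no real trouble.
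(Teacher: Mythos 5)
Your proposal is correct and follows essentially the same route as the paper: reduce to the core $\C O(SU_q(2))\ot\C O(SU_q(2))^{\op 2}$, use coassociativity in the form $\De\pa_\eta=(1\ot\pa_\eta)\De$ for the horizontal operator, and for the vertical operator use that $W$ preserves $\C O(SU_q(2))\ot\C A^n_q$ together with the scalar action of $\C D^V_t$ on spectral subspaces. Your explicit closedness/approximation argument for passing from the core to the full domains is exactly the ``standard result'' the paper invokes, so no gap remains.
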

\begin{proof}
We first remark that the direct sum $W \op W$ preserves the common core $\C O(SU_q(2)) \ot \C O(SU_q(2))^{\op 2}$ for the two selfadjoint unbounded operators $1 \hot D^H_q$ and $1 \hot D^V_t$. Using standard results on commutators with selfadjoint unbounded operators, it therefore suffices to verify  the identities in \eqref{eq:equivar} for elements of the form $\xi = \ze = x \ot y$ with $x \in \C O(SU_q(2))$ and $y = \pma{y_1 \\ y_2} \in \C O(SU_q(2))^{\op 2}$. 
Using the coassociativity of $\Delta$, one sees that $\De \pa_\eta(w) = (1 \ot \pa_\eta)\De(w)$ for all $\eta \in \C U_q(\G{su}(2))$ and $w \in \C O(SU_q(2))$. It therefore follows that
\[
\begin{split}
(1 \ot \C D^H_q)(W \oplus W)(\xi) 
& = - \pma{ 0 & 1 \ot q^{-1/2} \pa_{fk^{-1}} \\ 1 \ot q^{1/2} \pa_{ek^{-1}} & 0}
\pma{ \De(y_1) \cd (x \ot 1) \\ \De(y_2) \cd (x \ot 1) } \\
& = - \pma{ q^{-1/2}\De(\pa_{fk^{-1}}(y_2)) \cd (x \ot 1)  \\ q^{1/2}\De(\pa_{ek^{-1}}(y_1)) \cd (x \ot 1) } \\
& = (W \op W)(1 \ot \C D^H_q)(\xi) .
\end{split}
\]
This proves the relevant identity in the case of the horizontal Dirac operator. To treat the commutator with the vertical Dirac operator, we simply record that $W$ preserves the subspace $\C O(SU_q(2)) \ot \C A^n_q$ for all values of $n \in \zz$. The commutation relation now follows since $\C D_t^V$ acts as a diagonal  scalar matrix on $\C A^n_q \op \C A^m_q$ for all $n,m \in \zz$.
\end{proof}

\begin{remark}\label{rem:classical-equivariance}
In the situation where $t=q=1$,  Lemma \ref{l:multiunidir} together with the formulae $\lambda_g \xi = (\T{ev}_{g^{-1}} \ot 1 )\Delta(\xi) = (\T{ev}_{g^{-1}} \ot 1 )(W \op W)(1 \ot \xi)$ for the left translation operator $\lambda_g\colon \C O (SU(2))\to \C O (SU(2))$ implies that $\lambda_g \circ \C D_{1,1}=\C D_{1,1}\circ \lambda_g$ as operators on $\C O(SU(2))^{\oplus 2}$. Lemma \ref{l:multiunidir} thus recovers the $SU(2)$-equivariance of the classical Dirac operator in this case (cf.~ Section \ref{sec:comparison-with-classical}).
\end{remark}

 Next, we would like to introduce the analogue of the Lipschitz algebra $\T{Lip}_t(SU_q(2))$ for the minimal tensor product $C(SU_q(2)) \ot_{\T{min}} C(SU_q(2))$, but in this case associated with the unbounded selfadjoint operators $1 \hot D^H_q$ and $1 \hot D^V_t$ instead of the unbounded selfadjoint operators $D^H_q$ and $D^V_t$. For this to make sense, we let the minimal tensor product of $C^*$-algebras $C(SU_q(2)) \ot_{\T{min}} C(SU_q(2))$ act on $L^2(SU_q(2)) \hot L^2(SU_q(2))^{\op 2}$ via the representation $\rho \ot \pi$, which we will from now on often suppress. Note that in this representation, the coproduct is implemented by $W\oplus W$ in the sense that
 \begin{align}\label{eq:implement-in-repr}
(\rho \ot \pi)(\Delta(x))=(W\oplus W)(1 \ot \pi(x))(W\oplus W)^* \q \T{for all } x \in C(SU_q(2)) .
 \end{align}
 
We start out by expanding our notion of analytic elements. To this end, we record that the left circle action $\si_L \colon S^1 \ti C(SU_q(2)) \to C(SU_q(2))$ induces a left circle action $1 \ot \si_L$ on the minimal tensor product $C(SU_q(2)) \ot_{\T{min}} C(SU_q(2))$ given on simple tensors by
\[
(1 \ot \si_L)(z, x \ot y) := x \ot \si_L(z,y) \q \T{for all } z \in S^1 \, , \, \, x,y \in C(SU_q(2)) .
\]
We recall that the closed strip $I_s \su \cc$ was introduced in \eqref{eq:closstrip} for all values of $s \in (0,1]$.

\begin{dfn}\label{d:minanalyt}
Let $s \in (0,1]$. We say that an element $x \in C(SU_q(2)) \ot_{\T{min}} C(SU_q(2))$ is \emph{analytic of order $-\log(s)/2$} when the continuous map $\rr \to C(SU_q(2)) \ot_{\T{min}}  C(SU_q(2))$ given by $r \mapsto (1 \ot \si_L)(e^{ir},x)$ extends to a continuous function $I_s \to C(SU_q(2)) \ot_{\T{min}}  C(SU_q(2))$ which is analytic on the interior $I_s^{\ci} \su I_s$. We denote this (unique) continuous extension by $z \mapsto (1 \ot \si_L)(e^{iz},x)$.
\end{dfn}

We record that the $*$-algebra structure on the minimal tensor product $C(SU_q(2)) \ot_{\T{min}} C(SU_q(2))$ induces a $*$-algebra structure on the subset of elements which are analytic of order $-\log(s)/2$. 
%

\begin{lemma}\label{l:copanalyt}
Let $s \in (0,1]$. If an element $x \in C(SU_q(2))$ is analytic of order $-\log(s)/2$ in the sense of Definition \ref{d:analyt}, then $\De(x)$ is analytic of order $-\log(s)/2$ in the sense of Definition \ref{d:minanalyt}. Moreover, we have the formula
\[
(1 \ot \si_L)(e^{iz}, \De(x)) = \De( \si_L(e^{iz},x)) \q \mbox{for all } \, \, \, z \in I_s .
\]
\end{lemma}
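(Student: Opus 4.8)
The plan is to reduce the statement about $\De(x)$ to the already-established theory for the left circle action $\si_L$ on $C(SU_q(2))$, using that the coproduct $\De \colon C(SU_q(2)) \to C(SU_q(2)) \ot_{\T{min}} C(SU_q(2))$ intertwines $\si_L$ with $1 \ot \si_L$. First I would verify the key intertwining identity
\[
(1 \ot \si_L)(z, \De(w)) = \De(\si_L(z,w)) \q \mbox{for all } z \in S^1,\ w \in C(SU_q(2)).
\]
This is a standard equivariance fact: the right circle action $\si_R$ on the first leg and the left circle action $\si_L$ on the second leg together govern how $\De$ behaves, and since $\De$ is a $*$-homomorphism it suffices to check the relation on the generators $a,b$ (or on the matrix coefficients $u^n_{ij}$, using $\De(u^n_{ij}) = \sum_k u^n_{ik} \ot u^n_{kj}$ together with \eqref{eq:sigma-L-and-R-on-matrix-units}); both sides then act by the scalar $z^{2j-n}$ on the $k$-summand, giving equality. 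Because $\De$ is bounded (indeed isometric), the identity extends from $\C O(SU_q(2))$ to all of $C(SU_q(2))$ by density and continuity.

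Next I would use this to transfer analyticity. Suppose $x$ is analytic of order $-\log(s)/2$ in the sense of Definition \ref{d:analyt}, so that $r \mapsto \si_L(e^{ir},x)$ extends to a continuous map $g \colon I_s \to C(SU_q(2))$ which is holomorphic on $I_s^{\ci}$. Then the composite $\De \circ g \colon I_s \to C(SU_q(2)) \ot_{\T{min}} C(SU_q(2))$ is again continuous and holomorphic on $I_s^{\ci}$, since $\De$ is a bounded (in fact isometric, hence continuous and complex-linear) map between Banach spaces, and composition of a bounded operator with an operator-valued holomorphic function is holomorphic. Moreover, for real $r$ we have $(\De \circ g)(r) = \De(\si_L(e^{ir},x)) = (1 \ot \si_L)(e^{ir}, \De(x))$ by the intertwining identity of the previous paragraph. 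Hence $r \mapsto (1 \ot \si_L)(e^{ir}, \De(x))$ extends to the continuous map $\De \circ g$ on $I_s$, holomorphic on the interior; this is precisely the statement that $\De(x)$ is analytic of order $-\log(s)/2$ in the sense of Definition \ref{d:minanalyt}, and by the uniqueness of such an extension we obtain the formula $(1 \ot \si_L)(e^{iz},\De(x)) = \De \circ g(z) = \De(\si_L(e^{iz},x))$ for all $z \in I_s$.

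There is essentially no hard obstacle here; the proof is a soft argument combining the $\si_L$-equivariance of $\De$ with the fact that bounded linear maps preserve operator-valued holomorphy. The only point requiring a little care is the initial intertwining identity on $S^1$, and specifically making sure one is using the correct circle action on the correct tensor leg — the formulas \eqref{eq:sigma-L-and-R-on-matrix-units} for $\si_L$ on matrix coefficients together with the comultiplication $\De(u^n_{ij}) = \sum_k u^n_{ik} \ot u^n_{kj}$ make this transparent, since $\si_L$ acts on the second index $j$, which is exactly the index carried through $\De$ into the right leg. Once that identity is in hand, the extension to all of $C(SU_q(2))$ by density, and then the transfer of analyticity via $\De \circ g$, are routine, so I would present those steps briefly.
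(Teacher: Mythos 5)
Your proposal is correct and follows essentially the same route as the paper: verify the intertwining identity $(1\ot\si_L)(e^{ir},\De(x)) = \De(\si_L(e^{ir},x))$ on matrix coefficients (equivalently generators) and extend by linearity and continuity, then compose the given analytic extension with the bounded map $\De$ to obtain analyticity of $\De(x)$, concluding the formula on all of $I_s$ by uniqueness of the extension (the paper phrases this via the Identity Theorem). No gaps.
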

\begin{proof}
We first notice that $(1 \ot \si_L)(e^{ir}, \De(x)) = \De( \si_L(e^{ir},x))$ for all $x \in C(SU_q(2))$ and $r \in \rr$. To verify this identity, it suffices to use the formula in \eqref{eq:sigma-L-and-R-on-matrix-units} for the matrix coefficients and then extend  to all of $C(SU_q(2))$ by continuity and linearity. 
 Suppose next that $x \in C(SU_q(2))$ is analytic of order $-\log(s)/2$. The map $r\mapsto \De( \si_L(e^{ir},x))$ then extends continuously to $I_s$ and the extension is analytic on $I_s^{\circ}$. It follows that $\De(x)$ is analytic of order $-\log(s)/2$ as well. The desired formula for all $z\in I_s$ is then a consequence of the Identity Theorem in complex analysis.
\end{proof}

We now have the data needed in order to formally introduce the Lipschitz algebras associated to the unbounded selfadjoint operators $1 \hot D^H_q$ and $1 \hot D^V_t$. Let $x \in  C(SU_q(2)) \ot_{\T{min}} C(SU_q(2))$. We say that $x$ is \emph{horizontally Lipschitz} when
\begin{enumerate}
\item $x$ is analytic of order $-\log(q)/2$ (with respect to $1 \ot \si_L$);
\item the bounded operator $(1 \ot \si_L)(q^{1/2},x)$ preserves the domain of $1 \hot D^H_q$;
\item the twisted commutator
\[
(1 \hot D^H_q) \cd (1 \ot \si_L)(q^{1/2},x) - (1 \ot \si_L)(q^{-1/2},x) \cd (1 \hot D^H_q)
\]
extends to a bounded operator, denoted $(1 \ot \pa_q^H)(x)$, on the Hilbert space $L^2(SU_q(2)) \hot L^2(SU_q(2))^{\op 2}$.
\end{enumerate}
We say that $x$ is \emph{vertically Lipschitz} when
\begin{enumerate}
\item $x$ is analytic of order $-\log(t)/2$ (with respect to $1 \ot \si_L$);
\item the bounded operator $(1 \ot \si_L)(t^{1/2},x)$ preserves the domain of $1 \hot D^V_t$;
\item the twisted commutator
\[
(1 \hot D^V_t) \cd (1 \ot \si_L)(t^{1/2},x) - (1 \ot \si_L)(t^{-1/2},x) \cd (1 \hot D^V_t)
\]
extends to a bounded operator, denoted $(1 \ot \pa^V_t)(x)$, on the Hilbert space $L^2(SU_q(2)) \hot L^2(SU_q(2))^{\op 2}$.
\end{enumerate}
The \emph{Lipschitz algebra} $\Lip_t\big(SU_q(2) \ti SU_q(2) \big)$ then consists of the elements in $C(SU_q(2)) \ot_{\T{min}} C(SU_q(2))$ which are both horizontally and vertically Lipschitz. We record that the Lipschitz algebra $\Lip_t\big(SU_q(2) \ti SU_q(2) \big)$ is a norm-dense $*$-subalgebra of the minimal tensor product $C(SU_q(2)) \ot_{\T{min}} C(SU_q(2))$.  

\begin{lemma}\label{l:partialcommutes}
For $x \in \Lip_t(SU_q(2))$ it holds that $\De(x) \in \Lip_t\big(SU_q(2) \ti SU_q(2)\big)$ and we have the formulae
$(1 \ot \pa_q^H)\De(x) = \De(\pa_q^H(x))$ and $(1 \ot \pa^V_t)\De(x) = \De(\pa^V_t(x))$.  
\end{lemma}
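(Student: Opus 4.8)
The plan is to reduce everything to the definition of horizontally/vertically Lipschitz for elements of the minimal tensor product, and to verify the three defining conditions for $\De(x)$ by transporting the corresponding facts for $x$ through the comultiplication, using the compatibility lemmas already established. First I would check analyticity: since $x \in \Lip_t(SU_q(2))$ is analytic of order $-\log(q)/2$ and of order $-\log(t)/2$ in the sense of Definition \ref{d:analyt}, Lemma \ref{l:copanalyt} gives immediately that $\De(x)$ is analytic of the same orders in the sense of Definition \ref{d:minanalyt}, and moreover
\[
(1\ot\si_L)(s^{1/2},\De(x)) = \De(\si_L(s^{1/2},x)), \qquad (1\ot\si_L)(s^{-1/2},\De(x)) = \De(\si_L(s^{-1/2},x))
\]
for $s \in \{t,q\}$. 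This settles condition (1) in both the horizontal and vertical cases.

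Next I would handle conditions (2) and (3) simultaneously by a direct commutator computation on the common core $\C O(SU_q(2)) \ot \C O(SU_q(2))^{\op 2}$. Write $b := \si_L(q^{1/2},x)$ and recall from Lemma \ref{l:multiunidir} that $W\oplus W$ commutes with $1\hot D^H_q$ and preserves $\T{Dom}(1\hot D^H_q)$. Since $\De(b)$ acts as $(W\oplus W)(1\ot b)(W\oplus W)^*$ and $1\ot b$ preserves $\T{Dom}(1\hot D^H_q)$ (because $b$ preserves $\T{Dom}(D^H_q)$ by the definition of $\Lip_t(SU_q(2))$, hence $1\ot b$ preserves the domain of the closure $1\hot D^H_q$), we get that $\De(b)$ preserves $\T{Dom}(1\hot D^H_q)$; this is condition (2). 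For condition (3), compute on the core:
\[
\begin{split}
(1\hot D^H_q)\De(b) - \De(\si_L(q^{-1/2},x))(1\hot D^H_q)
&= (W\oplus W)\big[(1\hot D^H_q)(1\ot b) - (1\ot \si_L(q^{-1/2},x))(1\hot D^H_q)\big](W\oplus W)^*\\
&= (W\oplus W)\big(1 \ot \big[D^H_q\, b - \si_L(q^{-1/2},x)\, D^H_q\big]\big)(W\oplus W)^*,
\end{split}
\]
where in the first equality I used that $W\oplus W$ commutes with $1\hot D^H_q$ and that $\De(\si_L(q^{-1/2},x)) = (W\oplus W)(1\ot\si_L(q^{-1/2},x))(W\oplus W)^*$. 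The bracket on the right is exactly the twisted commutator defining $\pa_q^H(x)$, which extends to the bounded operator $\pa_q^H(x)$ since $x \in \Lip_t(SU_q(2))$. Hence the left side extends to the bounded operator $(W\oplus W)(1\ot\pa_q^H(x))(W\oplus W)^* = \De(\pa_q^H(x))$, which proves both that $\De(x)$ is horizontally Lipschitz and the formula $(1\ot\pa_q^H)\De(x) = \De(\pa_q^H(x))$. The vertical case is word-for-word identical, replacing $D^H_q$ by $D^V_t$, the parameter $q$ by $t$, and invoking the second half of Lemma \ref{l:multiunidir}. Since $\De(x)$ is then both horizontally and vertically Lipschitz, $\De(x)\in\Lip_t\big(SU_q(2)\ti SU_q(2)\big)$, completing the proof.

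The main technical point to be careful about is the passage from the core to the closures: one must justify that the bracketed twisted-commutator identity, established on $\C O(SU_q(2))\ot\C O(SU_q(2))^{\op 2}$, really does imply the claimed identity of bounded operators on all of $L^2(SU_q(2))\hot L^2(SU_q(2))^{\op 2}$ and the domain-preservation statement. This is the standard argument that a symmetric-type identity valid on a common core for two selfadjoint operators, together with boundedness of the commutator, extends by continuity; the relevant input is that $W\oplus W$ preserves the core (noted in the proof of Lemma \ref{l:multiunidir}) and that $b$ preserves $\T{Dom}(D^H_q)$ together with $D^H_q b$ being defined on the core. I expect this bookkeeping to be the only place requiring genuine care; everything else is a formal transport of structure along $\De$.
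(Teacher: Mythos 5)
Your argument is correct and follows essentially the same route as the paper: analyticity of $\De(x)$ via Lemma \ref{l:copanalyt}, conjugation by $W \op W$ using Lemma \ref{l:multiunidir} and \eqref{eq:implement-in-repr} to identify the twisted commutator with $\De(\pa_q^H(x)) = (W\op W)(1\ot \pa_q^H(x))(W\op W)^*$, and the core-to-closure bookkeeping handled exactly as you indicate (the paper computes on core vectors $\xi$ with $(W^*\op W^*)\xi$ still in the core and then invokes boundedness of $\De(\pa_q^H(x))$ together with the core property of $\C O(SU_q(2))\ot\C O(SU_q(2))^{\op 2}$). The only small caveat is that your parenthetical claim that preservation of $\T{Dom}(D_q^H)$ by $b=\si_L(q^{1/2},x)$ alone forces $1\ot b$ to preserve $\T{Dom}(1\hot D_q^H)$ also requires the boundedness of the twisted commutator, which your closing paragraph in any case supplies.
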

\begin{proof}
Let $x \in \T{Lip}_t(SU_q(2))$ be given. We focus on showing that $\De(x)$ is horizontally Lipschitz and that $(1 \ot \pa_q^H)\De(x) = \De(\pa_q^H(x))$, since the same argument applies to the vertical case as well. 
First note that $\De(x)$ is analytic of order $-\log(q)/2$ by Lemma \ref{l:copanalyt}.
Let $\xi \in \C O(SU_q(2)) \ot \C O(SU_q(2))^{\op 2}$ be an element in the core for $1 \hot D^H_q$, and recall that $(W^* \op W^*)(\xi) \in \C O(SU_q(2)) \ot \C O(SU_q(2))^{\op 2}$. Using Lemma \ref{l:multiunidir}, Lemma \ref{l:copanalyt} and \eqref{eq:implement-in-repr} we then see that
\[
\begin{split}
(1 \ot \si_L)(q^{1/2},\De(x))(\xi) 
& = \De(\si_L(q^{1/2},x))(\xi) \\ 
& = (W \op W) (1 \ot \si_L(q^{1/2},x)) (W^* \op W^*)(\xi) \in \T{Dom}(1 \hot D^H_q) .
\end{split}
\]
Using this, another application of Lemma \ref{l:multiunidir}, Lemma \ref{l:copanalyt} and \eqref{eq:implement-in-repr} shows that the twisted commutator may be computed on $\xi$ as follows:
\[
\begin{split}
& (1 \hot D^H_q) (1 \ot \si_L)(q^{1/2},\De(x))(\xi) - (1 \ot \si_L)(q^{-1/2}, \De(x)) (1 \hot D^H_q)(\xi) \\
& \q = (W \op W) \big( 1 \ot D^H_q \si_L(q^{1/2},x) - 1 \ot \si_L(q^{-1/2},x) D^H_q  \big) (W^* \op W^*)(\xi) \\
& \q = (W \op W)(1 \ot \pa_q^H(x) ) (W^* \op W^*)(\xi) = \De(\pa_q^H(x))(\xi) .
\end{split}
\]
The result of the lemma now follows since $\De(\pa_q^H(x))$ is a bounded operator and since $\C O(SU_q(2)) \ot \C O(SU_q(2))^{\op 2}$ is a core for the selfadjoint unbounded operator $1 \hot D_q^H$.
\end{proof}

For $\xi,\ze \in L^2(SU_q(2))$ we let $\phi_{\xi,\ze} \colon C(SU_q(2)) \to \cc$ denote the bounded linear functional $\phi_{\xi,\ze}(x) := \inn{\xi, \rho(x) \ze}$. Let us moreover introduce the two bounded operators $T_\xi$ and $T_\ze \colon L^2(SU_q(2))^{\op 2} \to L^2(SU_q(2)) \hot L^2(SU_q(2))^{\op 2}$ given by the formulae $T_\xi(\eta) := \xi \ot \eta$ and $T_\ze(\eta) := \ze \ot \eta$. We define the bounded operator
\[
\begin{split}
 \phi_{\xi,\ze} \ot 1 \colon \B B\big( L^2(SU_q(2)) \hot L^2(SU_q(2))^{\op 2}\big) &\longrightarrow \B B\big( L^2(SU_q(2))^{\op 2} \big) 
\end{split}
\]
given by $(\phi_{\xi,\ze} \ot 1)(z) := T_\xi^* z T_\ze$, 
and record that we have the estimate $\| \phi_{\xi,\ze} \ot 1  \| \leq \| \xi \| \cd \| \ze \|$ on the operator norm. 

The last result of the present section shows how the Lipschitz seminorm and the coproduct interact with the slice maps just introduced. This result will be essential in our analysis of the Berezin transform; see Proposition \ref{prop:berezin-approximates-identity}. 

\begin{prop}\label{prop:slice-and-Lip}
For each $\xi,\ze \in L^2(SU_q(2))$ and $z \in \Lip_t\big(SU_q(2) \ti SU_q(2)\big)$ it holds that
$(\phi_{\xi,\ze} \ot 1)(z) \in  \Lip_t(SU_q(2))$ and we have the identities 
\[
\begin{split}
& \pa_q^H\big( (\phi_{\xi,\ze} \ot 1)(z) \big) = (\phi_{\xi,\ze} \ot 1)(1 \ot \pa_q^H)(z) \q \mbox{and} \\ 
& \pa^V_t\big( (\phi_{\xi,\ze} \ot 1)(z) \big) = (\phi_{\xi,\ze} \ot 1)(1 \ot \pa^V_t)(z) .
\end{split}
\]
In particular, we have the estimate
\[
L_{t,q}^{\max}\big( (\phi_{\xi,\ze} \ot 1)(\De(x)) \big) \leq \| \xi \| \| \ze\| \cd L_{t,q}^{\max}(x)
\]
for all $x \in \Lip_t(SU_q(2))$.
\end{prop}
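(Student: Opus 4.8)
The plan is to reduce the statement about the slice map $\phi_{\xi,\ze}\ot 1$ to the commutation relations between the coproduct and the twisted derivations established in Lemma \ref{l:partialcommutes}, together with the equivariance from Lemma \ref{l:multiunidir}. First I would handle the claim that $(\phi_{\xi,\ze}\ot 1)(z)\in\Lip_t(SU_q(2))$ for $z\in\Lip_t(SU_q(2)\ti SU_q(2))$. The key computational input is that slicing commutes with the left circle action in the second leg: since $(1\ot\si_L)(e^{ir},\cd)$ acts only on the second tensor factor, one checks on simple tensors that $(\phi_{\xi,\ze}\ot 1)\big((1\ot\si_L)(e^{ir},z)\big)=\si_L(e^{ir},(\phi_{\xi,\ze}\ot 1)(z))$ for all $r\in\rr$, and then by boundedness of $\phi_{\xi,\ze}\ot 1$ (with norm $\leq\|\xi\|\,\|\ze\|$) and analyticity of $z$ one extends this to the strip $I_s$, concluding that $(\phi_{\xi,\ze}\ot 1)(z)$ is analytic of the required order with $\si_L(s^{\pm 1/2},(\phi_{\xi,\ze}\ot 1)(z))=(\phi_{\xi,\ze}\ot 1)\big((1\ot\si_L)(s^{\pm 1/2},z)\big)$.

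Next I would verify the domain-preservation and twisted-commutator conditions of Definition \ref{def:Lipschitz-elements}. Writing $x_{\pm}:=\si_L(q^{\pm 1/2},(\phi_{\xi,\ze}\ot 1)(z))=T_\xi^*\,(1\ot\si_L)(q^{\pm1/2},z)\,T_\ze$ in terms of the operators $T_\xi,T_\ze$, and using that $T_\ze$ maps $\T{Dom}(D^H_q)$ into $\T{Dom}(1\hot D^H_q)$ while $T_\xi^*$ maps $\T{Dom}(1\hot D^H_q)$ into $\T{Dom}(D^H_q)$ together with the intertwining $(1\hot D^H_q)T_\ze=T_\ze D^H_q$ on the core, one sees that $x_{+}$ preserves $\T{Dom}(D^H_q)$. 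Then on the core $\C O(SU_q(2))^{\op 2}$ the twisted commutator $D^H_q\,x_{+}-x_{-}\,D^H_q$ equals $T_\xi^*\big((1\hot D^H_q)(1\ot\si_L)(q^{1/2},z)-(1\ot\si_L)(q^{-1/2},z)(1\hot D^H_q)\big)T_\ze=T_\xi^*\,(1\ot\pa_q^H)(z)\,T_\ze=(\phi_{\xi,\ze}\ot 1)\big((1\ot\pa_q^H)(z)\big)$, which is bounded; this gives both $(\phi_{\xi,\ze}\ot 1)(z)\in\T{Lip}^H(SU_q(2))$ and the identity $\pa_q^H\big((\phi_{\xi,\ze}\ot 1)(z)\big)=(\phi_{\xi,\ze}\ot 1)(1\ot\pa_q^H)(z)$. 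The vertical case is identical with $D^H_q$ replaced by $D^V_t$ and $q$ by $t$.

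For the final estimate, I would specialise $z=\De(x)$ with $x\in\Lip_t(SU_q(2))$; by Lemma \ref{l:partialcommutes} we have $\De(x)\in\Lip_t(SU_q(2)\ti SU_q(2))$ with $(1\ot\pa_q^H)\De(x)=\De(\pa_q^H(x))$ and likewise for the vertical part, so combining with the identities just proved,
\[
\pa_q^H\big((\phi_{\xi,\ze}\ot 1)(\De(x))\big)=(\phi_{\xi,\ze}\ot 1)\big(\De(\pa_q^H(x))\big),
\]
and similarly for $\pa^V_t$. Since $\De$ is a $*$-homomorphism into $C(SU_q(2))\ot_{\T{min}}C(SU_q(2))$ it is isometric, so $\|\De(\pa_q^H(x))\|=\|\pa_q^H(x)\|$ and $\|\De(\pa^V_t(x))\|=\|\pa^V_t(x)\|$; using $\|\phi_{\xi,\ze}\ot 1\|\leq\|\xi\|\,\|\ze\|$ and the matrix-entry description of $\pa_{t,q}$ from Remark \ref{rem:pat-on-Lipschitz} (so that $L^{\max}_{t,q}$ controls both $\|\pa_q^H(\cd)\|$ and $\|\pa^V_t(\cd)\|$, cf.\ \eqref{eq:praktisk-ulighed}), one assembles
\[
L_{t,q}^{\max}\big((\phi_{\xi,\ze}\ot 1)(\De(x))\big)=\big\|\pa_{t,q}\big((\phi_{\xi,\ze}\ot 1)(\De(x))\big)\big\|\leq\|\xi\|\,\|\ze\|\cd\big\|\pa_{t,q}(x)\big\|=\|\xi\|\,\|\ze\|\cd L_{t,q}^{\max}(x).
\]
The main obstacle I anticipate is the careful bookkeeping in the second paragraph: making the manipulation of unbounded operators rigorous — verifying that $T_\ze$ and $T_\xi^*$ really do map the relevant domains correctly and intertwine with $1\hot D^H_q$, and that the core $\C O(SU_q(2))\ot\C O(SU_q(2))^{\op 2}$ is sufficient to pin down the bounded extension — rather than any single deep idea. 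Everything else is formal once Lemma \ref{l:partialcommutes} and Lemma \ref{l:multiunidir} are in hand.
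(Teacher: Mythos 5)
Your proposal is correct and takes essentially the same route as the paper's proof: the intertwining inclusions $T_\ze D \su (1\hot D)T_\ze$ and $T_\xi^*(1\hot D)\su D T_\xi^*$, the compatibility of the slice with $\si_L$ checked on simple tensors and extended by density and analyticity, the twisted-commutator computation on the domain (the paper writes out the vertical case, you the horizontal, which is symmetric), and the final estimate via Lemma \ref{l:partialcommutes}. The only cosmetic point is that the isometry used at the end should be justified by noting that $\De(\pa_q^H(x)) = (W\op W)(1\ot\pa_q^H(x))(W\op W)^*$ is a unitary conjugation on $\B M_2(L^\infty(SU_q(2)))$ rather than by appealing to $\De$ as a $*$-homomorphism on the minimal tensor product, and one should add the horizontal and vertical identities into $(\phi_{\xi,\ze}\ot 1)\De(\pa_{t,q}(x))$ before taking norms --- both of which your argument effectively accomplishes.
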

\begin{proof}
Let $\xi,\ze \in L^2(SU_q(2))$ and $z \in \Lip_t\big(SU_q(2) \ti SU_q(2)\big)$ be given. We focus on showing that $(\phi_{\xi,\ze} \ot 1)(z)$ is vertically Lipschitz and that $\pa^V_t\big( (\phi_{\xi,\ze} \ot 1)(z) \big) = (\phi_{\xi,\ze} \ot 1)(1 \ot \pa^V_t)(z)$. The analogous claim regarding the horizontal Dirac operator follows by a similar argument. 
Notice first that we have the inclusion $T_\ze D^V_t \su (1 \hot D^V_t) T_\ze$ of unbounded operators on $L^2(SU_q(2))^{\oplus 2}$. Since the same inclusion holds with $T_\xi$ instead of $T_\ze$ we also obtain the inclusion 
$T_\xi^* (1 \hot D^V_t) \su D^V_t T_\xi^*$ by applying the adjoint operation. Secondly, since $(\phi_{\xi,\ze} \ot 1)(y_1\ot y_2)= \phi_{\xi,\ze}(y_1)y_2$ for $y_1,y_2 \in C(SU_q(2))$ it follows that 
\[
\sigma_L(e^{ir}, (\phi_{\xi,\ze} \ot 1)(y_1\ot y_2))=(\phi_{\xi,\ze}\ot 1)(1 \ot \sigma_L)(e^{ir}, y_1 \ot y_2) \q \T{for all }  r\in \rr ,
\]
and hence the same formula holds globally on $C(SU_q(2))\ot_{\min} C(SU_q(2))$ by linearity and density.  We thereby obtain that $x := (\phi_{\xi,\ze} \ot 1)(z)\in C(SU_q(2))$ is analytic of order $-\log(t)/2$ (in the sense of Definition \ref{d:analyt}) and that we have the identity
\[
\si_L(e^{iw},x) = (\phi_{\xi,\ze} \ot 1)(1 \ot \si_L)(e^{iw}, z) \q \T{for all } w \in I_t .
\] 
It follows from the above observations that the bounded operator
\[
\si_L(t^{1/2},x) = (\phi_{\xi,\ze} \ot 1)(1 \ot \si_L)(t^{1/2},z) = T_\xi^* (1 \ot \si_L)(t^{1/2},z) T_\ze
\]
preserves the domain of $D^V_t$. Moreover, we may compute as follows for any vector $\eta \in \T{Dom}(D^V_t)$:
\[
\begin{split}
D^V_t \cd \si_L(t^{1/2},x) (\eta) 
& = D^V_t \cd T_\xi^* (1 \ot \si_L)(t^{1/2},z) T_\ze(\eta) 
= T_\xi^* (1 \hot D^V_t ) \cd (1 \ot \si_L)(t^{1/2},z) T_\ze(\eta) \\
& = T_\xi^* (1 \ot \pa^V_t)(z) T_\ze(\eta) + T_\xi^* (1 \ot \si_L)(t^{-1/2},z) \cd (1 \hot D^V_t) T_\ze(\eta) \\
& = (\phi_{\xi,\ze} \ot 1)(1 \ot \pa^V_t)(z) (\eta) + \si_L(t^{-1/2},x) \cd D^V_t (\eta) .
\end{split}
\]
This ends the proof of the first part of the lemma.  The second part of the lemma (regarding the estimate relating to the seminorm $L_{t,q}^{\max}$) now follows immediately by an application of Lemma \ref{l:partialcommutes}.
\end{proof}

\subsection{Conjugating the Dirac element with the fundamental unitary}
The main technical tool for proving quantum Gromov-Hausdorff continuity for the Podle{\'s} spheres $S_q^2$ \cite[Theorem A]{AKK:Podcon} is a trivialisation of the ``spinor bundle'' $\C A^1_q \op \C A^{-1}_q$ implemented by the fundamental corepresentation unitary $u:=u^1\in \mathbb{M}_2\big(\C O (SU_q(2))\big)$. Note that this trivialisation is not compatible with the $\zz/2\zz$-grading on the spinor bundle. As in Section \ref{subsec:comparison-with-KS-and-KRS}, we let $\pa^0\colon \C O (S_q^2) \to \mathbb{B}(H_q^1 \oplus H_q^{-1})$ denote the derivation arising by taking the commutator with the D\c{a}browski-Sitarz Dirac operator; see \cite{DaSi:DSP}. In the paper \cite{AKK:Podcon} we analysed the linear map $\de^0:=u\pa^0 u^*$, a key feature of which is that it gives rise to the same seminorm as $\pa^0$ after composition with the operator norm. Moreover, we saw in \cite[Proposition 3.12]{AKK:Podcon} that $\de^0$ can be described by means of the right action of the quantum enveloping algebra $\C U_q(\G{su}(2))$ on the coordinate algebra $\C O(SU_q(2))$.

To obtain quantum Gromov-Hausdorff continuity also at the level of quantum $SU(2)$, it is therefore relevant to analyse  the analogue of $\de^0$ in this context as well, and we carry out the relevant details in this section. For the analysis below to work of out we need the vertical and horizontal derivations to obey the same twisted Leibniz rule. We thus focus exclusively on the special case where the two parameters $t$ and $q \in (0,1]$ agree, and consider the twisted $*$-derivation (see Definition \ref{def:twisted-derivation})
\[
\pa := \pa_{q,q} = \pma{ \pa^3_q & - \pa^2 \\ - \pa^1 & - \pa^3_q} \colon
\C O(SU_q(2)) \to \B M_2\big( \C O(SU_q(2))\big),
\]
where the twists are given by $\pa_k$ and $\pa_{k^{-1}}$ so that $\pa(xy) = \pa(x)\pa_k(y) + \pa_{k^{-1}}(x)\pa(y)$ for all $x,y\in \C O(SU_q(2))$.

Note that the twisted $*$-derivation $\pa^3_q$ can be described by the formula
\[
\pa^3:= \pa^3_q = \fork{ccc}{ \frac{\pa_k - \pa_{k^{-1}}}{q - q^{-1}} & \T{for} & q \neq 1 \\ 
\frac{1}{2} \pa_h & \T{for} & q = 1} .
\]
so that $\pa$ is defined entirely in terms of the \emph{left} action of $\C U_q(\G{su}(2))$ on the coordinate algebra $\C O(SU_q(2))$. The main point is to show that when $\pa$ is conjugated with the fundamental corepresentation unitary we obtain a twisted $*$-derivation which can be expressed in terms of the \emph{right} action of the quantum enveloping algebra.
Recall that for $\eta\in \C U_q (\mathfrak{su}(2))$, the right action of $\eta$ is defined by the linear endomorphism $\de_\eta\colon \C O(SU_q(2))\to \C O (SU_q(2))$ given by the formula $\de_\eta := ( \inn{\eta, \cd} \ot 1) \De$, and in this way we obtain three twisted derivations $\de^1, \de^2, \de^3 \colon \C O(SU_q(2)) \to \C O(SU_q(2))$ by setting
\[
\de^1 := q^{1/2} \de_e  \quad \de^2 := q^{-1/2} \de_f \quad \T{and} \q
\de^3 := \fork{ccc}{ \frac{\de_k - \de_{k^{-1}}}{q - q^{-1}} & \T{for} & q \neq 1 \\ 
\frac{1}{2} \de_h & \T{for} & q = 1} .
\]
which are all twisted by the automorphisms $\de_k$ and $\de_{k^{-1}}$ so that  $\de^i(xy)=\de^i(x)\de_k(y) + \de_{k^{-1}}(x)\de^i(y)$ for all $x,y\in \C O(SU_q(2))$ and $i\in \{1,2,3\}$. We now assemble this data into a single twisted $*$-derivation 
\[
\de := \pma{ \de^3 & -\de^2 \\ -\de^1 & -\de^3} \colon \C O(SU_q(2)) \longrightarrow \mathbb{M}_2\big( \C O(SU_q(2)) \big), 
\]
where the twists are again given by $\de_k$ and $\de_{k^{-1}}$. Recalling that $u = u^1$ denotes the fundamental corepresentation unitary, the main result of this section is the identity
\begin{align}\label{eq:fundamental-identity}
u \pa(x) u^* = \de(x) \q \T{for all } x \in \C O(SU_q(2)) ,
\end{align}
which will play crucial role in our further analysis. The strategy for proving \eqref{eq:fundamental-identity} will be to first show that  $u\pa(-)u^*$ satisfies the same twisted Leibniz rule as $\de$,
thus reducing the proof to verifying \eqref{eq:fundamental-identity} on the generators of $\C O(SU_q(2))$. To this end, we will need the algebra automorphism $\nu^{-1/2} := \de_k \circ  \pa_k   \colon \C O(SU_q(2)) \to \C O(SU_q(2))$. Notice that it follows from the defining commutation relations in $\C O(SU_q(2))$ that
\begin{equation}\label{eq:taucommu}
b x = \nu^{-1/2}(x) b \quad \T{and} \quad b^* x = \nu^{-1/2}(x) b^* \quad \T{for all } x \in \C O(SU_q(2)) .
\end{equation}
 Before we proceed we introduce some relevant notation: if $\si, \te \colon \C O(SU_q(2)) \to \C O(SU_q(2))$ are algebra automorphisms, we shall write
\[
\comm{\si}{[y,x]}{\te} := y \te(x) - \si(x) y 
\]
for the twisted commutator between two elements $x, y \in \C O(SU_q(2))$. 

\begin{lemma}\label{l:deriII}
We have the identities
\begin{align*}
 \comm{\de_k}{[a^*,x]}{\pa_k} &= (1 - q^2) b \pa^1(x)  &   \mbox{and} \qquad \quad
\comm{\de_k}{[a, x]}{\pa_k} &= (1 - q^2) q^{-1} b^* \pa^2(x) \\
 \comm{\de_k}{[b^*,x]}{\pa_k} &= b^* (\pa_k - \pa_{k^{-1}})(x)  &  \mbox{and} \qquad \quad
\comm{\de_k}{[b,x]}{\pa_k} &= b (\pa_k - \pa_{k^{-1}})(x)
\end{align*}
for all $x \in \C O(SU_q(2))$.
\end{lemma}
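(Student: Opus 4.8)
The four identities are all of the same shape, so the plan is to verify each one by a direct computation that reduces everything to the explicit action formulae \eqref{eq:derexpI}, \eqref{eq:derexpII} on the generators together with the twisted Leibniz rule \eqref{eq:twisted-leibnitz-for-pae-and-paf} and the commutation relation \eqref{eq:taucommu}. First I would observe that for each generator $g\in\{a,a^*,b,b^*\}$, both sides of the claimed identity are $\cc$-linear in $x$ and, crucially, behave like twisted derivations in $x$: the left-hand side $\comm{\de_k}{[g,x]}{\pa_k}=g\pa_k(x)-\de_k(x)g$ satisfies a Leibniz-type rule because $\pa_k$ and $\de_k$ are algebra automorphisms, and I would compute the corresponding "twist discrepancy'' so that it matches the twisted Leibniz rule obeyed by the right-hand side (recall $\pa^1=q^{1/2}\pa_e$, $\pa^2=q^{-1/2}\pa_f$ are twisted by $\pa_k,\pa_{k^{-1}}$, while $\pa_k-\pa_{k^{-1}}$ has its own twisting behaviour). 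Concretely, if $x=x_1x_2$ then
\[
g\pa_k(x_1x_2)-\de_k(x_1x_2)g = \big(g\pa_k(x_1)-\de_k(x_1)g\big)\pa_k(x_2) + \de_k(x_1)\big(g\pa_k(x_2)-\de_k(x_2)g\big),
\]
which shows the left side is a derivation twisted by $\pa_k$ (right) and $\de_k$ (left). One then checks the right side has the same twists after pulling the generator $b$, $b^*$, etc., past $\pa_k(x_2)$ using \eqref{eq:taucommu} and \eqref{eq:derexpII}; since $\nu^{-1/2}=\de_k\pa_k$, the automorphism bookkeeping closes up.

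Granting that reduction, it remains to check the four identities when $x$ ranges over the generators $a,a^*,b,b^*$. This is a finite list of sixteen elementary verifications: e.g.\ for $\comm{\de_k}{[a^*,x]}{\pa_k}=(1-q^2)b\pa^1(x)$ one plugs in $x=a$ and uses $\pa_k(a)=q^{1/2}a$, $\de_k(a)=q^{1/2}a$, $\pa^1(a)=q^{1/2}\pa_e(a)=q^{1/2}b^*$, together with the relation $a^*a+q^2bb^*=1$ and $bb^*=b^*b$, to see both sides equal $q^{1/2}\cdot(1-q^2)bb^*$ after rearranging; the cases $x=a^*,b,b^*$ are handled the same way using the remaining defining relations of $\C O(SU_q(2))$ and the values in \eqref{eq:derexpI}--\eqref{eq:derexpII}. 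The $b$- and $b^*$-identities are even more transparent: $\pa^3:=\frac{\pa_k-\pa_{k^{-1}}}{q-q^{-1}}$ (or $\tfrac12\pa_h$ when $q=1$) is literally a twisted derivation with the stated twists, and one need only note that $b$ and $b^*$ commute with $\pa_k(x)$ up to the automorphism $\nu^{-1/2}$ via \eqref{eq:taucommu}, which is exactly what produces the factor $b^*(\pa_k-\pa_{k^{-1}})(x)$ (resp.\ $b(\pa_k-\pa_{k^{-1}})(x)$) on the right.

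The main obstacle I anticipate is purely organisational rather than conceptual: one has to be scrupulous about which automorphism twists which side of each expression, since $\de_k$, $\pa_k$, $\nu^{\pm 1/2}$ and their interplay with moving $b,b^*$ past elements all enter simultaneously, and a single sign or power-of-$q$ slip propagates. To keep this under control I would first record once and for all the commutation identities $b\,\pa_k(x)=\pa_k(\nu^{-1/2}(x))\,b$ and similarly for $b^*$ (a direct consequence of \eqref{eq:taucommu} and the fact that $\pa_k$ is an automorphism), and the fact that $\pa^1,\pa^2$ intertwine $\de_k$-conjugation in a controlled way; with these lemmas in hand the verification on generators is mechanical. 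I would also treat $q=1$ and $q\neq 1$ uniformly by writing $\pa^3=\frac{\pa_k-\pa_{k^{-1}}}{q-q^{-1}}$ and noting the limit is $\tfrac12\pa_h$, so no separate case analysis is needed beyond remarking that $k=1$ when $q=1$.
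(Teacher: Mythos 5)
Your proposal is correct and follows essentially the same route as the paper: show that the twisted commutator $x\mapsto\comm{\de_k}{[g,x]}{\pa_k}$ and the right-hand side obey the same twisted Leibniz rule (the latter via \eqref{eq:taucommu} and $\nu^{-1/2}=\de_k\circ\pa_k$), reduce the first two identities to a check on the generators $a,a^*,b,b^*$, and observe that the $b$- and $b^*$-identities follow directly from \eqref{eq:taucommu}. Your sample verification for $x=a$ matches the computation the paper leaves implicit.
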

\begin{proof}
A direct computation  reveals that the operation $x \mapsto \comm{\de_k}{[a^*,x]}{\pa_k}$ satisfies the following twisted Leibniz rule:
\[
\comm{\de_k}{[a^*, x y]}{\pa_k} = \comm{\de_k}{[a^*,x]}{\pa_k} \cd \pa_k(y) + \de_k(x)\cd \comm{\de_k}{[a^*, y]}{\pa_k} \q \T{for all } x, y \in \C O(SU_q(2)) .
\]
It moreover follows from \eqref{eq:taucommu} that the operation $x \mapsto b \pa^1(x)$ satisfies the same twisted Leibniz rule so that
\[
b \pa^1(xy) = b \pa^1(x) \pa_k(y) + \de_k(x) b \pa^1(y) \q \T{for all } x,y \in \C O(SU_q(2)) .
\]
In order to prove the first identity of the lemma, it thus suffices to check that 
\[
a^* \pa_k(x) - \de_k(x) a^* = (1 - q^2) b \pa^1(x)
\]
for $x \in \{a,a^*,b,b^*\}$.  In these four cases, one may verify the relevant identity by a straightforward computation.
The second identity of the lemma can be proved by a similar argument. 
The two last identities (those involving twisted commutators with $b$ and $b^*$) follow immediately from \eqref{eq:taucommu}.
\end{proof}

\begin{lemma}\label{l:deriIII}
We have the identities
\[
\comm{\de_k}{[u, x]}{\pa_k} = { (q^2 - 1)}  \ma{cc}{0 & b \\ q^{-1} b^* & 0} \pa(x) \quad \mbox{and} \quad
\comm{\pa_{k^{-1}}}{[u^*, x]}{\de_{k^{-1}}} = { (q^2 - 1)}  \pa(x) \ma{cc}{0 & q^{-1} b \\ b^* & 0} 
\]
for all $x \in \C O(SU_q(2))$.
\end{lemma}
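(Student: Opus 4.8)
The plan is to prove both identities of Lemma \ref{l:deriIII} by reducing to the entry-wise twisted commutator identities already established in Lemma \ref{l:deriII}, exploiting the matrix form of the fundamental unitary $u = \SmallMatrix{a^* & -qb \\ b^* & a}$ and the block form of the twisted $*$-derivation $\pa = \SmallMatrix{\pa^3 & -\pa^2 \\ -\pa^1 & -\pa^3}$. The key structural observation is that the twisted commutator $\comm{\de_k}{[\cdot,x]}{\pa_k}$, applied entry by entry to a matrix over $\C O(SU_q(2))$, makes sense because $\de_k$ and $\pa_k$ are algebra automorphisms; here I would use that $\pa^3 = (\pa_k - \pa_{k^{-1}})/(q-q^{-1})$ for $q \ne 1$ (and the analogous formula for $q=1$), so that the identity $\comm{\de_k}{[b^{(*)},x]}{\pa_k} = b^{(*)}(\pa_k - \pa_{k^{-1}})(x)$ from Lemma \ref{l:deriII} becomes $\comm{\de_k}{[b^{(*)},x]}{\pa_k} = (q-q^{-1})\, b^{(*)} \pa^3(x)$. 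Combining this with the first two identities of Lemma \ref{l:deriII}, namely $\comm{\de_k}{[a^*,x]}{\pa_k} = (1-q^2) b\pa^1(x)$ and $\comm{\de_k}{[a,x]}{\pa_k} = (1-q^2)q^{-1} b^*\pa^2(x)$, I would compute the matrix $\comm{\de_k}{[u,x]}{\pa_k}$ entry by entry. Writing out the four entries: the $(0,0)$-entry is $\comm{\de_k}{[a^*,x]}{\pa_k} = (1-q^2)b\pa^1(x)$, the $(0,1)$-entry is $-q\comm{\de_k}{[b,x]}{\pa_k} = -q(q-q^{-1})b\pa^3(x) = (1-q^2)b\pa^3(x)$, the $(1,0)$-entry is $\comm{\de_k}{[b^*,x]}{\pa_k} = (q-q^{-1})b^*\pa^3(x)$, and the $(1,1)$-entry is $\comm{\de_k}{[a,x]}{\pa_k} = (1-q^2)q^{-1}b^*\pa^2(x)$. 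Now I would factor: pulling out $\SmallMatrix{0 & b \\ q^{-1}b^* & 0}$ on the left, the first row of the product $\SmallMatrix{0 & b \\ q^{-1}b^* & 0}\pa(x)$ equals $b$ times the second row of $\pa(x)$, i.e. $b(-\pa^1(x), -\pa^3(x))$, and the second row equals $q^{-1}b^*$ times the first row of $\pa(x)$, i.e. $q^{-1}b^*(\pa^3(x), -\pa^2(x))$; multiplying by $(q^2-1)$ recovers exactly the four entries just computed, once one checks the sign bookkeeping, e.g. $(q^2-1)(-b\pa^1(x)) = (1-q^2)b\pa^1(x)$ matches the $(0,0)$-entry. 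This establishes the first identity.

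For the second identity, the cleanest route is to apply the adjoint operation to the first identity rather than redo the computation from scratch. Using that $\pa$ is a twisted $*$-derivation, so $\pa(x^*) = -\pa(x)^*$ and that $\de_k, \pa_k$ interact with $*$ via $\pa_k(x^*) = \pa_k^{-1}(x)^* = \pa_{k^{-1}}(x)^*$ and similarly $\de_k(x^*) = \de_{k^{-1}}(x)^*$ (recorded in Section \ref{subsec:enveloping}), I would take adjoints of $\comm{\de_k}{[u,x]}{\pa_k} = (q^2-1)\SmallMatrix{0 & b \\ q^{-1}b^* & 0}\pa(x)$ applied to $x^*$ in place of $x$: the left side becomes $\big(u\pa_k(x^*) - \de_k(x^*)u\big)^* = \pa_{k^{-1}}(x)u^* - u^*\de_{k^{-1}}(x) = \comm{\pa_{k^{-1}}}{[u^*,x]}{\de_{k^{-1}}}$ after rearranging (here I must be careful about which automorphism lands on which side, and about the fact that $u^* = S(u)$), while the right side becomes $\pa(x^*)^* \SmallMatrix{0 & q^{-1}b \\ b^* & 0}(q^2-1) = (q^2-1)\pa(x)\SmallMatrix{0 & q^{-1}b \\ b^* & 0}$ using $\SmallMatrix{0 & b \\ q^{-1}b^* & 0}^* = \SmallMatrix{0 & q^{-1}b \\ b^* & 0}$ (recall $b^* b = b b^*$ is not needed, just that $b$ and $b^*$ are each other's adjoints) and $(-\pa(x)^*)^* = \pa(x)$ combined with the sign absorbed in $(q^2-1)$ being real. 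This gives precisely $\comm{\pa_{k^{-1}}}{[u^*,x]}{\de_{k^{-1}}} = (q^2-1)\pa(x)\SmallMatrix{0 & q^{-1}b \\ b^* & 0}$.

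The main obstacle I anticipate is the sign and automorphism-placement bookkeeping in the adjoint argument for the second identity: one must verify carefully that $\big(\comm{\de_k}{[u,x^*]}{\pa_k}\big)^*$ equals $\comm{\pa_{k^{-1}}}{[u^*,x]}{\de_{k^{-1}}}$, which requires $(u\pa_k(x^*))^* = \pa_k(x^*)^* u^* = \pa_{k^{-1}}(x) u^*$ and $(\de_k(x^*)u)^* = u^*\de_k(x^*)^* = u^*\de_{k^{-1}}(x)$, so that the adjoint of $u\pa_k(x^*) - \de_k(x^*)u$ is $\pa_{k^{-1}}(x)u^* - u^*\de_{k^{-1}}(x)$, which indeed is $\comm{\pa_{k^{-1}}}{[u^*,x]}{\de_{k^{-1}}}$ by definition of the twisted commutator. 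If I wanted to avoid any risk here I could instead verify the second identity directly: since the operation $x \mapsto \comm{\pa_{k^{-1}}}{[u^*,x]}{\de_{k^{-1}}}$ satisfies a twisted Leibniz rule (with twists $\pa_{k^{-1}}$ and $\de_{k^{-1}}$), and $x \mapsto \pa(x)\SmallMatrix{0 & q^{-1}b \\ b^* & 0}$ satisfies the same rule by \eqref{eq:taucommu}, it suffices to check equality on the generators $a, a^*, b, b^*$ — but this is more computational, so I would prefer the adjoint approach and simply be meticulous with the signs.
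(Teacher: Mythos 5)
Your route is the same as the paper's: the first identity is obtained entrywise from Lemma \ref{l:deriII}, using $(\pa_k-\pa_{k^{-1}})(x)=(q-q^{-1})\pa^3(x)$, and then factored as $(q^2-1)\sma{0 & b \\ q^{-1}b^* & 0}\pa(x)$ (your four entries and the factorisation check out, and the $q=1$ case is trivial since both sides vanish); the second identity is deduced from the first applied to $x^*$ by taking adjoints, exactly as in the paper. The substance is therefore correct, but your adjoint step as written contains two compensating sign slips that you should repair: with the convention $\comm{\si}{[y,x]}{\te}=y\te(x)-\si(x)y$, one has $\big(u\pa_k(x^*)-\de_k(x^*)u\big)^*=\pa_{k^{-1}}(x)u^*-u^*\de_{k^{-1}}(x)$, which is the \emph{negative} of $\comm{\pa_{k^{-1}}}{[u^*,x]}{\de_{k^{-1}}}$, not the twisted commutator itself; and since the adjoint is only conjugate-linear in scalars, $\pa(x^*)^*=\big(-\pa(x)^*\big)^*=-\pa(x)$, not $+\pa(x)$ (the remark about ``the sign absorbed in $(q^2-1)$ being real'' does not absorb anything). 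The two minus signs cancel, so your final identity $\comm{\pa_{k^{-1}}}{[u^*,x]}{\de_{k^{-1}}}=(q^2-1)\pa(x)\sma{0 & q^{-1}b \\ b^* & 0}$ is correct and agrees with the paper, but as stated each intermediate equality is false, and fixing only one of them would produce a wrong sign. Your fallback (a twisted Leibniz argument for $x\mapsto\pa(x)\sma{0 & q^{-1}b \\ b^* & 0}$ with twists $\pa_{k^{-1}},\de_{k^{-1}}$, checked on generators, using \eqref{eq:taucommu}) would also work but is not needed once the signs are done carefully.
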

\begin{proof}
The relevant identities are trivially satisfied for $q = 1$ so we focus on the case where $q \neq 1$ and let $x \in \C O(SU_q(2))$ be given. Applying the definition of the fundamental corepresentation unitary $u$ together with Lemma \ref{l:deriII} we obtain that
\[
\begin{split}
\comm{\de_k}{[u, x]}{\pa_k} & = \pma{ \comm{\de_k}{[a^*,x]}{\pa_k} & -q \cd \comm{\de_k}{[b,x]}{\pa_k} \\ \,\comm{\de_k}{[b^*,x]}{\pa_k} & \comm{\de_k}{[a,x]}{\pa_k}} \\
& =  \pma{ (1 - q^2)b \pa^1(x) & -qb (\pa_k - \pa_{k^{-1}})(x) \\ b^* (\pa_k - \pa_{k^{-1}})(x) & (1 - q^2) q^{-1} b^* \pa^2(x)} \\
& = (q^2 - 1) \pma{0 & b \\ q^{-1}b^* & 0} \pa(x) .
\end{split}
\]
This proves the first identity of the lemma. The remaining identity then follows from the first via the following computation:
\[
\begin{split}
(q^2-1) \pa(x) \pma{0 & q^{-1} b \\ b^* & 0} & = 
(1 - q^2) \left( \pma{0 & b \\ q^{-1} b^* & 0} \pa(x^*) \right)^* 
= ( \de_k(x^*) u - u \pa_k(x^*) )^* \\ 
& = u^* \de_{k^{-1}}(x) - \pa_{k^{-1}}(x) u^* = \comm{\pa_{k^{-1}}}{[u^* ,x]}{\de_{k^{-1}}} . \qedhere
\end{split}
\]
\end{proof}

We are now ready to show that the operation $x \mapsto u \pa(x) u^*$ is a  twisted derivation.

\begin{prop}\label{p:deri}
It holds that
\[
u \pa(xy) u^* = u \pa(x) u^* \de_k(y) + \de_{k^{-1}}(x) u \pa(y) u^*
\]
for all $x,y \in \C O(SU_q(2))$.
\end{prop}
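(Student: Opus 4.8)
The plan is to reduce the statement to the two twisted commutator identities in Lemma~\ref{l:deriIII}. Since $\pa$ itself is a twisted $*$-derivation with twists $\pa_k$ and $\pa_{k^{-1}}$, i.e.~$\pa(xy) = \pa(x)\pa_k(y) + \pa_{k^{-1}}(x)\pa(y)$, we begin by conjugating this relation with $u$ on the left and $u^*$ on the right:
\[
u \pa(xy) u^* = u \pa(x) \pa_k(y) u^* + u \pa_{k^{-1}}(x) \pa(y) u^* .
\]
The strategy is to insert the identity $1 = u^* u$ after $\pa(x)$ in the first term and after $\pa_{k^{-1}}(x)$ in the second term, so as to bring out a factor $u\pa(x)u^*$ (respectively $u\pa(y)u^*$) together with a ``correction'' term measuring the failure of $u$ to commute with $\pa_k(y)$ and with $\pa_{k^{-1}}(x)$.

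Concretely, for the first term we write $u\pa(x)\pa_k(y)u^* = u\pa(x)u^* \cd u\pa_k(y)u^*$, and the point is to compare $u\pa_k(y)u^*$ with $\de_k(y)$; for the second term we write $u\pa_{k^{-1}}(x)\pa(y)u^* = u\pa_{k^{-1}}(x)u^* \cd u\pa(y)u^*$ and compare $u\pa_{k^{-1}}(x)u^*$ with $\de_{k^{-1}}(x)$. However, rather than establishing $u\pa_k(y)u^* = \de_k(y)$ directly (which is false in general), the cleaner route is: insert $1 = u^*u$ differently, namely write the first term as $u\pa(x)\,(u^* u)\,\pa_k(y)u^*$ is not yet helpful. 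Instead I would proceed as follows. Using the defining relations $\De(u) = u\ot u$, $S(u) = u^*$, one computes that the coproduct/left action relations give $u\pa_k(y) = \de_k(y)u$ plus a term involving $\comm{\de_k}{[u,y]}{\pa_k}$; more precisely, by the very definition of $\comm{\de_k}{[u,y]}{\pa_k} = u\pa_k(y) - \de_k(y)u$, we have $u\pa_k(y) = \de_k(y) u + \comm{\de_k}{[u,y]}{\pa_k}$, and symmetrically $\pa_{k^{-1}}(x) u^* = u^* \de_{k^{-1}}(x) + \comm{\pa_{k^{-1}}}{[u^*,x]}{\de_{k^{-1}}}$. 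Substituting these into the conjugated Leibniz expansion and using Lemma~\ref{l:deriIII} to replace the twisted commutators $\comm{\de_k}{[u,y]}{\pa_k}$ and $\comm{\pa_{k^{-1}}}{[u^*,x]}{\de_{k^{-1}}}$ by the explicit matrix expressions $(q^2-1)\SmallMatrix{0 & b \\ q^{-1}b^* & 0}\pa(y)$ and $(q^2-1)\pa(x)\SmallMatrix{0 & q^{-1}b \\ b^* & 0}$, one obtains
\[
u\pa(xy)u^* = u\pa(x)u^*\de_k(y) + \de_{k^{-1}}(x)u\pa(y)u^* + R(x,y),
\]
where $R(x,y)$ collects the two correction contributions together with the cross-term $u\pa(x)u^*\cd(q^2-1)\SmallMatrix{0 & b \\ q^{-1}b^* & 0}\pa(y)$ arising from $u\pa_{k^{-1}}(x)\comm{\de_k}{[u,y]}{\pa_k}u^*$ and similar.

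The main obstacle — and the heart of the computation — will be showing $R(x,y) = 0$, i.e.~that all the correction terms cancel. This amounts to an identity purely among $\pa(x)$, $\pa(y)$, the multiplication operators by $b,b^*$, and $u,u^*$, which should follow by combining $u^*\SmallMatrix{0 & b \\ q^{-1}b^*&0} = $ an explicit $2\times 2$ matrix over $\C O(SU_q(2))$ (compute it from the formula for $u^*$ and the commutation relations $ba = qab$ etc.) with the commutation relations \eqref{eq:taucommu}, namely $bx = \nu^{-1/2}(x)b$ and $b^*x = \nu^{-1/2}(x)b^*$, and the compatibility of $\nu^{-1/2} = \de_k\pa_k$ with $\pa$. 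I expect that after writing $u^*\SmallMatrix{0&b\\ q^{-1}b^*&0}u = \SmallMatrix{0 & b\\ q^{-1}b^* & 0}$ up to the twist (this is essentially the statement that $\SmallMatrix{0&b\\q^{-1}b^*&0}$ is, up to scalars, $u$ times a modular correction), the two correction terms will be negatives of each other and cancel. Once $R(x,y)=0$ is established the proposition is proved; the remaining identity \eqref{eq:fundamental-identity} on generators is then a short separate verification, reduced via this very Leibniz rule to checking it on $a,a^*,b,b^*$.
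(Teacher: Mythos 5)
Your proposal follows the paper's own proof essentially step for step: conjugate the twisted Leibniz rule for $\pa$, insert $u^*u$, express the discrepancy between $u\pa_k(y)u^*$ and $\de_k(y)$ (respectively between $u\pa_{k^{-1}}(x)u^*$ and $\de_{k^{-1}}(x)$) through the twisted commutators of Lemma \ref{l:deriIII}, and kill the resulting error terms by an explicit matrix identity. The one step you leave open, $R(x,y)=0$, is indeed the heart of the argument, and as written your sketch has two slips that would trip you up if you carried it out literally. First, the sign: since $\comm{\pa_{k^{-1}}}{[u^*,x]}{\de_{k^{-1}}} = u^*\de_{k^{-1}}(x) - \pa_{k^{-1}}(x)u^*$, one gets $u\pa_{k^{-1}}(x)u^* = \de_{k^{-1}}(x) - u\cd\comm{\pa_{k^{-1}}}{[u^*,x]}{\de_{k^{-1}}}$ (not $+$), so the error term is exactly
\[
R(x,y) = u\pa(x)u^*\cd \comm{\de_k}{[u,y]}{\pa_k}\cd u^* \;-\; u\cd \comm{\pa_{k^{-1}}}{[u^*,x]}{\de_{k^{-1}}}\cd u\pa(y)u^* ,
\]
with no further cross-terms of the kind you mention. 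Second, the intertwining you conjecture, $u^*\sma{0 & b \\ q^{-1}b^* & 0}u = \sma{0 & b \\ q^{-1}b^* & 0}$ ``up to the twist'', is not the identity that does the job (and is false as stated); the correct one, verified directly from the defining relations of $\C O(SU_q(2))$, is
\[
\pma{0 & q^{-1}b \\ b^* & 0} u = \pma{q^{-1} bb^* & ab \\ q^{-1} a^* b^* & - qb^* b} = u^* \pma{0 & b \\ q^{-1} b^* & 0} .
\]
Combining this with Lemma \ref{l:deriIII} gives
\[
u\cd\comm{\pa_{k^{-1}}}{[u^*,x]}{\de_{k^{-1}}}\cd u\pa(y)u^* = (q^2-1)\, u\pa(x)u^*\pma{0 & b \\ q^{-1}b^* & 0}\pa(y)u^* = u\pa(x)u^*\cd\comm{\de_k}{[u,y]}{\pa_k}\cd u^* ,
\]
so the two terms in $R(x,y)$ coincide and cancel against the relative minus sign. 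With these corrections your argument becomes precisely the proof given in the paper.
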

\begin{proof}
Let $x,y \in \C O(SU_q(2))$ be given. We compute that
\[
\begin{split}
u \pa(xy) u^* & = u \pa(x) \pa_k(y) u^* + u \pa_{k^{-1}}(x) \pa(y) u^* \\  
& = u \pa(x) u^* u \pa_k(y) u^* + u \pa_{k^{-1}}(x) u^* u \pa(y) u^* \\ 
& = u \pa(x) u^* \de_k(y) + \de_{k^{-1}}(x) u \pa(y) u^*  \\ 
& \q + u \pa(x) u^* \cd \comm{\de_k}{[u,y]}{\pa_k} u^* - u \cd \comm{\pa_{k^{-1}}}{[u^*,x]}{\de_{k^{-1}}} u \pa(y) u^* .
\end{split}
\]
Notice now that
\[
\pma{0 & q^{-1}b \\ b^* & 0} u = \pma{q^{-1} bb^* & ab \\ q^{-1} a^* b^* & - qb^* b} 
= u^* \pma{0 & b \\ q^{-1} b^* & 0} .
\]
Thus, applying Lemma \ref{l:deriIII} we obtain that
\[
\begin{split}
u \pa(x) u^* \cd \comm{\de_k}{[u,y]}{\pa_k} u^* 
& = { (q^2-1)} u \pa(x) u^* \pma{0 & b \\ q^{-1} b^* & 0} \pa(y) u^* \\
& = {(q^2-1)} u \pa(x) \pma{0 & q^{-1}b \\ b^* & 0} u \pa(y) u^* \\
& = u \cd \comm{\pa_{k^{-1}}}{[u^*,x]}{\de_{k^{-1}}} \cd u \pa(y) u^* .
\end{split}
\]
This proves the proposition.
\end{proof}

We are now ready to verify that $u$ conjugates $\pa$ into $\de$.

\begin{prop}\label{p:derV}
It holds that $u \pa(x) u^* = \de(x)$ for all $x \in \C O(SU_q(2))$.
\end{prop}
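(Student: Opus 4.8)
The plan is to exploit Proposition~\ref{p:deri}, which tells us that the assignment $x \mapsto u\pa(x)u^*$ is a twisted derivation with respect to the same twists $\de_k$ and $\de_{k^{-1}}$ that govern $\de$. Since $\de$ is also a twisted $*$-derivation for these twists, the difference $x \mapsto u\pa(x)u^* - \de(x)$ is again a twisted derivation with respect to $\de_k$ and $\de_{k^{-1}}$, and hence it vanishes identically as soon as it vanishes on a generating set for the algebra $\C O(SU_q(2))$. Thus the entire proof reduces to a finite verification: checking the identity $u\pa(x)u^* = \de(x)$ for $x \in \{a, a^*, b, b^*\}$.

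First I would record the explicit form of $u$ and $u^*$ from the definition of the fundamental unitary, namely $u = \SmallMatrix{a^* & -qb \\ b^* & a}$ and $u^* = \SmallMatrix{a & b \\ -qb^* & a^*}$ (using unitarity of $u$). Next I would assemble the values of $\pa$ on the generators: from \eqref{eq:derexpI} and \eqref{eq:twideriI}, $\pa^1 = q^{1/2}\pa_e$ and $\pa^2 = q^{-1/2}\pa_f$, together with $\pa^3$ acting as $[n/2]_q$ on $\C A_q^n$; this gives $\pa(a)$, $\pa(a^*)$, $\pa(b)$, $\pa(b^*)$ as explicit $2\times 2$ matrices over $\C O(SU_q(2))$. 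In parallel, I would compute the right-hand side $\de(x)$ on the same four generators using the analogous formulae for $\de^1 = q^{1/2}\de_e$, $\de^2 = q^{-1/2}\de_f$ and $\de^3$ from \eqref{eq:derexpI} and the grading of the spectral subspaces under the \emph{left} circle action. Then for each generator it is a matter of multiplying three explicit $2\times 2$ matrices, $u\cdot\pa(x)\cdot u^*$, simplifying using the defining relations of $\C O(SU_q(2))$ (in particular $a^*a + q^2bb^* = 1$, $aa^* + bb^* = 1$, and the $q$-commutation rules $ba = qab$, $b^*a = qab^*$, $bb^* = b^*b$), and comparing with $\de(x)$.

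The main obstacle I anticipate is purely computational bookkeeping in these four matrix identities: the products $u\pa(x)u^*$ produce, before simplification, sums of products of the generators with the various $q$-powers coming from the twists, and one must repeatedly apply the sphere relations $a^*a + q^2 bb^* = 1 = aa^* + bb^*$ to collapse them, while carefully tracking how the left circle action grading (hence the values of $\pa^3$ and $\de^3$) interacts with the off-diagonal placement of $b$, $b^*$ in $u$ and $u^*$. A useful organisational device is to handle $a$ and $a^*$ together (they are related by the $*$-structure and the relations $\pa_e(x^*) = -q^{-1}\pa_f(x)^*$ from \eqref{eq:pae-and-paf-and-star}), and similarly $b$ and $b^*$, so that effectively only two genuinely independent computations remain; the other two then follow by applying the adjoint, using that both $\pa$ and $\de$ are twisted $*$-derivations and that conjugation by the unitary $u$ is a $*$-preserving operation. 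Once the four base cases are in hand, the reduction via Proposition~\ref{p:deri} closes the argument.
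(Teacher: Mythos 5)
Your proposal is correct and is essentially the paper's own argument: Proposition \ref{p:deri} together with the fact that both $x \mapsto u\pa(x)u^*$ and $\de$ are twisted $*$-derivations for the twists $\de_k,\de_{k^{-1}}$ reduces everything to the explicit $2\times 2$ matrix checks on the generators $a$ and $b$, which is exactly how the paper proceeds. One small point to fix when carrying out those checks: the values of $\de^3$ on the generators are governed by the right action $\de_{k^{\pm 1}}$ (equivalently the right circle action grading), not the left one as you state, so for instance $\de^3(b) = -[1/2]_q \, b$; the formulae \eqref{eq:derexpII} supply this directly and the computation then matches $u\pa(b)u^*$ as required.
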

\begin{proof}
Using Proposition \ref{p:deri} we see that the operations $x \mapsto u \pa(x) u^*$ and $x \mapsto \de(x)$ satisfy the same twisted Leibniz rule. Since they also behave in the same way with respect to the adjoint operation, it therefore suffices to verify the required identity on the generators $a,b \in \C O(SU_q(2))$. To treat the case $q=1$ and $q<1$ on the same footing,  we define
\[
\mu := [1/2]_q = \frac{1}{q^{1/2} + q^{-1/2}} .
\]
so that  $\pa^3(a)=\mu a$ and  $\pa^3(b)=\mu b$. The two relations may now be proven by a straightforward computation, indeed:
\begin{align*}
u \pa(a) u^* & = \pma{a^* & - qb \\ b^* & a}  \pma{\mu \cd a & 0 \\ -q^{1/2} b^* & -\mu \cd a} \pma{a & b \\ -qb^* & a^*} \\
& =  \pma{ \mu \cd a^* a^2 + q^{3/2} bb^* a - q^2 \mu \cd ba b^* 
& \mu \cd a^* a b + q^{3/2} bb^*b + q \mu \cd baa^* \\
\mu \cd b^* a^2 - q^{1/2} ab^* a + q \mu \cd a^2 b^*
& \mu \cd b^* ab - q^{1/2} ab^* b - \mu \cd a^2 a^* } \\
&=  \pma{\mu \cd a &  q^{1/2} b \\ 0 & -\mu \cd a}  = \de(a), \quad \T{and} \\
u \pa(b) u^* & = \pma{a^* & - qb \\ b^* & a} \pma{ \mu \cd b & 0 \\ q^{-1/2}a^*  & -\mu \cd b } \pma{a & b \\ -qb^* & a^*} \\
& = \pma{\mu \cd a^* b a - q^{1/2} b a^* a - q^2 \mu \cd b^2 b^* & 
\mu \cd a^* b^2 - q^{1/2} ba^* b + q \mu \cd b^2 a^* \\ 
\mu \cd b^*b  a + q^{-1/2} aa^* a + q \mu \cd a bb^* 
& \mu \cd b^* b^2 + q^{-1/2} aa^* b - \mu \cd aba^* } \\
& =  \pma{-\mu \cd b  & 0 \\ q^{-1/2} a &  \mu \cd b }  = \de(b) . \qedhere
\end{align*}
\end{proof}

We now have the tools needed to properly investigate the quantum metric space structure on $SU_q(2)$, and we proceed to do so in the following section.

\section{Quantum metrics on quantum $SU(2)$}\label{sec:quantum-metrics-on-quantum-su2}
We now return to the general setting, and consider again two parameters $t,q \in (0,1]$ which will be fixed throughout this section. The aim of this section is to show that $\big(C(SU_q(2)), L_{t,q}^{\max}\big)$ is a compact quantum metric space. The proof consists of several steps and we therefore first explain the general strategy.
For each $M \in \nn_0$, we recall from Section \ref{ss:circle} that the algebraic spectral $M$-band is defined as the subspace $\C B_q^M := \sum_{m = - M}^M \C A_q^m \su \C O(SU_q(2))$ and that the spectral $M$-band $B_q^M$ agrees with the norm closure of $\C B_q^M$ with respect to the $C^*$-norm on $C(SU_q(2))$. The Lipschitz seminorm
\[
L_{t,q}^{\T{max}} \colon C(SU_q(2)) \to [0,\infty]
\]
restricts to a Lipschitz seminorm $L_{t,q}^{\T{max}} \colon B_q^M \to [0,\infty]$ with domain $B_q^M\cap \T{Lip}_t(SU_q(2))$. 
 We start by proving that the pair $(B_q^M,L_{t,q}^{\T{max}})$ is a compact quantum metric space for all $M \in \nn_0$. 
Knowing this, the next step is to construct a  Lip-norm contraction $ C(SU_q(2))\to B_q^M$ for each $M\in \nn_0$. This sets the stage for an application of  Corollary \ref{cor:approx-cor}, from which we will finally deduce that  $\big(C(SU_q(2)), L_{t,q}^{\max}\big)$ is a compact quantum metric space; see Theorem \ref{thm:quantum-su2-as-cqms} below for details.
In the following section we first treat the case where $M = 0$, which plays a special role, since this provides the connection with the Podle\'s sphere, whose quantum metric structure was investigated in \cite{AgKa:PSM}; see also \cite{AKK:Podcon, AKK:Polyapprox}. 

\subsection{The Podle\'s sphere revisited}\label{ss:podles-revisited}
Notice first of all that the spectral $0$-band $B_q^0$ agrees with the Podle\'s sphere $C(S_q^2)$. For each $m \in \zz$, we recall from Section \ref{ss:circle} that $H^m_q \su L^2(SU_q(2))$ denotes the Hilbert space completion of the algebraic spectral subspace $\C A^m_q$ with respect to the inner product coming from the Haar state $h \colon C(SU_q(2)) \to \cc$. The GNS Hilbert space $L^2(SU_q(2))$ is then isomorphic to the Hilbert space direct sum
\[
L^2(SU_q(2)) \cong \bigoplus_{m = -\infty}^\infty H^m_q.
\]
The horizontal Dirac operator $\C D^H_q \colon \C O(SU_q(2))^{\op 2} \to L^2(SU_q(2))^{\op 2}$ restricts to the unbounded operator
\[
\C D^0_q = \pma{0 & -\pa_f \\ -\pa_e & 0} \colon \C A^1_q \op \C A^{-1}_q \longrightarrow H^1_q \op H^{-1}_q 
\]
and we denote the closure by $D^0_q \colon \T{Dom}(D_q^0) \to H^{1}_q \op H^{-1}_q$. The vertical Dirac operator $\C D^V_t \colon \C O(SU_q(2))^{\op 2} \to L^2(SU_q(2))^{\op 2}$ restricts to the trivial operator zero on the direct sum $\C A^1_q \op \C A^{-1}_q$, and the diagonal representation $\pi \colon C(SU_q(2)) \to \B B(L^2(SU_q(2))^{\op 2})$ restricts to a representation
\[
\pi^0 \colon C(S_q^2) \longrightarrow \B B( H^1_q \op H^{-1}_q) .
\]
We equip the Hilbert space $H^{1}_q \op H^{-1}_q$ with the $\zz/2\zz$-grading operator $\gamma=\sma{1& 0 \\0&-1}$ and record that the triple $\big( C(S_q^2), H^{1}_q \op H^{-1}_q, D^0_q\big)$ agrees with the D\c{a}browski-Sitarz spectral triple (up to conjugation with the grading operator $\ga$); see \cite{DaSi:DSP,NeTu:LFQ}. We remark that we are now within the standard realm of non-commutative geometry, in so far that $\big( C(S_q^2), H^{1}_q \op H^{-1}_q, D^0_q\big)$ is a genuine (even) spectral triple on $C(S_q^2)$. This is in contrast to the situation for $C(SU_q(2))$, where we are just relying on the spectral data given by the horizontal and vertical Dirac operators. The D\c{a}browski-Sitarz spectral triple therefore has its own Lipschitz algebra
\[
\T{Lip}(S_q^2):=\{x\in C(S_q^2)\mid \pi^0(x)(\T{Dom}(D^0_q))\subseteq \T{Dom}(D^0_q) \T{ and }   \ov{[D^0_q, \pi^0(x)]} \text{ is bounded} \}.
\]
For each $x \in \T{Lip}(S_q^2)$ we apply the notation $\pa^0(x) := \ov{ [D^0_q,\pi^0(x)]}$. The main result in \cite{AgKa:PSM} is that the Lipschitz seminorm $L_q^{0,\max} \colon C(S_q^2)\to [0,\infty]$ defined by
\[
L_q^{0,\max}(x):= \cas{ \big\| \pa^0(x) \big\| & x \in \T{Lip}(S_q^2) \\ 
\infty & x \in C(S_q^2) \sem \T{Lip}(S_q^2) }
\]
turns $C(S_q^2)$ into a compact quantum metric space.  We shall now prove that the two settings are compatible, in the sense that the restriction of $L_{t,q}^{\max}$ to $C(S_q^2)$ agrees with $L_{q}^{0,\max}$. \\

Recall that $\nu^{1/2}$ denotes the algebra automorphism $\pa_{k^{-1}}\circ \de_{k^{-1}}\colon \C O(SU_q(2)) \to \C O(SU_q(2))$ while $\C J \colon \C O(SU_q(2)) \to \C O(SU_q(2))$ denotes the antilinear antihomomorphism  $x\mapsto (\de_k \pa_k)(x^*)$, which extends to the  antilinear unitary operator $J$ on $L^2(SU_q(2))$; see Section \ref{subsec:first-order-condition} for more details. 

\begin{lemma}\label{lem:right-mult-lem-v2}
For each $x,y \in \C O (SU_q(2))$ it holds that $J\nu^{1/2}(y)^* J\Lambda (x)= \Lambda(xy)$
\end{lemma}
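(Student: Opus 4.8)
The statement is essentially an identity about the modular conjugation $J$ acting on $L^2(SU_q(2))$, so I would first recall its definition: $J$ is the closure of the antilinear map $\mathcal{J}(x) = (\pa_k\de_k)(x^*)$ on $\C O(SU_q(2))$, coming from Tomita--Takesaki theory applied to the left Hilbert algebra $\C O(SU_q(2))$ with inner product $\inn{x,y} = h(x^*y)$. The Tomita--Takesaki formalism gives, for the canonical cyclic vector $\Omega = \La(1)$, that $J\La(y)^* = \La(S(y)\Omega)$ where $S$ is the closure of $\La(y) \mapsto \La(y^*)$; concretely, for $y \in \C O(SU_q(2))$ one has $J \rho(y) J \in L^\infty(SU_q(2))'$ and $J\rho(y)J\,\La(x) = \La(x \cdot \sigma_{-i/2}(y)^*)$ where $\sigma_z$ is the modular automorphism group. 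Since the modular automorphism is $\nu = \de_{k^{-2}}\pa_{k^{-2}}$ and $\nu^{1/2} = \pa_{k^{-1}}\de_{k^{-1}}$ by definition, the "half" modular automorphism $\sigma_{-i/2}$ is exactly (a version of) $\nu^{-1/2}$ or $\nu^{1/2}$ up to the sign convention. Thus the cleanest route is to verify the formula $J\nu^{1/2}(y)^* J\,\La(x) = \La(xy)$ directly.

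**Key steps.** First I would reduce to checking the identity on the linear basis of matrix coefficients $u^n_{ij}$ (or the basis $\xi^{klm}$), using linearity and continuity. Second, I would compute $J\La(u^n_{ij})$ explicitly: since $J$ is antilinear and $J\La(z) = \La(\mathcal J(z)) = \La((\pa_k\de_k)(z^*))$, using the adjoint formula \eqref{eq:adjoint-of-matrix-coefficients} $(u^n_{ij})^* = (-q)^{j-i}u^n_{n-i,n-j}$ together with the scaling $\pa_k(u^n_{ij}) = q^{j-n/2}u^n_{ij}$ and $\de_k(u^n_{ij}) = q^{i-n/2}u^n_{ij}$, one gets $J\La(u^n_{ij}) = (-q)^{i-j} q^{(n-i-n/2)+(n-j-n/2)}\,\La(u^n_{n-i,n-j})$, i.e. an explicit scalar multiple of $\La(u^n_{n-i,n-j})$. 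Third, I would combine $J\rho(w)J$ acting on $\La(x)$: since $J\rho(w)J$ and $\rho$-operators commute, $J\rho(w)J\,\La(x) = J\rho(w)\La(\mathcal J(x)) = J\La(w\cdot\mathcal J(x)) = \La(\mathcal J(w\mathcal J(x))) = \La(\mathcal J\mathcal J(x)\cdot \mathcal J(w))$; here $\mathcal J\mathcal J(x) = (\de_{k^2}\pa_{k^2})(x) = \nu^{-1}(x)$, no wait — one must be careful: $\mathcal J$ is antimultiplicative, so $\mathcal J(w\,\mathcal J(x)) = \mathcal J(\mathcal J(x))\,\mathcal J(w) = x'\,\mathcal J(w)$ for a suitable twist $x'$ of $x$. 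Tracking the twists precisely and setting $w = \nu^{1/2}(y)^*$ should collapse everything to $\La(xy)$.

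**The main obstacle.** The real work — and where sign/convention errors lurk — is pinning down exactly which power of the modular operator appears and verifying that the twists cancel. The cleanest way to avoid a bookkeeping nightmare is to use the abstract Tomita--Takesaki identity $J\Delta^{1/2}\La(w) = \La(w^*)$ (so $J\La(w^*) = \Delta^{-1/2}\La(w)$ when $w$ is analytic), identify $\Delta$ with the closure of $\nu$ (equivalently $\Delta^{1/2}$ with $\nu^{1/2}$ on $\C O(SU_q(2))$), and then compute:
\[
J\,\rho(\nu^{1/2}(y)^*)\,J\,\La(x)
= J\,\rho(\nu^{1/2}(y)^*)\,\Delta^{-1/2}\La(x^*)
= J\,\Delta^{-1/2}\,\rho(\nu^{-1/2}\nu^{1/2}(y)^*)\,\La(x^*),
\]
using $\Delta^{-1/2}\rho(z)\Delta^{1/2} = \rho(\nu^{-1/2}(z))$ on analytic elements. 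This equals $J\Delta^{-1/2}\La(y^* x^*) = J\Delta^{-1/2}\La((xy)^*) = J\,J\,\La(xy) = \La(xy)$ (since $J^2 = 1$ and $J\La(w^*) = \Delta^{-1/2}\La(w)$ gives $\La(xy) = J\Delta^{-1/2}\La((xy)^*)$). So the whole lemma follows from three standard facts: $\Delta^{1/2}\sim\nu^{1/2}$, $J\La(w^*) = \Delta^{-1/2}\La(w)$, and $\Delta$ commuting appropriately with $\rho$; the only thing to double-check is that the sign convention in the paper's definition $\mathcal J(x) = (\pa_k\de_k)(x^*)$ matches $J = J_{\mathrm{mod}}$ rather than its inverse, which is already asserted in Section~\ref{subsec:first-order-condition}. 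I would present the proof in the concrete basis form as a fallback if the abstract argument needs the analyticity of $y$ (which holds since $y \in \C O(SU_q(2))$, so this is not actually an obstruction).
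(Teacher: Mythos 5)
Your fallback route is exactly the paper's proof: the paper disposes of this lemma in one line, as a direct computation from the definition $J\La(z)=\La\big((\pa_k\de_k)(z^*)\big)$ using only the relations $\pa_k(x^*)=\pa_{k^{-1}}(x)^*$ and $\de_k(x)^*=\de_{k^{-1}}(x^*)$ (so that $\C J$ is an antimultiplicative involution with $\C J^2=\T{id}$ -- in your ``Key steps'' paragraph the twist $x'$ you leave undetermined is just $x$ itself, and everything collapses immediately). So the concrete computation you sketch does go through and is the intended argument.

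Your preferred abstract Tomita--Takesaki route also works, but as displayed it breaks at precisely the point where the cancellation has to happen. Two issues: first, with $S=J\De^{1/2}$ and the paper's identification of $J$ (equivalently $\De^{1/2}\La(z)=\La(\nu^{-1/2}(z))$, which is forced by $\C J(x)=(\pa_k\de_k)(x^*)$ and $h(xy)=h(\nu(y)x)$), one has $J\La(x)=\De^{1/2}\La(x^*)$, not $\De^{-1/2}\La(x^*)$; second, and more seriously, your middle step silently uses $\nu^{\mp 1/2}\big(\nu^{1/2}(y)^*\big)=y^*$, which treats $\nu^{1/2}$ as a $*$-automorphism. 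It is not: the correct relation is $\nu^{1/2}(y)^*=\nu^{-1/2}(y^*)$, which is exactly the pair of star-relations quoted above in disguise. With the correct versions the argument is two lines:
\[
J\rho\big(\nu^{1/2}(y)^*\big)J\La(x)=J\rho\big(\nu^{-1/2}(y^*)\big)\De^{1/2}\La(x^*)
=J\De^{1/2}\rho(y^*)\La(x^*)=J\De^{1/2}\La\big((xy)^*\big)=\La(xy),
\]
using $\rho(w)\De^{1/2}=\De^{1/2}\rho(\nu^{1/2}(w))$ on $\C O(SU_q(2))$. So the gap is not in the strategy but in the sign/star bookkeeping, and since the whole content of the lemma is that bookkeeping, it needs to be done correctly (or one simply runs the concrete computation, as the paper does).
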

\begin{proof}
This follows from a straightforward computation, using that $\de_k(x)^*=\de_{k^{-1}}(x^*)$ and $\pa_k(x^*)=\pa_{k^{-1}}(x)^*$ for all $x\in \C O(SU_q(2))$.
\end{proof}


\begin{prop}\label{p:podisomet}
The inclusion $C(S_q^2)  \subseteq C(SU_q(2))$ in an isometry with respect to the seminorms $L_q^{0,\max} \colon C(S_q^2) \to [0,\infty]$ and $L_{t,q}^{\max} \colon C(SU_q(2)) \to [0,\infty]$. In particular, it holds that $\T{Lip}(S_q^2) \su \T{Lip}_t(SU_q(2))$.
\end{prop}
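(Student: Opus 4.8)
The key observation is that on the algebraic spectral subspaces $\C A^1_q$ and $\C A^{-1}_q$, the vertical Dirac operator $\C D^V_t$ vanishes (as recorded in Section~\ref{ss:podles-revisited}) and the relevant circle-action twists become trivial on $C(S_q^2)=A^0_q$, so that for $x\in C(S_q^2)$ the twisted commutator with $D^H_q$ should reduce to the ordinary commutator with $D^0_q$ after restricting to the ``Podle\'s part'' $H^1_q\op H^{-1}_q$. The plan is therefore to show two things for $x\in C(S_q^2)$: first, that $x$ lies in $\T{Lip}_t(SU_q(2))$ if and only if $x\in \T{Lip}(S_q^2)$; and second, that when this holds, $L^{\max}_{t,q}(x)=\|\pa_{t,q}(x)\|$ equals $L^{0,\max}_q(x)=\|\pa^0(x)\|$.

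\textbf{Step 1: the analyticity and domain conditions are automatic.}
Since $x\in A^0_q$ we have $\si_L(z,x)=x$ for all $z$ by Lemma~\ref{l:sigmaonspec}, so $x$ is analytic of every order, all the twists $\si_L(q^{\pm 1/2},x)$, $\si_L(t^{\pm 1/2},x)$ equal $x$, and the twisted commutators in Definition~\ref{def:Lipschitz-elements} become genuine commutators $[D^H_q,x]$ and $[D^V_t,x]$. Moreover $x=\rho(x)$ is $\si_L$-equivariant and commutes with the spectral projections, hence preserves the decomposition $L^2(SU_q(2))^{\op 2}\cong\bigoplus_{n,m}(H^n_q\op H^m_q)$ and in particular preserves the subspace $H^1_q\op H^{-1}_q$ as well as its orthogonal complement. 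The point will be to decompose $D^V_t$ and $D^H_q$ according to this splitting: on $H^1_q\op H^{-1}_q$ the vertical operator $\C D^V_t$ is zero and $\C D^H_q$ restricts to $\C D^0_q$; the interaction of $x$ with the remaining summands must also be controlled.

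\textbf{Step 2: comparing the two derivations.}
Writing $P$ for the orthogonal projection onto $H^1_q\op H^{-1}_q$, I would argue that for $x\in C(S_q^2)$ the operator $x$ preserves $\T{Dom}(D^H_q)$ and $[D^H_q,x]$ is bounded precisely when the compression $P[D^H_q,x]P=[D^0_q,\pi^0(x)]$ is bounded and the off-diagonal and complementary pieces are as well; a computation using the explicit block structure of $\C D^H_q$ (via $\pa_e,\pa_f$) and the fact that $\pa_e(\C A^0_q)\su\C A^{-2}_q$, $\pa_f(\C A^0_q)\su\C A^{2}_q$, together with the circle-equivariance of $x$, should show that the commutator $[\C D^H_q,x]$ decomposes into copies of $[\C D^0_q,\pi^0(x)]$ indexed by the spectral subspaces, each with the same operator norm (because $x$ acts ``the same way'' on each band by equivariance, cf.\ the use of the corepresentation structure); similarly $[\C D^V_t,x]=0$ on every band since $x$ commutes with $\C D^V_t$ when $x\in A^0_q$ (as $\C D^V_t$ is a scalar on each $\C A^n_q$ and $x$ preserves each $\C A^n_q$). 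Hence $\pa_{t,q}(x)$ is bounded iff $\pa^0(x)$ is, i.e.\ $\T{Lip}(S_q^2)=\T{Lip}_t(SU_q(2))\cap C(S_q^2)$, and in that case $\|\pa_{t,q}(x)\|=\|\pa^0(x)\|$, giving the claimed isometry $L^{\max}_{t,q}|_{C(S_q^2)}=L^{0,\max}_q$; the inclusion $\T{Lip}(S_q^2)\su\T{Lip}_t(SU_q(2))$ follows immediately. Lemma~\ref{lem:right-mult-lem-v2} and the trivialisation identity \eqref{eq:fundamental-identity} are the natural tools for making the ``same norm on each band'' step rigorous, by transporting the module $\C A^n_q$ to a sum of copies of $\C O(S_q^2)$ compatibly with the relevant derivations.

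\textbf{Main obstacle.}
The delicate point is Step~2: verifying that the restriction to the single band $H^1_q\op H^{-1}_q$ loses no operator-norm, i.e.\ that $\|[\C D^H_q,x]\|=\|[\C D^0_q,\pi^0(x)]\|$ rather than merely $\geq$. The inequality $\geq$ is clear (restrict to a subspace), but equality requires genuinely using the equivariance/corepresentation structure of $C(SU_q(2))$ to see that the action of $x$ on the higher bands is, up to unitary equivalence, a direct sum of copies of its action on the lowest band — this is exactly where the Gra\ss mann-connection/fundamental-unitary machinery of Section~\ref{s:twisspectrip} (especially \eqref{eq:fundamental-identity} and the discussion of $E\hot_{C(S_q^2)}(H^1_q\op H^{-1}_q)\cong L^2(SU_q(2))^{\op 2}$) does the work. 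I expect the proof to proceed by exhibiting this unitary equivalence explicitly and checking it intertwines $\C D^H_q$ with a diagonal sum built from $\C D^0_q$, while intertwining the left action of $x\in C(S_q^2)$ with the diagonal action of $\pi^0(x)$.
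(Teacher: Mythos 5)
Your Step 1 and the vertical half of Step 2 are fine and agree with the paper: for $x\in C(S_q^2)=A_q^0$ the twists are trivial, $x$ preserves each band $H_q^{m+1}\op H_q^{m-1}$ of the decomposition \eqref{eq:decompspin}, $\C D_t^V$ is scalar there, so $[D_t^V,x]=0$; and $\C D_q^H$ restricts to $\C D_q^0$ on the lowest band. But the heart of the matter — showing that $x\in \T{Lip}(S_q^2)$ forces $[D_q^H,x]$ to be bounded on \emph{all} of $L^2(SU_q(2))^{\op 2}$ with norm $\|\pa^0(x)\|$ — is exactly what you defer to an ``expected'' unitary equivalence, and as stated that claim is not available. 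The spectral subspaces $A_q^m$ are nonfree projective modules over $C(S_q^2)$, so there is no evident unitary identifying the action of $x$ on a higher band with a multiple of its action on $H_q^1\op H_q^{-1}$ that also intertwines the Dirac operators; and the identification you invoke, $E\hot_{C(S_q^2)}(H_q^1\op H_q^{-1})\cong L^2(SU_q(2))^{\op 2}$ with the Gra\ss mann connection, intertwines $1\ot_\Na \C D_q^0$ with $\C D^{H}_{\T{KRS}}$, \emph{not} with $\C D_q^H$: by \eqref{eq:dampening} these differ by conjugation with the unbounded modular operator $\Ga_{q,0}$, so the ``same norm on each band'' argument does not go through verbatim for $q\neq 1$. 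Note also that the easy inequality goes the wrong way for your purposes: restricting to the lowest band gives $\|\pa_q^H(x)\|\geq\|\pa^0(x)\|$ only once boundedness on the whole space is known, whereas the Proposition must produce that boundedness from Lipschitz-ness on the lowest band alone.

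The paper closes this gap by a different mechanism, none of which appears in your outline. It uses the external input that $\de^0(x):=u\,\pa^0(x)\,u^*$ lies in $\B M_2(L^\infty(S_q^2))$ for every Lipschitz (not merely polynomial) $x$ (\cite[Lemma 3.7]{AKK:Polyapprox}), transports it isometrically by the normal inclusion $\io\colon L^\infty(S_q^2)\to L^\infty(SU_q(2))$, and sets $\io(\pa^0(x)):=u^*\io(\de^0(x))u$, which is the candidate for $\pa_q^H(x)$ with $\|\io(\pa^0(x))\|=\|\pa^0(x)\|$. Then, instead of any band-by-band unitary equivalence, it writes an arbitrary core vector as a right-module combination $\xi=Iz_1I\,v_1+Iz_2I\,v_2$ of the columns $v_1,v_2\in\C A_q^1\op\C A_q^{-1}$ of $u^*$ (Lemma \ref{lem:right-mult-lem-v2}), exploits that the right multiplications $IzI$ commute with left multiplication by $x$ and have commutators with $D_q^H$ controlled by Lemma \ref{l:commureal} (the twisted first-order structure, where $\Ga_{q,0}$ is harmless because it is the identity on $\C A_q^1\op\C A_q^{-1}$), and since $xv_1,xv_2\in\T{Dom}(D_q^0)$ it deduces $x\xi\in\T{Dom}(D_q^H)$ and $[D_q^H,x]\xi=\io(\pa^0(x))\xi$. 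So while your identification of the obstacle is accurate, the proposal is missing the two ingredients that actually resolve it: the von Neumann-algebraic statement $u\,\pa^0(x)\,u^*\in\B M_2(L^\infty(S_q^2))$ and the modular-conjugation/right-module computation; without them the central norm equality is asserted rather than proved. (For completeness: the converse inclusion $B_q^0\cap\T{Lip}_t(SU_q(2))\su\T{Lip}(S_q^2)$, which you fold into Step 2, is handled separately and easily in Corollary \ref{cor:zeroth-spec-band-is-cqms} by compressing with $\io^*D_q^H\su D_q^0\io^*$.)
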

\begin{proof}
By restricting the Haar state, we obtain a GNS representation $\rho^0\colon C(S_q^2)\to \mathbb{B}(H_q^0)$, and we denote by $L^\infty(S_q^2)\subseteq \mathbb{B}(H_q^0)$ the enveloping von Neumann algebra $\rho^0(C(S_q^2))''$. By standard von Neumann algebraic techniques, the inclusion $C(S_q^2) \su C(SU_q(2))$ extends to a normal inclusion $\io \colon L^\infty(S_q^2) \to L^\infty(SU_q(2))$ with the property that $\iota (x)\cdot \xi =x\cdot \xi$ for $\xi \in L^2(S_q^2)\subseteq L^2(SU_q(2))$. \\
%

Let $x \in \T{Lip}(S_q^2)$ be given. We recall from \cite[Lemma 3.7]{AKK:Polyapprox} that the operator $\de^0(x) := u \pa^0(x) u^*$ belongs to $\B M_2( L^\infty(S_q^2))$ and we may thus define the element
\[
\io(\pa^0(x)) := u^* \io(\de^0(x)) u \in \B M_2( L^\infty(SU_q(2))) .
\]
It clearly holds that $\| \io(\pa^0(x)) \| = \| \pa^0(x) \|$. Moreover, we remark that whenever $ \ze \in H_q^1 \op H_q^{-1} \su L^2(SU_q(2)) \op L^2(SU_q(2))$ it holds that $ u \cd \ze \in H_q^0\oplus H_q^0$ and hence that
\begin{equation}\label{eq:iopartial}
\io(\pa^0(x)) I y I  \ze = I y I \io(\pa^0(x))  \ze = I y I \pa^0(x)  \ze \q \T{for all } y \in L^\infty(SU_q(2)) .
\end{equation}
 Let now $\xi \in \C O(SU_q(2))^{\op 2}$ be given. We aim to show that $x \cd \xi \in \T{Dom}(D^H_q) \cap \T{Dom}(D^V_t)$ and that we have the identities
\begin{equation}\label{eq:hvcomm}
[D^H_q,x] \xi = \io(\pa^0(x)) \xi \q \T{and} \q [D^V_t,x] \xi = 0 .
\end{equation}
This suffices to prove the present theorem: indeed, since $x\in \T{Lip}(S_q^2)$ both twists involved in the  definitions of $\pa_t^V(x)$ and $\pa_q^H(x)$ are trivial. Moreover, if one proves the relations in \eqref{eq:hvcomm} for $\xi$ in the core $\C O(SU_q(2))^{\op 2}$, an approximation argument shows that $x(\T{Dom}(D^H_q))\subseteq \T{Dom}(D^H_q)$, $x(\T{Dom}(D_t^V))\subseteq \T{Dom}(D_t^V)$ and that the relations in \eqref{eq:hvcomm} hold on the two domains. \\

Let us start out by proving the claims relating to the vertical Dirac operator. Without loss of generality, we may assume that $\xi \in \C A^n_q \op \C A^m_q$ for some $n,m \in \zz$. Since $x \in C(S_q^2)$, it follows that $x \xi \in H^n_q \op H^m_q$. But $H^n_q \op H^m_q \su \T{Dom}(D^V_t)$ and the relevant commutator $[D^V_t,x] \xi$ is trivial since the restriction of $D^V_t$ to $H^n_q \op H^m_q$ is given by multiplication with the diagonal matrix
\[
\sma{ t^{\frac{-n+1}{2}} \big[\frac{n-1}{2}\big]_t & 0 \\ 0 & - t^{\frac{-m - 1}{2}} \big[\frac{m+1}{2}\big]_t}.
\]
Next we focus on the claims relating to the horizontal Dirac operator. Let first $\eta \in \T{Dom}(D^0_q)$ and $z \in \C O(SU_q(2))$ be given. We begin by showing that
\begin{equation}\label{eq:rightdom}
I z I \eta \in \T{Dom}(D^H_q) \q \T{and} \q [D^H_q, I z I] \eta = I \pa_q^H(\pa_k(z)) I \eta .
\end{equation}
Since $\C A^1_q \op \C A^{-1}_q$ is a core for $\T{Dom}(D^0_q)$ we may, without loss of generality, assume that $\eta \in \C A^1_q \op \C A^{-1}_q$. We then remark that $I z I \eta = \C I z \C I \eta \in \C O(SU_q(2)) \op \C O(SU_q(2))$ and that Lemma \ref{l:commureal} therefore implies that
\[
[D^H_q, I z I] \eta = \C I \pa_q^H(\pa_k(z)) \C I \Ga_{q,0}^2 \eta .
\]
The desired formula for the commutator $[D^H_q,IzI] \eta$ then follows by noting that $\Ga_{q,0}$ restricts to the identity operator on $\C A^1_q \op \C A^{-1}_q$. 
To proceed, we denote the two columns in $u^*$ by $v_1 \in \C A^1_q \op \C A^{-1}_q$ and $v_2 \in \C A^1_q \op \C A^{-1}_q$, thus
\[
v_1 = \begin{pmatrix} a \\ -q b^* \end{pmatrix} \q \T{and} \q v_2 = \begin{pmatrix} b \\ a^* \end{pmatrix} .
\]
For our fixed element $\xi \in \C O(SU_q(2))^{\op 2}$, we may therefore choose $y_1,y_2 \in \C O(SU_q(2))\subseteq L^2(SU_q(2))^{\oplus 2}$ such that
\[
\xi = u^* u \xi = v_1 \cd y_1 + v_2 \cd y_2 = - I \nu^{1/2}(y_1)^* I \cd v_1 - I \nu^{1/2}(y_2)^* I \cd v_2,
\]
where the last equality follows from Lemma \ref{lem:right-mult-lem-v2} (suppressing the embedding $\Lambda$ for notational convenience).
To ease the notation, put $z_1 := - \nu^{1/2}(y_1)^*$ and $z_2 := - \nu^{1/2}(y_2)^*$. We then have that
\[
 \xi = Iz_1 I \cd v_1 + I z_2 I \cd v_2 \, \, \ \T{ and } \ \, \, \, x \xi = I z_1 I \cd x v_1 + I z_2 I \cd x v_2 .
\]
 Since $v_1$ and $v_2$ belong to $\C A^1_q \op \C A^{-1}_q \su \T{Dom}(D^0_q)$ and $x \in \T{Lip}(S_q^2)$ we know that $xv_1$ and $xv_2 \in \T{Dom}(D^0_q)$. We thus obtain from  \eqref{eq:rightdom} that $x \xi \in \T{Dom}(D^H_q)$ and  moreover that
\[
\begin{split}
D^H_q x \xi 
& = I z_1 I \cd D^H_q x v_1 + I z_2 I \cd D^H_q x v_2
+ I \pa_q^H(\pa_k(z_1)) I x v_1 + I \pa_q^H(\pa_k(z_2)) I x v_2 \\
& = I z_1 I \cd \pa^0(x) v_1 + I z_2 I \cd \pa^0(x) v_2
+ I z_1 I x D^H_q v_1 + I z_2 I x D^H_q v_2 \\
& \q + x I \pa_q^H(\pa_k(z_1)) I v_1 + x I \pa_q^H(\pa_k(z_2)) I v_2 \\
& = I z_1 I \cd \pa^0(x) v_1 + I z_2 I \cd \pa^0(x) v_2
+ x D^H_q Iz_1 I v_1 + x D^H_q I z_2 I v_2 \\
& = \io(\pa^0(x)) \xi + x D^H_q \xi ,
\end{split}
\]
where the last equality follows from \eqref{eq:iopartial}. This ends the proof of the present proposition.
\end{proof}
 
\begin{cor}\label{cor:zeroth-spec-band-is-cqms}
We have the identity $(B_q^0, L_{t,q}^{\max}) = (C(S_q^2),L_q^{0,\max})$. In particular, it holds that $(B_q^0, L_{t,q}^{\max})$ is a compact quantum metric space.
\end{cor}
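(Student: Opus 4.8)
The plan is to deduce the statement almost immediately from Proposition \ref{p:podisomet} together with the already-established quantum metric structure on the Podle\'s sphere. First I would recall from Section \ref{ss:circle} that the spectral $0$-band $B_q^0 = \sum_{m=0}^0 A_q^m$ is by definition the fixed point algebra $A_q^0$ of the left circle action $\si_L$, which is precisely the Podle\'s sphere $C(S_q^2)$. Thus the underlying operator systems in the two pairs are literally the same, and the only thing left to verify is that the two (extended-valued) Lipschitz seminorms coincide.

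Next, the content of Proposition \ref{p:podisomet} is exactly that the inclusion $C(S_q^2) \subseteq C(SU_q(2))$ is an isometry for the seminorms $L_q^{0,\max}$ and $L_{t,q}^{\max}$; unwinding what this means for extended-real-valued Lipschitz seminorms, it says that $L_{t,q}^{\max}(x) = L_q^{0,\max}(x)$ for every $x \in C(S_q^2)$, where on the left $x$ is viewed as an element of $C(SU_q(2))$. In particular the domains agree: combining the inclusion $\T{Lip}(S_q^2) \su \T{Lip}_t(SU_q(2))$ from Proposition \ref{p:podisomet} with the observation that $L_q^{0,\max}(x) = \infty$ forces $L_{t,q}^{\max}(x) = \infty$, one gets $\T{Lip}_t(SU_q(2)) \cap C(S_q^2) = \T{Lip}(S_q^2)$. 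Hence $(B_q^0, L_{t,q}^{\max}) = (C(S_q^2), L_q^{0,\max})$ as pairs consisting of a complete operator system together with a Lipschitz seminorm in the sense of Definition \ref{d:lipschitzsem}.

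Finally, the ``in particular'' part is obtained by quoting the main theorem of \cite{AgKa:PSM}, according to which $L_q^{0,\max}$ turns $C(S_q^2)$ into a compact quantum metric space; by the identity just established, the same holds for $(B_q^0, L_{t,q}^{\max})$.

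There is essentially no obstacle at this stage, since all the analytic difficulty — proving that Lipschitz elements of the Podle\'s sphere remain Lipschitz for both the horizontal and the vertical Dirac operators, and that the twists become trivial on $B_q^0$ — has already been absorbed into Proposition \ref{p:podisomet} (via Lemma \ref{l:commureal} and the trivialisation $\de^0 = u \pa^0 u^*$ of Section \ref{ss:podles-revisited}). The only point requiring genuine care is the bookkeeping that the \emph{extended}-valued seminorms agree, i.e.\ that an element of $B_q^0$ lies in $\T{Lip}_t(SU_q(2))$ precisely when it lies in $\T{Lip}(S_q^2)$; as noted above this is the combination of the isometry statement with the domain inclusion, so no separate argument is needed.
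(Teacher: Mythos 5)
Your reduction of the corollary to Proposition \ref{p:podisomet} skips the one step that actually requires an argument, and it is exactly the step the paper's proof supplies. What the proof of Proposition \ref{p:podisomet} establishes is one-directional: for every $x \in \T{Lip}(S_q^2)$ one has $x \in \T{Lip}_t(SU_q(2))$ together with $L_{t,q}^{\max}(x) = L_q^{0,\max}(x)$ --- which is why its ``in particular'' records only the inclusion $\T{Lip}(S_q^2) \su \T{Lip}_t(SU_q(2))$ and not an equality of domains. Your assertion that ``$L_q^{0,\max}(x) = \infty$ forces $L_{t,q}^{\max}(x) = \infty$'' is precisely the reverse inclusion $B_q^0 \cap \T{Lip}_t(SU_q(2)) \su \T{Lip}(S_q^2)$, and it does not follow from what is proved there: nothing in that proof addresses an element of $C(S_q^2)$ which is Lipschitz for the pair $(D^V_t, D^H_q)$ but might a priori fail to be Lipschitz for the D\c{a}browski--Sitarz operator $D_q^0$. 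Reading the word ``isometry'' in the statement of Proposition \ref{p:podisomet} as an identity of the extended-valued seminorms on all of $C(S_q^2)$ begs the question, since that stronger assertion is exactly the content of the corollary you are trying to prove; the paper itself evidently does not read the proposition that way, as its proof of the corollary explicitly says that it remains to show $B_q^0 \cap \T{Lip}_t(SU_q(2)) \su \T{Lip}(S_q^2)$.

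The paper closes this gap with a short but genuine compression argument: writing $\io \colon H_q^1 \op H_q^{-1} \to L^2(SU_q(2))^{\op 2}$ for the inclusion of Hilbert spaces, one checks on the cores the intertwining relations $\io^* \C D_q^H \su \C D_q^0 \io^*$ and $\io \C D_q^0 \su \C D_q^H \io$, hence $\io^* D_q^H \su D_q^0 \io^*$ and $\io D_q^0 \su D_q^H \io$. Then, for $x \in B_q^0 \cap \T{Lip}_t(SU_q(2))$ (where the twists are trivial) and $\xi \in \T{Dom}(D_q^0)$, one gets $\pi^0(x)\xi = \io^*\pi(x)\io\xi \in \T{Dom}(D_q^0)$ and $D_q^0 \pi^0(x)\xi = \io^*\pa_q^H(x)\io\xi + \pi^0(x) D_q^0 \xi$, so that $[D_q^0,\pi^0(x)]$ is bounded and $x \in \T{Lip}(S_q^2)$; only after this does the identity of the two pairs, and hence the appeal to \cite{AgKa:PSM}, go through. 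Your proposal needs this argument (or a substitute for it) at the point where you claim the domains coincide.
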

\begin{proof} It suffices to establish the identity $(B_q^0, L_{t,q}^{\max}) = (C(S_q^2),L_q^{0,\max})$ since we already know from \cite[Theorem 8.3]{AgKa:PSM} that $(C(S_q^2),L_q^{0,\max})$ is a compact quantum metric space.
We have $B_q^0=C(S_q^2)$ and, by Proposition \ref{p:podisomet}, $\T{Lip}(S_q^2) \subseteq B_q^0 \cap \T{Lip}_t(SU_q(2))$ with $L_q^{0,\max}(x)=L_{t,q}^{\max}(x)$ for all $x\in \T{Lip}(S_q^2)$. We therefore only need to show that $B_q^0 \cap \T{Lip}_t(SU_q(2)) \subseteq  \T{Lip}(S_q^2)$. Denote by $\io \colon H_q^1 \op H_q^{-1} \to L^2(SU_q(2)) \op L^2(SU_q(2))$ the inclusion of Hilbert spaces. It can then be verified that $\io^* \C D_q^H \su \C D_q^0 \io^*$ and from this inclusion it follows that $\io^* D_q^H \su D_q^0 \io^*$. Similarly, we have the inclusion $\io D_q^0 \su D_q^H \io$ of unbounded operators.

The above inclusions can now be applied as follows: for each $x \in B_q^0 \cap \T{Lip}_t(SU_q(2))$ and each $\xi \in \T{Dom}(D_q^0)$ we get that $\pi^0(x) \xi = \io^* \pi(x) \io \xi \in \T{Dom}(D_q^0)$. Moreover, it holds that
\[
D_q^0 \pi^0(x) \xi = D_q^0 \io^* \pi(x) \io \xi 
= \io^* \pa^H_q(x) \io \xi + \io^* \pi(x) \io D_q^0 \xi = \io^* \pa^H_q(x) \io \xi + \pi^0(x) D_q^0 \xi .
\]
This shows that $x \in \T{Lip}(S_q^2)$ and the corollary is proved.
\end{proof}

\begin{remark}\label{rem:algebraic-podisomet}
The algebraic counterpart to Proposition \ref{p:podisomet} is basically a triviality. Indeed, for  $x\in \C O(S_q^2)$ we have that $\pa_{t,q}(x)=  \sma{0 & - q^{-1/2} \pa_f(x) \\ -q^{1/2}\pa_e(x) & 0} = \pa^0(x)$, and it moreover holds that
\[
\sma{0 & - q^{-1/2} \pa_f(x) \\ -q^{1/2}\pa_e(x) & 0}^* \sma{0 & - q^{-1/2} \pa_f(x) \\ -q^{1/2}\pa_e(x) & 0} = \pa^0(x)^*\pa^0(x) \in \mathbb{M}_2(\C O(S_q^2)) .
\]
We thereby obtain that
\[
 L_{t,q}(x)^2 = \| \pa_{t,q}(x)^*\pa_{t,q}(x) \|_{C(SU_q(2))}= \| \pa^0(x)^*\pa^0(x) \|_{C(S_q^2)}=  L_q^0(x)^2,
\]
where $L_q^0$ denotes the variation of $L_q^{0,\max}$ whose domain is $\C O(S_q^2)$.
\end{remark}


%


\subsection{Spectral projections and twisted derivations}\label{subsec:spectral-projections}
Let $\te \colon S^1 \ti C(SU_q(2)) \to C(SU_q(2))$ be a strongly continuous action of the circle on quantum $SU(2)$. In this section we are investigating the relationship between the spectral projections coming from $\te$ and the  twisted $*$-derivations
\[
\pa_q^H \T{ and } \pa^V_t \colon \T{Lip}_t(SU_q(2)) \to \B B\big( L^2(SU_q(2))^{\op 2} \big) 
\]
introduced in Section \ref{s:twisspectrip}. As a first consequence of these efforts, we shall  establish, in Proposition \ref{p:twistdericlos} below, that the sum of  twisted $*$-derivations $\pa_{t,q} = \pa_q^H + \pa^V_t$ is closable. \\

We suppose that there exists a $2\pi$-periodic, strongly continuous one-parameter unitary group $(U_r)_{r \in \rr}$ acting on the Hilbert space $L^2(SU_q(2))^{\op 2}$ such that
\begin{enumerate}
\item $U_r \pi(x) U_{-r} = \pi\big( \te(e^{ir},x) \big)$ for all $r \in \rr$ and $x \in C(SU_q(2))$,  where $\pi$ is the diagonal unital $*$-homomorphism introduced in Section \ref{subsec:Haar-state}.
\item $[D^H_q, U_r] = 0 = [D^V_t, U_r]$ for all $r \in \rr$;
\item $\te(z, \si_L(w,x) ) = \si_L(w, \te(z,x))$ for all $z,w \in S^1$, $x \in C(SU_q(2))$.
\end{enumerate}
Since the map $r \mapsto U_r$ is strongly continuous, we obtain that the map $r \mapsto U_r T U_{-r}$ is weakly continuous for every $T \in \mathbb{B}(L^2(SU_q(2))^{\oplus 2})$. For each $n \in \zz$ we may therefore define the \emph{n$^{\T{\emph{th}}}$ spectral projection}   $\Pi_n^\te \colon \B B( L^2(SU_q(2))^{\op 2}) \to \B B( L^2(SU_q(2))^{\op 2})$, implicitly, by the formula
\begin{equation}\label{eq:specprojWOT}
\begin{split}
& \inn{\xi,\Pi_n^\te(T)\eta} = \frac{1}{2\pi} \int_0^{2\pi} \inn{\xi, U_r T U_{-r} \eta} e^{-irn} dr,   \q \xi, \eta \in L^2(SU_q(2))^{\oplus 2} .
\end{split}
\end{equation}
We remark  that the  spectral projections separate points; i.e.~that for $T\in \mathbb{B}(L^2(SU_q(2))^{\oplus 2})$, it holds that $T=0$  if and only if $\Pi_n^\te(T)=0$ for all $n\in \zz$. 
It follows from our conditions that the spectral projection $\Pi^\te_n \colon \mathbb{B}(L^2(SU_q(2))^{\oplus 2}) \to \mathbb{B}(L^2(SU_q(2))^{\oplus 2})$ induces a spectral projection $\Pi^\te_n \colon C(SU_q(2)) \to C(SU_q(2))$ satisfying that $\Pi^\te_n(\pi(x)) = \pi( \Pi^\te_n(x))$ for all $x \in C(SU_q(2))$ (we hope that this slight abuse of notation does not cause unnecessary confusion). The spectral projection on $C(SU_q(2))$ is given by the norm-convergent Riemann integral
\begin{align}\label{eq:spec-proj-norm-def-theta}
\Pi^\te_n(x) = \frac{1}{2\pi} \int_0^{2\pi} \te(e^{ir},x) \cd e^{-irn} dr .
\end{align}


\begin{lemma}\label{l:spectralprojderiv}
For each $n \in \zz$ and $x\in \T{Lip}_t(SU_q(2))$, it holds that $\Pi^\te_n(x) \in \T{Lip}_t(SU_q(2))$ and we have the identities
\begin{equation}\label{eq:projder}
\Pi^\te_n( \pa_q^H(x) ) = \pa_q^H( \Pi^\te_n(x) ) \q \mbox{and} \q \Pi^\te_n( \pa_t^V(x) ) = \pa_t^V( \Pi^\te_n(x) ) .
\end{equation}
In particular, it holds that the spectral projection $\Pi^\te_n \colon C(SU_q(2)) \to C(SU_q(2))$ is a contraction for our Lipschitz seminorm, meaning that 
\[
L_{t,q}^{\T{max}}\big( \Pi^\te_n(x)\big) \leq L_{t,q}^{\T{max}}(x) \q \mbox{for all }  x \in C(SU_q(2)) .
\]
\end{lemma}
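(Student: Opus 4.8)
The strategy is to reduce everything to the spectral projection $\Pi^\te_n$ commuting with the twisted commutator constructions defining $\pa_q^H$ and $\pa_t^V$, and for this the key technical point will be to pass the unitary group $(U_r)_{r\in\rr}$ through the relevant operators. First I would fix $x\in\T{Lip}_t(SU_q(2))$ and $n\in\zz$. Since $x$ is analytic of order $-\log(q)/2$ and $-\log(t)/2$, I would use condition $(3)$ on $\te$ together with the definition \eqref{eq:spec-proj-norm-def-theta} of $\Pi^\te_n$ as a norm-convergent Riemann integral to conclude that $\Pi^\te_n(x)$ is again analytic of the same orders and that
\[
\si_L(s^{\pm 1/2},\Pi^\te_n(x)) = \Pi^\te_n\big(\si_L(s^{\pm 1/2},x)\big)
\]
for $s\in\{t,q\}$; this is an application of Lemma \ref{l:boundedanalytic} to the bounded $\si_L$-equivariant operator $\Pi^\te_n\colon C(SU_q(2))\to C(SU_q(2))$, whose $\si_L$-equivariance is exactly condition $(3)$.

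Next I would show that $\si_L(q^{1/2},\Pi^\te_n(x))$ preserves $\T{Dom}(D^H_q)$ and that the twisted commutator is bounded. The clean way is to express $\Pi^\te_n$ at the Hilbert-space level via the WOT-integral \eqref{eq:specprojWOT}: for the bounded operator $T:=\pi(\si_L(q^{1/2},x))$ one has $\Pi^\te_n(T) = \frac{1}{2\pi}\int_0^{2\pi} U_r T U_{-r}\, e^{-irn}\,dr$ in the WOT sense, and this operator equals $\pi(\si_L(q^{1/2},\Pi^\te_n(x)))$ by condition $(1)$ and the previous paragraph. Because $[D^H_q,U_r]=0$ (condition $(2)$), each $U_r$ preserves $\T{Dom}(D^H_q)$ and commutes with $D^H_q$ there; hence for $\xi\in\T{Dom}(D^H_q)$ the vector $U_r\, \si_L(q^{1/2},x)\, U_{-r}\xi$ lies in $\T{Dom}(D^H_q)$ and
\[
D^H_q\, U_r\,\si_L(q^{1/2},x)\,U_{-r}\,\xi - U_r\,\si_L(q^{-1/2},x)\,U_{-r}\, D^H_q\,\xi = U_r\,\pa_q^H(x)\,U_{-r}\,\xi ,
\]
using that $U_r$ also implements $\te$ on $\si_L(q^{-1/2},x)$. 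Integrating against $e^{-irn}$ and using that $D^H_q$ is closed (so it commutes with the WOT Riemann integral on its domain, after pairing with vectors in $\T{Dom}(D^H_q)$, which is legitimate since $D^H_q$ is selfadjoint and the integrand stays in a bounded subset of the graph) yields that $\si_L(q^{1/2},\Pi^\te_n(x))$ preserves $\T{Dom}(D^H_q)$ and that the twisted commutator extends to the bounded operator $\Pi^\te_n(\pa_q^H(x))$. The same argument verbatim, with $q$ replaced by $t$ and $D^H_q$ by $D^V_t$, gives the vertical statement. This establishes $\Pi^\te_n(x)\in\T{Lip}_t(SU_q(2))$ together with the identities \eqref{eq:projder}.

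Finally, the contraction statement is immediate: by Remark \ref{rem:pat-on-Lipschitz} the operator norm $L_{t,q}^{\T{max}}(y)=\|\pa_{t,q}(y)\|$ for $y\in\T{Lip}_t(SU_q(2))$ satisfies $\|\pa_{t,q}(y)\| = \|\pa_q^H(y)+\pa_t^V(y)\|$, and from \eqref{eq:projder} we get $\pa_{t,q}(\Pi^\te_n(x)) = \Pi^\te_n(\pa_q^H(x)) + \Pi^\te_n(\pa_t^V(x))$; since $\Pi^\te_n$ is a norm-one projection on $\B B(L^2(SU_q(2))^{\op 2})$ (it is a WOT-limit of averages of the unitary conjugations $U_r(\cd)U_{-r}$, each of which is isometric), we conclude $L_{t,q}^{\T{max}}(\Pi^\te_n(x)) \leq \|\pa_q^H(x)+\pa_t^V(x)\| = L_{t,q}^{\T{max}}(x)$. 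For $x\notin\T{Lip}_t(SU_q(2))$ the inequality holds trivially since the right-hand side is infinite.

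\textbf{Main obstacle.} The delicate point is the justification that one may interchange the closed unbounded operators $D^H_q$ and $D^V_t$ with the WOT-valued Riemann integral defining $\Pi^\te_n$ — i.e. that the integral of $U_r\,\si_L(q^{1/2},x)\,U_{-r}\xi$ actually lands in the domain and that $D^H_q$ applied to it equals the integral of $D^H_q$ applied pointwise. I would handle this by working weakly against the core $\C O(SU_q(2))^{\op 2}$: pair the commutator identity with an arbitrary $\eta$ in the core, use that $D^H_q$ is symmetric to move it onto $\eta$, integrate the resulting scalar identity (now an honest Riemann integral of a continuous bounded function of $r$), and then invoke boundedness of $\Pi^\te_n(\pa_q^H(x))$ together with the fact that the core is dense and $D^H_q$ is selfadjoint to upgrade the weak identity to the domain-preservation and operator identity. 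This is the one place where some care with unbounded-operator bookkeeping is needed; everything else is a direct consequence of conditions $(1)$–$(3)$ and the already-established analyticity lemmas.
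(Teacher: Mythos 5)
Your proposal is correct and follows essentially the same route as the paper: equivariance of $\Pi^\te_n$ under $\si_L$ via condition $(3)$ and Lemma \ref{l:boundedanalytic}, then a weak pairing argument using conditions $(1)$ and $(2)$ to push the unbounded operator onto the other vector and identify the twisted commutator with $\Pi^\te_n(\pa_q^H(x))$, concluding domain preservation from selfadjointness, and finally the Lip-contraction from \eqref{eq:projder} plus norm-contractivity of $\Pi^\te_n$. The only cosmetic difference is that you pair against the core and then extend (legitimate, since $\C O(SU_q(2))^{\op 2}$ is a core), whereas the paper pairs directly against arbitrary vectors in $\T{Dom}(D^H_q)$.
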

\begin{proof}
 Let $n \in \zz$. The fact that $\Pi^\te_n$ becomes a contraction for the seminorm $L_{t,q}^{\T{max}}$ is going to follow from the identities in \eqref{eq:projder} together with the fact that the spectral projection $\Pi^\te_n$ is a norm-contraction. 
 Let $x \in \T{Lip}_t(SU_q(2))$ be given.  We focus on showing that $\Pi^\te_n(x)$ is horizontally Lipschitz and that the identity $\Pi^\te_n(\pa_q^H(x)) = \pa_q^H( \Pi^\te_n(x))$ is satisfied. The vertical case follows by a similar argument.

 We first record that the assumption (3) on $\theta$ implies that $\sigma_L(e^{ir},\Pi^\te_n(x))= \Pi^\te_n( \si_L(e^{ir},x))$ for all $r \in \rr$. Hence, Lemma \ref{l:boundedanalytic} shows that $\Pi^\te_n(x)$ is analytic of order $-\log(q)/2$ and that the identities  
\[
\Pi^\te_n(\sigma_L(q^{1/2},x))=\sigma_L(q^{1/2},\Pi^\te_n(x)) \q \T{and} \q \Pi^\te_n(\si_L(q^{-1/2},x)) = \si_L(q^{-1/2},\Pi^\te_n(x))
\]
are satisfied. Let now $\xi,\eta \in \T{Dom}(D^H_q)$ be given. We may then compute as follows:
\[
\begin{split}
& \inn{ D^H_q \xi, \si_L(q^{1/2}, \Pi^\te_n(x)) \eta} 
= \frac{1}{2\pi} \int_0^{2\pi} \inn{D^H_q \xi, U_r \si_L(q^{1/2},x) U_{-r} \eta} \cd e^{-irn} \, dr  \\
& \q = \frac{1}{2\pi} \int_0^{2\pi} \inn{\xi, U_r D^H_q \si_L(q^{1/2},x) U_{-r} \eta} \cd e^{-irn} \, dr \\
& \q = \frac{1}{2\pi} \int_0^{2\pi} \inn{\xi, U_r \pa_q^H(x) U_{-r} \eta} \cd e^{-irn} \, dr
+ \frac{1}{2\pi} \int_0^{2\pi} \inn{\xi, U_r \si_L(q^{-1/2},x) U_{-r} D^H_q \eta} \cd e^{-irn} \, dr \\
& \q = \inn{ \xi, \Pi^\te_n( \pa_q^H(x)) \eta }
+ \inn{\xi, \si_L(q^{-1/2},\Pi^\te_n(x)) D^H_q \eta} .
\end{split}
\]
This shows that $\si_L(q^{1/2}, \Pi^\te_n(x)) \eta \in  \T{Dom}((D^H_q)^*)= \T{Dom}(D^H_q)$, and moreover that
$\Pi^\te_n(x)$ is horizontally Lipschitz with $\pa_q^H( \Pi^\te_n(x)) = \Pi^\te_n( \pa_q^H(x))$.
\end{proof}




Our prime example, where the above lemma applies, is given by the $2\pi$-periodic strongly continuous one-parameter unitary group $(U_r^L)_{r \in \rr}$ defined by 
\begin{align}\label{eq:U_t-unitary-grp}
U_r^L\begin{pmatrix}\xi \\ \eta\end{pmatrix}
= \begin{pmatrix} e^{ir(k - 1)} \cd \xi \\ e^{ir(m + 1)} \cd \eta \end{pmatrix} \q \xi \in H^k_q \, , \, \, \eta \in H^m_q, 
\end{align}
This unitary group induces the left circle action on $C(SU_q(2))$ in the sense that
\begin{equation}\label{eq:unitaryleft}
U_r^L \pi(x) U_{-r}^L = \pi\big(\si_L(e^{ir},x)\big) \q \T{for all } r \in \rr \, \T{ and }  \, x \in C(SU_q(2)) .
\end{equation}
For each $n \in \zz$ we denote the corresponding spectral projection by $\Pi_n^L$. The following lemma now verifies that the last assumption (2) is indeed satisfied.

\begin{lemma}\label{l:diracinvariant}
It holds that $[D^H_q, U_r^L] = 0 = [D^V_t, U_r^L]$ for all $r \in \rr$. 
\end{lemma}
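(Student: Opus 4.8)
The claim is that the unitary group $(U_r^L)_{r\in\rr}$ from \eqref{eq:U_t-unitary-grp} commutes with both Dirac operators $D^H_q$ and $D^V_t$. Since $U_r^L$ clearly preserves the common core $\C O(SU_q(2))^{\op 2}$ (it acts by a scalar on each $H^k_q\op H^m_q$, and $\C A^k_q\op\C A^m_q$ is invariant), it suffices to check that $U_r^L$ commutes with $\C D^H_q$ and $\C D^V_t$ on this core; the corresponding statement for the selfadjoint closures then follows by the standard fact that a unitary commuting with an essentially selfadjoint operator on a core commutes with its closure (here one uses that $U_r^L$ also preserves the core and that $\C D^H_q,\C D^V_t$ are essentially selfadjoint, as established in Section~\ref{s:twisspectrip}).

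First I would handle the vertical Dirac operator, which is the easy case: by \eqref{eq:dirvert}, $\C D^V_t$ acts on $\C A^n_q\op\C A^m_q$ as multiplication by the diagonal scalar matrix $\SmallMatrix{ t^{(-n+1)/2}[\frac{n-1}{2}]_t & 0 \\ 0 & -t^{(-m-1)/2}[\frac{m+1}{2}]_t}$, while $U_r^L$ acts on the same subspace as multiplication by the diagonal scalar matrix $\SmallMatrix{e^{ir(n-1)} & 0 \\ 0 & e^{ir(m+1)}}$ (taking $\xi\in\C A^n_q$, $\eta\in\C A^m_q$). Two diagonal scalar matrices commute, so $[\C D^V_t,U_r^L]=0$ on the core, hence $[D^V_t,U_r^L]=0$.

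For the horizontal Dirac operator $\C D^H_q=\SmallMatrix{0 & -q^{-1/2}\pa_{fk^{-1}} \\ -q^{1/2}\pa_{ek^{-1}} & 0}$, the point is a bookkeeping check on degrees: $\pa_e$ lowers the left-$\si_L$ degree by $2$ (i.e.~$\pa_e(\C A^n_q)\su\C A^{n-2}_q$) and $\pa_f$ raises it by $2$ (i.e.~$\pa_f(\C A^m_q)\su\C A^{m+2}_q$), and $\pa_k$ preserves degrees; this is exactly the computation already used in the proof of Lemma~\ref{l:sumdirac} and in Lemma~\ref{l:modudir}. So for $\SmallMatrix{\xi\\\eta}\in\C A^n_q\op\C A^m_q$ we have $\C D^H_q\SmallMatrix{\xi\\\eta}=\SmallMatrix{-q^{-1/2}\pa_{fk^{-1}}(\eta)\\-q^{1/2}\pa_{ek^{-1}}(\xi)}\in\C A^{m+2}_q\op\C A^{n-2}_q$. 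Applying $U_r^L$ to this vector multiplies the first component by $e^{ir((m+2)-1)}=e^{ir(m+1)}$ and the second by $e^{ir((n-2)+1)}=e^{ir(n-1)}$; on the other hand, $U_r^L\SmallMatrix{\xi\\\eta}=\SmallMatrix{e^{ir(n-1)}\xi\\e^{ir(m+1)}\eta}$ and then applying $\C D^H_q$ multiplies the $\pa_{fk^{-1}}(\eta)$-term by $e^{ir(m+1)}$ and the $\pa_{ek^{-1}}(\xi)$-term by $e^{ir(n-1)}$. The two results agree, giving $\C D^H_q U_r^L=U_r^L\C D^H_q$ on the core and therefore $[D^H_q,U_r^L]=0$.

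I do not anticipate a genuine obstacle here — the lemma is a routine degree-counting verification. The only mild subtlety worth spelling out is the passage from the core-level identity to the statement for the closures $D^H_q$ and $D^V_t$; this is handled uniformly by invoking essential selfadjointness (already proved) together with invariance of the core under $U_r^L$, so no delicate domain analysis is needed.
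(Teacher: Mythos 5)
Your proof is correct and follows essentially the same route as the paper: reduce to vectors in the algebraic spectral subspaces $\C A^n_q \op \C A^m_q$ (a core for both operators), note that $\C D^V_t$ and $U_r^L$ act there by commuting diagonal scalar matrices, and for $\C D^H_q$ use the degree shift $\C D^H_q\big(\C A^n_q \op \C A^m_q\big) \su \C A^{m+2}_q \op \C A^{n-2}_q$ to match the phases. Your explicit remark on passing from the core identity to the closures via essential selfadjointness and invariance of the core is a slightly more detailed version of what the paper leaves implicit.
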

\begin{proof}
Recall that  $\C O(SU_q(2))^{\op 2}$ is a core for both of the unbounded selfadjoint operators $D^H_q$ and $D^V_t$. Moreover, we know that $\C O(SU_q(2))$ agrees with the algebraic linear span of the algebraic spectral subspaces $\C A^k_q$, $k \in \zz$.  It therefore suffices to prove the relevant commutator identities on vectors of the form $\pma{\xi \\ \eta}$ with $\xi \in  \C A^n_q$ and $\eta \in \C A^m_q$ for some  $n,m \in \zz$. The vanishing result for the commutator with the vertical Dirac operator $\C D^V_t$ is then clearly satisfied, so we focus on the horizontal Dirac operator $\C D^H_q$. In this case, the vanishing result follows since
\[
\C D^H_q \pma{\xi \\ \eta} = -\pma{q^{-1/2}\pa_{fk^{-1}}(\eta) \\ q^{1/2}\pa_{ek^{-1}}(\xi)} 
\in  \C A^{m+2}_q \op \C A^{n-2}_q . \qedhere
\]  
\end{proof} 

The assumptions in Lemma \ref{l:spectralprojderiv} are therefore met, and it yields the following:

\begin{cor}\label{c:diracinvariant}
Let $x \in \T{Lip}_t(SU_q(2))$ and $n \in \zz$. It holds that $\Pi_n^L(x) \in \T{Lip}_t(SU_q(2))$ and we have the identities
\[
\pa^V_t( \Pi_n^L(x) ) = \Pi_n^L(\pa^V_t(x)) \, \, \mbox{ and } \, \, \, \pa^H_q( \Pi_n^L(x) ) = \Pi_n^L(\pa^H_q(x)) .
\]
In particular, it holds that $L_{t,q}^{\T{max}}(\Pi_n^L(x)) \leq L_{t,q}^{\T{max}}(x)$.
\end{cor}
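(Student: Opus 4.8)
The statement to prove is Corollary \ref{c:diracinvariant}: for $x \in \T{Lip}_t(SU_q(2))$ and $n \in \zz$, one has $\Pi_n^L(x) \in \T{Lip}_t(SU_q(2))$, together with the two intertwining identities $\pa^V_t(\Pi_n^L(x)) = \Pi_n^L(\pa^V_t(x))$ and $\pa^H_q(\Pi_n^L(x)) = \Pi_n^L(\pa^H_q(x))$, and consequently $L_{t,q}^{\T{max}}(\Pi_n^L(x)) \leq L_{t,q}^{\T{max}}(x)$.

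The plan is simply to verify that the left circle action $\te = \si_L$ with the explicit one-parameter unitary group $(U_r^L)_{r\in\rr}$ from \eqref{eq:U_t-unitary-grp} satisfies the three hypotheses $(1)$, $(2)$, $(3)$ required for Lemma \ref{l:spectralprojderiv}, and then invoke that lemma. Hypothesis $(1)$ — that $U_r^L \pi(x) U_{-r}^L = \pi(\si_L(e^{ir},x))$ — is exactly the content of \eqref{eq:unitaryleft}, which has already been recorded. Hypothesis $(2)$ — that $[D^H_q, U_r^L] = 0 = [D^V_t, U_r^L]$ for all $r \in \rr$ — is exactly Lemma \ref{l:diracinvariant}. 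Hypothesis $(3)$ — that $\si_L(z, \si_L(w,x)) = \si_L(w, \si_L(z,x))$ for all $z,w \in S^1$ — is immediate since $\si_L$ is a single circle action and hence the operators $\si_L(z,\cd)$ and $\si_L(w,\cd)$ commute (concretely, both act diagonally on the matrix coefficients $u^n_{ij}$ by scalars, by \eqref{eq:sigma-L-and-R-on-matrix-units}). Having checked these three points, Lemma \ref{l:spectralprojderiv} applied with $\te = \si_L$ gives at once that $\Pi_n^L(x) \in \T{Lip}_t(SU_q(2))$, the two identities in \eqref{eq:projder} (which here read precisely as the two displayed identities of the corollary), and the final seminorm inequality $L_{t,q}^{\T{max}}(\Pi_n^L(x)) \leq L_{t,q}^{\T{max}}(x)$.

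There is essentially no obstacle here: the corollary is a direct specialisation of the general lemma to the concrete unitary group that has already been singled out, and all three hypotheses have been pre-verified in the immediately preceding paragraphs and in Lemma \ref{l:diracinvariant}. The only minor point worth a sentence is to note that the unitary group $(U_r^L)_{r\in\rr}$ is indeed $2\pi$-periodic and strongly continuous (clear from the block-diagonal formula \eqref{eq:U_t-unitary-grp} on the Hilbert space decomposition $L^2(SU_q(2))^{\op 2} \cong \bigoplus_k H_q^k \op \bigoplus_m H_q^m$, since each block acts by a scalar phase $e^{ir(k-1)}$ or $e^{ir(m+1)}$), so that the spectral projections $\Pi_n^L$ are well-defined via \eqref{eq:specprojWOT}. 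After that the proof consists of one line: "The assumptions of Lemma \ref{l:spectralprojderiv} are met by \eqref{eq:unitaryleft}, Lemma \ref{l:diracinvariant} and \eqref{eq:sigma-L-and-R-on-matrix-units}, and the claims follow."

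Thus I would write the proof as: observe that $(U_r^L)$ is $2\pi$-periodic and strongly continuous; note that condition $(1)$ of Lemma \ref{l:spectralprojderiv} is \eqref{eq:unitaryleft}, condition $(2)$ is Lemma \ref{l:diracinvariant}, and condition $(3)$ holds because $\si_L(z,\cd)$ and $\si_L(w,\cd)$ are mutually commuting automorphisms of $C(SU_q(2))$ (both diagonal on matrix coefficients); then apply Lemma \ref{l:spectralprojderiv} with $\te = \si_L$ to conclude. No computation beyond citing these facts is needed.
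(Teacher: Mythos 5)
Your proposal is correct and follows exactly the paper's route: the paper likewise introduces the unitary group $(U_r^L)_{r\in\rr}$ implementing $\si_L$ via \eqref{eq:unitaryleft}, proves the commutation with the Dirac operators in Lemma \ref{l:diracinvariant}, and then deduces the corollary as a direct application of Lemma \ref{l:spectralprojderiv} with $\te = \si_L$ (condition $(3)$ being trivially satisfied since $\si_L$ commutes with itself). There is no gap; your one-line conclusion matches the paper's own treatment.
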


\begin{prop}\label{p:twistdericlos}
The sum of  twisted $*$-derivations $\pa_{t,q} = \pa_q^H + \pa^V_t \colon \T{Lip}_t(SU_q(2)) \to \B B\big( L^2(SU_q(2))^{\op 2} \big)$ is closable.
\end{prop}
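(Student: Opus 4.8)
The plan is to show that $\pa_{t,q}$ is closable by exhibiting its graph as an intersection of graphs of closable operators indexed by the spectral projections $\Pi_n^L$, together with the fact that these spectral projections separate points. Concretely, suppose $(x_j)_{j}$ is a sequence in $\T{Lip}_t(SU_q(2))$ with $x_j \to 0$ in operator norm and $\pa_{t,q}(x_j) \to T$ in $\B B\big( L^2(SU_q(2))^{\op 2}\big)$ for some bounded operator $T$; I must show $T = 0$. Since the spectral projections $\Pi_n^\te$ (for $\te = \si_L$) separate points, it suffices to prove $\Pi_n^L(T) = 0$ for every $n \in \zz$.

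Fix $n \in \zz$. By Corollary \ref{c:diracinvariant}, each $\Pi_n^L(x_j)$ lies in $\T{Lip}_t(SU_q(2))$ and $\pa_{t,q}(\Pi_n^L(x_j)) = \Pi_n^L(\pa_{t,q}(x_j))$. Since $\Pi_n^L$ is a norm-contraction on $C(SU_q(2))$, we have $\Pi_n^L(x_j) \to 0$ in operator norm; and since $\Pi_n^\te$ is WOT-continuous (being defined by the integral formula \eqref{eq:specprojWOT}), $\Pi_n^L(\pa_{t,q}(x_j)) \to \Pi_n^L(T)$ at least weakly. So it remains to verify closability on the fixed spectral band, i.e.\ that the restriction of $\pa_{t,q}$ to elements of a fixed $\si_L$-degree $n$ is closable. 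Here the key simplification is that on such elements all the analytic twists become genuine scalars: if $y \in \C A_q^n$ (or more generally in the closure $A_q^n$, where still $\si_L(s^{1/2},y) = s^{n/2} y$ by Lemma \ref{l:sigmaonspec} applied on the band), then $\pa_q^H(y)$ and $\pa^V_t(y)$ are honest (scalar-rescaled) commutators with the selfadjoint operators $D_q^H$ and $D_t^V$, hence closable by the standard argument: if $\Pi_n^L(x_j) \to 0$ in norm and $\pa_{t,q}(\Pi_n^L(x_j)) \to S$ weakly, then for $\xi,\eta$ in the common core $\C O(SU_q(2))^{\op 2}$ one computes
\[
\inn{\xi, S\eta} = \lim_j \inn{\xi, \pa_{t,q}(\Pi_n^L(x_j))\eta}
= \lim_j \big( \inn{(D_t^V+D_q^H)\xi, \si_L(\cdot,\Pi_n^L(x_j))\eta} - \inn{\si_L(\cdot,\Pi_n^L(x_j))\xi,(D_t^V+D_q^H)\eta}\big),
\]
where the twisting constants depend only on $n$; the right-hand side vanishes since $\Pi_n^L(x_j)\to 0$ strongly and the twists are bounded. (One should here use Remark \ref{rem:pat-on-Lipschitz} to write the twisted commutator with $D_t^V + D_q^H$ componentwise, so that on each matrix entry one genuinely has a single twist.) Therefore $\Pi_n^L(T) = S = 0$.

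The main obstacle is purely bookkeeping: one must be careful that $\pa_{t,q}$ is \emph{not} a twisted derivation and that on $D_t^V$ the twist is by $t$ while on $D_q^H$ it is by $q$, so the reduction to a single scalar twist must be done on each of the two summands separately (and, within $\pa_q^H$, on each off-diagonal matrix entry, and similarly for $\pa_t^V$ using that $\pa_t^4 = -\pa_t^3$ from Remark \ref{rem:form-of-patv}). Once the degree-$n$ case is handled entry-by-entry and summed, closability of $\pa_{t,q}$ follows from the separation property of the spectral projections. I would also remark that weak convergence of $\pa_{t,q}(x_j)$ is not needed as a hypothesis — only norm convergence to some $T$ — since the weak limit of the convergent sequence $\pa_{t,q}(\Pi_n^L(x_j))$ is automatically $\Pi_n^L$ of the norm limit by WOT-continuity of $\Pi_n^\te$; this is what makes the argument go through cleanly.
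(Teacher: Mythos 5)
Your proof is correct and follows essentially the same route as the paper: reduce, via the spectral projections $\Pi_n^L$ (which commute with $\pa_{t,q}$ by Corollary \ref{c:diracinvariant} and separate points of $\B B\big(L^2(SU_q(2))^{\op 2}\big)$), to elements of a fixed $\si_L$-degree, where by Lemma \ref{l:sigmaonspec} the twists become scalars and closability follows from selfadjointness of $D_q^H$ and $D_t^V$. The only cosmetic difference is that the paper first splits $\pa_{t,q}$ into $\pa_q^H$ and $\pa_t^V$ via the inequality \eqref{eq:praktisk-ulighed} and proves each summand closable separately, whereas you keep the sum together and separate the two (differently twisted) terms only inside the pairing against core vectors, which works equally well.
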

\begin{proof}
 We first record that \eqref{eq:praktisk-ulighed} yields that
\[
\max\{ \| \pa_q^H(x) \|, \| \pa^V_t(x) \| \} \leq \| \pa_{t,q}(x) \| \leq \| \pa_q^H(x) \| + \| \pa^V_t(x) \| 
\q \T{for all } x \in  \T{Lip}_t(SU_q(2)).
\]
To see that $\pa_{t,q}$ is closable is thus suffices to show that $\pa_q^H$ and $\pa^V_t$ are both closable.
We focus on showing that $\pa_q^H$ is closable since the proof is almost the same for $\pa^V_t$.
For $m \in \zz$, we first remark that the restriction $\pa_q^H \colon \T{Lip}_t(SU_q(2)) \cap A^m_q \to \B B\big( L^2(SU_q(2))^{\op 2} \big)$ is closable:  indeed, for each $x \in \T{Lip}_t(SU_q(2)) \cap A^m_q$ and each $\xi \in \T{Dom}(D^H_q)$ we obtain from Lemma \ref{l:sigmaonspec} that
\[
\pa_q^H(x) \xi = D^H_q \si_L(q^{1/2},x) \xi - \si_L(q^{-1/2},x) D^H_q \xi 
= q^{m/2} D^H_q x \xi - q^{-m/2} x D^H_q \xi .
\]
The fact that the relevant restriction is closable then follows from the selfadjointness of $D^H_q$.  An application of Corollary \ref{c:diracinvariant} now shows that $\pa_q^H \colon \T{Lip}_t(SU_q(2)) \to \B B\big( L^2(SU_q(2))^{\op 2} \big)$ is closable. Indeed, as already remarked, for a bounded operator $y \in \B B( L^2(SU_q(2))^{\op 2})$ it holds that $y = 0$ if and only if $\Pi_n^L(y) = 0$ for all $n \in \zz$.
%
\end{proof}

\subsection{Spectral bands as compact quantum metric spaces}
We fix again our two parameters $t,q \in (0,1]$  together with an $M \in \nn_0$. We are now going to establish that the spectral band  $B_q^M =\sum_{m=-M}^M A_q^m$ becomes a compact quantum metric spaces when equipped with the Lipschitz seminorm $L_{t,q}^{\T{max}} \colon B_q^M \to [0,\infty]$ introduced in Definition \ref{def:max-and-min-seminorm}.  To this end, we utilise the general theory about finitely generated projective modules developed in Section \ref{ss:fingenproCQMS}. We emphasise that the  domain of the restriction $L_{t,q}^{\T{max}} \colon B_q^M \to [0,\infty]$ is substantially larger than the algebraic spectral band $\C B_q^M$. We start out by stating (and reproving) a well-known result regarding the spectral subspaces $(A_q^m)_{m\in \zz}$  (see e.g.~\cite[Proposition 3.5]{HaMa:PMM}). Recall that $\Pi^L_m\colon C(SU_q(2)) \to C(SU_q(2))$ denotes the spectral projection defined in \eqref{eq:spec-proj-norm-def-theta} associated with the circle action $\sigma_L$. 

\begin{lemma}\label{l:spec-subspaces-projective}
For any $x\in C(SU_q(2))$  and any $m\in \zz$ it holds that
\[
\Pi^L_m(x) = \cas{
\sum_{i = 0}^m (u_{i0}^m)^* \cd \Pi^L_0( u_{i0}^m \cd x ) &  m \geq 0 \\ 
\sum_{i = 0}^{|m|} (u_{i |m|}^{|m|})^* \cd \Pi^L_0( u_{i |m|}^{|m|} \cd x) & m < 0
}
\]
In particular, we obtain that $A_q^m$ is finitely generated and projective as a right module over $A_q^0=C(S_q^2)$.
\end{lemma}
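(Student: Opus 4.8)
The plan is to verify the displayed formula for $\Pi^L_m$ on the standard orthogonal basis $\{u^n_{ij}\}$ of $L^2(SU_q(2))$ using the multiplication rules already recorded, and then upgrade from $L^2$ to $C(SU_q(2))$ by continuity, since both sides are bounded operators on the $C^*$-algebra. Concretely, for $m \geq 0$ I would first note that the left circle action satisfies $\sigma_L(z,u^n_{ij}) = z^{2j-n} u^n_{ij}$, so that $\Pi^L_m(u^n_{ij}) = \delta_{2j-n,m} \cdot u^n_{ij}$; in particular $\Pi^L_m$ kills $u^n_{ij}$ unless $n = 2j - m$, which forces $j \geq m$ (as $n \geq j$). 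On the right-hand side, recall from the discussion after \eqref{eq:leftmult} that $u^m_{00} = (a^*)^m$ and more generally that the first column entries $u^m_{i0}$ pair with $e,f,k$ in the way prescribed by \eqref{eq:exppai}; the key structural fact I need is that $\sum_{i=0}^m (u^m_{i0})^*\cdot u^m_{i0} = 1$, which is just unitarity of the corepresentation matrix $u^m$ (the $(0,0)$-entry of $u^m (u^m)^*$, after using \eqref{eq:adjoint-of-matrix-coefficients}). Then for each basis vector I would compute $\sum_{i=0}^m (u^m_{i0})^* \Pi^L_0(u^m_{i0}\cdot x)$: since left multiplication by $u^m_{i0} \in \C A^{-m}_q$ shifts the left degree down by $m$, the projection $\Pi^L_0$ retains exactly the part of $u^m_{i0}\cdot x$ of left degree $0$, i.e.\ the part of $x$ of left degree $m$, and multiplying back by $(u^m_{i0})^* \in \C A^m_q$ and summing over $i$ restores it by the unitarity identity. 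The case $m < 0$ is handled symmetrically using the last column $u^{|m|}_{i|m|}$ instead, where $u^{|m|}_{i|m|}$ has left degree $|m|$ by \eqref{eq:sigma-L-and-R-on-matrix-units}.

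More precisely, the cleanest route is to work directly in $C(SU_q(2))$: fix $m \geq 0$, put $p_i := u^m_{i0}$ for $i = 0,\dots,m$ so that each $p_i \in \C A^{-m}_q$ (indeed $\sigma_L(z,p_i) = z^{-m} p_i$), and observe that $\sum_{i=0}^m p_i^* p_i = 1$ by unitarity of $u^m$ together with \eqref{eq:adjoint-of-matrix-coefficients}. For arbitrary $x \in C(SU_q(2))$ and $r \in \rr$ one has
\[
\sigma_L(e^{ir}, p_i \cdot x) = \sigma_L(e^{ir},p_i)\cdot \sigma_L(e^{ir},x) = e^{-irm} p_i \cdot \sigma_L(e^{ir},x),
\]
using multiplicativity of the circle action. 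Plugging this into the Riemann-integral formula \eqref{eq:spec-proj-norm-def-on-suq2} for $\Pi^L_0$ gives
\[
\Pi^L_0(p_i \cdot x) = \frac{1}{2\pi}\int_0^{2\pi} p_i\cdot \sigma_L(e^{ir},x)\, e^{-irm}\, dr = p_i\cdot \Pi^L_m(x),
\]
where the last equality is again \eqref{eq:spec-proj-norm-def-on-suq2}, now for $\Pi^L_m$. Hence $\sum_{i=0}^m p_i^* \Pi^L_0(p_i\cdot x) = \big(\sum_{i=0}^m p_i^* p_i\big)\cdot \Pi^L_m(x) = \Pi^L_m(x)$, which is exactly the claimed identity. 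For $m<0$ the same argument applies verbatim with $p_i := u^{|m|}_{i|m|}$, which lies in $\C A^{|m|}_q$ since $\sigma_L(z,u^{|m|}_{i|m|}) = z^{2|m|-|m|}u^{|m|}_{i|m|} = z^{|m|}u^{|m|}_{i|m|}$, so that $\sigma_L(e^{ir},p_i\cdot x) = e^{ir|m|}p_i\cdot\sigma_L(e^{ir},x)$ and the projection $\Pi^L_0(p_i\cdot x) = p_i\cdot \Pi^L_m(x)$ follows in the same way, while $\sum_i p_i^* p_i = 1$ now comes from the $(|m|,|m|)$-entry of $u^{|m|}(u^{|m|})^*$.

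For the ``in particular'' statement: the identity just proved, specialised to $x \in A_q^m$ (where $\Pi^L_m(x) = x$), reads $x = \sum_{i=0}^m p_i^*\cdot (p_i\cdot x)$ with $p_i\cdot x \in A_q^0 = C(S_q^2)$ since $p_i$ has left degree $-m$ and $x$ has left degree $m$ (resp.\ degrees $|m|$ and $m$ for $m<0$); thus the elements $p_i^* \in A_q^m$ generate $A_q^m$ as a right $C(S_q^2)$-module, and the matrix $\big(p_i\cdot p_j^*\big)_{ij} \in \B M_{|m|+1}(C(S_q^2))$ is an idempotent — it equals $\big(\langle p_i^*, p_j^*\rangle\big)_{ij}$ for the $C(S_q^2)$-valued inner product $\langle y, z\rangle := \Pi^L_0(y^* z)$ — realising $A_q^m$ as a direct summand of the free module $C(S_q^2)^{|m|+1}$, hence finitely generated projective. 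I do not anticipate a genuine obstacle here; the only point requiring care is the bookkeeping of left degrees and the verification that $\sum_i p_i^* p_i = 1$ from unitarity of $u^m$ combined with the adjoint formula \eqref{eq:adjoint-of-matrix-coefficients}, which is a short direct computation.
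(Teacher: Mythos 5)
Your proof is correct and follows essentially the same route as the paper's: both hinge on the left-degree of the first (resp.\ last) column of $u^{|m|}$ under $\sigma_L$ together with unitarity of $u^{|m|}$, the only cosmetic difference being that you verify $\Pi^L_0(p_i\cdot x)=p_i\cdot\Pi^L_m(x)$ directly from the integral formula for all $x\in C(SU_q(2))$, whereas the paper reduces by density and linearity to homogeneous $x\in\C A_q^k$ and compares degrees; the projectivity argument via the idempotent $(p_ip_j^*)_{ij}$ is likewise the paper's embedding $x\mapsto u_{\bullet 0}\cdot x$ made explicit. One small correction: $\sum_i p_i^*p_i=1$ is the $(0,0)$- (resp.\ $(|m|,|m|)$-) entry of $(u^{|m|})^*u^{|m|}=1$, so no appeal to \eqref{eq:adjoint-of-matrix-coefficients} is needed and it is not an entry of $u^{|m|}(u^{|m|})^*$ as you state — though since $u^{|m|}$ is unitary both relations hold, so this does not affect the argument.
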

\begin{proof}
By continuity and density, it suffices to check the identity for $x\in \C O (SU_q(2))$ and by linearity we may furthermore assume that $x\in \C A_q^k$ for some $k\in \zz$. If $m\geq 0$, then $u_{i0}^m \in \C A_q^{-m}$ (cf.~\eqref{eq:sigma-L-and-R-on-matrix-units}) and hence $u_{i0}^m x\in \C A_{q}^{k-m}$. Hence both sides are zero if $m\neq k$ and for $m=k$ the identity follows from the fact that $u^m$ is a unitary matrix.   The final statement about projectivity now follows, since the identity just proven shows that the map $A_q^m \to (A_q^0)^{\oplus(m+1)}$ given by $x\mapsto u_{\bullet 0}\cdot x$ provides an embedding of $A_q^{m}$ as a direct summand in a finitely generated free module. The case $m<0$ follows analogously.
\end{proof}

To show that  $(B_q^M, L_{t,q}^{\T{max}})$ is a compact quantum  metric space, we wish to apply  Theorem \ref{t:fingenproCQMS}, and we therefore need to compare the Lipschitz seminorm $L_{t,q}^{\T{max}}$ with the operator norm on quantum $SU(2)$ (see Assumption \ref{a:fingensemi}). This comparison takes place in the next two lemmas.

\begin{lemma}\label{lem:paV-on-spectral-subspaces}
For every $m\in \zz$, it holds that $A_q^m \subseteq \T{Lip}_t^{V}(SU_q(2))$ and 
\[
\pa_t^V (x)= \pma{[m/2]_tx & 0 \\ 0 & -[m/2]_tx} \q \mbox{for all } x\in A_q^m .
\]
\end{lemma}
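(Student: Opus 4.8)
The statement concerns the vertical Lipschitz property for elements of a single spectral subspace $A_q^m$, so the natural strategy is to reduce everything to the explicit diagonal description of $D_t^V$ on the spectral decomposition $L^2(SU_q(2)) \cong \bigoplus_k H_q^k$. First I would recall from Lemma \ref{l:sigmaonspec} that any $x \in A_q^m$ is analytic of all orders and that $\si_L(s^{1/2},x) = s^{m/2}\cdot x$, so condition $(1)$ in the definition of vertically Lipschitz (Definition \ref{def:Lipschitz-elements}) is automatic, and the twists $\si_L(t^{\pm 1/2},x)$ are just the scalar multiples $t^{\pm m/2}\cdot x$.

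Next I would verify that left multiplication by $x \in A_q^m$ preserves the domain of $D_t^V$. The key observation is that $x$ maps $H_q^k \op H_q^{k'}$ into $H_q^{k+m}\op H_q^{k'+m}$ (since $x\in A_q^m$ and the $\si_L$-grading is multiplicative), and $D_t^V$ acts on $H_q^n \op H_q^{n'}$ as multiplication by the diagonal scalar matrix $\SmallMatrix{t^{\frac{-n+1}{2}}[\frac{n-1}{2}]_t & 0 \\ 0 & -t^{\frac{-n'-1}{2}}[\frac{n'+1}{2}]_t}$ from \eqref{eq:dirvert}. Using the general fact that a bounded operator which shifts the eigenspaces of a diagonalisable selfadjoint operator by a fixed amount and is bounded relative to that operator preserves its domain — or more concretely, checking on the core $\C O(SU_q(2))^{\op 2}$ and then using that $D_t^V$ is the closure of $\C D_t^V$ — one gets $x\big(\T{Dom}(D_t^V)\big)\su \T{Dom}(D_t^V)$ together with the commutator formula on the core.

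The heart of the computation is then the identity
\[
D_t^V \cd \si_L(t^{1/2},x) - \si_L(t^{-1/2},x)\cd D_t^V = t^{m/2}\cdot D_t^V x - t^{-m/2}\cdot x\, D_t^V,
\]
and one evaluates this on a vector $\SmallMatrix{\xi \\ \eta}$ with $\xi\in\C A_q^n$, $\eta\in\C A_q^{n'}$, using $x\xi\in\C A_q^{n+m}$, $x\eta\in\C A_q^{n'+m}$. The first entry becomes $\big(t^{m/2}\cdot t^{\frac{-(n+m)+1}{2}}[\tfrac{(n+m)-1}{2}]_t - t^{-m/2}\cdot t^{\frac{-n+1}{2}}[\tfrac{n-1}{2}]_t\big)\cdot x\xi$, and the prefactor simplifies: $t^{m/2}t^{\frac{-(n+m)+1}{2}} = t^{\frac{-n+1}{2}}$ and similarly $t^{-m/2}t^{\frac{-n+1}{2}} = t^{\frac{-n-m+1}{2}}$, so the coefficient is $t^{\frac{-n+1}{2}}\big([\tfrac{n+m-1}{2}]_t - t^{-m}[\tfrac{n-1}{2}]_t\big)$. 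Using the defining formula for the $q$-number $[a]_t = \frac{t^a - t^{-a}}{t - t^{-1}}$ (resp.\ $[a]_1 = a$), a short manipulation collapses this to $[m/2]_t$; the analogous computation on the second entry gives $-[m/2]_t$. This yields $\pa_t^V(x) = \SmallMatrix{[m/2]_t\, x & 0 \\ 0 & -[m/2]_t\, x}$, which is bounded, establishing $A_q^m \su \T{Lip}_t^V(SU_q(2))$.

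The main obstacle, such as it is, is purely bookkeeping: keeping the shifted indices $n \mapsto n+m$ straight inside the square brackets and confirming that the messy combination of powers of $t$ and bracketed $t$-numbers genuinely telescopes to $[m/2]_t$ uniformly in $n$ (and that the $t=1$ case is covered by the same algebra with $[a]_1 = a$). A clean way to organise this, avoiding case splits, is to pass to the operator $D_t^V \Ga_{t,0}^{-1}$ which by the proof of Lemma \ref{l:Irela} acts on $\C A_q^n \op \C A_q^{n'}$ as $\SmallMatrix{[\frac{n-1}{2}]_t & 0 \\ 0 & -[\frac{n'+1}{2}]_t}$, and to exploit that $\Ga_{t,0}$ commutes with $D_t^V$ (Lemma \ref{l:modudir}) and conjugates $x$ into $\si_L(t^{\mp1/2},x)$ (Lemma \ref{l:analytic}); this reduces the whole identity to the elementary recursion $[\tfrac{a+1}{2}]_t - [\tfrac{a-1}{2}]_t\cdot$(appropriate $t$-power) for $t$-numbers, which is standard.
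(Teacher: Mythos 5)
Your proposal is correct and follows essentially the same route as the paper: use Lemma \ref{l:sigmaonspec} for analyticity and the scalar twists $\si_L(t^{\pm 1/2},x)=t^{\pm m/2}x$, compute the twisted commutator spectral-subspace by spectral-subspace on the core $\C O(SU_q(2))^{\oplus 2}$, and then get boundedness and domain preservation from closedness of $D_t^V$; the paper packages your ``short manipulation'' as the explicit identity $[r+s]_t - t^{-r}[s]_t = t^s[r]_t$. The only flaw is a small slip in your intermediate display: the factored coefficient should be $t^{\frac{-n+1}{2}}\big([\tfrac{n+m-1}{2}]_t - t^{-m/2}[\tfrac{n-1}{2}]_t\big)$, with $t^{-m/2}$ rather than $t^{-m}$, which then indeed collapses to $[m/2]_t$ as you claim.
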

\begin{proof}
Let $m\in \zz$ and $x \in A^m_q$ be given. We then know from Lemma \ref{l:sigmaonspec} that $x$ is analytic of order $-\log(t)/2$. Let now $n,k \in \zz$ and $y \in \C A_q^n \op \C A_q^k$ be given. We then have that
\[
\si_L(t^{1/2},x) \cd y = t^{m/2} x \cd y \in A^{m + n}_q \op A^{m + k}_q \su \T{Dom}(D^V_t). 
\]
Using the relation $[r+s]_t- t^{-r}[s]_t= t^s[r]_t$, which is valid for all $r,s\in \rr$, a direct computation shows that
\[
\big( D^V_t \si_L(t^{1/2},x) - \si_L(t^{-1/2},x) D^V_t \big) y
= \pma{ [m/2]_t \cd x & 0 \\ 0 & -[m/2]_t \cd x} \cd y .
\]
This proves that the twisted commutator
\[
D^V_t \si_L(t^{1/2},x) - \si_L(t^{-1/2},x) D^V_t
\]
is well-defined on the core $\C O(SU_q(2))^{\op 2}$ for the vertical Dirac operator and that it extends to the bounded operator $[m/2]_t \pma{x & 0 \\ 0 & -x}$. 
From this it follows that $ \si_L(t^{1/2},x) $ preserves $\T{Dom}(D_t^V)$ and that $\pa_t^V(x)=  \pma{[m/2]_tx & 0 \\ 0 & -[m/2]_t x}$ as desired.
\end{proof}


\begin{remark}\label{rem:form-of-patv}
As an aside, we remark that it is now easy to verify that the algebraic formula for $\pa_t^V$ obtained in  Lemma \ref{l:twicommu} actually extends to the whole Lipschitz algebra, in the sense that
\begin{align}\label{eq:form-of-patv}
 \pa_t^V(x)=\pma{\pa_t^3(x) & 0 \\ 0 & -\pa_t^3(x)} \q \T{for all } x\in \T{Lip}_t(SU_q(2)) .
 \end{align}
By Remark \ref{rem:pat-on-Lipschitz}, we already know that the off-diagonal elements in $\pa_t^V(x)$ are zero and that the upper left hand entry is $\pa_t^3(x)$. 
Conjugating $\pa_t^V(x)$ with the unitary $S:=\SmallMatrix{0& 1 \\1& 0} $ interchanges the diagonal entries, so it suffices to show that $S\pa_t^V(x)S=-\pa_t^V(x)$.  
A direct computation shows that the unitaries $(U_r^L)_{r\in \rr}$ defined in \eqref{eq:U_t-unitary-grp} satisfies
\[
SU_{r}^L =\pma{e^{2ir} & 0 \\ 0& e^{-2ir}}U_r^L S \q \T{for all } r \in \rr 
\]
and since $ \pa_t^V(x)$ is diagonal it commutes with the unitary $\SmallMatrix{e^{2ir} & 0 \\ 0 & e^{-2ir}}$. Using this, it is not difficult to see that
\[
\Pi_n^L(S\pa_t^V(x) S)=S\Pi_n^{L}(\pa_t^V(x))S \q \T{for all } n \in \zz  .
\]
By Lemma \ref{lem:paV-on-spectral-subspaces} we know that \eqref{eq:form-of-patv} is valid whenever $x$ belongs to a spectral subspace, and since 
$\Pi_n^L$ commutes with $\pa_t^V$ (see Corollary \ref{c:diracinvariant}) we therefore obtain that
\[
\Pi_n^L(S\pa_t^V(x) S)=S\Pi_n^{L}(\pa_t^V(x))S= S\pa_t^V(\Pi_n^{L}x)S=-\pa_t^V(\Pi_n^{L}x)=-\Pi_n^L(\pa_t^V(x)).
\]
Since the spectral projections separate points, it follows that $S\pa_t^V(x)S=-\pa_t^V(x)$ and hence that \eqref{eq:form-of-patv} holds.
\end{remark}

\begin{lemma}\label{l:lipdomnorm}
For each $m \in \zz$,  it holds that $| [m/2]_t | \cd \| \Pi_m^L(x) \| \leq L_{t,q}^{\T{max}}(x)$ for all  $x \in \T{Lip}_t(SU_q(2))$.
\end{lemma}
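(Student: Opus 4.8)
\textbf{Proof plan for Lemma \ref{l:lipdomnorm}.}
The statement to prove is that $|[m/2]_t| \cdot \|\Pi_m^L(x)\| \leq L_{t,q}^{\max}(x)$ for all $x \in \T{Lip}_t(SU_q(2))$ and all $m \in \zz$. The plan is to combine three ingredients already available: the contractivity of the spectral projection $\Pi_m^L$ for the Lipschitz seminorm (Corollary \ref{c:diracinvariant}), the explicit form of $\pa_t^V$ on a spectral subspace (Lemma \ref{lem:paV-on-spectral-subspaces}), and the elementary inequality $\|\pa_t^V(y)\| \leq \|\pa_{t,q}(y)\| = L_{t,q}^{\max}(y)$ coming from \eqref{eq:praktisk-ulighed} in Remark \ref{rem:pat-on-Lipschitz}.

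First I would fix $m \in \zz$ and $x \in \T{Lip}_t(SU_q(2))$, and set $y := \Pi_m^L(x)$. By Corollary \ref{c:diracinvariant}, $y$ again lies in $\T{Lip}_t(SU_q(2))$ and $L_{t,q}^{\max}(y) \leq L_{t,q}^{\max}(x)$. Next, since $y \in A_q^m$, Lemma \ref{lem:paV-on-spectral-subspaces} gives the explicit formula
\[
\pa_t^V(y) = \pma{[m/2]_t \cd y & 0 \\ 0 & -[m/2]_t \cd y},
\]
whose operator norm on $L^2(SU_q(2))^{\op 2}$ equals $|[m/2]_t| \cdot \|y\|$ (the two diagonal blocks have equal norm, and $\|y\|$ is the $C^*$-norm of $y$ acting by left multiplication, which is the same as the norm of $y$ in $C(SU_q(2))$ since the GNS representation is faithful). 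Finally, the inequality $\|\pa_t^V(y)\| \leq \|\pa_{t,q}(y)\| = L_{t,q}^{\max}(y)$ from \eqref{eq:praktisk-ulighed} chains everything together:
\[
|[m/2]_t| \cd \|\Pi_m^L(x)\| = \|\pa_t^V(y)\| \leq L_{t,q}^{\max}(y) \leq L_{t,q}^{\max}(x),
\]
which is the desired estimate.

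There is essentially no hard step here — the lemma is a bookkeeping consequence of results already established. The only minor point worth being careful about is identifying $\|y\|$ as a bounded operator on $L^2(SU_q(2))^{\op 2}$ with $\|y\|$ in $C(SU_q(2))$; this is immediate because $\rho$ (and hence $\pi$) is an injective $*$-homomorphism of $C^*$-algebras and is therefore isometric. Thus the proof is just the displayed chain of inequalities preceded by the invocation of Corollary \ref{c:diracinvariant} and Lemma \ref{lem:paV-on-spectral-subspaces}.
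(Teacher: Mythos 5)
Your proof is correct and follows essentially the same route as the paper: the paper's argument is exactly the chain $|[m/2]_t|\cdot\|\Pi_m^L(x)\| = \|\pa_t^V(\Pi_m^L(x))\| \leq L_{t,q}^{\max}(\Pi_m^L(x)) \leq L_{t,q}^{\max}(x)$, invoking Corollary \ref{c:diracinvariant} and Lemma \ref{lem:paV-on-spectral-subspaces}, with the middle inequality being precisely the estimate \eqref{eq:praktisk-ulighed} you cite.
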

\begin{proof}
The result follows from Corollary \ref{c:diracinvariant} and Lemma \ref{lem:paV-on-spectral-subspaces} via the estimate
\[
| [m/2]_t | \cd \| \Pi_m^L(x) \| = \| \pa^V_t( \Pi_m^L(x) ) \| \leq L_{t,q}^{\T{max}}( \Pi_m^L(x)) \leq L_{t,q}^{\T{max}}(x) . \qedhere
\]
\end{proof}
Lastly, in order to apply Theorem \ref{t:fingenproCQMS}, we need to verify that Assumption \ref{a:fingensemi} (5) is satisfied, which is the contents of the following lemma: 

\begin{lemma}\label{l:leftbound}
Let $v \in \T{Lip}_t(SU_q(2))$ and let $M \in \nn_0$. Then the left-multiplication operator
\[
m(v) \colon B_q^M \cap \ker(\Pi_0^L) \to C(SU_q(2))
\]
is bounded with respect to the seminorm $L_{t,q}^{\T{max}}$.
\end{lemma}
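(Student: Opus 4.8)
The goal is to bound $L_{t,q}^{\T{max}}(v\cdot x)$ in terms of $\|x\|$ (or rather in terms of a norm of $x$ controlling the relevant analytic data) for $x \in B_q^M \cap \ker(\Pi_0^L)$. The natural starting point is the twisted Leibniz rules from Lemma \ref{l:twilip}, which give
\[
\pa_q^H(v\cdot x) = \pa_q^H(v)\,\si_L(q^{1/2},x) + \si_L(q^{-1/2},v)\,\pa_q^H(x),
\]
and the analogous formula for $\pa^V_t$. Hence, using \eqref{eq:praktisk-ulighed},
\[
L_{t,q}^{\T{max}}(v\cdot x) \leq \|v\|_{t,q}\cdot L_{t,q}^{\T{max}}(x) + L_{t,q}^{\T{max}}(v)\cdot \|x\|_{t,q},
\]
which is exactly Lemma \ref{l:leftrightbound} applied to $v$ and $x$. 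So the whole problem reduces to controlling $L_{t,q}^{\T{max}}(x)$ and $\|x\|_{t,q}$ by the operator norm $\|x\|$, \emph{uniformly over the finitely many spectral degrees appearing in $B_q^M$, and using that the degree-zero part vanishes}.

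\textbf{Key steps.} First, for $\|x\|_{t,q}$: since $x\in B_q^M$, Lemma \ref{l:anaband} gives $x\in\T{Ana}_t(SU_q(2))$ with $\|x\|_{t,q}\leq \sum_{m=-M}^M (t^{m/2}+q^{m/2})\|x\|$, so this term is harmless — it is bounded by a constant (depending on $M,t,q$) times $\|x\|$. Second, and this is where $\ker(\Pi_0^L)$ enters, I want to bound $L_{t,q}^{\T{max}}(x)$ by $\|x\|$. Decompose $x = \sum_{m=-M,\,m\neq 0}^{M}\Pi_m^L(x)$, a \emph{finite} sum with no $m=0$ term. By Corollary \ref{c:diracinvariant}, $L_{t,q}^{\T{max}}(\Pi_m^L(x))\leq L_{t,q}^{\T{max}}(x)$ and the spectral projections commute with $\pa_t^V,\pa_q^H$; and $\Pi_m^L$ is a norm-contraction, so $\|\Pi_m^L(x)\|\leq\|x\|$. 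For each fixed $m\neq 0$ with $|m|\leq M$, Lemma \ref{lem:paV-on-spectral-subspaces} gives $\pa_t^V(\Pi_m^L(x)) = [m/2]_t\bigl(\begin{smallmatrix}\Pi_m^L(x) & 0\\ 0 & -\Pi_m^L(x)\end{smallmatrix}\bigr)$; running the argument of Lemma \ref{l:lipdomnorm} backwards does \emph{not} immediately give a bound on $L_{t,q}^{\T{max}}(\Pi_m^L(x))$, because the horizontal part $\pa_q^H(\Pi_m^L(x))$ is not controlled by the vertical data. So instead I should bound the horizontal contribution directly: by Lemma \ref{l:twilip} again, for $x\in\C A_q^m$ one has (as in the proof of Proposition \ref{p:twistdericlos}) $\pa_q^H(x)\xi = q^{m/2}D_q^H x\xi - q^{-m/2}xD_q^H\xi$ — but this still involves the unbounded $D_q^H$, so this is not the route either. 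The clean route is: on the \emph{algebraic} spectral band the formula from Lemma \ref{l:twicommu} holds, giving $\pa_q^H(x) = \bigl(\begin{smallmatrix}0 & -q^{-1/2}\pa_f(x)\\ -q^{1/2}\pa_e(x) & 0\end{smallmatrix}\bigr)$, but $x\in B_q^M\cap\T{Lip}_t$ need not be in the coordinate algebra.

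\textbf{The main obstacle, and how I'd resolve it.} The real content is therefore to show that the restriction $L_{t,q}^{\T{max}}\colon B_q^M\cap\ker(\Pi_0^L)\to[0,\infty]$ is dominated by the operator norm — i.e. that on a spectral band \emph{with the zero-mode removed}, ``Lipschitz'' data is forced to be ``small''. The mechanism must be: on $A_q^m$ with $m\neq 0$, the vertical Dirac operator has $[m/2]_t\neq 0$, so Lemma \ref{l:lipdomnorm} reads $|[m/2]_t|\cdot\|\Pi_m^L(x)\| \leq L_{t,q}^{\T{max}}(x)$. That is the \emph{wrong} direction. The correct statement we actually need is the reverse: I expect the lemma's proof to proceed by showing that \emph{once $x$ lies in a fixed band and has no zero component}, $x$ automatically lies in $\T{Dom}(\pa_{t,q})$ closure issues aside, and $\|\pa_{t,q}(x)\|$ is controlled. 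Concretely, I would argue: (i) reduce to $x=\Pi_m^L(x)$ for a single $m\neq 0$ by the finite-sum decomposition and the triangle inequality; (ii) use Lemma \ref{l:spec-subspaces-projective} to write $x = \sum_i (u^m_{i0})^*\cdot\Pi_0^L(u^m_{i0}\cdot x)$ (or the $m<0$ analogue), expressing $x$ as a finite combination of fixed matrix-coefficient elements $u^m_{i0}\in\C O(SU_q(2))\subseteq\T{Lip}_t$ times elements $b_i := \Pi_0^L(u^m_{i0}\cdot x)\in C(S_q^2)$ with $\|b_i\|\leq\|u^m_{i0}\|\cdot\|x\|$; (iii) apply the Leibniz rule (Lemma \ref{l:twilip}) to each summand $(u^m_{i0})^*\cdot b_i$, getting $L_{t,q}^{\T{max}}((u^m_{i0})^* b_i)\leq \|(u^m_{i0})^*\|_{t,q}L_{t,q}^{\T{max}}(b_i) + L_{t,q}^{\T{max}}((u^m_{i0})^*)\|b_i\|_{t,q}$; (iv) here $L_{t,q}^{\T{max}}(b_i)$ is the Podle\'s-sphere Lipschitz seminorm of an element of $C(S_q^2)$ — but there is no reason $b_i$ is Lipschitz on the Podle\'s sphere! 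So step (iii) fails in general. This tells me the lemma as stated must be using something subtler, most likely: the hypothesis $x\in B_q^M\cap\ker(\Pi_0^L)$ combined with $v\in\T{Lip}_t$, so that actually one should \emph{not} try to bound $L_{t,q}^{\T{max}}(x)$ in isolation, but rather directly estimate $L_{t,q}^{\T{max}}(v\cdot x) = \|\pa_{t,q}(v\cdot x)\|$ using the closed derivations and the fact that $v\cdot x$, while not obviously Lipschitz, has its Lipschitz seminorm controlled through the module structure $m(v)$; equivalently, one verifies directly that $\si_L(q^{1/2},v\cdot x)$ preserves $\T{Dom}(D^H_q)$ and computes the twisted commutator using that $x$ maps into a finite sum of spectral subspaces on which $D^V_t$ acts diagonally and $D^H_q$ shifts degree by $\pm 2$. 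I would therefore structure the proof around the identity (valid on the core, then extended by closedness and the separation-of-points property of $\Pi_n^L$ from Proposition \ref{p:twistdericlos}'s proof)
\[
\pa_{t,q}(v\cdot x) = \pa_{t,q}(v)\,\si_L(\cdot,x) + \si_L(\cdot^{-1},v)\,\pa_{t,q}(x),
\]
reading off that $\|\pa_{t,q}(v\cdot x)\| \leq L_{t,q}^{\T{max}}(v)\cdot\|x\|_{t,q} + \|v\|_{t,q}\cdot\sup_{1\leq|m|\leq M}|[m/2]_t|^{-1}\cdot(\text{vertical part bound})$ — and the vertical part of $\pa_{t,q}(x)$, by Remark \ref{rem:form-of-patv} and Lemma \ref{lem:paV-on-spectral-subspaces}, is $\bigl(\begin{smallmatrix}\pa^3_t(x)&0\\0&-\pa^3_t(x)\end{smallmatrix}\bigr)$ with $\pa^3_t(x)=\sum_{m}[m/2]_t\Pi_m^L(x)$, so $\|\pa^3_t(x)\|\leq\bigl(\max_{1\leq|m|\leq M}|[m/2]_t|\bigr)\cdot\sum_m\|\Pi_m^L(x)\|\leq C_M\|x\|$; while the horizontal part $\pa_q^H(x)$ is handled by noting $x\in B_q^M\cap\T{Lip}_t$ gives $\si_L(q^{1/2},x)\T{Dom}(D^H_q)\subseteq\T{Dom}(D^H_q)$ and, since $D^H_q$ and $D^V_t$ together have intersection-of-domains equal to $\T{Dom}(D_{t,q})$ (Lemma \ref{l:sumdirac}), the horizontal twisted commutator of $x$ is the difference $\pa_{t,q}(x)-\pa^V_t(x)$, whence $\|\pa_q^H(x)\|\leq L_{t,q}^{\T{max}}(x)+\|\pa^3_t(x)\|$ — circular again unless $x\in\T{Dom}(\pa_{t,q})$ is already known. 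The resolution I'd ultimately commit to: the lemma is only asserting boundedness of $m(v)$ on $B_q^M\cap\ker\Pi_0^L$ with respect to $L_{t,q}^{\T{max}}$, meaning there is a constant $C(v,M)$ with $L_{t,q}^{\T{max}}(v\cdot x)\leq C(v,M)\cdot L_{t,q}^{\T{max}}(x)$ for all such $x$ — \emph{not} in terms of $\|x\|$. Re-read: yes, ``bounded with respect to the seminorm'' means exactly this. Then everything is easy: it is immediate from Lemma \ref{l:leftrightbound} that $L_{t,q}^{\T{max}}(v\cdot x)\leq\|v\|_{t,q}L_{t,q}^{\T{max}}(x)+L_{t,q}^{\T{max}}(v)\|x\|_{t,q}$, and by Lemma \ref{l:lipdomnorm} applied to each nonzero mode, $\|\Pi_m^L(x)\|\leq|[m/2]_t|^{-1}L_{t,q}^{\T{max}}(x)$, so $\|x\|_{t,q}\leq\sum_{1\leq|m|\leq M}(t^{m/2}+q^{m/2})\|\Pi_m^L(x)\|\leq\bigl(\sum_{1\leq|m|\leq M}(t^{m/2}+q^{m/2})|[m/2]_t|^{-1}\bigr)L_{t,q}^{\T{max}}(x)$, and combining gives the desired bound with $C(v,M)=\|v\|_{t,q}+L_{t,q}^{\T{max}}(v)\cdot\sum_{1\leq|m|\leq M}(t^{m/2}+q^{m/2})|[m/2]_t|^{-1}$. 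So the \textbf{main obstacle} is purely bookkeeping: correctly interpreting ``bounded with respect to the seminorm'' and assembling the $M$-dependent constant from Lemma \ref{l:lipdomnorm}'s per-mode estimates, using crucially that $\Pi_0^L(x)=0$ so no $|[m/2]_t|^{-1}$ with $m=0$ (which is infinite) ever appears.
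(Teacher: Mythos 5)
Your final committed argument is essentially the paper's proof: the twisted Leibniz estimate of Lemma \ref{l:leftrightbound} combined with Lemma \ref{l:lipdomnorm}, which—precisely because $\Pi_0^L(x)=0$ keeps the mode $m=0$ (where $[m/2]_t=0$) out of play—controls $\|x\|$, and hence $\|x\|_{t,q}$ via Lemma \ref{l:anaband}, by an $M$-dependent multiple of $L_{t,q}^{\max}(x)$. The one inaccuracy is your per-mode estimate $\|x\|_{t,q}\leq\sum_{1\leq|m|\leq M}(t^{m/2}+q^{m/2})\|\Pi_m^L(x)\|$, which is false already for $x\in A_q^1$ with $t<1$ since the term $\|\si_L(t^{-1/2},x)\|$ forces the coefficient $t^{-|m|/2}$; replacing the exponents by $-|m|/2$, or simply invoking Lemma \ref{l:anaband} together with the bound $\|x\|\leq D_M\cd L_{t,q}^{\max}(x)$ as the paper does, repairs this and only changes the value of the constant.
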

\begin{proof}
We first remark that Lemma \ref{l:lipdomnorm} shows that there exists a constant $D_M > 0$ such that $\| x \| \leq D_M \cd L_{t,q}^{\T{max}}(x)$ for all $x \in B_q^M \cap \ker(\Pi_0^L)$. Next, it follows from Lemma \ref{l:anaband} and Lemma \ref{l:leftrightbound} that
\[
\begin{split}
L_{t,q}^{\max}(v \cd x) & \leq \| v\|_{t,q} \cd L_{t,q}^{\max}(x) + L_{t,q}^{\max}(v) \cd \| x \|_{t,q} \\
& \leq \Big( \| v\|_{t,q} + \sum_{m = -M}^M (t^{m/2} + q^{m/2}) D_M \Big) \cd L_{t,q}^{\max}(x) 
\end{split}
\]  
for all $x \in B_q^M \cap \ker(\Pi_0^L)$. This proves the present lemma.
\end{proof}

We are now in position to state and prove the main result of this section, which shows that the spectral bands are compact quantum metric spaces. Notice that it follows from Lemma \ref{l:spec-subspaces-projective} that the spectral bands are finitely generated projective modules. In fact, with a little extra effort it can be proved that they are free (but this does not help to ease the argumentation).

\begin{theorem}\label{t:bandcqms}
Let $M \in \nn_0$. The spectral band $B_q^M \su C(SU_q(2))$, the conditional expectation $\Pi_0^L \colon C(SU_q(2)) \to C(S_q^2)$ and the Lipschitz seminorm $L_{t,q}^{\T{max}} \colon C(SU_q(2)) \to [0,\infty]$ satisfy Assumption \ref{a:fingenerated} and Assumption \ref{a:fingensemi}. In particular, it holds that the restriction $L_{t,q}^{\T{max}} \colon B_q^M \to [0,\infty]$ provides $B_q^M$ with the structure of a compact quantum metric space. 
\end{theorem}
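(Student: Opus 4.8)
The plan is to verify the hypotheses of Theorem \ref{t:fingenproCQMS} for the data $A = C(SU_q(2))$, $B = C(S_q^2) = A_q^0$, $E = \Pi_0^L$, $X = B_q^M$ and the Lipschitz seminorm $L := L_{t,q}^{\T{max}}$. Once Assumption \ref{a:fingenerated} and Assumption \ref{a:fingensemi} are checked, the final conclusion (that $L_{t,q}^{\T{max}}$ makes $B_q^M$ a compact quantum metric space) is immediate from Theorem \ref{t:fingenproCQMS}.

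First I would dispose of Assumption \ref{a:fingenerated}. The conditional expectation $\Pi_0^L$ is indeed a conditional expectation onto $B_q^0 = C(S_q^2)$ by the discussion in Section \ref{ss:circle}, and it maps $B_q^M$ into itself. For the module-generation identity one uses Lemma \ref{l:spec-subspaces-projective}: for each $m$ with $-M \le m \le M$ one has an explicit finite expansion of $\Pi_m^L(x)$ in terms of the matrix coefficients $u^{|m|}_{i\,?}$ and $\Pi_0^L$, and since $x = \sum_{m=-M}^M \Pi_m^L(x)$ for $x \in B_q^M$ one assembles the finitely many pairs $(v_j, w_j)$ — namely $v_0 = w_0 = 1$ together with the pairs $\big((u^{m}_{i0})^*$-type elements, $u^{m}_{i0}$-type elements$\big)$ for $m = 1,\dots,M$ and the analogous pairs for negative $m$ — so that $\sum_j w_j\, \Pi_0^L(v_j x) = x$ for all $x \in B_q^M$. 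The condition $\Pi_0^L(v_j) = 0$ for $j \ge 1$ holds because each such $v_j$ lies in a non-trivial spectral subspace $\C A_q^{\pm m}$, $m\ge 1$, and $\Pi_0^L$ kills those. All the $v_j, w_j$ lie in $\C O(SU_q(2)) \su \T{Lip}_t(SU_q(2))$, so Assumption \ref{a:fingensemi}(4) is automatic.

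Next, Assumption \ref{a:fingensemi}: (1) the spectral projection $\Pi_0^L$ is $L_{t,q}^{\T{max}}$-bounded (in fact a contraction) by Corollary \ref{c:diracinvariant}; (2) the restriction $L_{t,q}^{\T{max}} = L_q^{0,\max}$ on $B_q^0 = C(S_q^2)$ is a Lip-norm by Corollary \ref{cor:zeroth-spec-band-is-cqms} (which invokes \cite{AgKa:PSM}); (3) the estimate $\| x - \Pi_0^L(x) \| \le C_0\, L_{t,q}^{\T{max}}(x)$ for $x \in B_q^M$ follows from Lemma \ref{l:lipdomnorm}, since $x - \Pi_0^L(x) = \sum_{0 < |m| \le M}\Pi_m^L(x)$ and each summand satisfies $\|\Pi_m^L(x)\| \le |[m/2]_t|^{-1} L_{t,q}^{\T{max}}(x)$, so one may take $C_0 = \sum_{0<|m|\le M}|[m/2]_t|^{-1}$; (5) the left-multiplication maps $m(v)\colon B_q^M \cap \ker(\Pi_0^L) \to C(SU_q(2))$ are $L_{t,q}^{\T{max}}$-bounded for $v \in \T{Lip}_t(SU_q(2)) = \T{Dom}(L_{t,q}^{\T{max}})$ precisely by Lemma \ref{l:leftbound}, which combines Lemma \ref{l:anaband}, Lemma \ref{l:leftrightbound} and Lemma \ref{l:lipdomnorm}. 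This exhausts the list.

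The main obstacle is conceptual rather than computational: one must be careful that $\T{Dom}(L_{t,q}^{\T{max}}) = \T{Lip}_t(SU_q(2))$ is a genuine $*$-algebra and that $X \cap \T{Dom}(L_{t,q}^{\T{max}}) = B_q^M \cap \T{Lip}_t(SU_q(2))$ is norm-dense in $B_q^M$ — the point being that this domain is strictly larger than the algebraic spectral band $\C B_q^M$, and the density uses the spectral projections $\Pi_m^L$ (each a norm-contraction preserving $\T{Lip}_t(SU_q(2))$ by Corollary \ref{c:diracinvariant}) applied to elements of $\T{Lip}_t(SU_q(2))$, together with $\C O(SU_q(2)) \su \T{Lip}_t(SU_q(2))$. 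The bookkeeping in Assumption \ref{a:fingenerated} — pinning down the precise generators $v_j, w_j$ and checking the reproducing identity holds on all of $B_q^M$ and not merely on $\C B_q^M$ — is where the care is needed; this is handled by continuity from $\C B_q^M$ using that $\Pi_m^L$ and left multiplication by fixed matrix coefficients are norm-continuous. With Assumptions \ref{a:fingenerated} and \ref{a:fingensemi} in hand, Theorem \ref{t:fingenproCQMS} yields that $L_{t,q}^{\T{max}}\colon B_q^M \to [0,\infty]$ is a Lip-norm, completing the proof.
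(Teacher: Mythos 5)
Your proposal is correct and follows essentially the same route as the paper: Assumption \ref{a:fingenerated} is verified via Lemma \ref{l:spec-subspacs-projective}\hspace{-1.3em}\ref{l:spec-subspaces-projective} with the matrix coefficients $u^{|m|}_{i0}$ (resp.\ $u^{|m|}_{i,|m|}$) and their adjoints as the generating pairs, and conditions (1)--(5) of Assumption \ref{a:fingensemi} are checked with exactly the references the paper uses (Corollary \ref{c:diracinvariant}, Corollary \ref{cor:zeroth-spec-band-is-cqms}, Lemma \ref{l:lipdomnorm}, membership of the generators in $\C O(SU_q(2))$, and Lemma \ref{l:leftbound}), after which Theorem \ref{t:fingenproCQMS} gives the conclusion. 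The density and continuity points you flag are already built into Lemma \ref{l:spec-subspaces-projective} and the remarks following Assumption \ref{a:fingensemi}, so no extra argument is needed there.
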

\begin{proof}
It follows from Lemma \ref{l:spec-subspaces-projective} that $B_q^M$ satisfies Assumption \ref{a:fingenerated}: indeed we may apply the elements in $C(SU_q(2))$, defined, for each $m \in \{-M,-M+1,\ldots,M\}$ and each $i \in \{0,1,\ldots,|m|\}$, by
\[
v_{im} := \cas{u^m_{i,0} & m \geq 0 \\ u^{-m}_{i,-m} & m < 0} \, \T{ and } \, \, w_{im} := v_{im}^*.
\]
Notice in this respect that $1=v_{00} = w_{00}$ and that $\Pi_0^L(v_{im}) = 0 = \Pi_0^L(w_{im})$ as soon as $(i,m) \neq (0,0)$ (cf.~\eqref{eq:sigma-L-and-R-on-matrix-units}). 
To see that conditions (1)-(5) in Assumption  \ref{a:fingensemi}  are satisfied, notice that (1) follows from Corollary \ref{c:diracinvariant}, while (2) follows from Corollary \ref{cor:zeroth-spec-band-is-cqms}. Condition (3) is a consequence of Lemma \ref{l:lipdomnorm} and condition (4) is trivially satisfied since $v_{im},  w_{im}\in \C O(SU_q(2))$ for all $m \in \{-M,-M+1,\ldots,M\}$ and $i \in \{0,1,\ldots,|m|\}$. Condition (5) is exactly the contents of Lemma \ref{l:leftbound} and Theorem \ref{t:fingenproCQMS} therefore shows that $(B_q^M, L_{t,q}^{\T{max}})$ is a compact quantum metric space.
%
\end{proof}
Knowing  that the spectral bands are compact quantum metric spaces, our next main goal will be to show that the same is true for quantum $SU(2)$. 
We wish to do so by an application of Corollary  \ref{cor:approx-cor}, but verifying that the assumptions there are indeed fulfilled turns out to be a slightly delicate matter.  One of our objectives will be to construct an ``anti-derivative'' of the twisted $*$-derivation $\pa^V_t \colon \T{Lip}_t(SU_q(2)) \to \B B(L^2(SU_q(2))^{\op 2})$. To this end we need the theory of Schur multipliers, and we  gather all the results needed within this context in the following section.

\subsection{Schur multipliers}\label{ss:schur}
Let $\| \cd \|_2 \colon \ell^2(\zz) \to [0,\infty)$ denote the usual Hilbert space norm on the Hilbert space of $\ell^2$-sequences indexed by $\zz$. The standard basis vectors in $\ell^2(\zz)$ are denoted by $e_i$, $i \in \zz$. We recall the following essential result due to Grothendieck:


\begin{prop}[Grothendieck]\label{p:groth}
Let $H$ and $K$ be Hilbert spaces and assume that they are $\zz$-graded as $H = \oplus_{i = -\infty}^\infty H_i$ and $K = \oplus_{i = -\infty}^\infty K_i$ such that each bounded operator $T\in \B B(H,K)$ is represented by a matrix $(T_{ij})_{i,j\in \zz}$. Let $\varphi\colon \zz\times \zz \to \cc$ be given and assume that there exist $a(i), b(i)\in \ell^2(\zz)$ for every $i \in \zz$ such that
\begin{enumerate}
\item $c(a):=\sup_{i\in \zz} \| a(i) \|_2 < \infty$ and $c(b):=\sup_{i\in \zz} \| b(i) \|_2 < \infty$;
\item $\varphi(i,j)= \inn{a(i), b(j)}$ for all $i,j\in \zz$.
\end{enumerate}
Then for every $T\in \B B(H,K)$ the matrix $(\varphi(i,j)T_{ij})_{i,j \in \zz}$ also defines a bounded operator from $H$ to $K$ and the map $\B M(\varphi) \colon \B B(H,K) \to \B B(H,K)$, which associates to $T$ the bounded operator with matrix $(\varphi(i,j)T_{ij})_{i,j}$, is completely bounded with cb-norm at most {$c(a)c(b)$}.
\end{prop}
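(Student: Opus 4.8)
The plan is to recognise this as the classical statement that Schur multipliers with symbols admitting a "Grothendieck factorisation" through Hilbert space are completely bounded, and to give the standard proof via amplification and the Cauchy--Schwarz inequality. Since the result as stated concerns arbitrary bounded operators $T \in \B B(H,K)$ between $\zz$-graded Hilbert spaces, I would first reduce to the case of finitely supported matrices and then pass to the general case by a weak-$*$ limiting argument; alternatively, one can work directly with the infinite matrices, being careful that the operator $\B M(\varphi)(T)$ is genuinely bounded. Concretely, I would proceed as follows.

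First I would set up notation: write $T = (T_{ij})_{i,j \in \zz}$ with $T_{ij} \in \B B(H_j, K_i)$, and note that the matrix entries satisfy $T_{ij} = P_i^K T P_j^H$ where $P_i^K, P_j^H$ are the orthogonal projections onto the graded components. The key computation is the pointwise estimate: for $\xi = (\xi_j)_j \in H$ and $\eta = (\eta_i)_i \in K$ with $\sum_j \| \xi_j\|^2 \leq 1$ and $\sum_i \|\eta_i\|^2 \leq 1$, one expands
\[
\binn{\eta, \B M(\varphi)(T) \xi} = \sum_{i,j} \varphi(i,j) \binn{\eta_i, T_{ij} \xi_j} = \sum_{i,j} \binn{a(i), b(j)} \binn{\eta_i, T_{ij}\xi_j}.
\]
Introducing the auxiliary Hilbert space $\ell^2(\zz)$, one rewrites $\binn{a(i),b(j)} \binn{\eta_i, T_{ij}\xi_j}$ as an inner product in $\ell^2(\zz) \hot K_i$ versus... — more precisely, I would factor through the amplified operator $\T{id}_{\ell^2} \hot T$ acting on $\ell^2(\zz) \hot H \to \ell^2(\zz) \hot K$, applied to the vector $\sum_j e_? \ot \dots$; the cleanest route is to define vectors $\widetilde{\xi} := \sum_j b(j) \ot \xi_j$ and $\widetilde{\eta} := \sum_i a(i) \ot \eta_i$ in $\ell^2(\zz) \hot H$ and $\ell^2(\zz) \hot K$ respectively (suitably interpreting the graded pieces), observe $\|\widetilde\xi\|^2 \leq c(b)^2$ and $\|\widetilde\eta\|^2 \leq c(a)^2$ by condition (1), and check that $\binn{\widetilde\eta, (\T{id}_{\ell^2} \hot T)\widetilde\xi} = \binn{\eta, \B M(\varphi)(T)\xi}$ by condition (2). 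Since $\| \T{id}_{\ell^2}\hot T\| = \|T\|$, Cauchy--Schwarz gives $|\binn{\eta, \B M(\varphi)(T)\xi}| \leq c(a) c(b) \|T\|$, whence $\B M(\varphi)$ is bounded with norm at most $c(a)c(b)$.

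For the complete boundedness statement I would then run exactly the same argument on matrix amplifications: for $n \in \nn$, the map $\B M(\varphi) \ot \T{id}_{\B M_n}$ on $\B B(H,K)\ot \B M_n = \B B(H^{\op n}, K^{\op n})$ is itself a Schur multiplier (on the $\zz$-graded Hilbert spaces $H^{\op n}, K^{\op n}$, with the grading inherited componentwise) whose symbol is still $\varphi$ composed with the projection $\zz \times \{1,\dots,n\} \to \zz$, and the factorisation data $a,b$ extend trivially with the same bounds $c(a), c(b)$. Hence each amplification has norm at most $c(a)c(b)$, giving $\|\B M(\varphi)\|_{\T{cb}} \leq c(a)c(b)$. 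The only genuine subtlety — and the main obstacle worth being careful about — is the rigorous justification that $\B M(\varphi)(T)$ is a well-defined bounded operator on the possibly infinite-dimensional graded components, i.e.\ that the matrix $(\varphi(i,j)T_{ij})$ really represents an operator: this is handled by the boundedness estimate above applied first to the compressions $P_{[-N,N]}^K T P_{[-N,N]}^H$ (which are finite block matrices and unambiguously define operators), obtaining a uniform bound $c(a)c(b)\|T\|$ independent of $N$, and then letting $N \to \infty$ using that these compressions converge to $\B M(\varphi)(T)$ in the strong operator topology on the dense subspace of finitely graded-supported vectors. I expect the bookkeeping with the graded tensor-product vectors $\widetilde\xi, \widetilde\eta$ to be the fiddliest part, but it is entirely routine.
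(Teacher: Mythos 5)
Your proposal is correct and is essentially the paper's own argument: the paper likewise factors $\B M(\varphi)(T)$ through the amplification $1\ot T$ on $\ell^2(\zz)\hot H$ via the bounded column maps built from the sequences $a(i)$, $b(i)$ (writing $\B M(\varphi)(T)=b^*(1\ot T)a$, which is your vector/sesquilinear-form computation phrased operator-theoretically and also settles well-definedness at once), and obtains the cb-bound by running the same argument over matrices.
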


Under the hypotheses of the theorem above, the map $\varphi$ is called a \emph{Schur multiplier}. For a more elaborate treatment of the theory of Schur multipliers the reader is referred to \cite{Pisier:Similarity}, but for the readers convenience we sketch the proof of Proposition \ref{p:groth} here.
\begin{proof}
Defining $a \colon H\to \ell^2(\zz) \hot H = \bigoplus_{i\in \zz} \ell^2(\zz)\hot H_i$ by  $a((\xi_i)_i):= (a(i) \ot \xi_i)_i$ and $b\colon K\to \ell^2(\zz)\hot K$ by $b( (\eta_i)_i) := (b(i) \ot \eta_i)_i$, one sees that $a$ and $b$ are bounded with $\|a\|\leq c(a)$ and $\|b\|\leq c(b)$. Moreover, one verifies that $\mathbb{M}(\varphi)(T)=b^*(1\ot T) a$ and hence we get $\|\mathbb{M}(\varphi)\|\leq c(a)c(b)$. The same argument works over matrices, so we indeed obtain that $\|\mathbb{M}(\varphi)\|_{\T{cb}}\leq c(a)c(b)$.
\end{proof}
%
For each $t,q \in (0,1]$, we wish to construct an anti-derivative of $\pa^V_t \colon \T{Lip}_t(SU_q(2)) \to \B B\big( L^2(SU_q(2))^{\op 2} \big)$, which will be given in terms of a Schur multiplier $\varphi_t \colon \zz \ti \zz \to \cc$ defined by the formula
 \begin{equation}\label{eq:varphi}
\varphi_t(i,j) := \begin{cases} \frac{1}{[ (i-j)/2]_t} & i \neq j\\
0 & i=j
\end{cases} .
\end{equation}
In order to show that $\varphi_t \colon \zz \ti \zz \to \cc$ is indeed a Schur multiplier we start out by recording a well known lemma on  $q$-numbers (including the proof for lack of a good reference):

\begin{lemma}\label{l:qnumest}
It holds that $[n/2]_q \geq \frac{n}{q^{1/2} + q^{-1/2}}$ for all $q \in (0,1]$ and $n \in \nn$.
\end{lemma}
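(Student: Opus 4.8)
The claim is the elementary estimate $[n/2]_q \geq \tfrac{n}{q^{1/2}+q^{-1/2}}$ for $q \in (0,1]$ and $n \in \nn$. The plan is to reduce to a statement about the function $q \mapsto [n/2]_q$ and then prove the inequality by a direct computation, treating the cases $q = 1$ and $q \in (0,1)$ separately.

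The case $q = 1$ is immediate, since then $[n/2]_1 = n/2$ while $\tfrac{n}{q^{1/2}+q^{-1/2}} = \tfrac{n}{2}$, so the two sides agree. For $q \in (0,1)$ I would write $p := q^{1/2} \in (0,1)$, so that
\[
[n/2]_q = \frac{q^{n/2} - q^{-n/2}}{q^{1/2} - q^{-1/2}} = \frac{p^{n} - p^{-n}}{p - p^{-1}} = \frac{p^{-n} - p^{n}}{p^{-1} - p} = \sum_{j=0}^{n-1} p^{\,2j - (n-1)} = \sum_{j=0}^{n-1} p^{\,n-1-2j}.
\]
The summands pair up symmetrically: grouping the $j$-th and $(n-1-j)$-th terms gives $p^{\,n-1-2j} + p^{\,-(n-1-2j)}$, which is of the form $x + x^{-1} \geq 2$ for $x > 0$ by AM--GM. (If $n$ is odd, the middle term $j = (n-1)/2$ equals $p^0 = 1$, which one should think of as half of $1 + 1 = 2$.) Hence the average of the two extreme-paired terms is at least... more precisely, $[n/2]_q = \tfrac12\sum_{j=0}^{n-1}\bigl(p^{\,n-1-2j} + p^{\,-(n-1-2j)}\bigr) \geq \tfrac12 \cdot 2n = n$ — wait, this would give the cruder bound $[n/2]_q \geq n/2$ only after dividing, so I should instead pair each term $p^{\,n-1-2j}$ only with its partner and compare directly with $\tfrac{2}{p + p^{-1}}$ per pair. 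The cleanest route: show each symmetric pair satisfies $p^{\,m} + p^{\,-m} \geq p + p^{-1}$ whenever $|m| \geq 1$ is an integer of the same parity, which follows since $t \mapsto p^t + p^{-t}$ is increasing in $|t|$ for $p \in (0,1)$; this handles the extreme terms, and the remaining (smaller $|m|$) pairs are each $\geq 2 \geq$ nothing useful.

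On reflection, the slickest argument is probably: $(q^{1/2}+q^{-1/2})[n/2]_q = (q^{1/2}+q^{-1/2})\sum_{j=0}^{n-1} q^{\frac{n-1}{2}-j} = \sum_{j=0}^{n-1}\bigl(q^{\frac{n}{2}-j} + q^{\frac{n-2}{2}-j}\bigr) = \sum_{j=0}^{n} c_j\, q^{\frac{n}{2}-j}$ where each coefficient $c_j \in \{1,2\}$ and $\sum c_j = 2n$; then telescoping/symmetry of the exponents about $0$ and convexity of $q \mapsto q^{x}$ (or just AM--GM on symmetric pairs $q^{a} + q^{-a} \geq 2$) gives $\sum_j c_j q^{\frac{n}{2}-j} \geq \sum_j c_j = 2n$, which rearranges to the desired inequality after dividing by $q^{1/2}+q^{-1/2}$. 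Wait — that gives $[n/2]_q \geq \tfrac{2n}{q^{1/2}+q^{-1/2}}$, twice too strong, so the coefficient bookkeeping needs care: the exponents $\tfrac{n}{2}-j$ for $j=0,\dots,n$ are symmetric about $0$ only if one is careful, and in fact $q^a + q^{-a} \ge 2$ applied to the symmetric pairs of the single sum $\sum_{j=0}^{n-1} q^{\frac{n-1}{2}-j}$ directly yields $2[n/2]_q \ge n$, i.e. the weak bound; to get the sharp constant one must instead keep the factor $q^{1/2}+q^{-1/2}$ and compare term by term. I expect this bookkeeping — getting exactly the constant $q^{1/2}+q^{-1/2}$ rather than $2$ or $4$ — to be the only subtle point; it should resolve by noting $q^{1/2}+q^{-1/2} \geq 2$ and pairing the sum $\sum_{j} q^{\frac{n-1}{2}-j}$ so that each pair contributes $q^{m}+q^{-m} \geq q^{1/2}+q^{-1/2}$ when $m \geq 1/2$, while a possible central term contributes $1 \geq \tfrac{1}{2}(q^{1/2}+q^{-1/2})^{-1}\cdot$... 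I would finalize by choosing whichever of these pairings cleanly produces $n$ copies of $(q^{1/2}+q^{-1/2})^{-1}$, most likely the monotonicity statement "$q^{1/2-j}+q^{j-1/2} \ge q^{1/2}+q^{-1/2}$ for integer $j \ge 1$" applied after symmetrizing.
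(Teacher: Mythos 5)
There is a genuine error at the very first step, and it is exactly the point your later hand-wringing about constants is circling around. With the paper's definition $[a]_q = \frac{q^a - q^{-a}}{q - q^{-1}}$, your opening identity
\[
[n/2]_q = \frac{q^{n/2} - q^{-n/2}}{q^{1/2} - q^{-1/2}}
\]
is false: the denominator must be $q - q^{-1} = (q^{1/2}-q^{-1/2})(q^{1/2}+q^{-1/2})$, so you have silently dropped a factor of $q^{1/2}+q^{-1/2}$. A quick sanity check: for $n=1$ one has $[1/2]_q = \frac{1}{q^{1/2}+q^{-1/2}} \leq \frac12$, whereas your identity would give $[1/2]_q = 1$. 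Consequently the quantity you expand as $\sum_{j=0}^{n-1} p^{\,n-1-2j}$ with $p = q^{1/2}$ is not $[n/2]_q$ but $[n]_{q^{1/2}}$, and your pairing argument (each symmetric pair $p^m + p^{-m} \geq 2$) proves $[n]_{q^{1/2}} \geq n$ — a true and useful statement, but not the lemma. This also explains why your bookkeeping kept threatening to produce bounds that are "twice too strong" or outright false ($[n/2]_q \geq n$ fails for $n=1$): the discrepancy is the missing $q^{1/2}+q^{-1/2}$, and no choice of pairing scheme can recover it. As written, the proposal never settles on a final argument, so it is incomplete as well as based on a wrong identity.

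The repair is short and is precisely the paper's proof: from the correct identity
\[
[n/2]_q = \frac{q^{n/2}-q^{-n/2}}{(q^{1/2}-q^{-1/2})(q^{1/2}+q^{-1/2})} = \frac{[n]_{q^{1/2}}}{q^{1/2}+q^{-1/2}},
\]
the lemma follows at once from $[n]_p \geq n$ for $p \in (0,1]$, which your AM--GM pairing of the symmetric sum $\sum_{j=0}^{n-1} p^{\,n-1-2j}$ does establish (the paper proves the same auxiliary inequality by grouping terms as $p^{-m}+p^{m}-2 \geq 0$, splitting into even and odd $n$; your symmetrized pairing is an equivalent and slightly cleaner way to say it). So you have both ingredients in hand; the gap is that you attributed the expansion to the wrong $q$-number and then tried to conjure the constant $q^{1/2}+q^{-1/2}$ out of the pairing instead of out of the identity where it actually lives.
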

\begin{proof}
Let $n \in \nn$ be given.  The inequality clearly holds for $q = 1$, so assume that $q \in (0,1)$. We first notice that $[n]_q \geq n$. Indeed, for $n = 2k$ even this inequality follows since
\[
[2k]_q - 2k = (q^{-2k + 1} + q^{2k -1} - 2) + (q^{-2k + 3} + q^{2k-3} - 2) \plp (q^{-1} + q - 2) \geq 0 
\]
and for $n = 2k+1$ odd we obtain the inequality since
\[
[2k+1]_q - (2k+1) = (q^{-2k} + q^{2k} - 2) + (q^{-2k + 2} + q^{2k - 2} - 2) \plp (q^{-2} + q^2 - 2) \geq 0 .
\]
We then obtain that
\[
[n/2]_q = \frac{q^{n/2} - q^{-n/2}}{(q^{1/2} - q^{-1/2})(q^{1/2} + q^{-1/2})} 
= \frac{ [n]_{q^{1/2}} }{q^{1/2} + q^{-1/2}} \geq \frac{n}{q^{1/2} + q^{-1/2}} . \qedhere
\] 
\end{proof}

\begin{lemma}\label{l:quaintest}
Let $t \in (0,1]$. The function $\varphi_t \colon \zz \ti \zz \to \cc$ is a Schur multiplier and we have the estimate
\[
\| \B M(\varphi_t) \|_{\T{cb}} \leq \frac{ \pi \cd (t^{1/2} + t^{-1/2} ) }{\sqrt{3}} 
\]
on the cb-norm of the associated completely bounded operator.
\end{lemma}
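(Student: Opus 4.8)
The strategy is to write the function $\varphi_t$ as an inner product of uniformly $\ell^2$-bounded vector fields, so that Proposition \ref{p:groth} applies directly. The key observation is that $\varphi_t(i,j)$ depends only on the difference $i-j$; setting $g_t(n):=1/[n/2]_t$ for $n\in\zz\sem\{0\}$ and $g_t(0):=0$, we have $\varphi_t(i,j)=g_t(i-j)$. First I would record that $g_t$ is an odd, absolutely summable function on $\zz$: by Lemma \ref{l:qnumest} we have $|[n/2]_t|\geq |n|/(t^{1/2}+t^{-1/2})$ for all $n\neq 0$, hence $|g_t(n)|\leq (t^{1/2}+t^{-1/2})/|n|$, and more usefully $\sum_{n\in\zz}|g_t(n)|^2\leq (t^{1/2}+t^{-1/2})^2\cdot 2\sum_{n\geq 1}n^{-2}=(t^{1/2}+t^{-1/2})^2\cdot\pi^2/3$.

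Next, the classical trick for turning a convolution-type (Toeplitz) symbol into a Schur multiplier of small norm: since $g_t\in\ell^1(\zz)\su\ell^2(\zz)$, its (inverse) Fourier transform is a function $h_t\in L^2(S^1)$ with $\widehat{h_t}(n)=g_t(n)$, so that $g_t(i-j)=\inn{a(i),b(j)}$ where I take $a(i):=e^{-i\cdot(\ )}h_t$ and $b(j):=e^{-j\cdot(\ )}\,\cc$, i.e. $a(i),b(j)\in L^2(S^1)$ are the obvious modulations with $\|a(i)\|_2=\|h_t\|_2=\|g_t\|_{\ell^2}$ and $\|b(j)\|_2=1$. (Concretely, $\inn{a(i),b(j)}=\frac{1}{2\pi}\int_0^{2\pi}h_t(\theta)e^{i(i-j)\theta}\,d\theta=\widehat{h_t}(j-i)$ up to a conjugation/sign convention, which one matches to $g_t$ using oddness of $g_t$.) This gives $c(a)=\|g_t\|_{\ell^2}$ and $c(b)=1$, uniformly in $i,j$, so conditions (1) and (2) of Proposition \ref{p:groth} hold. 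Applying the proposition yields that $\varphi_t$ is a Schur multiplier with
\[
\|\B M(\varphi_t)\|_{\T{cb}}\leq c(a)\,c(b)=\|g_t\|_{\ell^2}\leq (t^{1/2}+t^{-1/2})\cdot\frac{\pi}{\sqrt{3}},
\]
which is exactly the claimed estimate.

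The main obstacle is essentially bookkeeping rather than a genuine difficulty: one has to be careful that Proposition \ref{p:groth} as stated involves $\ell^2(\zz)$-valued data $a(i),b(j)$, whereas the natural construction above lives in $L^2(S^1)$; but $L^2(S^1)\cong\ell^2(\zz)$ via the Fourier basis, so after fixing an isometric identification one may simply take $a(i):=(g_t(n-i))_{n\in\zz}\cdot$ (a suitable normalising reindex) — more precisely, it is cleanest to factor $g_t=g_t^{1/2}\cdot g_t^{1/2}$ is \emph{not} available since $g_t$ changes sign, so instead one should directly use the Fourier description or, alternatively, write $g_t(i-j)=\inn{\big(g_t(i-n)\big)_n,\big(\de_{j,n}\big)_n}$ which manifestly has $c(a)^2=\sum_n|g_t(i-n)|^2=\|g_t\|_{\ell^2}^2$ and $c(b)=1$. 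This last formula avoids Fourier analysis entirely and makes the verification of (1)–(2) immediate. The only remaining point is to confirm $\sum_{n\geq 1}n^{-2}=\pi^2/6$ so that $\|g_t\|_{\ell^2}^2\leq (t^{1/2}+t^{-1/2})^2\pi^2/3$, giving the stated bound after taking square roots.
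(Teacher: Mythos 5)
Your proposal is correct and, in its final form, coincides with the paper's own argument: the decomposition $\varphi_t(i,j)=\inn{(g_t(i-n))_n,(\de_{j,n})_n}$ is exactly the paper's choice $a(i)=\sum_k\varphi_t(i,k)e_k$, $b(j)=e_j$, combined with Lemma \ref{l:qnumest} and $\sum_{n\geq 1}n^{-2}=\pi^2/6$ to get $c(a)\leq\pi(t^{1/2}+t^{-1/2})/\sqrt{3}$, $c(b)=1$. The Fourier/modulation detour is superfluous but harmless, since you correctly fall back on the direct formula.
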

\begin{proof}
In order to apply Proposition \ref{p:groth}, we define the sequences $a(i) := \sum_{k = -\infty}^\infty \varphi_t(i,k) \cd e_k$ and $b(i) := e_i$ for all $i \in \zz$ and note that $\varphi_t(i,j)=\inn{a(i), b(j)}$. For each $i \in \zz$, we then apply Lemma \ref{l:qnumest} to obtain the estimate
\[
\| a(i) \|_2^2 = \| a(0) \|_2^2 = 2 \sum_{k = 1}^\infty \frac{1}{[ k/2]_t^2} 
\leq 2 \sum_{k = 1}^\infty \frac{(t^{1/2} + t^{-1/2})^2}{k^2} = \frac{\pi^2 (t^{1/2} + t^{-1/2})^2}{3} 
\]
on the Hilbert space norm. Since we moreover have that $\| b(i) \|_2^2 =1$ for all $i\in \zz$, the relevant estimate on the cb-norm now follows from Proposition \ref{p:groth}:
\[
\| \B M(\varphi_t) \|_{\T{cb}} \leq \sup_{i \in \zz}\| a(i) \|_2 \leq \frac{\pi \cd (t^{1/2} + t^{-1/2})}{\sqrt{3}} . \qedhere
\]
\end{proof}

We shall also need a systematic method for approximating elements in $C(SU_q(2))$ by elements in the spectral bands $B_q^M$, $M \in \nn_0$. This approximation will also take place by means of Schur multipliers. For each $M \in \nn_0$, we define the function $\ga_M \colon \zz \ti \zz \to \cc$ by the formula
\begin{equation}\label{eq:gamma}
\gamma_M(i,j) := \begin{cases} \frac{M+1-|i-j|}{M+1}  & |i - j|\leq M  \\
0 & |i-j| > M 
\end{cases} .
\end{equation}

\begin{lemma}\label{l:compcont}
For each $M \in \nn_0$, the function $\ga_M \colon \zz \ti \zz \to \cc$ is a Schur multiplier and we have the estimate 
\[
\| \B M(\gamma_M)  \|_{\T{cb}} \leq 1
\]
on the cb-norm of the associated completely bounded operator.
\end{lemma}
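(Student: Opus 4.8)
The plan is to realise $\ga_M$ as a Gram-type function and apply Grothendieck's criterion (Proposition \ref{p:groth}). Concretely, for each $i \in \zz$ I would set
\[
a(i) := \frac{1}{\sqrt{M+1}} \sum_{k = 0}^{M} e_{i + k} \in \ell^2(\zz) \q \T{and} \q b(i) := a(i),
\]
where $(e_j)_{j \in \zz}$ is the standard basis of $\ell^2(\zz)$. Each $a(i)$ is a normalised sum of $M+1$ orthonormal vectors, so $\| a(i) \|_2 = 1$ for every $i \in \zz$, whence $c(a) = c(b) = 1$ in the notation of Proposition \ref{p:groth}. This verifies hypothesis $(1)$ of that proposition with the optimal constant.

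Next I would check hypothesis $(2)$, i.e.\ that $\ga_M(i,j) = \inn{a(i),b(j)}$ for all $i,j \in \zz$. Expanding the inner product and using orthonormality of the $e_j$'s,
\[
\inn{a(i),b(j)} = \frac{1}{M+1} \cd \#\big\{ (k,l) \in \{0,1,\ldots,M\}^2 \mid i + k = j + l \big\}
= \frac{1}{M+1} \cd \#\big\{ (k,l) \in \{0,1,\ldots,M\}^2 \mid k - l = j - i \big\}.
\]
An elementary lattice-point count shows that the number of pairs $(k,l) \in \{0,\ldots,M\}^2$ with $k - l$ equal to a fixed integer $d$ is $M + 1 - |d|$ when $|d| \leq M$ and $0$ otherwise. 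Taking $d = j - i$ gives exactly $\inn{a(i),b(j)} = \ga_M(i,j)$, matching the definition in \eqref{eq:gamma}.

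With both hypotheses of Proposition \ref{p:groth} in place, the conclusion is immediate: $\ga_M$ is a Schur multiplier and
\[
\| \B M(\ga_M) \|_{\T{cb}} \leq c(a) \cd c(b) = 1,
\]
as claimed. There is essentially no obstacle here; the only point requiring a line of justification is the lattice-point count, which is routine (it can also be seen by noting that $a(0)$ is, up to normalisation, the indicator of a window of length $M+1$, so $\inn{a(i),a(j)}$ is the autocorrelation of that window — the discrete Fejér/triangle function). I would include the short counting argument for completeness.
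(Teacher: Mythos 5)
Your proposal is correct and follows essentially the same route as the paper: it uses the very same vectors $a(i)=b(i)=\tfrac{1}{\sqrt{M+1}}\sum_{k=i}^{i+M}e_k$ (up to reindexing) and applies Grothendieck's criterion from Proposition \ref{p:groth}, with the only cosmetic difference being that you verify the identity $\inn{a(i),b(j)}=\ga_M(i,j)$ by a lattice-point count while the paper does a direct case analysis on $i\leq j$ versus $j\leq i$. No changes are needed.
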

\begin{proof}
We are going to apply Proposition \ref{p:groth}.  Let $M \in \nn_0$ be given, and define, for each $i \in \zz$, the sequences
\[
a(i) = b(i) := \frac{1}{\sqrt{M+1}} \cd \sum_{k = i}^{i + M} e_k .
\]
We record that $\| a(i) \|_2^2 = 1 = \| b(i) \|_2^2$. Let now $i,j \in \zz$ be given, and assume first that $i \leq j$. We then compute that 
\[
\begin{split}
\inn{a(i),b(j)} & = \frac{1}{M+1} \cd \Big \langle \sum_{k = i}^{i + M} e_k, \sum_{l = j}^{j+M} e_l \Big\rangle
= \begin{cases}
\frac{1}{M+1} \sum_{k = j}^{i + M} 1 & j \leq i + M \\
0 & j > i + M
\end{cases} 
\\
& = 
\begin{cases}
\frac{M + i - j + 1}{M+1} & j - i \leq M \\
0 & j - i > M
\end{cases}
 = \ga_M(i,j) .
\end{split}
\]
For $j \leq i$ we get from the above identities that $\inn{a(i),b(j)} = \inn{a(j),b(i)} = \ga_M(j,i) = \ga_M(i,j)$. The proof is therefore complete.
\end{proof}

For each $\de \in (0,1)$ we define the null-sequence of positive real numbers $(\ep(\de,M))_{M = 0}^\infty$ by putting
\begin{equation}\label{eq:epnull}
\ep(\de,M) := 2^{1/2} \cd (\de^{1/2} + \de^{-1/2}) \cd \Big(  \frac{M}{(M+1)^2} + \sum_{k = M+1}^\infty \frac{1}{k^2} \Big)^{1/2} \q \T{for all } M \in \nn_0 .
\end{equation}
In particular, we record that $\ep(\de,0) = \frac{\pi \cd (\de^{1/2} + \de^{-1/2})}{\sqrt{3}}$.

\begin{lemma}\label{l:emtail}
Let $\de \in (0,1)$. It holds that
\[
\big \| \B M(\varphi_t) (1- \B M(\ga_M)) \big\|_{\T{cb}} \leq \ep(\de,M)
\]
for all $M \in \nn_0$ and all $t \in [\de,1]$.
\end{lemma}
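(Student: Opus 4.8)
The plan is to recognise $\B M(\varphi_t)\big( 1 - \B M(\ga_M) \big)$ as a single Schur multiplier and then apply Grothendieck's criterion, Proposition \ref{p:groth}, just as in the proofs of Lemma \ref{l:quaintest} and Lemma \ref{l:compcont}. Since composing the completely bounded maps attached to two Schur multipliers produces the completely bounded map attached to their pointwise product, we have
\[
\B M(\varphi_t)\big( 1 - \B M(\ga_M) \big) = \B M(\varphi_t) - \B M(\varphi_t \ga_M) = \B M\big( \psi \big) , \qquad \psi := \varphi_t \cd (1 - \ga_M) ,
\]
so it suffices to bound $\| \B M(\psi) \|_{\T{cb}}$. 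Writing out the definitions \eqref{eq:varphi} and \eqref{eq:gamma}, the symbol $\psi \colon \zz \ti \zz \to \cc$ satisfies $\psi(i,i) = 0$ and, for $i \neq j$ with $d := |i - j|$,
\[
|\psi(i,j)| = \begin{cases} \dfrac{d}{(M+1)\,[d/2]_t} & d \leq M , \\[2mm] \dfrac{1}{[d/2]_t} & d > M , \end{cases}
\]
using that $[d/2]_t > 0$ for $d \geq 1$.

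Next I would run the Grothendieck argument: put $a(i) := \sum_{k \in \zz} \psi(i,k) \cd e_k$ and $b(i) := e_i$, so that $\psi(i,j) = \inn{a(i),b(j)}$. Since $|\psi(i,j)|$ depends only on $|i - j|$, the norm $\| a(i) \|_2$ is independent of $i$, and splitting the sum over $k \neq 0$ according to whether $|i-j| \leq M$ or $|i-j| > M$ gives
\[
\| a(0) \|_2^2 = 2 \sum_{d = 1}^M \frac{d^2}{(M+1)^2\,[d/2]_t^2} + 2 \sum_{d = M+1}^\infty \frac{1}{[d/2]_t^2} .
\]
Invoking Lemma \ref{l:qnumest} in the form $[d/2]_t^{-2} \leq (t^{1/2} + t^{-1/2})^2 / d^2$ and using $\sum_{d=1}^M d^2/d^2 = M$, the right-hand side is at most
\[
2 (t^{1/2} + t^{-1/2})^2 \Big( \frac{M}{(M+1)^2} + \sum_{k = M+1}^\infty \frac{1}{k^2} \Big) = \ep(t,M)^2 .
\]

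Finally, I would note that $g(s) := s^{1/2} + s^{-1/2}$ has $g'(s) = \tfrac12 s^{-3/2}(s - 1) \leq 0$ on $(0,1]$, hence is non-increasing there, so $t^{1/2} + t^{-1/2} \leq \de^{1/2} + \de^{-1/2}$ for every $t \in [\de,1]$ and therefore $\| a(0) \|_2 \leq \ep(t,M) \leq \ep(\de,M)$. Since $\| b(i) \|_2 = 1$ for all $i$, Proposition \ref{p:groth} yields $\| \B M(\psi) \|_{\T{cb}} \leq \sup_i \| a(i) \|_2 \cd \sup_i \| b(i) \|_2 \leq \ep(\de,M)$, which is the assertion. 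There is no serious obstacle here; the only points needing a little care are the identification of the composed map with the Schur multiplier of the pointwise product $\psi$ (so that Proposition \ref{p:groth} applies with $\psi$ itself), the bookkeeping of the two ranges $d \leq M$ and $d > M$ together with the telescoping $\sum_{d=1}^M 1 = M$, and the monotonicity of $s \mapsto s^{1/2} + s^{-1/2}$ used to pass from the estimate in $t$ to the uniform estimate in $\de$.
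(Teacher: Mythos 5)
Your proposal is correct and follows essentially the same route as the paper: the paper also forms the pointwise product $\rho_{t,M} = \varphi_t\cd(1-\ga_M)$, applies Proposition \ref{p:groth} with $a(i) := \sum_k \rho_{t,M}(i,k)\, e_k$ and $b(j) := e_j$, and bounds $\|a(0)\|_2^2$ via Lemma \ref{l:qnumest} and the monotonicity of $s \mapsto s^{1/2}+s^{-1/2}$ on $[\de,1]$. The only cosmetic difference is that you spell out the identification of the composed map with $\B M(\psi)$ and the monotonicity step, which the paper leaves implicit.
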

\begin{proof}
Let $M \in \nn_0$ and $t \in [\de,1]$ be given. We are going to apply Proposition \ref{p:groth} to the function $\rho_{t,M} \colon \zz \ti \zz \to \cc$ given by the formula
\[
\rho_{t,M}(i,j) := \varphi_t(i,j) \cd \big(1 - \ga_M(i,j) \big)
 = \begin{cases}
\frac{|i - j|}{(M + 1) \cd [(i-j)/2]_t} & 0 < |i - j| \leq M \\
0 & |i - j | = 0 \\
\frac{1}{[(i-j)/2]_t} & |i - j | > M
\end{cases} .
\]
For each $i,j \in \zz$ we define the sequences
\[
a(i) := \sum_{k = -\infty}^\infty \rho_{t,M}(i,k) \cd e_k \q \T{and} \q b(j) = e_j .
\]
Applying Lemma \ref{l:qnumest}, we may estimate the Hilbert space norm of $a(i)$ as follows:
\[
\begin{split}
\| a(i) \|_2^2 
& = \| a(0) \|_2^2 = \sum_{k = -\infty}^\infty | \rho_{t,M}(0,k) |^2 
= 2 \cd \sum_{k =  M + 1}^\infty  \frac{1}{[k/2]_t^2} 
+ { \frac{2}{(M+1)^2}} \cd \sum_{k = 1}^{ M} \frac{k^2}{[k/2]_t^2} \\
& \leq 2 \left( t^{1/2} + t^{-1/2}\right)^2 \cd \Big( \sum_{k = { M+1}}^\infty \frac{1}{k^2}\Big)
+ { \frac{2M}{(M+1)^2}} \cd \left( t^{1/2} + t^{-1/2}\right)^2 \\
& \leq 2 \left(\de^{1/2} + \de^{-1/2}\right)^2 \cd \Big( \frac{M}{(M+1)^2} + \sum_{k = { M+1}}^\infty \frac{1}{k^2} \Big) = \ep(\de,M)^2 .
\end{split}
\]
This shows that $\rho_{t,M}$ is a Schur multiplier satisfying the estimate $\| \B M( \rho_{t,M}) \|_{\T{cb}} \leq \ep(\de,M)$ on the cb-norm of the associated completely bounded operator. The result of the present lemma now follows by noting that $\B M(\rho_{t,M}) = \B M(\varphi_t) \cd (1 - \B M(\ga_M) )$ by construction. 
\end{proof}

We end this subsection by re-introducing spectral projections in the context of Schur multipliers. For each $n \in \zz$ we define the Schur multiplier 
\begin{equation}\label{eq:delta}
\de_n \colon \zz\times \zz \to \cc \q \de_n(i,j):=\de_{n,i-j}.
\end{equation}
The associated operator $\B M(\de_n)$ is then completely contractive and can be interpreted in terms of spectral projections.  To explain this, suppose that $H = \op_{m = -\infty}^\infty H_m$ is a $\zz$-graded Hilbert space and define the unitary operator $V_r \colon H \to H$ by
\[
 \q V_r\Big( \sum_{m = -\infty}^\infty e_m \cd \xi_m \Big) := \sum_{m = -\infty}^\infty e_m \cd e^{irm} \xi_m
\]
for every $r \in \rr$. This yields a $2\pi$-periodic strongly continuous one-parameter unitary group $(V_r)_{r \in \rr}$ and it holds that
\[
\inn{\xi,\B M(\de_n)(T) \eta} = \frac{1}{2\pi}\int_0^{2\pi} \inn{\xi, V_r T V_{-r} \eta} e^{-irn} dr
\]
for all $T \in \B B(H)$ and $\xi,\eta \in H$. Thus, $\B M(\de_n)$ is the $n^{\T{th}}$ spectral projection associated with our $2\pi$-periodic strongly continuous unitary group $(V_r)_{r \in \rr}$; cf. \eqref{eq:specprojWOT}. 


\subsection{Projecting onto the spectral bands}


Throughout this section we again fix the parameters $t,q \in (0,1]$. We are going to apply the theory of Schur multipliers to the $\zz$-grading 
\begin{equation}\label{eq:decompspin}
L^2(SU_q(2))^{\op 2} = \bigoplus_{m = -\infty}^\infty (H_q^{m+1}\oplus H_q^{m-1}) .
\end{equation}
 This $\zz$-grading is simply the spectral subspace decomposition associated with the circle action on $L^2(SU_q(2))^{\op 2}$ induced by the $2\pi$-periodic strongly continuous one-parameter unitary group $(U_r^L)_{r \in \rr}$ introduced in \eqref{eq:U_t-unitary-grp}.  
For $M \in \nn_0$ and $n \in \zz$, we consider the Schur multipliers $\gamma_M, \de_n \colon \zz \times \zz \to \cc$ introduced  in \eqref{eq:gamma} and \eqref{eq:delta} and  apply the following notation for the completely bounded  operators they induce:
\[
 E_M^L := \B M(\ga_M)  \, \, , \, \, \, \Pi^L_n :=  \B M(\de_n)  \colon \B B\left( L^2(SU_q(2))^{\op 2}\right) \to 
\B B\left( L^2(SU_q(2))^{\op 2}\right) .
\]
 This notation is compatible with our already existing notation for spectral projections by the remarks at the end of  Section \ref{ss:schur}.
We emphasise that both $\Pi^L_n$ and $E_M^L$ induce operators on $C(SU_q(2))$ via the relations
\[
\Pi^L_n( \pi(x)) = \pi( \Pi^L_n(x)) \q \T{and} \q E_M^L(\pi(x)) = \pi( E_M^L(x))
\]
for all $x \in C(SU_q(2))$. Notice in this respect that 
\[
E_M^L = \sum_{m = - M}^{M} \frac{M + 1 - |m|}{M+1} \Pi_m^L .
\]

The aim of this subsection is to prove that $E_M^L$ is an $L_{t,q}^{\max}$-contraction onto the spectral $M$-band $B^M_q$ and that $E_M^L$ approximates the identity map on the $L_{t,q}^{\max}$-unit ball better and better as $M$ grows, \textcolor{black}{thus setting  the stage for an application of Corollary \ref{cor:approx-cor}. }

We are also interested in the completely bounded operator $\int_t^V \colon \B B( L^2(SU_q(2))^{\op 2}) \to \B B( L^2(SU_q(2))^{\op 2})$ defined by the formula
\begin{equation}\label{eq:quaintdef}
\int_t^V T := \B M(\varphi_t)( \ga \cd T ) ,
\end{equation}
where we recall that $\gamma:=\SmallMatrix{1& 0 \\0&-1}  \in \B B( L^2(SU_q(2))^{\op 2})$ and that $\varphi_t \colon \zz \ti \zz \to \cc$ was introduced in \eqref{eq:varphi}. Remark that the Schur multiplier $\B M(\varphi_t)$ is also defined relative to the spectral subspace decomposition given in \eqref{eq:decompspin}. We record that $\B M(\varphi_t)$ induces a bounded operator on $C(SU_q(2))$:  indeed, for each $m \in \zz$ and $x \in A^m_q$ we have the formula
\[
\B M(\varphi_t)(x) = \cas{ \frac{1}{[m/2]_t} \cd x & m \neq 0 \\ 0 & m = 0}, 
\] 
from which it follows that $\B M(\varphi_t)$ preserves $\C O (SU_q(2))$ and hence also $C(SU_q(2))$ by boundedness. We start out by proving that $\int_t^V$ serves as an anti-derivative with respect to $\pa_t^V$ providing a noncommutative analogue of the fundamental theorem of calculus.

\begin{prop}\label{prop:antiderivative}
For each $x\in \T{Lip}_t(SU_q(2))$, it holds that $\displaystyle\int_t^V \pa_t^V(x)=(1 - \Pi_0^L)(x)$.
\end{prop}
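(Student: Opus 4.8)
The strategy is to verify the claimed identity on the dense linear span of the spectral subspaces and then extend by a closability/continuity argument. Since both sides of the equation are linear in $x$, and since $\Pi_0^L$ and $\int_t^V$ are bounded (the former as a spectral projection, the latter by Lemma \ref{l:quaintest}), while $\pa_t^V$ is the twisted $*$-derivation from Section \ref{s:twisspectrip}, it suffices to prove the formula for $x$ in a single algebraic spectral subspace $A_q^m \subseteq \T{Lip}_t^V(SU_q(2))$ and then bootstrap to all of $\T{Lip}_t(SU_q(2))$ using the spectral projections $\Pi_n^L$ and the fact that they separate points.

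First I would fix $m \in \zz$ and $x \in A_q^m$. By Lemma \ref{lem:paV-on-spectral-subspaces}, we know $x \in \T{Lip}_t^V(SU_q(2))$ with
\[
\pa_t^V(x) = \pma{[m/2]_t\, x & 0 \\ 0 & -[m/2]_t\, x} = [m/2]_t \cd \ga \cd \pi(x),
\]
where $\ga = \SmallMatrix{1 & 0 \\ 0 & -1}$. Now I apply the definition \eqref{eq:quaintdef} of $\int_t^V$. The key point is that $x \in A_q^m$ sits in the $m$-th spectral subspace for the circle action on $L^2(SU_q(2))^{\op 2}$ implemented by $(U_r^L)_{r\in\rr}$, since left multiplication by $x$ shifts the grading \eqref{eq:decompspin} by $m$; concretely, viewing $\pi(x)$ as a matrix $(T_{ij})$ relative to that $\zz$-grading, only the entries with $i - j = m$ are non-zero. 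Thus $\ga \cd \ga \cd \pi(x)$ also has this support, and applying the Schur multiplier $\B M(\varphi_t)$ multiplies each such entry by $\varphi_t(i,j) = \varphi_t(i, i-m)$. When $m \neq 0$ this scalar is $1/[m/2]_t$, and when $m = 0$ it is $0$. Hence
\[
\int_t^V \pa_t^V(x) = \B M(\varphi_t)\big( \ga \cd [m/2]_t \cd \ga \cd \pi(x) \big) = [m/2]_t \cd \B M(\varphi_t)(\pi(x)) = \cas{ \pi(x) & m \neq 0 \\ 0 & m = 0 },
\]
using $\ga^2 = 1$. This is exactly $(1 - \Pi_0^L)(\pi(x))$, since $\Pi_0^L$ projects onto the $m=0$ graded component.

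For the general case, let $x \in \T{Lip}_t(SU_q(2))$ be arbitrary. By Corollary \ref{c:diracinvariant} we have $\Pi_n^L(x) \in \T{Lip}_t(SU_q(2))$ and $\pa_t^V(\Pi_n^L(x)) = \Pi_n^L(\pa_t^V(x))$ for all $n \in \zz$; moreover $\Pi_n^L(x) \in A_q^n$, so the special case just established gives $\int_t^V \pa_t^V(\Pi_n^L(x)) = (1 - \Pi_0^L)(\Pi_n^L(x))$. Since $\int_t^V$ and $\Pi_0^L$ and $1$ are all bounded operators on $\B B(L^2(SU_q(2))^{\op 2})$ that commute appropriately with the spectral projections $\Pi_n^L$ (for $\int_t^V$ this is because $\B M(\varphi_t)$ and $\B M(\de_n)$ are both Schur multipliers relative to the same grading and hence commute, and $\ga$ commutes with $\B M(\de_n)$), one obtains $\Pi_n^L\big(\int_t^V \pa_t^V(x)\big) = \Pi_n^L\big((1-\Pi_0^L)(x)\big)$ for every $n \in \zz$. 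As the spectral projections separate points in $\B B(L^2(SU_q(2))^{\op 2})$ (noted after \eqref{eq:specprojWOT}), the desired identity $\int_t^V \pa_t^V(x) = (1 - \Pi_0^L)(x)$ follows.

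\textbf{Main obstacle.} The only genuinely delicate point is the interchange of $\int_t^V$ with the spectral projection $\Pi_n^L$ in the last step, i.e.\ checking that $\Pi_n^L \int_t^V = \int_t^V \Pi_n^L$ on $\B B(L^2(SU_q(2))^{\op 2})$ and likewise that $\pa_t^V$ commutes with $\Pi_n^L$ on the level we need (the latter being Corollary \ref{c:diracinvariant}). Both are morally immediate—everything in sight is diagonalised by the same $\zz$-grading—but one should be careful that $\ga$ in the definition of $\int_t^V$ indeed commutes with every $\Pi_n^L = \B M(\de_n)$, which holds since $\ga$ acts as a grading-preserving scalar on each graded piece $H_q^{m+1}\oplus H_q^{m-1}$. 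Everything else is a direct computation with the explicit formula for the Schur multiplier $\varphi_t$.
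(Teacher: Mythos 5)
Your proposal is correct and follows essentially the same route as the paper: verify the identity on the spectral subspaces $A_q^m$ using Lemma \ref{lem:paV-on-spectral-subspaces} and the explicit action of $\B M(\varphi_t)$, then reduce the general case to this one via Corollary \ref{c:diracinvariant}, the commutation of $\B M(\varphi_t)$, $\ga$ and $\Pi_n^L$ (all compatible with the same $\zz$-grading), and the fact that the spectral projections separate points. The only cosmetic difference is that you spell out the matrix-support argument for $\pi(x)$, whereas the paper quotes the previously recorded formula for $\B M(\varphi_t)$ on $A_q^m$; the content is identical.
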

\begin{proof}
Let $x \in \T{Lip}_t(SU_q(2))$ be given. First note that if $x\in \T{Lip}_t(SU_q(2))\cap A_q^m$ for some $m\in \zz$, then the statement follows from Lemma \ref{lem:paV-on-spectral-subspaces}: indeed, in this case we have that
\[
\int_t^V \pa_t^V(x) = \int_t^V \pma{ [m/2]_t x & 0 \\ 0 & -[m/2]_t x} = [m/2]_t \cd \B M(\varphi_t)(x)
= (1 - \Pi_0^L)(x) .
\]
To prove the general statement, it suffices to show that
\[
\Pi_n^L\Big(\int_t^V \pa_t^V(x)\Big)  = \Pi_n^L(1 - \Pi_0^L)(x) \q \T{for all } n\in \zz .
\]
Let thus $n \in \zz$ be given. Since both $\Pi_n^L$ and $\B M(\varphi_t)$ are Schur multipliers with respect to the same $\zz$-grading on $L^2(SU_q(2))^{\op 2}$, they commute. Moreover, we notice that the grading operator $\ga$ preserves the spectral subspace $H^{m+1}_q \op H^{m-1}_q \su L^2(SU_q(2))^{\op 2}$ for all $m \in \zz$ and hence  it holds that left multiplication with $\ga$ commutes with $\Pi_n^L$. The relevant identity therefore becomes a consequence of Corollary \ref{c:diracinvariant} through the following computation: 
\[
\Pi_n^L\Big(\int_t^V \pa_t^V(x) \Big) = \int_t^V \pa_t^V(\Pi_n^L x)
= (1-\Pi_0^L)\Pi_n^L( x ) =\Pi_n^L(1-\Pi_0^L)(x). \qedhere
\]
\end{proof}


 The next step is to prove that $E_M^L$ is a contraction for $L_{t,q}^{\max}$, thus verifying part of the hypotheses in Corollary \ref{cor:approx-cor}.

\begin{lemma}\label{l:emlip}
Let $M \in \nn_0$ and $x\in \T{Lip}_t(SU_q(2))$. It holds that $E_M^L(x)\in  B_q^M\cap \T{Lip}_t(SU_q(2))$ and
$L_{t,q}^{\max}(E_M^L(x)) \leq L_{t,q}^{\max}(x)$. 
\end{lemma}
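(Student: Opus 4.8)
The plan is to prove Lemma \ref{l:emlip} by combining the fact that $E_M^L$ is a finite positive combination of spectral projections $\Pi_m^L$ with the already-established contraction property of these projections (Corollary \ref{c:diracinvariant}), together with the compatibility between $E_M^L$ and the twisted $*$-derivations coming from the Schur multiplier formalism of Section \ref{ss:schur}. First I would record that $E_M^L = \sum_{m=-M}^M \frac{M+1-|m|}{M+1}\Pi_m^L$ and hence $E_M^L\big(C(SU_q(2))\big)\subseteq B_q^M$, so the task is really to show $E_M^L$ preserves $\T{Lip}_t(SU_q(2))$ and does not increase $L_{t,q}^{\max}$. The key structural observation is that $E_M^L=\B M(\ga_M)$ is a Schur multiplier with respect to the $\zz$-grading \eqref{eq:decompspin} of $L^2(SU_q(2))^{\op 2}$, and this is exactly the grading for which the unitary group $(U_r^L)_{r\in\rr}$ of \eqref{eq:U_t-unitary-grp} implements the circle action; thus $E_M^L$ commutes with each $\Pi_n^L$ at the level of $\B B(L^2(SU_q(2))^{\op 2})$ and, via the compatibility $E_M^L(\pi(x))=\pi(E_M^L(x))$, at the level of $C(SU_q(2))$.

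Next I would verify that for $x\in\T{Lip}_t(SU_q(2))$ the element $E_M^L(x)$ is again horizontally and vertically Lipschitz, with $\pa_q^H(E_M^L(x))=E_M^L(\pa_q^H(x))$ and $\pa_t^V(E_M^L(x))=E_M^L(\pa_t^V(x))$. The cleanest route is to reduce to Corollary \ref{c:diracinvariant}: write $E_M^L(x)=\sum_{m=-M}^M\frac{M+1-|m|}{M+1}\Pi_m^L(x)$ as a finite sum, apply $\Pi_m^L(x)\in\T{Lip}_t(SU_q(2))$ and $\pa_q^H(\Pi_m^L(x))=\Pi_m^L(\pa_q^H(x))$, $\pa_t^V(\Pi_m^L(x))=\Pi_m^L(\pa_t^V(x))$ for each $m$, and use linearity of $\pa_q^H$, $\pa_t^V$ together with the fact that $\T{Lip}_t(SU_q(2))$ is a linear subspace. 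This immediately gives $\pa_{t,q}(E_M^L(x))=E_M^L(\pa_{t,q}(x))$ where on the right $E_M^L$ now denotes the Schur multiplier $\B M(\ga_M)$ acting on $\B B(L^2(SU_q(2))^{\op 2})$ (one must check the two notions of $E_M^L$ agree on the relevant bounded operators, which follows since $\ga_M$ is a Schur multiplier for the grading \eqref{eq:decompspin} and the projections $\Pi_m^L$ on $C(SU_q(2))$ are the restrictions of those on $\B B(L^2(SU_q(2))^{\op 2})$, as noted at the end of Section \ref{ss:schur}).

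Finally, the norm estimate follows from Lemma \ref{l:compcont}, which gives $\|\B M(\ga_M)\|_{\T{cb}}\leq 1$, hence in particular $\|E_M^L\|\leq 1$ as an operator on $\B B(L^2(SU_q(2))^{\op 2})$. Applying this to the bounded operator $\pa_{t,q}(x)$ we obtain
\[
L_{t,q}^{\max}(E_M^L(x)) = \|\pa_{t,q}(E_M^L(x))\| = \|E_M^L(\pa_{t,q}(x))\| \leq \|\pa_{t,q}(x)\| = L_{t,q}^{\max}(x),
\]
and since $E_M^L(x)\in B_q^M$ this also confirms $E_M^L(x)\in B_q^M\cap\T{Lip}_t(SU_q(2))$. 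I expect the main (though still modest) obstacle to be the bookkeeping identifying the operator $E_M^L$ acting on $C(SU_q(2))$ with the Schur multiplier $\B M(\ga_M)$ acting on $\B B(L^2(SU_q(2))^{\op 2})$ in a way that is compatible with $\pi$ and with the derivations $\pa_q^H$, $\pa_t^V$; once that identification is in place, everything reduces to Corollary \ref{c:diracinvariant}, Lemma \ref{l:compcont} and linearity. An alternative, essentially equivalent, argument avoids the finite-sum reduction by invoking Lemma \ref{l:spectralprojderiv}-style reasoning directly for the action $\te=\si_L$ and the averaging defining $E_M^L$, but the spectral-projection decomposition is the shortest path.
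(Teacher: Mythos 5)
Your proposal is correct and follows essentially the same route as the paper: decompose $E_M^L=\sum_{m=-M}^M\tfrac{M+1-|m|}{M+1}\Pi_m^L$, use Corollary \ref{c:diracinvariant} together with linearity to get $E_M^L(x)\in B_q^M\cap\T{Lip}_t(SU_q(2))$ and $\pa_{t,q}(E_M^L(x))=E_M^L(\pa_{t,q}(x))$, and then conclude via the Schur multiplier bound $\|\B M(\ga_M)\|_{\T{cb}}\leq 1$ from Lemma \ref{l:compcont}. The compatibility between $E_M^L$ on $C(SU_q(2))$ and $\B M(\ga_M)$ on $\B B(L^2(SU_q(2))^{\op 2})$ that you flag is indeed the only bookkeeping point, and it is exactly the identification the paper sets up just before the lemma.
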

\begin{proof}
We start out by recalling that $E_M^L =  \sum_{m=-M}^{M} \tfrac{M+1-|m|}{M+1} \Pi_m^L$. It therefore follows from Lemma \ref{l:compcont}  and Corollary \ref{c:diracinvariant} that $E_M^L(x)\in \T{Lip}_t(SU_q(2))\cap B_q^M$ and that 
\[
L_{t,q}^{\max}(E_M^L(x)) = \| \pa_{t,q}(E_M^L(x))\|= \| E_M^L( \pa_{t,q}(x)) \| \leq \|\pa_{t,q}(x) \|=L_{t,q}^{\max}(x) . \qedhere
\]
\end{proof}

We now show that the sequence of $L_{t,q}^{\T{max}}$-contractions $(E_M^L)_{M = 0}^\infty$ approximates the identity map on the $L_{t,q}^{\T{max}}$-unit ball, thus verifying the last hypothesis in Corollary \ref{cor:approx-cor}.  In fact, this approximation can be obtained uniformly in the deformation parameters $t,q \in (0,1]$.  For each $\de \in (0,1)$ we recall the definition of the null-sequence of positive real numbers $(\ep(\de,M))_{M = 0}^\infty$ from \eqref{eq:epnull}.

\begin{prop}\label{p:bandapprox-without-saying-it}
Let $\de \in (0,1)$. It holds that
\[
\|x-E_M^L(x)\|\leq \ep(\de,M) \cd L_{t,q}^{\T{max}}(x)
\]
for all $M \in \nn_0$, $(t,q) \in [\de,1] \ti (0,1]$ and $x\in \T{Lip}_t(SU_q(2))$.
\end{prop}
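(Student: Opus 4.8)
The plan is to reduce the estimate to two results already in hand: the noncommutative fundamental theorem of calculus, Proposition \ref{prop:antiderivative}, and the Schur-multiplier tail bound, Lemma \ref{l:emtail}. Throughout I fix $\de \in (0,1)$, $M \in \nn_0$, $(t,q) \in [\de,1] \ti (0,1]$ and $x \in \T{Lip}_t(SU_q(2))$, and I work with operators on $L^2(SU_q(2))^{\op 2}$, tacitly identifying $C(SU_q(2))$ with $\pi(C(SU_q(2)))$ so that the symbols $\Pi_0^L$, $E_M^L$ and $\B M(\varphi_t)$ denote at once Schur multipliers and the operators they induce on $C(SU_q(2))$.

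First I would discard the degree-zero part. Since $E_M^L = \B M(\ga_M)$ and $\Pi_0^L = \B M(\de_0)$ are Schur multipliers for the same $\zz$-grading \eqref{eq:decompspin}, their product is the Schur multiplier with symbol $\ga_M \cd \de_0$; as $\ga_M(i,i) = 1$ this symbol is just $\de_0$, so $E_M^L \Pi_0^L = \Pi_0^L$, hence $(1 - E_M^L)\Pi_0^L = 0$ and therefore $x - E_M^L(x) = (1 - E_M^L)(1 - \Pi_0^L)(x)$.

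Next I would bring in the anti-derivative. Proposition \ref{prop:antiderivative} together with the definition \eqref{eq:quaintdef} of $\int_t^V$ gives $(1 - \Pi_0^L)(x) = \int_t^V \pa_t^V(x) = \B M(\varphi_t)\big( \ga \cd \pa_t^V(x) \big)$. Left multiplication by the grading operator $\ga$ preserves each spectral subspace $H_q^{m+1} \op H_q^{m-1}$, so it commutes with every Schur multiplier for \eqref{eq:decompspin}, and any two such Schur multipliers commute as well; hence
\[
x - E_M^L(x) = \big(1 - \B M(\ga_M)\big)\B M(\varphi_t)\big( \ga \cd \pa_t^V(x) \big) = \B M(\varphi_t)\big(1 - \B M(\ga_M)\big)\big( \ga \cd \pa_t^V(x) \big).
\]
Finally I would take norms: $\ga$ is unitary, so $\big\| \ga \cd \pa_t^V(x) \big\| = \| \pa_t^V(x) \|$; inequality \eqref{eq:praktisk-ulighed} gives $\| \pa_t^V(x) \| \leq \| \pa_{t,q}(x) \| = L_{t,q}^{\T{max}}(x)$; and Lemma \ref{l:emtail} bounds the operator norm of $\B M(\varphi_t)(1 - \B M(\ga_M))$ by its cb-norm and hence by $\ep(\de,M)$ uniformly for $t \in [\de,1]$. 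Since $\pi$ is isometric, chaining these bounds yields $\| x - E_M^L(x) \| \leq \ep(\de,M) \cd L_{t,q}^{\T{max}}(x)$.

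I do not expect a real obstacle here: the content is entirely carried by Proposition \ref{prop:antiderivative} and Lemma \ref{l:emtail}, and what remains is the bookkeeping of commuting Schur multipliers together with the observation that $E_M^L$ collapses the degree-zero block to exactly what $\Pi_0^L$ already records. The one point demanding a modicum of care is keeping track of whether a given identity is being read in $\B B(L^2(SU_q(2))^{\op 2})$ or in $C(SU_q(2))$, but the isometric $*$-homomorphism $\pi$ makes the two pictures interchangeable throughout, so this causes no difficulty.
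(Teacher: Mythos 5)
Your argument is correct and is essentially the paper's own proof: both reduce to the identity $x - E_M^L(x) = (1-E_M^L)(1-\Pi_0^L)(x)$, invoke Proposition \ref{prop:antiderivative} to rewrite $(1-\Pi_0^L)(x)$ as $\int_t^V \pa_t^V(x)$, and then conclude via the estimate \eqref{eq:praktisk-ulighed} together with the Schur-multiplier tail bound of Lemma \ref{l:emtail}. The extra details you supply (the symbol computation giving $E_M^L\Pi_0^L=\Pi_0^L$ and the commutation of the Schur multipliers with each other and with $\ga$) are exactly the bookkeeping the paper leaves implicit.
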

\begin{proof}
We apply Proposition \ref{prop:antiderivative}  in combination with Lemma \ref{l:emtail} to obtain that
\[
\begin{split}
\|(1 - E_M^L)(x) \| & = \| (1 - E_M^L)(1 - \Pi_0^L)(x) \| = \big\| (1 - E_M^L) \int_t^V \pa_t^V(x) \big\| \\ 
& \leq \big\|  (1-\B M(\ga_M)) \B M(\varphi_t) \big\| \cd \| \ga \cd \pa_t^V(x) \|
\leq \ep(\de,M) \cd L_{t,q}^{\max}(x) 
\end{split}
\]
for all $M \in \nn_0$, $t \in [\de,1]$ and $x \in \T{Lip}_t(SU_q(2))$.
\end{proof}

\subsection{Quantum $SU(2)$ as a compact quantum metric space}
We are now ready to show that quantum $SU(2)$ becomes a compact quantum metric space when equipped with the Lipschitz seminorm $L_{t,q}^{\T{max}} \colon C(SU_q(2)) \to [0,\infty]$. 

\begin{theorem}\label{thm:quantum-su2-as-cqms}
The pair $\big(C(SU_q(2)), L_{t,q}^{\max}\big)$ is a compact quantum metric space for all $t,q \in (0,1]$.
\end{theorem}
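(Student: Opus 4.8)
The strategy is a direct application of Corollary~\ref{cor:approx-cor}: for each $\ep > 0$ I need to exhibit a compact quantum metric space $(X_\ep, L_\ep)$ together with unital linear maps $\Phi_\ep \colon C(SU_q(2)) \to X_\ep$ and $\Psi_\ep \colon X_\ep \to C(SU_q(2))$ which are bounded for the relevant (semi)norms and satisfy $\|\Psi_\ep\Phi_\ep(x) - x\| \leq \ep \cdot L_{t,q}^{\max}(x)$ for all $x$. The building blocks are already in place. Fix $\de \in (0,1)$ with $q \in [\de,1]$ (this is harmless since $q$ is fixed) and recall the null-sequence $(\ep(\de,M))_{M=0}^\infty$ from \eqref{eq:epnull}. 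Given $\ep > 0$, choose $M \in \nn_0$ with $\ep(\de,M) \leq \ep$, and take $X_\ep := B_q^M$ with $L_\ep := L_{t,q}^{\max}\vert_{B_q^M}$, which is a compact quantum metric space by Theorem~\ref{t:bandcqms}. For the maps, set $\Phi_\ep := E_M^L \colon C(SU_q(2)) \to B_q^M$ and let $\Psi_\ep \colon B_q^M \to C(SU_q(2))$ be the inclusion.

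\textbf{Verification of the hypotheses.} The map $\Psi_\ep$ is the inclusion, hence unital, positive and isometric for the operator norm. The map $\Phi_\ep = E_M^L$ is unital and completely contractive by Lemma~\ref{l:compcont}, hence positive. Its boundedness for the Lipschitz seminorms is Lemma~\ref{l:emlip}: $L_{t,q}^{\max}(E_M^L(x)) \leq L_{t,q}^{\max}(x)$, and in particular $E_M^L$ maps $\T{Lip}_t(SU_q(2))$ into $B_q^M \cap \T{Lip}_t(SU_q(2))$, so condition~$(1)$ of Corollary~\ref{cor:approx-cor} holds with constant $1$ on both sides. Finally, condition~$(2)$ is precisely Proposition~\ref{p:bandapprox-without-saying-it}: for $x \in \T{Lip}_t(SU_q(2))$ one has
\[
\|\Psi_\ep\Phi_\ep(x) - x\| = \|x - E_M^L(x)\| \leq \ep(\de,M)\cdot L_{t,q}^{\max}(x) \leq \ep\cdot L_{t,q}^{\max}(x),
\]
while for $x \notin \T{Lip}_t(SU_q(2))$ the right-hand side is infinite and there is nothing to check. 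Thus all hypotheses of Corollary~\ref{cor:approx-cor} are met, and it follows that $\big(C(SU_q(2)), L_{t,q}^{\max}\big)$ is a compact quantum metric space.

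\textbf{Where the real work lies.} The theorem itself is now a short assembly, but this is only because the substantive content has been distributed across the preceding sections: Theorem~\ref{t:bandcqms} (the spectral bands are compact quantum metric spaces, which rests on the finitely-generated-projective-module machinery of Theorem~\ref{t:fingenproCQMS}, on the Podle\'s sphere input of Corollary~\ref{cor:zeroth-spec-band-is-cqms}, and on the norm-versus-seminorm comparison of Lemma~\ref{l:lipdomnorm}), together with the Schur-multiplier anti-derivative construction (Proposition~\ref{prop:antiderivative}) and its uniform tail estimate (Lemma~\ref{l:emtail}, feeding Proposition~\ref{p:bandapprox-without-saying-it}). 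If I were proving this from scratch, the main obstacle would be precisely the construction of an anti-derivative for the vertical twisted derivation $\pa_t^V$ with good norm control --- the idea of realising it as the Schur multiplier $\B M(\varphi_t)$ and using Grothendieck's criterion (Proposition~\ref{p:groth}) to bound its completely bounded norm by $\pi(t^{1/2}+t^{-1/2})/\sqrt{3}$ is the technical heart of the argument, since it is what lets one pass from the spectral bands up to all of $C(SU_q(2))$ with an estimate that is uniform in the parameters.
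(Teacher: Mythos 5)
Your proof is correct and follows essentially the same route as the paper: Theorem~\ref{t:bandcqms} for the spectral bands, the maps $E_M^L$ and the inclusion $B_q^M \hookrightarrow C(SU_q(2))$, and Lemma~\ref{l:emlip} together with Proposition~\ref{p:bandapprox-without-saying-it} to verify the hypotheses of Corollary~\ref{cor:approx-cor}. One tiny slip: Proposition~\ref{p:bandapprox-without-saying-it} requires $t \in [\de,1]$ (not $q$), so you should choose $\de \leq \min\{t,q\}$ --- harmless since both parameters are fixed.
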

\begin{proof}
For each $M \in \nn_0$ we know from Theorem \ref{t:bandcqms} that the spectral band $B_q^M$ becomes a compact quantum metric space when equipped with the restricted Lipschitz seminorm $L_{t,q}^{\T{max}} \colon B_q^M \to [0,\infty]$. We may then apply Corollary \ref{cor:approx-cor} using the compact quantum metric spaces $(B_q^M, L_{t,q}^{\max})$, together with the unital linear maps $E_M^L \colon C(SU_q(2)) \to B_q^M$ and the inclusions $\io_M \colon B_q^M \to C(SU_q(2))$. That the assumptions in Corollary \ref{cor:approx-cor}  are indeed met by this data follows from Lemma \ref{l:emlip} and Proposition \ref{p:bandapprox-without-saying-it}. 
\end{proof}

\begin{cor}\label{cor:alg-seminorm-gives-cqms}
The pair $\big(C(SU_q(2)), L_{t,q}\big)$ is a compact quantum metric space for all $t,q \in (0,1]$
\end{cor}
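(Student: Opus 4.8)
The assertion is that $\big(C(SU_q(2)), L_{t,q}\big)$ is a compact quantum metric space, where $L_{t,q}$ is the ``minimal'' seminorm, finite only on the coordinate algebra $\C O(SU_q(2))$. Having just established in Theorem \ref{thm:quantum-su2-as-cqms} that $\big(C(SU_q(2)), L_{t,q}^{\max}\big)$ is a compact quantum metric space, the natural route is to exploit the fact that enlarging the domain of a seminorm only makes the total boundedness criterion harder to verify. Concretely, by construction we have $L_{t,q}(x) = L_{t,q}^{\max}(x)$ for every $x \in \T{Dom}(L_{t,q}) = \C O(SU_q(2))$, and $L_{t,q}^{\max}(x) \leq L_{t,q}(x)$ for all $x \in C(SU_q(2))$ (since the right-hand side is $\infty$ off the coordinate algebra). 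Hence the closed unit ball $\ov{\B B}_1^{L_{t,q}}(0) \su \ov{\B B}_1^{L_{t,q}^{\max}}(0)$, and therefore the image $\big[\ov{\B B}_1^{L_{t,q}}(0)\big]$ in $C(SU_q(2))/\cc$ is a subset of $\big[\ov{\B B}_1^{L_{t,q}^{\max}}(0)\big]$, which is totally bounded by Theorem \ref{thm:rieffels-criterion} applied to the maximal seminorm. A subset of a totally bounded set is totally bounded, so Theorem \ref{thm:rieffels-criterion} applies in the reverse direction to conclude that $\big(C(SU_q(2)), L_{t,q}\big)$ is a compact quantum metric space.

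\textbf{Steps.} First I would note that $L_{t,q}$ is a Lipschitz seminorm in the sense of Definition \ref{d:lipschitzsem}: it is densely defined since $\C O(SU_q(2))$ is norm-dense in $C(SU_q(2))$; its kernel contains the scalars because $\pa_{t,q}(1) = 0$; and it is $*$-invariant because $\pa_{t,q}(x^*) = -\pa_{t,q}(x)^*$ by Lemma \ref{l:twilip} (together with the fact that $\C O(SU_q(2))$ is $*$-closed), so $L_{t,q}(x^*) = \|\pa_{t,q}(x^*)\| = \|\pa_{t,q}(x)\| = L_{t,q}(x)$. This is already recorded in the remark following Definition \ref{def:max-and-min-seminorm}. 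Second, I would record the pointwise inequality $L_{t,q}^{\max} \leq L_{t,q}$ and the resulting inclusion of closed unit balls. Third, I would invoke Theorem \ref{thm:quantum-su2-as-cqms} and Theorem \ref{thm:rieffels-criterion} to get that $\big[\ov{\B B}_1^{L_{t,q}^{\max}}(0)\big]$ is totally bounded in $C(SU_q(2))/\cc$, deduce that $\big[\ov{\B B}_1^{L_{t,q}}(0)\big]$ is totally bounded as a subset of it, and then apply the ``if'' direction of Theorem \ref{thm:rieffels-criterion} to $L_{t,q}$. This is precisely the argument sketched in the sentence preceding the introductory Corollary after Theorem \ref{introthm:SUq2-is-a-cqms}.

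\textbf{Main obstacle.} There is essentially no obstacle here; the proof is a one-line consequence of the already-proven maximal statement together with Rieffel's criterion. The only point requiring a moment's care is the direction of the inequality between the two seminorms — one must observe that the \emph{larger} domain gives the \emph{smaller} seminorm (pointwise), hence the \emph{smaller} unit ball, and that total boundedness passes to subsets. I would present this as a short proof reading roughly: ``The seminorm $L_{t,q}$ is a Lipschitz seminorm and satisfies $L_{t,q}^{\max} \leq L_{t,q}$, so $\ov{\B B}_1^{L_{t,q}}(0) \su \ov{\B B}_1^{L_{t,q}^{\max}}(0)$. By Theorem \ref{thm:quantum-su2-as-cqms} and Theorem \ref{thm:rieffels-criterion}, $\big[\ov{\B B}_1^{L_{t,q}^{\max}}(0)\big]$ is totally bounded in $C(SU_q(2))/\cc$; hence so is the subset $\big[\ov{\B B}_1^{L_{t,q}}(0)\big]$, and Theorem \ref{thm:rieffels-criterion} gives the claim.''
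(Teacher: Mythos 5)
Your proof is correct and is essentially the paper's own argument: the paper's proof of this corollary is exactly the observation that the $L_{t,q}$-unit ball is contained in the $L_{t,q}^{\max}$-unit ball, combined with Theorem \ref{thm:quantum-su2-as-cqms} and Rieffel's criterion (Theorem \ref{thm:rieffels-criterion}). Your additional check that $L_{t,q}$ is a Lipschitz seminorm is harmless and matches the remark following Definition \ref{def:max-and-min-seminorm}.
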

\begin{proof}
Since the $L_{t,q}$-unit ball is contained in the $L_{t,q}^{\max}$-unit ball this follows from Theorem \ref{thm:quantum-su2-as-cqms} and Theorem \ref{thm:rieffels-criterion}.
\end{proof}

We can also show that the spectral bands converge towards quantum $SU(2)$ in the quantum Gromov-Hausdorff distance. In fact, as the following theorem shows, the convergence can even be obtained in a uniform manner with respect to the deformation parameters $t, q \in (0,1]$. For each $\de \in (0,1)$ we recall the definition of the null-sequence of positive real numbers $(\ep(\de,M))_{M = 0}^\infty$ from \eqref{eq:epnull}.

\begin{theorem}\label{thm:band-approx}
 Let $\de\in (0,1)$. It holds that 
\begin{align*}
\T{dist}_{\T{Q}}\big((C(SU_q(2)), L_{t,q}^{\max}); (B_q^M, L_{t,q}^{\max}) \big)  \leq \ep(\de,M)
\end{align*}
for all $M\in \nn_0$ and $(t,q)\in [\de, 1] \ti (0,1]$. Moreover, for all  $\mu,\nu \in \C S\big(C(SU_q(2))\big)$ it holds that
\[
d_{t,q}^{\max}( \mu,\nu) \leq 2 \cd \ep(\de,M) + d_{t,q}^{\max}\big(\mu\vert_{B_q^M}, \nu\vert_{B_q^M} \big)
\]
for all $M \in \nn_0$, all $(t,q) \in [\de,1] \ti (0,1]$ 
\end{theorem}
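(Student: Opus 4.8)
\textbf{Proof plan for Theorem \ref{thm:band-approx}.}

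The proof is essentially an application of the general approximation machinery established in Section \ref{sec:qcms}, combined with the two technical inputs already at our disposal: Lemma \ref{l:emlip}, which says $E_M^L$ is a unital positive $L_{t,q}^{\max}$-contraction onto $B_q^M$, and Proposition \ref{p:bandapprox-without-saying-it}, which gives $\|x - E_M^L(x)\| \leq \ep(\de,M) \cd L_{t,q}^{\max}(x)$ uniformly in $(t,q) \in [\de,1] \ti (0,1]$. First I would observe that $\big(C(SU_q(2)), L_{t,q}^{\max}\big)$ and $\big(B_q^M, L_{t,q}^{\max}\big)$ are both compact quantum metric spaces by Theorem \ref{thm:quantum-su2-as-cqms} and Theorem \ref{t:bandcqms}, so that the relevant quantum Gromov-Hausdorff distances and Monge-Kantorovi\v{c} metrics are all well-defined. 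I would then recall that $E_M^L$ is unital and positive (being a unital contraction, cf.~the discussion in Section \ref{sec:qcms} and Lemma \ref{l:compcont}), and that $\T{Dom}(L_{t,q}^{\max}) \cap B_q^M = B_q^M \cap \T{Lip}_t(SU_q(2))$ is norm-dense in $B_q^M$ (it contains the algebraic spectral band $\C B_q^M$).

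For the distance estimate, I would apply Corollary \ref{cor:subspacegen} with $X := C(SU_q(2))$, $Y := B_q^M$, $L := L_{t,q}^{\max}$, $\Phi := E_M^L$, the constant $D := 0$ (since $L_{t,q}^{\max}(E_M^L(x)) \leq L_{t,q}^{\max}(x)$ by Lemma \ref{l:emlip}), and $\ep := \ep(\de,M)$ (using Proposition \ref{p:bandapprox-without-saying-it}). Since $E_M^L$ is then a Lip-norm contraction, Corollary \ref{cor:subspacegen} yields directly
\[
\T{dist}_{\T{Q}}\big((C(SU_q(2)), L_{t,q}^{\max}); (B_q^M, L_{t,q}^{\max}) \big) \leq \ep(\de,M),
\]
which is the first claimed inequality. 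For the second inequality, I would invoke Corollary \ref{cor:subgenmet}, again with $D = 0$ and $\ep = \ep(\de,M)$: its conclusion reads
\[
d_L(\mu,\nu) \leq \frac{D}{1+D}\cd \T{diam}(X,L) + 2\ep + d_L(\mu\vert_Y,\nu\vert_Y),
\]
and with $D = 0$ the first term vanishes, giving precisely
\[
d_{t,q}^{\max}(\mu,\nu) \leq 2\cd \ep(\de,M) + d_{t,q}^{\max}\big(\mu\vert_{B_q^M}, \nu\vert_{B_q^M}\big)
\]
for all $\mu,\nu \in \C S\big(C(SU_q(2))\big)$, where $d_{t,q}^{\max}$ on the right is the Monge-Kantorovi\v{c} metric on $\C S(B_q^M)$ coming from the restricted seminorm, as noted after the statement of Corollary \ref{cor:subgenmet}.

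In this setup there is no genuine obstacle: the entire content has been front-loaded into Lemma \ref{l:emlip} and Proposition \ref{p:bandapprox-without-saying-it}, and the theorem is a bookkeeping exercise verifying that the hypotheses of Corollaries \ref{cor:subspacegen} and \ref{cor:subgenmet} are met. The one point requiring a little care is confirming that $E_M^L$ genuinely maps into $B_q^M$ (not merely into $C(SU_q(2))$) and is unital and positive there — this is exactly Lemma \ref{l:emlip} together with the identity $E_M^L = \sum_{m=-M}^{M} \tfrac{M+1-|m|}{M+1}\Pi_m^L$ and the fact that each $\Pi_m^L$ lands in $A_q^m$. A secondary point is to make sure the estimates are stated uniformly over $(t,q) \in [\de,1]\ti(0,1]$, which is already guaranteed by the uniform bound in Proposition \ref{p:bandapprox-without-saying-it}, whose constant $\ep(\de,M)$ depends only on $\de$ and $M$.
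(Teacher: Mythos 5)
Your proposal is correct and follows the same route as the paper: the paper's proof likewise applies Corollary \ref{cor:subspacegen} and Corollary \ref{cor:subgenmet} to the unital positive map $E_M^L$ with $D=0$ and $\ep = \ep(\de,M)$, citing exactly Lemma \ref{l:emlip} and Proposition \ref{p:bandapprox-without-saying-it}. Your additional remarks on density of the domain in $B_q^M$ and on positivity of $E_M^L$ are consistent with what the paper leaves implicit.
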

\begin{proof}
   By Lemma \ref{l:emlip} and Proposition \ref{p:bandapprox-without-saying-it}, the unital positive operator $E^L_M\colon C(SU_q(2))\to B_q^M$ satisfies the assumptions in
    Corollary \ref{cor:subspacegen}  with $D=0$ and $\ep=\ep(\de, M)$. The first statement therefore follows from Corollary  \ref{cor:subspacegen} and the second from Corollary \ref{cor:subgenmet}.
\end{proof}

We may also provide an estimate on the diameter (see Definition \ref{def:diameter}) of quantum $SU(2)$ in terms of the  diameter of the Podle{\'s} sphere. 

\begin{prop}\label{prop:diameter-estimate}
For all $t,q\in (0,1]$ it holds that
\[
\T{diam}\big(C(SU_q(2)), L_{t,q}^{\max}\big) \leq  \frac{ 2 \pi \cd (t^{1/2} + t^{-1/2})}{\sqrt{3}} + \T{diam}\big(C(S_q^2),  L_q^{0,\max}\big) .
\]
\end{prop}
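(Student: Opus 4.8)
The plan is to exploit the spectral band decomposition together with the fact, already established, that the zeroth spectral band $(B_q^0, L_{t,q}^{\max})$ coincides with the Podleś sphere $(C(S_q^2), L_q^{0,\max})$. The diameter of a compact quantum metric space $(X,L)$ can be computed as $\sup\{\|[x]\|_{X/\cc} : L(x)\le 1\}$ up to a factor of $2$; more precisely, by \cite[Proposition 2.2]{Rie:MSS} one has $\T{diam}(X,L) = 2 \sup\{ \inf_{\la\in \rr}\|x - \la \cd 1_X\| : x \in X_{\sa}, L(x)\le 1\}$. So it suffices to estimate, for each selfadjoint $x\in \T{Lip}_t(SU_q(2))$ with $L_{t,q}^{\max}(x)\le 1$, the distance from $x$ to the scalars.

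First I would decompose $x = \Pi_0^L(x) + (1-\Pi_0^L)(x)$. By Proposition \ref{prop:antiderivative} we have $(1-\Pi_0^L)(x) = \int_t^V \pa_t^V(x)$, and $\int_t^V = \B M(\varphi_t)(\ga \cd -)$, so using \eqref{eq:praktisk-ulighed} and Lemma \ref{l:quaintest},
\[
\| (1-\Pi_0^L)(x) \| = \big\| \B M(\varphi_t)(\ga \cd \pa_t^V(x)) \big\|
\leq \| \B M(\varphi_t) \|_{\T{cb}} \cd \| \pa_t^V(x) \|
\leq \frac{\pi \cd (t^{1/2} + t^{-1/2})}{\sqrt 3} \cd L_{t,q}^{\max}(x).
\]
Note that $\Pi_0^L(x)$ is selfadjoint and lies in $B_q^0\cap \T{Lip}_t(SU_q(2)) = C(S_q^2)\cap \T{Lip}(S_q^2)$, with $L_q^{0,\max}(\Pi_0^L(x)) = L_{t,q}^{\max}(\Pi_0^L(x)) \le L_{t,q}^{\max}(x)\le 1$ by Corollary \ref{cor:zeroth-spec-band-is-cqms} and Corollary \ref{c:diracinvariant}. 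Hence, again by \cite[Proposition 2.2]{Rie:MSS} applied inside $C(S_q^2)$, there is $\la\in\rr$ with $\|\Pi_0^L(x) - \la \cd 1\| \le \T{diam}(C(S_q^2), L_q^{0,\max})/2$.

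Combining the two estimates, for any such $x$,
\[
\inf_{\la\in\rr}\| x - \la \cd 1 \| \leq \| (1-\Pi_0^L)(x)\| + \inf_{\la\in\rr}\|\Pi_0^L(x) - \la\cd 1\|
\leq \frac{\pi(t^{1/2}+t^{-1/2})}{\sqrt 3} + \frac12 \T{diam}\big(C(S_q^2), L_q^{0,\max}\big),
\]
and multiplying by $2$ gives the claimed bound via \cite[Proposition 2.2]{Rie:MSS}. The only mild subtlety — and the main point requiring care — is to make sure the operator norm estimate $\|\B M(\varphi_t)(\ga\cd \pa_t^V(x))\| \le \|\B M(\varphi_t)\|_{\T{cb}} \|\pa_t^V(x)\|$ is applied at the level of $\B B(L^2(SU_q(2))^{\op 2})$ and then transported to $C(SU_q(2))$, which is legitimate since $\B M(\varphi_t)$ and $E_M^L$-type operators restrict to $C(SU_q(2))$ as recorded in Section \ref{subsec:spectral-projections} and Section \ref{ss:schur}, and since $\pa_t^V(x)$ only involves the diagonal twisted derivation $\pa_t^3$; no genuine obstacle is expected here.
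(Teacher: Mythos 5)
Your argument is correct. It delivers the stated bound, and every ingredient you invoke is available in the paper: the anti-derivative identity $(1-\Pi_0^L)(x)=\int_t^V\pa_t^V(x)$ (Proposition \ref{prop:antiderivative}), the cb-norm bound $\|\B M(\varphi_t)\|_{\T{cb}}\leq \pi(t^{1/2}+t^{-1/2})/\sqrt3$ (Lemma \ref{l:quaintest}), the inequality $\|\pa_t^V(x)\|\leq L_{t,q}^{\max}(x)$ from \eqref{eq:praktisk-ulighed}, the Lip-contractivity of $\Pi_0^L$ (Corollary \ref{c:diracinvariant}), the identification $(B_q^0,L_{t,q}^{\max})=(C(S_q^2),L_q^{0,\max})$ (Corollary \ref{cor:zeroth-spec-band-is-cqms}), and the reduction to selfadjoint elements via $d_L=d_{L_{\T{sa}}}$ (Proposition \ref{p:restrict}); the direction of Rieffel's diameter characterisation you actually need, namely that a uniform bound on $\inf_\la\|x-\la\cd 1\|$ over the $L$-unit ball bounds the diameter by twice that number, is elementary in any case.

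The route is genuinely different from the paper's, though the analytic core is the same. The paper argues entirely at the level of quantum Gromov--Hausdorff distance: it writes $\T{diam}\big(C(SU_q(2)),L_{t,q}^{\max}\big)=2\cd\T{dist}_{\T Q}\big((C(SU_q(2)),L_{t,q}^{\max});(\cc,0)\big)$ by \cite[Proposition 5.5]{Rie:GHD}, applies the triangle inequality to insert $(C(S_q^2),L_q^{0,\max})$, and then quotes Theorem \ref{thm:band-approx} with $M=0$, whose constant $\ep(\de,0)=\pi(\de^{1/2}+\de^{-1/2})/\sqrt3$ ultimately comes from the same Schur-multiplier/anti-derivative estimate you use (Proposition \ref{p:bandapprox-without-saying-it} via Lemma \ref{l:vertradi}'s mechanism). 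Your proof bypasses the $\T{dist}_{\T Q}$ machinery and Corollaries \ref{cor:subspacegen}--\ref{cor:subgenmet} entirely, working directly with the quotient norm modulo scalars; what you essentially reprove is Lemma \ref{l:vertradi} applied to $(1-\Pi_0^L)(x)$. What each approach buys: the paper's version is a two-line corollary once the GH approximation theorem is in place, and it fits the paper's narrative of band approximations; yours is more self-contained and elementary, and it gives the constant directly in terms of $t$ without the small detour through the auxiliary parameter $\de$ (in the paper one takes $t\in[\de,1]$ and lets $\de$ tend to $t$, or sets $\de=t$ when $t<1$). Only a cosmetic remark: writing $B_q^0\cap\T{Lip}_t(SU_q(2))=C(S_q^2)\cap\T{Lip}(S_q^2)$ is slightly redundant since $\T{Lip}(S_q^2)\su C(S_q^2)$; the precise statement you need is exactly Corollary \ref{cor:zeroth-spec-band-is-cqms}.
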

\begin{proof}
 By \cite[Proposition 5.5]{Rie:GHD} we have that 
\[
\T{diam}\big(C(SU_q(2)), L_{t,q}^{\max}\big) = 2 \cd \T{dist}_{\T{Q}}\big( (C(SU_q(2)), L_{t,q}^{\max}); (\cc,0) \big).
\]
Using the triangle inequality for the quantum Gromov-Hausdorff distance we then obtain that
\[
\begin{split}
\T{diam}\big(C(SU_q(2)), L_{t,q}^{\max}\big) & \leq 2 \cd \T{dist}_{\T Q}\big( (C(SU_q(2)), L_{t,q}^{\max}); (C(S_q^2),L_q^{0,\max}) \big) \\
& \q + \T{diam}\big(C(S_q^2), L_q^{0,\max}\big) .
\end{split}
\]
The result of the proposition now follows from Corollary \ref{cor:zeroth-spec-band-is-cqms} and Theorem \ref{thm:band-approx} in the case where $M = 0$.
%
\end{proof}

\begin{remark}\label{rem:bounded-diamter}
In \cite[Theorem 4.18]{AKK:Podcon} we proved that the family $\big( (C(S_q^2),L_{q}^{0,\max})\big)_{q\in (0,1]}$ varies continuously in the quantum Gromov-Hausdorff distance, and thus, in particular, that the function $ (0,1] \ni q \mapsto \T{diam}\big(C(S_q^2),L_{q}^{0,\max}\big) =2\cdot \T{dist}_{\T{Q}}((C(S_q^2), L_{q}^{0,\max}); (\cc,0))$ is continuous.  An application of Proposition \ref{prop:diameter-estimate} therefore shows that the function $(t,q)\mapsto \T{diam}\left(C(SU_q(2)), L_{t,q}^{\max}\right)$ is bounded on compact subsets of $(0,1]\times (0,1]$.
\end{remark}

\section{The quantum Berezin transform}\label{sec:berezin}
We now introduce the second key ingredient in the analysis of the quantum metric structure of $SU_q(2)$, namely an analogue of the classical Berezin transform (see e.g.~\cite{Sch:BTQ} and references therein) in this context. The Berezin transform was already essential in Rieffel's seminal results in \cite{Rie:MSG},  where he proves that the 2-sphere can be approximated by matrices. The Berezin transform also played a pivotal role in the analysis of the quantum metric structure on the Podle{\'s} spheres $S_q^2$, $q\in (0,1]$, in  \cite{AKK:Podcon} and \cite{AKK:Polyapprox}. In the present context it will serve to  firstly establish the fact that the maximal and minimal  Lip-norm, $L_{t,q}^{\max}$ and $L_{t,q}$ introduced in Definition \ref{def:max-and-min-seminorm}, actually give rise to the same quantum metric structure (see  Corollary \ref{cor:dist-zero} below).  Secondly, the Berezin transform provides us with finite dimensional quantum metric spaces which we will show approximate quantum $SU(2)$ in a suitably uniform manner. This, in turn, will be the key to our main continuity result, Theorem \ref{introthm:qgh-continuity}.

\subsection{Definition of the Berezin transform}
  Throughout this section, we fix the deformation parameter $q\in (0,1]$. The other parameter $t \in (0,1]$ is irrelevant in this section, since  we are currently only concerned with the $C^*$-algebras and not the Lip-norms.
 For each $N,M \in \nn_0$ we then define the element
\begin{equation}\label{eq:xidef}
\xi_N^M := \frac{1}{\sqrt{M + 1}} \sum_{r = N}^{N + M} a^r \cd \sqrt{ \inn{r + 1}_q} \in \C O(SU_q(2)),
\end{equation}
and consider the state $\chi_N^M \colon C(SU_q(2)) \to \cc$ given by $ \chi_N^M(x) := h\big( (\xi_N^M)^* x \xi_N^M \big)$. 
That $\chi_N^M$ is indeed a state follows from the formulae in \eqref{eq:haarmatrix} since $u_{00}^n=(a^*)^n$ for all $n \in \nn_0$. 
In order to analyse these states in more detail, it is convenient to first introduce a new circle action. Consider again the left and right circle actions $\sigma_L$ and $\sigma_R$ on $C(SU_q(2))$ defined on generators by
\begin{align}\label{eq:sigma-left-right-formulas}
\sigma_L(z,a):=za \, , \, \, \sigma_L(z,b):= zb \qquad \text{ and } \qquad \sigma_R(z,a):=za \, , \, \, \sigma_R(z,b)=z^{-1}b
\end{align}
A direct computation shows that 
\[
\sigma(z,x):= \sigma_R\big(z, \sigma_L^{-1}(z,x) \big) \q z \in S^1 \, , \, \, x \in C(SU_q(2))
\]
defines a strongly continuous circle action on $C(SU_q(2))$ which preserves $\C O(SU_q(2))$, and we let $\Pi^{\sigma}_m$, $m \in \zz$, denote the spectral projections associated with $\sigma$ (cf.~Section \ref{subsec:spectral-projections}). The circle action $\sigma$ is relevant in connection with the states $\chi_N^M$ and $\epsilon$ since, as we will se below, these only detect its fixed point algebra. We first determine the fixed point algebra in terms of the standard linear basis of $\C O(SU_q(2))$.

\begin{lemma}\label{lem:fixed-pt-basis}
The fixed point algebra of the circle action $\sigma$ on $C(SU_q(2))$ agrees with the norm closure of the linear span
\[
\T{span}_{\cc}\big\{ (b^*b)^m (a^*)^k, a^k (b^*b)^m\mid k,m \in \nn_0 \big\} .
\]
\end{lemma}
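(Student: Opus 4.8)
The plan is to work entirely inside the coordinate algebra $\C O(SU_q(2))$, using the explicit linear basis $\{\xi^{klm}\}$ from \eqref{eq:standard-basis}, and to compute the action of $\sigma$ on each basis element. First I would record how $\sigma$ acts on the generators. Since $\sigma_L(z,a)=za$, $\sigma_L(z,b)=zb$ and $\sigma_R(z,a)=za$, $\sigma_R(z,b)=z^{-1}b$, we get $\sigma_L^{-1}(z,a)=z^{-1}a$, $\sigma_L^{-1}(z,b)=z^{-1}b$, hence
\[
\sigma(z,a)=\sigma_R(z,\sigma_L^{-1}(z,a))=\sigma_R(z,z^{-1}a)=z^{-1}\cdot z a = a,
\]
while
\[
\sigma(z,b)=\sigma_R(z,\sigma_L^{-1}(z,b))=\sigma_R(z,z^{-1}b)=z^{-1}\cdot z^{-1} b = z^{-2} b .
\]
Taking adjoints, $\sigma(z,a^*)=a^*$ and $\sigma(z,b^*)=z^{2}b^*$. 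Since $\sigma(z,\cdot)$ is a $*$-automorphism, it follows that on the basis element $\xi^{klm}$ we have $\sigma(z,\xi^{klm})=z^{2(m-l)}\xi^{klm}$ for all $k\in\zz$, $l,m\in\nn_0$; this is a one-line induction from the multiplicativity of $\sigma(z,\cdot)$.

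Next I would identify the fixed point algebra at the algebraic level. The spectral projection $\Pi^\sigma_0$ is a norm contraction (as in \eqref{eq:spec-proj-norm-def-theta}) mapping $C(SU_q(2))$ onto the fixed point algebra and sending $\C O(SU_q(2))$ into $\C O(SU_q(2))$; moreover $\Pi^\sigma_0(\xi^{klm})$ equals $\xi^{klm}$ if $l=m$ and $0$ otherwise. Hence the algebraic fixed point algebra is precisely $\T{span}_{\cc}\{\xi^{klm}\mid k\in\zz,\ l=m\in\nn_0\}=\T{span}_{\cc}\{a^k (b^*b)^m,\ (b^*b)^m(a^*)^k\mid k,m\in\nn_0\}$, where I have used $\xi^{klm}=a^k b^l (b^*)^m$ for $k\geq 0$ and $\xi^{klm}=b^l(b^*)^m(a^*)^{-k}$ for $k<0$, together with $bb^*=b^*b$, to rewrite the diagonal basis elements in the stated form (for $k\geq 0$ one gets $a^k(b^*b)^m$, for $k<0$ one gets $(b^*b)^m(a^*)^{|k|}$, and the case $k=0$ is common to both). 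Finally, the full ($C^*$-algebraic) fixed point algebra is the norm closure of the algebraic one: indeed $\C O(SU_q(2))$ is dense in $C(SU_q(2))$, $\Pi^\sigma_0$ is a norm-continuous projection onto the fixed point algebra, and $\Pi^\sigma_0(\C O(SU_q(2)))$ is the algebraic fixed point algebra, so its closure is all of $\Pi^\sigma_0(C(SU_q(2)))$.

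I do not anticipate a serious obstacle here; the only mild subtlety is bookkeeping with the two cases $k\geq 0$ and $k<0$ in the definition of $\xi^{klm}$ and checking that the diagonal ($l=m$) basis elements are exactly the monomials $a^k(b^*b)^m$ and $(b^*b)^m(a^*)^k$ listed in the statement — this uses $bb^*=b^*b$ and nothing more. One should also be slightly careful to note that $\sigma$ is well-defined and strongly continuous, but this is asserted in the text preceding the lemma, so I would simply cite it. Thus the proof reduces to: (i) the computation $\sigma(z,a)=a$, $\sigma(z,b)=z^{-2}b$; (ii) the resulting eigenvalue $z^{2(m-l)}$ on $\xi^{klm}$; (iii) the description of $\Pi^\sigma_0$ on the basis; and (iv) a density argument to pass from the algebraic to the $C^*$-algebraic fixed point algebra.
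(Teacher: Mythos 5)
Your proof is correct and follows essentially the same route as the paper: both compute the $\sigma$-eigenvalue $z^{2(m-l)}$ on the standard basis elements $\xi^{klm}$ and then use the spectral projection $\Pi_0^\sigma$ together with density of $\C O(SU_q(2))$ to identify the fixed point algebra with the closure of the span of the diagonal ($l=m$) basis elements. The extra bookkeeping you supply (the computation $\sigma(z,a)=a$, $\sigma(z,b)=z^{-2}b$ and the rewriting via $bb^*=b^*b$) is exactly what the paper leaves implicit.
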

\begin{proof}
Since $\sigma$ fixes $a$, $a^*$ and $b^*b$, it is clear that the span in the statement of the lemma is contained in the fixed point algebra. For the opposite inclusion, one may use the standard linear basis \eqref{eq:standard-basis}  together with the spectral projection $\Pi_0^\si \colon C(SU_q(2)) \to C(SU_q(2))$. Indeed, it holds that 
\[
\Pi_0^\si(\xi^{klm}) = \fork{ccc}{ \xi^{klm} & \T{for} & m = l \\ 0 & \T{for} & m \neq l } . \qedhere
\]
\end{proof}

\begin{lemma} \label{lem:states-only-see-fixed-points}
Let $N,M \in \nn_0$. We have that $\epsilon= \epsilon \circ \Pi^{\sigma}_0$ and $\chi_N^M=\chi_N^M\circ \Pi^{\sigma}_0$.
\end{lemma}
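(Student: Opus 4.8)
The key observation is that both functionals $\epsilon$ and $\chi_N^M$ are built from the element $\xi_N^M$, which is a polynomial in $a$ alone, and from the Haar state $h$; the circle action $\sigma$ fixes $a$, $a^*$ and $b^*b$, so it ought to be ``invisible'' to these functionals. Concretely, I would use the description of the fixed point algebra from Lemma \ref{lem:fixed-pt-basis}: it suffices to show that $\epsilon$ and $\chi_N^M$ vanish on $\xi^{klm}$ whenever $l \neq m$, since $\Pi_0^\sigma$ acts as the identity on basis elements with $l = m$ and kills those with $l \neq m$. By linearity and continuity of the spectral projection, together with the fact that $\Pi_0^\sigma$ leaves $\C O(SU_q(2))$ invariant, the identities $\epsilon = \epsilon \circ \Pi_0^\sigma$ and $\chi_N^M = \chi_N^M \circ \Pi_0^\sigma$ follow once they are checked on the standard linear basis.

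For $\epsilon$ this is immediate: $\epsilon$ is a $*$-homomorphism with $\epsilon(a) = 1$ and $\epsilon(b) = 0$ (directly from $\epsilon(u) = \SmallMatrix{1 & 0 \\ 0 & 1}$), so $\epsilon(\xi^{klm}) = 0$ as soon as $l > 0$ or $m > 0$; in particular it vanishes whenever $l \neq m$, which is more than enough. For $\chi_N^M$ the argument is a Haar-state computation. Writing $\xi_N^M = \sum_r c_r a^r$ with scalars $c_r = \frac{1}{\sqrt{M+1}}\sqrt{\inn{r+1}_q}$, one has
\[
\chi_N^M(\xi^{klm}) = \sum_{r,s} \bar c_r c_s \, h\big( (a^r)^* \, \xi^{klm} \, a^s \big),
\]
so I need to understand $h\big((a^*)^r \xi^{klm} a^s\big)$. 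Using the commutation relations $b a = q a b$ and $b^* a = q a b^*$ one may move all the $a$'s and $a^*$'s to appropriate sides and express $(a^*)^r a^k b^l (b^*)^m a^s$ (and the analogous expression for $k<0$) as a scalar multiple of a standard basis element $\xi^{k'lm}$ with the \emph{same} pair $(l,m)$ — conjugation by powers of $a$ and $a^*$ does not change $l$ or $m$, only the $a$-degree $k$ and introduce powers of $q$. Since the Haar state vanishes on $\xi^{k'lm}$ unless $k' = 0$ and $l = m$ (as recalled in Section \ref{subsec:Haar-state}), we conclude $h\big((a^*)^r \xi^{klm} a^s\big) = 0$ whenever $l \neq m$. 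Hence $\chi_N^M(\xi^{klm}) = 0$ for $l \neq m$, which gives $\chi_N^M = \chi_N^M \circ \Pi_0^\sigma$.

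The main technical point — though it is genuinely routine rather than deep — is the bookkeeping in rewriting $(a^*)^r a^k b^l (b^*)^m a^s$ as a $q$-scalar times a standard basis vector, keeping track of the cases $k \geq 0$ and $k < 0$ separately and using $a^* a = 1 - q^2 bb^*$, $aa^* = 1 - bb^*$ to reduce mixed products of $a$ and $a^*$. One must be slightly careful that such reductions can produce additional terms in $bb^* = b b^*$, but these only raise $l$ and $m$ \emph{simultaneously}, so they never create a term with $l = m$ out of one with $l \neq m$; the parity obstruction ``$l - m$ is preserved'' is exactly what makes the argument work. I would phrase this cleanly by noting that the circle action $\sigma$ assigns to $\xi^{klm}$ the weight $z^{l-m}$ under $\sigma$ applied with suitable signs (this is essentially the content of the formula for $\Pi_0^\sigma(\xi^{klm})$ in the proof of Lemma \ref{lem:fixed-pt-basis}), and that both $a^r$ and $(a^*)^r$ are $\sigma$-invariant, so that $(a^*)^r \xi^{klm} a^s$ again has $\sigma$-weight $z^{l-m}$; since $h$ annihilates every element of nonzero $\sigma$-weight (being $\sigma$-invariant, as $\sigma$ is a composition of the bi-invariant-implementing actions $\sigma_L, \sigma_R$), the claim follows. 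This reformulation lets me avoid the explicit $q$-power computation entirely.
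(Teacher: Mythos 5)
Your proof is correct and, once you reach the weight-space reformulation at the end, it rests on exactly the same ingredients as the paper's short argument: $\sigma$ fixes $a$ (hence $\xi_N^M$), the Haar state satisfies $h(\sigma(z,x))=h(x)$, and $\epsilon(b)=0$; the paper simply phrases this as $\epsilon(\sigma(z,x))=\epsilon(x)$ and $\chi_N^M(\sigma(z,x))=\chi_N^M(x)$ for all $z\in S^1$ and then invokes the averaging integral defining $\Pi_0^\sigma$, whereas you verify vanishing of both functionals on the basis vectors $\xi^{klm}$ with $l\neq m$. The intermediate commutation-relation bookkeeping you sketch is therefore unnecessary (and, as you note yourself, those reductions produce sums rather than a single scalar multiple of a basis element), but your final invariance argument coincides with the paper's proof.
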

\begin{proof}
 This follows immediately since $\epsilon\big( \sigma(z,x) \big) = \epsilon(x)$ and $\chi_N^M\big( \sigma(z,x)\big) = \chi_N^M(x)$ for all $z \in S^1$ and $x \in C(SU_q(2))$. In the case of  $\epsilon$ it suffices to check the relevant identity on the generators $a$ and $b$ and in the case of $\chi_N^M$ the  relevant identity follows since $\si(z, \xi_N^M) = \xi_N^M$ and $h\big( \si(z,x) \big) = h(x)$.
\end{proof}

\begin{lemma}\label{lem:convergence-to-counit}
We have the convergence result $\lim_{N,M \to \infty} \chi^M_N = \epsilon$ with respect to the weak$^*$ topology on $\C S\big(C(SU_q(2))\big)$. 
\end{lemma}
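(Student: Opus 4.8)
By Lemma \ref{lem:states-only-see-fixed-points} we have $\chi_N^M = \chi_N^M \circ \Pi_0^\si$ and $\epsilon = \epsilon \circ \Pi_0^\si$, so it suffices to test weak$^*$ convergence against elements of the $\si$-fixed point algebra; by norm-density of the algebraic span and uniform boundedness of the states $\chi_N^M$ (they all have norm $1$), it is in fact enough to check the convergence $\chi_N^M(x) \to \epsilon(x)$ on the linear basis elements $x = a^k (b^*b)^m$ and $x = (b^*b)^m (a^*)^k$ described in Lemma \ref{lem:fixed-pt-basis}. Since $\chi_N^M$ and $\epsilon$ are states (hence $\ast$-compatible) it suffices to treat $x = a^k(b^*b)^m$; note $\epsilon(a^k(b^*b)^m) = 0$ unless $k = m = 0$, and $\epsilon(1) = 1 = \chi_N^M(1)$, so the only real content is to show $\chi_N^M(a^k(b^*b)^m) \to 0$ whenever $(k,m) \neq (0,0)$.

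First I would expand $\chi_N^M(a^k(b^*b)^m) = \frac{1}{M+1}\sum_{r,s = N}^{N+M} \sqrt{\inn{r+1}_q \inn{s+1}_q}\, h\big( (a^*)^r a^k (b^*b)^m a^s \big)$ and compute the Haar state values using the standard basis description from Section \ref{subsec:Haar-state}. Commuting $a^s$ past $(b^*b)^m$ introduces only powers of $q$ (via the relations $ba = qab$, $b^*a = qab^*$), so $(a^*)^r a^k (b^*b)^m a^s = q^{\,?}\, (a^*)^r a^{k+s} (b^*b)^m$ up to reordering; writing $(a^*)^r a^{k+s}$ in the standard basis and using that the Haar state annihilates $\xi^{klm}$ unless the "$a$-degree" vanishes and $l = m$, one sees $h$ is nonzero only when $r = k+s$, in which case the value is a rational function of $q$ of the shape $q^{c(r,s,k,m)}/\inn{m+1}_q$ times a product coming from rewriting $a^{k+s}(a^*)^{k+s}$ or $(a^*)^r a^r$ via $a^*a = 1 - q^2 b b^*$. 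Carrying this through, $\chi_N^M(a^k(b^*b)^m)$ becomes a single sum over $s$ from $\max(N, N-k)$ to $N+M$ of terms of the form $\frac{1}{M+1}\sqrt{\inn{k+s+1}_q \inn{s+1}_q}\cdot q^{e(s)}\cdot(\text{bounded rational factor})$, with each summand bounded uniformly in $s$ (using $q \in (0,1]$ and $\inn{n}_q \leq \frac{1}{1-q^2}$ when $q<1$, or $\inn{n}_q = n$ and a separate elementary estimate when $q = 1$). Hence the whole expression is $O\big(\tfrac{1}{M+1}\big)$ when $q<1$, and for $q=1$ a direct estimate of the resulting telescoping/polynomial sum again gives decay as $M \to \infty$; either way $\chi_N^M(a^k(b^*b)^m) \to 0$ as $N, M \to \infty$.

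The main obstacle is the bookkeeping in the second step: correctly reordering $(a^*)^r a^k (b^*b)^m a^s$ into standard-basis form, tracking all the $q$-powers, and in particular handling the substitution $a^*a = 1-q^2bb^*$ (and $aa^* = 1 - bb^*$) which turns a single monomial into a sum of basis elements, each contributing a Haar value of the form \eqref{eq:Haar-on-standard-basis}. Once the resulting closed expression is in hand, the estimate $|\chi_N^M(a^k(b^*b)^m)| \leq \frac{C_{k,m,q}}{M+1}$ (with $C_{k,m,q}$ independent of $N$) is routine for $q < 1$; the borderline case $q = 1$ requires a slightly more careful but still elementary summation estimate, since the naive bound on each summand is only $O(\sqrt{s})$ and one must use cancellation or a sharper count. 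Combining the pointwise convergence on basis elements with the uniform norm bound $\|\chi_N^M\| = 1 = \|\epsilon\|$ and Lemma \ref{lem:fixed-pt-basis} then yields $\chi_N^M \to \epsilon$ weak$^*$, completing the proof.
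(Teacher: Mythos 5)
There is a genuine error at the very first reduction: you assert that $\epsilon(a^k(b^*b)^m)=0$ unless $k=m=0$, but the counit is the unital $*$-homomorphism determined by $\epsilon(u)=\SmallMatrix{1 & 0\\ 0 & 1}$, so $\epsilon(a)=\epsilon(a^*)=1$ and $\epsilon(b)=0$; hence $\epsilon(a^k(b^*b)^m)=1$ whenever $m=0$, for \emph{every} $k$. Consequently your stated goal, namely $\chi_N^M(a^k(b^*b)^m)\to 0$ for all $(k,m)\neq(0,0)$, is false for $m=0$, $k\geq 1$: one must instead prove $\chi_N^M(a^k)\to 1$, and this is precisely the nontrivial content of the lemma. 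The paper's proof reduces (after your same fixed-point/adjoint reductions, plus the observation that $\T{span}_\cc\{(b^*b)^m\}=\T{span}_\cc\{(a^*)^na^n\}$) to elements of the form $(a^*)^{k+n}a^n$, where $\epsilon$ takes the value $1$, and then shows $\chi_N^M\big((a^*)^{k+n}a^n\big)\to 1$ via the explicit Haar values \eqref{eq:haarmatrix}: after the diagonal restriction $j=i+k$ one gets a Ces\`aro-type average $\frac{1}{M+1}\sum_{i=N}^{N+M-k}\frac{\inn{i+1}_q^{1/2}\inn{i+k+1}_q^{1/2}}{\inn{i+k+n+1}_q}$, whose summands tend to $1$ as $i\to\infty$. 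None of this appears in your argument, so the core of the proof is missing, not merely left as bookkeeping.

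Two further remarks. First, the substitution $\T{span}_\cc\{(b^*b)^m\}=\T{span}_\cc\{(a^*)^na^n\}$ (using $a^*a=1-q^2bb^*$) is exactly what removes the "main obstacle" you flag: with it, the Haar values are read off directly from \eqref{eq:haarmatrix} via $(a^*)^m=u^m_{00}$, and no reordering of $(a^*)^r a^k(b^*b)^m a^s$ into the standard basis is needed. Second, even for the part of your claim that is true ($m\geq 1$, where $\epsilon$ does vanish), your decay mechanism is not right as stated: after the constraint $r=k+s$ there remain about $M+1$ terms whose individual sizes are not small in general, so the prefactor $\frac{1}{M+1}$ alone does not give decay; for $q<1$ the decay comes from the powers $q^{2s}$ produced when commuting $(b^*b)^m$ past $a^s$ (so it is $N\to\infty$, not $M\to\infty$, that forces these terms to zero), and for $q=1$ it comes from the decay of the Haar values $h\big((a^*)^r a^r (b^*b)^m\big)$ themselves. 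So the proposal needs both the correct target value of $\epsilon$ and a genuinely different quantitative argument before it can be completed.
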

\begin{proof}
By Lemma \ref{lem:fixed-pt-basis} and Lemma \ref{lem:states-only-see-fixed-points}, we only need to treat elements of the form 
\[
(b^*b)^m (a^*)^k \quad \T{ and }  \quad a^k (b^*b)^m, \quad \T{ for }   k,m \in \nn_0.
\]
Since states preserve the involution and 
\[
(b^*b)^m (a^*)^k  = q^{-2km} (a^*)^k(b^*b)^m 
\]
it is enough to check the claim on elements of the form $(a^*)^k(b^*b)^m$. But since
\[
\T{span}_{\cc}\big\{ (b^*b)^m \mid m\in \nn_0\big\}=\text{span}_{\cc}\big\{(a^*)^n a^n \mid n\in \nn_0\big\},
\]
we may, equivalently, verify the convergence on elements of the form $(a^*)^{k+n} a^n$. Let now $k,n \in \nn_0$ be given. We are left with the task of showing that 
\[
\lim_{N,M  \to \infty} \chi_N^M\big( (a^*)^{k+n} a^n\big) = \epsilon\big((a^*)^{k+n} a^n\big)= 1  .
\]
Let us recall the inner product formulae from \eqref{eq:haarmatrix} as well as the fact that $(a^*)^m = u_{00}^m$ for all $m \in \nn_0$. For each $N,M \in \nn_0$ with $M \geq k$ we may thus compute as follows:
\[
\begin{split}
\chi_N^M \big( (a^*)^{k+n} a^n \big) 
& = \frac{1}{M+1}\sum_{i,j = N}^{N+M} \inn{i+1}_q^{1/2} \inn{j+1}_q^{1/2} h\big( (a^*)^{i+k+n} a^{n + j}\big) \\
& = \frac{1}{M+1}\sum_{i = N}^{N+M -k} \inn{i+1}_q^{1/2} \inn{i + k + 1}_q^{1/2} h\big( (a^*)^{i+k+n} a^{i + k + n}\big) \\
& = \frac{1}{M+1}\sum_{i = N}^{N+M - k} \frac{ \inn{i+1}_q^{1/2} \inn{i+k+1}_q^{1/2} }{ \inn{i + k + n + 1}_q} .
\end{split}
\]
Let now $\ep > 0$ be given. Since $\lim_{s \to \infty} \frac{ \inn{s}_q}{\inn{l + s}_q} = 1$ for all $l \in \nn_0$ we may choose $N_0 \in \nn_0$ such that
\[
\Big| \frac{ \inn{i +1}_q^{1/2} \inn{k + i +1}_q^{1/2}}{ \inn{n+k+ i + 1}_q} - 1 \Big| < \ep/2
\]
for all $i \geq N_0$. Furthermore, we may choose $M_0 \geq k$ such that $\frac{k}{M + 1} < \ep/2$ for all $M \geq M_0$. 
For all $M \geq M_0$ and $N \geq N_0$ we then estimate that
\begin{align*}
 \big| \chi_N^M \big( (a^*)^{k+n} a^n \big)  - 1 \big| 
& \leq \frac{1}{M+1}\sum_{i = N}^{N+M - k} \Big| \frac{ \inn{i+1}_q^{1/2} \inn{i+k+1}_q^{1/2} }{ \inn{i + k + n + 1}_q} - 1 \Big| + \Big| \frac{M - k + 1}{M + 1} - 1 \Big|\\
&< \ep/2 + \ep/2 = \ep .
\end{align*}
This proves the proposition.
\end{proof}

We are now ready to introduce the analogue of the Berezin transform in our $q$-deformed setting: 

\begin{dfn} The \emph{quantum Berezin transform} in degree $N,M \in \nn_0$ is the completely positive unital map 
$\be_N^M \colon C(SU_q(2)) \to C(SU_q(2))$ given by $\be_N^M(x) := \left(1 \ot \chi_N^M\right)\De(x)$.
\end{dfn}


\begin{remark}\label{rem:relation-with-podles-berezin}
In \cite{AKK:Podcon}, a quantum Berezin transform was introduced for the standard Podle{\'s} sphere $S_q^2$ in a manner very similar to the one above; see also \cite{IzNeTu:PBD} for an alternative and much more general construction of a Berezin transform on quantum homogeneous spaces. In \cite{AKK:Podcon}, the states defining the Berezin transform were denoted $h_N$, $N \in \nn_0$, and given by $h_N(x):=\inn{N+1}_q h\big((a^*)^Nxa^{N}\big)$ for all $x\in C(S_q^2)\su C(SU_q(2))$. We therefore have that $h_N=\chi_{N}^0\vert_{C(S_q^2)}$. In particular, the restriction of $\be_N^0$ to $C(S_q^2)$ agrees with the Berezin transform $\beta_N$ introduced in \cite{AKK:Podcon}. When $q=1$, we recovered the usual Berezin transform on the classical 2-sphere; see \cite[Section 3.2]{AKK:Podcon} for details on this. Note also that a Berezin transform for quantum homogeneous spaces was introduced in \cite{Sain:Thesis} in the setting of Kac type quantum groups. Since $SU_q(2)$ is only of Kac type when $q=1$ the constructions in \cite{Sain:Thesis} unfortunately do not apply directly in our context. However, as we shall see below, the more ad hoc definition above shares a number of properties with the construction in \cite{Sain:Thesis}.
%
%
\end{remark}

\subsection{The image of the Berezin transform}\label{ss:image-of-berezin}
When investigating  the quantum Gromov-Hausdorff continuity of the family $\big( C(SU_q(2)),L_{t,q}^{\T{max}}\big)_{t,q \in (0,1]}$, a detailed understanding of the image of the Berezin transform $\beta_N^M$ turns out to be imperative. In this section we therefore describe this image explicitly in terms of polynomial expressions in the generators $a,b,a^*,b^*$ for $\C O(SU_q(2))$. 

For each $r,s \in \nn_0$, we introduce the linear functional $\varphi_{r,s} \colon C(SU_q(2)) \to \cc$ given by
\[
\varphi_{r,s}(x) := h\big( (a^*)^s x a^r \big) .
\]
These linear functionals are then related to our states $\chi_N^M$  (see \eqref{eq:xidef}) by the formula
\begin{equation}\label{eq:statphi}
\chi_N^M = \frac{1}{M+1} \sum_{r,s = N}^{N + M} \sqrt{ \inn{r + 1}_q \inn{s + 1}_q } \cd \varphi_{r,s},  \q N,M \in \nn_0 .
\end{equation}
We now wish to determine the image of the Berezin transform $\be_N^M$. To this end we first analyse the linear functionals $\varphi_{r,s}$ in more details.

\begin{lemma}\label{l:imageberI}
Let $n,r,s \in \nn_0$ and $0 \leq i,j \leq n$. It holds that $\varphi_{r,s}(u^n_{ij}) \geq 0$ and that
\[
\varphi_{r,s}(u^n_{ij}) \neq 0 \Longleftrightarrow \big( n - 2j = r - s \T{ and } i = j \T{ and } j \leq s \big) .
\]
\end{lemma}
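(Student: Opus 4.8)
\textbf{Proof plan for Lemma \ref{l:imageberI}.}

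The plan is to compute $\varphi_{r,s}(u^n_{ij}) = h\big((a^*)^s u^n_{ij} a^r\big)$ by rewriting everything in terms of the orthogonal basis $\{u^m_{kl}\}$ of $L^2(SU_q(2))$ and then invoking the Haar state formulae. Since $(a^*)^s = u^s_{00}$ and $a^r = (u^r_{00})^* \cdot (\text{scalar})$ — more precisely, using \eqref{eq:adjoint-of-matrix-coefficients} one has $a^r = (a^*)^{*r}$ and $(a^*)^r = u^r_{00}$, so $a^r = (u^r_{00})^*$ — the quantity $\varphi_{r,s}(u^n_{ij})$ is an inner product in $H_q$ of the form $\binn{u^s_{00}{}^*, u^n_{ij} a^r}$, up to taking care of the embedding $\Lambda$ and the adjoint conventions. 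I would first reduce to understanding the product $u^n_{ij} a^r = u^n_{ij} (u^r_{00})^*$ as a linear combination of matrix coefficients, and symmetrically the product $(a^*)^s u^n_{ij} = u^s_{00} u^n_{ij}$, by iterating the multiplication rules in \eqref{eq:leftmult}.

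The cleaner route, which I expect to be the main computational step, is to iterate the formula for $a^* \cd u^n_{ij}$ from \eqref{eq:leftmult}: multiplying on the left by $a^*$ repeatedly sends $u^n_{ij}$ into a linear combination of $u^{n'}_{i,j}$ with $n' \in \{n-s, n-s+2, \dots, n+s\}$ but crucially \emph{fixing the second index $j$} and shifting the first index. Actually, a better bookkeeping device is to observe that $a^* = u^1_{00}$ pairs trivially under $\sigma_R$ in the sense that left multiplication by powers of $a^*$ and right multiplication by powers of $a$ only affect the spectral data controlled by the circle actions $\sigma_L$ and $\sigma_R$; combined with the weight/grading constraints this will force $i=j$. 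Concretely: the Haar state $h(u^m_{kl})$ vanishes unless $m=0$, so $h\big((a^*)^s u^n_{ij} a^r\big)$ is the coefficient of $u^0_{00}=1$ in the expansion of $(a^*)^s u^n_{ij} a^r$. Expanding $(a^*)^s u^n_{ij}$ via \eqref{eq:leftmult} keeps the column index $j$ until it is forced down, and expanding $u^n_{ij} a^r$ on the right (using the right-handed analogue, or equivalently the $\sigma_R$-covariance) keeps the row index $i$; the only way to reach $u^0_{00}$ is to have the descending chains meet, which happens precisely when $i=j$, when $n-2j = r-s$ (a grading/spectral-degree count from $\sigma_L$ and $\sigma_R$, since $a^*$ raises the $\sigma_L$- and $\sigma_R$-degrees by $1$ while $u^n_{ij}$ has $\sigma_L$-degree $2j-n$ and $\sigma_R$-degree $2i-n$), and when the downward steps are actually available, i.e. $j \le s$ (one needs at least $j$ copies of $a^*$ to annihilate the $j$-th column down to $0$, the rest contributing via the $u^{n+1}$ branch to eventually produce $u^0_{00}$ — tracking the exact range gives $j\le s$, and the symmetric count gives $i\le r$, which under $i=j$ and $n-2j=r-s$ is automatic once $j\le s$ since then $i=j\le s \le r + (n-2j) \le r$ when $n\ge 2j$; the case $n<2j$ is impossible for a matrix coefficient).

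For the positivity claim $\varphi_{r,s}(u^n_{ij}) \ge 0$: once the above forces $i=j$, the surviving term is a product of nonnegative $q$-Clebsch-Gordan-type coefficients times the strictly positive Haar values $h(b^m b^{*m}) = 1/\inn{m+1}_q$ from \eqref{eq:Haar-on-standard-basis} (equivalently the $2$-norms in \eqref{eq:haarmatrix}), with no sign cancellations, because at each descending step in \eqref{eq:leftmult} only one branch survives the eventual projection to $u^0_{00}$ and the relevant structure constants $\tfrac{\sqrt{\inn{\cdot}\inn{\cdot}}}{\inn{n+1}}$ are manifestly nonnegative. Alternatively, and perhaps more robustly, I would note $\varphi_{r,s}(u^n_{ij}) = \binn{\Lambda(a^s), \rho(u^n_{ij})\Lambda(a^r)}$ is — after the index reductions — of the form $\binn{\Lambda(a^s u^n_{ij}{}'), \Lambda(a^r{}')}$ with matching basis vectors, hence equals a squared norm times positive scalars; I would make this precise using Lemma \ref{lem:right-mult-lem-v2} to convert right multiplication by $a$ into left multiplication by $J\nu^{1/2}(\cdot)^*J$ and then recognize an honest inner product of a basis vector with itself. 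The hard part will be organizing the iterated application of \eqref{eq:leftmult} cleanly enough to extract exactly the three conditions $n-2j=r-s$, $i=j$, $j\le s$ without drowning in indices; I expect the $\sigma_L$/$\sigma_R$ grading argument to dispatch the first two conditions almost for free, leaving only the range condition $j\le s$ to be checked by a short explicit induction on $s$.
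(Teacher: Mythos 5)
Your overall strategy is essentially the paper's: expand $(a^*)^s u^n_{ij} a^r$ in the matrix-coefficient basis, use that $h$ annihilates every $u^m_{kl}$ with $m>0$, and deduce positivity from the manifest nonnegativity of the structure constants; your $\sigma_L$/$\sigma_R$-grading count is a clean (and correct) variant route to the two conditions $n-2j=r-s$ and $i=j$. The paper differs in one technically convenient step you skip: it first invokes the twisted-trace property \eqref{eq:modular} together with \eqref{eq:modular-function} to write $\varphi_{r,s}(u^n_{ij}) = q^{-2r}\, h\big(a^r (a^*)^s u^n_{ij}\big)$, so that only the \emph{left} multiplication formulas \eqref{eq:leftmult} are ever needed: $(a^*)^s u^n_{ij}=\sum_{k=0}^{\min\{i,j,s\}}\mu(k)\, u^{n-2k+s}_{i-k,j-k}$ and $a^r u^m_{kl}=\sum_{k'}\lambda(k')\, u^{m+2k'-r}_{k+k',l+k'}$ with strictly positive coefficients, and reaching $u^0_{00}$ forces $k=i=j\le s$ and $m=r$ in one stroke, giving both directions of the equivalence and the positivity simultaneously. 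Your plan instead requires right multiplication by $a^r$, for which the paper provides no formulas; your fallback via Lemma \ref{lem:right-mult-lem-v2} is workable but left unspecified.

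Two of your combinatorial claims are false and, as written, would derail the treatment of the condition $j\le s$. First, left multiplication by $a^*$ does not ``fix the second index $j$ and shift the first'': by \eqref{eq:leftmult} the two indices move in lockstep, either both staying put (the $u^{n+1}_{ij}$ branch) or both dropping by one (the $u^{n-1}_{i-1,j-1}$ branch); likewise right multiplication by $a$ either fixes both or raises both. Second, the assertion that ``the case $n<2j$ is impossible for a matrix coefficient'' is wrong --- e.g.\ $u^2_{22}=a^2$ --- and you use it to dispose of the purported symmetric condition $i\le r$. In fact $i\le r$ is not a necessary condition at all: $\varphi_{0,2}(u^2_{22})=h\big((a^*)^2a^2\big)>0$ while $i=2>r=0$, in agreement with the lemma, so insisting on $i\le r$ would contradict the very statement you are proving. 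Once these points are repaired --- most easily by adopting the paper's modular trick, or by observing that in your ordering the lowering branch of right multiplication by $a$ does not lower the indices, so the only range constraint produced is $i\le s$ --- the remainder of your plan (grading for $i=j$ and $n-2j=r-s$; positivity from nonnegative Clebsch--Gordan coefficients with no cancellations) goes through and coincides with the paper's argument.
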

\begin{proof}
First note that by \eqref{eq:modular-function} we have the identities
\[
\varphi_{r,s}(u_{ij}^n)=h\big((a^*)^s  u_{ij}^n a^r\big)= h\big( \nu(a^r) \cd (a^*)^s u_{ij}^n\big)=q^{-2r}h\big(a^r (a^*)^s u_{ij}^n\big).
\]
Applying the formulae  \eqref{eq:leftmult} we obtain that
\begin{equation}\label{eq:leftpow}
\begin{split}
a^r \cd u^n_{ij} & =  \sum_{k = 0}^r \la_{n,i,j}(k) \cd u_{i+k,j+k}^{n + 2k - r} \quad \T{and} \\
(a^*)^s \cd u^n_{ij} & = \sum_{k = 0}^{ \min\{i,j,s\}} \mu_{n,i,j}(k) \cd u_{i -k,j-k}^{n-2k + s} ,
\end{split}
\end{equation}
where all the coefficients appearing are strictly positive. Now note that $h(u^m_{kl}) = 0$ for all $m > 0$ and $h(u^0_{00}) = 1$.  We see from the formulae in \eqref{eq:leftpow} that if the matrix coefficient $u_{00}^0$ appears in the double sum expressing $a^r (a^*)^s \cd u^n_{ij}$,  then there are terms of the form $u^m_{00}$ in the sum expressing $(a^*)^s \cd u^n_{ij}$. This in turn implies that $s\geq j$  and $i = j$. We thus arrive at the following expressions:
\begin{align*}
h\big( a^r (a^*)^s \cd u^n_{ij} \big) 
& = \cas{
\la_{n-2j + s,0,0}(0) \cd \mu_{n,j,j}(j) \cd h\big( u_{00}^{n - 2j + s - r} \big) & i = j \, , \, \, j \leq s \\
0 & \T{elsewhere} } \\
& = \cas{
\la_{r,0,0}(0) \cd \mu_{n,j,j}(j) &  i = j \, , \, \, j \leq s \, , \, \, n = r -s + 2j \\
0 &  \T{elsewhere} } .
\end{align*}
This proves the lemma.
\end{proof}


 \begin{lemma}\label{l:imageberIA} 
Let $N,M \in \nn_0$ and $m \in \zz$ with $|m| > M$. It holds that $\be_N^M(x) = 0$ for all $x \in A_q^m$.
\end{lemma}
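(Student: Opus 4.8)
The claim is that the Berezin transform $\be_N^M$ annihilates every spectral subspace $A_q^m$ with $|m| > M$. My plan is to reduce this to a statement about matrix coefficients via continuity, density and linearity. Recall that the coproduct satisfies $\De(u^n_{ij}) = \sum_{k=0}^n u^n_{ik} \ot u^n_{kj}$, so that
\[
\be_N^M(u^n_{ij}) = (1 \ot \chi_N^M)\De(u^n_{ij}) = \sum_{k = 0}^n u^n_{ik} \cd \chi_N^M(u^n_{kj}) .
\]
Thus it suffices to understand for which triples $(n,k,j)$ the scalar $\chi_N^M(u^n_{kj})$ is non-zero. The key input is Lemma \ref{l:imageberI} together with the expansion \eqref{eq:statphi}: since $\chi_N^M$ is a positive linear combination of the functionals $\varphi_{r,s}$ with $N \le r,s \le N+M$, and since Lemma \ref{l:imageberI} gives $\varphi_{r,s}(u^n_{kj}) \ge 0$ with equality unless $k = j$, $j \le s$ and $n - 2j = r - s$, there is no cancellation: $\chi_N^M(u^n_{kj}) \neq 0$ forces $k = j$ and forces $n - 2j = r - s$ for at least one admissible pair $(r,s)$ with $N \le r,s \le N+M$. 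In particular $|n - 2j| = |r - s| \le M$.

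\textbf{Connecting to the spectral grading.} The next step is to recall from \eqref{eq:sigma-L-and-R-on-matrix-units} that $u^n_{ij} \in A_q^{2j - n}$, i.e.\ $u^n_{ij}$ lies in the spectral subspace for the left circle action with weight $2j - n$. So the condition ``$|n - 2j| \le M$'' from the previous paragraph is precisely the statement that the matrix coefficient $u^n_{kj}$ appearing with non-zero coefficient in $\be_N^M(u^n_{ij})$ lies in a spectral subspace $A_q^{m'}$ with $|m'| \le M$. Now fix $m$ with $|m| > M$ and take $x \in \C A_q^m$. Since the algebraic spectral subspace is spanned by matrix coefficients $u^n_{ij}$ with $2j - n = m$, it is enough to check $\be_N^M(u^n_{ij}) = 0$ for such coefficients. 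For these, $n - 2j = -m$ has $|n - 2j| = |m| > M$, so by the discussion above every term $u^n_{ik}\chi_N^M(u^n_{kj})$ vanishes (the only potentially surviving term would need $k = j$ and then $|n - 2j| \le M$, a contradiction). Hence $\be_N^M$ vanishes on $\C A_q^m$.

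\textbf{Passing to the norm closure.} Finally, $A_q^m$ is the norm closure of $\C A_q^m$ (this was recorded in Section \ref{ss:circle}, using the spectral projections $\Pi^L_m$), and $\be_N^M$ is a completely positive, hence bounded, operator on $C(SU_q(2))$. Therefore $\be_N^M$ vanishes on all of $A_q^m$ whenever $|m| > M$, which is the assertion.

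\textbf{Main obstacle.} The only genuinely delicate point is the no-cancellation argument: one must be sure that writing $\chi_N^M$ as a positive combination of the $\varphi_{r,s}$ together with the sign information $\varphi_{r,s}(u^n_{kj}) \ge 0$ from Lemma \ref{l:imageberI} really does prevent any nonzero value from arising when $k \neq j$ or when $|n - 2j| > M$. Since all coefficients $\sqrt{\inn{r+1}_q \inn{s+1}_q}/(M+1)$ in \eqref{eq:statphi} are strictly positive and all the $\varphi_{r,s}(u^n_{kj})$ are nonnegative, $\chi_N^M(u^n_{kj}) = 0$ if and only if every individual $\varphi_{r,s}(u^n_{kj}) = 0$; this is exactly where Lemma \ref{l:imageberI}'s characterization is used, and it is a routine but essential bookkeeping step. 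Everything else is a direct consequence of the coproduct formula on matrix coefficients and the identification of spectral subspaces with matrix-coefficient weights.
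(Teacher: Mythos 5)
Your proof is correct and follows essentially the same route as the paper: reduce by linearity, density and boundedness of $\be_N^M$ to matrix coefficients spanning $\C A_q^m$, expand $\be_N^M(u^n_{ij}) = \sum_k u^n_{ik}\cd\chi_N^M(u^n_{kj})$ via the coproduct, and kill every coefficient using the characterisation in Lemma \ref{l:imageberI} together with $|n-2j| = |m| > M \geq |r-s|$ (the paper merely adds a cosmetic reduction to $m < -M$ via the involution). The only quibble is that your ``no-cancellation'' worry is superfluous for the direction you actually use: since each $\varphi_{r,s}$ with $N\leq r,s\leq N+M$ vanishes on $u^n_{kj}$, the vanishing of $\chi_N^M(u^n_{kj})$ follows from linearity alone, with the positivity in Lemma \ref{l:imageberI} only being needed for the converse statements in Lemma \ref{l:imageberII}.
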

 \begin{proof}
Since $\be_N^M$ preserves the involution and $A_q^m = (A_q^{-m})^*$ we may suppose that $m < -M$. Furthermore, we may assume that $x = u^{2j - m}_{ij}$ for some $j \in \nn_0$ and $i \in \{0,1,\ldots,2j - m\}$ since $\C A_q^m$ is spanned by such matrix coefficients by \eqref{eq:sigma-L-and-R-on-matrix-units}. It then follows from Lemma \ref{l:imageberI} that
\[
\be_N^M(x) = \sum_{k = 0}^{2j - m} u_{ik}^{2j - m} \cd \chi_N^M(u_{kj}^{2j - m}) 
= u_{ij}^{2j-m} \cd \chi_N^M(u_{jj}^{2j - m}) = 0 .
\]
Indeed, for all $r,s \in \{N,\ldots,N+M\}$ we have  $2j - m - 2j = -m > M \geq r - s$.
\end{proof}

\begin{lemma}\label{l:imageberII}
Let $N,M \in \nn_0$ and let $m \in \{0,\ldots,M\}$. Let moreover $j \in \nn_0$ and $i \in \{0,1,\ldots,2j+m\}$. It holds that
\[
\be_N^M(u^{2j+m}_{ij}) \neq 0 \ \Longleftrightarrow \ j \in \{0, \ldots,N+M-m\} .
\]
In this case $\chi_N^M(u^{2j+m}_{jj}) > 0$ and we have the formula
\[
\be_N^M(u^{2j+m}_{ij}) = u^{2j+m}_{ij} \cd \chi_N^M(u^{2j+m}_{jj}) .
\]
Similarly, we have that 
\[
\be_N^M(u^{2j+m}_{i,j+m}) \neq 0 \ \Longleftrightarrow \  j \in \{0,\ldots,N+M-m\} .
\]
In this case $\chi_N^M(u^{2j+m}_{j+m,j+m}) > 0$ and we have the formula
\[
\be_N^M(u^{2j+m}_{i,j+m}) = u^{2j+m}_{i,j+m} \cd \chi_N^M(u^{2j+m}_{j+m,j+m}) .
\]
\end{lemma}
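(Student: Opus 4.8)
\textbf{Proof plan for Lemma \ref{l:imageberII}.}

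The plan is to reduce everything to the computation of $\be_N^M$ on individual matrix coefficients via the defining formula $\be_N^M(x) = (1\ot\chi_N^M)\De(x)$ together with the corepresentation identity $\De(u^n_{ij}) = \sum_{k=0}^n u^n_{ik}\ot u^n_{kj}$. Applying this to $x = u^{2j+m}_{ij}$ gives $\be_N^M(u^{2j+m}_{ij}) = \sum_{k=0}^{2j+m} u^{2j+m}_{ik}\cd\chi_N^M(u^{2j+m}_{kj})$, so the first task is to determine for which $k$ one has $\chi_N^M(u^{2j+m}_{kj})\neq 0$. Here I would invoke \eqref{eq:statphi}, which expresses $\chi_N^M$ as a positive combination of the functionals $\varphi_{r,s}$ with $r,s$ ranging over $\{N,\dots,N+M\}$, and then feed in Lemma \ref{l:imageberI}: $\varphi_{r,s}(u^n_{kj})\neq 0$ forces $k=j$, $j\leq s$ and $n-2j = r-s$. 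With $n = 2j+m$ this last condition reads $m = r-s$, which is solvable with $r,s\in\{N,\dots,N+M\}$ precisely when $0\le m\le M$ (true by hypothesis), and the solutions are $(r,s)$ with $s\in\{N,\dots,N+M-m\}$, $r = s+m$. Since all coefficients in \eqref{eq:statphi} and all values $\varphi_{r,s}(u^n_{jj})$ (when nonzero) are strictly positive by Lemma \ref{l:imageberI}, there is no cancellation, so $\chi_N^M(u^{2j+m}_{kj})\neq 0$ iff $k=j$ \emph{and} there exists an admissible $(r,s)$ with $j\le s$; the latter holds iff $j\le N+M-m$. This simultaneously yields the formula $\be_N^M(u^{2j+m}_{ij}) = u^{2j+m}_{ij}\cd\chi_N^M(u^{2j+m}_{jj})$ and the positivity statement $\chi_N^M(u^{2j+m}_{jj})>0$ in the nonvanishing range, and shows the vanishing $\be_N^M(u^{2j+m}_{ij})=0$ when $j>N+M-m$.

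For the second pair of statements, concerning $\be_N^M(u^{2j+m}_{i,j+m})$, I would run the identical argument with the second index replaced by $j+m$: now $\De(u^{2j+m}_{i,j+m}) = \sum_k u^{2j+m}_{ik}\ot u^{2j+m}_{k,j+m}$, and Lemma \ref{l:imageberI} applied to $u^n_{k,j+m}$ with $n = 2j+m$ forces $k = j+m$, $j+m\le s$, and $n - 2(j+m) = r-s$, i.e.\ $-m = r-s$, i.e.\ $r = s-m$. This is solvable with $r,s\in\{N,\dots,N+M\}$ for $s\in\{N+m,\dots,N+M\}$, and the constraint $j+m\le s$ combined with $s\le N+M$ is compatible iff $j+m\le N+M$, i.e.\ $j\le N+M-m$ — the same range as before. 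Again strict positivity of all the relevant $\varphi_{r,s}(u^n_{j+m,j+m})$ and of the coefficients in \eqref{eq:statphi} rules out cancellation, giving $\chi_N^M(u^{2j+m}_{j+m,j+m})>0$ and $\be_N^M(u^{2j+m}_{i,j+m}) = u^{2j+m}_{i,j+m}\cd\chi_N^M(u^{2j+m}_{j+m,j+m})$ in that range, and vanishing outside it.

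I do not anticipate a serious obstacle here: the lemma is essentially bookkeeping on top of Lemma \ref{l:imageberI} and \eqref{eq:statphi}, and the two halves are mirror images of one another under the symmetry $j\leftrightarrow j+m$ in the column index. The one point requiring a little care is the \emph{nonvanishing} direction — asserting $\chi_N^M(u^{2j+m}_{jj})>0$ rather than merely $\neq 0$ — which is exactly where I must use that Lemma \ref{l:imageberI} delivers $\varphi_{r,s}(u^n_{ij})\ge 0$ with the precise nonvanishing criterion, so that the sum in \eqref{eq:statphi} is a sum of nonnegative terms at least one of which is strictly positive; this is why the argument produces strict positivity for free and there is nothing subtle to check beyond verifying that the index ranges $\{N,\dots,N+M-m\}$ and $\{N+m,\dots,N+M\}$ are nonempty exactly when $j\le N+M-m$ (using $0\le m\le M$).
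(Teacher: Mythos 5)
Your argument is correct, and for the first family of coefficients $u^{2j+m}_{ij}$ it is exactly the paper's proof: expand $\be_N^M(u^{2j+m}_{ij})=\sum_k u^{2j+m}_{ik}\cd\chi_N^M(u^{2j+m}_{kj})$ and combine \eqref{eq:statphi} with Lemma \ref{l:imageberI}, noting that all terms are nonnegative, so the nonvanishing criterion ($k=j$ and $j\leq N+M-m$) comes with strict positivity of $\chi_N^M(u^{2j+m}_{jj})$. The only divergence is in the second family: you rerun the same computation for $u^{2j+m}_{i,j+m}$ (now with $r=s-m$ and the constraint $j+m\leq s\leq N+M$), whereas the paper deduces this half from the first in one line, using that $\be_N^M$ and $\chi_N^M$ are positive, hence $*$-preserving, together with the adjoint formula $(u^{2j+m}_{ij})^*=(-q)^{j-i}u^{2j+m}_{2j+m-i,j+m}$ from \eqref{eq:adjoint-of-matrix-coefficients}. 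Your direct computation is equally valid and makes the $j\leftrightarrow j+m$ symmetry explicit; the paper's route is just shorter. One phrasing slip in your closing sentence: the ranges $\{N,\ldots,N+M-m\}$ and $\{N+m,\ldots,N+M\}$ are nonempty precisely because $m\leq M$, independently of $j$; what is equivalent to $j\leq N+M-m$ is that these ranges contain an $s$ with $j\leq s$ (respectively $j+m\leq s$), which is what your main argument correctly uses.
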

\begin{proof}
Let $k \in \{0,\ldots,2j+m\}$ be given. Using Lemma \ref{l:imageberI} together with \eqref{eq:statphi} we obtain that $\chi_N^M(u^{2j+m}_{kj}) \neq 0$ if and only if $k = j$ and there exist $r,s \in \{N,\ldots,N+M\}$ with $r - s = m$ and $j \leq s$. Since we have assumed that $M\geq m \geq 0$ we then see that $\chi_N^M(u^{2j+m}_{kj}) \neq 0$ if and only if $k = j$ and $j \in \{0,1,\ldots,N+M - m\}$. 
In this case, we moreover have that $\chi_N^M(u^{2j+m}_{kj}) > 0$. The first claim of the lemma (regarding $u^{2j+m}_{ij}$) therefore follows since
\[
\be_N^M(u^{2j+m}_{ij}) = \sum_{k = 0}^{2j+m} u^{2j+m}_{ik} \cd \chi_N^M(u^{2j+m}_{kj}) = u^{2j+m}_{ij} \cd \chi_N^M(u^{2j+m}_{jj}) .
\]
The remaining claim is now a consequence of the positivity of the linear maps $\be_N^M \colon C(SU_q(2)) \to C(SU_q(2))$ and $\chi_N^M \colon C(SU_q(2)) \to \cc$. Indeed, we know from \eqref{eq:adjoint-of-matrix-coefficients} that $(u^{2j+m}_{i,j})^* = (-q)^{j-i} u^{2j+m}_{2j+m - i, j + m}$.
\end{proof} 

\begin{lemma}\label{lem:berezin-image}
Let $N,M \in \nn_0$ and let $m \in \{0,1,\ldots,M\}$. It holds that 
\begin{align*}
\be_N^M(A^{-m}_q) &= \T{span}_{\cc}\big\{ u^{2j + m}_{ij} \mid 0\leq j \leq N+M-m , \ 0\leq i\leq 2j+m    \big\} \q \mbox{and} \\
\be_N^M(A^{m}_q) &= \T{span}_{\cc}\big\{ u^{2j + m}_{i,j+m} \mid 0 \leq j \leq N+M - m, \ 0\leq i \leq 2j + m    \big\} .
\end{align*}
The vector space dimensions are  given by
\[
\T{dim}_{\cc}\big( \be_N^M(A^{-m}_q) \big) = (N+ M + 1)(N+M+1 -m) = \T{dim}_{\cc}\big( \be_N^M(A^m_q) \big) .
\]
In particular, we have that $\be_N^M(A^k_q) \su \C A^k_q$ for all $k \in \{-M,-M+1,\ldots, M\}$.
\end{lemma}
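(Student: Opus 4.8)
The plan is to deduce Lemma \ref{lem:berezin-image} directly from the three preceding lemmas, \ref{l:imageberIA}, \ref{l:imageberII} and the explicit formulas for the comultiplication on matrix coefficients. First I would recall that the spectral subspace $\C A_q^{-m}$ is spanned, as a vector space, by the matrix coefficients $u_{ij}^n$ with $n-2j = -m$, i.e.\ $n = 2j+m$, for $j\in\nn_0$ and $i\in\{0,1,\ldots,2j+m\}$; this is immediate from \eqref{eq:sigma-L-and-R-on-matrix-units}. Applying $\be_N^M$ to such a generator and using the coaction formula $\De(u_{ij}^n)=\sum_k u_{ik}^n\ot u_{kj}^n$, one gets $\be_N^M(u_{ij}^{2j+m}) = \sum_{k} u_{ik}^{2j+m}\cdot \chi_N^M(u_{kj}^{2j+m})$. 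Lemma \ref{l:imageberII} then says precisely that the only surviving term is $k=j$, with nonzero (indeed positive) coefficient exactly when $0\le j\le N+M-m$. Hence each generator of $\C A_q^{-m}$ is mapped either to zero or to a nonzero scalar multiple of itself, and the span of the nonzero images is exactly $\T{span}_\cc\{u_{ij}^{2j+m}\mid 0\le j\le N+M-m,\ 0\le i\le 2j+m\}$. The claim for $\be_N^M(A_q^m)$ follows by the same computation together with the second half of Lemma \ref{l:imageberII} (about $u_{i,j+m}^{2j+m}$), or alternatively by applying the involution and using that $\be_N^M$ is $*$-preserving together with \eqref{eq:adjoint-of-matrix-coefficients}.

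A small point I would be careful about is that $\be_N^M$ is only \emph{a priori} defined on all of $C(SU_q(2))$, whereas $A_q^{-m}$ is the norm-closed spectral subspace; but since $\be_N^M$ is norm-continuous and $\C A_q^{-m}$ is norm-dense in $A_q^{-m}$, and since the image $\be_N^M(\C A_q^{-m})$ is the fixed finite-dimensional space described above (hence norm-closed), we get $\be_N^M(A_q^{-m}) = \be_N^M(\C A_q^{-m})$, so working with the algebraic spectral subspace suffices. In particular the image lands in $\C A_q^{-m}\subseteq\C O(SU_q(2))$, which gives the final assertion $\be_N^M(A_q^k)\subseteq \C A_q^k$ for $-M\le k\le M$; combined with Lemma \ref{l:imageberIA}, which disposes of the range $|k|>M$, this also shows $\be_N^M$ maps each spectral subspace into its algebraic counterpart (and kills the high ones).

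For the dimension count, I would simply enumerate the basis exhibited above. For fixed $m\in\{0,\ldots,M\}$ the index $j$ runs over $\{0,1,\ldots,N+M-m\}$, and for each such $j$ the index $i$ runs over $\{0,1,\ldots,2j+m\}$, contributing $2j+m+1$ basis vectors. Summing,
\[
\sum_{j=0}^{N+M-m} (2j+m+1) = 2\cdot\frac{(N+M-m)(N+M-m+1)}{2} + (m+1)(N+M-m+1),
\]
which after factoring out $(N+M-m+1)$ and simplifying the bracket $\big[(N+M-m)+(m+1)\big] = N+M+1$ equals $(N+M+1)(N+M-m+1) = (N+M+1)(N+M+1-m)$, as claimed. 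The same count applies verbatim to $\be_N^M(A_q^m)$ since the basis there is indexed in exactly the same way. I do not anticipate any real obstacle here: the substance is entirely contained in Lemmas \ref{l:imageberIA} and \ref{l:imageberII}, and the remaining work is bookkeeping with the comultiplication formula, the density argument, and an elementary finite sum. If anything, the only thing requiring a moment's care is making sure the ranges of $i$ and $j$ in the statement match those coming out of \eqref{eq:sigma-L-and-R-on-matrix-units} and Lemma \ref{l:imageberII} without off-by-one errors.
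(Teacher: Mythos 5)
Your proposal is correct and follows essentially the same route as the paper: identify the spanning matrix coefficients of $\C A_q^{\pm m}$ via \eqref{eq:sigma-L-and-R-on-matrix-units} and \eqref{eq:adjoint-of-matrix-coefficients}, invoke Lemma \ref{l:imageberII} to see which generators survive and that they are only rescaled, and then count the basis by the same finite sum. The only difference is that you spell out the passage from the algebraic spectral subspace to its norm closure, which the paper leaves implicit; this is a harmless (indeed welcome) refinement rather than a departure.
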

\begin{proof}
 We first remark that the algebraic spectral subspace $\C A_q^{-m}$ is spanned by matrix coefficients of the form $u^{2j+m}_{ij}$ with $j \in \nn_0$ and $i \in \{0,1,\ldots,2j+m\}$ by \eqref{eq:sigma-L-and-R-on-matrix-units}.
  Similarly, since $\C A_q^m= (\C A_q^{-m})^*$ it follows from \eqref{eq:adjoint-of-matrix-coefficients} that $\C A_q^m$ is spanned by matrix coefficients of the form $u^{2j+m}_{i,j+m}$ with $j \in \nn_0$ and $i \in \{0,1,\ldots,2j+m\}$. The first claim regarding the images is then a consequence of Lemma \ref{l:imageberII}. 
The relevant formula for the dimension of the subspaces $\be_N^M(A_q^{-m})$ and $\be_N^M(A^m_q)$ now follows from the computation
\[
\begin{split}
\T{dim}_{\cc}\big(\beta_N^M(A_q^m)\big) & = \T{dim}_{\cc}\big(\beta_N^M(A_q^{-m})\big) = \sum_{j=0}^{N+M-m} (2j+m+1) \\ 
& = (N+M-m+1)(N+M+1). \qedhere
\end{split}
\]
\end{proof}
The images  of the spectral bands under the  Berezin transforms  will serve as our finite dimensional (also known as ``fuzzy'') approximations, analogous to the fuzzy spheres from \cite{Mad:TFS,Rie:MSG} and their $q$-deformed counterparts in \cite{AKK:Podcon}. It will, however, also be convenient to have a description available in terms of the generators of $SU_q(2)$ and we therefore opt to use this as the formal definition. To this end, recall from  \cite[Definition 3.5]{AKK:Podcon} that the quantum fuzzy sphere in degree $N \in \nn_0$ is defined as 
\begin{equation}\label{eq:quantum-fuzzy-sphere-def}
\begin{split}
\T{Fuzz}_N(S_q^2) & := 
\T{span}_{\cc}\big\{(bb^*)^i(ab^*)^j, (bb^*)^i (b a^*)^j \mid i,j\in \nn_0, i+j\leq N \big\}  \su \C O(S_q^2) .
\end{split}
\end{equation}

We now make the following definition:

\begin{dfn}\label{d:fuzzspec}
Let $N,m \in \nn_0$. We define the \emph{fuzzy spectral subspaces} as the finite dimensional vector spaces
\[
\begin{split}
 \T{Fuzz}_N(A_q^m) &:= \sum_{k = 0}^m a^k b^{m - k} \cd \T{Fuzz}_N(S_q^2) \su \C A_q^m  \q \T{and} \\
 \T{Fuzz}_N(A_q^{-m}) &:= \sum_{k = 0}^m (a^*)^k (b^*)^{m - k} \cd \T{Fuzz}_N(S_q^2) \su \C A_q^{-m} .
\end{split}
\]
Moreover, for $K\in \nn_0$ we define the \emph{fuzzy spectral $K$-bands} as
\[
\T{Fuzz}_N(B_q^K):= \sum_{m=-K}^K  \T{Fuzz}_N(A_q^m)\su  \C B_q^K.
\]
\end{dfn}
Note that since $\T{Fuzz}_N(S_q^2)$ increases with $N \in \nn_0$, the same is true for  $\T{Fuzz}_N(A_q^m)$ for all $m\in \zz$.
As mentioned above, the spaces just defined are intimately related to the quantum Berezin transform as the following result shows:

\begin{prop}\label{p:fuzzber}
 Let $N,M,K \in \nn_0$. It holds that $\be_N^M(A_q^m) = \T{Fuzz}_{N + M - |m|}(A_q^m)$  for all $m \in \{-M,\ldots,M\}$. In particular, $\beta_N^M(B_q^K)\subseteq \T{Fuzz}_{N+M}(B_q^K)$ whenever $M\geq K$ and $\T{Fuzz}_N(B_q^K)$ is an operator system (without any constraints on $N,K \in \nn_0$).
\end{prop}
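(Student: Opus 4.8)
The plan is to deduce the first claim, $\be_N^M(A_q^m) = \T{Fuzz}_{N+M-|m|}(A_q^m)$, from the explicit description of $\be_N^M(A_q^m)$ obtained in Lemma \ref{lem:berezin-image}, and then read off the two remaining assertions as immediate consequences. Because $\be_N^M$ commutes with the adjoint operation and $\T{Fuzz}_N(A_q^{-m}) = \T{Fuzz}_N(A_q^m)^*$ by Definition \ref{d:fuzzspec} (using $(a^kb^{m-k})^* $ being a scalar multiple of $(b^*)^{m-k}(a^*)^k$, hence expressible in the stated spanning set after the commutation relations are applied), it suffices to treat $m \in \{0,1,\ldots,M\}$ and show $\be_N^M(A_q^m) = \T{Fuzz}_{N+M-m}(A_q^m)$. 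Lemma \ref{lem:berezin-image} already identifies $\be_N^M(A_q^m)$ as $\T{span}_{\cc}\{ u^{2j+m}_{i,j+m} \mid 0 \leq j \leq N+M-m,\ 0 \leq i \leq 2j+m\}$, so the entire task reduces to identifying this span of matrix coefficients with $\T{Fuzz}_{N+M-m}(A_q^m) = \sum_{k=0}^m a^k b^{m-k}\cd \T{Fuzz}_{N+M-m}(S_q^2)$.

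\textbf{Key steps.} First I would recall from \cite{AKK:Podcon} (and the spanning set \eqref{eq:quantum-fuzzy-sphere-def}) the identification of $\T{Fuzz}_N(S_q^2)$ with a span of matrix coefficients in $\C A_q^0$; concretely, $\T{Fuzz}_N(S_q^2)$ should coincide with $\T{span}_{\cc}\{ u^{2j}_{ij} \mid 0 \leq j \leq N,\ 0 \leq i \leq 2j\}$, which is exactly $\be_N^0(A_q^0)$ by the $m=0$ case of Lemma \ref{lem:berezin-image}; this matches Remark \ref{rem:relation-with-podles-berezin}, where $\be_N^0|_{C(S_q^2)}$ is identified with the Podle\'s Berezin transform $\beta_N$, whose image is the quantum fuzzy sphere by \cite[Definition 3.5 and the surrounding discussion]{AKK:Podcon}. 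Second, I would compute the left action of $a^k b^{m-k}$ on the matrix coefficients $u^{2j}_{ij}$ spanning $\T{Fuzz}_{N+M-m}(S_q^2)$, using the product formulae \eqref{eq:leftmult}: multiplying by $a$ and $b$ raises the corepresentation label by one and shifts the column index, and iterating $m$ times (in any admissible order, since the span is what matters) produces precisely linear combinations of the $u^{2j+m}_{i,j+m}$ with $0 \leq j \leq N+M-m$. This gives the inclusion $\T{Fuzz}_{N+M-m}(A_q^m) \subseteq \be_N^M(A_q^m)$. For the reverse inclusion, I would compare dimensions: Lemma \ref{lem:berezin-image} gives $\dim_{\cc}\be_N^M(A_q^m) = (N+M+1)(N+M+1-m)$, while a direct count on the right-hand side of Definition \ref{d:fuzzspec}, using that $\dim_{\cc}\T{Fuzz}_{N+M-m}(S_q^2) = \binom{N+M-m+2}{2}$ and that the $m+1$ summands $a^k b^{m-k}\cd\T{Fuzz}_{N+M-m}(S_q^2)$ are linearly independent inside $\C A_q^m$ (their images under $\de_R$-type weight decompositions, or simply their leading terms in the standard basis \eqref{eq:standard-basis}, are disjoint), yields the same value; hence equality. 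Finally, the statement $\be_N^M(B_q^K) \subseteq \T{Fuzz}_{N+M}(B_q^K)$ for $M \geq K$ follows by summing over $m \in \{-K,\ldots,K\}$ and using monotonicity of $\T{Fuzz}_N(A_q^m)$ in $N$ (noted after Definition \ref{d:fuzzspec}) to absorb each $\T{Fuzz}_{N+M-|m|}(A_q^m)$ into $\T{Fuzz}_{N+M}(A_q^m)$; and the claim that $\T{Fuzz}_N(B_q^K)$ is an operator system follows because it is manifestly $*$-invariant (the fuzzy spectral subspaces for $m$ and $-m$ are mutual adjoints) and contains $1 = \be_0^0(1) \in \T{Fuzz}_0(A_q^0)$, hence lies in every $\T{Fuzz}_N(B_q^K)$.

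\textbf{Main obstacle.} The delicate point is the dimension count establishing the reverse inclusion, and more precisely the linear independence of the $m+1$ subspaces $a^k b^{m-k}\cd\T{Fuzz}_{N+M-m}(S_q^2)$ inside $\C A_q^m$. One must verify that $\sum_{k=0}^m \dim_{\cc}\T{Fuzz}_{N+M-m}(S_q^2) = (m+1)\binom{N+M-m+2}{2}$ equals $(N+M+1)(N+M+1-m)$ — a routine but not entirely transparent binomial identity — and that no cancellations occur among the summands. For the latter I would exploit a second grading (e.g.\ the right circle action $\sigma_R$, or equivalently reading off the power of $a$ versus $b$ in the standard basis expansion \eqref{eq:standard-basis}), under which $a^k b^{m-k}\cd\T{Fuzz}_{N+M-m}(S_q^2)$ occupies a distinct weight range for each $k$; this decouples the summands and forces the dimensions to add. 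Once the two dimensions are seen to agree and the inclusion $\supseteq$ is in hand, equality is automatic and the remaining assertions are formal.
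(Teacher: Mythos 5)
Your overall architecture (use Lemma \ref{lem:berezin-image} to identify $\be_N^M(A_q^m)$ with a span of matrix coefficients, prove the inclusion $\T{Fuzz}_{N+M-m}(A_q^m)\su\be_N^M(A_q^m)$ via the product formulae \eqref{eq:leftmult}, and close the gap with a dimension argument) matches the paper's proof, but the dimension argument you propose for the reverse inclusion breaks down. First, $\T{dim}_{\cc}\T{Fuzz}_{L}(S_q^2)=(L+1)^2$ (it is spanned by the $u^{2j}_{ij}$ with $0\leq j\leq L$, $0\leq i\leq 2j$), not $\binom{L+2}{2}$. Second, and more seriously, the sum $\sum_{k=0}^m a^k b^{m-k}\cd\T{Fuzz}_{L}(S_q^2)$ is very far from direct: for instance $a^k b^{m-k}\cd(ab^*)x$ and $a^{k+1}b^{m-k-1}\cd(bb^*)x$ agree up to a nonzero scalar, so consecutive summands overlap massively. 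Indeed, if the sum were direct one would get $(m+1)(L+1)^2$ with $L=N+M-m$, which strictly exceeds the target $(N+M+1)(N+M-m+1)$ as soon as $m\geq 1$ and $L\geq 1$; so no grading can decouple the summands. In particular the second grading you suggest cannot work: $\T{Fuzz}_L(S_q^2)$ is not $\si_R$-homogeneous (it contains $ab^*$, $ba^*$ and $bb^*$, of $\si_R$-weights $2$, $-2$ and $0$), so the weight ranges of the summands $a^kb^{m-k}\cd\T{Fuzz}_L(S_q^2)$ overlap for different $k$, and likewise the $a$-powers occurring in the standard-basis expansions overlap. Your binomial identity $(m+1)\binom{L+2}{2}=(N+M+1)(N+M+1-m)$ also fails for $m\geq 2$, $L\geq 1$ (the two errors happen to cancel only for $m\leq 1$).

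What is actually needed is only a \emph{lower} bound $\T{dim}_{\cc}\T{Fuzz}_{N+M-m}(A_q^m)\geq (N+M+1)(N+M-m+1)$, and the paper obtains it not by adding dimensions of the summands but by exhibiting inside $\T{Fuzz}_{N+M-m}(A_q^m)$ an explicit family of standard-basis monomials \eqref{eq:standard-basis} of exactly that cardinality: the monomials $a^{m+j}b^i(b^*)^{i+j}$ and $(a^*)^jb^{m+i+j}(b^*)^i$ ($i+j\leq N+M-m$) coming from the extreme summands $k=m$ and $k=0$, together with $a^kb^{m-k+i}(b^*)^i$ for $1\leq k\leq m-1$, $0\leq i\leq N+M-m$; being distinct standard-basis elements they are automatically linearly independent, and counting them gives $(N+M-m+1)(N+M+1)$. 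A secondary point: your reduction of the case $m<0$ to $m>0$ via adjoints is not as immediate as you suggest, since $(a^kb^{m-k}x)^*=x^*(b^*)^{m-k}(a^*)^k$ has the fuzzy-sphere factor on the wrong side and moving it across powers of $a^*$ uses $aa^*=a^*a+(q^2-1)bb^*$, which produces extra terms whose membership in the fuzzy subspace must be checked; the paper simply runs the same argument again for $m<0$, and the $*$-invariance $\T{Fuzz}_N(A_q^m)^*=\T{Fuzz}_N(A_q^{-m})$ is then deduced \emph{afterwards} from the identification with the Berezin images, rather than assumed at the outset.
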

\begin{proof}
Let $m\in \zz$ with  $|m| \leq M$ be given. We focus on the case where $m \in \nn_0$ since the case where $m < 0$ follows from similar arguments. We begin by recalling  from \cite[Lemma 3.4 and Lemma 3.7]{AKK:Podcon} that 
\begin{equation}\label{eq:fuzzpod}
\T{Fuzz}_{N + M - m}(S_q^2)  = \T{span}_{\cc}\big\{ u^{2j}_{ij} \mid 0 \leq j \leq N + M - m \, , \, \, 0 \leq i \leq 2j \big\} .
\end{equation}
 Similarly, we recall from Lemma \ref{lem:berezin-image} that
\[
 \be_N^M(A_q^m)=\T{span}_{\cc}\big\{ u^{2j + m}_{i,j+m} \mid 0 \leq j \leq N+M - m, \ 0\leq i \leq 2j + m    \big\}  .
\]
For $m = 0$,  the identity $\T{Fuzz}_{N + M - m}(A_q^m) = \be_N^M(A_q^m)$  therefore follows immediately. We may thus suppose that $m > 0$. 
Let us start out by proving the inclusion 
\begin{equation}\label{eq:fuzzinc}
\T{Fuzz}_{N + M - m}(A_q^m) \su \be_N^M(A_q^m).
\end{equation}
For each  $l, i,j\in \nn_0$ with $i \leq 2j+l$ it follows from \eqref{eq:leftmult} that
\[
\begin{split}
& a \cd u^{2j+l}_{i,j+l} \in \T{span}_{\cc}\big\{ u^{2j + (l+1)}_{i+1, j + (l+1)} \, , \, \, u^{2(j-1) + (l+1)}_{i,j-1 + (l+1)} \big\} \q \T{and} \\
& b \cd u^{2j + l}_{i,j+l} \in \T{span}_{\cc}\big\{ u^{2j + (l+1)}_{i, j + (l+1)} \, , \, \, u^{2(j-1) + (l+1)}_{i-1,j-1 + (l+1)} \big\} .
\end{split}
\]
Hence, for all $k\in \{0,\dots, m\}$, $j \in \{0,\ldots,N + M - m\}$ and $i \in \{0,\ldots, 2j \}$ it holds that
\[
a^k b^{m-k} u_{ij}^{2j}\in  \T{span}_{\cc}\big\{ u^{2j + m}_{i,j+m} \mid 0 \leq j \leq N+M - m, \ 0\leq i \leq 2j + m    \big\} = \be_N^M(A_q^m) .
\]
By definition of $\T{Fuzz}_{N + M - m}(A_q^m)$, the inclusion in \eqref{eq:fuzzinc} therefore follows.
In order to show that $\T{Fuzz}_{N + M - m}(A_q^m) = \be_N^M(A_q^m)$, it now suffices to establish that
\[
\dim_{\cc}\big( \T{Fuzz}_{N + M - m}(A_q^m) \big) \geq \dim_{\cc}\big( \be_N^M(A_q^m) \big) .
\]
Rewriting the definition of the quantum fuzzy sphere from \eqref{eq:quantum-fuzzy-sphere-def} slightly we obtain
\begin{equation}\label{eq:fuzzy-sphere-2}
\T{Fuzz}_{N + M - m}(S_q^2) = \T{span}_{\cc}\big\{ a^j b^i (b^*)^{i+j} \, , \, \, 
(a^*)^j b^{i + j} (b^*)^i 
 \mid i,j \in \nn_0 \,  \, , i + j \leq N + M - m \big\} .
\end{equation}
From the two extremes, $k=0$ and $k=m$, in Definition \ref{d:fuzzspec}  we obtain  that
\[
\begin{split}
M_1 &:= \big\{ a^{m+j} b^i (b^*)^{i+j} \, , \, \, (a^*)^j b^{m+i+j} (b^*)^i 
\mid  i,j \in \nn_0 \, , \, \, i + j \leq N + M - m \big\} \\
& \subseteq\T{Fuzz}_{N + M - m}(A_q^m) .
\end{split}
\]
Similarly,  fixing $j=0$ in  \eqref{eq:fuzzy-sphere-2} and letting $k$ vary in $\{1,\dots, m-1 \}$ we obtain that
\[
\begin{split}
M_2 & := \big\{ a^k b^{m-k + i} (b^*)^i \mid   1\leq k \leq m-1 \, , \, \, 0 \leq i \leq N+M-m  \big\} \\
& \subseteq \T{Fuzz}_{N + M - m}(A_q^m).
\end{split}
\]
Since $m \geq 1$, we see from \eqref{eq:standard-basis} that the set $M_1\cup M_2$ consists of linearly independent vectors and its cardinality is given by
\[
(m-1) \cd (N + M - m + 1) +  2 \cd\left( \sum_{i = 1}^{N + M - m + 1} i \right)
= (N + M - m + 1)\cd (N + M + 1) ,
\]
which is exactly $\dim_{\cc}(\beta_N^M(A_q^m))$ by Lemma \ref{lem:berezin-image}. This completes the proof of the first part of the lemma.\\

The last two statements of the lemma follow from the first part. Firstly, for $M\geq K$ we have that
\[
\beta_N^M(B_q^K)=\sum_{m=-K}^K \T{Fuzz}_{N+M-|m|}(A_q^m)\subseteq \sum_{m=-K}^K \T{Fuzz}_{N+M}(A_q^m)
=\T{Fuzz}_{ N+M}(B_q^K) .
\]
Secondly, for arbitrary $N,K \in \nn_0$, $\T{Fuzz}_N(B_q^K)$ is an operator system since the Berezin transforms are $*$-preserving:
\[
\T{Fuzz}_N(A_q^m)^*=\beta_N^{|m|}(A_q^m)^*=\beta_N^{|m|}(A_q^{-m})= \T{Fuzz}_N(A_q^{-m}).\qedhere
\]
\end{proof}

\begin{cor}
 Let $m \in \zz$ and $N,K \in \nn_0$. It holds that $\T{dim}_{\cc}  (\T{Fuzz}_N(A_q^m))= (N+|m| +1)(N+1)$. In particular, both $\T{dim}_{\cc}  (\T{Fuzz}_N(A_q^m))$ and $\T{dim}_{\cc} (\T{Fuzz}_N(B_q^K)) $ are independent of  $q \in (0,1]$.
\end{cor}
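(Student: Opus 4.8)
The plan is to read the dimension formula directly off Proposition \ref{p:fuzzber} together with Lemma \ref{lem:berezin-image}. The key observation is that for every $m \in \zz$ and every $N \in \nn_0$ we may realise $\T{Fuzz}_N(A_q^m)$ as the image of a spectral subspace under a Berezin transform. Indeed, choosing $M := |m|$ we have $M \geq |m|$, so Proposition \ref{p:fuzzber} applies and yields
\[
\T{Fuzz}_N(A_q^m) = \T{Fuzz}_{N + |m| - |m|}(A_q^m) = \be_{N}^{|m|}(A_q^m) .
\]
Thus $\T{dim}_{\cc}\big( \T{Fuzz}_N(A_q^m) \big) = \T{dim}_{\cc}\big( \be_{N}^{|m|}(A_q^m) \big)$, and the latter is computed in Lemma \ref{lem:berezin-image}: with the substitution $M = |m|$ in the formula $\T{dim}_{\cc}\big( \be_N^M(A_q^{m}) \big) = (N + M + 1)(N + M + 1 - |m|)$ we obtain $(N + |m| + 1)(N + |m| + 1 - |m|) = (N + |m| + 1)(N + 1)$. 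This establishes the first claimed identity.

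Next I would deduce the $q$-independence statements. For the fuzzy spectral subspaces this is immediate, since the integer $(N + |m| + 1)(N + 1)$ visibly does not involve $q$. For the fuzzy spectral $K$-bands one uses the definition $\T{Fuzz}_N(B_q^K) = \sum_{m = -K}^K \T{Fuzz}_N(A_q^m) \su \C B_q^K$ together with the fact that this sum is \emph{direct}: the summand $\T{Fuzz}_N(A_q^m)$ lies inside the algebraic spectral subspace $\C A_q^m$, and distinct algebraic spectral subspaces intersect trivially since they are eigenspaces of the left circle action $\si_L$ for distinct characters. Hence $\T{dim}_{\cc}\big( \T{Fuzz}_N(B_q^K) \big) = \sum_{m = -K}^K (N + |m| + 1)(N + 1)$, which is again manifestly independent of $q$.

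Honestly, there is no real obstacle here; the content has already been extracted in Proposition \ref{p:fuzzber} and Lemma \ref{lem:berezin-image}, and the corollary is a bookkeeping consequence. The only point requiring a word of care is the directness of the sum defining $\T{Fuzz}_N(B_q^K)$, which is needed to turn the dimension of the sum into the sum of dimensions; this follows because the left circle action $\si_L$ acts on $\C A_q^m$ by the character $z \mapsto z^m$ (see Section \ref{ss:circle}), so the subspaces $(\C A_q^m)_{m \in \zz}$, and a fortiori the subspaces $(\T{Fuzz}_N(A_q^m))_{m \in \zz}$, are linearly independent. If one wishes, one may also simply note that a basis for $\T{Fuzz}_N(B_q^K)$ is obtained by concatenating the bases of the $\T{Fuzz}_N(A_q^m)$ exhibited via the matrix coefficients $u^{2j+m}_{ij}$ in Lemma \ref{lem:berezin-image}, which are linearly independent across all $m$. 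With this remark in place the proof is complete.
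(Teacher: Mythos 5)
Your proposal is correct and follows essentially the same route as the paper: identify $\T{Fuzz}_N(A_q^m)$ with $\be_N^{|m|}(A_q^m)$ via Proposition \ref{p:fuzzber}, read the dimension off Lemma \ref{lem:berezin-image}, and sum over $m$ using that the summands lie in distinct spectral subspaces $\C A_q^m$. The remark on directness of the sum is exactly the point the paper invokes via the inclusion $\T{Fuzz}_N(A_q^m) \su \C A_q^m$, so nothing is missing.
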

\begin{proof}
The first identity follows from Lemma \ref{lem:berezin-image} since $\T{Fuzz}_N(A_q^m)=\beta_N^{|m|}(A_q^m)$ by Proposition \ref{p:fuzzber}. Since  $\T{Fuzz}_N(A_q^m) \su \C A_q^m$ one has that 
\[
\T{dim}_{\cc} (\T{Fuzz}_N(B_q^K)) =  \sum_{m=-K}^K  \T{dim}_\cc(\T{Fuzz}_N(A_q^m)),
\]
and $\T{dim}_{\cc} (\T{Fuzz}_N(B_q^K))$ is therefore also independent of  $q \in (0,1]$.
\end{proof}

Inspecting the proof of Proposition \ref{p:fuzzber}, we obtain an explicit linear basis for the fuzzy spectral subspaces:

\begin{cor}\label{c:fuzzbasis}
For each $N\in \nn$ and $m\in \zz$ the fuzzy spectral subspace $\T{Fuzz}_N(A_q^m)$ admits a linear basis consisting of a subset of the standard linear basis \eqref{eq:standard-basis} for $\C O(SU_q(2))$ which is independent of the value of $q$. Concretely the basis can be chosen as follows:
\begin{itemize}
\item For $m>0$ it is given by
\begin{align*}
&\left \{a^{j+m}b^i (b^*)^{i+j}, \  (a^*)^j b^{i+j+m}(b^*)^i  \mid i,j\in \nn_0, \  0\leq i+j\leq N  \right\}\\
&   \cup \ \left \{a^k b^{i+m-k}(b^*)^i \mid   k\in \{1, \dots, m-1\}, \   i\in \{0,\dots, N\}   \right\} .
\end{align*}

\item For $m=0$ it is given by 
\begin{align*}
  & \left\{ a^j b^i (b^*)^{i+j}, \ (a^*)^j b^{i+j} (b^*)^i \mid j\in \{1, \dots, N\}, \   i \in \{0,\dots, N-j\} \right\} \\
  & \cup \ \left \{ b^i (b^*)^i \mid i \in \{0,\ldots,N\} \right \} .
\end{align*}

\item For $m<0$ it is given by
\begin{align*}
&\left \{(a^*)^{j-m}b^{i+j} (b^*)^{i}, \  a^j b^i (b^*)^{i+j-m}  \mid i,j\in \nn_0, \  0\leq i+j\leq N  \right\}\\
&   \cup \ \left \{(a^*)^k b^i (b^*)^{i-m-k}(b^*)^i \mid   k\in \{1, \dots, -m-1\}, \   i\in \{0,\dots, N\}   \right\} .
\end{align*}
\end{itemize}

\end{cor}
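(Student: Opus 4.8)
The plan is to read off the basis directly from the proof of Proposition \ref{p:fuzzber}, where the required linear independence was already established. Indeed, in that proof we exhibited, for $m>0$, the set $M_1\cup M_2$ consisting of the vectors
\[
a^{j+m}b^i(b^*)^{i+j},\quad (a^*)^j b^{m+i+j}(b^*)^i\quad (i,j\in\nn_0,\ i+j\leq N+M-m)
\]
together with
\[
a^k b^{m-k+i}(b^*)^i\quad (1\leq k\leq m-1,\ 0\leq i\leq N+M-m),
\]
and we checked (i) that these vectors lie in $\T{Fuzz}_{N+M-m}(A_q^m)$, (ii) that they are linearly independent because they form a subset of the standard basis \eqref{eq:standard-basis}, and (iii) that their cardinality equals $\dim_\cc\big(\be_N^M(A_q^m)\big)=\dim_\cc\big(\T{Fuzz}_{N+M-m}(A_q^m)\big)$. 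Specialising $M=|m|=m$ (so that $N+M-m=N$) and invoking $\be_N^{|m|}(A_q^m)=\T{Fuzz}_N(A_q^m)$ from Proposition \ref{p:fuzzber}, these three facts together say precisely that $M_1\cup M_2$ is a basis for $\T{Fuzz}_N(A_q^m)$, and this set is manifestly independent of $q$. The stated basis for $m>0$ is just $M_1\cup M_2$ rewritten with the summation range $0\leq i+j\leq N$ (resp.\ $i\in\{0,\dots,N\}$).

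For $m=0$ the space is $\T{Fuzz}_N(S_q^2)$, and the claimed basis is exactly the one recorded in \eqref{eq:fuzzy-sphere-2} (equivalently \eqref{eq:quantum-fuzzy-sphere-def}) with $N+M-m$ replaced by $N$; again these are standard basis vectors, so linear independence is immediate, and one only has to note that separating off the $i+j=0$ terms of the two families $a^jb^i(b^*)^{i+j}$ and $(a^*)^jb^{i+j}(b^*)^i$ produces the single family $b^i(b^*)^i$ (the two families agree when $j=0$), which accounts for the slightly different bookkeeping in the displayed formula. For $m<0$ one applies the adjoint operation: $\T{Fuzz}_N(A_q^m)=\T{Fuzz}_N(A_q^{-m})^*$ by the last line of the proof of Proposition \ref{p:fuzzber}, and taking adjoints of the (already established) basis for $m'=-m>0$ via \eqref{eq:standard-basis} and the relations $a^*{}^*=a$, $(b^*)^*=b$ — together with the commutation relations, which only permute and rescale standard basis elements — turns the displayed $m>0$ basis into the displayed $m<0$ basis. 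Here one should double-check that conjugating $a^{j+m'}b^i(b^*)^{i+j}$ etc.\ lands, after reordering into standard form, on exactly the listed vectors $(a^*)^{j-m}b^{i+j}(b^*)^i$ and $a^jb^i(b^*)^{i+j-m}$ and $(a^*)^k b^i(b^*)^{i-m-k}$; this is a routine but slightly fiddly normalisation using $ba=qab$, $b^*a=qab^*$ and $bb^*=b^*b$.

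The only genuine subtlety — and the step I would treat most carefully — is bookkeeping, not mathematics: making sure that the index ranges in the three displayed lists contain exactly $M_1\cup M_2$ (for $m>0$) and its adjoint (for $m<0$), with no vector double-counted when $j=0$ or $k$ hits an endpoint, and that the $m=0$ case is the honest union of three families rather than an overcounting of two. Concretely I would verify the cardinalities agree with the dimension formula $\dim_\cc\big(\T{Fuzz}_N(A_q^m)\big)=(N+|m|+1)(N+1)$ from the corollary immediately preceding, which pins down the lists uniquely once linear independence and containment are known. Since containment in $\T{Fuzz}_N(A_q^m)$ and linear independence were both already proved inside Proposition \ref{p:fuzzber}, and the dimension count matches, the corollary follows without any new estimate.
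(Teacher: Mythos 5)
Your proposal is correct and follows exactly the route the paper intends: the corollary is obtained by inspecting the proof of Proposition \ref{p:fuzzber}, where (for $m>0$, specialised to $M=|m|$ so that $N+M-m=N$) the sets $M_1\cup M_2$ were already shown to lie in $\T{Fuzz}_N(A_q^m)$, to be linearly independent as standard basis vectors, and to have cardinality equal to $\dim_\cc\big(\be_N^{|m|}(A_q^m)\big)=\dim_\cc\big(\T{Fuzz}_N(A_q^m)\big)$, while the $m=0$ case comes from \eqref{eq:fuzzy-sphere-2} after merging the $j=0$ duplicates and the $m<0$ case from taking adjoints. Your flagged subtlety about reordering the adjointed monomials into standard form only introduces nonzero powers of $q$, so it does not affect the basis property, and your cardinality cross-check against $(N+|m|+1)(N+1)$ confirms the bookkeeping.
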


The fuzzy approximations of the 2-sphere originated in physics \cite{GrPr:DFS,Mad:TFS,Mad:NCG} and have the feature of carrying an action of $SU(2)$. In the $q$-deformed setting fuzzy approximations of the Podle\'s sphere have also been studied in the mathematical physics literature; see \cite{AlReSc:NWG,GrMaSt:FQFI,GrMaSt:FQFII}. In some sense these ideas can be traced back to the work of Podle\'s \cite{Pod:QS}.

Similarly to the quantum fuzzy spheres, our fuzzy spectral bands also carry a coaction of quantum $SU(2)$:


\begin{prop}\label{prop:fuzzy-coinvariant}
For each $N,K\in \nn_0$, the operator system $\T{Fuzz}_N(B_q^K) \subseteq \C O(SU_q(2))$ is $\C O(SU_q(2))$-coinvariant.  
\end{prop}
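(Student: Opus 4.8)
The plan is to deduce the coinvariance of the fuzzy spectral bands from the defining property $\be_N^M = (1 \ot \chi_N^M)\De$ of the quantum Berezin transform together with coassociativity of the coproduct. Since $\T{Fuzz}_N(B_q^K) = \sum_{m = -K}^K \T{Fuzz}_N(A_q^m)$, and the assertion to be proved is the inclusion $\De\big(\T{Fuzz}_N(B_q^K)\big) \su \C O(SU_q(2)) \ot \T{Fuzz}_N(B_q^K)$, it suffices to show $\De\big( \T{Fuzz}_N(A_q^m) \big) \su \C O(SU_q(2)) \ot \T{Fuzz}_N(A_q^m)$ for each $m \in \{-K,\ldots,K\}$ and then sum over $m$.

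For this I would first record the intertwining identity $\De \circ \be_N^M = (1 \ot \be_N^M) \circ \De$ on $\C O(SU_q(2))$, which is a purely formal consequence of coassociativity of $\De$ applied to the definition of $\be_N^M$ (in Sweedler notation it amounts to $(\De \ot 1)\De = (1 \ot \De)\De$, and it uses nothing about $\chi_N^M$ beyond linearity). Next I would invoke Proposition \ref{p:fuzzber} and Lemma \ref{lem:berezin-image}, which together give $\T{Fuzz}_N(A_q^m) = \be_N^{|m|}(A_q^m) = \be_N^{|m|}\big(\C A_q^m\big)$ — the last equality because Lemma \ref{lem:berezin-image} exhibits this image as the linear span of $\be_N^{|m|}$ applied to the matrix coefficients that span $\C A_q^m$. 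Thus every element of $\T{Fuzz}_N(A_q^m)$ has the form $\be_N^{|m|}(x)$ for some $x \in \C A_q^m$. Using the already recorded fact that $\De$ restricts to a coaction $\De \colon \C A_q^m \to \C O(SU_q(2)) \ot \C A_q^m$, we may write $\De(x) = \sum_i a_i \ot b_i$ with $a_i \in \C O(SU_q(2))$ and $b_i \in \C A_q^m$, whence
\[
\De\big(\be_N^{|m|}(x)\big) = (1 \ot \be_N^{|m|})\De(x) = \sum_i a_i \ot \be_N^{|m|}(b_i) \in \C O(SU_q(2)) \ot \be_N^{|m|}\big(\C A_q^m\big) = \C O(SU_q(2)) \ot \T{Fuzz}_N(A_q^m),
\]
which is exactly what is needed.

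I expect no serious obstacle here: the argument is essentially bookkeeping, once the Berezin transform has been constructed and its image made explicit in Section \ref{ss:image-of-berezin}. The only points meriting a moment's care are the identification $\be_N^{|m|}(\C A_q^m) = \T{Fuzz}_N(A_q^m)$ — for which one combines Proposition \ref{p:fuzzber} with Lemma \ref{lem:berezin-image}, noting that $\be_N^{|m|}$ sends matrix coefficients of $\C A_q^m$ into $\C A_q^m$, so that the image of the algebraic spectral subspace coincides with the image of the whole spectral subspace $A_q^m$ — and the trivial observation that $\be_N^M$ preserves $\C O(SU_q(2))$, which is immediate from $\De(u^n_{ij}) = \sum_k u^n_{ik} \ot u^n_{kj}$.
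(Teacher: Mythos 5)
Your proof is correct and follows essentially the same route as the paper: reduce to the fuzzy spectral subspaces, identify $\T{Fuzz}_N(A_q^m)$ with a Berezin image via Proposition \ref{p:fuzzber} and Lemma \ref{lem:berezin-image}, and observe that the coproduct only touches the left tensor leg. The paper carries out the last step by applying the coproduct formula $\De(u^n_{ij}) = \sum_k u^n_{ik}\ot u^n_{kj}$ to the matrix coefficients spanning the image, whereas you package it as the coassociativity identity $\De\circ\be_N^{|m|} = (1\ot\be_N^{|m|})\De$ together with the comodule property of $\C A_q^m$ --- the same mechanism in slightly different clothing.
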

\begin{proof}
For every $x\in \T{Fuzz}_N(B_q^K)$ we need to show that $\Delta(x)\in \C O(SU_q(2)) \ot \T{Fuzz}_N(B_q^K)$. Let $m \in \{ -K,-K+1,\dots, K \}$. We shall in fact see that $\De(x) \in \C O(SU_q(2)) \ot \T{Fuzz}_N(A_q^m)$ whenever $x \in \T{Fuzz}_N(A_q^m)$. Indeed, since $\T{Fuzz}_N(A_q^m)=\beta_N^{|m|}(A_q^m)$ by Proposition \ref{p:fuzzber}, the relevant inclusion follows from Lemma \ref{lem:berezin-image} together with the formula for the coproduct on matrix coefficients.
\end{proof} 

Remark that for $q=1$, the comultiplication on $\C O(SU(2))$ is dual to the group multiplication. Letting $\la$ denote the left regular action of $SU(2)$ on $\C O(SU(2))$ and $\T{ev}_g \colon \C O(SU(2)) \to \cc$ denote the evaluation at a point $g \in SU(2)$, we have the formula $\lambda_{g^{-1}}f=(\T{ev}_g \ot 1)\Delta(f)$ for all $f \in \C O(SU(2))$. Thus, in this case the coinvariance in Proposition \ref{prop:fuzzy-coinvariant} does indeed correspond to invariance of $\T{Fuzz}_N(B_1^K)$ under the left regular action of $SU(2)$. \\

In the section to follow, we need to apply the Berezin transform, which is at the moment only defined on $C(SU_q(2))$, to elements in the von Neumann algebra $L^\infty(SU_q(2))$. Since $\Delta$ extends to a normal $*$-homomorphism at the von Neumann algebraic level and each $\chi_N^M$ is normal (being a vector state in the GNS representation), the slice map formula $(1\tens \chi_N^M)\De(x)$ also makes sense at the level of $L^\infty(SU_q(2))$. We could therefore simply extend the Berezin transform $\beta_N^M$ to $L^\infty(SU_q(2))$ using the same formula. However, it will be important to view this extension as a composition of a finite dimensional projection and the original Berezin transform and we therefore take this point of view as our point of departure. \\

For each finite dimensional subspace $F \su \C O(SU_q(2))$ we let $P_F \colon L^2(SU_q(2)) \to L^2(SU_q(2))$ denote the orthogonal projection with image $\La(F) \su L^2(SU_q(2))$. We then define the linear map $\Phi_F \colon \B B(L^2(SU_q(2))) \to \C O(SU_q(2))$ by the formula
\begin{equation}\label{eq:finitephi}
\La( \Phi_F(T) ) := P_F( T \cdot \Lambda(1) ) \q \T{for all } T \in \B B(L^2(SU_q(2))) .
\end{equation}
We record that $\T{Im}( \Phi_F) = F$ and that $\Phi_F$ is WOT-norm continuous, where we recall that WOT refers to the weak operator topology on $\B B\big(L^2(SU_q(2))\big)$. Notice, moreover, that $\Phi_{F_0} \Phi_{F_1} = \Phi_{F_1} \Phi_{F_0} = \Phi_{F_0}$ when $F_0,F_1 \su \C O(SU_q(2))$ are finite dimensional subspaces with $F_0 \su F_1$.
For each $N,M \in \nn_0$, we have that $\beta_N^M(B_q^M) \su \C B_q^M \su \C O(SU_q(2))$ is a finite dimensional subspace and we apply the notation
\[
\Phi_N^M := \Phi_{\beta_N^M(B_q^M)} \colon \B B\big(L^2(SU_q(2))\big) \to \C O(SU_q(2))
\]
for the associated linear map.  Recalling from Proposition \ref{p:fuzzber} that $\be_N^M(B_q^M) \su \T{Fuzz}_{N+M}(B_q^M)$ we now define the \emph{extended Berezin transform} $\wit{\beta}_N^M\colon L^\infty(SU_q(2)) \to \T{Fuzz}_{N+M}(B_q^M)$ by setting 
\begin{align}\label{eq:extended-berezin}
\wit{\beta}_N^M(x):=\beta_N^M(\Phi_N^M(x)) \q \T{for all } x \in L^\infty(SU_q(2)) .
\end{align}

\begin{lemma}\label{lem:extended-berezin-is-ucp}
Let $N,M \in \nn_0$. The extended Berezin transform $\wit{\beta}_N^M$ is ucp (unital completely positive) and satisfies that $\wit{\beta}_N^M(x) = \be_N^M(x)$ for all $x \in C(SU_q(2))$.
\end{lemma}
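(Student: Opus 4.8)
The statement to prove is Lemma \ref{lem:extended-berezin-is-ucp}: the extended Berezin transform $\wit\beta_N^M = \be_N^M \circ \Phi_N^M \colon L^\infty(SU_q(2)) \to \T{Fuzz}_{N+M}(B_q^M)$ is unital completely positive and restricts to $\be_N^M$ on $C(SU_q(2))$. The strategy is to factor $\wit\beta_N^M$ in a different, manifestly ucp way. The key observation is that $\Phi_N^M$ is the projection $P_F(\,\cdot\,\Lambda(1))$ onto $F := \be_N^M(B_q^M)$, while $\be_N^M(x) = (1\ot\chi_N^M)\De(x)$ where $\chi_N^M$ is a vector state. Since $\De$ extends to a normal $*$-homomorphism $L^\infty(SU_q(2)) \to L^\infty(SU_q(2))\,\ov\ot\,L^\infty(SU_q(2))$ and $\chi_N^M$ is a normal state (being the vector state $x \mapsto \inn{\Lambda(\xi_N^M), x\Lambda(\xi_N^M)}$ in the GNS representation, after normalising $\|\Lambda(\xi_N^M)\| = 1$), the slice map $(1\ot\chi_N^M)\De$ is a normal ucp map on $L^\infty(SU_q(2))$; call it $\ov{\be}{}_N^M$. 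The plan is to prove the pointwise identity $\wit\beta_N^M = \ov\be{}_N^M$ on all of $L^\infty(SU_q(2))$, which immediately gives that $\wit\beta_N^M$ is ucp, and since $\ov\be{}_N^M$ agrees with $\be_N^M$ on $C(SU_q(2))$ by construction, also the second assertion.

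First I would record that $\ov\be{}_N^M\big(L^\infty(SU_q(2))\big) \su \T{Fuzz}_{N+M}(B_q^M)$: by Proposition \ref{p:fuzzber} (and Lemma \ref{lem:berezin-image}) the classical Berezin transform $\be_N^M$ already maps $B_q^M$ into the finite dimensional space $\T{Fuzz}_{N+M}(B_q^M)$, and the same spectral-band computation — using that $\chi_N^M = \chi_N^M \circ \Pi_0^\si$ and the explicit action of $\De$ on matrix coefficients, exactly as in the proofs of Lemmas \ref{l:imageberIA}, \ref{l:imageberII} — shows $\ov\be{}_N^M$ kills the spectral subspaces $A_q^m$ with $|m| > M$ and lands in the span of finitely many matrix coefficients for $|m| \le M$; normality and weak$^*$-density of $\C O(SU_q(2))$ then pin the image down to $\T{Fuzz}_{N+M}(B_q^M)$. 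In particular $\ov\be{}_N^M$ has finite dimensional range, so it factors through $\Phi_N^M$ in the following sense: I would show $\Phi_N^M \circ \ov\be{}_N^M = \ov\be{}_N^M$, because $\ov\be{}_N^M$ takes values in $F' := \T{Fuzz}_{N+M}(B_q^M) \supseteq F$ and $\Phi_{F'}$ is the identity on $F'$ — wait, here I must be slightly careful since $\Phi_N^M = \Phi_F$ with $F = \be_N^M(B_q^M)$ which may be strictly smaller than $F'$; however what is actually needed is $\ov\be{}_N^M = \be_N^M \circ \Phi_N^M$, and I will instead verify this directly.

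The heart of the argument is the identity $\ov\be{}_N^M = \be_N^M \circ \Phi_N^M$ on $L^\infty(SU_q(2))$. Both sides are normal (the left by the above; the right because $\Phi_N^M$ is WOT-norm continuous and $\be_N^M$ is norm continuous with finite dimensional range, hence their composite is WOT-to-norm continuous and a fortiori normal), so by Kaplansky density / weak$^*$-density of $\C O(SU_q(2))$ in $L^\infty(SU_q(2))$ it suffices to check equality on $\C O(SU_q(2))$, and by linearity and the fact that both sides annihilate $\C A_q^m$ for $|m| > M$, on the spectral subspaces $\C A_q^m$ with $|m| \le M$, i.e.\ on matrix coefficients $u^n_{ij}$. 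There the computation is concrete: $\De(u^n_{ij}) = \sum_k u^n_{ik}\ot u^n_{kj}$, so $\ov\be{}_N^M(u^n_{ij}) = \sum_k u^n_{ik}\,\chi_N^M(u^n_{kj}) = \be_N^M(u^n_{ij})$ — so on the nose $\ov\be{}_N^M$ and $\be_N^M$ already agree on $\C O(SU_q(2))$, which is clear. What remains is $\be_N^M = \be_N^M\circ\Phi_N^M$ on $\C O(SU_q(2))$, equivalently $\Phi_N^M$ acts as the identity ``up to $\be_N^M$'': since $\Lambda(1) = \Lambda(u^0_{00})$ is a unit vector orthogonal to all $\Lambda(u^n_{kj})$ with $n>0$, one computes from \eqref{eq:finitephi} that $\Phi_N^M(u^n_{ij})$ is the $P_F$-projection of $\Lambda(u^n_{ij})$; combining the orthogonality relations \eqref{eq:haarmatrix} with Lemmas \ref{l:imageberI}–\ref{lem:berezin-image} one shows $\be_N^M(\Phi_N^M(u^n_{ij})) = \be_N^M(u^n_{ij})$ coefficient by coefficient. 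This bookkeeping with $q$-numbers is the only place where something could go wrong, and it is the step I expect to be the main obstacle — not conceptually hard, but requiring that the basis $\{u^n_{ij}\}$ behaves orthogonally well enough that $\Phi_N^M$ really restores the matrix coefficients that $\be_N^M$ sees. Once $\wit\beta_N^M = \ov\be{}_N^M$ is established, ucp-ness is automatic: $\De$ is a $*$-homomorphism, hence ucp, and slicing by the normal state $\chi_N^M$ (normalised) preserves complete positivity and unitality, so $\ov\be{}_N^M$ is ucp; and its restriction to $C(SU_q(2))$ is $\be_N^M$ by the very definition of $\ov\be{}_N^M$. This completes the proof.
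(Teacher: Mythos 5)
Your proposal is correct, but it reaches the conclusion by a genuinely different route than the paper, so let me compare. The shared core is the extension property: both you and the paper reduce $\be_N^M\big(\Phi_N^M(x)\big) = \be_N^M(x)$ on $C(SU_q(2))$ to a check on matrix coefficients, and the check is exactly the dichotomy coming from Lemmas \ref{l:imageberIA}, \ref{l:imageberII} and \ref{lem:berezin-image} together with orthogonality of the $u^n_{ij}$ in $L^2(SU_q(2))$: either $u^n_{ij}$ lies in $\be_N^M(B_q^M)$, in which case $\Phi_N^M$ fixes it, or $\La(u^n_{ij})$ is orthogonal to $\La\big(\be_N^M(B_q^M)\big)$, in which case $\Phi_N^M(u^n_{ij}) = 0$ and also $\be_N^M(u^n_{ij}) = 0$ by the image lemmas. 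So the ``$q$-number bookkeeping'' you flag as the likely obstacle in fact collapses to this clean two-case argument and involves no computation at all. Where you genuinely diverge is the complete positivity step. You identify $\wit{\beta}_N^M = \be_N^M \ci \Phi_N^M$ with the normal slice-map extension $(1 \ot \chi_N^M)$ applied to the normal extension of $\De$ to $L^\infty(SU_q(2))$ (a map the paper only alludes to in the paragraph preceding \eqref{eq:extended-berezin}), by noting that both maps are ultraweakly continuous into a finite dimensional space ($\Phi_N^M$ being WOT-norm continuous) and agree on the $\si$-weakly dense subalgebra $\C O(SU_q(2))$; ucp-ness and the restriction claim then fall out of the structural description. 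The paper instead never proves this identity beyond $C(SU_q(2))$: it establishes complete positivity directly, by approximating $x \in L^\infty(SU_q(2)) \ot \B M_d(\cc)$ in the strong operator topology by a net from $C(SU_q(2)) \ot \B M_d(\cc)$, using WOT-norm continuity of $\Phi_N^M$ and norm-closedness of the positive cone to pass positivity of $(\be_N^M \ot 1_d)(x_\al^* x_\al)$ to the limit. Your route buys a stronger and arguably more illuminating statement (the extended transform is literally the normal slice-map extension, hence in particular normal), at the price of invoking the von Neumann algebraic coproduct and normal slice maps; the paper's route is more elementary and self-contained, needing only the WOT-norm continuity of the finite-rank projection and a positivity-by-limits argument. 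Both are valid proofs of the lemma as stated.
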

\begin{proof}
We start by showing that the extended Berezin transform does indeed extend the Berezin transform. By norm-density and linearity, it suffices to verify that $\beta_N^M(\Phi_N^M(u_{ij}^n)) = \beta_N^M(u_{ij}^n)$ for all $n\in \nn_0$ and $i,j\in \{0,\dots, n\}$. If $u_{ij}^n$ is not one of the matrix coefficients spanning $\beta_N^M(B_q^M)=\sum_{m=-M}^M \beta_N^M(A_q^m)$  then we obtain from Lemma \ref{l:imageberIA}, Lemma \ref{l:imageberII} and Lemma \ref{lem:berezin-image} that both sides of the claimed identity are equal to zero (recall here that the different matrix coefficients are orthogonal to one another when embedded in $L^2(SU_q(2))$). Conversely, if $u_{ij}^n \in \beta_N^M(B_q^M)$,   it holds that $\Phi_N^M(u_{ij}^n)=u_{ij}^n$, and the relevant identity therefore holds trivially. \\
We now focus on showing that $\wit{\beta}_N^M$ is completely positive. We first note that this is indeed the case for the Berezin transform $\beta_N^M$, being defined as the composition of the unital $*$-homomorphism $\Delta$ with the slice map induced by the state $\chi_N^M$. Let $x\in L^\infty(SU_q(2))\otimes \mathbb{M}_d(\cc)$ be given. Then there exists a net $(x_\al)_\al$ in $C(SU_q(2))\otimes \mathbb{M}_d(\cc)$ converging in the strong operator topology to $x$. The net $(x_\alpha^*x_\alpha)_\al$ therefore converges in the weak operator topology to $x^*x$ and since $\Phi_N^M$ is WOT-norm continuous we obtain that the net $\left( (\beta_N^M \otimes 1_d)(x_\alpha^* x_\alpha)\right)_\alpha$ converges in norm to $(\wit{\beta}_N^M \otimes 1_d)(x^*x)$. Since each $(\beta_N^M\otimes 1_d)(x_\alpha^* x_\alpha)$ is positive and the positive cone is norm closed we obtain that $(\wit{\beta}_N^M \otimes 1_d)(x^*x)$ is positive. This proves that the extended Berezin transform is completely positive.
\end{proof}

\subsection{Estimates on the Berezin transform}
Our next aim is to analyse the interplay between the Berezin transforms and the twisted derivations defining the Lip-norms $L_{t,q}^{\max}$. At the algebraic level, i.e.~with $L_{t,q}$ instead of $L_{t,q}^{\max}$, this analysis is slightly less complicated (see the remarks preceding Proposition \ref{prop:berezin-and-delta}), but at the analytic level things are more subtle. In the first series of lemmas below, we show how one may, nevertheless,  reduce certain questions to the algebraic setting by means of the projections $\Phi_F$ introduced  in \eqref{eq:finitephi}. \\

Throughout this section, we fix the two parameters $t$ and $q$ in $(0,1]$ unless explicitly stated otherwise.

\begin{lemma}\label{l:projinside}
Let $\xi, \eta \in \C O(SU_q(2))^{\op 2}$. Then there exists a finite dimensional subspace $F_0 \su \C O(SU_q(2))$ such that
\[
\binn{\xi, \pa_q^H(x) \eta} = \binn{\xi, \pa_q^H( \Phi_F(x)) \eta } \q \mbox{and} \q
\binn{\xi, \pa^V_t(x) \eta} = \binn{\xi, \pa^V_t( \Phi_F(x)) \eta } 
\]
whenever $F \su \C O(SU_q(2))$ is a finite dimensional subspace with $F_0 \su F$ and $x \in \T{Lip}_t(SU_q(2))$.
\end{lemma}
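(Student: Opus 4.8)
The statement asserts that when we evaluate the matrix elements $\binn{\xi,\pa_q^H(x)\eta}$ and $\binn{\xi,\pa_t^V(x)\eta}$ against fixed vectors $\xi,\eta$ from the algebraic core, only a finite-dimensional ``piece'' of $x$ matters, and that piece can be isolated by the projection map $\Phi_F$. The plan is to analyse the two twisted derivations separately and to use the explicit formulae for $D_q^H$ and $D_t^V$ on the standard basis of matrix coefficients. First I would reduce to the case where $\xi$ and $\eta$ are single standard basis vectors, say $\xi \in V^{n}_{ij}$-type components and $\eta$ a matrix-coefficient pair, using bilinearity in $\xi,\eta$; since both vectors lie in $\C O(SU_q(2))^{\op 2}$, finitely many such basis vectors suffice, and a finite union of the resulting subspaces $F_0$ will work for the general $\xi,\eta$.

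Next, for the horizontal part I would write $\binn{\xi,\pa_q^H(x)\eta}$ using the defining twisted-commutator expression
$\pa_q^H(x) = \ov{D_q^H\,\si_L(q^{1/2},x) - \si_L(q^{-1/2},x)\,D_q^H}$
restricted to the core. Since $\eta \in \C O(SU_q(2))^{\op 2}$ lies in the core of $D_q^H$, the vector $D_q^H\eta$ is again a finite linear combination of standard basis vectors (by the formula \eqref{eq:dirhori} together with \eqref{eq:derexpI}, as $\pa_e$ and $\pa_f$ shift within the finitely generated $\C O(SU_q(2))$-span determined by $\eta$). Likewise $D_q^H\xi$ is a finite combination. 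Expanding the inner product, every term is of the form $\binn{\zeta',\,\si_L(q^{\pm1/2},x)\,\zeta}$ with $\zeta,\zeta'$ ranging over a fixed finite set of standard basis vectors. Now $\si_L(q^{\pm1/2},\cdot)$ acts as a scalar $q^{\pm m/2}$ on each $A_q^m$ (Lemma \ref{l:sigmaonspec}), so $\binn{\zeta',\si_L(q^{\pm1/2},x)\zeta} = q^{\pm\deg(\zeta)/2}\binn{\zeta',x\zeta} = q^{\pm\deg(\zeta)/2}\binn{\rho(x)^*\zeta',\zeta}$, using that $\rho$ is a $*$-representation; by the definition \eqref{eq:finitephi} of $\Phi_F$, this depends only on $P_F(x\cdot\La(1))$ once $F$ contains the finite-dimensional space spanned by all the products $\La(y^*\zeta)$ for $y$ ranging over the generators involved in $\zeta'$ — more precisely, one wants $F$ large enough that $P_F$ acts as the identity on $\La(z^*\zeta')$ for the relevant finitely many $\zeta,\zeta'$, where $z$ is the polynomial with $z\cdot\La(1)=\zeta$. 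Choosing $F_0$ to be the span of this finite collection of standard basis vectors then yields $\binn{\xi,\pa_q^H(x)\eta} = \binn{\xi,\pa_q^H(\Phi_F(x))\eta}$ for all $F\supseteq F_0$, since $\Phi_F(x)$ and $x$ have the same ``matrix entries'' against those basis vectors.

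For the vertical part $\pa_t^V$ the argument is identical in structure but simpler: by Remark \ref{rem:form-of-patv} (or directly from \eqref{eq:dirvert}) the operator $D_t^V$ is diagonal on each $\C A_q^m$, so $D_t^V\eta$ and $D_t^V\xi$ are again finite combinations of the same standard basis vectors as $\eta,\xi$ respectively (no shifting occurs), and the same scalar-action argument for $\si_L(t^{\pm1/2},\cdot)$ applies. One takes the union of the two finite-dimensional subspaces produced for the horizontal and vertical cases to obtain a single $F_0$ working for both identities simultaneously. The main obstacle I anticipate is purely bookkeeping: tracking exactly which finitely many standard basis vectors appear after applying $D_q^H$ (with its $\pa_e,\pa_f$ shifts governed by \eqref{eq:leftmult} and \eqref{eq:derexpI}) and after the multiplication implicit in the slice, and verifying that $\Phi_F$ with $F\supseteq F_0$ genuinely reproduces all the needed inner products $\binn{\zeta',x\zeta}$; this requires care because $\Phi_F(x)$ is defined via $P_F(x\cdot\La(1))$ rather than a two-sided compression, so one must use that $\zeta = z\cdot\La(1)$ for a polynomial $z$ and rewrite $\binn{\zeta',x\zeta} = \binn{(z')^*\cdot\La(1)\text{-adjoint terms},\dots}$ appropriately — but this is exactly the kind of density-free, finite-dimensional linear algebra that the projections $\Phi_F$ were designed to handle, so no genuine analytic difficulty arises.
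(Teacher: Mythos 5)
Your overall strategy---rewrite $\binn{\xi,\pa_q^H(x)\eta}$ as finitely many matrix elements of $x$ against algebraic vectors and then observe that these are captured by $\Phi_F$---is the same as the paper's, but the two steps that actually carry the argument are wrong or missing. First, for a general $x\in\T{Lip}_t(SU_q(2))$ the twist $\si_L(q^{\pm1/2},\cdot)$ is applied to $x$, not to the vector it acts on, so your identity $\binn{\zeta',\si_L(q^{\pm1/2},x)\zeta}=q^{\pm\deg(\zeta)/2}\binn{\zeta',x\zeta}$ is false as stated (test it on $x\in\C A^m_q$: there the scalar is $q^{\pm m/2}$, not $q^{\pm\deg(\zeta)/2}$). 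The correct mechanism is Lemma \ref{l:analytic}: $\si_L(q^{\pm1/2},x)$ is implemented by conjugation with $\Ga_q^{\mp1}$, and one then moves $\Ga_q^{\pm 1}$ and $\Ga_q^{\mp1}\C D^H_q$ onto the vectors $\xi,\eta$, which lie in $\C O(SU_q(2))^{\op 2}$ and are preserved by these operators; this is exactly how the paper rewrites the twisted commutator so that $x$ itself sits between algebraic vectors. Your conclusion (a scalar multiple of $\binn{\zeta',x\zeta}$, with a scalar depending on both $\zeta$ and $\zeta'$) is salvageable, but only through this lemma, which your sketch never invokes.

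Second, the step you defer as bookkeeping---that $\binn{\zeta',x\zeta}$ depends only on $\Phi_F(x)$ for suitable $F$---is the actual crux, and your candidate subspace is the wrong one. Since $\Phi_F(x)$ only sees $P_F(\rho(x)\La(1))$, you must convert the right-hand factor into a modification of the left-hand vector; writing $\zeta=\La(z)$ and $\zeta'=\La(y)$, this uses the twisted-trace property \eqref{eq:modular} of the Haar state, $h(y^*xz)=h(\nu(z)y^*x)$, giving $\binn{\zeta',x\zeta}=\binn{\La(y\,\nu(z)^*),\rho(x)\La(1)}$, so that $F_0$ must contain the elements $y\,\nu(z)^*$. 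Your prescription (that $P_F$ fix $\La(z^*\zeta')$) is the tracial guess and fails for $q<1$, since $h(y^*zx)\neq h(y^*xz)$ in general; without the modular automorphism $\nu$ the identity $\binn{\xi,\pa_q^H(x)\eta}=\binn{\xi,\pa_q^H(\Phi_F(x))\eta}$ does not follow, and no finite-dimensional linear algebra substitutes for it. The same two corrections are needed (in simpler form) for the vertical part.
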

\begin{proof}
First consider $y,z \in \C O(SU_q(2))$ and let $F \su \C O(SU_q(2))$ be any finite dimensional subspace containing the vector $y \cd \nu(z)^* \in \C O(SU_q(2))$.  Using that the Haar state is a twisted trace (see \eqref{eq:modular}), we then have that
\begin{equation}\label{eq:innfinproj}
\inn{y, x \cd z} = \inn{y \cd \nu(z)^*, x} = \inn{y \cd \nu(z)^*, \Phi_F(x)} = \inn{y, \Phi_F(x) \cd z}
\end{equation}
for all $x \in C(SU_q(2))$.\\
Let us now focus on the case of the horizontal Dirac operator. The argument is similar for the vertical Dirac operator. Let $x \in \T{Lip}_t(SU_q(2))$ and $\xi, \eta \in \C O(SU_q(2))^{\op 2}$. By definition of $\pa_q^H(x)$ and by Lemma \ref{l:analytic} we have that
\[
\begin{split}
\binn{\xi, \pa_q^H(x) \eta} 
& = \binn{\xi, D^H_q \si_L(q^{1/2},x) \eta} - \binn{\xi, \si_L(q^{-1/2},x) \C D^H_q \eta} \\
& = \inn{\C D^H_q \xi, \Ga_q^{-1} x \Ga_q \eta} - \inn{\xi, \Ga_q x \Ga_q^{-1} \C D^H_q \eta} \\
& = \inn{\Ga_q^{-1}\C D^H_q \xi,  x \Ga_q \eta} - \inn{\Ga_q \xi,  x \Ga_q^{-1} \C D^H_q \eta} .
\end{split}
\]
Since the unbounded operators $\Ga_q$ and $\Ga_q^{-1} \C D^H_q$ both preserve the subspace $\C O(SU_q(2))^{\op 2}$ we obtain the result of the lemma by applying the observation from \eqref{eq:innfinproj} and running the last computation backwards.
\end{proof}

\begin{lemma}\label{lem:fuzzy-three-spheres-exhaust}
Let $n,i,j\in \nn_0$ satisfy that $i,j \leq n$. It holds that $u_{ij}^n\in \be_N^M(B_q^M)$ for all $N, M \in \nn_0$ with $N + M \geq n$ and $M\geq |2j-n|$. In particular, for any finite dimensional subspace $F \su \C O(SU_q(2))$ we may choose a $K_0 \in \nn_0$ such that $F \su \be_0^K(B_q^K)$ for all $K \geq K_0$.
\end{lemma}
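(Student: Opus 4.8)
The plan is to reduce the statement to a membership claim about individual matrix coefficients, which I can read off from the explicit formula for $\beta_N^M$ on spectral subspaces established earlier. First I would recall that, by \eqref{eq:sigma-L-and-R-on-matrix-units}, a matrix coefficient $u_{ij}^n$ lies in the algebraic spectral subspace $\C A_q^{2j-n}$; write $m := 2j - n$, so that $|m| = |2j-n|$. Under the hypotheses $N+M \geq n$ and $M \geq |m|$, I want to show $u_{ij}^n \in \beta_N^M(A_q^m) \subseteq \beta_N^M(B_q^M)$, the last inclusion being valid precisely because $|m| \leq M$.

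The key computation is then to invoke Lemma \ref{l:imageberII} (together with Lemma \ref{lem:berezin-image} and, for the negative case, Lemma \ref{l:imageberIA}). Concretely, for $m \geq 0$, write $n = 2j' + m$ with $j' := j - m = n - j \geq 0$; the condition $j' \in \{0,\ldots,N+M-m\}$ amounts to $n - j \leq N + M - m$, i.e.\ $n - j + (2j - n) \leq N+M$, i.e.\ $j \leq N+M$, which is implied by $N + M \geq n \geq j$. In this range Lemma \ref{l:imageberII} gives $\beta_N^M(u_{ij}^n) = u_{ij}^n \cdot \chi_N^M(u_{jj}^n)$ with $\chi_N^M(u_{jj}^n) > 0$, so $u_{ij}^n = \chi_N^M(u_{jj}^n)^{-1} \cdot \beta_N^M(u_{ij}^n) \in \beta_N^M(A_q^m)$. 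For $m < 0$ one runs the symmetric version of Lemma \ref{l:imageberII} (the statement concerning $u^{2j+m}_{ij}$, after reindexing), or alternatively applies the adjoint: $u_{ij}^n = (-q)^{i-j}(u^n_{n-i,n-j})^*$ by \eqref{eq:adjoint-of-matrix-coefficients}, and $u^n_{n-i,n-j}$ lies in $\C A_q^{-m}$ with $|-m| = |m| \leq M$ and $n - (n-j) = j \leq N+M$, so by the already-treated case $u^n_{n-i,n-j} \in \beta_N^M(A_q^{-m})$; since $\beta_N^M$ is $*$-preserving and $\beta_N^M(A_q^{-m})^* = \beta_N^M(A_q^m)$, we conclude $u_{ij}^n \in \beta_N^M(A_q^m)$. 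The bookkeeping with the indices — matching the ``$j$'' appearing in the two versions of Lemma \ref{l:imageberII} with the ``$j$'' of our $u_{ij}^n$ — is the only place one has to be careful, and that is the main (mild) obstacle.

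Finally, for the ``in particular'' clause: given a finite dimensional $F \subseteq \C O(SU_q(2))$, choose finitely many matrix coefficients $u^{n_1}_{i_1 j_1}, \ldots, u^{n_p}_{i_p j_p}$ spanning $F$, set $K_0 := \max_\ell \max\{n_\ell, |2j_\ell - n_\ell|\}$, and observe that for every $K \geq K_0$ the pair $(N,M) = (0,K)$ satisfies $N + M = K \geq n_\ell$ and $M = K \geq |2j_\ell - n_\ell|$ for all $\ell$, so each spanning coefficient, hence all of $F$, lies in $\beta_0^K(B_q^K)$ by the first part.
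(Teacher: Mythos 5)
Your argument is correct and takes essentially the same route as the paper's proof: identify $u^n_{ij}$ with a matrix coefficient of the normal form $u^{2k+m}_{ik}$ or $u^{2k+m}_{i,k+m}$ and check the index condition $k\leq N+M-m$ supplied by Lemma \ref{l:imageberII}/Lemma \ref{lem:berezin-image}, the paper treating the negative-degree case by direct reindexing rather than your (equally valid) adjoint shortcut. The only slip is cosmetic: \eqref{eq:adjoint-of-matrix-coefficients} gives $u^n_{ij} = (-q)^{j-i}\,(u^n_{n-i,n-j})^*$ rather than $(-q)^{i-j}$, which is irrelevant since only a nonzero scalar multiple is needed for membership in the subspace $\be_N^M(A_q^m)$.
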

\begin{proof}
Let $N,M \in \nn_0$ with $N + M \geq n$ and $M \geq |2j - n|$ be given. Put $m:=|2j-n|$ so that $M \geq m$. 
 Suppose first that $m = n - 2j$. We then have that $u^n_{ij} = u^{2j + m}_{ij}$ and it follows from Lemma \ref{lem:berezin-image} that $u^n_{ij} \in \be_N^M(A^{-m}_q) \su \be_N^M(B_q^M)$, since $j \leq 2j = n - m \leq N + M - m$. Suppose next that $m = 2j - n$. Put $k := j - m$ and notice that $k \geq 0$ since $k = n - j$. We then have that $u^n_{ij} = u^{2j - m}_{ij} = u^{2k + m}_{i,k + m}$ and it again follows from Lemma \ref{lem:berezin-image} that $u^n_{ij} \in \be_N^M(A^m_q)$ since $k = j - m \leq n - m \leq N + M - m$.
\end{proof}

We define the linear map $\de \colon \T{Lip}_q(SU_q(2)) \to \B M_2\big( L^\infty(SU_q(2)) \big)$ by putting
\[
\de(x) := u \cd  \pa_{q,q}(x) \cd u^* \q \T{for all } x \in \T{Lip}_q(SU_q(2)) .
\]
We notice that $\de$ does indeed take values in the von Neumann algebra $\B M_2\big( L^\infty(SU_q(2)) \big)$ since $ \pa_{q,q} = \pa_q^V + \pa_q^H$ takes values here by Corollary \ref{cor:values-in-L-infty}. We moreover remark that $\de$ extends the twisted $*$-derivation
\[
\de = \pma{ \de^3 & -\de^2 \\ - \de^1 & -\de^3 } \colon \C O(SU_q(2)) \to \B M_2\big( \C O(SU_q(2)) \big)
\]
as can be seen by an application of Proposition \ref{p:derV}.

\begin{lemma}\label{lem:adjoint-operator-without-saying-it}
For each $N,M\in \nn_0$ there exists a $K_0 \in \nn_0$ such that 
\begin{enumerate}
\item $\Phi_N^M \de(x) = \Phi_N^M  \de\big(  \Phi_0^K(x) \big)$ for all $x\in \T{Lip}_q(SU_q(2))$ and $K\geq K_0$.
\item $\Phi_N^M \pa_t^V(x) = \Phi_N^M  \pa_t^V\big(  \Phi_0^K(x) \big)$ for all $x \in \T{Lip}_t(SU_q(2))$ and $K \geq K_0$.
\end{enumerate}
\end{lemma}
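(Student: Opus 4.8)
\textbf{Proof plan for Lemma \ref{lem:adjoint-operator-without-saying-it}.}

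The plan is to reduce both statements to the pointwise identities of Lemma \ref{l:projinside} and Lemma \ref{lem:fuzzy-three-spheres-exhaust}, using the fact that $\Phi_N^M$ only sees the values of an operator against a fixed finite set of standard basis vectors. Since $\be_N^M(B_q^M) \su \C B_q^M$ is a finite-dimensional subspace, the orthogonal projection $P_{\be_N^M(B_q^M)}$ onto $\La(\be_N^M(B_q^M))$ has an orthonormal basis $\{\zeta_1,\dots,\zeta_d\}$ consisting of (normalisations of) standard basis elements of $\C O(SU_q(2))$. By definition \eqref{eq:finitephi} of $\Phi_N^M$, for any $T \in \B B(L^2(SU_q(2)))$ one has $\La(\Phi_N^M(T)) = \sum_{l=1}^d \binn{\zeta_l, T \La(1)} \zeta_l$, so $\Phi_N^M(T)$ is determined entirely by the $d$ numbers $\binn{\zeta_l, T \La(1)}$.

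First I would treat (2). Fix $N,M \in \nn_0$ and let $F_0 \su \C O(SU_q(2))$ be the finite-dimensional subspace furnished by Lemma \ref{l:projinside} applied with $\xi$ ranging over the finite set $\{\zeta_1,\dots,\zeta_d\}$ and $\eta = \La(1) \in \C O(SU_q(2))^{\op 2}$ (more precisely, one runs Lemma \ref{l:projinside} with each pair $(\zeta_l, \La(1))$ and takes the span of the resulting spaces $F_0$, which is again finite-dimensional). Enlarging $F_0$ if necessary, apply Lemma \ref{lem:fuzzy-three-spheres-exhaust} to obtain $K_0 \in \nn_0$ such that $F_0 \su \be_0^K(B_q^K)$ for all $K \geq K_0$; since $\Phi_0^K = \Phi_{\be_0^K(B_q^K)}$, this guarantees that for any finite-dimensional $F$ with $F_0 \su F$ one still has $\binn{\zeta_l, \pa_t^V(x)\La(1)} = \binn{\zeta_l, \pa_t^V(\Phi_F(x))\La(1)}$, in particular for $F = \be_0^K(B_q^K)$ when $K \geq K_0$. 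Now for $x \in \T{Lip}_t(SU_q(2))$ and $K \geq K_0$, Lemma \ref{l:projinside} (with $F = \be_0^K(B_q^K)$) gives $\binn{\zeta_l, \pa_t^V(x)\La(1)} = \binn{\zeta_l, \pa_t^V(\Phi_0^K(x))\La(1)}$ for each $l$; since $\Phi_N^M$ of an operator depends only on these inner products against the $\zeta_l$, this yields $\Phi_N^M \pa_t^V(x) = \Phi_N^M \pa_t^V(\Phi_0^K(x))$. Statement (1) follows by the same argument applied entry-wise to $\de(x) = u \cdot \pa_{q,q}(x) \cdot u^*$: each of the four entries of $\de(x)$ is a finite sum of terms of the form $(\text{matrix coefficient})\cdot \pa_q^H(x) \cdot (\text{matrix coefficient})$ and $(\text{matrix coefficient})\cdot \pa_t^V(x)\cdot(\text{matrix coefficient})$ with the fixed entries of $u$ and $u^*$, so that $\binn{\zeta_l, \de(x)_{ab}\La(1)}$ is a finite linear combination of inner products $\binn{\zeta', \pa_q^H(x)\La(1)'}$ and $\binn{\zeta', \pa_t^V(x)\La(1)'}$ with $\zeta', \La(1)' \in \C O(SU_q(2))^{\op 2}$ ranging over a fixed finite set; enlarging $F_0$ to accommodate all these pairs and invoking Lemma \ref{l:projinside} once more gives the desired reduction.

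The main obstacle I anticipate is bookkeeping rather than mathematics: one must be careful that the finite-dimensional subspace $F_0$ produced by Lemma \ref{l:projinside} genuinely depends only on $N,M$ (through the finite basis $\zeta_1,\dots,\zeta_d$ of $\La(\be_N^M(B_q^M))$ and the finitely many matrix entries of $u, u^*$), and not on $x$ — this is what makes $K_0$ uniform in $x$. A secondary point to verify carefully is the compatibility of left and right multiplication by the generators with the twisted-trace property \eqref{eq:modular} of the Haar state, exactly as in the computation \eqref{eq:innfinproj} inside the proof of Lemma \ref{l:projinside}; here it is used that $u$ and $u^*$ have entries in $\C O(SU_q(2))$ so that all the intertwining of $\Phi_F$ with multiplication stays within the algebraic setting where \eqref{eq:innfinproj} applies. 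No genuinely new estimate is needed; everything is assembled from Lemma \ref{l:analytic}, Lemma \ref{l:projinside}, Lemma \ref{lem:fuzzy-three-spheres-exhaust} and Proposition \ref{p:derV}.
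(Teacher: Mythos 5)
Your argument is correct and is essentially the paper's own proof: both reduce $\Phi_N^M \de(x)$ and $\Phi_N^M \pa_t^V(x)$ to finitely many inner products against an orthonormal basis of $\La\big(\be_N^M(B_q^M)\big)$ and the vector $\La(1)$ (with the entries of $u$ and $u^*$ absorbed into the vectors), then apply Lemma \ref{l:projinside} to those finitely many pairs and invoke Lemma \ref{lem:fuzzy-three-spheres-exhaust} to produce $K_0$. The only blemishes are cosmetic: in part (1) the vertical derivation occurring in $\de = u \cd \pa_{q,q} \cd u^*$ is $\pa_q^V$ rather than $\pa_t^V$, and the orthonormal basis should be taken among (normalised) matrix coefficients rather than the standard basis \eqref{eq:standard-basis}; neither affects the argument, since all that is used is that the vectors $\zeta_l$ lie in $\C O(SU_q(2))$.
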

\begin{proof}
We will only carry out the argumentation for $\de$ since the remaining case follows by a similar but slightly easier argument. \\
Consider the finite dimensional subspace $\be_N^M(B_q^M) \su \C O(SU_q(2))$ and denote its dimension by $d \in \nn$. Let us choose a subset $\{\ze_k \mid k = 1,2,\ldots,d\} \subseteq \be_N^M(B_q^M)$ so that $\{\Lambda(\ze_k) \mid k = 1,2,\ldots,d\}$ constitutes an orthonormal basis for the subspace $\La\big( \be_N^M(B_q^M) \big) \su L^2(SU_q(2))$. The map $\Phi_N^M$ is then given by the expression
\[
\Phi_N^M(T) = \sum_{k = 1}^d \ze_k \binn{\Lambda(\ze_k), T \La(1)} \q T \in \B B\big(L^2(SU_q(2))\big) .
\]
For every vector $\ze \in L^2(SU_q(2))$ we apply the notation
\[
\ze^0 := \pma{\ze \\ 0} \, \, \T{ and } \, \, \, \ze^1 := \pma{0 \\ \ze} \in L^2(SU_q(2))^{\op 2} ,
\]
and let $e_{ij} \in \B M_2(\cc)$ denote the standard matrix units for $i,j \in \{0,1\}$. 
The linear map $\Phi_N^M$ can then be described at the level of $2 \ti 2$-matrices by the expression
\[
\Phi_N^M(T) = \sum_{i,j = 0}^1 \sum_{k = 1}^d e_{ij} \cd \ze_k \binn{\Lambda(\ze^i_k), T \La(1)^j}
\q \T{for all } T \in \B M_2\big( \B B(L^2(SU_q(2))) \big) .
\]
In particular, we have that 
\[
\Phi_N^M( \de(x)) 
= \sum_{i,j = 0}^1 \sum_{k = 1}^d e_{ij} \cd \ze_k \binn{\Lambda(\ze^i_k), \de(x) \La(1)^j}  
= \sum_{i,j = 0}^1 \sum_{k = 1}^d e_{ij} \cd \ze_k \binn{u^* \Lambda(\ze^i_k),  \pa_{q,q}(x) u^* \La(1)^j}
\]
for all $x \in \T{Lip}_q(SU_q(2))$.  It therefore follows from Lemma \ref{l:projinside} 
that we may choose a finite dimensional subspace $F_0  \su \C O(SU_q(2))$ such that
\[
\Phi_N^M( \de(x))  = \Phi_N^M\big( \de(\Phi_F(x)) \big)
\]
for all finite dimensional subspaces $F \su \C O(SU_q(2))$ with $F_0 \su F$ and all $x \in \T{Lip}_q(SU_q(2))$. The result of the present lemma is now a consequence of Lemma \ref{lem:fuzzy-three-spheres-exhaust}.
%
\end{proof}

Let $N, M \in \nn_0$ be given. Recall that the Berezin transform $\beta_N^M \colon C(SU_q(2)) \to C(SU_q(2))$ is defined by slicing the coproduct $\Delta$ on the right tensor-leg with a state, while endomorphisms of the form $\delta_\eta$ with $\eta \in \C U_q (\mathfrak{su}(2))$ are defined by slicing the coproduct on the left tensor-leg. An application of the  coassociativity  of $\Delta$ therefore shows that $\beta_N^M(\delta_\eta(x))=\delta_\eta(\beta_N^M(x))$ for all $x\in \C O(SU_q(2))$ and $\eta \in \C U_q(\G{su}(2))$. In particular, we obtain that
\begin{equation}\label{eq:delber}
\be_N^M(\de(x)) = \de(\be_N^M(x)) \q \T{for all } x \in \C O(SU_q(2)) .
\end{equation}
Furthermore, for each element $x$ belonging to an algebraic spectral subspace $\C A^m_q$ for some $m \in \zz$, we get from Lemma \ref{l:twicommu} , Lemma \ref{l:imageberIA} and Lemma \ref{lem:berezin-image} that
\[
\be_N^M\big( \pa_t^V(x) ) = \pma{ [m/2]_t \be_N^M(x) & 0 \\ 0 & -[m/2]_t \be_N^M(x) }
= \pa_t^V( \be_N^M(x))  \q \T{for all } t \in (0,1] .
\]
We may thus conclude that
\[
\be_N^M(\pa_t^V(x)) = \pa_t^V( \be_N^M(x)) \q \T{for all } x \in \C O(SU_q(2)) .
\]
As a consequence of the analysis carried out above, we shall now see that these identities remain valid also at the level of the Lipschitz algebra. Recall, in this connection, that $\wit{\be}_N^M$ denotes the extension of $\be_N^M$ to $L^\infty(SU_q(2))$ introduced in \eqref{eq:extended-berezin}.

\begin{prop}\label{prop:berezin-and-delta}
For $M, N \in \nn_0$, the following identities are valid:
\begin{enumerate}
\item $\de\big( \be_N^M(x) \big) = \wit{\be}_N^M \de(x)$ for all $x \in \T{Lip}_q(SU_q(2))$;
\item $\pa_t^V\big( \be_N^M(x) \big) = \wit{\be}_N^M \pa_t^V(x)$ for all $x \in \T{Lip}_t(SU_q(2))$.
\end{enumerate}
\end{prop}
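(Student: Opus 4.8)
The plan is to deduce the two identities from the corresponding algebraic identities \eqref{eq:delber} (and its vertical analogue) by an approximation argument using the finite-dimensional projections $\Phi_F$ introduced in \eqref{eq:finitephi}. The key point is that both sides of the claimed identities are bounded operators on $L^2(SU_q(2))^{\op 2}$, so it suffices to test them against pairs of vectors $\xi,\eta$ in the dense subspace $\C O(SU_q(2))^{\op 2}$. I will treat the case of $\de$ first, since the vertical case follows by an entirely analogous (and slightly easier) argument; I will only indicate the modifications needed at the end.

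\textbf{The $\de$-identity.} Fix $x\in\T{Lip}_q(SU_q(2))$ and fix $\xi,\eta\in\C O(SU_q(2))^{\op 2}$. First I would observe that, by the definition of the extended Berezin transform in \eqref{eq:extended-berezin}, one has $\wit\be_N^M\de(x)=\be_N^M\big(\Phi_N^M\de(x)\big)$. By Lemma \ref{lem:adjoint-operator-without-saying-it}(1) there is a $K_0\in\nn_0$ such that $\Phi_N^M\de(x)=\Phi_N^M\de\big(\Phi_0^K(x)\big)$ for all $K\geq K_0$, and hence
\[
\wit\be_N^M\de(x)=\be_N^M\big(\Phi_N^M\de(\Phi_0^K(x))\big)=\wit\be_N^M\big(\de(\Phi_0^K(x))\big) .
\]
Now $\Phi_0^K(x)\in\C O(SU_q(2))$, so the algebraic identity \eqref{eq:delber} applies and gives $\de(\Phi_0^K(x))=\be_N^M(\de(\Phi_0^K(x)))\in\B M_2(\C O(SU_q(2)))$; in particular $\wit\be_N^M$ acts on it as $\be_N^M$ and, since $\be_N^M$ is idempotent on its image (being a slice of the coassociative coproduct, $\be_N^M\circ\be_N^M=\be_N^M$, which I would record along the way), we obtain $\wit\be_N^M\de(x)=\de(\Phi_0^K(x))$ whenever $K\ge K_0$ — wait, more carefully: $\wit\be_N^M\big(\de(\Phi_0^K(x))\big)=\be_N^M\big(\de(\Phi_0^K(x))\big)=\de\big(\be_N^M(\Phi_0^K(x))\big)$, using \eqref{eq:delber} once more. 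On the other hand, I must show the left-hand side $\de(\be_N^M(x))$ equals this. Here I would use Lemma \ref{lem:fuzzy-three-spheres-exhaust} to enlarge $K$ so that, in addition, $\be_N^M(B_q^M)\subseteq\be_0^K(B_q^K)$, which forces $\be_N^M(\Phi_0^K(x))=\be_N^M(x)$ (because $\Phi_0^K$ is the identity on the finite-dimensional space $\be_0^K(B_q^K)\supseteq\be_N^M(B_q^M)$, and $\be_N^M$ factors through $\Phi_N^M$, hence through $\be_0^K(B_q^K)$). Comparing the two computations yields $\de(\be_N^M(x))=\wit\be_N^M\de(x)$, as desired. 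I would phrase the whole chain as a single computation testing against $\xi,\eta$ to be safe about the boundedness issues.

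\textbf{The vertical identity and the main obstacle.} For part (2) the argument is the same with $\de$ replaced by $\pa_t^V$: use Lemma \ref{lem:adjoint-operator-without-saying-it}(2) to reduce $\Phi_N^M\pa_t^V(x)$ to $\Phi_N^M\pa_t^V(\Phi_0^K(x))$ for $K$ large, then invoke the algebraic identity $\be_N^M(\pa_t^V(y))=\pa_t^V(\be_N^M(y))$ for $y\in\C O(SU_q(2))$ established just before the statement, together with Lemma \ref{lem:fuzzy-three-spheres-exhaust} and idempotency of $\be_N^M$. The one subtlety I anticipate being the main obstacle is bookkeeping the interaction between the three reductions — replacing $x$ by $\Phi_0^K(x)$, the identity $\Phi_0^K(x)\in\C O(SU_q(2))$ landing us in the algebraic regime, and the fact that $\be_N^M$ only ``sees'' a finite-dimensional piece — so that all the relevant $K$'s can be taken simultaneously large and the chain of equalities genuinely closes up; in particular one must be careful that $\Phi_0^K(x)$ need not equal $x$, only that $\be_N^M\Phi_0^K(x)=\be_N^M(x)$ and $\Phi_N^M\de\Phi_0^K(x)=\Phi_N^M\de(x)$ once $K\ge K_0$. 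Everything else is a direct consequence of the lemmas already proved, so once the indices are organised the proof is short.

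\begin{proof}
We only treat the identity in $(1)$, since $(2)$ follows by the same argument, using the algebraic identity $\be_N^M(\pa_t^V(y))= \pa_t^V(\be_N^M(y))$ for $y \in \C O(SU_q(2))$ (established above) in place of \eqref{eq:delber}, and Lemma \ref{lem:adjoint-operator-without-saying-it}$(2)$ in place of Lemma \ref{lem:adjoint-operator-without-saying-it}$(1)$.

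Let $x \in \T{Lip}_q(SU_q(2))$ be given. Since $\be_N^M$ is a slice of the coassociative coproduct, it is idempotent: $\be_N^M \circ \be_N^M = \be_N^M$ on $C(SU_q(2))$, and likewise $\wit{\be}_N^M \circ \be_N^M = \be_N^M$ on $C(SU_q(2))$ by Lemma \ref{lem:extended-berezin-is-ucp}. Choose, using Lemma \ref{lem:adjoint-operator-without-saying-it}$(1)$, an integer $K_1 \in \nn_0$ such that $\Phi_N^M \de(x) = \Phi_N^M \de\big( \Phi_0^K(x) \big)$ for all $K \geq K_1$. Since $\be_N^M(B_q^M) \su \C O(SU_q(2))$ is finite dimensional, Lemma \ref{lem:fuzzy-three-spheres-exhaust} provides an integer $K_2 \in \nn_0$ such that $\be_N^M(B_q^M) \su \be_0^K(B_q^K)$ for all $K \geq K_2$. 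Fix $K := \max\{K_1,K_2\}$.

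Since $\Phi_0^K$ restricts to the identity on the finite dimensional subspace $\be_0^K(B_q^K)$ and the image of $\be_N^M$ is contained in $\be_N^M(B_q^M) \su \be_0^K(B_q^K)$, we obtain $\be_N^M\big( \Phi_0^K(x) \big) = \be_N^M(x)$. Using this together with \eqref{eq:delber} applied to $\Phi_0^K(x) \in \C O(SU_q(2))$ we get
\[
\de\big( \be_N^M(x) \big) = \de\big( \be_N^M(\Phi_0^K(x)) \big) = \be_N^M\big( \de(\Phi_0^K(x)) \big) = \wit{\be}_N^M\big( \de(\Phi_0^K(x)) \big) ,
\]
where the last equality uses that $\de(\Phi_0^K(x)) \in \B M_2(\C O(SU_q(2)))$ by \eqref{eq:delber} and Lemma \ref{lem:extended-berezin-is-ucp}. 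On the other hand, by the definition \eqref{eq:extended-berezin} of the extended Berezin transform and by the choice of $K$,
\[
\wit{\be}_N^M \de(x) = \be_N^M\big( \Phi_N^M \de(x) \big) = \be_N^M\big( \Phi_N^M \de(\Phi_0^K(x)) \big) = \wit{\be}_N^M\big( \de(\Phi_0^K(x)) \big) .
\]
Combining the two displayed chains of equalities yields $\de\big( \be_N^M(x) \big) = \wit{\be}_N^M \de(x)$, which is the desired identity.
\end{proof}
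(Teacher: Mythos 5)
Your argument follows the same route as the paper's proof: reduce to the coordinate algebra by replacing $x$ with $\Phi_0^K(x)$ via Lemma \ref{lem:adjoint-operator-without-saying-it} and Lemma \ref{lem:fuzzy-three-spheres-exhaust}, and then combine the algebraic commutation \eqref{eq:delber} with Lemma \ref{lem:extended-berezin-is-ucp}. However, two points need repair. First, the claim that $\be_N^M$ is idempotent because it is ``a slice of the coassociative coproduct'' is false: iterating $\be_N^M$ corresponds to slicing with the convolution square of $\chi_N^M$, and $\chi_N^M$ is not an idempotent state. Concretely, by Lemma \ref{l:imageberII} the Berezin transform rescales the surviving matrix coefficients by the scalars $\chi_N^M(u^{2j+m}_{jj})$, which are in general strictly between $0$ and $1$ (for instance $\chi_N^M(a^*)<1$ while $\be_N^M(a^*)\neq 0$ when $M\geq 1$), so applying $\be_N^M$ twice squares these scalars and $\be_N^M\ci\be_N^M\neq\be_N^M$. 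Fortunately your two displayed chains never use idempotency — only that $\wit{\be}_N^M$ extends $\be_N^M$ on $C(SU_q(2))$ — so this sentence should simply be deleted.

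Second, and more importantly, the justification of the load-bearing step $\be_N^M\big(\Phi_0^K(x)\big)=\be_N^M(x)$ is incorrect as written. The two facts you cite — that $\Phi_0^K$ restricts to the identity on $\be_0^K(B_q^K)$ and that the image of $\be_N^M$ is contained in $\be_N^M(B_q^M)\su\be_0^K(B_q^K)$ — prove the composition in the opposite order, namely $\Phi_0^K\ci\be_N^M=\be_N^M$, and say nothing about precomposing $\be_N^M$ with $\Phi_0^K$. The identity you need is nevertheless true, and the correct one-line argument uses exactly the ingredients already recorded in the paper: Lemma \ref{lem:extended-berezin-is-ucp} gives $\be_N^M\Phi_N^M(y)=\wit{\be}_N^M(y)=\be_N^M(y)$ for all $y\in C(SU_q(2))$, and since $\be_N^M(B_q^M)\su\be_0^K(B_q^K)$ the nesting property of the projections noted after \eqref{eq:finitephi} yields $\Phi_N^M\Phi_0^K=\Phi_N^M$; hence $\be_N^M\Phi_0^K(x)=\be_N^M\Phi_N^M\Phi_0^K(x)=\be_N^M\Phi_N^M(x)=\be_N^M(x)$. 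With this substitution your proof closes up and coincides in substance with the paper's, which performs the same cancellation inside a single chain, namely $\wit{\be}_N^M\de(x)=\be_N^M\Phi_N^M\de\big(\Phi_0^K(x)\big)=\de\big(\be_N^M\Phi_N^M\Phi_0^K(x)\big)=\de\big(\be_N^M(x)\big)$.
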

\begin{proof}
We focus on proving the identity regarding the  map $\de$. A similar argumentation applies to the twisted $*$-derivation $\pa_t^V$.  Let $x \in \T{Lip}_q(SU_q(2))$ be given. By Lemma \ref{lem:fuzzy-three-spheres-exhaust} and Lemma \ref{lem:adjoint-operator-without-saying-it}, we may choose a $K \in \nn_0$ such that $\be^M_N(B_q^M) \su \be^K_0(B_q^K)$ and such that 
\[
\Phi_N^M\de(x)=\Phi_N^M \de\big( \Phi^K_0(x) \big) .
\]
We now remark that $\Phi^K_0(x)\in \C O(SU_q(2))$ and that $\Phi_N^M  \Phi_0^K=\Phi_N^M$.

Applying these facts together with \eqref{eq:delber} and Lemma \ref{lem:extended-berezin-is-ucp} we obtain the desired result:
\begin{align*}
\wit{\beta}_N^M \de(x) &= \beta_N^M \Phi_N^M \de(x)=  \beta_N^M \Phi_N^M \de\big( \Phi^K_0(x) \big) 
=  \de\big( \beta_N^M  \Phi_N^M \Phi^K_0(x) \big) 
= \de\big( \beta_N^M (x) \big) . \qedhere
\end{align*}
\end{proof}


In the special situation where $t=q$ we have the identity $u\cd  \pa_{q,q} \cd u^*=\de$ and, as we saw above, the map $\de$ commutes with the Berezin transform. As the following result shows, this has the effect that the Berezin transform becomes a contraction for the associated Lip-norm $L_{q,q}^{\max}$. There is no reason to expect this to be the case when $t\neq q$, but Proposition \ref{p:berestigen} below  provides an estimate on how far away the Berezin transform is from being a contraction for the Lip-norm $L_{t,q}^{\max}$. 

\begin{cor}\label{cor:beta-Lip-contractive-when-t=q}
Let $N,M\in \nn_0$. The Berezin transform $\beta_N^M\colon \T{Lip}_q(SU_q(2)) \to \C O(SU_q(2))$ is a Lip-norm contraction for $L_{q,q}^{\max}$; i.e.~we have the inequality $L_{q,q}^{\max}\big(\beta_N^M(x))\leq L_{q,q}^{\max}(x)$ for all $x\in \T{Lip}_q(SU_q(2))$.
\end{cor}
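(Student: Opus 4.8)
The strategy is to reduce the statement to the already-established fact that the Berezin transform commutes with the twisted $*$-derivation $\de = u\,\pa_{q,q}\,u^*$, together with the observation that conjugation by the fundamental unitary $u$ does not change the operator norm of the relevant matrices. Recall from Definition~\ref{def:max-and-min-seminorm} that for $t=q$ one has $L_{q,q}^{\max}(x)=\|\pa_{q,q}(x)\|$ for $x\in\T{Lip}_q(SU_q(2))$, and from the discussion preceding the corollary that $\de(x)=u\,\pa_{q,q}(x)\,u^*$ extends the algebraic twisted $*$-derivation and takes values in $\B M_2\big(L^\infty(SU_q(2))\big)$ (using Corollary~\ref{cor:values-in-L-infty}). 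Since $u\in\B M_2\big(\C O(SU_q(2))\big)$ is unitary, the map $T\mapsto uTu^*$ is an isometry on $\B M_2\big(L^\infty(SU_q(2))\big)$, so $\|\de(x)\| = \|\pa_{q,q}(x)\| = L_{q,q}^{\max}(x)$ for all $x\in\T{Lip}_q(SU_q(2))$.

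First I would fix $N,M\in\nn_0$ and an arbitrary $x\in\T{Lip}_q(SU_q(2))$. By Proposition~\ref{p:fuzzber}, $\be_N^M(B_q^M)\su\C O(SU_q(2))$, so in particular $\be_N^M(x)\in\C O(SU_q(2))\su\T{Lip}_q(SU_q(2))$ and the quantity $L_{q,q}^{\max}\big(\be_N^M(x)\big)=\|\pa_{q,q}(\be_N^M(x))\|$ is finite and can be rewritten using $\de$ as $\|\de(\be_N^M(x))\|$. Next I would invoke Proposition~\ref{prop:berezin-and-delta}(1), which gives the key identity $\de\big(\be_N^M(x)\big)=\wit\be_N^M\de(x)$, where $\wit\be_N^M\colon L^\infty(SU_q(2))\to\T{Fuzz}_{N+M}(B_q^M)$ is the extended Berezin transform from~\eqref{eq:extended-berezin}. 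Applying this entrywise (the matrix units $e_{ij}$ simply shuffle the four scalar-valued slots, and $\wit\be_N^M\ot 1_2$ is again ucp on $\B M_2\big(L^\infty(SU_q(2))\big)$ by Lemma~\ref{lem:extended-berezin-is-ucp}), I obtain
\[
L_{q,q}^{\max}\big(\be_N^M(x)\big)=\big\|\de\big(\be_N^M(x)\big)\big\|=\big\|(\wit\be_N^M\ot 1_2)\big(\de(x)\big)\big\|.
\]

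Finally, since $\wit\be_N^M$ is unital and completely positive (Lemma~\ref{lem:extended-berezin-is-ucp}), the amplification $\wit\be_N^M\ot 1_2\colon\B M_2\big(L^\infty(SU_q(2))\big)\to\B M_2\big(L^\infty(SU_q(2))\big)$ is a ucp map, hence a complete contraction; in particular it is norm-decreasing. Therefore
\[
L_{q,q}^{\max}\big(\be_N^M(x)\big)=\big\|(\wit\be_N^M\ot 1_2)\big(\de(x)\big)\big\|\leq\big\|\de(x)\big\|=L_{q,q}^{\max}(x),
\]
which is the claimed inequality. There is no real obstacle here: the entire content has been front-loaded into Proposition~\ref{prop:berezin-and-delta} and the norm-preserving property of conjugation by $u$, and the only point requiring a word of care is the passage from the scalar identity in Proposition~\ref{prop:berezin-and-delta}(1) to the matrix-level statement, which is handled by noting that $\de$ and $\wit\be_N^M\ot 1_2$ respect the $2\times 2$ matrix decomposition. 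One could equally well phrase the last step directly in terms of $\pa_{q,q}$ using inequality~\eqref{eq:praktisk-ulighed} and Proposition~\ref{prop:berezin-and-delta}(2) together with the analogous identity for $\pa_q^H$, but routing through $\de$ is cleanest since it turns $L_{q,q}^{\max}$ into a genuine operator norm on which the ucp map $\wit\be_N^M$ acts contractively.
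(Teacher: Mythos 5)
Your proposal is correct and follows essentially the same route as the paper's proof: rewrite $L_{q,q}^{\max}$ via $\de = u\,\pa_{q,q}(\cdot)\,u^*$, use Proposition \ref{prop:berezin-and-delta} to commute $\de$ past the Berezin transform, and conclude by the complete contractivity of the ucp map $\wit\be_N^M$ from Lemma \ref{lem:extended-berezin-is-ucp}. The only cosmetic difference is that you spell out the matrix amplification $\wit\be_N^M\ot 1_2$ explicitly (and your justification that $\be_N^M(x)\in\C O(SU_q(2))$ for arbitrary Lipschitz $x$ is better sourced from Lemma \ref{lem:extended-berezin-is-ucp} than from Proposition \ref{p:fuzzber}, since $x$ need not lie in $B_q^M$), but neither point affects the argument.
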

\begin{proof}
 Let $x \in \T{Lip}_q(SU_q(2))$. By Corollary \ref{cor:values-in-L-infty}, we have that $\de(x) = u \cd  \pa_{q,q}(x) \cd u^* \in \B M_2\big(L^\infty(SU_q(2))\big)$ and by Lemma \ref{lem:extended-berezin-is-ucp} the map $\wit{\be}_N^M\colon L^\infty(SU_q(2)) \to C(SU_q(2))$ is ucp, and hence a complete contraction. Using this together with Proposition \ref{prop:berezin-and-delta}, we obtain the relevant inequality:
\[
L_{q,q}^{\max}\big(\beta_N^M(x)\big) = 
\big\| \de(\beta_N^M(x))\big\|=\big\| \wit{\beta}_N^M(\de(x))\big\|\leq \|\de(x)\|=L_{q,q}^{\max}(x). \qedhere
\]
\end{proof}

We now return to the general setting, and will prove that the Berezin transform suitably approximates the identity  operator on the Lip-unit ball.
Most of the results below will be needed in two versions: one version for all of quantum $SU(2)$ and one version which is fine tuned to hold on the spectral bands. 
For $K\in \nn_0$, we will also use $d_{t,q}$ and $d_{t,q}^{\max}$ to denote the metrics on the state space $\C S(B_q^K)$ arising from the restriction of the seminorms $L_{t,q}$ and $L_{t,q}^{\max}$ to the spectral band $B_q^K$ having domains $\C B_q^K$ and $B_q^K \cap \T{Lip}_t(SU_q(2))$, respectively. Hence, for $\mu,\nu \in \C S(C(SU_q(2)))$ we specify that
\begin{align*}
d_{t,q}^{\max}(\mu,\nu)&:= \sup\big\{ |\mu(x)-\nu(x)|\mid x\in C(SU_q(2)) \, , \, \, L_{t,q}^{\max}(x)\leq 1 \big\}\\
d_{t,q}^{\max}(\mu\vert_{B_q^K},\nu\vert_{B_q^K})&:= \sup\big\{ |\mu(x)-\nu(x)|\mid x\in B_q^K \, ,  \,\, L_{t,q}^{\max}(x)\leq 1 \big\},
\end{align*}
and similarly for $d_{t,q}$. Note that by Lemma \ref{lem:paV-on-spectral-subspaces} the domain of the restricted seminorm $L_{t,q}^{\max}|_{B_q^K}$ is independent of $t$, in that we have 
\begin{align}\label{eq:Lip-algebra-intersected-with-band}
\T{Lip}_t(SU_q(2)) \cap B_q^K= \T{Lip}^H (SU_q(2))\cap B_q^K,
\end{align}
where $\T{Lip}^H (SU_q(2))$ is the algebra of horizontally Lipschitz elements introduced in Definition \ref{def:Lipschitz-elements}.

\begin{prop}\label{prop:berezin-approximates-identity}
Let $N,M,K \in \nn_0$. It holds that
\begin{alignat*}{2}
\|\beta_N^M(x)-x \| & \leq d_{t,q}^{\max}(\chi_N^M,\epsilon ) \cd L_{t,q}^{\max}(x) \q && \mbox{for all } x\in C(SU_q(2)) \, \, \, \mbox{and} \\
\|\beta_N^M(x)-x \| & \leq d_{t,q}^{\max}(\chi_N^M\vert_{B_q^K},\epsilon\vert_{B_q^K} ) \cd L_{t,q}^{\max}(x) \q && \mbox{for all } x\in B_q^K .
\end{alignat*}
\end{prop}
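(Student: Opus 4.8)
The statement is a Berezin-transform analogue of the standard ``a state that is close to the counit moves Lipschitz elements only a little'' estimate, and the natural route is to exploit equivariance of $\beta_N^M$ together with the bound on the slice maps $\phi_{\xi,\zeta}\otimes 1$ from Proposition \ref{prop:slice-and-Lip}. Recall that $\beta_N^M(x)=(1\otimes\chi_N^M)\Delta(x)$ and $x=(1\otimes\epsilon)\Delta(x)$ (using coamenability so that $\epsilon$ is a bounded state). Hence
\[
\beta_N^M(x)-x = (1\otimes(\chi_N^M-\epsilon))\Delta(x),
\]
and the first inequality will follow once we know that for any bounded functional $\omega$ on $C(SU_q(2))$ the element $(1\otimes\omega)\Delta(x)$ has norm at most $\|\omega\|_{\mathrm{Lip}}\cdot L_{t,q}^{\max}(x)$, where $\|\omega\|_{\mathrm{Lip}}$ is the ``dual Lipschitz norm'' of $\omega$. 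More precisely, I will estimate $\|(1\otimes(\chi_N^M-\epsilon))\Delta(x)\|$ by testing against arbitrary unit vectors $\xi,\zeta\in L^2(SU_q(2))$: writing $\langle\xi,(1\otimes(\chi_N^M-\epsilon))\Delta(x)\,\zeta\rangle = (\chi_N^M-\epsilon)\big((\phi_{\xi,\zeta}\otimes 1)\Delta(x)\big)$ (after swapping tensor legs appropriately), and then using that $(\phi_{\xi,\zeta}\otimes 1)\Delta(x)\in\mathrm{Lip}_t(SU_q(2))$ with $L_{t,q}^{\max}\big((\phi_{\xi,\zeta}\otimes 1)\Delta(x)\big)\le \|\xi\|\,\|\zeta\|\,L_{t,q}^{\max}(x)$ by Proposition \ref{prop:slice-and-Lip}. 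Therefore $|(\chi_N^M-\epsilon)\big((\phi_{\xi,\zeta}\otimes 1)\Delta(x)\big)|\le d_{t,q}^{\max}(\chi_N^M,\epsilon)\cdot L_{t,q}^{\max}(x)$ directly from the definition of the Monge--Kantorovi\v{c} metric, and taking the supremum over unit vectors $\xi,\zeta$ gives the first claimed inequality.

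\textbf{Care with the tensor leg.} The one subtlety is that $\phi_{\xi,\zeta}\otimes 1$ as defined in the excerpt slices off the \emph{first} leg of $C(SU_q(2))\otimes_{\min}C(SU_q(2))$, whereas $\beta_N^M$ slices off the \emph{second}. So I will either apply the flip automorphism (which is a $*$-isomorphism and hence norm-preserving, and which interchanges the roles, at the cost of replacing $\Delta$ by the opposite coproduct) or, more cleanly, observe that for $\omega$ a functional and $\psi$ a functional one has $\psi\big((\mathrm{id}\otimes\omega)\Delta(x)\big)=(\psi\otimes\omega)\Delta(x)=\omega\big((\psi\otimes\mathrm{id})\Delta(x)\big)$, so that $\langle\xi,\beta_N^M(x)\zeta\rangle-\langle\xi,x\,\zeta\rangle = (\chi_N^M-\epsilon)\big((\phi_{\xi,\zeta}\otimes\mathrm{id})\Delta(x)\big)$ with the slice map landing in the \emph{second} leg. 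This is exactly the form to which Proposition \ref{prop:slice-and-Lip} applies (it is stated for $(\phi_{\xi,\zeta}\otimes 1)(\Delta(x))$). I expect this bookkeeping to be the only place where one has to be careful.

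\textbf{The band version.} For the second inequality, the point is that everything restricts to the spectral band. If $x\in B_q^K$, then since $\Delta$ is coinvariant on the algebraic spectral subspaces (stated in Section \ref{ss:circle}, and in particular each $\C A_q^m$ is a left comodule), one has $\Delta(x)\in C(SU_q(2))\otimes_{\min} B_q^K$ — more carefully, $\Delta(B_q^K)\subseteq C(SU_q(2))\otimes_{\min}B_q^K$ by approximating $x$ by elements of $\C B_q^K$ and using that the spectral projections $\Pi^L_m$ are contractions. Consequently $(\phi_{\xi,\zeta}\otimes\mathrm{id})\Delta(x)\in B_q^K$ as well, so when we test $(\chi_N^M-\epsilon)$ against it we may use the restricted Monge--Kantorovi\v{c} metric $d_{t,q}^{\max}(\chi_N^M|_{B_q^K},\epsilon|_{B_q^K})$ in place of $d_{t,q}^{\max}(\chi_N^M,\epsilon)$. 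The remaining estimate $L_{t,q}^{\max}\big((\phi_{\xi,\zeta}\otimes\mathrm{id})\Delta(x)\big)\le\|\xi\|\,\|\zeta\|\,L_{t,q}^{\max}(x)$ is again Proposition \ref{prop:slice-and-Lip}, which does not see the band restriction. Taking the supremum over unit vectors $\xi,\zeta$ then yields $\|\beta_N^M(x)-x\|\le d_{t,q}^{\max}(\chi_N^M|_{B_q^K},\epsilon|_{B_q^K})\cdot L_{t,q}^{\max}(x)$, as claimed. The main obstacle, such as it is, is not any single hard step but rather assembling the pieces so that the slice-map estimate (Proposition \ref{prop:slice-and-Lip}), the identity $\beta_N^M(x)-x=(1\otimes(\chi_N^M-\epsilon))\Delta(x)$, coamenability (to make $\epsilon$ bounded), and the coinvariance of the spectral bands all line up; once they do, the inequality is immediate from the definition of $d_{t,q}^{\max}$.
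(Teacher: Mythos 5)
Your argument is correct and is essentially the paper's own proof: the identity $\phi_{\xi,\zeta}(\beta_N^M(x)-x)=(\chi_N^M-\epsilon)\big((\phi_{\xi,\zeta}\otimes 1)\Delta(x)\big)$ (the paper cites Tomiyama's Fubini theorem for slice maps for exactly this leg-swapping), the bound from Proposition \ref{prop:slice-and-Lip}, the supremum over vector functionals, and the comodule property $\Delta(B_q^K)\subseteq C(SU_q(2))\otimes_{\min}B_q^K$ for the band version all match the paper step for step.
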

\begin{proof}
 When proving the two statements we may focus on the case where $x$ belongs to $\T{Lip}_t(SU_q(2))$ or $\T{Lip}_t(SU_q(2)) \cap B_q^K$ since the seminorms on the right hand side otherwise take the value infinity. Notice first that for every $y \in C(SU_q(2))$ it holds that  
\begin{equation}\label{eq:normvector} 
\|y\|= \sup\big\{| \phi_{\xi,\eta}(y) | \mid \xi,\eta \in L^2(SU_q(2)) \, , \, \, \| \xi \|, \| \eta \| = 1 \big\} ,
\end{equation} 
where we recall that $\phi_{\xi,\eta}$ denotes the linear functional $x\mapsto \inn{\xi, \rho(x) \eta}$.  
Let now $x \in \T{Lip}_t(SU_q(2))$ be given and let $\xi,\eta \in L^2(SU_q(2))$ be unit vectors. Using the identity  \eqref{eq:normvector}, it suffices to show that
\[
\big| \phi_{\xi,\eta} (\beta_N^M(x)-x) \big| \leq d_{t,q}^{\max}(\chi_N^M,\epsilon ) \cd L_{t,q}^{\max}(x) .
\]
This inequality follows from Proposition \ref{prop:slice-and-Lip} and the Fubini theorem for slice maps \cite{Tom:AppFub} via the estimates:
\begin{align*}
\big| \phi_{\xi,\eta} (\beta_N^M(x)-x) \big| 
& = \big| (\chi_N^M - \epsilon) (\phi_{\xi,\eta} \ot 1)\Delta(x) \big| \\
& \leq d_{t,q}^{\max}(\chi_N^M,\epsilon)  \cd L_{t,q}^{\max}\big((\phi_{\xi,\eta} \ot 1)\Delta(x)\big)
\leq d_{t,q}^{\max}(\chi_N^M,\epsilon) \cd L_{t,q}^{\max}(x).
\end{align*}
This proves the first part of the statement. \\
If $x\in B_q^K$ then $\De(x)\in C(SU_q(2))\ot_{\min} B_q^K$ since each of the algebraic spectral subspaces is a left comodule for $\C O(SU_q(2))$. In the last computation in the proof above we therefore have $(\phi_{\xi,\eta} \ot 1)\Delta(x)\in B_q^K$, and hence the rest of the  argument carries over to prove the remaining inequality.
\end{proof}

As indicated above, we now wish to estimate how far the Berezin transform is from being a contraction for the Lip-norm $L_{t,q}^{\max}$. 
In general, there is no hope to commute the Berezin transform directly past the operation $u \cd \pa_{t,q} \cd u^*$ as we could when $t=q$. However, as Proposition \ref{prop:berezin-approximates-identity} shows, the Berezin transform approximates the identity operator well on the Lip-unit ball, and  this makes it possible to obtain strong estimates nevertheless. 
The analytic norm $\|\cdot\|_{t,q}$ introduced in Section \ref{ss:analytic} will be used as a tool in the analysis below, and we first provide an estimate on its values on the entries of the fundamental unitary $u \in \mathbb{M}_2\big(\C O(SU_q(2))\big)$.  We denote these entries by $u_{ij}$, $i,j = 0,1$.

\begin{lemma}\label{l:boumatuni}
For every $i,j \in \{0,1\}$, it holds that
\begin{alignat}{2}
\| u_{ij} \|_{t,q} &= \hspace{0.2cm}\| u_{ij}^* \|_{t,q}  &&\leq q^{-1/2} + t^{-1/2} \q \mbox{and} \notag \\
L_{t,q}^{\T{max}}(u_{ij}) &= L_{t,q}^{\T{max}}(u^*_{ij}) && \leq [1/2]_t + q^{-1/2} .\notag
\end{alignat}
\end{lemma}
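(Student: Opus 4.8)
The plan is to compute everything explicitly on the four entries $u_{00}=a^*$, $u_{01}=-qb$, $u_{10}=b^*$ and $u_{11}=a$, using that each of these lies in a single left-spectral subspace. Indeed, from the definition of $\si_L$ in \eqref{eq:sigma-left-right-formulas} we have $a\in A_q^1$, $a^*\in A_q^{-1}$, $b\in A_q^1$ and $b^*\in A_q^{-1}$, so every entry $u_{ij}$ satisfies $u_{ij}\in A_q^{\pm 1}$. By Lemma \ref{l:sigmaonspec}, $\si_L(s^{\pm 1/2},u_{ij})=s^{\pm 1/2}u_{ij}$ when $u_{ij}\in A_q^{1}$ and $\si_L(s^{\pm 1/2},u_{ij})=s^{\mp 1/2}u_{ij}$ when $u_{ij}\in A_q^{-1}$, in either case picking up a scalar $s^{\pm 1/2}$. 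Since all four entries satisfy $\|u_{ij}\|=1$ (as $u$ is a unitary matrix, each column has norm one, and in fact $\|a\|=\|b\|=1$), the definition of $\|\cdot\|_{t,q}$ in Section \ref{ss:analytic} gives directly
\[
\|u_{ij}\|_{t,q}=\max\big\{ s_1^{1/2}+s_2^{1/2}\ :\ (s_1,s_2)\in\{(t,q),(t^{-1},q^{-1})\}\big\}\cdot\|u_{ij}\|,
\]
and since $t,q\in(0,1]$ the maximum is attained at $(t^{-1},q^{-1})$, yielding the bound $q^{-1/2}+t^{-1/2}$. The identity $\|u_{ij}\|_{t,q}=\|u_{ij}^*\|_{t,q}$ follows from \eqref{eq:sigmaL-relations}, which shows $\si_L(s^{1/2},x)^*=\si_L(s^{-1/2},x^*)$, so that the two maxima defining $\|x\|_{t,q}$ and $\|x^*\|_{t,q}$ are interchanged.

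For the second estimate, I would use Lemma \ref{lem:paV-on-spectral-subspaces} together with the algebraic formula for $\pa_q^H$ from Lemma \ref{l:twicommu}. By Remark \ref{rem:form-of-patv} (or directly from Lemma \ref{lem:paV-on-spectral-subspaces}), for $u_{ij}\in A_q^{m}$ with $m=\pm 1$ we have $\pa_t^V(u_{ij})=\SmallMatrix{[m/2]_t u_{ij} & 0\\ 0 & -[m/2]_t u_{ij}}$, and $|[m/2]_t|=|[\pm 1/2]_t|=[1/2]_t$ in both cases, so $\|\pa_t^V(u_{ij})\|=[1/2]_t$. For the horizontal part, Lemma \ref{l:twicommu} gives $\pa_q^H(u_{ij})=\SmallMatrix{0 & -\pa^2(u_{ij})\\ -\pa^1(u_{ij}) & 0}$ with $\pa^1=q^{1/2}\pa_e$ and $\pa^2=q^{-1/2}\pa_f$; using the explicit values in \eqref{eq:derexpI} one reads off that for each entry exactly one of $\pa_e(u_{ij})$, $\pa_f(u_{ij})$ is nonzero and equals (up to sign and a factor $q^{0}$ or $q^{1}$) another generator of norm one. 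A direct check of the four cases shows $\|\pa^1(u_{ij})\|,\|\pa^2(u_{ij})\|\le q^{-1/2}$, hence $\|\pa_q^H(u_{ij})\|\le q^{-1/2}$. Then
\[
L_{t,q}^{\T{max}}(u_{ij})=\|\pa_{t,q}(u_{ij})\|\le\|\pa_t^V(u_{ij})\|+\|\pa_q^H(u_{ij})\|\le[1/2]_t+q^{-1/2}.
\]
The equality $L_{t,q}^{\T{max}}(u_{ij})=L_{t,q}^{\T{max}}(u_{ij}^*)$ is immediate from the $*$-invariance $\pa_{t,q}(x^*)=-\pa_{t,q}(x)^*$ noted after Lemma \ref{l:twilip}.

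I do not expect any genuine obstacle here; the only thing requiring a little care is the bookkeeping in the horizontal estimate, namely making sure that across all four entries $u_{ij}$ the relevant derivative $\pa^1(u_{ij})$ or $\pa^2(u_{ij})$ has norm at most $q^{-1/2}$ rather than, say, $q^{1/2}$ or $q^{-1}$. Concretely, from \eqref{eq:derexpI}: $\pa_e(a^*)=0$ and $\pa_f(a^*)=-qb$, so $\pa^2(a^*)=-q^{1/2}b$ has norm $q^{1/2}\le q^{-1/2}$; $\pa_e(a)=b^*$ and $\pa_f(a)=0$, so $\pa^1(a)=q^{1/2}b^*$ has norm $q^{1/2}$; $\pa_e(b)=-q^{-1}a^*$, $\pa_f(b)=0$, so $\pa^1(b)=q^{-1/2}a^*$ has norm $q^{-1/2}$; $\pa_e(b^*)=0$, $\pa_f(b^*)=a$, so $\pa^2(b^*)=q^{-1/2}a$ has norm $q^{-1/2}$. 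Taking into account the scalar $-q$ relating $u_{01}$ to $b$ only changes a sign in the relevant bound after noting $\|u_{01}\|=q\le 1$, so every case stays within $q^{-1/2}$. This confirms the stated inequalities, so the lemma follows.
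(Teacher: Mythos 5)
Your proof is correct and takes essentially the same route as the paper's: the $\|\cdot\|_{t,q}$ estimate comes from each entry $u_{ij}$ lying in a spectral subspace $A_q^{\pm 1}$ together with $\|u_{ij}\|\leq 1$, and the Lip-norm estimate from splitting $\pa_{t,q}$ into its vertical and horizontal parts, bounded by $[1/2]_t$ and $q^{-1/2}$ respectively via the explicit action of $\pa_e$ and $\pa_f$ on the entries. The only slip is the assertion that $\|u_{ij}\|=1$ for all four entries (in fact $\|u_{01}\| = \| -q b \| = q$ when $q<1$), but since the argument only requires $\|u_{ij}\|\leq 1$ this is harmless.
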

\begin{proof}
Let $i,j \in \{0,1\}$ be given. Using that $\sigma_L(s^{1/2}, u_{ij})=s^{j-1/2} u_{ij}$ for all $s \in (0,\infty)$, the result of the lemma follows from the estimate
\[
\| u_{ij}^* \|_{t,q} = \| u_{ij} \|_{t,q} 
\leq \max\big\{ q^{j-1/2} + t^{j-1/2} , q^{-j+1/2} + t^{-j + 1/2} \big\}
\leq q^{-1/2} + t^{-1/2}
\]
together with the estimates
\[
\begin{split}
L_{t,q}^{\T{max}}(u_{ij}^*) & = L_{t,q}^{\T{max}}(u_{ij})
= \| \pa_t^V(u_{ij}) + \pa_q^H(u_{ij}) \| \\
& \leq  \| \pa^3_t(u_{ij}) \|  + \T{max}\big\{ q^{1/2}\| \pa_e(u_{ij}) \| , q^{-1/2} \| \pa_f(u_{ij}) \| \big\}
\leq [1/2]_t + q^{-1/2}, 
\end{split}
\]
where the last inequality follows from \eqref{eq:generators-on-fundamental-unitary}.
\end{proof}


In the following lemma we recall that $\Pi_0^L \colon C(SU_q(2)) \to C(S_q^2)$ denotes the spectral projection onto the Podle\'s sphere; see \eqref{eq:spec-proj-norm-def-on-suq2}. 

\begin{lemma}\label{l:vertradi}
Let $x \in \ker(\Pi_0^L)$. We have the estimate
\[
\| x \| \leq \frac{\pi \cd (t^{1/2} + t^{-1/2})}{\sqrt{3}} \cd L_{t,q}^{\T{max}}(x) .
\]
\end{lemma}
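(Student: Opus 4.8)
The statement to prove is that for $x\in\ker(\Pi_0^L)$ one has $\|x\|\leq \frac{\pi(t^{1/2}+t^{-1/2})}{\sqrt3}\,L_{t,q}^{\max}(x)$. The natural route is to feed $\pa_t^V(x)$ into the anti-derivative $\int_t^V$ constructed via the Schur multiplier $\varphi_t$, exactly as in the proof of Proposition \ref{p:bandapprox-without-saying-it}, and then pass from the operator-theoretic level back to $C(SU_q(2))$. First I would assume without loss of generality that $x\in\T{Lip}_t(SU_q(2))$, since otherwise $L_{t,q}^{\max}(x)=\infty$ and the inequality is trivial. Then, since $x\in\ker(\Pi_0^L)$, Proposition \ref{prop:antiderivative} gives $\int_t^V\pa_t^V(x)=(1-\Pi_0^L)(x)=x$. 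Here I should be a little careful: $\int_t^V$ is defined on $\B B(L^2(SU_q(2))^{\op2})$, so the identity $\int_t^V\pa_t^V(x)=x$ is really an identity between the image of $\pa_t^V(x)$ under $\B M(\varphi_t)(\ga\,\cdot\,)$ and the operator $\rho(x)$ acting diagonally; but $\B M(\varphi_t)$ preserves $C(SU_q(2))$ (as recorded just before Proposition \ref{prop:antiderivative}), so this really does land back inside $C(SU_q(2))$ and equals $x$.

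The estimate then follows by taking operator norms:
\[
\|x\| = \Big\| \int_t^V \pa_t^V(x) \Big\| = \big\| \B M(\varphi_t)\big(\ga\cdot\pa_t^V(x)\big)\big\| \leq \|\B M(\varphi_t)\|_{\T{cb}}\cdot \|\ga\cdot \pa_t^V(x)\| .
\]
Now $\ga$ is a unitary (in fact a selfadjoint unitary), so $\|\ga\cdot\pa_t^V(x)\|=\|\pa_t^V(x)\|$, and by the inequality \eqref{eq:praktisk-ulighed} we have $\|\pa_t^V(x)\|\leq\|\pa_{t,q}(x)\|=L_{t,q}^{\max}(x)$. Finally Lemma \ref{l:quaintest} bounds $\|\B M(\varphi_t)\|_{\T{cb}}\leq \frac{\pi(t^{1/2}+t^{-1/2})}{\sqrt3}$. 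Stringing these together yields
\[
\|x\| \leq \frac{\pi\cdot(t^{1/2}+t^{-1/2})}{\sqrt3}\cdot L_{t,q}^{\T{max}}(x),
\]
which is exactly the claim.

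I do not expect any serious obstacle here — this is essentially a one-line consequence of the machinery already assembled (the anti-derivative of Proposition \ref{prop:antiderivative}, the cb-norm bound of Lemma \ref{l:quaintest}, and the comparison \eqref{eq:praktisk-ulighed}). The only point requiring a moment's attention is the bookkeeping about where $\int_t^V\pa_t^V(x)$ lives: one must note that although $\int_t^V$ and $\pa_t^V$ a priori produce bounded operators on the Hilbert space, the composition reproduces the original element $x\in C(SU_q(2))\subseteq\B B(L^2(SU_q(2))^{\op2})$, so taking the $C^*$-norm of $x$ is the same as taking the operator norm of $\int_t^V\pa_t^V(x)$, and the stated inequality is meaningful. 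Everything else is a routine chain of norm estimates.
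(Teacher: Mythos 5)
Your argument is correct and is essentially the paper's proof: reduce to $x\in\T{Lip}_t(SU_q(2))$, use Proposition \ref{prop:antiderivative} to write $x=\int_t^V\pa_t^V(x)$, and then combine the cb-norm bound of Lemma \ref{l:quaintest} with the inequality \eqref{eq:praktisk-ulighed}. The bookkeeping remark about where $\int_t^V\pa_t^V(x)$ lives is fine and consistent with what the paper records just before Proposition \ref{prop:antiderivative}.
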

\begin{proof}
Without loss of generality, we may assume that $x \in \ker(\Pi_0^L) \cap \T{Lip}_t(SU_q(2))$ since the right hand side of the desired inequality is equal to infinity otherwise. By Proposition \ref{prop:antiderivative} we then get that $x = \int_t^V \pa^V_t(x)$. It thus follows from Lemma \ref{l:quaintest} and the definition of $\int_t^V$ from \eqref{eq:quaintdef} that
\[
\| x \| \leq  \| \smallint_t^V \| \cd \| \pa^V_t(x) \| \leq \frac{\pi \cd (t^{1/2} + t^{-1/2})}{\sqrt{3}} \cd L_{t,q}^{\T{max}}(x) . \qedhere
\]
\end{proof}

With the above auxiliary results at our disposal, we may now start estimating the error arising when commuting the Berezin transform  past conjugation with the fundamental unitary. This will be relevant when estimating the $L_{t,q}^{\max}$-operator norm of the Berezin transform. An important point of the following lemma is that we are able to control the error term by means of a continuous function in $t$ and $q$. For the statement, we recall that $\ga := \sma{1 & 0 \\ 0 & -1}$.

\begin{lemma}\label{l:estiberuni}
Let $K \in \nn_0$. There exists a continuous, positive function $g_K \colon (0,1] \ti (0,1] \to (0,\infty)$ such that 
\[
\big\| \be^M_N( u \ga x u^* ) - u \be^M_N(\ga x) u^* \big\| \leq 
g_K(t,q) \cd d_{t,q}^{\T{max}}\big( \chi_N^M\vert_{B_q^K},\epsilon\vert_{B_q^K}\big) \cd L_{t,q}^{\T{max}}(x)
\]
for all $N,M \in \nn_0$, all $t,q \in (0,1]$ and all $x \in B_q^K \cap \ker(\Pi_0^L)$ .
\end{lemma}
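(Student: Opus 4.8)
The idea is to exploit that the error term $\be_N^M(u\ga x u^*) - u\be_N^M(\ga x)u^*$ can be rewritten by first commuting $\be_N^M$ past left and right multiplication by the entries of $u$ and $u^*$, replacing $\be_N^M$ by $\epsilon$ up to the controlled error from Proposition \ref{prop:berezin-approximates-identity}, and then using that $\epsilon$ is a $*$-homomorphism so that $\epsilon(u\ga x u^*) = \epsilon(u)\ga\epsilon(x)\epsilon(u)^{-1}$ interacts trivially with conjugation. Concretely, since $u \ga x u^*$ is a sum of terms $u_{ik}\ga_{kk}x\, u^*_{lj}$ over $i,j,k,l\in\{0,1\}$ (there are only finitely many), it suffices to estimate each term $\be_N^M(u_{ik}\,\ga_{kk}\, x\, u^*_{lj}) - u_{ik}\,\ga_{kk}\, \be_N^M(x)\, u^*_{lj}$ in operator norm. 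Here I would insert $\epsilon(u_{ik})\ga_{kk}\,\epsilon(x)\,\epsilon(u_{lj})^* $-type anchor terms and telescope: write the difference as a sum of three pieces, one where $\be_N^M$ is replaced by the identity on the factor $x$ (bounding by $\|u_{ik}\|\,\|u_{lj}\|\cd\|\be_N^M(x)-x\|$), one recording the failure of $\be_N^M$ to be multiplicative on the band elements (which one must control using that $\be_N^M$ is ucp hence a complete contraction, plus the $L_{t,q}^{\max}$-estimate applied to $x$ sitting in $B_q^K$), and a remainder. At each stage the quantity $\|\be_N^M(z)-z\|$ for $z\in B_q^K$ is bounded by $d_{t,q}^{\max}(\chi_N^M|_{B_q^K},\epsilon|_{B_q^K})\cd L_{t,q}^{\max}(z)$ by the spectral-band version of Proposition \ref{prop:berezin-approximates-identity}, and the relevant Lipschitz seminorms of products like $u_{ik}x u^*_{lj}$ are controlled via Lemma \ref{l:leftrightbound} together with the explicit bounds on $\|u_{ij}\|_{t,q}$ and $L_{t,q}^{\max}(u_{ij})$ from Lemma \ref{l:boumatuni}. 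Finally, since $x\in\ker(\Pi_0^L)$, Lemma \ref{l:vertradi} lets me trade a stray factor of $\|x\|$ for $\tfrac{\pi(t^{1/2}+t^{-1/2})}{\sqrt 3}L_{t,q}^{\max}(x)$ whenever such a factor appears, keeping everything proportional to $L_{t,q}^{\max}(x)$ and to the distance $d_{t,q}^{\max}(\chi_N^M|_{B_q^K},\epsilon|_{B_q^K})$.

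Collecting the terms, each contribution is of the form (explicit polynomial in $q^{-1/2}$, $t^{-1/2}$, $[1/2]_t$, and $\pi(t^{1/2}+t^{-1/2})/\sqrt3$) times $d_{t,q}^{\max}(\chi_N^M|_{B_q^K},\epsilon|_{B_q^K})\cd L_{t,q}^{\max}(x)$; summing over the finitely many index choices $i,j,k,l$ yields a single coefficient which I would package as the function $g_K(t,q)$. Continuity and strict positivity of $g_K$ on $(0,1]\times(0,1]$ is then immediate because each elementary building block — $q^{\pm 1/2}$, $t^{\pm 1/2}$, $[1/2]_t = 1/(t^{1/2}+t^{-1/2})$, and the constant $\pi/\sqrt3$ — is continuous and positive there, and the $K$-dependence enters only through how large the band is, i.e.\ through the multiplicities in the telescoping argument (a finite number depending on $K$). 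I should double-check that $g_K$ genuinely may depend on $K$: it does, since the number of spectral components and hence the combinatorial bookkeeping in controlling $\|\be_N^M(z)-z\|$ for $z$ a product living in a possibly larger band than $B_q^K$ grows with $K$ — this is exactly why the statement only asserts existence of $g_K$ rather than a uniform bound.

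The main obstacle I anticipate is handling the failure of $\be_N^M$ to be an algebra homomorphism in a way that stays controlled by the Lipschitz seminorm rather than just the operator norm. When I write $\be_N^M(u_{ik}\,\ga_{kk}\,x\,u^*_{lj})$ I cannot factor the Berezin transform through the product, so I must instead keep $u_{ik}\cd(\text{something})\cd u^*_{lj}$ together and only peel off the middle factor $x$. The cleanest route is probably to note that $u_{ik}\ga_{kk}x u^*_{lj}$ itself lies in some spectral band $B_q^{K'}$ with $K'$ depending only on $K$ (since multiplication by $a,b,a^*,b^*$ shifts the left-grading by a bounded amount), apply the band version of Proposition \ref{prop:berezin-approximates-identity} directly to this product, and then separately estimate $L_{t,q}^{\max}(u_{ik}\ga_{kk}x u^*_{lj})$ via the twisted Leibniz inequality of Lemma \ref{l:leftrightbound} and the bounds of Lemma \ref{l:boumatuni}; the term $u_{ik}\ga_{kk}\be_N^M(x)u^*_{lj}$ is then compared to $\be_N^M(u_{ik}\ga_{kk}x u^*_{lj})$ by applying the same approximation estimate to $u_{ik}\ga_{kk}x u^*_{lj}$ and to $x$ and using the triangle inequality, with the extra $\|x\|$-type factors converted via Lemma \ref{l:vertradi}. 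Carrying out this bookkeeping carefully, keeping every constant explicit enough to see it is a continuous positive function of $(t,q)$, is the technical heart of the argument, but it involves no genuinely new idea beyond the tools already assembled.
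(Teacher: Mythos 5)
Your route is essentially the paper's: expand $u\ga x u^*$ into matrix entries, compare $\be_N^M$ of each resulting product with the corresponding product of $\be_N^M(x)$ by a triangle inequality, control both error terms by the spectral-band version of Proposition \ref{prop:berezin-approximates-identity}, estimate the Lip-seminorm of the products via Lemma \ref{l:leftrightbound} and Lemma \ref{l:boumatuni}, and convert stray norm factors of $x$ into $L_{t,q}^{\max}(x)$ using Lemma \ref{l:anaband} and Lemma \ref{l:vertradi}. However, there is one genuine gap, and it sits exactly at the point you flag as uncertain. The statement requires the bound in terms of $d_{t,q}^{\T{max}}\big(\chi_N^M\vert_{B_q^K},\epsilon\vert_{B_q^K}\big)$, i.e.\ the distance restricted to the band $B_q^K$ itself. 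You expand $u\ga x u^*$ into terms $u_{ik}\ga_{kk}x\,u^*_{lj}$ with four independent indices and then only claim these lie in ``some band $B_q^{K'}$ with $K'$ depending on $K$'', proposing to apply the band version of Proposition \ref{prop:berezin-approximates-identity} on $B_q^{K'}$. That yields a bound involving $d_{t,q}^{\T{max}}\big(\chi_N^M\vert_{B_q^{K'}},\epsilon\vert_{B_q^{K'}}\big)$, which \emph{dominates} the distance over the smaller band $B_q^K$ (the supremum is over a larger set), so the stated inequality does not follow; no comparison in the reverse direction between these restricted distances is available in the paper. (The weaker estimate would still suffice for the later applications, but it does not prove this lemma.)

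The fix is to expand the matrix product correctly: the $(i,k)$-entry of $u\ga x u^*$ is $\sum_{j}(\pm 1)\,u_{ij}\,x\,(u_{kj})^*$, with the \emph{same} inner index $j$ in both factors. Since $u_{ij}\in \C A_q^{2j-1}$ and $(u_{kj})^*\in \C A_q^{1-2j}$ by \eqref{eq:sigma-L-and-R-on-matrix-units}, the degree shifts cancel and $u_{ij}\,x\,(u_{kj})^*$ lies in $B_q^K$ exactly, so Proposition \ref{prop:berezin-approximates-identity} applies on $B_q^K$ and your bookkeeping closes, giving precisely the paper's argument. Your four-index bookkeeping is what obscures this cancellation, and also your elaborate three-piece telescoping (including a term ``recording the failure of $\be_N^M$ to be multiplicative'') is unnecessary: the two-term split $\|\be_N^M(u_{ij}xu^*_{kj})-u_{ij}xu^*_{kj}\|+\|u_{ij}(x-\be_N^M(x))u^*_{kj}\|$ suffices. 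A minor further point: the $K$-dependence of $g_K$ comes from the bound $\|x\|_{t,q}\leq \sum_{m=-K}^{K}(t^{m/2}+q^{m/2})\,\|x\|$ of Lemma \ref{l:anaband} (combined with Lemma \ref{l:vertradi}), not from any multiplicity count in a telescoping argument; this does not affect correctness but is where the function $g_K$ actually acquires its $K$.
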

\begin{proof}
Without loss of generality we may focus on the case where $x \in \Lip_t(SU_q(2)) \cap B_q^K \cap \ker( \Pi_0^L)$, since the right hand side is otherwise equal to infinity. An application of Proposition \ref{prop:berezin-approximates-identity} shows that the following inequalities hold for all $N,M \in \nn_0$ and all $t,q \in (0,1]$; notice in this respect that $u_{ij} x u^*_{kj} \in B_q^K$ for all $i,j,k \in \{0,1\}$:
\[
\begin{split}
& \big \| \be^M_N( u \ga x u^* ) - u \be^M_N(\ga x) u^* \big\|
\leq \sum_{i,j,k = 0}^1 \big\| \be^M_N( u_{ij} x u^*_{kj} ) - u_{ij} \be^M_N(x) u^*_{kj} \big\| \\
& \q \leq \sum_{i,j,k = 0}^1 \big\| \be^M_N(u_{ij} x u^*_{kj}) - u_{ij} x u^*_{kj} \big\|
+ \sum_{i,j,k = 0}^1 \big\| u_{ij} (x - \be^M_N(x) ) u^*_{kj} \big\| \\
& \q \leq d_{t,q}^{\T{max}}\big( \chi_N^M\vert_{B_q^K},\epsilon\vert_{B_q^K}\big)
\cd \sum_{i,j,k = 0}^1 \left( L_{t,q}^{\T{max}}(u_{ij} x u^*_{kj}) + L_{t,q}^{\T{max}}(x) \right) .
\end{split}
\]
Applying Lemma \ref{l:leftrightbound} and Lemma \ref{l:boumatuni} we estimate that
\[
\begin{split}
L_{t,q}^{\T{max}}(u_{ij} x u^*_{kj}) 
& \leq L_{t,q}^{\T{max}}(u_{ij}) \cd \| x \|_{t,q} \cd \| u_{kj} \|_{t,q}
+ \| u_{ij} \|_{t,q} \cd L_{t,q}^{\T{max}}(x) \cd \| u_{kj} \|_{t,q} \\
& \ \ \ + \| u_{ij} \|_{t,q} \cd \| x \|_{t,q} \cd L_{t,q}^{\T{max}}(u_{kj}) \\
& \leq 2 ( [1/2]_t + q^{-1/2} ) (q^{-1/2} + t^{-1/2}) \cd \| x \|_{t,q}
+ (q^{-1/2} + t^{-1/2})^2 \cd L_{t,q}^{\T{max}}(x) .
\end{split}
\]
The result of the lemma is now a consequence of  Lemma \ref{l:anaband} and  Lemma \ref{l:vertradi}: indeed, we have that
\[
\begin{split}
\| x \|_{t,q} 
& \leq \sum_{m = -K}^K (t^{m/2} + q^{m/2}) \cd \| x \| \\
& \leq \sum_{m = -K}^K (t^{m/2} + q^{m/2}) \cd \frac{\pi \cd (t^{1/2} + t^{-1/2})}{\sqrt{3}} \cd L_{t,q}^{\max}(x) . \qedhere
\end{split}
\]
\end{proof}



\begin{prop}\label{p:berestigen}
Let $K \in \nn_0$. There exists a continuous positive function $h_K \colon (0,1] \ti (0,1] \to (0,\infty)$ satisfying that
\begin{enumerate}
\item $h_K(q,q) = 0$ for all $q \in (0,1]$ and;
\item the following estimate holds
\[
L_{t,q}^{\max}\big( \be^M_N(x) \big) \leq
\big( 1 + h_K(t,q) \cd d_{t,q}^{\max}(\chi_N^M\vert_{B_q^K},\epsilon\vert_{B_q^K}) \big) \cd L_{t,q}^{\max}(x) 
\]
for all $N,M \in \nn_0$, all $t,q \in (0,1]$ and all $x \in \T{Lip}_t(SU_q(2)) \cap B_q^K$.
\end{enumerate}
\end{prop}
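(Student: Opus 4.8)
The idea is to reduce the estimate for $L_{t,q}^{\max}\big(\be_N^M(x)\big)$ to the case $t=q$, where we already know from Corollary \ref{cor:beta-Lip-contractive-when-t=q} that the Berezin transform is a Lip-norm contraction, and to pay for the discrepancy between the two Lip-norms $L_{t,q}^{\max}$ and $L_{q,q}^{\max}$ by an error term measured in the analytic norm $\|\cdot\|_{t,q}$ and hence ultimately controlled by Lemma \ref{l:estiberuni}. The key observation is that $L_{t,q}^{\max}\big(\be_N^M(x)\big) = \|\pa_{t,q}(\be_N^M(x))\| = \|u^*\cd u\pa_{t,q}(\be_N^M(x))u^*\cd u\|$, so it suffices to estimate $\|u\pa_{t,q}(\be_N^M(x))u^*\|$. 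Now write $u\pa_{t,q}(\be_N^M(x))u^* = u\big(\pa_q^H(\be_N^M(x))+\pa_t^V(\be_N^M(x))\big)u^*$; the horizontal part $u\pa_q^H(\be_N^M(x))u^*$ should, by Proposition \ref{p:derV} (valid for $t=q$) applied in the form $u\pa_{q,q}(\cdot)u^*=\de(\cdot)$ together with the decomposition of $\pa_{q,q}$ into its horizontal and vertical pieces, be compared with the corresponding right-action expression $\de^1,\de^2$, and the vertical part $\pa_t^V(\be_N^M(x))$ handled via Proposition \ref{prop:berezin-and-delta}(2), i.e.~$\pa_t^V(\be_N^M(x)) = \wit\be_N^M(\pa_t^V(x))$, which only costs a complete contraction.

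\textbf{Main steps.} First I would record, using Remark \ref{rem:form-of-patv} and Remark \ref{rem:pat-on-Lipschitz}, the matrix shape $\pa_{t,q}(y)=\SmallMatrix{\pa_t^3(y) & -\pa^2(y)\\ -\pa^1(y) & -\pa_t^3(y)}$, and likewise the decomposition of $\de$ as $\de=\SmallMatrix{\de^3 & -\de^2 \\ -\de^1 & -\de^3}$, so that $\pa_{t,q}=\pa_{q,q}+(\pa_t^V-\pa_q^V)$, where $\pa_t^V-\pa_q^V$ is the diagonal twisted map $\SmallMatrix{\pa_t^3-\pa_q^3 & 0 \\ 0 & -(\pa_t^3-\pa_q^3)}$. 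Applying $\be_N^M$ and conjugating by $u$, the term $u\,\be_N^M(\pa_{q,q}(x))\,u^*$ is, by \eqref{eq:delber} and Corollary \ref{cor:values-in-L-infty}, equal to $u\,\pa_{q,q}(\be_N^M(x))\,u^*=\de(\be_N^M(x))=\wit\be_N^M(\de(x))$ (Proposition \ref{prop:berezin-and-delta}(1)), whose norm is $\le\|\de(x)\|=\|\pa_{q,q}(x)\|=L_{q,q}^{\max}(x)$. This last quantity must then be compared with $L_{t,q}^{\max}(x)$; the gap is exactly $\|\pa_t^V(x)-\pa_q^V(x)\|$, which (again by Remark \ref{rem:form-of-patv}) is diagonal with entries $\pa_t^3(x)-\pa_q^3(x)$, and one estimates $\|\pa_t^3(x)-\pa_q^3(x)\|$ spectral-band-by-spectral-band using that $x\in B_q^K$: on $A_q^m$ this difference is multiplication by $[m/2]_t-[m/2]_q$, hence bounded by $\sum_{m=-K}^K|[m/2]_t-[m/2]_q|\,\|\Pi_m^L(x)\|$, and Lemma \ref{l:vertradi} together with Lemma \ref{l:anaband} (or directly Lemma \ref{l:lipdomnorm}) converts each $\|\Pi_m^L(x)\|$ into a multiple of $L_{t,q}^{\max}(x)$ with a coefficient that is a continuous function of $(t,q)$ vanishing on the diagonal. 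Finally, the remaining term $\be_N^M\big(u\ga\tilde x u^*\big)-u\be_N^M(\ga\tilde x)u^*$ arising from the fact that $\be_N^M$ does not commute with conjugation by $u$ for the vertical part is precisely what Lemma \ref{l:estiberuni} bounds by $g_K(t,q)\cd d_{t,q}^{\max}\big(\chi_N^M|_{B_q^K},\epsilon|_{B_q^K}\big)\cd L_{t,q}^{\max}(x)$; one applies it to $\tilde x=x-\Pi_0^L(x)\in B_q^K\cap\ker(\Pi_0^L)$ (the $\Pi_0^L(x)$ part lands in $C(S_q^2)$ where all twists are trivial and $\be_N^M$ genuinely commutes with $\pa^0$, so it contributes nothing extra). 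Assembling the pieces and absorbing everything into a single continuous function $h_K(t,q)$ with $h_K(q,q)=0$ yields the claimed inequality.

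\textbf{Expected main obstacle.} The delicate point is the bookkeeping that isolates, in $u\,\pa_{t,q}(\be_N^M(x))\,u^*$, exactly which sub-expression fails to commute with $\be_N^M$ and then matches it with the hypotheses of Lemma \ref{l:estiberuni}: one must split $\pa_{t,q}=\pa_{q,q}+(\pa_t^V-\pa_q^V)$, recognize that $\pa_{q,q}$ conjugated by $u$ becomes $\de$ which commutes with $\be_N^M$ by coassociativity, and that the vertical correction $\pa_t^V-\pa_q^V$ is diagonal and hence its conjugate by $u$ has the form $u\ga(\cdot)u^*$ times scalars, so that Lemma \ref{l:estiberuni} (whose statement is phrased precisely for $u\ga x u^*$) applies after pulling out the $m$-dependent scalars $[m/2]_t-[m/2]_q$ band by band. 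Making the resulting coefficient a single \emph{continuous} function of $(t,q)$ that vanishes identically on $\{t=q\}$ requires care, but follows from continuity of $t\mapsto[m/2]_t$, of the functions $g_K$ from Lemma \ref{l:estiberuni} and the constants in Lemma \ref{l:vertradi} and Lemma \ref{l:boumatuni}, all of which are already established or elementary. I do not expect genuine analytic difficulty beyond this organizational step, since every individual estimate needed is either stated above or reduces to the behaviour of $q$-numbers.
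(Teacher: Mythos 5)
Your outline assembles exactly the ingredients of the paper's argument (the split $\pa_{t,q}=\pa_{q,q}+(\pa_t^V-\pa_q^V)$, Proposition \ref{prop:berezin-and-delta}, contractivity of $\wit{\be}_N^M$, Lemma \ref{l:estiberuni}, the scalars $[m/2]_t-[m/2]_q$ handled band by band via Corollary \ref{c:diracinvariant}), but the step you dismiss as ``bookkeeping'' is where the proof actually lives, and as written your assembly does not yield the stated inequality. Your route bounds the horizontal piece by $\|\wit{\be}_N^M(\de(x))\|\leq L_{q,q}^{\max}(x)$ and then pays the gap $\|\pa_t^V(x)-\pa_q^V(x)\|=\|y\|$ to return to $L_{t,q}^{\max}(x)$; similarly, after subtracting the commutator defect, the leftover term $\be_N^M(u\ga y u^*)$ is only bounded by $\|y\|$. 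Both of these parasitic terms are of size $c_K(t,q)\cd L_{t,q}^{\max}(x)$ with $c_K$ vanishing on the diagonal but \emph{not} carrying the factor $d_{t,q}^{\max}(\chi_N^M\vert_{B_q^K},\epsilon\vert_{B_q^K})$. They cannot be ``absorbed'' into $h_K(t,q)\cd d_{t,q}^{\max}(\chi_N^M\vert_{B_q^K},\epsilon\vert_{B_q^K})$, because for fixed $t\neq q$ the distance tends to $0$ as $N,M\to\infty$ while $c_K(t,q)>0$ is fixed; and this proportionality to the distance is precisely what the proposition asserts and what the later applications (Proposition \ref{p:fuzztosuq2}, Corollaries \ref{cor:dist-zero} and \ref{cor:fuzzy-to-quantum-SU2}) require.

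The missing idea is a recombination rather than a triangle inequality: with $y:=\pa_t^3(x)-\pa_q^3(x)\in B_q^K\cap\ker(\Pi_0^L)$ one writes
\[
u\cd\pa_{t,q}\big(\be_N^M(x)\big)\cd u^*
= \big(u\cd\be_N^M(\ga y)\cd u^* - \be_N^M(u\ga y u^*)\big) + \wit{\be}_N^M\big(u\cd\pa_{t,q}(x)\cd u^*\big),
\]
using $\be_N^M(u\ga y u^*)+\wit{\be}_N^M(\de(x))=\wit{\be}_N^M\big(u\,\pa_{t,q}(x)\,u^*\big)$. The second summand has norm at most $L_{t,q}^{\max}(x)$ directly (never passing through $L_{q,q}^{\max}$), so the \emph{entire} deviation sits in the commutator defect, which Lemma \ref{l:estiberuni} bounds by $g_K(t,q)\cd d_{t,q}^{\max}(\chi_N^M\vert_{B_q^K},\epsilon\vert_{B_q^K})\cd L_{t,q}^{\max}(y)$, and $L_{t,q}^{\max}(y)\leq 2\sum_{m=1}^K\big|[m/2]_t-[m/2]_q\big|\cd L_{t,q}^{\max}(x)$ by Corollary \ref{c:diracinvariant}; this produces $h_K$ with $h_K(q,q)=0$. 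Note also that Lemma \ref{l:estiberuni} must be applied to $y$ (or band-wise to $\Pi_m^L(x)$, $m\neq 0$, with the scalars pulled out), not to $x-\Pi_0^L(x)$ as in your main-steps paragraph: the term $\be_N^M(u\ga(x-\Pi_0^L(x))u^*)-u\be_N^M(\ga(x-\Pi_0^L(x)))u^*$ never arises in the decomposition, and it lacks the coefficients $[m/2]_t-[m/2]_q$ that make $h_K$ vanish at $t=q$.
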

\begin{proof}
We start out by choosing the continuous positive function $g_K \colon (0,1] \ti (0,1] \to (0,\infty)$ according to Lemma \ref{l:estiberuni}. We then define the continuous positive function $h_K \colon (0,1] \ti (0,1] \to (0,\infty)$ by putting
\[
h_K(t,q) := 2 \cd \sum_{m = 1}^K \big| [m/2]_t - [m/2]_q \big| \cd g_K(t,q) 
\]
and note that $h_K(q,q) = 0$ for all $q \in (0,1]$ as desired. \\

Let now $N,M \in \nn_0$ and $t,q \in (0,1]$ be given. Let moreover $x \in \T{Lip}_t(SU_q(2)) \cap B_q^K$, and remark that by \eqref{eq:Lip-algebra-intersected-with-band}, $x \in \T{Lip}_q(SU_q(2)) \cap B_q^K$ as well. 
We define the element $y = \pa^3_t(x) - \pa^3_q(x)$ and notice that $y \in B_q^K \cap \ker(\Pi_0^L)$ by Lemma \ref{lem:paV-on-spectral-subspaces}. We moreover emphasise the identities 
\[
\pa_{t,q}(x) - \pa_{q,q}(x) = \pa_t^V(x) - \pa_q^V(x) = \ga y, \q \T{where } \ga = \sma{1 & 0 \\ 0 & -1} .
\]

Using Proposition \ref{p:derV} and Proposition \ref{prop:berezin-and-delta} we now compute as follows:
\[
\begin{split}
& u \cd \pa_{t,q}\big( \be^M_N(x)\big) \cd u^*
= u \cd (\pa_t^V - \pa_q^V)\big( \be^M_N(x)\big) \cd u^* + \de\big(\be^M_N(x)\big) \\
& \q = u \cd \be^M_N\big( (\pa_t^V - \pa_q^V)(x) \big) \cd u^* + \wit{\be}^M_N\big( \de(x) \big) \\
& \q = u \cd \be^M_N( \ga y ) \cd u^* - \be^M_N( u \ga y u^* ) 
+ \be^M_N\big( u \cd (\pa_{t,q} - \pa_{q,q})(x) \cd u^* \big) + \wit{\be}^M_N\big( \de(x)\big) \\
& \q = u \cd \be^M_N( \ga y ) \cd u^* - \be^M_N( u \ga y u^* ) + \wit{\be}^M_N\big( u \cd \pa_{t,q}(x) \cd u^*\big) .
\end{split}
\]
Combining the above computation with Lemma \ref{l:estiberuni},  recalling that $\wit{\be}_N^M$ is a complete contraction by Lemma \ref{lem:extended-berezin-is-ucp}, we obtain that
\[
\begin{split}
L_{t,q}^{\max}\big( \be^M_N(x) \big) 
& \leq \big\| u \cd \be^M_N( \ga y ) \cd u^* - \be^M_N( u \ga y u^* ) \big\|
+ L_{t,q}^{\max}(x) \\
& \leq g_K(t,q) \cd  d_{t,q}^{\T{max}}\big( \chi_N^M\vert_{B_q^K},\epsilon\vert_{B_q^K}\big) \cd L_{t,q}^{\max}(y) 
+ L_{t,q}^{\max}(x) .
\end{split}
\]
The result of the present proposition now follows since $y=\sum_{m=-K}^K([m/2]_t-[m/2]_q) \cd \Pi_m^L(x)$ so that 
\[
\begin{split}
L_{t,q}^{\max}(y) 
& \leq \sum_{m = -K}^K \big| [m/2]_t - [m/2]_q \big| \cd L_{t,q}^{\max}( \Pi_m^L(x)) \\
& \leq 2 \sum_{m = 1}^K \big| [m/2]_t - [m/2]_q \big| \cd L_{t,q}^{\max}( x ) ,
\end{split}
\]
where the last inequality follows from  Corollary \ref{c:diracinvariant}. 
\end{proof}


\subsection{Approximation in the quantum Gromov-Hausdorff distance}
As a result of the analysis carried out in this section, we shall see that the quantum Gromov-Hausdorff distance between the two compact quantum metric spaces $\big( C(SU_q(2)), L_{t,q}^{\max}\big)$ and $\big( C(SU_q(2)), L_{t,q}\big)$ is in fact equal to zero; cf.~Corollary \ref{cor:dist-zero} below.  When considering the quantum Gromov-Hausdorff convergence questions in Section \ref{sec:continuity-results}, this result will allow us to work exclusively at the algebraic level, which will simplify matters significantly. We start out with a technical estimate, from which a number of our main results will follow.

\begin{prop}\label{p:fuzztosuq2}
  Let $\de \in (0,1)$. For every $\ep > 0$ there exists a $K_0 \in \nn_0$ and a constant $C \geq 0$ such that  
\[
 \T{dist}_{\T{Q}}\big( (\be_N^M(B_q^{K_0}), L_{t,q}); (C(SU_q(2)), L_{t,q}^{\max}) \big)
\leq d_{t,q}^{\T{max}}\big(\chi_N^M\vert_{B_q^{K_0}},\epsilon\vert_{B_q^{K_0}}\big) \cd C + \ep
  \]
  for all $N,M \in \nn_0$ and all $t,q \in [\de,1]$. Moreover, if $X\subseteq C(SU_q(2))$ is a sub-operator system such that $\T{Dom}(L_{t,q}^{\max})\cap X$ is norm-dense in $X$ and $\beta_N^M(B_q^{K_0})\subseteq X$, then it holds that
  \[
 \T{dist}_{\T{Q}}\big( (X, L_{t,q}^{\T{max}}); (C(SU_q(2)), L_{t,q}^{\max}) \big)
\leq d_{t,q}^{\T{max}}\big(\chi_N^M\vert_{B_q^{K_0}},\epsilon\vert_{B_q^{K_0}}\big) \cd C + \ep
\]
for all $t,q \in [\de,1]$.
\end{prop}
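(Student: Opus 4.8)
The strategy is to combine the estimates on the Berezin transform obtained above with the general distance estimate from Corollary \ref{cor:subspacegen} (and Remark \ref{rem:intermediate}). The key observation is that for fixed $N,M,K \in \nn_0$ we may view $\be_N^M \colon C(SU_q(2)) \to \be_N^M(B_q^K)$ as a unital positive map (it is the composition of the ucp Berezin transform $\be_N^M$ with the inclusion of its image into $\be_N^M(B_q^K)$, which lands in $\be_N^M(B_q^M)\subseteq \be_N^M(B_q^K)$ once $M\geq K$ by Proposition \ref{p:fuzzber}, but for general $M$ one should instead work with $\be_N^M(B_q^{K})$ directly; the point is that $\be_N^M$ maps $C(SU_q(2))$ into a finite-dimensional operator system sitting inside the spectral band $B_q^{K}$ for $K$ large enough, using Lemma \ref{lem:berezin-image} and Proposition \ref{p:fuzzber}). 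The three ingredients controlling this map are: Proposition \ref{prop:berezin-approximates-identity}, which gives $\|\be_N^M(x) - x\| \leq d_{t,q}^{\max}(\chi_N^M,\epsilon)\cdot L_{t,q}^{\max}(x)$; Proposition \ref{p:berestigen}, which on the spectral band $B_q^{K}$ gives $L_{t,q}^{\max}(\be_N^M(x)) \leq (1 + h_K(t,q) d_{t,q}^{\max}(\chi_N^M|_{B_q^K},\epsilon|_{B_q^K}))\cdot L_{t,q}^{\max}(x)$; and Corollary \ref{cor:beta-Lip-contractive-when-t=q} for the diagonal case, though that is only cosmetic here.

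\textbf{Step 1: reduce to the spectral band.} First I would fix $\de\in(0,1)$ and $\ep>0$. By Theorem \ref{thm:band-approx}, $\T{dist}_{\T Q}\big((C(SU_q(2)),L_{t,q}^{\max});(B_q^{K},L_{t,q}^{\max})\big)\leq \ep(\de,K)$ uniformly for $(t,q)\in[\de,1]\ti(0,1]$, and since $\ep(\de,K)\to 0$ as $K\to\infty$, I may choose $K_0\in\nn_0$ with $\ep(\de,K_0) < \ep/3$, say. By the triangle inequality for $\T{dist}_{\T Q}$ (Theorem \ref{thm:quantumdist}), it therefore suffices to estimate $\T{dist}_{\T Q}\big((\be_N^M(B_q^{K_0}),L_{t,q});(B_q^{K_0},L_{t,q}^{\max})\big)$ and add $\ep/3$. (For the ``moreover'' statement about an intermediate operator system $X$, Remark \ref{rem:intermediate} shows that it is enough to produce the estimate with $\be_N^M(B_q^{K_0})$ in place of $X$, since $\be_N^M(B_q^{K_0})\subseteq X\subseteq C(SU_q(2))$ — but one must be slightly careful: Remark \ref{rem:intermediate} enlarges the codomain of $\Phi$, so I would apply Corollary \ref{cor:subspacegen} with the map $\be_N^M \colon C(SU_q(2))\to \be_N^M(B_q^{K_0})$ and intermediate system $X$, which is legitimate since $\T{Dom}(L_{t,q}^{\max})\cap X$ is assumed norm-dense in $X$.)

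\textbf{Step 2: apply the subspace criterion on $B_q^{K_0}$.} I would apply Corollary \ref{cor:subspacegen} with $(X,L) = (B_q^{K_0}, L_{t,q}^{\max})$, $Y = \be_N^M(B_q^{K_0})$ (which is a norm-dense — indeed equal, being finite-dimensional and hence closed — sub-operator system of itself once one checks it is an operator system, which holds by Proposition \ref{p:fuzzber}), and the unital positive map $\Phi = \be_N^M|_{B_q^{K_0}} \colon B_q^{K_0} \to \be_N^M(B_q^{K_0})$. Writing $D := h_{K_0}(t,q)\cdot d_{t,q}^{\max}(\chi_N^M|_{B_q^{K_0}},\epsilon|_{B_q^{K_0}})$ and $\ep' := d_{t,q}^{\max}(\chi_N^M|_{B_q^{K_0}},\epsilon|_{B_q^{K_0}})$, Propositions \ref{p:berestigen} and \ref{prop:berezin-approximates-identity} give exactly the hypotheses $L_{t,q}^{\max}(\Phi(x))\leq(1+D)L_{t,q}^{\max}(x)$ and $\|x-\Phi(x)\|\leq\ep' L_{t,q}^{\max}(x)$ for $x\in B_q^{K_0}$. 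Corollary \ref{cor:subspacegen} then yields
\[
\T{dist}_{\T Q}\big((B_q^{K_0},L_{t,q}^{\max});(\be_N^M(B_q^{K_0}),L_{t,q}^{\max})\big)\leq \T{diam}(B_q^{K_0},L_{t,q}^{\max})\cdot\tfrac{D}{1+D} + \ep' .
\]
Using $\tfrac{D}{1+D}\leq D = h_{K_0}(t,q)\cdot d_{t,q}^{\max}(\chi_N^M|_{B_q^{K_0}},\epsilon|_{B_q^{K_0}})$, and bounding $\T{diam}(B_q^{K_0},L_{t,q}^{\max})\leq\T{diam}(C(SU_q(2)),L_{t,q}^{\max})$ by a constant uniformly over $(t,q)\in[\de,1]\ti[\de,1]$ (this boundedness on compact sets is Remark \ref{rem:bounded-diamter}, which together with continuity of $h_{K_0}$ on the compact set $[\de,1]^2$ produces a single constant), I can absorb everything into a constant $C\geq 0$ so that the right-hand side is at most $d_{t,q}^{\max}(\chi_N^M|_{B_q^{K_0}},\epsilon|_{B_q^{K_0}})\cdot C$. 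Finally I must note that on the finite-dimensional space $\be_N^M(B_q^{K_0})\subseteq\C O(SU_q(2))$ the seminorms $L_{t,q}$ and $L_{t,q}^{\max}$ coincide, so the left-hand side equals $\T{dist}_{\T Q}\big((B_q^{K_0},L_{t,q}^{\max});(\be_N^M(B_q^{K_0}),L_{t,q})\big)$. Combining with Step 1 and the triangle inequality gives both displayed inequalities (for the ``moreover'' part one runs Step 2 with $X$ as the intermediate system, invoking Remark \ref{rem:intermediate} to pass from $\be_N^M(B_q^{K_0})$ to $X$).

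\textbf{Main obstacle.} The delicate point is \emph{uniformity of the constant $C$ over the parameter range} $[\de,1]\ti(0,1]$. The function $h_{K_0}$ from Proposition \ref{p:berestigen} is continuous and positive on $(0,1]\ti(0,1]$, but to get a genuine constant I need it bounded; it is bounded on $[\de,1]\ti[\de,1]$ by compactness, but the statement allows $q\in[\de,1]$ while $t$ ranges in — wait, the statement has $t,q\in[\de,1]$, so compactness suffices. Similarly the diameter bound from Remark \ref{rem:bounded-diamter} is uniform on compact subsets of $(0,1]^2$, hence on $[\de,1]^2$. So the real care needed is just bookkeeping: making sure the choice of $K_0$ depends only on $\de$ and $\ep$ (which it does, via $\ep(\de,K_0)$), and that $C$ then depends only on $\de$, $\ep$ and $K_0$. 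A secondary subtlety is checking that $\be_N^M(B_q^{K_0})$ is genuinely an operator system and that $L_{t,q}$ and $L_{t,q}^{\max}$ agree there — both follow from Proposition \ref{p:fuzzber} (operator system) and Definition \ref{def:max-and-min-seminorm} together with $\be_N^M(B_q^{K_0})\subseteq\C O(SU_q(2))$ (seminorms agree).
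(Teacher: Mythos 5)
Your argument for the first displayed inequality is correct, but it takes a different route from the paper's: you split off the band via Theorem \ref{thm:band-approx} and the triangle inequality, and then apply Corollary \ref{cor:subspacegen} to $\be_N^M$ restricted to $B_q^{K_0}$, whereas the paper applies Corollary \ref{cor:subspacegen} a single time, on all of $C(SU_q(2))$, to the composed map $\Phi:=\be_N^M\circ E_{K_0}^L$, whose hypotheses come from Lemma \ref{l:emlip}, Proposition \ref{p:bandapprox-without-saying-it}, the band version of Proposition \ref{prop:berezin-approximates-identity} and Proposition \ref{p:berestigen}. Your version additionally needs the monotonicity $\T{diam}(B_q^{K_0},L_{t,q}^{\max})\leq \T{diam}(C(SU_q(2)),L_{t,q}^{\max})$, which you assert without proof; it is true, but it requires extending states from the operator system $B_q^{K_0}$ to $C(SU_q(2))$, so it deserves a sentence.

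The genuine gap is in the ``moreover'' part. Your band-level application of Corollary \ref{cor:subspacegen} has domain $B_q^{K_0}$, so Remark \ref{rem:intermediate} only covers intermediate operator systems $Z$ with $\be_N^M(B_q^{K_0})\subseteq Z\subseteq B_q^{K_0}$, while the statement allows arbitrary $X\subseteq C(SU_q(2))$ (and in the intended application, Corollary \ref{cor:fuzzy-to-quantum-SU2}, $X=\T{Fuzz}_N(B_q^K)$ is not contained in $B_q^{K_0}$). Your proposed repair --- applying Corollary \ref{cor:subspacegen} to ``$\be_N^M\colon C(SU_q(2))\to\be_N^M(B_q^{K_0})$'' --- does not work for three reasons: (i) for $M>K_0$ the Berezin transform of a general element lands in $\be_N^M(B_q^M)$, not in $\be_N^M(B_q^{K_0})$, so the map need not take values in $X$; (ii) the Lip-norm bound $L_{t,q}^{\max}(\be_N^M(x))\leq (1+D)\, L_{t,q}^{\max}(x)$ is only available on a fixed spectral band (Proposition \ref{p:berestigen}); no such estimate is proved on all of $C(SU_q(2))$ when $t\neq q$, and this is exactly why the band enters the construction at all; (iii) even granting (i) and (ii), the approximation estimate on all of $C(SU_q(2))$ only yields the unrestricted quantity $d_{t,q}^{\max}(\chi_N^M,\epsilon)$ rather than $d_{t,q}^{\max}(\chi_N^M\vert_{B_q^{K_0}},\epsilon\vert_{B_q^{K_0}})$, so you would obtain a strictly weaker inequality than the one claimed (and it is the restricted distance that is later controlled uniformly in $q$ via Proposition \ref{p:mulspeII}). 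The correct repair is the paper's map $\Phi=\be_N^M\circ E_{K_0}^L$: it is unital and positive, takes values in $\be_N^M(B_q^{K_0})\subseteq X$, and satisfies $\|x-\Phi(x)\|\leq \big(\ep+d_{t,q}^{\max}(\chi_N^M\vert_{B_q^{K_0}},\epsilon\vert_{B_q^{K_0}})\big)L_{t,q}^{\max}(x)$ as well as $L_{t,q}^{\max}(\Phi(x))\leq \big(1+C_0\, d_{t,q}^{\max}(\chi_N^M\vert_{B_q^{K_0}},\epsilon\vert_{B_q^{K_0}})\big)L_{t,q}^{\max}(x)$ on all of $C(SU_q(2))$; Corollary \ref{cor:subspacegen} and Remark \ref{rem:intermediate} then give both displayed inequalities at once, which also renders your Step 1 and the diameter-monotonicity step unnecessary.
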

\begin{proof}
  Let $\ep > 0$ be given and choose $K_0 \in \nn_0$ such that $\ep(\de,K_0) \leq \ep$; see \eqref{eq:epnull} for the definition of $\ep(\de,K)$ for $K \in \nn_0$. For every $N,M \in \nn_0$ we remark that the seminorms $L_{t,q}$ and $L_{t,q}^{\max}$ agree on the sub-operator system $\be_N^M(B_q^{K_0}) \su C(SU_q(2))$. This is a consequence of Lemma \ref{l:imageberIA} and Lemma \ref{lem:berezin-image}.
 By Proposition \ref{p:berestigen}, we may choose a constant $C_0 \geq 0$ such that
    \begin{equation}\label{eq:berestidel}
L_{t,q}^{\max}(\be_N^M(x)) \leq \big( 1 + C_0 \cd d_{t,q}^{\max}(\chi_N^M\vert_{B_q^{K_0}},\epsilon\vert_{B_q^{K_0}}) \big) \cd L_{t,q}^{\max}(x)
    \end{equation}
for all $N,M \in \nn_0$, all $t,q \in [\de,1]$ and all $x \in \T{Lip}_t(SU_q(2)) \cap B_q^{K_0}$.
   Combining Proposition \ref{prop:diameter-estimate} with Remark \ref{rem:bounded-diamter} we may choose the constant $C \geq 0$ such that
  \[
C_0 \cd \T{diam}\big( C(SU_q(2)),L_{t,q}^{\max} \big) + 1 \leq  C \q \T{for all } t,q \in [\de,1].
\]
Let now $N,M \in \nn_0$ and $t,q \in [\de,1]$ be given. Define the unital map $\Phi := \be_N^M \ci E_{K_0}^L \colon C(SU_q(2)) \to \be_N^M(B_q^{K_0})$ and note that $\Phi$ is positive since $E_{K_0}^L=\mathbb{M}(\gamma_{K_0})$ is a unital contraction (Lemma \ref{l:compcont}) and $\be_N^M$ is positive by construction. We then obtain from Proposition \ref{p:bandapprox-without-saying-it}, Proposition \ref{prop:berezin-approximates-identity} and Lemma \ref{l:emlip} that
\[
\begin{split}
  \| x - \Phi(x) \| & \leq \| x - E_{K_0}^L(x) \| + \| E_{K_0}^L(x) - \be_N^M(E_{K_0}^L(x)) \| \\
& \leq \ep(\de,K_0) \cd L_{t,q}^{\max}(x) + d_{t,q}^{\max}\big( \chi_N^M\vert_{B_q^{K_0}}, \epsilon\vert_{B_q^{K_0}}\big) \cd L_{t,q}^{\max}(E_{K_0}^L(x)) \\
&  \leq \big( \ep + d_{t,q}^{\max}( \chi_N^M\vert_{B_q^{K_0}}, \epsilon\vert_{B_q^{K_0}}) \big) \cd L_{t,q}^{\max}(x)
  \end{split}
  \]
  for all $x \in \T{Lip}_t(SU_q(2))$.   Another application of Lemma \ref{l:emlip} together with \eqref{eq:berestidel} moreover shows that
  \[
 L_{t,q}^{\max}(\Phi(x)) \leq \big( 1 + C_0 \cd d_{t,q}^{\max}(\chi_N^M\vert_{B_q^{K_0}},\epsilon\vert_{B_q^{K_0}}) \big) \cd L_{t,q}^{\max}(x)
 \]
 for all $x \in \T{Lip}_t(SU_q(2))$.

 Using Corollary \ref{cor:subspacegen} we then see that
 \[
 \begin{split}
& \T{dist}_{\T{Q}}\big( (\be_N^M(B_q^{K_0}), L_{t,q}); (C(SU_q(2)), L_{t,q}^{\max}) \big) \\
& \q \leq d_{t,q}^{\T{max}}(\chi_N^M\vert_{B_q^{K_0}},\epsilon\vert_{B_q^{K_0}}) \cd ( C_0 \cd \T{diam}(C(SU_q(2)), L_{t,q}^{\max}) + 1 ) + \ep \\
& \q \leq d_{t,q}^{\T{max}}(\chi_N^M\vert_{B_q^{K_0}},\epsilon\vert_{B_q^{K_0}}) \cd C + \ep .
\end{split}
\]
This proves the first part of the present proposition. The second part of our proposition now follows from Remark \ref{rem:intermediate}.
\end{proof}

\begin{cor}\label{cor:dist-zero}
  Let $t,q \in (0,1]$. The metrics $d_{t,q}$ and $d_{t,q}^{\max}$ agree on the state space $\C S\big(C(SU_q(2))\big)$. In particular, it holds that
    \[
\T{dist}_{\T{Q}}\big( (C(SU_q(2)), L_{t,q}); (C(SU_q(2)), L_{t,q}^{\max}) \big) = 0 .
    \]
\end{cor}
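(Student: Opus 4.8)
\textbf{Proof plan for Corollary \ref{cor:dist-zero}.}
The plan is to derive both assertions from Proposition \ref{p:fuzztosuq2} together with the convergence $\chi_N^M \to \epsilon$ from Lemma \ref{lem:convergence-to-counit}. Fix $t,q \in (0,1]$ and pick $\de \in (0,1)$ with $t,q \in [\de,1]$. First I would observe that $d_{t,q}^{\max} \leq d_{t,q}$ always holds, since the $L_{t,q}$-unit ball is contained in the $L_{t,q}^{\max}$-unit ball (the seminorm $L_{t,q}$ has the smaller domain $\C O(SU_q(2))$ and agrees with $L_{t,q}^{\max}$ there); so the real content is the reverse inequality. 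To this end, let $\ep > 0$ be given, and apply Proposition \ref{p:fuzztosuq2} to obtain $K_0 \in \nn_0$ and $C \geq 0$ such that
\[
\T{dist}_{\T{Q}}\big( (\be_N^M(B_q^{K_0}), L_{t,q}); (C(SU_q(2)), L_{t,q}^{\max}) \big) \leq d_{t,q}^{\max}\big( \chi_N^M\vert_{B_q^{K_0}}, \epsilon\vert_{B_q^{K_0}} \big) \cd C + \ep
\]
for all $N,M \in \nn_0$. Since $B_q^{K_0}$ is a finite direct sum of finitely generated projective modules and, in particular, the restricted state space $\C S(B_q^{K_0})$ is weak$^*$-compact with $d_{t,q}^{\max}$ metrising its topology (Theorem \ref{t:bandcqms}), the weak$^*$-convergence $\chi_N^M \to \epsilon$ from Lemma \ref{lem:convergence-to-counit} forces $d_{t,q}^{\max}\big( \chi_N^M\vert_{B_q^{K_0}}, \epsilon\vert_{B_q^{K_0}} \big) \to 0$ as $N,M \to \infty$. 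Hence for $N,M$ large enough the right-hand side is at most $2\ep$.

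Next I would use this to compare the two Monge-Kantorovi\v{c} metrics directly. Fix $\mu,\nu \in \C S\big(C(SU_q(2))\big)$. The space $\be_N^M(B_q^{K_0})$ is a finite-dimensional operator system on which $L_{t,q}$ and $L_{t,q}^{\max}$ coincide (by Lemma \ref{l:imageberIA} and Lemma \ref{lem:berezin-image} its matrix-coefficient basis lies in $\C O(SU_q(2))$), so it is in particular a compact quantum metric space for $L_{t,q}$. Applying Corollary \ref{cor:subgenmet} (with $D = 0$) to the inclusion $\be_N^M(B_q^{K_0}) \su C(SU_q(2))$ and the unital positive map $\Phi = \be_N^M \ci E_{K_0}^L$ built in the proof of Proposition \ref{p:fuzztosuq2}, together with the norm estimate $\|x - \Phi(x)\| \leq (\ep + d_{t,q}^{\max}(\chi_N^M\vert_{B_q^{K_0}},\epsilon\vert_{B_q^{K_0}})) \cd L_{t,q}^{\max}(x)$ recorded there, yields
\[
d_{t,q}^{\max}(\mu,\nu) \leq 2\big( \ep + d_{t,q}^{\max}(\chi_N^M\vert_{B_q^{K_0}},\epsilon\vert_{B_q^{K_0}}) \big) + d_{t,q}^{\max}\big( \mu\vert_{\be_N^M(B_q^{K_0})}, \nu\vert_{\be_N^M(B_q^{K_0})} \big).
\]
Since $L_{t,q}$ and $L_{t,q}^{\max}$ agree on $\be_N^M(B_q^{K_0})$, the last term equals $d_{t,q}\big( \mu\vert_{\be_N^M(B_q^{K_0})}, \nu\vert_{\be_N^M(B_q^{K_0})} \big) \leq d_{t,q}(\mu,\nu)$. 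Letting $N,M \to \infty$ and then $\ep \to 0$ gives $d_{t,q}^{\max}(\mu,\nu) \leq d_{t,q}(\mu,\nu)$, hence equality. Finally, since $d_{t,q}^{\max} = d_{t,q}$, the identity map is an isometry between $\big(C(SU_q(2)),L_{t,q}\big)$ and $\big(C(SU_q(2)),L_{t,q}^{\max}\big)$ at the level of state spaces, so by Theorem \ref{thm:quantumdist}(2) the quantum Gromov-Hausdorff distance between them vanishes.

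The main obstacle I anticipate is not in the logical structure but in correctly extracting the norm estimate $\|x - \Phi(x)\| \leq (\ep + d_{t,q}^{\max}(\cdots)) \cd L_{t,q}^{\max}(x)$ and the Lip-contraction-type bound on $\Phi$ from inside the proof of Proposition \ref{p:fuzztosuq2}, and in being careful that Corollary \ref{cor:subgenmet} is applied with the correct intermediate operator system; an alternative, perhaps cleaner, route is to invoke the first displayed conclusion of Proposition \ref{p:fuzztosuq2} to get $\T{dist}_{\T{Q}}\big( (\be_N^M(B_q^{K_0}), L_{t,q}); (C(SU_q(2)), L_{t,q}^{\max}) \big) \to 0$ and, by the analogous reasoning with the maximal seminorm, $\T{dist}_{\T{Q}}\big( (\be_N^M(B_q^{K_0}), L_{t,q}); (C(SU_q(2)), L_{t,q}) \big) \to 0$ as well, then conclude via the triangle inequality (Theorem \ref{thm:quantumdist}(1)) that $\T{dist}_{\T{Q}}\big( (C(SU_q(2)), L_{t,q}); (C(SU_q(2)), L_{t,q}^{\max}) \big) = 0$, and finally upgrade ``distance zero'' to ``same Monge-Kantorovi\v{c} metric'' using that the identity map on $C(SU_q(2))$ is the relevant isometry by Theorem \ref{thm:quantumdist}(2). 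I would present whichever of these two is shorter once the bookkeeping is pinned down.
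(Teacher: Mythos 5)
Your proposal follows essentially the same route as the paper's proof: restrict to the finite-dimensional operator systems $\be_N^M(B_q^{K_0})$, where $L_{t,q}$ and $L_{t,q}^{\max}$ coincide, use the map $\Phi = \be_N^M \circ E_{K_0}^L$ and Corollary \ref{cor:subgenmet} to compare $d_{t,q}^{\max}(\mu,\nu)$ with the restricted metric, let $d_{t,q}^{\max}(\chi_N^M,\epsilon)\to 0$ (your argument via Theorem \ref{t:bandcqms} on the band works just as well as the paper's appeal to Theorem \ref{thm:quantum-su2-as-cqms}), and deduce distance zero from the metric equality via the isometry characterisation. Two points need repair, though. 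First, your opening claim that $d_{t,q}^{\max}\leq d_{t,q}$ is the trivial inequality is backwards: containment of the $L_{t,q}$-unit ball in the $L_{t,q}^{\max}$-unit ball gives $d_{t,q}\leq d_{t,q}^{\max}$; the inequality your main argument then (correctly) establishes, $d_{t,q}^{\max}\leq d_{t,q}$, is precisely the nontrivial one, so the logic survives once the labels are swapped. Second, and more substantively, invoking Corollary \ref{cor:subgenmet} ``with $D=0$'' presumes that $\Phi$ is an $L_{t,q}^{\max}$-contraction. That is only available when $t=q$ (Corollary \ref{cor:beta-Lip-contractive-when-t=q}); in general Proposition \ref{p:berestigen} only yields $L_{t,q}^{\max}(\Phi(x))\leq \big(1+C_0\cdot d_{t,q}^{\max}(\chi_N^M\vert_{B_q^{K_0}},\epsilon\vert_{B_q^{K_0}})\big)\cdot L_{t,q}^{\max}(x)$, so your displayed estimate must carry the additional term $\tfrac{D}{1+D}\cdot\T{diam}\big(C(SU_q(2)),L_{t,q}^{\max}\big)$ with $D = C_0\cdot d_{t,q}^{\max}(\chi_N^M\vert_{B_q^{K_0}},\epsilon\vert_{B_q^{K_0}})$. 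This is harmless because the diameter is finite (Proposition \ref{prop:diameter-estimate} and Remark \ref{rem:bounded-diamter}) and the whole extra term vanishes as $N,M\to\infty$; absorbing it into a single constant $C$ is exactly how the paper's proof is organised.

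Finally, a caution on your proposed alternative: distance zero between the two structures only guarantees the existence of \emph{some} isometry between the state spaces (Theorem \ref{thm:quantumdist}), not that the identity map is one, so one cannot upgrade a triangle-inequality argument for $\T{dist}_{\T{Q}}=0$ into equality of the Monge-Kantorovi\v{c} metrics. Your primary route — prove $d_{t,q}=d_{t,q}^{\max}$ first and then conclude distance zero — is the correct order, and it is the one the paper takes.
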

\begin{proof}
   Let $\mu,\nu \in \C S\big( C(SU_q(2)) \big)$. We trivially have that $d_{t,q}(\mu,\nu)\leq d_{t,q}^{\max}(\mu,\nu)$ so we need to prove the opposite inequality.\\
   For every $K,N,M \in \nn_0$ we recall that the seminorms $L_{t,q}$ and $L_{t,q}^{\max}$ agree on the sub-operator system $\be_N^M(B_q^K) \su C(SU_q(2))$ (see Lemma \ref{l:imageberIA} and Lemma \ref{lem:berezin-image}). We therefore obtain that the two metrics $d_{t,q}$ and $d_{t,q}^{\max}$ agree on the state space $\C S\big( \be_N^M(B_q^K)\big)$.\\
Let $\ep > 0$ be given. Combining the proof of Proposition \ref{p:fuzztosuq2} with Corollary \ref{cor:subgenmet} we may choose a $K_0 \in \nn_0$ and a constant $C \geq 0$ such that 
    \[
    d_{t,q}^{\max}(\mu,\nu)
    \leq d_{t,q}^{\max}\big(\chi_N^M\vert_{B_q^{K_0}}, \epsilon\vert_{B_q^{K_0}}\big) \cd C + \ep/2 + d_{t,q}( \mu, \nu)
    \]
    for all $N,M \in \nn_0$. Next, by Theorem \ref{thm:quantum-su2-as-cqms}, $d_{t,q}^{\max}$ metrises the weak$^*$ topology on $\C S\big(C(SU_q(2))\big)$ and by Lemma \ref{lem:convergence-to-counit} it therefore follows that $\lim_{N,M\to \infty} d_{t,q}^{\max}(\chi_N^M,\epsilon) = 0$. We may thus choose $N,M \in \nn_0$ such that
\[
C \cd d_{t,q}^{\max}\big( \chi_N^M\vert_{B_q^{K_0}}, \epsilon\vert_{B_q^{K_0}} \big) \leq
C \cd d_{t,q}^{\max}( \chi_N^M, \epsilon ) \leq \ep/2 .
\]
Combining these two estimates we obtain that
\[
d_{t,q}^{\max}(\mu,\nu) \leq \ep + d_{t,q}(\mu,\nu) .
\]
Since $\ep > 0$ was arbitrary we have proved that $d_{t,q}^{\max}(\mu,\nu) = d_{t,q}(\mu,\nu)$.\\
The fact that the quantum Gromov-Hausdorff distance between $(C(SU_q(2)),L_{t,q}^{\max})$ and $(C(SU_q(2)),L_{t,q})$ is equal to zero now follows from \cite[Corollary 6.4]{Rie:GHD} (see also the discussion near  Theorem \ref{thm:quantumdist}).
  \end{proof}

We also record a corollary which is an analogue to Corollary \ref{cor:dist-zero} for the spectral bands. Since the proof is similar but easier than the proof of Corollary \ref{cor:dist-zero} we are leaving it out. 

\begin{cor}\label{cor:metrics-agree-on-state-space-of-bands}
  Let $K \in \nn_0$ and let $t,q \in (0,1]$. The metrics $d_{t,q}$ and $d_{t,q}^{\max}$ agree on the state space $\C S(B_q^K)$. In particular, it holds that
    \[
    \T{dist}_{\T Q}\big( (B_q^K,L_{t,q}^{\max}\vert_{B_q^K}); (B_q^K,L_{t,q}\vert_{B_q^K}) \big) = 0 .
    \]
\end{cor}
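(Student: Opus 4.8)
\textbf{Proof plan for Corollary \ref{cor:metrics-agree-on-state-space-of-bands}.}

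The strategy is to mimic the proof of Corollary \ref{cor:dist-zero}, but replacing $C(SU_q(2))$ by the spectral band $B_q^K$ throughout; the argument is actually simpler because we stay inside a fixed finitely generated projective module rather than having to exhaust all of quantum $SU(2)$. Fix $K \in \nn_0$ and $t,q \in (0,1]$. Since trivially $d_{t,q} \leq d_{t,q}^{\max}$ on $\C S(B_q^K)$, it suffices to prove the reverse inequality. As in the proof of Corollary \ref{cor:dist-zero}, the first observation is that for all $N,M \in \nn_0$ the seminorms $L_{t,q}$ and $L_{t,q}^{\max}$ agree on the finite dimensional sub-operator system $\be_N^M(B_q^K) \subseteq \C B_q^K \subseteq B_q^K$; this follows from Lemma \ref{l:imageberIA} and Lemma \ref{lem:berezin-image}, which identify $\be_N^M(B_q^K)$ with a span of matrix coefficients sitting inside $\C O(SU_q(2))$, so that both seminorms there equal $\| \pa_{t,q}(\cdot) \|$ on the same (finite dimensional) domain. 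Consequently $d_{t,q}$ and $d_{t,q}^{\max}$ coincide on $\C S(\be_N^M(B_q^K))$.

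Next I would set up the Berezin approximation at the level of the band. For each $\mu,\nu \in \C S(B_q^K)$ and each $\ep > 0$, I want to bound $d_{t,q}^{\max}(\mu,\nu)$ in terms of $d_{t,q}(\mu,\nu)$ plus an error that can be made arbitrarily small by choosing $N,M$ large. The relevant inputs are: the second estimate in Proposition \ref{prop:berezin-approximates-identity}, namely $\|\be_N^M(x) - x\| \leq d_{t,q}^{\max}(\chi_N^M|_{B_q^K}, \epsilon|_{B_q^K}) \cdot L_{t,q}^{\max}(x)$ for $x \in B_q^K$; the estimate from Proposition \ref{p:berestigen} controlling $L_{t,q}^{\max}(\be_N^M(x))$ for $x \in \T{Lip}_t(SU_q(2)) \cap B_q^K$ by $(1 + h_K(t,q)\, d_{t,q}^{\max}(\chi_N^M|_{B_q^K}, \epsilon|_{B_q^K}))\, L_{t,q}^{\max}(x)$; the fact that $\be_N^M$ restricts to a unital positive map $B_q^K \to \be_N^M(B_q^K)$ (positivity being built into the construction of the Berezin transform, and $\be_N^M(B_q^K) \subseteq B_q^K$ by Proposition \ref{p:fuzzber}); and the finiteness of $\T{diam}(B_q^K, L_{t,q}^{\max})$, which holds because $(B_q^K, L_{t,q}^{\max})$ is a compact quantum metric space by Theorem \ref{t:bandcqms}. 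Feeding the map $\Phi := \be_N^M \colon B_q^K \to \be_N^M(B_q^K)$ into Corollary \ref{cor:subgenmet} (with $D = h_K(t,q)\, d_{t,q}^{\max}(\chi_N^M|_{B_q^K}, \epsilon|_{B_q^K})$ and $\ep$ replaced by $d_{t,q}^{\max}(\chi_N^M|_{B_q^K}, \epsilon|_{B_q^K})$), I obtain
\[
d_{t,q}^{\max}(\mu,\nu) \leq \frac{D}{1+D}\cd \T{diam}(B_q^K, L_{t,q}^{\max}) + 2\, d_{t,q}^{\max}(\chi_N^M|_{B_q^K}, \epsilon|_{B_q^K}) + d_{t,q}^{\max}(\mu|_{\be_N^M(B_q^K)}, \nu|_{\be_N^M(B_q^K)}),
\]
and since $d_{t,q}^{\max} = d_{t,q}$ on $\C S(\be_N^M(B_q^K))$ and $d_{t,q}(\mu|_{\be_N^M(B_q^K)}, \nu|_{\be_N^M(B_q^K)}) \leq d_{t,q}(\mu,\nu)$, all the correction terms are controlled by $d_{t,q}^{\max}(\chi_N^M|_{B_q^K}, \epsilon|_{B_q^K})$ (and the fixed finite diameter).

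Finally I would let $N,M \to \infty$. By Theorem \ref{t:bandcqms} the metric $d_{t,q}^{\max}$ metrises the weak$^*$ topology on $\C S(B_q^K)$, and by Lemma \ref{lem:convergence-to-counit} we have $\chi_N^M \to \epsilon$ weak$^*$ on $\C S(C(SU_q(2)))$, hence $\chi_N^M|_{B_q^K} \to \epsilon|_{B_q^K}$ weak$^*$ on $\C S(B_q^K)$, so $d_{t,q}^{\max}(\chi_N^M|_{B_q^K}, \epsilon|_{B_q^K}) \to 0$. Choosing $N,M$ large enough that this quantity times the relevant constants is below $\ep$, and letting $\ep \to 0$, yields $d_{t,q}^{\max}(\mu,\nu) \leq d_{t,q}(\mu,\nu)$, hence equality. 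The statement about the quantum Gromov-Hausdorff distance being zero then follows from \cite[Corollary 6.4]{Rie:GHD} (cf.~the discussion near Theorem \ref{thm:quantumdist}): equal Monge-Kantorovi\v{c} metrics on the common state space means the two closed order unit spaces are isometric, so $\T{dist}_{\T Q}\big((B_q^K, L_{t,q}^{\max}|_{B_q^K}); (B_q^K, L_{t,q}|_{B_q^K})\big) = 0$. I do not expect a genuine obstacle here; the only point requiring a little care is ensuring that the seminorm comparison $L_{t,q} = L_{t,q}^{\max}$ on $\be_N^M(B_q^K)$ and the positivity/unitality of the restricted Berezin transform are invoked correctly, and that one works with the restricted states $\chi_N^M|_{B_q^K}$ (whose Monge-Kantorovi\v{c} distances dominate the unrestricted ones when estimating, but whose convergence to $\epsilon|_{B_q^K}$ is what actually drives the limit).
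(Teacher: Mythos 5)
Your argument is correct and is essentially the proof the paper has in mind: the paper omits it, describing it as a simpler variant of the proof of Corollary \ref{cor:dist-zero}, and your adaptation — working inside the fixed band, applying Proposition \ref{prop:berezin-approximates-identity}, Proposition \ref{p:berestigen} and Corollary \ref{cor:subgenmet} to $\Phi = \be_N^M$, using that $L_{t,q} = L_{t,q}^{\max}$ on $\be_N^M(B_q^K)$, and letting $N,M \to \infty$ via Lemma \ref{lem:convergence-to-counit} — is exactly that simplification (no $E_K^L$ truncation needed). The only minor point left implicit is that, once the metrics agree and $d_{t,q}^{\max}$ metrises the weak$^*$ topology by Theorem \ref{t:bandcqms}, the pair $(B_q^K, L_{t,q}\vert_{B_q^K})$ is itself a compact quantum metric space, which is what licenses the appeal to \cite[Corollary 6.4]{Rie:GHD} for the distance-zero statement.
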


Lastly, we single out the following consequence of   Proposition \ref{p:fuzztosuq2}, which  shows that our fuzzy approximations do indeed approximate quantum $SU(2)$ in the quantum Gromov-Hausdorff distance. 

\begin{cor}\label{cor:fuzzy-to-quantum-SU2}
Let $t,q\in (0,1]$. It holds that
\[
\lim_{N,K\to \infty }\T{dist}_{\T{Q}}\big( (\T{Fuzz}_{N}(B_q^K), L_{t,q}); (C(SU_q(2)), L_{t,q}^{\max}) \big)=0 .
\]
\end{cor}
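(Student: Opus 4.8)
The goal is to prove that $\lim_{N,K\to\infty}\T{dist}_{\T Q}\big((\T{Fuzz}_N(B_q^K),L_{t,q});(C(SU_q(2)),L_{t,q}^{\max})\big)=0$ for fixed $t,q\in(0,1]$. The plan is to feed the estimate of Proposition \ref{p:fuzztosuq2} into a diagonal argument over $N$, $M$ and $K$, using the fuzzy band description $\be_N^M(B_q^K)=\T{Fuzz}_{N+M}(B_q^K)$ (wait — more precisely from Proposition \ref{p:fuzzber} one has $\be_N^M(B_q^M)=\sum_{m}\T{Fuzz}_{N+M-|m|}(A_q^m)$ and $\be_N^M(B_q^K)\subseteq\T{Fuzz}_{N+M}(B_q^K)$ whenever $M\geq K$), together with the weak$^*$-convergence $\chi_N^M\to\epsilon$ from Lemma \ref{lem:convergence-to-counit} and the fact that $d_{t,q}^{\max}$ metrises the weak$^*$ topology (Theorem \ref{thm:quantum-su2-as-cqms}).

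First I would fix $\de\in(0,1)$ with $t,q\in[\de,1]$ (possible since $t,q$ are now fixed), and let $\ep>0$ be given. Applying Proposition \ref{p:fuzztosuq2} yields a $K_0\in\nn_0$ and a constant $C\geq 0$ so that
\[
\T{dist}_{\T Q}\big((\be_N^M(B_q^{K_0}),L_{t,q});(C(SU_q(2)),L_{t,q}^{\max})\big)\leq d_{t,q}^{\max}\big(\chi_N^M\vert_{B_q^{K_0}},\epsilon\vert_{B_q^{K_0}}\big)\cd C+\ep/2
\]
for all $N,M\in\nn_0$. Since $d_{t,q}^{\max}$ metrises the weak$^*$ topology on $\C S(C(SU_q(2)))$ and $\chi_N^M\to\epsilon$ weak$^*$ as $N,M\to\infty$ by Lemma \ref{lem:convergence-to-counit}, we have $d_{t,q}^{\max}(\chi_N^M,\epsilon)\to 0$, and restriction to $B_q^{K_0}$ only decreases the metric, so we may pick $N_0,M_0$ with $M_0\geq K_0$ such that $C\cd d_{t,q}^{\max}(\chi_{N}^{M}\vert_{B_q^{K_0}},\epsilon\vert_{B_q^{K_0}})\leq\ep/2$ for all $N\geq N_0$, $M\geq M_0$. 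For such $N,M$ we get $\T{dist}_{\T Q}\big((\be_N^M(B_q^{K_0}),L_{t,q});(C(SU_q(2)),L_{t,q}^{\max})\big)\leq\ep$.

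The remaining point is to translate a statement about $\be_N^M(B_q^{K_0})$ into one about $\T{Fuzz}_{N'}(B_q^{K'})$ for $N',K'$ large. Here I would use monotonicity of the fuzzy spaces: since $\T{Fuzz}_N(S_q^2)$ increases in $N$, so does $\T{Fuzz}_N(A_q^m)$ for each $m$, hence $\T{Fuzz}_N(B_q^K)$ increases in $N$, and it obviously increases in $K$ as well. Given a target pair $(N',K')$ with $K'\geq K_0$ and $N'$ large, choose $M\geq\max\{M_0,K'\}$ and $N=N'-M\geq N_0$ (assuming $N'$ is large enough that this is possible); then $\be_N^M(B_q^{K_0})\subseteq\be_N^M(B_q^{K'})\subseteq\T{Fuzz}_{N+M}(B_q^{K'})=\T{Fuzz}_{N'}(B_q^{K'})\subseteq\T{Fuzz}_{N'}(B_q^{K'})$, all of which are operator systems by Proposition \ref{p:fuzzber} with $\T{Dom}(L_{t,q}^{\max})$-dense (in fact equal to themselves, being finite-dimensional and contained in $\C O(SU_q(2))$). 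Since on each of these the restrictions of $L_{t,q}$ and $L_{t,q}^{\max}$ coincide (Lemma \ref{l:imageberIA} and Lemma \ref{lem:berezin-image} show $L_{t,q}=L_{t,q}^{\max}$ on $\be_N^M(B_q^{K'})$, and they are genuinely algebraic on the fuzzy bands by construction), the second assertion of Proposition \ref{p:fuzztosuq2} applies verbatim with $X=\T{Fuzz}_{N'}(B_q^{K'})$, giving $\T{dist}_{\T Q}\big((\T{Fuzz}_{N'}(B_q^{K'}),L_{t,q});(C(SU_q(2)),L_{t,q}^{\max})\big)\leq\ep$. As $\ep>0$ was arbitrary and this holds for all sufficiently large $N'$ and all $K'\geq K_0$, the claimed limit follows.

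The main obstacle I anticipate is purely bookkeeping: making sure that for \emph{every} sufficiently large pair $(N,K)$ — not just those of the special form $N+M$ with $M\geq K_0$ — the fuzzy band $\T{Fuzz}_N(B_q^K)$ sandwiches some $\be_{N_1}^{M_1}(B_q^{K_0})$ with $N_1\geq N_0$, $M_1\geq M_0$; this is where the monotonicity of $\T{Fuzz}_N(B_q^K)$ in both indices, plus the identity $\T{Fuzz}_{N+M}(B_q^K)\supseteq\be_N^M(B_q^K)$ from Proposition \ref{p:fuzzber}, does the work, but one must be slightly careful choosing $M_1:=K$ (or $\max\{K,M_0,K_0\}$) and $N_1:=N$ once $N\geq N_0$ and $K\geq\max\{K_0,M_0\}$, so that $\be_{N_1}^{M_1}(B_q^{K_0})\subseteq\be_{N_1}^{M_1}(B_q^{M_1})\subseteq\T{Fuzz}_{N_1+M_1}(B_q^{M_1})$ and then intersect/enlarge appropriately — none of which is deep, just requires care. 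Everything analytic has already been absorbed into Proposition \ref{p:fuzztosuq2} and Proposition \ref{p:berestigen}.
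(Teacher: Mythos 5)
Your analytic skeleton is exactly the paper's: invoke Proposition \ref{p:fuzztosuq2} to get $K_0$ and $C$, use Lemma \ref{lem:convergence-to-counit} together with the fact that $d_{t,q}^{\max}$ metrises the weak$^*$ topology (Theorem \ref{thm:quantum-su2-as-cqms}) to make $C\cd d_{t,q}^{\max}(\chi_N^M\vert_{B_q^{K_0}},\epsilon\vert_{B_q^{K_0}})$ small, and then feed a fuzzy band containing a suitable Berezin image into the second assertion of Proposition \ref{p:fuzztosuq2} (the identification of $L_{t,q}$ with $L_{t,q}^{\max}$ on the fuzzy bands is fine, since they sit inside $\C O(SU_q(2))$). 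However, the one step this corollary actually adds — producing thresholds $N_1,K_1$ such that \emph{every} pair with $N\geq N_1$ and $K\geq K_1$ works — is where your bookkeeping fails. In your main argument you pick $M\geq\max\{M_0,K'\}$ and $N=N'-M\geq N_0$, so the lower bound you need on $N'$ is $N_0+\max\{M_0,K'\}$, which grows with $K'$; the region you cover therefore contains no quadrant $[N_1,\infty)\ti[K_1,\infty)$, and the joint limit as stated is not established. Your proposed repair in the last paragraph makes it worse: with $N_1:=N$ and $M_1:=K$ you get $\be_{N}^{K}(B_q^{K_0})\su\T{Fuzz}_{N+K}(B_q^{K_0})$, and this space is \emph{not} contained in $\T{Fuzz}_N(B_q^K)$ (already its degree-zero part $\T{Fuzz}_{N+K}(A_q^0)$ strictly exceeds $\T{Fuzz}_N(A_q^0)$), so the inclusion runs in the wrong direction and "intersect/enlarge appropriately" does not rescue it. You are also demanding more than necessary: there is no need to find, for each target $(N,K)$, a Berezin image with \emph{both} indices large.

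The correct fix — and the paper's proof — is to freeze a single pair. Choose $N_0,M_0$ with $M_0\geq K_0$ such that $C\cd d_{t,q}^{\max}(\chi_{N_0}^{M_0}\vert_{B_q^{K_0}},\epsilon\vert_{B_q^{K_0}})<\ep/2$ (monotonicity in $(N,M)$ of this quantity is irrelevant; one good pair suffices, because the bound in the second part of Proposition \ref{p:fuzztosuq2} is stated for the same $(N,M)$ as the included image). Then for all $N\geq N_0+M_0$ and $K\geq K_0$ one has $\be_{N_0}^{M_0}(B_q^{K_0})\su\T{Fuzz}_{N_0+M_0}(B_q^{K_0})\su\T{Fuzz}_N(B_q^{K_0})\su\T{Fuzz}_N(B_q^K)$, using only Proposition \ref{p:fuzzber} and monotonicity of the fuzzy bands in each index, and the second part of Proposition \ref{p:fuzztosuq2} applied with $X=\T{Fuzz}_N(B_q^K)$ and the fixed $(N_0,M_0)$ gives distance $<\ep$ on the whole quadrant; the thresholds $N_0+M_0$ and $K_0$ are independent of each other, which is exactly what the statement requires.
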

\begin{proof}
    Let $\ep > 0$ be given. By Proposition \ref{p:fuzztosuq2}, there exist a $K_0 \in \nn_0$ and a constant $C \geq 0$ such that
  \[
\T{dist}_{\T{Q}}\big( (\be_N^M(B_q^{K_0}), L_{t,q}); (C(SU_q(2)), L_{t,q}^{\max}) \big)
\leq C\cdot d_{t,q}^{\T{max}}\big(\chi_N^M\vert_{B_q^{K_0}},\epsilon\vert_{B_q^{K_0}}\big)  + \ep/2
  \]
  for all $N,M\in \nn_0$.    By Theorem \ref{thm:quantum-su2-as-cqms} and Lemma \ref{lem:convergence-to-counit} we may choose $N_0,M_0 \in \nn_0$ with $M_0 \geq K_0$ such that 
  \[
C \cd d_{t,q}^{\max}(\chi_N^M\vert_{B_q^{K_0}} ,\epsilon\vert_{B_q^{K_0}}) \leq C \cd d_{t,q}^{\max}(\chi_N^M,\epsilon) < \ep/2 \ \ \T{for all }  \  N \geq N_0 {\T{ and }} M\geq M_0 .
\]
For $N\geq N_0+ M_0$ and $K\geq K_0$ we obtain from Proposition \ref{p:fuzzber} that
\[
\beta_{N_0}^{M_0}(B_q^{K_0})\subseteq \T{Fuzz}_{N_0+M_0}(B_q^{K_0})\subseteq \T{Fuzz}_N(B_q^{K_0})\subseteq \T{Fuzz}_N(B_q^K),
\]
and the last part of Proposition \ref{p:fuzztosuq2} therefore shows that
\[
\T{dist}_{\T{Q}}\big( (  \T{Fuzz}_N(B_q^K)   , L_{t,q}); (C(SU_q(2)), L_{t,q}^{\max}) \big)<\ep,
\]
for all $N\geq N_0+M_0$ and all $K\geq K_0$.
\end{proof}

\begin{remark}
The case where $t = q = 1$ is of particular interest since $C(SU_1(2))=C(SU(2))$ and the Lip-norm $L_{1,1}^{\max}$ computes the Lipschitz constant arising from twice the round metric $d_{S^3}$ on $SU(2)\cong S^3\subseteq \rr^4$; see Section \ref{sec:comparison-with-classical} for details. Corollary \ref{cor:fuzzy-to-quantum-SU2} therefore provides a finite dimensional approximation of $C(S^3)$ by subspaces invariant under the $SU(2)$-action (see Proposition \ref{prop:fuzzy-coinvariant}). This yields an $S^3$-analogue of Rieffel's original result \cite[Theorem 3.2]{Rie:MSG} for the 2-sphere.
\end{remark}

\section{Continuity results}\label{sec:continuity-results}

In this section we embark  on our final goal of the paper, which is to prove that the family of compact quantum metric spaces $\big(C(SU_q(2)),L_{t,q}\big)_{t,q\in (0,1]}$ varies continuously in the quantum Gromov-Hausdorff distance; see Theorem \ref{introthm:qgh-continuity}. The result in Corollary \ref{cor:dist-zero} shows that we may choose to work exclusively with the Lip-norm $L_{t,q}$, meaning that the domain equals the coordinate algebra $\C O(SU_q(2))$. Indeed, the corresponding continuity result for the Lip-norm $L_{t,q}^{\max}$ with domain equal to the Lipschitz algebra $\T{Lip}_t(SU_q(2))$ follows automatically. In effect, this allows us to circumvent a lot of analysis and work at a purely (Hopf-)algebraic level. We begin by providing a rough outline of the mains steps in the proof of continuity at a point $(t_0,q_0)\in (0,1] \ti (0,1]$: 
\begin{enumerate}
\item[1.]  We fine tune the result in  Corollary \ref{cor:fuzzy-to-quantum-SU2} by showing that locally around $(t_0,q_0)$ the fuzzy approximations approach quantum $SU(2)$ in a uniform manner. 
\item[2.] Utilising the finite dimensionality of the fuzzy approximation we show that these vary continuously. 
\item[3.] Piecing together these approximation results, we arrive at the main continuity statement in Theorem \ref{thm:continuity-of-quantum-su2} below.
\end{enumerate}
\subsection{Continuity of the fuzzy approximations}
We begin by addressing point 2.~in the above list.

\begin{prop}\label{p:fincont}
Let $K,N \in \nn_0$. The $2$-parameter family of compact quantum metric spaces $\big(\T{Fuzz}_N(B_q^K), L_{t,q} \big)_{t,q\in (0,1]}$ varies continuously in the quantum Gromov-Hausdorff distance. 
\end{prop}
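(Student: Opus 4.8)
The plan is to establish continuity of the $2$-parameter family $\big(\T{Fuzz}_N(B_q^K), L_{t,q}\big)_{t,q \in (0,1]}$ by exploiting the fact that all these operator systems are finite dimensional of the \emph{same} dimension, with a linear basis (coming from the standard basis \eqref{eq:standard-basis}) that is \emph{independent of $q$} by Corollary \ref{c:fuzzbasis}. This lets us compare fuzzy spectral bands for nearby parameters by transporting everything into a single fixed finite-dimensional vector space. Concretely, fix $(t_0,q_0) \in (0,1] \ti (0,1]$. Since $\C O(SU_\bullet(2))$ is a continuous field over any compact interval $[\de,1] \ni q_0$ (Section \ref{subsec:cont-field}), the structure constants of the $C^*$-algebra $C(SU_q(2))$ in the $q$-independent basis vary continuously with $q$; moreover the coproduct, the Haar state, and hence the states $\chi_N^M$ and the derivations $\pa_e, \pa_f$ all have matrix coefficients (in the fixed basis) that depend continuously on $q$ — this follows from the explicit formulae in \eqref{eq:exppai}, \eqref{eq:leftmult}, \eqref{eq:Haar-on-standard-basis} together with the twisted Leibniz rule \eqref{eq:twisted-leibnitz-for-pae-and-paf}. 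Likewise $[m/2]_t$ depends continuously on $t$, so the twisted derivation $\pa_{t,q} = \pa_t^V + \pa_q^H$ restricted to the fixed finite-dimensional space has matrix entries depending continuously on $(t,q)$, and similarly for the GNS inner product.

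\textbf{Key steps.} First I would set up a fixed finite-dimensional $*$-vector space $V$ with the $q$-independent basis from Corollary \ref{c:fuzzbasis}, and for each $(t,q)$ near $(t_0,q_0)$ identify $\T{Fuzz}_N(B_q^K)$ with $V$ via this basis; under this identification the operator-system structure (the cone of positive elements, equivalently the norm, which on a finite-dimensional operator system is determined by the embedding into a matrix algebra via the GNS representation) and the seminorm $L_{t,q}$ both become continuous functions of $(t,q)$ with values in, respectively, the (compact, hence continuously varying) unit ball structure and the space of seminorms on $V$. Second, I would invoke a general continuity principle for finite-dimensional compact quantum metric spaces: if $(X, \|\cdot\|_s, L_s)$ is a family of compact quantum metric spaces on a \emph{fixed} finite-dimensional space $X$ such that $\|\cdot\|_s \to \|\cdot\|_{s_0}$ and $L_s \to L_{s_0}$ pointwise (equivalently uniformly on the unit ball, by finite-dimensionality), then $\T{dist}_{\T Q}\big((X,\|\cdot\|_s,L_s);(X,\|\cdot\|_{s_0},L_{s_0})\big) \to 0$. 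This is proved by feeding the identity map $\T{id}\colon X \to X$ into Proposition \ref{p:almostisom}: one checks that for $s$ near $s_0$ the identity is an approximate isometry, i.e.\ $L_{s_0}(x) \leq C(s) L_s(x)$ and $L_s(x) \leq C'(s) L_{s_0}(x)$ with $C(s), C'(s) \to 1$, and $\|x\|_{s_0} - \|x\|_s$ is small relative to $L_s(x)$ on the unit balls — both of which follow from the uniform convergence of the seminorms and norms on the (uniformly bounded, by the diameter estimate Proposition \ref{prop:diameter-estimate} combined with Remark \ref{rem:bounded-diamter}) $L$-unit balls, using that $L_{s_0}$ has kernel exactly the scalars so that $\|\cdot\|_{s_0}$ is controlled on $\ker(L_{s_0})^\perp \cap \ov{\B B}_1^{L_{s_0}}(0)$. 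One must be slightly careful that the identity map $V \to V$ need not be positive for the two different operator-system structures $\|\cdot\|_s$ and $\|\cdot\|_{s_0}$; but since Proposition \ref{p:almostisom} is stated for unital positive maps, I would instead work at the level of the order-unit spaces $V_{\T{sa}}$ (via Lemma \ref{lem:Rie-qGH-equals-our-qGH} and Proposition \ref{p:restrict}), where the identity \emph{is} order-preserving up to a small perturbation, or alternatively appeal directly to Rieffel's original estimates in \cite{Rie:GHD} which only require the unital and self-adjoint-preserving properties together with the two-sided Lipschitz and norm bounds. Finally, combining these, continuity at $(t_0,q_0)$ follows, and since $(t_0,q_0)$ was arbitrary, the family varies continuously.

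\textbf{Main obstacle.} The principal technical point is the handling of the operator-system (equivalently $C^*$-)norm: unlike the seminorm, which is visibly continuous in $(t,q)$ through the explicit matrix formulae, the norm on $\T{Fuzz}_N(B_q^K)$ is the restriction of the $C^*$-norm of $C(SU_q(2))$, and one needs that this restriction — transported to the fixed space $V$ — varies continuously with $q$. I would deduce this from the continuous field structure: the evaluation maps $\T{ev}_q\colon \C O(SU_\bullet(2)) \to C(SU_q(2))$ assemble into a continuous field, so for a fixed element $v_\bullet \in \C O(SU_\bullet(2))$ the function $q \mapsto \|\T{ev}_q(v_\bullet)\|$ is continuous; applying this to the (finitely many) basis elements and to their finite linear combinations with coefficients ranging over a compact set, plus a standard $\ep$-net argument, gives the desired uniform continuity of the norm on bounded subsets of $V$. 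Once this is in hand, the rest is the routine application of Proposition \ref{p:almostisom} / Rieffel's estimates described above.
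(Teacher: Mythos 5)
Your setup coincides with the paper's: you transport everything to one fixed finite-dimensional space using the $q$-independent basis of Corollary \ref{c:fuzzbasis}, obtain continuity of the norms from the continuous field $C(SU_\bullet(2))$ via the evaluation maps $\T{ev}_q$, and obtain continuity of the seminorms from the explicit formulae for $\pa^1,\pa^2,\pa^3_t$ lifted to the field; the paper then simply feeds this data into Rieffel's criterion for continuous fields of order-unit compact quantum metric spaces, \cite[Theorem 11.2]{Rie:GHD}, which is exactly the fallback you mention. Where you deviate is the primary route of running Proposition \ref{p:almostisom} with $\Phi=\Psi=\mathrm{id}$, and as stated this does not go through: the proposition genuinely needs unital \emph{positive} maps (positivity is what makes $\mu\circ\Psi$ a state in the last step of its proof, which is how the Hausdorff distance between the state spaces is bounded), and passing to the selfadjoint parts via Proposition \ref{p:restrict} and Lemma \ref{lem:Rie-qGH-equals-our-qGH} does not help, because the order on $V_{\T{sa}}$ is just the operator-system order and still varies with $q$. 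Your remark that the identity is ``order-preserving up to a small perturbation'' is the crux and is left unproved; making it precise (for instance by showing that unital selfadjoint functionals of norm close to $1$ are close to states, or by building honest positive approximate inverses from the closeness of the cones) essentially amounts to reproving \cite[Theorem 11.2]{Rie:GHD}. So the proposal is correct in substance and, once you take the fallback of citing Rieffel, it is the same proof as the paper's; the identity-map shortcut through Proposition \ref{p:almostisom} should either be repaired along the lines above or replaced by that citation.
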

\begin{proof}
Fix a $\de \in (0,1)$. We aim to apply  \cite[Theorem 11.2]{Rie:GHD}, and must therefore provide a fixed  finite dimensional real vector space $V$ with a distinguished vector $e$, a continuous family $\big(\|\cdot\|_{t,q}\big)_{t,q\in [\de, 1]}$ of norms and a continuous family $(M_{t,q})_{t,q\in [\delta, 1]}$ of seminorms such that $(V, e, \|\cdot\|_{t,q}, M_{t,q})$ is an order unit compact quantum metric space isomorphic to 
\[
\big(\T{Fuzz}_N(B_q^K)_{\T{sa}}, 1, \|\cdot\|, L_{t,q}\big) \q \T{for all } t,q\in [\de, 1] .
\]
We are going to apply the unital continuous field of $C^*$-algebras over $[\de,1]$ with total space $C(SU_\bullet(2))$ and with fibre $C(SU_q(2))$ for every $q \in [\de,1]$ which was introduced in Section \ref{subsec:cont-field}. For each $q \in [\de,1]$, we recall that $\T{ev}_q \colon C(SU_\bullet(2)) \to C(SU_q(2))$ denotes the unital $*$-homomorphism which evaluates at the point $q$. 

We moreover recall that  $\C O(SU_{\bullet}(2)) \su C(SU_{\bullet}(2))$ denotes the smallest unital $*$-subalgebra containing $C([\de,1])$ and the generators $a_\bullet$ and $b_\bullet$. Notice also that $\C O(SU_{\bullet}(2))$ is a free $C([\de,1])$-module with basis given by the elements
\[
\xi_\bullet^{klm} := \begin{cases} a_\bullet^k b_\bullet^l (b_\bullet^*)^m & k \geq 0 \\
b_\bullet^l (b_\bullet^*)^m (a_\bullet^*)^{-k} & k < 0 
\end{cases} .
\]
for $k \in \zz$ and $l,m \in \nn_0$. 

For each $q \in [\de,1]$ we then obtain a linear basis for the coordinate algebra $\C O(SU_q(2))$ by applying the evaluation map to the linearly independent subset $\big\{ \xi_\bullet^{klm} \mid (k,l,m) \in \zz \ti \nn_0 \ti \nn_0 \big\} \su C(SU_\bullet(2))$. In particular, we obtain that
\[
\T{ev}_q \colon \T{span}_{\cc}\big\{ \xi_\bullet^{klm} \mid (k,l,m) \in \zz \ti \nn_0 \ti \nn_0 \big\} \longrightarrow \C O(SU_q(2))
\]
is an isomorphism of vector spaces over $\cc$. 
By an application of Corollary \ref{c:fuzzbasis}, we may choose a finite subset $J \su \zz \ti \nn_0 \ti \nn_0$ satisfying that
\[
\T{ev}_q\big( \T{span}_{\cc}\big\{ \xi_\bullet^{klm} \mid (k,l,m) \in J \big\} \big) = \T{Fuzz}_N(B_q^K)
\]
for all $q \in [\de,1]$. We apply the notation
\[
W := \T{span}_{\cc}\big\{ \xi_\bullet^{klm} \mid (k,l,m) \in J \big\} \su C(SU_\bullet(2)) 
\]
and record that $W$ becomes a finite dimensional operator system (indeed, it holds that $\xi^* \in W$ whenever $\xi \in W$ and clearly $1 \in W$ as well). We put $V := W_{\T{sa}}$ and record that the isomorphism
\[
\T{ev}_q \colon W \to \T{Fuzz}_N(B_q^K)
\]
induces an isomorphism of real vector spaces $\T{ev}_q \colon V \to \T{Fuzz}_N(B_q^K)_{\T{sa}}$ for all $q \in [\de,1]$.
For each $t,q \in [\de,1]$ we equip $V$ with the unique order unit space structure such that $\T{ev}_q \colon V \to \T{Fuzz}_N(B_q^K)_{\T{sa}}$ becomes an isomorphism of order unit spaces. We emphasise that this order unit space structure does not depend on the parameter $t \in [\de,1]$. Moreover, we may introduce the seminorm
\[
M_{t,q} \colon V \to [0,\infty) \q M_{t,q}(x_\bullet) := L_{t,q}\big( \T{ev}_q(x_\bullet)\big) .
\]
In this fashion, we get that $(V,M_{t,q})$ becomes an order unit compact quantum metric space which is isometrically isomorphic to the order unit compact quantum metric space $(\T{Fuzz}_N(B_q^K)_{\T{sa}}, L_{t,q})$. 
We remark that the different order unit space structures on $V$ yields a family of norms $\big( \| \cd \|_{t,q}\big)_{t,q \in [\de,1]}$ on $V$. This family becomes continuous since we are dealing with a continuous field of $C^*$-algebras with total space $C(SU_\bullet(2))$. Indeed, for each $t,q \in [\de,1]$ we record that $\| x_\bullet \|_{t,q} = \| \T{ev}_q(x_\bullet) \|$. We therefore only need to show that the family of seminorms $( M_{t,q} )_{t,q \in [\de,1]}$ is continuous as well.\\
It then follows from the discussion in Section \ref{subsec:cont-field} 
that we have two $C([\de,1])$-linear maps
\[
\pa^1_\bullet \T{ and } \pa^2_\bullet \colon \C O(SU_{\bullet}(2)) \to \C O(SU_{\bullet}(2)) 
\]
satisfying that $\T{ev}_q \ci \pa^1_\bullet = \pa^1 \ci \T{ev}_q$ and $\T{ev}_q \ci \pa^2_\bullet = \pa^2 \ci \T{ev}_q$. Moreover, for each $t \in (0,1]$ we may define the $C([\de,1])$-linear map
\[
\pa^3_{t,\bullet} \colon \C O(SU_{\bullet}(2)) \to \C O(SU_{\bullet}(2)) \q
\pa^3_{t,\bullet}(\xi^{klm}_\bullet) := \big[(k + l - m)/2\big]_t \cd \xi^{klm}_\bullet .
\]
By construction we obtain that $\T{ev}_q \ci \pa^3_{t,\bullet} = \pa^3_t \ci \T{ev}_q$. Moreover, for each $x_\bullet \in \C O(SU_\bullet(2))$, we note that the map $(0,1] \to C(SU_\bullet(2))$ defined by $t \mapsto \pa^3_{t,\bullet}(x_\bullet)$ is continuous with respect to the $C^*$-norm on $C(SU_\bullet(2))$. For each $t \in (0,1]$, we may thus consider the $C([\de,1])$-linear map
\[
\pa_{t,\bullet} \colon \C O(SU_\bullet(2)) \to \B M_2\big( C(SU_\bullet(2)) \big) \q
\pa_{t,\bullet} := \pma{ \pa^3_{t,\bullet} & -\pa^2_{\bullet} \\ - \pa^1_{\bullet} & - \pa^3_{t,\bullet}} .
\]
We notice that $\B M_2\big( C(SU_\bullet(2)) \big)$ is again the total space of a continuous field of $C^*$-algebras over $[\de,1]$, this time with fibres $\B M_2\big( C(SU_q(2)) \big)$ for $q \in [\de,1]$. For each $t,q \in [\de,1]$ we moreover have that
\[
M_{t,q}(x_\bullet) = \big\| \pa_{t,q}( \T{ev}_q(x_\bullet)) \big\|
= \big\| \T{ev}_q\big( \pa_{t,\bullet}(x_\bullet) \big) \big\|
\]
for every $x_\bullet \in V$. From these observations we obtain that $(M_{t,q})_{t,q\in [\delta, 1]}$ is a continuous family of seminorms on $V$.\\
The assumptions in \cite[Theorem 11.2]{Rie:GHD} are thereby fulfilled, and since $\de \in (0,1)$ was arbitrary this implies the claimed continuity result.
\end{proof}

In the following subsection we address point 1.~in the road map provided in the beginning of this section. This is the main technical step in the proof of Theorem \ref{introthm:qgh-continuity}. The uniform fuzzy approximation which we are going to establish builds on a combination of the approximation results described in Section \ref{sec:berezin} and the continuity results obtained earlier for the Podle{\'s} sphere in \cite{AKK:Podcon} and \cite{GKK:QI}.

\subsection{Uniformity of the fuzzy approximation}\label{sec:uniform-fuzzy-approximation}

The core result of this section provides a uniform estimate on the Monge-Kantorovi\v{c} distance between the states $\chi_N^M$, $N,M \in \nn_0$, and the counit $\epsilon$. This estimate takes place on a fixed spectral band and the main part of the upper bound is given in terms of the Monge-Kantorovi\v{c} distance between states on the Podle\'s sphere. One of the relevant states is the restriction of the counit  while the remaining states on $S_q^2$ are all of the following form:
\begin{align}\label{eq:def-hj}
h_j \colon C(S_q^2) \to \cc \q h_j(x) := \inn{j+1}_q \cd h\big((a^*)^jxa^j \big) \q j \in \nn_0 .
\end{align}
We emphasise that the state $h_j$ is the restriction of the state $\chi_j^0$ (see \eqref{eq:xidef}) to the Podle\'s sphere $C(S_q^2) \su C(SU_q(2))$. 
We are interested in the algebraic versions of the Monge-Kantorovi\v{c} metrics on quantum $SU(2)$ and the Podle{\'s} sphere defined by
\begin{align*}
d_{t,q}(\mu,\nu)&:=\sup\{|\mu(x)-\nu(x)| \mid x\in \C O(SU_q(2)), L_{t,q}(x)\leq 1\}, \qquad \mu,\nu \in \C S(C(SU_q(2)))\\
d_q^0(\mu,\nu)&:=\sup\{|\mu(x)-\nu(x)| \mid x\in \C O(S_q^2), L_q^0(x)\leq 1\}, \qquad \qquad \hspace{0.15cm} \mu,\nu \in \C S(C(S_q^2)) .
\end{align*}
 We recall from Proposition \ref{p:podisomet} that the seminorm $L_q^0 \colon \C O(S_q^2) \to [0,\infty)$ agrees with the restriction of the seminorm $L_{t,q} \colon \C O(SU_q(2)) \to [0,\infty)$ to $\C O(S_q^2)$ for all values of $t \in (0,1]$.

\begin{lemma}\label{l:leftriga}
Let $m \in \zz$ and $t,q \in (0,1]$. For every $x \in \C A^m_q$ it holds that
\[
\begin{split}
L_q^0\big( (a^*)^m x \big) 
& \leq ( t^{1/2} + t^{-1/2} + 1 ) L_{t,q}(x) \q \mbox{for } m \geq 0  \, \, \mbox{ and} \\
L_q^0\big( x a^{-m} \big)  
& \leq ( t^{1/2} + t^{-1/2} + 1 ) L_{t,q}(x) \q \mbox{for } m \leq 0 .
\end{split}
\]
\end{lemma}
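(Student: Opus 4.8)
The statement asserts a Leibniz-type estimate bounding the Podle\'s-sphere seminorm $L_q^0$ of $(a^*)^m x$ (resp.\ $x a^{-m}$) by the quantum $SU(2)$ seminorm $L_{t,q}(x)$, up to the constant $t^{1/2}+t^{-1/2}+1$. The central tools are the twisted Leibniz rules for $\pa_q^H$ and $\pa^V_t$ from Lemma~\ref{l:twilip}, the fact from Remark~\ref{rem:algebraic-podisomet} (and Proposition~\ref{p:podisomet}) that on the Podle\'s sphere the twists are trivial and $L_q^0 = L_{t,q}\vert_{\C O(S_q^2)}$, together with the explicit values of $\si_L(s^{1/2},-)$ on spectral subspaces (Lemma~\ref{l:sigmaonspec}) and the norm bound $\|\pa_q^H(x)\|,\|\pa_t^V(x)\|\le \|\pa_{t,q}(x)\| = L_{t,q}(x)$ from \eqref{eq:praktisk-ulighed}. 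By symmetry under the adjoint operation (both $L_q^0$ and $L_{t,q}$ are $*$-invariant, and $\big((a^*)^m x\big)^* = x^* a^m$ with $x^* \in \C A_q^{-m}$), it suffices to treat the case $m \ge 0$.

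First I would compute $\pa_{t,q}\big((a^*)^m x\big)$ using the twisted Leibniz rule. Since $(a^*)^m \in \C A_q^{-m}$, Lemma~\ref{l:sigmaonspec} gives $\si_L(s^{1/2},(a^*)^m) = s^{-m/2}(a^*)^m$, and likewise $\si_L(s^{1/2},x) = s^{m/2} x$ for $x \in \C A_q^m$. The vertical part is diagonal and acts as multiplication by $[-m/2]_t$ on $\C A_q^{-m}$ and by $[m/2]_t$ on $\C A_q^m$; so $\pa^V_t\big((a^*)^m x\big) = [-m/2]_t\, q^{?}\cdots$, but the key point is that $(a^*)^m x \in \C A_q^0 = \C O(S_q^2)$, so on the final answer the vertical Dirac operator contributes $0$ (indeed $\pa_t^V(y) = [0/2]_t\cdot(\cdots) = 0$ for $y \in \C A_q^0$ by Lemma~\ref{lem:paV-on-spectral-subspaces}). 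What survives is the horizontal part: applying the twisted Leibniz rule for $\pa_q^H$ (Lemma~\ref{l:twilip}),
\[
\pa_q^H\big((a^*)^m x\big) = \pa_q^H\big((a^*)^m\big)\,\si_L(q^{1/2},x) + \si_L(q^{-1/2},(a^*)^m)\,\pa_q^H(x)
= q^{m/2}\pa_q^H\big((a^*)^m\big)\,x + q^{m/2}(a^*)^m\,\pa_q^H(x).
\]
Since $L_q^0\big((a^*)^m x\big) = \|\pa_q^H\big((a^*)^m x\big)\|$ (the vertical term vanishing), I would estimate the two summands separately: $\|q^{m/2}(a^*)^m \pa_q^H(x)\| \le \|\pa_q^H(x)\| \le L_{t,q}(x)$ using $\|a^*\| = 1$ and $q^{m/2}\le 1$; and for the first term I would need $\|q^{m/2}\pa_q^H\big((a^*)^m\big) x\|$, which after a short induction computing $\pa_q^H\big((a^*)^m\big)$ via the twisted Leibniz rule (using $\pa_e(a^*) = 0$, $\pa_f(a^*) = -qb$, so only one off-diagonal entry is nonzero and it is a scalar multiple of $b (a^*)^{m-1}$ type expression of norm $\le$ something like $[m/2]$ or a geometric sum) should be bounded by $(t^{1/2}+t^{-1/2})\,\|\pa_t^3(x)\|$ or directly by $(t^{1/2}+t^{-1/2})L_{t,q}(x)$ — this is where the factor $t^{1/2}+t^{-1/2}$ enters, presumably by re-expressing $[m/2]_q$-type coefficients in terms of $[m/2]_t \le (q^{1/2}+q^{-1/2})^{-1}(\cdots)$ and invoking Lemma~\ref{l:lipdomnorm}/Lemma~\ref{l:qnumest} to bound $[m/2]_t\|\Pi_m^L x\|$ by $L_{t,q}(x)$.

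The main obstacle I anticipate is bookkeeping the precise constant: one must show the horizontal-derivative term $\pa_q^H\big((a^*)^m\big)\cdot x$ (after the $q^{m/2}$ twist) contributes at most $(t^{1/2}+t^{-1/2})L_{t,q}(x)$ rather than something depending on $q$ or growing with $m$. The clean route is to avoid computing $\pa_q^H\big((a^*)^m\big)$ explicitly and instead write $(a^*)^m x = \si_L$-twisted products and use that $L_q^0\big((a^*)^m x\big) = \|\pa_q^H\big((a^*)^m x\big)\|$ together with the identity $\pa_q^H\big((a^*)^m x\big) - q^{m/2}(a^*)^m\pa_q^H(x) = \big(\pa_q^H((a^*)^m x) - (a^*)^m \pa_q^H(x)\cdot(\text{twist})\big)$, recognising the difference as a twisted commutator that can be re-expressed using $\pa_t^3$. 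Concretely, since $(a^*)^m x$ lies in the fixed-point algebra, I expect the cleanest argument uses Lemma~\ref{l:leftrightbound} (the twisted Leibniz inequality for $L_{t,q}^{\max}$) applied to the product $(a^*)^m \cdot x$, giving $L_{t,q}\big((a^*)^m x\big) \le \|(a^*)^m\|_{t,q}\, L_{t,q}(x) + L_{t,q}\big((a^*)^m\big)\,\|x\|_{t,q}$; then bound $\|(a^*)^m\|_{t,q} = \max\{q^{m/2}+t^{m/2}, q^{-m/2}+t^{-m/2}\}$ — but this grows with $m$, so this naive approach fails and one genuinely must exploit the cancellation coming from $(a^*)^m x \in \C A_q^0$. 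Thus the real content is: the vertical derivative of $(a^*)^m x$ vanishes, and the horizontal derivative of $(a^*)^m x$ telescopes so that the $m$-dependence cancels, leaving the $m$-independent constant $t^{1/2}+t^{-1/2}+1$. I would carry out the induction on $m$ carefully, checking the base case $m=0$ (trivial: $L_q^0(x) = \|\pa_q^H(x)\| \le L_{t,q}(x)$) and the inductive step $(a^*)^{m+1}x = a^*\cdot\big((a^*)^m x\big)$ where now both factors can be handled — $a^* \in \C A_q^{-1}$ has controlled $\|\cdot\|_{t,q}$ and $L_{t,q}$-seminorm (bounded by $t^{1/2}+t^{-1/2}$ and $[1/2]_t+q^{-1/2}$ respectively, cf.\ Lemma~\ref{l:boumatuni}), and $(a^*)^m x \in \C A_q^0$ so its $\|\cdot\|_{t,q}$-norm equals its operator norm $\le 1\cdot\|x\|$ — wait, this still requires controlling $\|(a^*)^m x\|$, which is fine since it is $\le \|x\|$, but then I need $\|x\|\le (\text{const})L_{t,q}(x)$ which only holds on $\ker \Pi_0^L$. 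Hence the argument must instead keep the horizontal derivative terms exact throughout the induction and only estimate at the very end; I expect the correct inductive invariant is a bound on $\pa_q^H\big((a^*)^m x\big)$ as a sum of $m+1$ terms each of operator norm $\lesssim \|\Pi_\bullet^L x\|$-type quantities, which collapse via Lemma~\ref{l:lipdomnorm} to the stated constant.
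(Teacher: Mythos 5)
Your skeleton matches the paper's proof: reduce to $m\geq 0$ by $*$-invariance, observe $(a^*)^m x\in\C A_q^0$ so that $L_q^0\big((a^*)^m x\big)=L_{t,q}\big((a^*)^m x\big)=\|\pa_q^H\big((a^*)^m x\big)\|$ (Proposition \ref{p:podisomet}, vanishing vertical part), apply the twisted Leibniz rule once to get
\[
\pa_q^H\big((a^*)^m x\big)=q^{m/2}\,\pa_q^H\big((a^*)^m\big)\,x+q^{m/2}(a^*)^m\,\pa_q^H(x),
\]
and bound the second term by $L_{t,q}(x)$ via \eqref{eq:praktisk-ulighed}. You also correctly point to Lemma \ref{l:lipdomnorm} and Lemma \ref{l:qnumest} as the source of the factor $t^{1/2}+t^{-1/2}$. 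But the one genuinely quantitative step is left as a hope rather than proved: you never establish a bound on $\pa_q^H\big((a^*)^m\big)$ that makes the $m$-dependence disappear. This is exactly where the paper does a concrete computation: since $(a^*)^m=u_{00}^m$, the pairing formulae \eqref{eq:exppai} give $\pa_e\big((a^*)^m\big)=0$ and $\pa_f\big((a^*)^m\big)=u_{01}^m\sqrt{q^{1-m}\inn{m}_q}$, hence $\|\pa_q^H\big((a^*)^m\big)\|\leq\sqrt{q^{-m}\inn{m}_q}\leq\sqrt{m}\,q^{-m/2}$. The twist $q^{m/2}$ coming from $\si_L(q^{1/2},x)=q^{m/2}x$ cancels the $q^{-m/2}$ exactly, and the residual $\sqrt{m}$ is absorbed by $\|x\|\leq\frac{1}{[m/2]_t}L_{t,q}(x)\leq\frac{t^{1/2}+t^{-1/2}}{m}L_{t,q}(x)$, so the first term is at most $\frac{t^{1/2}+t^{-1/2}}{\sqrt{m}}L_{t,q}(x)\leq(t^{1/2}+t^{-1/2})L_{t,q}(x)$, uniformly in $m$. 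Without this closed-form estimate your argument does not close.

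The alternative routes you sketch do not repair this. The induction $(a^*)^{m+1}x=a^*\cdot\big((a^*)^m x\big)$ through Lemma \ref{l:leftrightbound} accumulates a multiplicative constant of roughly $\|a^*\|_{t,q}\geq q^{-1/2}+t^{-1/2}$ at each step (and, as you note yourself, requires a norm bound on $(a^*)^m x\in\C A_q^0$, where Lemma \ref{l:lipdomnorm} gives nothing since $[0]_t=0$), so it cannot produce an $m$-independent constant; and computing $\pa_q^H\big((a^*)^m\big)$ by iterating the Leibniz rule from $\pa_f(a^*)=-qb$ would work in principle but you would still have to sum the resulting geometric-type expression and verify it is $O(\sqrt{m}\,q^{-m/2})$ — precisely the content you defer with "should be bounded by" and "presumably". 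So the approach is the right one, but the proof as written has a genuine gap at its only nontrivial estimate; it is filled by one application of \eqref{eq:exppai} to $u_{00}^m$, after which no telescoping or induction is needed.
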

\begin{proof}
We focus on the case where $m \geq 0$ since the remaining case follows by taking adjoints. Suppose thus that $m \geq 0$ and let $x \in \C A^m_q$. We know that $(a^*)^m x \in \C A^0_q$ and an application of Proposition \ref{p:podisomet} shows that $L_q^0\big( (a^*)^m x\big) = L_{t,q}\big( (a^*)^m x\big)$. In particular, we immediately obtain the relevant inequality for $m = 0$. We may thus assume that $m > 0$. Since $(a^*)^m = u_{00}^m$, it follows from \eqref{eq:exppai} that 
\[
\pa_e( (a^*)^m) = 0 \q \T{and} \q \pa_f( (a^*)^m) = u_{01}^m \cd \sqrt{ q^{1-m} \inn{m}_q} .
\]
As a consequence of these identities, we get the estimate 
\[
\| \pa_q^H( (a^*)^m) \| = \| q^{-1/2} \pa_f( (a^*)^m) \| \leq \sqrt{q^{-m} \cd \inn{m}_q} \leq \sqrt{m} \cd q^{-m/2} . 
\]
We moreover notice that Lemma \ref{l:lipdomnorm} and Lemma \ref{l:qnumest} imply the inequalities
\begin{equation}\label{eq:normlipspec}
\| x \| \leq \frac{1}{[m/2]_t} \cd L_{t,q}(x) \leq \frac{ t^{1/2} + t^{-1/2} }{m} \cd L_{t,q}(x) .
\end{equation}
Since $L_{t,q}\big( (a^*)^m x \big)=L_{q}^0\big( (a^*)^m x \big)=\|\pa_q^H\big( (a^*)^m x \big)\|$, the result of the lemma now follows   from Lemma \ref{l:twilip} together with Lemma \ref{l:sigmaonspec} and the estimate in \eqref{eq:praktisk-ulighed}:
\[
\begin{split}
L_{t,q}\big( (a^*)^m x \big)  
& \leq \| \pa_q^H( (a^*)^m) \| \cd q^{m/2} \| x \| + q^{m/2}\| (a^*)^m \| \cd \| \pa_q^H(x) \| \\
& \leq \frac{ t^{1/2} + t^{-1/2}}{ \sqrt{m}} \cd L_{t,q}(x) + L_{t,q}(x) \leq (t^{1/2} + t^{-1/2} + 1) \cd L_{t,q}(x) . \qedhere
\end{split}
\]
\end{proof}

Recall from  Section \ref{ss:image-of-berezin} the linear functionals $\varphi_{r,s} \colon C(SU_q(2)) \to \cc$, $r,s\in \nn_0$, given by
\[
\varphi_{r,s}(x) = h\big( (a^*)^s x a^r \big) .
\]
As noted in \eqref{eq:statphi}, for each $N,M \in \nn_0$, the state $\chi_N^M$ appearing in the definition of the Berezin transform $\be_N^M \colon C(SU_q(2)) \to C(SU_q(2))$ is then given by
\begin{align}\label{eq:chiNM-formula}
\chi_N^M = \frac{1}{M+1} \sum_{s,r = N}^{N + M} \sqrt{ \inn{r+1}_q \inn{s+1}_q} \cd \varphi_{r,s} .
\end{align}

We first describe the  linear functionals $\varphi_{r,s}$ in terms of the  states $h_j$ on the Podle{\'s} sphere introduced in \eqref{eq:def-hj}.

\begin{lemma}\label{l:functionals}
Let $r,s \in \nn_0$. For every $x \in C(SU_q(2))$ it holds that 
\[
\varphi_{r,s}(x) = \begin{cases}
\frac{1}{\inn{r+1}_q} \cd h_r\big( (a^*)^{s-r} \cd \Pi^L_{s-r}(x) \big) & s \geq r \\
\frac{1}{\inn{s+1}_q} \cd h_s\big( \Pi^L_{s-r}(x) \cd a^{r-s} \big) & r \geq s 
\end{cases} .
\]
\end{lemma}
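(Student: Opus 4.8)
The statement to be established is the formula
\[
\varphi_{r,s}(x) = \begin{cases}
\frac{1}{\inn{r+1}_q} \cd h_r\big( (a^*)^{s-r} \cd \Pi^L_{s-r}(x) \big) & s \geq r \\
\frac{1}{\inn{s+1}_q} \cd h_s\big( \Pi^L_{s-r}(x) \cd a^{r-s} \big) & r \geq s
\end{cases},
\]
relating the linear functionals $\varphi_{r,s}(x) = h\big( (a^*)^s x a^r \big)$ to the states $h_j$ on the Podle\'s sphere. By the formula $h_j(y) = \inn{j+1}_q \cd h\big((a^*)^j y a^j\big)$, both sides of the asserted identity are, after clearing the leading $q$-integer, expressions built from the Haar state evaluated on words in the generators; so the whole lemma should reduce to algebraic manipulation inside $\C O(SU_q(2))$ together with the invariance properties of $h$ and the identification of $\Pi_{s-r}^L$ as a spectral projection for $\sigma_L$.

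\textbf{First step.} I would treat the case $s \geq r$; the case $r \geq s$ then follows by applying the adjoint operation, since $\varphi_{r,s}(x)^* = h\big((a^*)^r x^* a^s\big) = \varphi_{s,r}(x^*)$, $\Pi_{s-r}^L(x)^* = \Pi_{r-s}^L(x^*)$, $(a^*)^* = a$, and each $h_j$ is a state hence $*$-invariant on self-adjoint combinations — more precisely $h_j(y)^* = h_j(y^*)$. So it is enough to prove $\varphi_{r,s}(x) = \frac{1}{\inn{r+1}_q} h_r\big((a^*)^{s-r}\Pi_{s-r}^L(x)\big)$ for $s \geq r$, which after expanding $h_r$ amounts to
\[
h\big((a^*)^s x a^r\big) = h\big((a^*)^r (a^*)^{s-r} \Pi_{s-r}^L(x) a^r\big) = h\big((a^*)^s \Pi_{s-r}^L(x) a^r\big).
\]
Thus the content of the lemma in this case is simply that $h\big((a^*)^s x a^r\big) = h\big((a^*)^s \Pi_{s-r}^L(x) a^r\big)$, i.e.\ that $\varphi_{r,s}$ only detects the degree-$(s-r)$ part of $x$ with respect to $\sigma_L$.

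\textbf{Second step.} To prove this reduction I would decompose $x = \sum_m \Pi_m^L(x)$ (convergent after applying $h$ and the bounded operators of left/right multiplication by powers of $a,a^*$) and check that $h\big((a^*)^s \Pi_m^L(x) a^r\big) = 0$ unless $m = s-r$. Using $\sigma_L(z,a) = za$ and $\sigma_L(z,a^*) = z^{-1}a^*$, the element $(a^*)^s \Pi_m^L(x) a^r$ lies in the spectral subspace $\C A_q^{m - s + r}$ for $\sigma_L$; combined with the left-invariance of the Haar state, $(h \ot 1)\Delta = h(\cdot)1$, which forces $h$ to annihilate every nonzero $\sigma_L$-spectral subspace (a standard fact, derivable from $h(u^n_{ij}) = 0$ for $n > 0$ together with \eqref{eq:sigma-L-and-R-on-matrix-units}), we get vanishing unless $m - s + r = 0$, i.e.\ $m = s - r$. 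This yields exactly $h\big((a^*)^s x a^r\big) = h\big((a^*)^s \Pi_{s-r}^L(x) a^r\big)$, completing the case $s \geq r$; the case $r \geq s$ follows by the adjoint symmetry noted above, after checking $\Pi_{s-r}^L(x) a^{r-s} \in \C A_q^0 = C(S_q^2)$ so that $h_s$ may legitimately be applied to it.

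\textbf{Main obstacle.} None of this is deep; the only point requiring care is the interchange of the (infinite) spectral decomposition $x = \sum_m \Pi_m^L(x)$ with the functional $h$ and the multiplication operators, which I would handle by first verifying the identity for $x \in \C O(SU_q(2))$ (where the sum is finite) and then extending by norm-continuity, using that $\Pi_{s-r}^L$, $h$, and left/right multiplication by $(a^*)^s, a^r$ are all norm-bounded. A secondary bookkeeping point is making sure the ranges of indices are consistent — that $(a^*)^{s-r}\Pi_{s-r}^L(x)$ indeed lands in $\C A_q^0$ when $s \geq r$ so that the expression $h_r$ of it makes sense — but this is immediate from $(a^*)^{s-r} \in \C A_q^{-(s-r)}$ and $\Pi_{s-r}^L(x) \in A_q^{s-r}$.
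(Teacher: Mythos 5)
Your proposal is correct and follows essentially the same route as the paper's proof: reduce by linearity and norm-continuity to elements of a single $\sigma_L$-spectral subspace, use that the Haar state vanishes on $A_q^m$ for $m\neq 0$ so that $\varphi_{r,s}$ only detects the degree-$(s-r)$ component, and then the identity is a direct rewriting via the definition of $h_j$. The only cosmetic difference is that you obtain the case $r\geq s$ from the case $s\geq r$ by taking adjoints, whereas the paper performs the symmetric computation directly; both are routine.
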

\begin{proof}
By continuity and linearity, we may assume that $x \in A^m_q$ for some $m \in \zz$. Since the  Haar state $h \colon C(SU_q(2)) \to \cc$ vanishes on all but the zeroth spectral subspace and $(a^{*})^sxa^r\in A_q^{m+r-s}$ we then have that $\varphi_{r,s}(x) \neq 0$ if and only if $m + r - s = 0$. Since $\Pi^L_{s-r}(x) $ also vanishes for $m + r - s \neq 0$,
we may assume that $m = s - r$. For $m \geq 0$ we  have that 
\[
\varphi_{r,s}(x) = h\big( (a^*)^s x a^r \big) = h\big( (a^*)^r (a^*)^m x a^r \big) 
= \frac{1}{\inn{r+1}_q} h_r\big( (a^*)^m x \big) .
\]
Likewise, for $m \leq 0$ we get that
\[
\varphi_{r,s}(x) = h\big( (a^*)^s x a^r \big) = h\big( (a^*)^s x a^{-m} a^s \big) 
= \frac{1}{\inn{s+1}_q} h_s\big( x a^{-m} \big) .
\]
This proves the present lemma.
\end{proof}

Inspired by Lemma \ref{l:functionals}, for each $m \in \nn_0$, we now define the bounded operator $P_m \colon C(SU_q(2)) \to C(S_q^2)$ by the formula
\[
P_m(x) := \fork{cc}{ 
(a^*)^m \Pi^L_m(x) + \Pi^L_{-m}(x) a^m & m > 0 \\
\Pi^L_0(x) & m = 0 } .
\]
 Indeed, for every $s,r \in \nn_0$ with $r < s$ we get from Lemma \ref{l:functionals} that
  \begin{equation}\label{eq:sumvarphi}
    \varphi_{s,r}(x) + \varphi_{r,s}(x) = \frac{1}{\inn{r+1}_q} \cd h_r \big( P_{s-r}(x) \big) \, \, \T{ and }  \, \, \,
    \varphi_{r,r}(x) = \frac{1}{\inn{r+1}_q} \cd h_r( P_0(x)) ,
\end{equation}
for all $x \in C(SU_q(2))$. Note also that $P_m(x^*)=P_m(x)^*$ since $\Pi_m(x^*)=\Pi_{-m}(x)^*$ for all $x\in C(SU_q(2))$ and $m\in \nn_0$. For each $N,M \in \nn_0$ we may then express the state $\chi_N^M \colon C(SU_q(2)) \to \cc$ in terms of the bounded operators $P_m$,  $m \in \nn_0$, and the states $h_r \colon C(S_q^2) \to \cc$, $r \in \nn_0$:

\begin{lemma}\label{l:state}
Let $N,M \in \nn_0$. For every $x \in C(SU_q(2))$, it holds that
\[
\chi_N^M(x) = \frac{1}{M+1} \sum_{r = N}^{N + M} \sum_{m = 0}^{N + M - r} \sqrt{ \frac{ \inn{m+r+1}_q}{\inn{r+1}_q}} \cd h_r\big( P_m(x) \big) .
\]
\end{lemma}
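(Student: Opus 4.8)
The plan is to start from the defining formula \eqref{eq:chiNM-formula}, namely
\[
\chi_N^M = \frac{1}{M+1}\sum_{s,r=N}^{N+M}\sqrt{\inn{r+1}_q\inn{s+1}_q}\cdot\varphi_{r,s},
\]
and to reorganise the double sum over $(r,s)\in\{N,\dots,N+M\}^2$ according to the difference $m=|s-r|\in\{0,\dots,M\}$. For a fixed $x\in C(SU_q(2))$ I would split the sum into the diagonal terms $r=s$ and the off-diagonal pairs, grouping each pair $\{(r,s),(s,r)\}$ with $r<s$ together; writing $m=s-r$ this pair contributes, via \eqref{eq:sumvarphi},
\[
\sqrt{\inn{r+1}_q\inn{s+1}_q}\big(\varphi_{r,s}(x)+\varphi_{s,r}(x)\big)
=\sqrt{\inn{r+1}_q\inn{m+r+1}_q}\cdot\frac{1}{\inn{r+1}_q}h_r\big(P_m(x)\big)
=\sqrt{\frac{\inn{m+r+1}_q}{\inn{r+1}_q}}\,h_r\big(P_m(x)\big),
\]
while the diagonal term at $r=s$ contributes $\inn{r+1}_q\cdot\frac{1}{\inn{r+1}_q}h_r(P_0(x))=h_r(P_0(x))$, which is exactly the $m=0$ summand in the claimed formula (and agrees with the $m=0$ case of the off-diagonal computation since $\sqrt{\inn{r+1}_q/\inn{r+1}_q}=1$).

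The next step is bookkeeping on the index set. Reindexing the off-diagonal pairs by $(r,m)$ with $r=N,\dots,N+M$ and $m=s-r=1,\dots,N+M-r$ covers exactly the pairs with $s>r$, and the diagonal $r=s$ adds the $m=0$ row. Hence
\[
\chi_N^M(x)=\frac{1}{M+1}\sum_{r=N}^{N+M}\Big(h_r(P_0(x))+\sum_{m=1}^{N+M-r}\sqrt{\frac{\inn{m+r+1}_q}{\inn{r+1}_q}}\,h_r(P_m(x))\Big)
=\frac{1}{M+1}\sum_{r=N}^{N+M}\sum_{m=0}^{N+M-r}\sqrt{\frac{\inn{m+r+1}_q}{\inn{r+1}_q}}\,h_r\big(P_m(x)\big),
\]
which is the asserted identity. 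Since the equality is linear and norm-continuous in $x$ (all maps involved are bounded), it suffices to verify it on the matrix coefficients or, even more simply, one may just run the whole computation for general $x\in C(SU_q(2))$ using that $\varphi_{r,s}$, $h_r$, $P_m$ and $\Pi^L_m$ are all bounded.

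I do not expect a genuine obstacle here: this is essentially a reindexing argument combining \eqref{eq:statphi}/\eqref{eq:chiNM-formula} with the already-established identities \eqref{eq:sumvarphi} (which in turn rest on Lemma \ref{l:functionals}). The only point requiring a little care is making sure the diagonal $r=s$ terms are correctly identified with the $m=0$ slice and not double-counted against the paired off-diagonal terms — i.e.\ that the reindexing $(r,s)\mapsto(r,m=s-r)$ is a bijection from $\{(r,s):N\le r\le s\le N+M\}$ onto $\{(r,m):N\le r\le N+M,\ 0\le m\le N+M-r\}$, and that the pairs $(r,s)$ with $s<r$ have been absorbed into the symmetrisation $\varphi_{r,s}+\varphi_{s,r}$. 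Once this combinatorial check is in place the lemma follows immediately.
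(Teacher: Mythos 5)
Your argument is correct and is essentially the paper's own proof: both start from the formula \eqref{eq:chiNM-formula}, split the double sum into the diagonal terms and the symmetrised off-diagonal pairs, apply \eqref{eq:sumvarphi}, and reindex by $m=s-r$. The bookkeeping you spell out (the bijection onto $\{(r,m): N\le r\le N+M,\ 0\le m\le N+M-r\}$) is exactly what the paper's one-line computation implicitly performs, so nothing is missing.
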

\begin{proof}
Using \eqref{eq:chiNM-formula} and \eqref{eq:sumvarphi}, we obtain the desired result from the computation
\[
\begin{split}
 (M + 1) \cd \chi_N^M & =  \sum_{r = N}^{N + M}  \inn{r+1}_q \cd \varphi_{r,r} +  \sum_{r = N}^{N + M} \sum_{s = r + 1}^{N + M} \sqrt{ \inn{r+1}_q \inn{s+1}_q} \cd (\varphi_{s,r} +\varphi_{r,s}) \\
  & = \sum_{r = N}^{N + M} \sum_{s = r}^{N+M}  \sqrt{ \frac{\inn{s+1}_q}{\inn{r+1}_q} } \cd (h_r \ci P_{s-r} ) . \qedhere
  \end{split}
\]
\end{proof}

In order to estimate the distance between the counit $\epsilon$ and the state $\chi_N^M$ for different values of $N,M \in \nn_0$ we introduce the intermediary linear functional $\psi_N^M \colon C(SU_q(2)) \to \cc$ defined by
\[
\psi_N^M(x) = \frac{1}{M+1} \sum_{r = N}^{N + M} \sum_{m = 0}^{N + M - r} \sqrt{ \frac{ \inn{m+r+1}_q}{\inn{r+1}_q}} \cd \epsilon\big( P_m(x) \big) 
\]
for all $x \in C(SU_q(2))$. 
The next two lemmas serve as preparation for Proposition \ref{p:mulspe}, where we provide a uniform upper bound on the Monge-Kantorovi\v{c} distance between the states $\epsilon$ and $\chi_N^M$ on a fixed spectral band.

\begin{lemma}\label{l:epsinter}
Let $n \in \zz$. It holds that
\begin{equation}\label{eq:interepsi}
\big| \psi_N^M(x) - \epsilon(x) \big| \leq 
\big( \tfrac{ 1}{N+1} + \tfrac{ 1}{M + 1} \big) \cd (t^{1/2} + t^{-1/2}) \cd L_{t,q}(x)
\end{equation}
for all $t,q \in (0,1]$, all $x \in \C A_q^n$ and all $N,M \in \nn_0$ with $M \geq |n|$.
\end{lemma}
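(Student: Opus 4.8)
The key observation is that $\psi_N^M$ is built from the counit $\epsilon$ precomposed with the operators $P_m$, and $\epsilon \circ P_m$ can be evaluated explicitly on a spectral subspace. Concretely, for $x \in \C A_q^n$ only the term $P_{|n|}(x)$ survives in the sense that $\epsilon(P_m(x)) = 0$ for $m \neq |n|$ (since $P_m(x)$ involves spectral projections $\Pi_m^L(x)$ and $\Pi_{-m}^L(x)$ which vanish unless $m = \pm n$), while $\epsilon(P_{|n|}(x)) = \epsilon((a^*)^{|n|}x)$ or $\epsilon(xa^{|n|})$ according to the sign of $n$, and since $\epsilon(a) = \epsilon(a^*) = 1$ this equals $\epsilon(x)$. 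Thus the first step is to compute
\[
\psi_N^M(x) = \frac{1}{M+1} \Big( \sum_{r = N}^{N+M-|n|} \sqrt{ \tfrac{\inn{|n|+r+1}_q}{\inn{r+1}_q}} \Big) \cd \epsilon(x) = \frac{1}{M+1}\sum_{r=N}^{N+M-|n|} c_{r,n} \cd \epsilon(x),
\]
where $c_{r,n} := \sqrt{\inn{|n|+r+1}_q/\inn{r+1}_q}$. So the left-hand side of \eqref{eq:interepsi} becomes $|\epsilon(x)| \cd \big| \tfrac{1}{M+1}\sum_{r=N}^{N+M-|n|} c_{r,n} - 1 \big|$.

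\textbf{Estimating the scalar factor.} The next step is to bound $\big|\tfrac{1}{M+1}\sum_{r=N}^{N+M-|n|}c_{r,n} - 1\big|$. Split it as
\[
\Big| \frac{1}{M+1}\sum_{r=N}^{N+M-|n|} (c_{r,n} - 1) \Big| + \Big| \frac{M+1-|n|}{M+1} - 1 \Big| \leq \frac{1}{M+1}\sum_{r=N}^{N+M-|n|} |c_{r,n}-1| + \frac{|n|}{M+1}.
\]
For the first part one uses that $c_{r,n} \geq 1$ (as $\inn{\cdot}_q$ is increasing) together with an estimate of the form $c_{r,n} - 1 \leq \tfrac{\inn{|n|+r+1}_q - \inn{r+1}_q}{2\inn{r+1}_q} \leq \tfrac{|n|}{2(r+1)}$ via $\sqrt{1+u} \leq 1 + u/2$ and the telescoping bound $\inn{|n|+r+1}_q - \inn{r+1}_q = \sum_{i=r+1}^{r+|n|} q^{2i} \leq |n| \leq |n|\inn{r+1}_q$. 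Hence $\tfrac{1}{M+1}\sum_{r=N}^{N+M-|n|}|c_{r,n}-1| \leq \tfrac{|n|}{2(M+1)}\sum_{r\geq N}\tfrac{1}{(r+1)} $ — but this diverges, so one must instead telescope more carefully, bounding $\sum_{r=N}^{N+M-|n|}(c_{r,n}-1)$ by comparing consecutive ratios; alternatively observe $c_{r,n}-1 \leq \tfrac{|n|}{2\inn{r+1}_q}$ and $\inn{r+1}_q \geq \max\{1, [r+1]_{q} \cdot(\text{something})\}$, using $\inn{r+1}_q \geq 1$ to get $c_{r,n} - 1 \le |n|/2$, which combined with there being at most $M+1$ terms only gives a bound of order $|n|$, not $1/N$. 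The honest route is: $c_{r,n} \le \sqrt{1 + |n|/\inn{r+1}_q}$ and for $q \in (0,1)$, $\inn{r+1}_q = \tfrac{1-q^{2(r+1)}}{1-q^2}$ is bounded below by a constant growing toward $\tfrac{1}{1-q^2}$, but uniformly in $q$ it is only $\geq 1$; the genuine smallness in $N$ must come from the $q=1$-like regime. This is the main obstacle: one wants the bound to degenerate like $1/(N+1)$, which forces treating $\inn{r+1}_q \geq \inn{r+1}_1 = r+1$ only when $q$ is bounded away from $0$, or else absorbing the relevant $q$-dependence into the term already present. I expect the actual argument uses that $\inn{m+r+1}_q/\inn{r+1}_q - 1 \le \inn{m}_q q^{2(r+1)}/\inn{r+1}_q$ or a similarly sharp identity so that the sum over $r$ telescopes to something of size $\inn{n}_q/\inn{N+1}_q$ and then invokes Lemma \ref{l:qnumest}-type control; the $(t^{1/2}+t^{-1/2})$ factor and the $L_{t,q}(x)$ on the right enter via \eqref{eq:normlipspec}, i.e.\ $|\epsilon(x)| \le \|x\| \le \tfrac{t^{1/2}+t^{-1/2}}{|n|}L_{t,q}(x)$ for $n \neq 0$ (and trivially $\epsilon(x)=\epsilon(1)\lambda$ contributes nothing to $L_{t,q}$, so one may assume $\epsilon(x)=0$ when $n=0$, making the inequality vacuous there).

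\textbf{Assembling.} Finally, for $n = 0$ the operator $\psi_N^M$ and $\epsilon$ both act on $\C A_q^0$ and the scalar prefactor is $\tfrac{1}{M+1}\sum_{r=N}^{N+M} 1 = 1$, so $\psi_N^M(x) = \epsilon(x)$ and \eqref{eq:interepsi} holds trivially. For $n \neq 0$ combine the scalar estimate $\big|\tfrac{1}{M+1}\sum_{r=N}^{N+M-|n|}c_{r,n}-1\big| \lesssim \tfrac{|n|}{N+1} + \tfrac{|n|}{M+1}$ with $|\epsilon(x)| \le \tfrac{t^{1/2}+t^{-1/2}}{|n|}L_{t,q}(x)$ so that the factors of $|n|$ cancel, yielding exactly $\big(\tfrac{1}{N+1}+\tfrac{1}{M+1}\big)(t^{1/2}+t^{-1/2})L_{t,q}(x)$ up to tracking the constant (which should come out to be at most $1$ in each summand after the $\sqrt{1+u}\le 1+u/2$ step). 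The only real care needed is the uniformity in $q$ of the scalar estimate; everything else is bookkeeping with the formula from Lemma \ref{l:state} specialised via $\epsilon \circ P_m$ and the norm bound \eqref{eq:normlipspec} coming from Lemma \ref{l:lipdomnorm} and Lemma \ref{l:qnumest}.
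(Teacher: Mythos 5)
Your overall architecture is exactly the paper's: evaluate $\epsilon\circ P_m$ on $\C A_q^n$ to get $\psi_N^M(x)=\tfrac{1}{M+1}\sum_{r=N}^{N+M-|n|}\sqrt{\inn{|n|+r+1}_q/\inn{r+1}_q}\cd\epsilon(x)$, reduce to a scalar estimate of size $\tfrac{|n|}{N+1}+\tfrac{|n|}{M+1}$, and cancel the factor $|n|$ against $|\epsilon(x)|\leq\|x\|\leq\tfrac{1}{[|n|/2]_t}L_{t,q}(x)\leq\tfrac{t^{1/2}+t^{-1/2}}{|n|}L_{t,q}(x)$ (Lemmas \ref{l:lipdomnorm} and \ref{l:qnumest}), with the case $n=0$ being trivial. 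However, the central step — the bound $\sqrt{\inn{n+r+1}_q/\inn{r+1}_q}-1\leq\tfrac{n}{r+1}$ \emph{uniformly in $q\in(0,1]$} — is precisely the point you identify as "the main obstacle" and then do not close. Your first attempt (via $\sqrt{1+u}\leq 1+u/2$ and $\inn{r+1}_q\geq 1$) only yields $O(|n|)$, as you note, and your conjectured repair, that the sum over $r$ "telescopes to something of size $\inn{n}_q/\inn{N+1}_q$", does not work: as $q\to 0$ that ratio tends to $1$, so it is not dominated by $n/(N+1)$, and Lemma \ref{l:qnumest} (which concerns $[\cd]_q$-numbers) does not supply the missing uniformity. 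So as written the proposal has a genuine gap at its quantitative core.

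The resolution is short but hinges on \emph{not discarding} the $q$-power that appears in the exact difference. Write, for $u\geq 1$, $|\sqrt{u}-1|\leq u-1$, and use the identity $\inn{n+r+1}_q-\inn{r+1}_q=q^{2(r+1)}\cd\inn{n}_q$, so that
\[
\sqrt{\tfrac{\inn{n+r+1}_q}{\inn{r+1}_q}}-1
\leq q^{2(r+1)}\cd\tfrac{\inn{n}_q}{\inn{r+1}_q}
\leq n\cd\tfrac{q^{2(r+1)}}{\sum_{i=0}^{r}q^{2i}}
= n\cd\tfrac{q^{2}}{\sum_{j=0}^{r}q^{-2j}}
\leq \tfrac{n}{r+1},
\]
where the uniformity in $q$ comes exactly from the factor $q^{2(r+1)}$ against $\inn{r+1}_q\geq (r+1)q^{2r}$. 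No telescoping is needed: each of the at most $M+1-n$ terms is $\leq n/(N+1)$, so after dividing by $M+1$ the sum contributes $\leq n/(N+1)$, which together with the truncation term $|1-\tfrac{M+1-n}{M+1}|=\tfrac{n}{M+1}$ and the norm bound above gives \eqref{eq:interepsi}. With this estimate inserted, the rest of your bookkeeping (including handling both signs of $n$ via $|n|$, or reducing to $n\geq 0$ by taking adjoints) goes through and coincides with the paper's proof.
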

\begin{proof}
  Since $\psi_N^M$ and $\epsilon$  respect the adjoint operation and $L_{t,q}$ is $*$-invariant, it suffices to treat the case $n\geq 0$. For $n = 0$, the estimate in \eqref{eq:interepsi} clearly holds since the left hand side of the inequality is equal to zero. Suppose therefore that $n > 0$, and let $q\in (0,1]$ be given. For each $r \in \nn_0$ we start out by remarking that 
\begin{equation}\label{eq:quotientest}
\begin{split}
\Big| \tfrac{\sqrt{\inn{n + r + 1}_q}}{\sqrt{\inn{r+1}_q}} - 1  \Big| 
  \leq \tfrac{\inn{n + r + 1}_q}{\inn{r+1}_q} - 1 = q^{2(r+1)} \tfrac{\inn{n}_q}{\inn{r+1}_q} 
  \leq n \cd q^2  \tfrac{1}{\sum_{i = 0}^r q^{-2i}} \leq \tfrac{n}{r+1}  .
\end{split}
\end{equation}
Fix now $N,M \in \nn_0$ with $M \geq n$. From the above inequalities we obtain that
\[
\begin{split}
 \Big| 1 - \frac{1}{M+1} \sum_{r = N}^{N + M - n} \tfrac{\sqrt{\inn{n + r + 1}_q}}{\sqrt{\inn{r+1}_q}}  \Big| & \leq \Big| 1 - \tfrac{M + 1 - n}{M+1} \Big|
+ \frac{1}{M+1} \sum_{r = N}^{N + M - n} \Big| \tfrac{\sqrt{\inn{n + r + 1}_q}}{\sqrt{\inn{r+1}_q}} - 1  \Big|\\
&\leq \tfrac{n}{M+1} + \tfrac{n}{N+1} .
\end{split}
\]
Let furthermore $x \in \C A_q^n$ be given.  Since $\epsilon(a^*) = 1 = \epsilon(a)$ (and since $\epsilon$ is a unital $*$-homomorphism) we know that 
\[
\epsilon( P_m(x)) = \de_{n,m} \cd \epsilon(x) \q \T{for all } m \in \nn_0 .
\]
From this identity, we then get that
\[
\begin{split}
\psi_N^M(x) 
& = \frac{1}{M+1}\sum_{r = N}^{N+M} \sum_{m = 0}^{N + M - r} \tfrac{\sqrt{\inn{m + r + 1}_q}}{\sqrt{\inn{r+1}_q}} \cd \epsilon(P_m(x)) 
 = \frac{1}{M+1}\sum_{r = N}^{N+M -n} \tfrac{\sqrt{\inn{n + r + 1}_q}}{\sqrt{\inn{r+1}_q}} \cd \epsilon(x) .
\end{split}
\]
Combining the above estimates we get
\[
\left| \psi_N^M(x)- \epsilon(x)\right|= \Big| \big(1 - \frac{1}{M+1}\sum_{r = N}^{N+M -n} \tfrac{\sqrt{\inn{n + r + 1}_q}}{\sqrt{\inn{r+1}_q}}\big) \epsilon(x)\Big|\leq ( \tfrac{n}{M+1} + \tfrac{n}{N+1})|\epsilon(x)| .
\] 
Let finally $t \in (0,1]$ be given. The result of the lemma then follows from the above computations together with the estimate
\[
|\epsilon(x)| \leq  \| x \| \leq  \frac{1}{[n/2]_t} L_{t,q}(x) \leq \frac{t^{1/2} + t^{-1/2}}{n}L_{t,q}(x), 
\]
see Lemma \ref{l:lipdomnorm} and Lemma \ref{l:qnumest}.
\end{proof}


\begin{lemma}\label{l:chiinter}
Let $n \in \zz$. The following inequality holds
\[
\begin{split}
\big| \chi_N^M(x) - \psi_N^M(x) \big|
& \leq 
\big( 1 + \tfrac{|n|}{N+1} \big)^{1/2} \cd (t^{1/2} + t^{-1/2} + 1) 
 \cd \sup_{N\leq r  \leq N+M} d_q^0\big(h_r,\epsilon|_{C(S_q^2)}\big) \cd L_{t,q}(x) 
\end{split}
\]
for all $t,q \in (0,1]$, all $x \in \C A^n_q$ and all $N,M \in \nn_0$ with $M \geq |n|$.
\end{lemma}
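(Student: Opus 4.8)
\textbf{Proof plan for Lemma \ref{l:chiinter}.}
The strategy is to compare $\chi_N^M$ and $\psi_N^M$ term by term using the explicit expressions obtained in Lemma \ref{l:state} and the definition of $\psi_N^M$. Since both functionals respect the adjoint operation and $L_{t,q}$ is $*$-invariant, I would first reduce to the case $n \geq 0$. For fixed $x \in \C A^n_q$ and $N,M \in \nn_0$ with $M \geq n$, subtracting the two formulae gives
\[
\chi_N^M(x) - \psi_N^M(x) = \frac{1}{M+1} \sum_{r = N}^{N+M} \sum_{m = 0}^{N+M-r} \sqrt{ \tfrac{ \inn{m+r+1}_q}{\inn{r+1}_q}} \cd \big( h_r - \epsilon \big)\big( P_m(x) \big).
\]
Each inner summand involves the Podle\'s-sphere functional $(h_r - \epsilon)|_{C(S_q^2)}$ evaluated on $P_m(x) \in C(S_q^2)$, so it is bounded by $d_q^0\big(h_r, \epsilon|_{C(S_q^2)}\big) \cd L_q^0(P_m(x))$.

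The key input is then an estimate on $L_q^0(P_m(x))$ in terms of $L_{t,q}(x)$. By the definition of $P_m$, for $m > 0$ we have $P_m(x) = (a^*)^m \Pi^L_m(x) + \Pi^L_{-m}(x) a^m$, and Corollary \ref{c:diracinvariant} gives $L_{t,q}(\Pi^L_{\pm m}(x)) \leq L_{t,q}(x)$; moreover $\Pi^L_m(x) = x$ and $\Pi^L_{-m}(x) = 0$ when $x \in \C A^n_q$ unless $m = n$ (using $\sigma_L$ acts on $\C A^n_q$ by weight $n$), so in fact only the single value $m = n$ contributes, and $P_n(x) = (a^*)^n x$. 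An application of Lemma \ref{l:leftriga} then yields $L_q^0(P_n(x)) \leq (t^{1/2} + t^{-1/2} + 1) L_{t,q}(x)$, while $P_m(x) = 0$ for $m \neq n$ (and $P_0(x) = 0$ as $n > 0$). So the double sum collapses to
\[
\big| \chi_N^M(x) - \psi_N^M(x) \big| \leq \frac{1}{M+1} \sum_{r = N}^{N+M-n} \sqrt{ \tfrac{ \inn{n+r+1}_q}{\inn{r+1}_q}} \cd d_q^0\big( h_r, \epsilon|_{C(S_q^2)} \big) \cd (t^{1/2} + t^{-1/2} + 1) \cd L_{t,q}(x).
\]

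To finish, I would bound the supremum of $d_q^0(h_r, \epsilon|_{C(S_q^2)})$ over $N \leq r \leq N+M$, bound the number of terms by $M+1$ so the prefactor $\tfrac{1}{M+1}$ cancels the length of the sum, and control the coefficient $\sqrt{\inn{n+r+1}_q/\inn{r+1}_q}$. For the latter, since $\inn{n+r+1}_q/\inn{r+1}_q = 1 + q^{2(r+1)}\inn{n}_q/\inn{r+1}_q \leq 1 + n/(r+1) \leq 1 + n/(N+1)$ (this is the same elementary estimate as in \eqref{eq:quotientest}), each coefficient is at most $(1 + n/(N+1))^{1/2}$. Assembling these pieces gives precisely the claimed bound with $|n| = n$. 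The main obstacle here is bookkeeping: verifying that $P_m(x)$ genuinely vanishes for all $m \neq n$ (so that the double sum reduces to a single clean term), and keeping the various constant factors aligned with the stated inequality; the analytic content is light since it all reduces to Lemma \ref{l:leftriga}, Corollary \ref{c:diracinvariant} and the $q$-number estimate already used in Lemma \ref{l:epsinter}.
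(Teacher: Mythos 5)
Your proposal is correct and follows essentially the same route as the paper: reduce to $n\geq 0$, use Lemma \ref{l:state} together with the observation that $P_m(x)=\de_{n,m}\cd (a^*)^n x$ for $x\in\C A^n_q$ so the double sum collapses to the single term $m=n$, bound $(h_r-\epsilon)(P_n(x))$ by $d_q^0(h_r,\epsilon|_{C(S_q^2)})\cd L_q^0(P_n(x))$, apply Lemma \ref{l:leftriga}, and control the coefficient via the same $q$-number estimate as in \eqref{eq:quotientest}. The passing appeal to Corollary \ref{c:diracinvariant} is superfluous (as you yourself note, only $m=n$ survives), but this does not affect correctness.
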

\begin{proof}
Let $t,q \in (0,1]$, $x \in \C A^n_q$ and $N,M \in \nn_0$ with $M \geq |n|$ be given. Since $\psi_N^M$ and $\chi_N^M$ preserve the adjoint operation and $L_{t,q}$ is $*$-invariant we may, without loss of generality, assume that $n\geq 0$. As in \eqref{eq:quotientest} we have that 
\[
\tfrac{\inn{n + r + 1}_q}{\inn{r+1}_q} \leq 1 + \tfrac{n}{r + 1} \q \T{for all } r \in \nn_0 .
\]
 Remark moreover that $P_m(x) = \de_{n,m} \cdot (a^*)^n x$ for all $m \in \nn_0$. An application of these observations together with Lemma \ref{l:state} yield the following inequalities:
\[
\begin{split}
\big|  \chi_N^M(x) - \psi_N^M(x) \big|
& = \frac{1}{M+1}\Big| 
\sum_{r = N}^{N + M - n} \tfrac{\sqrt{\inn{n + r + 1}_q}}{\sqrt{\inn{r+1}_q}} 
\cd \big( h_r( P_{n}(x)) - \epsilon(P_{n}(x)) \big) \Big| \\
& \leq \frac{1}{M+1} \sum_{r = N}^{N + M - n}\big( 1 + \tfrac{n}{r + 1} \big)^{1/2} 
d_q^0\big( h_r, \epsilon|_{C(S_q^2)} \big) \cd L_{q}^0(P_{n}(x)) \\
& \leq \big( 1 + \tfrac{n}{N + 1} \big)^{1/2} 
\cd \sup_{N\leq r  \leq N+M} d_q^0\big(h_r,\epsilon|_{C(S_q^2)}\big)
\cd L_{q}^0(P_{n}(x)).
\end{split}
\]
The result of the present lemma now follows by noting that Lemma \ref{l:leftriga} entails the inequality $L_{q}^0(P_{n}(x)) \leq (t^{1/2} + t^{-1/2} + 1) \cd L_{t,q}(x)$.
\end{proof}


\begin{prop}\label{p:mulspe}
Let $K \in \nn_0$ and  $\de \in (0,1)$. There exist a constant $C > 0$ and a  positive null sequence $(\ep_{N,M} )_{N,M = 0}^\infty$ such that
\[
d_{t,q}\big(\chi_N^M|_{B_q^K}, \epsilon|_{B_q^K}\big) \leq C \cd\sup_{N\leq r  \leq N+M} d_q^0\big(h_r,\epsilon|_{C(S_q^2)}\big)
+ \ep_{N,M}
\]
for all $(t,q) \in [\de,1] \ti (0,1]$ and all $N,M \in \nn_0$ with $M \geq K$.
\end{prop}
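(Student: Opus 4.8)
The plan is to estimate $d_{t,q}(\chi_N^M|_{B_q^K},\epsilon|_{B_q^K})$ by bounding $|\chi_N^M(x)-\epsilon(x)|$ uniformly over $x$ in the $L_{t,q}$-unit ball of $\C B_q^K$, using the intermediary functional $\psi_N^M$ introduced just above and the triangle inequality
\[
|\chi_N^M(x)-\epsilon(x)| \leq |\chi_N^M(x)-\psi_N^M(x)| + |\psi_N^M(x)-\epsilon(x)| .
\]
First I would note that any $x \in \C B_q^K$ decomposes as $x = \sum_{n=-K}^{K}\Pi_n^L(x)$ with $\Pi_n^L(x)\in \C A_q^n$, and by Corollary \ref{c:diracinvariant} each summand satisfies $L_{t,q}(\Pi_n^L(x))\leq L_{t,q}(x)$. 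Since $M \geq K \geq |n|$ for all these $n$, Lemma \ref{l:epsinter} applies to each $\Pi_n^L(x)$ and gives
\[
|\psi_N^M(\Pi_n^L(x)) - \epsilon(\Pi_n^L(x))| \leq \Big(\tfrac{1}{N+1}+\tfrac{1}{M+1}\Big)(t^{1/2}+t^{-1/2})\,L_{t,q}(x),
\]
and Lemma \ref{l:chiinter} gives
\[
|\chi_N^M(\Pi_n^L(x)) - \psi_N^M(\Pi_n^L(x))| \leq \big(1+\tfrac{K}{N+1}\big)^{1/2}(t^{1/2}+t^{-1/2}+1)\,\sup_{N\leq r\leq N+M} d_q^0(h_r,\epsilon|_{C(S_q^2)})\,L_{t,q}(x),
\]
where I have used $|n|\leq K$ to replace $|n|/(N+1)$ by $K/(N+1)$ in the monotone prefactors. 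Summing over the $2K+1$ values of $n$ and using that $t\in[\de,1]$ forces $t^{1/2}+t^{-1/2}\leq \de^{1/2}+\de^{-1/2}$, the two prefactors become bounded uniformly in $(t,q)\in[\de,1]\times(0,1]$ and in $N,M$ (since $(1+K/(N+1))^{1/2}\leq (1+K)^{1/2}$).

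Concretely I would set
\[
C := (2K+1)(1+K)^{1/2}(\de^{1/2}+\de^{-1/2}+1)
\]
and
\[
\ep_{N,M} := (2K+1)(\de^{1/2}+\de^{-1/2})\Big(\tfrac{1}{N+1}+\tfrac{1}{M+1}\Big),
\]
the latter being a positive null sequence as $N,M\to\infty$. Combining the displayed bounds over the spectral decomposition of $x$ yields, for every $x\in\C B_q^K$ with $L_{t,q}(x)\leq 1$,
\[
|\chi_N^M(x)-\epsilon(x)| \leq C\cdot\sup_{N\leq r\leq N+M}d_q^0(h_r,\epsilon|_{C(S_q^2)}) + \ep_{N,M},
\]
and taking the supremum over such $x$ gives the claimed estimate on $d_{t,q}(\chi_N^M|_{B_q^K},\epsilon|_{B_q^K})$.

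The only genuinely delicate point is making sure the decomposition argument is legitimate: $d_{t,q}$ restricted to $\C S(B_q^K)$ is computed by testing against elements of $\C B_q^K \cap \C O(SU_q(2)) = \C B_q^K$ (the domain of $L_{t,q}$ restricted to $B_q^K$), so one must check that it suffices to bound $|\chi_N^M(x)-\epsilon(x)|$ for $x\in\C B_q^K$ rather than for general elements of $B_q^K$ — this is immediate since $L_{t,q}$ takes the value $\infty$ outside $\C O(SU_q(2))$. I do not expect a serious obstacle here; the main work is simply the bookkeeping of constants, and the key prior inputs (the $\Pi_n^L$-contractivity from Corollary \ref{c:diracinvariant} and the two estimates from Lemmas \ref{l:epsinter} and \ref{l:chiinter}) have already been established. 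One should also double-check that Lemmas \ref{l:epsinter} and \ref{l:chiinter} were stated for $x\in\C A_q^n$ with $M\geq|n|$, which is exactly guaranteed by the hypothesis $M\geq K$ together with $|n|\leq K$.
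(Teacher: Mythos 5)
Your proof is correct and follows essentially the same route as the paper: the same triangle inequality through the intermediary functional $\psi_N^M$, the same spectral decomposition $x=\sum_{n=-K}^K\Pi_n^L(x)$ controlled by Corollary \ref{c:diracinvariant}, the same application of Lemmas \ref{l:epsinter} and \ref{l:chiinter}, and even the identical constants $C$ and $\ep_{N,M}$. The point you flag about the supremum effectively running over $\C B_q^K$ is indeed harmless, exactly as you argue.
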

\begin{proof}
We define the constant $C > 0$ by putting
\[
C := (2K+1) \cd ( 1 + K)^{1/2} \cd (\de^{1/2} + \de^{-1/2} + 1),
\]
and the null sequence $(\ep_{N,M} )_{N,M = 0}^\infty$ by putting
\[
\ep_{N,M} := (2K+1) \cd \big(  \tfrac{1}{N+1} + \tfrac{1}{M+1}\big) \cd (\de^{1/2} + \de^{-1/2}) 
\]
for all $N,M \in \nn_0$.\\
Let now $(t,q) \in [\de,1] \ti (0,1]$ be given and let $x \in \C B_q^K$ satisfy that $L_{t,q}(x) \leq 1$. For every $N,M \in \nn_0$ with $M \geq K$, an application of Lemma \ref{l:epsinter} and Lemma \ref{l:chiinter} then shows that
\[
\begin{split}
& \big| \chi_N^M(x) - \epsilon(x) \big|  \leq \big| \chi_N^M(x) - \psi_N^M(x) \big| + \big| \psi_N^M(x) - \epsilon(x) \big| \\
& \q \leq \sum_{n = -K}^K \big| \chi_N^M(\Pi_n^L(x)) - \psi_N^M(\Pi_n^L(x)) \big|
+ \sum_{n = -K}^K \big| \psi_N^M( \Pi_n^L(x)) - \epsilon( \Pi_n^L(x) ) \big|  \\
& \q \leq (2K+1)  \big( 1 + \tfrac{K}{N+1} \big)^{1/2}  (t^{1/2} + t^{-1/2} + 1)  \cd \sup_{N\leq r  \leq N+M} d_q^0\big(h_r,\epsilon|_{C(S_q^2)}\big)\cd L_{t,q}( \Pi_n^L(x)) \\
&  \q \ \ \ +  (2K + 1) \cd \big(  \tfrac{1}{N+1} + \tfrac{1}{M+1} \big) \cd  (t^{1/2} + t^{-1/2}) \cd L_{t,q}( \Pi_n^L(x) ) \\
& \q \leq C \cd \sup_{N\leq r  \leq N+M} d_q^0\big(h_r,\epsilon|_{C(S_q^2)}\big)
+ \ep_{N,M}, 
\end{split}
\]
where the last estimate follows from  Corollary \ref{c:diracinvariant}. This proves the present proposition.
\end{proof}

The next proposition follows by an application of the estimate from Proposition \ref{p:mulspe} together with the core technical result from \cite{AKK:Podcon}.

\begin{prop}\label{p:mulspeII}
 Let $\de \in (0,1)$, $q_0 \in (0,1]$ and $K \in \nn_0$. For every $\ep>0$ there exist an open interval $I$ containing $q_0$ and $N_0,M_0 \in \nn_0$ with $M_0 \geq K$ such that
\[
d_{t,q}\big( \chi_{N_0}^{M_0} |_{B_q^K}, \epsilon|_{B_q^K} \big) < \ep
\] 
for all  $q \in I\cap [\de,1]$ and all $t \in [\de,1]$.
\end{prop}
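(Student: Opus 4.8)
The plan is to deduce this localised uniform estimate by combining the band estimate from Proposition \ref{p:mulspe} with the continuity statement for the Podle\'s spheres established in \cite{AKK:Podcon}. The key point is that Proposition \ref{p:mulspe} reduces everything to controlling $\sup_{N \leq r \leq N+M} d_q^0(h_r, \epsilon|_{C(S_q^2)})$, uniformly for $q$ near $q_0$, and this is precisely the kind of statement that the analysis in \cite{AKK:Podcon} provides. Concretely, I would first invoke Proposition \ref{p:mulspe} to obtain a constant $C > 0$ and a positive null sequence $(\ep_{N,M})_{N,M=0}^\infty$ with
\[
d_{t,q}\big( \chi_N^M|_{B_q^K}, \epsilon|_{B_q^K} \big) \leq C \cd \sup_{N \leq r \leq N+M} d_q^0\big( h_r, \epsilon|_{C(S_q^2)} \big) + \ep_{N,M}
\]
for all $(t,q) \in [\de,1] \ti (0,1]$ and all $N,M \in \nn_0$ with $M \geq K$. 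Given $\ep > 0$, I would choose $N_0, M_0$ (with $M_0 \geq K$) large enough that $\ep_{N_0,M_0} < \ep/2$; this handles the error term uniformly in $t$ and $q$.

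Next I would deal with the main term $C \cd \sup_{N_0 \leq r \leq N_0+M_0} d_{q}^0(h_r, \epsilon|_{C(S_q^2)})$. Here the number of states $h_r$ involved is finite (at most $M_0 + 1$ of them), so it suffices to control each $d_q^0(h_r, \epsilon|_{C(S_q^2)})$ for $r \in \{N_0, \ldots, N_0 + M_0\}$ uniformly for $q$ in a neighbourhood of $q_0$. The relevant input is the core technical result of \cite{AKK:Podcon} (the uniform approximation of the counit on the Podle\'s sphere by the states $h_r$), which gives, for each fixed $r$, that the function $q \mapsto d_q^0(h_r, \epsilon|_{C(S_q^2)})$ is controlled near $q_0$; more precisely, the continuity result \cite[Theorem 4.18]{AKK:Podcon} (together with the convergence $h_r \to \epsilon|_{C(S_q^2)}$) shows that $d_q^0(h_r, \epsilon|_{C(S_q^2)})$ can be made arbitrarily small by first taking $r$ large and then restricting $q$ to a small interval around $q_0$. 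Since in our situation $r$ ranges only over the finite window $\{N_0, \ldots, N_0 + M_0\}$, but $N_0$ may be taken large at the outset, I would in fact re-order the choices: first use the uniform estimate from \cite{AKK:Podcon} to pick $N_0$ large so that $\sup_{q \in [\de,1]} \sup_{r \geq N_0} d_q^0(h_r, \epsilon|_{C(S_q^2)})$ is small, or alternatively—if only pointwise-in-$q$ control is available—pick $N_0$ large and then an open interval $I \ni q_0$ on which $\sup_{N_0 \leq r \leq N_0 + M_0} d_q^0(h_r,\epsilon|_{C(S_q^2)}) < \ep/(2C)$ for $q \in I \cap [\de,1]$, using that a finite supremum of functions continuous (or semicontinuous) in $q$ near $q_0$ stays small on a neighbourhood. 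Combining the two bounds yields $d_{t,q}(\chi_{N_0}^{M_0}|_{B_q^K}, \epsilon|_{B_q^K}) < \ep$ for all $t \in [\de,1]$ and all $q \in I \cap [\de,1]$, as required.

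The main obstacle I anticipate is extracting from \cite{AKK:Podcon} exactly the right \emph{uniform-in-$q$} statement about $d_q^0(h_r, \epsilon|_{C(S_q^2)})$: the cited continuity theorem for the Podle\'s spheres is phrased as continuity of the family of compact quantum metric spaces in the quantum Gromov-Hausdorff distance, so one must translate this into a genuine local uniform bound on the Monge-Kantorovi\v{c} distances between the specific states $h_r$ and the counit. This requires knowing that the isometric embeddings implicit in the definition of the quantum Gromov-Hausdorff distance can be chosen compatibly with the states $h_r$ and $\epsilon$, or, more directly, appealing to whatever explicit estimate underlies \cite[Theorem 4.18]{AKK:Podcon} (of the kind ``$d_q^0(h_N, \epsilon) \to 0$ uniformly for $q$ in compact subsets of $(0,1]$''). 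Once that uniform Podle\'s-sphere estimate is in hand, the rest is a bookkeeping argument: finitely many states, a fixed constant $C$, and the null sequence $\ep_{N,M}$, all combined via the triangle-inequality-type bound of Proposition \ref{p:mulspe}. A minor additional point to check is that the interval $I$ and the integers $N_0, M_0$ can indeed be chosen \emph{independently of $t$}, which is immediate since $t$ enters only through the constant $\de^{1/2} + \de^{-1/2} + 1$ already absorbed into $C$ and $\ep_{N,M}$.
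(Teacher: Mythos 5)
Your strategy coincides with the paper's: both proofs start from Proposition \ref{p:mulspe}, kill the error term $\ep_{N,M}$ by taking $N_0,M_0$ large, and then localise the Podle\'s-sphere term $\sup_{N_0\leq r\leq N_0+M_0} d_q^0(h_r,\epsilon|_{C(S_q^2)})$ in $q$ near $q_0$ over the resulting finite window of indices $r$. The one point you flag as an obstacle is indeed the crux, and the paper resolves it exactly in the way you anticipate in your parenthetical remark about ``whatever explicit estimate underlies'' the continuity result: it does not invoke \cite[Theorem 4.18]{AKK:Podcon} (quantum Gromov--Hausdorff continuity), nor does it assume any continuity or semicontinuity of $q \mapsto d_q^0(h_r,\epsilon|_{C(S_q^2)})$ -- neither of which is available -- but instead uses \cite[Lemma 4.11]{AKK:Podcon} to produce, for each $r$, a \emph{continuous} majorant $H_r \colon [\de,1]\to[0,\infty)$ with $d_q^0(h_r,\epsilon|_{C(S_q^2)}) \leq H_r(q)$, and \cite[Lemma 4.12]{AKK:Podcon} to arrange $\lim_{r\to\infty} H_r(q_0)=0$. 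With these in hand the paper runs precisely your ``second alternative'': choose $N_0$ with $\ep_{N_0,N_0}<\ep/2$ and $H_r(q_0)<\ep/(4C)$ for all $r\geq N_0$, set $M_0=N_0$, and use continuity of the finitely many $H_r$ with $N_0\leq r\leq 2N_0$ to find the interval $I$; uniformity in $t\in[\de,1]$ is automatic since $t$ only enters through the constants of Proposition \ref{p:mulspe}, as you observe. So your argument is correct once the hedged citation is replaced by these two lemmas; your ``first alternative'' (a genuinely uniform-in-$q$ bound on $\sup_{r\geq N_0} d_q^0(h_r,\epsilon)$) is not what \cite{AKK:Podcon} provides and should be discarded.
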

\begin{proof}
By \cite[Lemma 4.11]{AKK:Podcon}, for every $r \in \nn_0$, we may choose a continuous function  $H_r \colon [\de,1] \to [0,\infty)$ such that 
\[
d_q^0\big(h_r, \epsilon|_{C(S_q^2)} \big) \leq H_r(q) \q \T{for all }  q\in [\de, 1] .
\]
Moreover, by \cite[Lemma 4.12]{AKK:Podcon} we may arrange that $\lim_{r \to \infty} H_r(q_0) = 0$.\\
Let us choose the constant $C > 0$ and the  positive null sequence $(\ep_{N,M})_{N,M = 0}^\infty$ according to Proposition \ref{p:mulspe} with $\de \in (0,1)$ and  $K \in \nn_0$ as given in the statement of the present proposition. Let now $\ep > 0$ be given. Choose $N_0 \geq K$ such that  $\ep_{N,M} < \ep/2$ for all $N,M \geq N_0$ and $H_r(q_0) < \tfrac{\ep}{4 C}$ for all $r \geq N_0$. Since the function $H_r$ is continuous for all $r \in \nn_0$, we may choose our open interval $I$ containing $q_0$ such that
\[
\big| H_r(q_0) - H_r(q) \big| < \tfrac{\ep}{4 C} \q \T{for all } q \in I \cap  [\de,1] \T{ and all } r \in \{N_0,N_0 + 1,\ldots,2N_0\} .
\] 
We now put $M_0 := N_0$ and it then follows from Proposition \ref{p:mulspe} that
\[
d_{t,q}\big( \chi_{N_0}^{M_0} |_{B_q^K}, \epsilon|_{B_q^K} \big) \leq C \cd \sup_{N_0\leq r\leq 2N_0 } H_r(q) + \ep_{N_0,N_0} < \ep
\]
for all $q \in I \cap  [\de,1]$ and all  $t \in [\de,1]$.
\end{proof}

\subsection{Continuity of quantum $SU(2)$}
We are now ready to assemble all the information gathered in the previous sections to obtain a proof of our main continuity result, Theorem \ref{introthm:qgh-continuity} from the introduction.

\begin{theorem}\label{thm:continuity-of-quantum-su2}
The $2$-parameter family of compact quantum metric spaces $\big( C(SU_q(2)), L_{t,q} \big)_{(t,q) \in (0,1]\times (0,1]}$ varies continuously in the quantum Gromov-Hausdorff distance.
\end{theorem}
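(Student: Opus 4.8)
The plan is to fix a point $(t_0,q_0)\in (0,1]\times (0,1]$ and an $\ep>0$, and to produce a neighbourhood of $(t_0,q_0)$ on which the quantum Gromov-Hausdorff distance to $\big(C(SU_{q_0}(2)),L_{t_0,q_0}\big)$ is at most $\ep$. By Corollary \ref{cor:dist-zero} it is harmless to work throughout with the algebraic Lip-norm $L_{t,q}$ (domain $\C O(SU_q(2))$) rather than $L_{t,q}^{\max}$, since the two compact quantum metric spaces are at distance zero and hence interchangeable in all triangle-inequality estimates. Choose $\de\in(0,1)$ with $\de<t_0$ and $\de<q_0$, so that a whole neighbourhood of $(t_0,q_0)$ lies in $[\de,1]\times[\de,1]$.

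\textbf{Step 1: uniform fuzzy approximation near $(t_0,q_0)$.} First I would invoke Proposition \ref{p:fuzztosuq2}: given $\ep$, there are $K_0\in\nn_0$ and $C\geq 0$ with
\[
\T{dist}_{\T Q}\big((\be_N^M(B_q^{K_0}),L_{t,q});(C(SU_q(2)),L_{t,q}^{\max})\big)\leq C\cd d_{t,q}^{\max}\big(\chi_N^M\vert_{B_q^{K_0}},\epsilon\vert_{B_q^{K_0}}\big)+\ep/4
\]
for all $N,M\in\nn_0$ and all $t,q\in[\de,1]$. Now apply Proposition \ref{p:mulspeII} with this $K_0$, with $q_0$ as given, and with the tolerance $\ep/(4C)$: this yields an open interval $I\ni q_0$ and fixed $N_0,M_0\in\nn_0$ (with $M_0\geq K_0$) such that $d_{t,q}(\chi_{N_0}^{M_0}\vert_{B_q^{K_0}},\epsilon\vert_{B_q^{K_0}})<\ep/(4C)$ for all $q\in I\cap[\de,1]$ and all $t\in[\de,1]$. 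Since $d_{t,q}$ and $d_{t,q}^{\max}$ agree on $\C S(B_q^{K_0})$ by Corollary \ref{cor:metrics-agree-on-state-space-of-bands}, this bounds the Monge-Kantorovi\v{c} term above, so
\[
\T{dist}_{\T Q}\big((\be_{N_0}^{M_0}(B_q^{K_0}),L_{t,q});(C(SU_q(2)),L_{t,q}^{\max})\big)<\ep/2
\]
uniformly for $(t,q)\in\big(I\cap[\de,1]\big)\times[\de,1]$. By Proposition \ref{p:fuzzber} the operator system $\be_{N_0}^{M_0}(B_q^{K_0})$ equals $\T{Fuzz}_{N_0+M_0-|m|}$-type pieces assembled into a subspace contained in $\T{Fuzz}_{N_0+M_0}(B_q^{K_0})$; write $F:=\T{Fuzz}_{N_0+M_0}(B_q^{K_0})$ and note that, by the last assertion of Proposition \ref{p:fuzztosuq2} (applied with $X=F$, which contains $\be_{N_0}^{M_0}(B_q^{K_0})$ and has $L_{t,q}^{\max}$-dense domain), one also gets $\T{dist}_{\T Q}\big((F,L_{t,q});(C(SU_q(2)),L_{t,q}^{\max})\big)<\ep/2$ for $(t,q)$ in the same set. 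Crucially, $F=\T{Fuzz}_{N_0+M_0}(B_q^{K_0})$ and its dimension do not depend on $q$ (by the corollary following Proposition \ref{p:fuzzber}).

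\textbf{Step 2: continuity of the finite-dimensional pieces, and assembling.} Next I would apply Proposition \ref{p:fincont} with $K=K_0$ and $N=N_0+M_0$: the family $\big(\T{Fuzz}_{N_0+M_0}(B_q^{K_0}),L_{t,q}\big)_{t,q\in(0,1]}$ varies continuously in the quantum Gromov-Hausdorff distance. Hence, after possibly shrinking the interval $I$ around $q_0$ and choosing an open interval $I'\ni t_0$, we may assume
\[
\T{dist}_{\T Q}\big((F,L_{t,q});(\T{Fuzz}_{N_0+M_0}(B_{q_0}^{K_0}),L_{t_0,q_0})\big)<\ep
\]
for all $(t,q)\in (I'\cap[\de,1])\times(I\cap[\de,1])$. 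Finally, combine the three estimates via the triangle inequality for $\T{dist}_{\T Q}$ (Theorem \ref{thm:quantumdist}), keeping in mind Corollary \ref{cor:dist-zero} to pass freely between $L_{t,q}$ and $L_{t,q}^{\max}$ on $C(SU_q(2))$: for $(t,q)$ in the product neighbourhood $(I'\cap[\de,1])\times(I\cap[\de,1])$ of $(t_0,q_0)$,
\[
\T{dist}_{\T Q}\big((C(SU_q(2)),L_{t,q});(C(SU_{q_0}(2)),L_{t_0,q_0})\big)
\leq \tfrac{\ep}{2}+\ep+\tfrac{\ep}{2}=2\ep,
\]
which (as $\ep$ was arbitrary) proves continuity at $(t_0,q_0)$.

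\textbf{Main obstacle.} The routine part is the triangle-inequality bookkeeping in Step 2; the substance is entirely in Step 1, where the uniformity of the fuzzy approximation in both parameters must be controlled simultaneously. That uniformity rests on two inputs that are already available to us: the $(t,q)$-uniform error bound of Proposition \ref{p:fuzztosuq2} (itself relying on the diameter bound of Proposition \ref{prop:diameter-estimate} together with Remark \ref{rem:bounded-diamter}, and on the Berezin-transform estimate Proposition \ref{p:berestigen} whose error term $h_K(t,q)$ vanishes on the diagonal and is continuous), and the transfer of the Podle\'s-sphere continuity results $d_q^0(h_r,\epsilon)\leq H_r(q)$ with $\lim_{r\to\infty}H_r(q_0)=0$ from \cite{AKK:Podcon} into an $SU_q(2)$ estimate on a fixed spectral band via Proposition \ref{p:mulspe}. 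The delicate point is that $N_0$ and $M_0$ must be chosen \emph{before} shrinking the $q$-interval and must work for \emph{all} $t\in[\de,1]$ at once; this is exactly what Proposition \ref{p:mulspeII} delivers, so once that proposition is in hand the remaining argument is the assembly above.
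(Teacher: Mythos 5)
Your proposal is correct and follows essentially the same route as the paper's own proof: the uniform fuzzy approximation via Proposition \ref{p:fuzztosuq2}, Corollary \ref{cor:metrics-agree-on-state-space-of-bands} and Proposition \ref{p:mulspeII}, then continuity of the finite-dimensional pieces via Proposition \ref{p:fincont}, assembled by the triangle inequality and Corollary \ref{cor:dist-zero}. The only differences are cosmetic (the $\ep/4$ versus $\ep/6$ bookkeeping, ending with $2\ep$ instead of $\ep$), which does not affect the argument.
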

As noted earlier, since 
\[
\T{dist}_{\T{Q}}\big((C(SU_q(2)), L_{t,q}), (C(SU_q(2)), L_{t,q}^{\max})\big) = 0
\]
by Corollary \ref{cor:dist-zero}, the above theorem also holds true for $L_{t,q}^{\max}$ instead of $L_{t,q}$. 

\begin{proof}
  Let $(t_0,q_0) \in (0,1]\times (0,1]$ and $\ep > 0$ be given and put $\de:=\min\{q_0/2, t_0/2\}$. By Proposition \ref{p:fuzzber},  $\beta_N^M(B_q^K)\subseteq \T{Fuzz}_{N+M}(B_q^K)$  for all $N,M,K \in \nn_0$ with $M \geq K$. Applying Proposition \ref{p:fuzztosuq2}, we may choose $K_0 \in \nn_0$ and a constant $C > 0$ such that 
\[
\T{dist}_{\T{Q}}\big( (\T{Fuzz}_{N+M}(B_q^{K_0}),L_{t,q}) ; (C(SU_q(2)), L_{t,q}^{\max}) \big) \leq d_{t,q}^{\max}(\chi_N^M\vert_{B_q^{K_0}},\epsilon\vert_{B_q^{K_0}}) \cd  C +  \ep/6
\]
for all $N,M\in \nn_0$  with $M \geq K_0$ and all $t,q\in [\de,1]$. By  Corollary \ref{cor:metrics-agree-on-state-space-of-bands} and Proposition \ref{p:mulspeII}, there exist  $N_0,M_0\in \nn_0$ with $M_0 \geq K_0$ and an open interval $I$ with $q_0 \in I$ such that
\[
d_{t,q}^{\max}\big(\chi_{N_0}^{M_0}\vert_{B_q^{K_0}},\epsilon\vert_{B_q^{K_0}}\big)=d_{t,q}\big(\chi_{N_0}^{M_0}\vert_{B_q^{K_0}},\epsilon\vert_{B_q^{K_0}}\big) <  \frac{\ep}{6C}
\]
for all $q \in I \cap [\de,1]$ and all $t\in [\de,1]$.  Hence
\[
\T{dist}_{\T{Q}}\big((\T{Fuzz}_{N_0+M_0}(B_q^{K_0}),L_{t,q}); (C(SU_q(2)), L_{t,q}^{\max})\big) < \frac{\ep}{3}
\]
for all $q \in I \cap [\de,1]$ and all $t\in [\de,1]$.  Note, at this point, that $V := [\de,1] \ti ( I \cap [\de,1] ) \su (0,1] \ti (0,1]$ is a neighbourhood of the point $(t_0,q_0)$. By Proposition \ref{p:fincont}, the compact quantum metric spaces $\big( \T{Fuzz}_{N_0+M_0}(B_q^{K_0}) , L_{t,q} \big)_{t,q\in (0,1]}$ vary continuously in the quantum Gromov-Hausdorff distance, so we may  choose a neighbourhood $U$ of $(t_0,q_0) \in (0,1] \ti (0,1]$ such that
\[
\T{dist}_{\T Q}\big(  (\T{Fuzz}_{N_0+M_0}(B_q^{K_0}),L_{t,q}) ;   ( \T{Fuzz}_{N_0+M_0}(B_{q_0}^{K_0}), L_{t_0,q_0}) \big)<\ep/3
\]
for all $(t,q) \in U$. An application of the triangle inequality for the quantum Gromov-Hausdorff distance \cite[Theorem 4.3]{Rie:GHD}, now yields the estimate
\[
\T{dist}_{\T{Q}}\big( ( C(SU_q(2)),L_{t,q}^{\max}); (C(SU_{q_0}(2)),L_{t_0,q_0}^{\max}) \big) < \ep
\]
for all  $(t,q)\in U \cap V$, thus completing the proof.
\end{proof}

As a last remark we single out the following special case of the above  theorem: As the deformation parameter $q$ tends to $1$, the quantum metric spaces $SU_q(2)$ converge towards $SU(2)$ equipped with its classical round metric rescaled with a factor $2$. To make this statement precise, recall from Section \ref{sec:comparison-with-classical}, that we denote by $d_{S^3}$ the usual round metric on $SU(2)\cong S^3$. We then have the Lip-norm $L_{\T{Lip}}$ which to any continuous function $f \colon SU(2) \to \cc$ assigns the Lipschitz constant with respect to the rescaled metric $2 \cd d_{S^3}$. Comparing with Theorem \ref{t:classical}, the special case of Theorem \ref{thm:continuity-of-quantum-su2} then reads as follows:

\begin{cor}\label{cor:convergence-to-classical-SU2}
As $(t,q) \in (0,1] \ti (0,1]$ tends to $(1,1)$, the quantum metric spaces $(C(SU_q(2)), L_{t,q})$ converge in quantum Gromov-Hausdorff distance to $\big(C(SU(2)), L_{\T{Lip}}\big)$.
\end{cor}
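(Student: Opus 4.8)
The plan is to derive Corollary~\ref{cor:convergence-to-classical-SU2} as a direct specialisation of Theorem~\ref{thm:continuity-of-quantum-su2} combined with the identification of the limiting object carried out in Section~\ref{sec:comparison-with-classical}. First I would invoke Theorem~\ref{thm:continuity-of-quantum-su2}, which asserts that the assignment $(t,q)\mapsto \big(C(SU_q(2)),L_{t,q}\big)$ is continuous on $(0,1]\times(0,1]$ with respect to $\T{dist}_{\T Q}$; in particular, taking the point $(t_0,q_0)=(1,1)$, we obtain $\lim_{(t,q)\to(1,1)}\T{dist}_{\T Q}\big((C(SU_q(2)),L_{t,q});(C(SU_1(2)),L_{1,1})\big)=0$. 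It then only remains to recognise the fibre at $(1,1)$ as the classical object: by Theorem~\ref{t:classical} we have $C(SU_1(2))\cong C(SU(2))\cong C(S^3)$, and, after passing to the minimal seminorm, the Lip-norm $L_{1,1}$ (with domain $\C O(SU_1(2))$) induces the same Monge-Kantorovi\v c metric on the state space as $L_{1,1}^{\max}$ by Corollary~\ref{cor:dist-zero}, which in turn equals $\tfrac12 L_{\T{Lip}}$ in the sense that $d_{1,1}^{\max}=2\cdot d_{S^3}$ on pure states, and hence the Monge-Kantorovi\v c metric of $L_{1,1}$ coincides with that of $L_{\T{Lip}}$ on all of $\C S(C(SU(2)))$.

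Concretely, the intermediate step I would spell out is that $\big(C(SU_1(2)),L_{1,1}\big)$ and $\big(C(SU(2)),L_{\T{Lip}}\big)$ are isometric in the sense discussed just before Theorem~\ref{thm:quantumdist}: since $L_{\T{Lip}}=2\cdot L_{1,1}^{\max}$ on $\T{Lip}(S^3)=\T{Lip}_1(SU_1(2))$ by Theorem~\ref{t:classical}, the two seminorms have unit balls differing by the scaling factor $2$, but the relevant point is that Corollary~\ref{cor:dist-zero} gives $d_{1,1}=d_{1,1}^{\max}$, so the state space $\C S(C(SU(2)))$ carries one well-defined metric whether computed from $L_{1,1}$, $L_{1,1}^{\max}$, or $L_{\T{Lip}}$, up to the identification with $2\cdot d_{S^3}$ established in Theorem~\ref{t:classical}. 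By Theorem~\ref{thm:quantumdist}(2), the vanishing of $\T{dist}_{\T Q}$ between two compact quantum metric spaces is equivalent to the existence of an isometry, so one concludes $\T{dist}_{\T Q}\big((C(SU_1(2)),L_{1,1});(C(SU(2)),L_{\T{Lip}})\big)=0$.

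Combining the two displayed facts via the triangle inequality for $\T{dist}_{\T Q}$ (Theorem~\ref{thm:quantumdist}(1)) then yields
\[
\lim_{(t,q)\to(1,1)}\T{dist}_{\T Q}\big((C(SU_q(2)),L_{t,q});(C(SU(2)),L_{\T{Lip}})\big)=0,
\]
which is precisely the assertion of the corollary. I expect no genuine obstacle here: the entire argument is a bookkeeping exercise assembling Theorem~\ref{thm:continuity-of-quantum-su2}, Corollary~\ref{cor:dist-zero}, Theorem~\ref{t:classical} and the formal properties of $\T{dist}_{\T Q}$ recorded in Theorem~\ref{thm:quantumdist}. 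The only point requiring a modicum of care is the careful handling of the factor $2$: one must make sure that $L_{\T{Lip}}$ as defined in the statement (the Lipschitz constant of the \emph{rescaled} metric $2\cdot d_{S^3}$) matches the normalisation coming out of $L_{1,1}^{\max}$, and this is exactly the content of Theorem~\ref{t:classical}, so invoking that theorem with the correct rescaling is the one place where a wrong constant could creep in.
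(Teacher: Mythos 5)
Your overall route is the paper's own: the corollary is obtained by specialising Theorem \ref{thm:continuity-of-quantum-su2} at $(t_0,q_0)=(1,1)$, passing between $L_{1,1}$ and $L_{1,1}^{\max}$ via Corollary \ref{cor:dist-zero}, identifying $\big(C(SU_1(2)),L_{1,1}^{\max}\big)$ with $\big(C(SU(2)),L_{\T{Lip}}\big)$ through Theorem \ref{t:classical}, and closing with the triangle inequality from Theorem \ref{thm:quantumdist}. However, your middle paragraph mishandles exactly the factor of $2$ that you flag at the end. In the corollary, $L_{\T{Lip}}$ is the Lipschitz seminorm for the \emph{rescaled} metric $2\cd d_{S^3}$, so $L_{\T{Lip}}(f)$ is one half of the Lipschitz constant of $f$ with respect to $d_{S^3}$; since Theorem \ref{t:classical} states that $L_{1,1}^{\max}$ is also one half of the Lipschitz constant for $d_{S^3}$ (the symbol $L_{\T{Lip}}$ in that theorem refers to the \emph{unrescaled} metric), one obtains $L_{\T{Lip}}=L_{1,1}^{\max}$ on $\T{Lip}(S^3)=\T{Lip}_1(SU_1(2))$: the two compact quantum metric spaces are literally the same object, and there is no rescaling of unit balls at all. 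Your displayed identity $L_{\T{Lip}}=2\cd L_{1,1}^{\max}$ is therefore false for the seminorm appearing in the statement of the corollary (it is the correct relation only for the unrescaled Lipschitz constant of Theorem \ref{t:classical}).

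This matters because the way you try to absorb the alleged factor $2$ does not work: if two Lip-norms on the same operator system genuinely differed by a nontrivial scalar, their Monge-Kantorovi\v{c} metrics would differ by the reciprocal scalar, the state-space diameters would differ, and the two compact quantum metric spaces would \emph{not} be at quantum Gromov-Hausdorff distance zero. Corollary \ref{cor:dist-zero} (that is, $d_{1,1}=d_{1,1}^{\max}$) cannot repair such a discrepancy; its only role in this argument is to let you replace the minimal seminorm $L_{1,1}$, which is what the limit object produced by Theorem \ref{thm:continuity-of-quantum-su2} carries, by $L_{1,1}^{\max}$. Once you replace the erroneous identity by $L_{\T{Lip}}=L_{1,1}^{\max}$, the remaining assembly — continuity at $(1,1)$, Corollary \ref{cor:dist-zero}, and the triangle inequality — is exactly the paper's (essentially unwritten) derivation, and the proof is complete.
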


\newcommand{\etalchar}[1]{$^{#1}$}
\providecommand{\bysame}{\leavevmode\hbox to3em{\hrulefill}\thinspace}
\providecommand{\MR}{\relax\ifhmode\unskip\space\fi MR }
\providecommand{\MRhref}[2]{%
  \href{http://www.ams.org/mathscinet-getitem?mr=#1}{#2}
}
\providecommand{\href}[2]{#2}


\end{document}